\numberwithin{equation}{section}
\newcounter{introequation}
\newenvironment{introequation}{\refstepcounter{introequation}\equation}{\tag{\theintroequation}\endequation}
\newtheorem{theorem}{Theorem}[section]
\newtheorem{THM}{Theorem}
\newtheorem{proposition}[theorem]{Proposition}
\newtheorem{lemma}[theorem]{Lemma}
\newtheorem{coro}[theorem]{Corollary}
\theoremstyle{definition}
\newtheorem{definition}[theorem]{Definition}
\newtheorem{remark}[theorem]{Remark}
\newtheorem{example}[theorem]{Example}
\newcounter{x}\setcounter{x}{1}
\newenvironment{axiom}[1]
  {\inneraxiom\stepcounter{x}}
  {\endinneraxiom}
\newcommand{\BB}[1]{\mathbb{#1}}
\newcommand{\CL}[1]{\mathcal{#1}}
\newcommand{\tr}{\text{tr}}
\newcommand{\Ad}{\text{Ad}}
\newcommand{\norm}[1]{\lVert #1 \rVert}
\newcommand{\bnorm}[1]{\bigg{\lVert} #1 \bigg{\rVert}}
\newcommand{\cris}[3]{\ensuremath{{\Gamma}_{#1#2}^{#3}}}
\newcommand{\inner}[2]{\langle #1,#2\rangle}
\newcommand{\sign}{\textnormal{sign}}
\newcommand{\ran}{\textnormal{ran}}
\newcommand{\ind}{\textnormal{ind}}
\newcommand{\Pf}{\text{Pf}}
\newcommand{\End}{\text{End}}
\newcommand{\Hom}{\text{Hom}}
\newcommand{\Aut}{\text{Aut}}
\newcommand{\Dom}{\text{Dom}}
\newcommand{\spec}{\textnormal{spec}}
\newcommand{\rk}{\text{rk}}
\newcommand{\vol}{\text{vol}}
\newcommand{\Sp}{\textnormal{Spec}}
\newcommand{\supp}{\textnormal{supp}}
\newcommand{\dom}{\textnormal{Dom}}
\begin{document}

\title{Induced Dirac-Schr\"odinger operators on $S^1$-semi-free quotients}
\author{Juan Camilo Orduz}
\address{Institut F\"ur Mathematik , Humboldt Universit\"at zu
Berlin, Germany.}
\email{juanitorduz@gmail.com}
\date{\today}
\maketitle

\begin{abstract}
John Lott has computed an integer-valued signature for the orbit space
of a compact orientable $(4k+1)$ manifold with a semi-free $S^1$-action,
which is a homotopy invariant of that space, but he did not construct
a Dirac type operator which has this signature as its index. In this
Thesis, we construct such operator on the orbit space, a Thom-Mather
stratified space with one singular stratum of positive dimension, and
we show that it is essentially unique and that its index coincides
with Lott's signature, at least when the stratified space satisfies
the so called Witt condition. We call this operator the
{\em induced Dirac-Schr\"odinger operator}. The strategy of the construction is
to ``push down" an appropriate $S^1$-invariant first order
transversally elliptic operator to the quotient space.\\
The Witt condition, a topological condition which in this case depends
on the codimension of the fixed point set, has various analytic
consequences.  In particular, when not satisfied, the Hodge-de Rham
operator on the quotient space does not need to be essentially
self-adjoint and therefore a choice of boundary conditions is
required. This choice freedom is not natural in view of the fact that
Lott's signature is well defined independently of the Witt condition.\\
The Dirac-Schr\"odinger operator constructed in this Thesis differs
from the Hodge-de Rham operator by a zero order term which ensures it
to be essentially self-adjoint. Moreover, this zero order term
anti-commutes with the chirality involution allowing the whole
operator to split so that the index can be computed even if the Witt
condition is not satisfied.
\end{abstract}

\pagenumbering{gobble} 

\newpage

\setcounter{tocdepth}{2}
\tableofcontents

\clearpage

\pagenumbering{roman}

\section*{Introduction}

\subsection*{Description of the problem}

Let $M$ be a $4k+1$ dimensional closed, connected and orientable smooth Riemannian manifold admitting an effective semi-free $S^1$-action by orientation preserving isometries. ``Semi-free" means that there are only two isotropy groups: the trivial group and the whole  $S^1$. We can decompose the manifold as $M=M_0\cup M^{S^1}$ where $M_0$ is the principal orbit space - an open and dense subset of $M$ on which the action is free - and $M^{S^1}$, the fixed point set, is a disjoint union of closed odd dimensional submanifolds of $M$. This decomposition is a  Whitney stratification (\cite[Section 2.7]{DK00}). Since the $S^1$-action on $M_0$ is free, the quotient space $M_0/S^1$ has a unique smooth structure such that the orbit map $\pi_{S^1}:M_0\longrightarrow M_0/S^1$ becomes a smooth fiber bundle. Moreover, we can induce a quotient metric by requiring $\pi_{S^1}$ to be a Riemannian submersion. In general, this metric will be incomplete. By studying the associated linear action, one can see that the quotient space close to a connected component $F$ of the singular stratum $M^{S^1}$ is diffeomorphic to the mapping cylinder $C(\mathcal{F})$ of a Riemannian fiber bundle $\pi_\mathcal{F}:\mathcal{F}\longrightarrow F$ with typical fiber $Y=\mathbb{C}P^N$ for some $N\in\mathbb{N}$ which depends on the codimension of $F$ in $M$ (see Figure \ref{Fig:Local}).\\
 
\begin{scheme}[h]
\begin{center}
\begin{tikzpicture}
\draw (-2,5)--(2,5);
\draw (-3,3)--(1,3);
\draw (-2,5)--(-3,3);
\draw (2,5)--(1,3);
\draw (-2.5,1)--(1.5,1);
\draw (-3,3)--(-2.5,1);
\draw (2,5)--(1.5,1);
\draw (1,3)--(1.5,1);
\draw [dashed](-2,5)--(-2.25,3);
\draw (-2.5,1)--(-2.25,3);
\draw (-1,3)--(-0.5,1);
\draw [dashed](0,5)--(-0.25,3);
\draw (-0.5,1)--(-0.25,3);
\draw (0,5)--(-1,3);
\node (a) at (-1,4) {$\mathbb{C}P^N$};
\node (b) at (-1.5,0.7) {$F$};
\node (b) at (-3,4) {$\mathcal{F}$};
\end{tikzpicture}
\caption{Mapping cylinder of the $\mathbb{C}P^N$-fiber bundle $\pi_{\mathcal{F}}:\mathcal{F}\longrightarrow F.$}\label{Fig:Local}
\end{center}
\end{scheme}

In this context, Lott defined in \cite{L00} a topological invariant of the action, the {\em equivariant $S^1$-signature} $\sigma_{S^1}(M)$, as follows: Let $V\in C^{\infty}(M,TM)$ denote the generating vector field of the $S^1$-action and consider the complex of basic differential forms with compact support $\Omega_{\text{bas},c}(M_0)\coloneqq \{\omega\in\Omega_c(M_0)\:|\:L_V\omega=\iota_V\omega=0\}$.
Here $L_V$ and $\iota_V$ denote the Lie derivative and the contraction with $V$, respectively. Let $H^{*}_{\text{bas},c}(M_0)$ be the associated cohomology groups with respect to the exterior derivative. Then, the {\em equivariant $S^1$-signature} is defined as the signature of the quadratic form
\begin{equation*}
\xymatrixrowsep{0.01cm}\xymatrixcolsep{2cm}\xymatrix{
H^{2k}_{\text{bas},c}(M_0)\otimes H^{2k}_{\text{bas},c}(M_0) \ar[r] & \mathbb{R}\\
[\omega_1]\otimes[\omega_2] \ar@{|->}[r]  &\displaystyle{\int_M \alpha\wedge \omega_1\wedge\omega_2,}
}
\end{equation*}
where $\alpha\in\Omega^1(M_0)$ satisfies $L_V\alpha=0$ and $\alpha(V)=1$. Lott showed that this signature does not depend on the metric on $M$ and that it is an invariant under orientation-preserving $S^1$-homotopy equivalences (\cite[Proposition 6]{L00}). Moreover, he proved the following remarkable formula (\cite[Theorem 4]{L00})
\begin{introequation}\label{Eqn:Lott}
\sigma_{S^1}(M)=\int_{M_0/S^1}L\left(T(M_0/S^1), g^{T(M_0/S^1)}\right)+\eta(M^{S^1}),
\end{introequation}
where $L\left(T(M_0/S^1), g^{T(M_0/S^1)}\right)$ is the $L$-polynomial of the curvature form of the tangent bundle $T(M_0/S^1)$ with respect to the quotient metric $g^{T(M_0/S^1)}$and $\eta(M^{S^1})$ is the eta invariant of the odd signature operator defined on the fixed point set. It is important to emphasize that part of the result is the convergence of the integral over $M_0/S^1$ (which is an open manifold if the action is not free). 
The main idea of the proof is to approximate each connected component of the fixed point set by a manifold with boundary in order to apply the Atiyah-Patodi-Singer signature theorem \cite[Theorem 4.14]{APSI} and then take the adiabatic limit, using the techniques developed in \cite{D91}, of the geometric and analytic quantities as they approach the singular stratum (see Figure \ref{Fig:Decomposition}). The proof also requires some equivariant methods, discussed for example in \cite{BGV}, to explain why the eta form of Bismut and Cheeger does not appear in the adiabatic formula (\cite{G00}) and also to compute the transgression term, which appears in the Atiyah-Patodi-Singer signature formula when the metric is not a product close to the boundary (\cite[Section 4.3]{G95}). \\

\begin{scheme}[h]
\begin{center}
\begin{tikzpicture}
\draw (-2,5)--(2,5);
\draw (-3,3)--(1,3);
\draw (-2,5)--(-3,3);
\draw (2,5)--(1,3);
\draw (-2.5,1)--(1.5,1);
\draw (-3,3)--(-2.5,1);
\draw (2,5)--(1.5,1);
\draw (1,3)--(1.5,1);
\draw [dashed](-2,5)--(-2.25,3);
\draw (-2.5,1)--(-2.25,3);
\draw (-1,3)--(-0.5,1);
\draw [dashed](0,5)--(-0.25,3);
\draw (-0.5,1)--(-0.25,3);
\draw (0,5)--(-1,3);
\draw (-2.4,2) arc (87:155:0.3 and 0.5);
\draw (1.6,2) arc (87:155:0.3 and 0.5);
\draw (-0.4,2) arc (87:155:0.3 and 0.5);
\draw (-2.7,1.7)--(1.33,1.7);
\draw (-2.4,2)--(1.62,2);
\node (a) at (-1,4) {$\mathbb{C}P^N$};
\node (b) at (0,0.7) {$F$};
\draw [<-](-2.84,1.68)--(-2.65,1);
\node (c) at (-3,1.3) {$r$};
\node (d) at (-2.5,0.8) {$0$};
\node (e) at (-3.2,3) {$1$};
\node (b) at (-3,4) {$\mathcal{F}$};
\end{tikzpicture}
\caption{Decomposition of $C(\mathcal{F})$ near the fixed point set $F$.}\label{Fig:Local2}
\end{center}
\end{scheme}

The question that arises naturally is whether there exists a Fredholm operator whose index computes $\sigma_{S^1}(M)$. This question was posed by Lott himself as a remark in his original work \cite[Section 4.2]{L00}. A natural candidate is the Hodge-de Rham operator $D_{M_0/S^1}
\coloneqq d_{M_0/S^1}+d^{\dagger}_{M_0/S^1}$ defined on the space of compactly supported differential forms $\Omega_c(M_0/S^1)$. Here $d^{\dagger}_{M_0/S^1}$ denotes the $L^2$ formal adjoint of the exterior derivative $d_{M_0/S^1}:\Omega^*_c(M_0/S^1)\longrightarrow \Omega^{*+1}_c(M_0)$.  As Fredholm operators are by definition closed,  we need to choose a closed extension of $D_{M_0/S^1}$. If the quotient metric is not complete, $D_{M_0/S^1}$ might have several closed extensions. In order to understand this phenomenon better we need to study the form of the operator close to the fixed point set. Following Br\"uning's work \cite{B09} one sees that $D_{M_0/S^1}$, close to $F\subset M^{S^1}$, is unitary equivalent to an operator of the form 
\begin{introequation}\label{Eqn:OpSing}
\Psi^{-1}{D}_{M_0/S^1}\Psi
=\gamma\left(\frac{\partial}{\partial r}+
\left(\begin{array}{cc}
I & 0\\
0 & -I
\end{array}\right)\otimes A(r)
\right).
\end{introequation}
Here $A(r)$ is an operator acting on differential forms $\Omega(\mathcal{F})$ of the total space of the fiber bundle $\mathcal{F}$ and $r>0$ is the radial coordinate. The operator $A(r)$ can be written as
\begin{introequation}\label{Eqn:A}
A(r)\coloneqq A_H(r)+\frac{1}{r}A_V,
\end{introequation}
where $A_H(r)$ is a first order horizontal operator, well defined for $r\geq 0$. The coefficient $A_V$ is a first order vertical operator, known as the {\em cone coefficient},  which can be itself written as $A_V=A_{0V}+\nu$, where $A_{0V}$ can be thought as the tangential signature operator of the fiber and $\nu\coloneqq \text{vd}-N$ where $\text{vd}$ is the vertical degree counting operator. Using the techniques developed in \cite{BS91}, Br\"uning showed in \cite[Section 4]{B09} that the operator \eqref{Eqn:OpSing} has a discrete self-adjoint extension. In addition, if the cone coefficient satisfies the spectral condition
\begin{introequation}\label{Eqn:AVgeg12}
|A_V|\geq\frac{1}{2}, 
\end{introequation} 
then the operator is in fact essentially self-adjoint. In the {\em Witt case}, i.e when there are no vertical harmonic forms in degree $N$, we can always achieve condition \eqref{Eqn:AVgeg12} by rescaling the vertical metric, which is an operation that preserves the index. To see this one needs to understand the spectrum of $A_V$. It was shown in \cite[Theorem 3.1]{B09} that the essential eigenvalues, those invariant under the rescaling, are the ones obtained when restricting to the space of vertical harmonic forms. These eigenvalues are explicitly given by $2j-N$, for $j=0,1,\cdots, N$. Observe that if $N$ is odd, zero does not appear as an essential eigenvalue and $|2j-N|\geq 1$. On the other hand, if $N=2\ell$ then zero appears as an eigenvalue when $j=\ell$ and the corresponding eigenspace is non-zero (the non-Witt case); therefore the rescaling procedure would fail to ensure condition \eqref{Eqn:AVgeg12}.\\

For the Witt case we apply the technques from the work of Br\"uning \cite[Section 5]{B09} to prove $\ind(D_{M_0/S^1}^+)=\sigma_{S^1}(M)$, where $D_{M_0/S^1}^+$ is the chiral Dirac operator with respect to the Clifford involution $\star_{M_0/S^1}$ (\cite[Proposition 3.58]{BGV}). The main tool for the index computation is the {\em Dirac-Schr\"odinger systems} formalism developed in \cite{BBC08}.  The strategy is to use the gluing index theorem \cite[Theorem 4.17]{BBC08} to compute the index as the sum of two contributions by approximating the singularity through a manifold with boundary, in the same spirit of Lott's proof of \eqref{Eqn:Lott}. This decomposition can be schematically visualized in Figure \ref{Fig:Decomposition}.

\begin{scheme}[H]
\begin{center}
\begin{tikzpicture}
\draw[rounded corners=32pt](7,-1)--(4,-1)--(2,-2)--(0,0) -- (2,2)--(4,1)--(7,1);
\draw (1.5,0.2) arc (175:315:1cm and 0.5cm);
\draw (3,-0.28) arc (-30:180:0.7cm and 0.3cm);
\draw (7.5,0) arc (0:360:0.5cm and 1cm);
\node (a) at (20:2.5) {$Z_t$};
\node (a) at (7,-1.5) {$\partial Z_t=\mathcal{F}_{t}$};
\draw (11,-0.6)--(11,0.6);
\draw (7,1)--(11,0.6);
\draw (7,-1)--(11,-0.6);
\node (a) at (9,0) {$U_t$};
\node (a) at (11.5,0) {$M^{S^1}$};
\end{tikzpicture}
\caption{Decomposition of $M/S^1$ as $M/S^1=Z_t\cup U_t$ where $Z_t$ is a compact manifold with boundary and $U_t=C(\mathcal{F}_t)$.}\label{Fig:Decomposition}
\end{center}
\end{scheme}

Imposing complementary APS boundary conditions on $Z_t$ and $U_t$ respectively, one obtains the following decomposition formula for the index of $D^+\coloneqq D^+_{M/S^1}$,
\begin{introequation}\label{Eqn:IndexDecompForm}
\textnormal{ind}({D}^+)=\textnormal{ind}\left({D}^{+}_{Z_t,Q_{<}({A}(t))(H)}\right)+\textnormal{ind}\left({D}^{+}_{U_t,Q_{\geq }({A}(t))(H)}\right).
\end{introequation}
One of the main ingredients of Br\"uning's  index computation is the fact that, for $t>0$ sufficiently small, the index contribution of $U_t$ vanishes. As a consequence, since the left hand side of \eqref{Eqn:IndexDecompForm} is independent of $t$, we can compute the index by taking the limit of $\textnormal{ind}\left({D}^{+}_{Z_t,Q_{<}({A}(t))(H)}\right)$ as $t\longrightarrow 0^+$ using the techniques of \cite{D91}. One final subtle point in the derivation of the index formula is the need, in order to compute a certain Kato index (\cite[Theorem 5.4]{B09}), of a remarkable result of Cheeger and Dai, presented in \cite{CD09}, relating the $\tau$ invariant of the fibration $\mathcal{F}$ and the $L^2$-signature of the associated generalized Thom space for the Witt case. 

\subsection*{Overview of the Thesis work}

Up to this point the picture looks incomplete as Lott's geometric proof of \eqref{Eqn:Lott} works without any distinction on the parity of $N$. In contrast, for the analytical counterpart one needs to distinguish between  the Witt and the non-Witt case since in the latter we are forced to impose boundary conditions. This motivates the following question: Does there exist an essentially self-adjoint operator on $M/S^1$, independent of the codimension of the fixed point set in $M$, whose index is precisely the $S^1$-signature?
Heuristically, should this operator exist, we hope it would be a perturbation of the Hodge-de Rham operator. That is, it would differ by a zero order potential whose contribution close to the singular stratum would be of the form $1/r$, in view of  the form of operator \eqref{Eqn:A}, and such that it would push the spectrum of the corresponding cone coefficient outside the open interval $(-1/2,1/2)$.
This would automatically make it essentially self-adjoint as the analogous condition \eqref{Eqn:AVgeg12} of the perturbed cone coefficient would be satisfied.\\

Hope for the existence of such operator relies on the fundamental work \cite{BH78} of Br\"uning and Heintze,  where the authors develop a machinery to ``push-down" self-adjoint operators to quotients of compact Lie group actions. The key observation of their formalism is that, whenever a self-adjoint operator commuting with the group action is restricted to the space of invariant sections, it remains self-adjoint in the restricted domain. Once this result was established, Br\"uning and Heintze constructed a unitary map $\Phi$ between the  space of square integrable invariant sections on the principal orbit and the space of square integrable sections of certain vector bundle defined on the quotient space. Altogether, the ``push-down" procedure for a self-adjoint operator consists of two steps: First, restrict its domain to the space of invariant sections. Second, compose this resulting domain with the unitary map $\Phi$.\\

This construction seems appropriate for our case of interest because all geometric differential operators on $M$, defined on smooth forms, are essentially self-adjoint since $M$ is closed. The next question is to determine which operator to choose in order to apply 
Br\"uning and Heintze's construction.  Two natural candidates are the Hodge-de Rham operator and the odd signature operator. Implementing the procedure described above for these two operators, one obtains only partially satisfactory results. Concretely, the induced operators are indeed self-adjoint by construction, but the resulting potentials do not anti-commute with $\star_{M_0/S^1}$. This is of course a problem since $\star_{M_0/S^1}$ is the natural involution which should split the desired push down operator in order to obtain the $S^1$-signature.\\
Despite these results, going back to the construction of \cite{BH78}, one can see that it is enough to push down a transversally elliptic operator in order to obtain an elliptic operator on the quotient. Using this observation, which enlarges the pool of candidates for the operator, and by analyzing the concrete form of the unitary transformation $\Phi$ defined by Br\"uning and Heintze, we are able to find an essentially self-adjoint $S^1$-invariant transversally elliptic operator whose induced push-down operator satisfies all the requirements described above. We call the resulting operator the {\em induced Dirac-Schr\"odinger operator}.\\

We now present the main steps of its construction, where the following geometric quantities are needed,
\begin{itemize}
\item $h:M_0/S^1\longrightarrow\mathbb{R}$ is the function that computes the volume of the orbit, i.e. $h(y)\coloneqq \vol(\pi^{-1}_{S^1}(y))$, where $\pi_{S^1}:M_0\longrightarrow M_0/S^1$ is the orbit map.
\item $\kappa\in\Omega^{1}(M_0)$ is the associated mean curvature $1$-form, given by $\kappa\coloneqq -d\log(\norm{V})$ where $V$ is the generating vector field of the $S^1$-action.
\item $\chi\in\Omega^1(M_0)$ is the characteristic $1$-form defined by the relation $\chi(V)\coloneqq\norm{V}$.
\item $\varphi_0\in\Omega^2(M_0)$  is the $2$-form defined by the equation  $\varphi_0\coloneqq d\chi+\kappa\wedge\chi$. 
\item $c:T^*M\longrightarrow \End(\wedge T^*M)$ denotes the Clifford multiplication, which is defined for an element $\alpha\in T^*M$ by $c(\alpha)\coloneqq \alpha\wedge-\iota_{\alpha^\sharp}$.
\item $\varepsilon\coloneqq (-1)^j$ on $j$-forms is the Gauss-Bonnet grading.
\end{itemize}
The forms $\kappa$ and $\varphi_0$ are basic, which is equivalent to saying that there exist unique forms $\bar{\kappa}$ and $\bar{\varphi}_0$ in $M_0/S^1$ such that $\kappa=\pi_{S^1}^*(\bar{\kappa})$ and $\varphi_0=\pi_{S^1}^*(\bar{\varphi}_0)$.\\

Consider now the operator $B\coloneqq -c(\chi)d+d^\dagger c(\chi):\Omega_c(M_0)\longrightarrow\Omega_c(M_0)$. It is not hard to see that $B$ has the following properties:
\begin{enumerate}
\item It is a first order symmetric differential operator.
\item It is transversally elliptic. 
\item It commutes with the $S^1$-action. 
\item It commutes with the Gauss-Bonnet grading $\varepsilon$. 
\end{enumerate}
This allows us to define the operator $\mathscr{D}':\Omega_c(M_0/S^1)\longrightarrow \Omega_c(M_0/S^1)$ through the following commutative diagram:
\begin{align*}
\xymatrixcolsep{2cm}\xymatrixrowsep{2cm}\xymatrix{
\Omega_c^\text{ev}(M_0)^{S^1} \ar[r]^-{B^\text{ev}} & \Omega_c^\text{ev}(M_0)^{S^1} \\
\Omega_c(M_0/S^1) \ar[u]^-{\psi_\text{ev}} \ar[r]^-{\mathscr{D}'}& \Omega_c(M_0/S^1) \ar[u]_-{\psi_\text{ev}}
 }
\end{align*}
Here $\psi_\text{ev}$ is a  modification of the unitary transformation introduced in \cite[Section 5]{BS88},
\begin{align*}
\psi_\text{ev}(\bar\omega)\coloneqq \left(\frac{1+\varepsilon}{2}\right)h^{-1/2}\pi_{S^1}^*(\bar{\omega}) +\left[\left(\frac{1-\varepsilon}{2}\right)h^{-1/2}\pi_{S^1}^*(\bar{\omega})\right]\wedge\chi.
\end{align*}
The operator $\mathscr{D}'$ is given explicitly by 
\begin{align*}
\mathscr{D}'=D_{M_0/S^1} +\frac{1}{2}c(\bar{\kappa})\varepsilon-\frac{1}{2}\widehat{c}(\bar{\varphi}_0)(1-\varepsilon),
\end{align*}
where $c(\bar{\kappa})$ is the  Clifford  action and 
$\widehat{c}(\bar{\varphi}_0)\coloneqq (\bar{\varphi}_0\wedge)-\star_{M_0/S^1}(\bar{\varphi}_0\wedge)\star_{M_0/S^1}$.\\

The following properties of $\mathscr{D}'$ are proven combining the results of  \cite[Section 4]{B09}, \cite{BH78} and \cite[Lemma 2.1]{GL02} .
\begin{THM}\label{THM1}
The operator $\mathscr{D}':\Omega_c(M_0/S^1)\longrightarrow \Omega_c(M_0/S^1)$ satisfies the following properties:
\begin{enumerate}
\item It anti-commutes with $\star_{M_0/S^1}$.
\item It is essentially self-adjoint. 
\item It has the same principal symbol as  the Hodge-de Rham operator $D_{M_0/S^1}$ .
\item It is discrete. 
\end{enumerate}
\end{THM}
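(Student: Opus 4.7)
The plan is to dispatch the easier properties (3) and (1) as direct computations, and then concentrate on essential self-adjointness (2), from which discreteness (4) follows via Br\"uning's cone machinery. Property (3) is immediate: the two extra terms $\tfrac12 c(\bar\kappa)\varepsilon$ and $-\tfrac12 \widehat{c}(\bar\varphi_0)(1-\varepsilon)$ are bundle endomorphisms, Clifford-type multiplications by fixed smooth forms composed with the degree grading, hence zero-order. They do not contribute to the principal symbol, so $\mathscr{D}'$ and $D_{M_0/S^1}$ share the same principal symbol. For property (1), I would check the anti-commutation with $\star_{M_0/S^1}$ term by term. The Hodge--de Rham operator $D_{M_0/S^1}$ anti-commutes with $\star_{M_0/S^1}$ by the standard signature construction on the even-dimensional base $M_0/S^1$. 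Clifford multiplication by a one-form anti-commutes with the Clifford involution $\star_{M_0/S^1}$ in even dimension and, independently, $c(\bar\kappa)$ shifts parity and so anti-commutes with $\varepsilon$; combined with $[\star_{M_0/S^1},\varepsilon]=0$ in dimension $4k$, this yields $\{c(\bar\kappa)\varepsilon,\star_{M_0/S^1}\}=0$. For the $\widehat{c}$-term, the very definition of $\widehat{c}(\bar\varphi_0)$ identifies it as the part of $\bar\varphi_0\wedge$ that anti-commutes with $\star_{M_0/S^1}$ (using $\star_{M_0/S^1}^2=1$), and multiplying by $(1-\varepsilon)$, which commutes with $\star_{M_0/S^1}$, preserves the anti-commutation.

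Property (2) is the heart of the theorem. The plan is to combine the Br\"uning--Heintze push-down \cite{BH78} on the regular locus with a direct cone analysis near the singular stratum. The unitary $\psi_\text{ev}$ identifies $\mathscr{D}'$ with the restriction of $B^{\text{ev}}$ to $S^1$-invariant sections over $M_0$, which guarantees symmetry of $\mathscr{D}'$ and reduces essential self-adjointness to the behavior as $r\to 0^+$ along the mapping cylinder $C(\mathcal{F})$. I would then bring $\mathscr{D}'$ into the canonical form \eqref{Eqn:OpSing}, producing a cone coefficient of the shape $\mathscr{A}(r)=\mathscr{A}_H(r)+\tfrac{1}{r}\mathscr{A}_V$. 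Using the asymptotics of $\bar\kappa$ and $\bar\varphi_0$ along the radial direction --- where \cite[Lemma 2.1]{GL02} enters --- the zero-order perturbation should contribute a $\pm\tfrac12$ shift to the essential spectrum of $A_V$: the eigenvalues $2j-N$ of $A_V$ on vertical harmonic forms become $(2j-N)\pm\tfrac12$. In particular the critical value $0$, arising in the non-Witt case $N=2\ell$ with $j=\ell$, is moved to $\pm\tfrac12$, while the remaining eigenvalues already satisfy $|2j-N|\geq 1$. Br\"uning's essential self-adjointness criterion \cite[Section 4]{B09}, the analogue of \eqref{Eqn:AVgeg12} for $\mathscr{A}_V$, then applies uniformly in the parity of $N$.

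Property (4) follows from the discreteness part of the same cone machinery in \cite[Section 4]{B09}: a cone-form operator whose cone coefficient satisfies the spectral-gap condition has compact resolvent. The principal obstacle throughout will be the quantitative cone-coefficient computation within (2): verifying that the two carefully chosen perturbation terms produce exactly a $\pm\tfrac12$ shift of $A_V$ on the vertical harmonic forms, and not a merely bounded contribution that would leave the critical eigenvalue $0$ untouched. This matching is precisely the motivation behind the coefficients $\tfrac12$ appearing in the definition of $\mathscr{D}'$; verifying it requires careful book-keeping of how $c(\bar\kappa)$ and $\widehat{c}(\bar\varphi_0)$ transform under the unitary identification that brings the Hodge--de Rham operator into the form \eqref{Eqn:OpSing}, together with the explicit description of the vertical harmonic subspace of $\Omega(\mathcal{F})$.
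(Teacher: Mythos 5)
Your arguments for (3) and (1) are correct and track the paper's: the potential terms $\tfrac12 c(\bar\kappa)\varepsilon$ and $-\tfrac12\widehat{c}(\bar\varphi_0)(1-\varepsilon)$ are zero-order endomorphisms and hence do not alter the principal symbol, and the anti-commutation with $\star_{M_0/S^1}$ follows from $\star\,c(\bar\kappa)=-c(\bar\kappa)\star$ in even dimension, $[\star,\varepsilon]=0$ on a $4k$-dimensional base, and Proposition \ref{Prop:CliffMultVarphi}, which identifies $\widehat{c}(\bar\varphi_0)$ as the $\bar\star$-anti-commuting part of $\bar\varphi_0\wedge$.

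The gap is in (2). You describe the Br\"uning--Heintze push-down as merely \emph{guaranteeing symmetry} of $\mathscr{D}'$ and then \emph{reducing} essential self-adjointness to a boundary analysis as $r\to 0^+$; this undersells the machinery, which already yields essential self-adjointness outright. The operator $B=-c(\chi)d+d^\dagger c(\chi)$ is a symmetric first-order differential operator on the \emph{closed} manifold $M$, hence essentially self-adjoint by Remark \ref{Remark:GL02} --- and this is precisely where \cite[Lemma 2.1]{GL02} enters, not in any radial asymptotics as you suggest. Lemma \ref{Lemma:OpS} then says that the restriction of the self-adjoint closure of $B$ to $S^1$-invariant sections is again self-adjoint, and conjugating by the unitary $\psi_\text{ev}$ transports this directly to $\mathscr{D}'$ on $\Omega_c(M_0/S^1)$, with no cone analysis required for (2). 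Your proposed cone-coefficient route is a viable alternative but incomplete as stated: the $\pm\tfrac12$ shift you compute controls only the vertical-harmonic eigenvalues of $\mathscr{A}_V$, so to obtain $|\mathscr{A}_V|\ge\tfrac12$ you must also push the non-harmonic eigenvalues out of $(-\tfrac12,\tfrac12)$ by rescaling the vertical metric, as in Theorem \ref{Thm:SpecDec}(2)--(3) and Remark \ref{Rmk:ScaleHodgeStar}. Finally, discreteness (4) is not a consequence of essential self-adjointness; it requires the separate parametrix construction of Chapter \ref{Sect:Param}, carried out for $\mathscr{D}$, showing the resolvent is compact, after which discreteness of $\mathscr{D}'$ follows since $\mathscr{D}'-\mathscr{D}$ is a bounded perturbation.
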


Implementing the transformation $\Phi$ of \eqref{Eqn:OpSing} for the operator $\mathscr{D}'$ one sees that the term $\widehat{c}(\bar{\varphi_0})$ is actually proportional to $r$ and therefore bounded. Using the Kato-Rellich theorem we can remove this term and study instead the operator 
\begin{align*}
\mathscr{D}\coloneqq D_{M_0/S^1}+\frac{1}{2}c(\bar{\kappa})\varepsilon,
\end{align*}
which remains essentially self-adjoint. We call this operator the {\em induced Dirac-Schr\"odinger operator}. Since the mean curvature form can be written close to the fixed point set as $\bar{\kappa}=-dr/r$, one verifies that the whole transformation  of $\mathscr{D}$ is in fact
\begin{introequation}\label{Eqn:OpSingPot}
\Psi^{-1}\mathscr{D}\Psi
=\gamma\left(\frac{\partial}{\partial r}+
\left(\begin{array}{cc}
I & 0\\
0 & -I
\end{array}\right)\otimes \left(A(r)-\frac{\varepsilon}{2r}\right)
\right).
\end{introequation}
One can deduce from \cite[Theorem 3.1]{B09} that, by rescaling the vertical metric if necessary,
\begin{introequation}\label{CondSpecOpPot}
\spec\left(A_V-\frac{1}{2}\varepsilon\right)\cap\left(-\frac{1}{2},\frac{1}{2}\right)=\emptyset,
\end{introequation}
which verifies that $\mathscr{D}$ is indeed essentially self-adjoint. Furthermore, it is easy to verify that the parametrix's construction of \cite[Section 4]{B09} can be adapted to  $\mathscr{D}$, which allows us to prove that this operator is discrete. \\

Now we describe the index computation. As for the signature operator, we can split the index using the decomposition of Figure \ref{Fig:Decomposition} and \cite[Theorem 4.17]{BBC08} as 
\begin{align*}
\textnormal{ind}(\mathscr{D}^+)=\textnormal{ind}\left(\mathscr{D}^{+}_{Z_t,Q_{<}({A}(t))(H)}\right)+\textnormal{ind}\left(\mathscr{D}^{+}_{U_t,Q_{\geq }({A}(t))(H)}\right).
\end{align*}
Using some deformation arguments, motivated by  the proof of  \cite[Theorem 5.2]{B09}, we show that in the Witt case, the index contribution of $U_t$ is zero for $t>0$ small enough. Using this vanishing result we compute the index in a similar manner as for the signature operator by taking the limit as $t\longrightarrow 0^+$ of the index contribution of the manifold with boundary  $Z_t$. 
\begin{THM}\label{THM2}
In the Witt case for the graded Dirac-Schr\"odinger operator $\mathscr{D}^+$ we have the following index identity 
\begin{align*}
\ind(\mathscr{D}^+)=\sigma_{S^1}(M)=\int_{M_0/S^1}L\left(T(M_0/S^1), g^{T(M_0/S^1)}\right).
\end{align*}
\end{THM}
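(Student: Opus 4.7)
The plan is to prove this by the same two-step strategy Br\"uning used in \cite{B09}: first write the index of $\mathscr{D}^+$ as a sum of two pieces using the gluing decomposition of Figure \ref{Fig:Decomposition}, then show one piece vanishes and evaluate the limit of the other.

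First, I fix $t>0$ and apply the gluing index theorem \cite[Theorem 4.17]{BBC08} with complementary APS boundary projections $Q_{<}(A(t))(H)$ and $Q_{\geq}(A(t))(H)$ at the common boundary $\mathcal{F}_t$ to obtain
\begin{align*}
\ind(\mathscr{D}^+)=\ind\!\left(\mathscr{D}^+_{Z_t,Q_{<}(A(t))(H)}\right)+\ind\!\left(\mathscr{D}^+_{U_t,Q_{\geq}(A(t))(H)}\right).
\end{align*}
The validity of this decomposition requires $\mathscr{D}$ to fit the Dirac--Schr\"odinger framework near $\mathcal{F}_t$, which is guaranteed by the normal form \eqref{Eqn:OpSingPot} together with the essential self-adjointness established in Theorem \ref{THM1}.

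Second, I handle the cylindrical contribution. Here the zero-order potential $\tfrac{1}{2}c(\bar\kappa)\varepsilon$ plays the key role: by \eqref{CondSpecOpPot}, after rescaling the vertical metric the shifted cone coefficient $A_V-\tfrac{1}{2}\varepsilon$ has no spectrum in $(-\tfrac{1}{2},\tfrac{1}{2})$, so the model operator on $U_t$ is fully under spectral control. Imitating the deformation argument in the proof of \cite[Theorem 5.2]{B09}, I linearly deform $A(r)$ on $U_t=C(\mathcal{F}_t)$ towards its model form, verify that the APS boundary conditions remain admissible during the deformation by the spectral gap, and conclude that
\begin{align*}
\ind\!\left(\mathscr{D}^+_{U_t,Q_{\geq}(A(t))(H)}\right)=0
\end{align*}
for all sufficiently small $t>0$. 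The Witt assumption enters precisely at this step: without it, harmonic forms in the critical vertical degree would obstruct the rescaling that produces the spectral gap.

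Third, since $\ind(\mathscr{D}^+)$ does not depend on $t$, the identity reduces to computing
\begin{align*}
\ind(\mathscr{D}^+)=\lim_{t\to 0^+}\ind\!\left(\mathscr{D}^+_{Z_t,Q_{<}(A(t))(H)}\right).
\end{align*}
On the compact manifold with boundary $Z_t$, the APS index theorem expresses the right-hand side as an interior integral of the local index density of $\mathscr{D}^+$ minus a boundary correction built from the $\eta$- and kernel-contributions of $A(t)$. Because $\mathscr{D}$ has the same principal symbol as $D_{M_0/S^1}$ (Theorem \ref{THM1}(3)), its local index density agrees with the Hirzebruch $L$-form $L(T(M_0/S^1),g^{T(M_0/S^1)})$ up to an exact term that integrates to zero. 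Passing to the limit $t\to 0^+$ using the adiabatic techniques of \cite{D91} — exactly as Lott did in deriving \eqref{Eqn:Lott} — the interior integral converges to $\int_{M_0/S^1}L$, and the boundary correction converges to $\eta(M^{S^1})$; the Bismut--Cheeger eta form vanishes for the equivariant reasons recalled after \eqref{Eqn:Lott}, and the Kato-index contribution is handled through the Cheeger--Dai result \cite{CD09}. Comparison with \eqref{Eqn:Lott} identifies the limit with $\sigma_{S^1}(M)$.

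The principal obstacle is the vanishing of the cylindrical contribution: one must check carefully that the deformation from $A(t)$ to its model and the simultaneous deformation of the APS projections neither cross any eigenvalue at the boundary nor produce a non-Fredholm intermediate operator. Once the spectral condition \eqref{CondSpecOpPot} is invoked in the right way, this step propagates through and the rest of the proof follows Br\"uning's and Lott's templates essentially verbatim.
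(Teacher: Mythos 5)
Your proposal follows essentially the same route as the paper: the gluing decomposition via \cite[Theorem 4.17]{BBC08}, the vanishing of the $U_t$ contribution via the spectral gap of the shifted cone coefficient (which is exactly where the Witt condition and the vertical rescaling enter), and the adiabatic limit on $Z_t$ with the Cheeger--Dai input for the residual Kato index. The only points where the paper is more explicit are the bookkeeping between the boundary projections of $\mathscr{A}(t)$, $A(t)$ and $A_0(t)$, and the harmonic/non-harmonic splitting with a Hardy-inequality estimate and an involution argument inside the vanishing step, but these are refinements of the strategy you describe rather than a different approach.
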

Here we have used the fact that in the Witt case the eta invariant of the odd signature operator of  fixed point set vanishes.

\subsection*{Organization of the document}
A first objective of the Thesis is to provide a clear and structured presentation of the subject. This does not mean that the presentation is self-contained, but that we present the most important arguments in a transparent way. The first three chapters are thus prerequisite material.\\

In Chapter \ref{Sect:BH} we describe in detail the general construction of ``pushing down" a self-adjoint operator of Br\"uning and Heintze treated in \cite[Sections 1 \& 2]{BH78}. In Chapter \ref{Sec:AdiabLimit} we recall some background material in order to understand the adiabatic limit formula for the odd signature operator \cite[Theorem 0.3]{D91}. In addition, we discuss the definition of $L^2$-cohomology and the construction of the signature operator in the context of Hilbert complexes. In Chapter \ref{Sect:Lott} we present the construction of the $S^1$-equivariant signature $\sigma_{S^1}(M)$ following \cite[Sections 2.1 \& 2.3]{L00}. In particular, we give a detailed and extended proof of \eqref{Eqn:Lott} using the tools introduced in Chapter \ref{Sec:AdiabLimit}.\\

The main results are developed in the following chapters. Chapter \ref{Sec:Induced} presents the push-down procedure for the Hodge-de Rham operator and the odd signature operator. This includes a careful implementation of the construction given in  Chapter \ref{Sect:BH}. We then construct $\mathscr{D}'$, the operator described above, and prove (1)-(3) of Theorem \ref{THM1}. In Chapter \ref{Sect:Examples} we present concrete examples of the theory. We first deal with some low dimensional examples. Next, we treat a semi-free $S^1$-action on the $5$-sphere such that $S^5/S^1$ is a Witt space with one singular stratum $M^{S^1}=S^1$. We verify \eqref{Eqn:Lott} in this example by exploiting the relation between $\sigma_{S^1}(M)$ and the signature of the pairing in intersection homology for Witt spaces. In Chapter \ref{Sect:LocalDesc} we dive into the local description of the operators $D$ and $\mathscr{D}$ to obtain the transformation \eqref{Eqn:OpSingPot} and verify condition \eqref{CondSpecOpPot}. Chapter \ref{Sect:Param} presents, in a synthesized manner,  Br\"uning's construction of the parametrix for the signature operator (\cite[Section 4]{B09}).  Furthermore, we show how this can be adapted to obtain a parametrix for $\mathscr{D}$ and hence to prove Theorem \ref{THM1}(4). Finally, in Chapter \ref{Sect:Index} we revisit the index computation presented in  \cite[Section 5]{B09}, we set up an analogous Dirac system for the operator $\mathscr{D}$ and prove the vanishing result for the index contribution on $U_t$ in the Witt case, proving Theorem \ref{THM2}.\\

We include two appendices: One collecting the main results of regular singular operators \cite[Sections 2 \& 3]{BS88} and another one containing the construction of a sequence of cut-off functions introduced in \cite[Section 6]{BS87} and its generalization in \cite[Lemma 5.1]{BS91}. This sequence is essential for the results of Chapter \ref{Sect:Param}. 

\subsection*{Acknowledgments}

First of all I would like to acknowledge my advisor Prof. Jochen  Br\"uning for all these years of teaching, guidance and unconditional support. Five years ago I had the privilege to attend a one year course offered by him on the Atiyah-Singer Index Theorem where I discovered this amazing area of mathematics. Through pleasant and illuminating discussions, Prof. Br\"uning taught me how to think about mathematics as a balanced mixture between intuition, creativity and rigor.  He was always patient and supportive with my questions. I will always be grateful for everything I have learned from him. \\

I wish to express my gratitude to Francesco Bei for his company and enlightening conversations. I will always be grateful with Asilya Suleymanova and Batu G\"uneysu for inspiring discussions and collaboration. I want to thank Kati Blaudzun for her support. I am profoundly grateful with Prof. Sylvie Paycha for her constant guidance, advice and of course her inspiring seminar at Universit\"at Potsdam.  I am deeply indebted to Sara Azzali, Georges Habib, Peter Patzt and  Prof. Ken Richardson for all the ideas that contributed to this work. \\

I gratefully acknowledge the financial support of the Berlin Mathematical School and the help and assistance from its amazing staff during my graduate studies. I am also grateful with the project  SFB 647: Space - Time - Matter for the financial support during the last period of this work.\\

I want to express my gratitude to all the great people that made this dream come true. In particular, Julio Backhoff, Todor Bilarev, Ana Botero, Gregor Bruns, Daniel Escobar, Sebastian Mart\'inez and Alejandra Rinc\'on, thank you for all the good moments.  Eva, thank you for your love! For my family I have only words of gratitude. None of this would have been possible without their unconditional support.  \\\\

\hfill {\em Esta tesis est\'a dedicada con todo mi amor a mi mam\'a y mi pap\'a.}

\pagenumbering{arabic}

\section{Compact Lie group actions and self-adjoint operators}\label{Sect:BH}

The aim on this first chapter is to give a detailed study of the construction, developed in the fundamental work \cite{BH78} of Br\"uning and Heintze, on induced self-adjoint operators on quotients of compact Lie group actions.

\subsection{$G$-Vector bundles}\label{Section:GVectorBundles}
We begin by describing some important results in the context of compact Lie group actions on smooth manifolds and vector bundles. We do not intend to give a complete self contained description but rather establish some notation and state relevant results that will be needed later on. For a thorough study we encourage to consult \cite{BGV}, \cite{B72}, \cite{DK00}, \cite{P01}, \cite{W71} among many other references. 

\subsubsection{$G$-manifolds}\label{Section:G-manifolds} 
Let $G$ be a compact Lie group acting on a smooth manifold $M$. We abbreviate this by simply saying that $M$ is a $G$-manifold. We will always assume that the maps are all of class $C^\infty$ unless otherwise stated. Let us begin with some basic definitions.  For an element $x\in M$ we define its {\em orbit} as $Gx\coloneqq \{gx\:|\: g\in G\}\subseteq M$.
Since $G$ is compact each orbit is an embedded submanifold of $M$. We denote the space of orbits by $M/G$ and we equip it with the quotient topology with respect to the orbit map $\pi_G:M\longrightarrow M/G$.  In general $M/G$ is not a smooth manifold, nevertheless since $G$ is compact then it is at least Hausdorff (\cite[Theorem I.3.1]{B72}). For $x\in M$, we define the its {\em isotropy} or {\em stabilizer group} at by $G_x\coloneqq \{g\in G\:|\: gx=x \}\subseteq G$. Since this set is closed then it follows that is in fact a compact Lie subgroup of $G$. In addition, the quotient map defines a principal $G_x$-bundle $G\longrightarrow G/G_x. $
\begin{definition}\label{Def:Semi-Free}
Let $e$ denote the identity element of $G$, then we say the action is 
\begin{itemize}
\item {\em Free} if $G_x=\{e\}$ for all $x\in M$. 
\item {\em Semi-free} if $G_x=\{e\}$ or $G_x=G$ for all $x\in M$. 
\item {\em Effective} if for all $g\in G$ with  $g\neq e$ there exists $x\in M$ such that $gx\neq x$.
\end{itemize}  
\end{definition}

Our first objective is to describe a local model for the $G$-action on $M$. This is the content of the so called {\em Slice Theorem} (see Theorem \ref{Thm:SliceThm} below). To do so we need to describe the induced $G$-action on each tangent space, i.e. its linearization. 

\begin{lemma}[{\cite[Proposition I.4.1]{B72}}]\label{Lemma:Ell}
For each $x\in M$, the multiplication map
\begin{align*}
\ell_x:\xymatrixrowsep{0.01cm}\xymatrixcolsep{2cm}\xymatrix{
G \ar[r] & M\\
g \ar@{|->}[r] & gx,
}
\end{align*}
induces a diffeomorphism between the quotient $G/G_x$ and the orbit $Gx$.
\end{lemma}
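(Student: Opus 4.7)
The plan is to show that $\ell_x$ descends to a well-defined injective smooth immersion $\bar\ell_x: G/G_x \to M$ whose image is exactly $Gx$, and then invoke compactness of $G/G_x$ to upgrade this to an embedding.

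First I would verify the algebraic step: the definition of $G_x$ immediately gives $\ell_x(gh) = ghx = gx = \ell_x(g)$ for every $h \in G_x$, so $\ell_x$ is constant on left $G_x$-cosets and therefore factors through a set-theoretic map $\bar\ell_x: G/G_x \to M$ with image $Gx$. Injectivity of $\bar\ell_x$ is equally direct: $gx = g'x$ forces $g^{-1}g' \in G_x$. Since $G_x$ is a closed subgroup of $G$, standard Lie theory equips $G/G_x$ with a unique smooth structure making $\pi: G \to G/G_x$ a smooth principal $G_x$-bundle, in particular a surjective submersion. Smoothness of $\bar\ell_x$ then follows from the universal property: $\ell_x = \bar\ell_x \circ \pi$ is smooth, and $\pi$ is a submersion.

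The crux is showing $\bar\ell_x$ is an immersion. I would compute the differential at the identity coset. The differential $(d\ell_x)_e: \mathfrak{g} \to T_xM$ sends $X \in \mathfrak{g}$ to the value at $x$ of the fundamental vector field, $X_M(x) = \tfrac{d}{dt}\big|_{t=0}\exp(tX)\cdot x$. Its kernel consists of those $X$ with $\exp(tX) \in G_x$ for all $t$, which is precisely the Lie algebra $\mathfrak{g}_x$ of the closed subgroup $G_x$. Hence $(d\ell_x)_e$ descends to an injective linear map $\mathfrak{g}/\mathfrak{g}_x \cong T_{[e]}(G/G_x) \hookrightarrow T_xM$, so $\bar\ell_x$ is immersive at $[e]$. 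By $G$-equivariance of $\bar\ell_x$ with respect to the left $G$-action on $G/G_x$ and the $G$-action on $M$ (and the fact that both actions are transitive on $G/G_x$ and $Gx$ respectively), the differential has the same rank at every point, so $\bar\ell_x$ is an immersion globally.

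Finally, since $G$ is compact the quotient $G/G_x$ is compact, and an injective smooth immersion from a compact manifold into the Hausdorff manifold $M$ is automatically an embedding (the continuous bijection $\bar\ell_x: G/G_x \to Gx$ from compact to Hausdorff is a homeomorphism, and combined with the immersion property this gives a diffeomorphism onto its image). This realizes $Gx$ as an embedded submanifold of $M$ diffeomorphic to $G/G_x$, which is the desired conclusion. The only subtle point is identifying $\ker(d\ell_x)_e$ with $\mathfrak{g}_x$, which requires knowing that the Lie algebra of the closed subgroup $G_x$ coincides with $\{X \in \mathfrak{g} : \exp(tX) \cdot x = x \text{ for all } t\}$; this is a standard consequence of the closed-subgroup theorem and is the main technical ingredient borrowed from Lie theory.
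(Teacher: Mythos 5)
Your proof is correct and complete; it is the standard argument (descend to an injective equivariant immersion $\bar\ell_x\colon G/G_x\to M$ with image $Gx$, identify $\ker(d\ell_x)_e$ with $\mathfrak{g}_x$, use equivariance for constant rank, and invoke compactness of $G/G_x$ to upgrade the injective immersion to an embedding). The paper itself gives no proof of this lemma --- it is quoted directly from Bredon \cite[Proposition I.4.1]{B72} --- so there is nothing to compare against beyond noting that your argument is exactly the one found in the standard references, with the one genuinely nontrivial ingredient (that the Lie algebra of the closed subgroup $G_x$ equals $\{X\in\mathfrak{g}:\exp(tX)x=x\ \forall t\}$) correctly isolated and attributed to the closed-subgroup theorem.
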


If we differentiate $\ell_x$ at $g=e$ we obtain a map
$\alpha_x\coloneqq d\ell_x|_{g=e}:\mathfrak{g}\longrightarrow T_x M,$
where $\mathfrak{g}\cong T_e G$ denotes the Lie algebra of $G$. We call $\alpha_x$ the {\em infinitesimal action at $x$}. Note that the kernel of $\alpha_x$ is equal to the Lie algebra of the subgroup $G_x$.\\

We are now ready to describe the local model of the action. Fix a point $x\in M$ and an element $g\in G_x$, then the derivative at $x$ of the induced function on $M$
\begin{align*}
g:\xymatrixrowsep{0.01cm}\xymatrixcolsep{2cm}\xymatrix{
M \ar[r] & M\\
y \ar@{|->}[r] & gy,
}
\end{align*}
induces a linear map $dg|_x:T_xM\longrightarrow T_x M$, called the {\em tangent action}. This defines a linear representation (\cite[Theorem 3.45]{W71})
\begin{align}\label{Eqn:rho}
\rho_x: \xymatrixrowsep{0.01cm}\xymatrixcolsep{2cm}\xymatrix{
G_x \ar[r] & \Aut(T_x M)\\
g \ar@{|->}[r] & dg|_x.
}
\end{align}
Starting from the principal $G_x$-bundle $G\longrightarrow G/G_x$ we can therefore construct the associated vector bundle $G\times_{G_x}T_x M\longrightarrow G/G_x$. Recall that this vector bundle is defined as
$G\times_{G_x}T_x M\coloneqq (G\times T_x M)/G_x$ where we define the action of an element $g\in G_x$ on $(h,v)\in G\times T_xM$ by $g(h,v)\coloneqq  (hg^{-1},\rho_x(g)v)$. Let $[(h,v)]\in G\times_{G_x} T_x M$ denote the equivalence class of $(h,v)\in G\times T_x M$, then the projection onto the $G/G_x$ is just $[(h,v)]\longmapsto hG_x$. Now we define an action of the whole group $G$ on the vector bundle $G\times_{G_x}T_x M$ by $h_0[(h,v)]\coloneqq [(h_0h,v)]$.  The action of $G$ on $G\times_{G_x}T_x M$ is well defined since it commutes with the $G_x$-action.  
\begin{theorem}[Slice Theorem, {\cite[Section 2.4]{DK00}}]\label{Thm:SliceThm}
Let $G$ be a compact Lie group acting on a manifold $M$. For each $x\in M$ there exists a $G$-invariant open neighborhood $\mathcal{O}_x$ of $x$ in $M$ such that the $G$-action in $\mathcal{O}_x$ is equivalent to the action of $G$ on $G\times_{G_x}B$. Here $B$ is an open $G_x$-invariant neighborhood of $0$ in $T_x /\alpha_x(\mathfrak{g})$, on which $G_x$ acts linearly, via the tangent action. 
\end{theorem}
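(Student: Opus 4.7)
The plan is to construct the equivalence explicitly, using the exponential map of a $G$-invariant metric along a slice transverse to the orbit. The argument has three ingredients: averaging to secure $G$-invariance, exponentiation to build the candidate equivariant map, and a compactness argument to control injectivity.

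First, by averaging any Riemannian metric over $G$ with respect to normalized Haar measure, one obtains a $G$-invariant metric $g^{TM}$ on $M$. Relative to this metric the tangent representation $\rho_x$ of $G_x$ acts by isometries on $T_xM$, so the orthogonal complement $N_x \coloneqq \alpha_x(\mathfrak{g})^{\perp}\subset T_xM$ is a $G_x$-invariant subspace canonically isomorphic to $T_xM/\alpha_x(\mathfrak{g})$. Choose a $G_x$-invariant open ball $B\subset N_x$ about the origin on which $\exp_x$ restricts to a diffeomorphism onto its image; such a $B$ exists for sufficiently small radius because $\rho_x$ preserves norms.

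Second, define the candidate equivariant map
\begin{align*}
\Phi: G\times_{G_x}B\longrightarrow M, \qquad [(g,v)]\longmapsto g\cdot\exp_x(v).
\end{align*}
The $G$-invariance of $g^{TM}$ yields $g\cdot\exp_x(v)=\exp_{gx}(dg|_x v)$, which shows that $\Phi$ is well defined on equivalence classes and manifestly $G$-equivariant for the left $G$-action on both sides. At the point $[(e,0)]$ the tangent space decomposes as $\mathfrak{g}/\mathfrak{g}_x\oplus N_x$, and $d\Phi$ maps this isomorphically onto $\alpha_x(\mathfrak{g})\oplus N_x = T_xM$ by construction. Hence $\Phi$ is a local diffeomorphism at $[(e,0)]$, and by equivariance in an open neighborhood of the entire zero section $G\times_{G_x}\{0\}$.

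The main obstacle is global injectivity on a $G$-invariant neighborhood, since a priori some $g\notin G_x$ could carry one exponentiated vector onto another. This is handled by a standard compactness argument. Assume, for contradiction, that on every ball of radius $1/n$ there exist $g_n\in G$ and $v_n,w_n\in N_x$ of norm less than $1/n$ with $g_n\exp_x(v_n)=\exp_x(w_n)$ yet $[(g_n,v_n)]\neq[(e,w_n)]$ in $G\times_{G_x}B$. By compactness of $G$ we may pass to a subsequence with $g_n\to g_\infty$; letting $n\to\infty$ forces $g_\infty x=x$, so $g_\infty\in G_x$. Writing $g_n=g_\infty h_n$ with $h_n\to e$ and using that $g_\infty$ preserves the slice via $\rho_x$, the relation reduces to an identity near $[(e,0)]$ that contradicts the local diffeomorphism property established above. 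Consequently, after shrinking $B$, the map $\Phi$ is a $G$-equivariant diffeomorphism onto the $G$-invariant open set $\mathcal{O}_x\coloneqq \Phi(G\times_{G_x}B)\subset M$, which is the required tubular neighborhood of $x$.
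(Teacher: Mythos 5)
Your proof is correct and is the standard slice/tube argument (average the metric over Haar measure, exponentiate the orthogonal complement of the orbit direction to get an equivariant map $G\times_{G_x}B\to M$, check its differential along the zero section, and force injectivity by a compactness contradiction); the paper itself offers no proof and simply cites \cite[Section 2.4]{DK00}, where essentially this same argument appears. The only step you compress is the final reduction — one should note explicitly that $g_\infty\in G_x$ gives $[(g_\infty,v)]=[(e,\rho_x(g_\infty)v)]$, so the hypothetical coincidences become equalities $\Phi([(h_n,v_n)])=\Phi([(e,\rho_x(g_\infty^{-1})w_n)])$ between points converging to $[(e,0)]$, contradicting local injectivity there — but this is routine and your sketch clearly contains the idea.
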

Note in particular the following commutative diagram:
\begin{align*}
\xymatrixrowsep{2cm}\xymatrixcolsep{2cm}\xymatrix{
G\times_{G_x}B \ar[d] \ar[r] & \mathcal{O}_x \ar[d]\\
G/G_x \ar[r]^-{\ell_x} & Gx.
}
\end{align*}

\begin{coro}\label{Coro:FreeActMnfld}
If the action on $M$ is free, then the quotient space $M/G$ is a manifold, and $\pi_G:M\longrightarrow M/G$ is a principal $G$-bundle.
\end{coro}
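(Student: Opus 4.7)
The plan is to read off the corollary as a direct corollary of the Slice Theorem, using freeness to trivialise the isotropy. First I would fix $x\in M$ and apply Theorem \ref{Thm:SliceThm}: because the action is free, $G_x=\{e\}$, so there is a $G$-invariant open neighbourhood $\mathcal{O}_x$ of $x$ that is $G$-equivariantly diffeomorphic to $G\times_{\{e\}} B = G\times B$, where $B$ is an open neighbourhood of $0$ in $T_xM/\alpha_x(\mathfrak{g})$. Since the $G$-action on $G\times B$ is the one on the left factor only, the orbit space of $\mathcal{O}_x$ is canonically identified with $B$, which is an open subset of a real vector space. Write $U_x \coloneqq \pi_G(\mathcal{O}_x)$ and define $\varphi_x : U_x \to B$ by the composition $U_x \cong \mathcal{O}_x/G \cong B$. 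This gives a candidate chart on $M/G$ around $\pi_G(x)$.

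Next I would assemble these into a smooth atlas on $M/G$. The set $U_x$ is open in $M/G$ because $\pi_G$ is an open map (the preimage of $U_x$ is the saturation $G\cdot\mathcal{O}_x$, which is open since $\mathcal{O}_x$ is open and the translates $g\mathcal{O}_x$ are open). Hausdorffness of $M/G$ is already recorded (\cite[Theorem I.3.1]{B72}). To verify smooth compatibility on overlaps $U_x\cap U_{x'}$, I would observe that the transition $\varphi_{x'}\circ\varphi_x^{-1}$ is obtained by pulling back to an $S$-slice $\{e\}\times B \subset G\times B \cong \mathcal{O}_x$, moving within $M$, and then projecting again; because the original identifications are diffeomorphisms and the $G$-action on $M$ is smooth, this composition is smooth. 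Thus $\{(U_x,\varphi_x)\}_{x\in M}$ forms a smooth atlas of dimension $\dim M-\dim G$ on $M/G$.

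With the smooth structure in place, $\pi_G$ becomes smooth because in the chart $\mathcal{O}_x\cong G\times B$ it is the projection onto the second factor, which is clearly smooth and submersive. To promote this to the principal bundle statement, I would use the same local models as local trivialisations: the diffeomorphism $\pi_G^{-1}(U_x)\cong G\times U_x$ induced by the slice identification is $G$-equivariant, where $G$ acts by left multiplication on the first factor. Smoothness and equivariance of transition functions $g_{xx'}:U_x\cap U_{x'}\to G$ follow from the smoothness of the $G$-action together with the fact that, having chosen slices, the value $g_{xx'}(\pi_G(y))$ is the unique group element relating the two chosen representatives in the fibre over $\pi_G(y)$ (uniqueness uses freeness). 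This gives $\pi_G:M\to M/G$ the structure of a smooth principal $G$-bundle.

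The main obstacle I anticipate is bookkeeping around the slice-to-slice transition functions: one must argue that the map sending $\pi_G(y)\in U_x\cap U_{x'}$ to the unique $g\in G$ with $g\cdot s_x(\pi_G(y)) = s_{x'}(\pi_G(y))$ (where $s_x,s_{x'}$ are the local sections coming from the slices) is smooth. Freeness makes $g$ well-defined, and one can realise it as the composition of the smooth local section $s_x$ with a smooth lift coming from the other trivialisation, followed by the smooth inverse of the free action restricted to a fibre — this is where compactness of $G$ and the Slice Theorem combine to keep everything manifestly smooth.
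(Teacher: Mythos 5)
Your proof is correct and is exactly the argument the paper intends: the corollary is stated as an immediate consequence of the Slice Theorem (the paper gives no written proof), and your route — specializing the slice model to $G\times_{\{e\}}B=G\times B$, using the slices as charts on $M/G$ and as local trivialisations, with freeness guaranteeing well-defined smooth transition functions — is the standard way to fill in that gap.
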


Let $H\subset G$ be a subgroup, we define its associated fixed point set by
\begin{align*}
M^{H}\coloneqq \{x\in M\:|\:hx=x\:,\:\forall h\in H\}\subseteq M.
\end{align*}
If $H\subset G$ is a closed subgroup then each connected component of $M^H$ is a closed submanifold of $M$ (\cite{K58}).\\

For an element $x\in M$ we define its {\em orbit type} to be the conjugacy class of its istoropy group $G_x$ in $G$. Since for all $g\in G$ we have $G_{gx}=gG_x g^{-1}$ we see that the orbit type is constant along each orbit. For a closed subgroup $H\subset G$ we define 
\begin{align*}
M_{(H)}\coloneqq \{x\in M\:|\: G_x \:\text{is conjugate to}\: H\}\subset M. 
\end{align*}
It can be shown, using the Slice Theorem, that each $M_{(H)}$ is a submanifold of $M$. The following relation is the main idea for the proof. 
\begin{lemma}\label{Lemma:LocOrbitModel}
For the local model described by the Slice Theorem above we have 
$$(G\times_{H}B)_{(H)}=G\times_H B^H\cong (G/H)\times B^H.$$ 
\end{lemma}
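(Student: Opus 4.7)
My plan is to compute the isotropy group of an arbitrary point in $G\times_H B$ explicitly, then identify when it is conjugate to $H$, and finally exploit the triviality of the $H$-action on $B^H$ to produce the product splitting.

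First I would compute stabilizers. Recall that a point in $G\times_H B$ is an equivalence class $[g,b]$ under $[gh^{-1}, \rho_x(h)b] = [g,b]$ for $h\in H$, and the $G$-action is $g_0[g,b] = [g_0 g, b]$. An element $g_0\in G$ fixes $[g,b]$ iff there exists $h\in H$ with $g_0 g = g h^{-1}$ and $\rho_x(h) b = b$; equivalently $g_0 = g h^{-1} g^{-1}$ for some $h\in H_b := \{h\in H : \rho_x(h)b=b\}$. Hence the isotropy group at $[g,b]$ is $g\,H_b\, g^{-1}$, conjugate (in $G$) to the subgroup $H_b\subseteq H$.

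Next I would identify the orbits of type $(H)$. The isotropy $g H_b g^{-1}$ is conjugate to $H$ iff $H_b$ is conjugate in $G$ to $H$. Since $H_b$ sits inside the compact group $H$, I would invoke the standard fact that a compact subgroup of $G$ contained in $H$ and conjugate in $G$ to $H$ must equal $H$ itself; the usual argument is that conjugation preserves Haar measure / dimension plus component count, so the chain $H_b\subseteq H$ with $|H_b|=|H|$ (in the appropriate invariant sense) forces equality. Therefore $[g,b]\in (G\times_H B)_{(H)}$ iff $H_b = H$, i.e.\ iff $b\in B^H$. This gives the set-theoretic identity
\begin{equation*}
(G\times_H B)_{(H)} = \{[g,b] : b\in B^H\} = G\times_H B^H,
\end{equation*}
which is easily seen to be an equality of $G$-invariant submanifolds by the Slice Theorem applied to points of $B^H$.

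Finally I would produce the product splitting. Since $H$ acts trivially on $B^H$ by definition, the map
\begin{equation*}
\Phi: G\times_H B^H \longrightarrow (G/H)\times B^H,\qquad [g,b]\longmapsto (gH,\,b),
\end{equation*}
is well defined (the representative $[gh^{-1}, b]$ has the same image because $\rho_x(h)b=b$), smooth, and $G$-equivariant for the left $G$-action on $G/H$ and the trivial action on $B^H$. Its inverse $(gH,b)\mapsto [g,b]$ is also well defined, yielding the required diffeomorphism. The only delicate step in the argument is the group-theoretic lemma used to force $H_b = H$ from $H_b\subseteq H$ and conjugacy in $G$; everything else is formal manipulation of the associated bundle construction.
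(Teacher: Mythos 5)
Your proposal is correct and follows essentially the same route as the paper: compute the isotropy $G_{[g,b]} = gH_b g^{-1}$, show that conjugacy of $H_b\subseteq H$ to $H$ inside $G$ forces $H_b=H$ (hence $b\in B^H$), and then observe that the $H$-action on $B^H$ is trivial so the associated bundle splits as a product. Your justification of the key group-theoretic step — that a compact subgroup $H_b\subseteq H$ conjugate in $G$ to $H$ must equal $H$, via invariance of dimension and component count under conjugation — is actually a bit more explicit than the paper's (which only sketches this with a small diagram), so no changes are needed.
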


\begin{proof}
Let $[(g,v)]\in (G\times_{H}B)_{(H)}$ and $\tilde{g}\in G_{[(g,v)]}$ then by definition there exists $h\in H$ such that $(\tilde{g}g,v)=(gh^{-1},\rho(h)v)$. In particular we see that $\tilde{g}=gh^{-1}g^{-1}$ and $h\in H_v$, where $H_v$ denotes the isotropy group of $v\in B$ with respect to the $H$-action. This shows the relation $G_{[(g,v)]}=g H_v g^{-1}$. Note that $H_v$ is conjugate to $H$ if, and only if, $H_v=H$. This can be seen as follows: Assume $H=gH_vg^{-1}$ and let $\Ad_g$ denote the conjugation by $g$, which is an isomorphism with inverse $\Ad_{g^{-1}}$. Let $j:H_v\longrightarrow H$ denote the inclusion, then the diagram
\begin{align*}
\xymatrixrowsep{2cm}\xymatrixcolsep{2cm}\xymatrix{
H\ar[r]^-{\Ad_g} & gHg^{-1}=H_v\\
H_v\ar@{^{(}->}[u]^-{j} \ar@{=}[ru]
}
\end{align*}
shows that $j$ must be an isomorphism. Hence, for $[(g,v)]\in (G\times_{H}B)_{(H)}$ we must have $H_v=H$, which shows $(G\times_{H}B)_{(H)}=G\times_H B^H$. Since the action of $H$ on $B^H$ is trivial we see that the twisted product becomes $G\times_H B^H\cong (G/H)\times B^H$. 
\end{proof}

\begin{remark}\label{Rmk:QuotientLocModelOrbTyp}
From the proof of last lemma we also obtain $(G\times_{H}B)_{(H)}/G=B^H$.
\end{remark}

The following is another important result which describes a particularly important orbit type.  

\begin{proposition}[Principal Orbit, {\cite[Section VI.3]{B72},\cite[Section 2.8]{DK00}}]\label{Prop:PrincipalOrTyp}
Suppose that $M$ is connected. Then there exists a closed subgroup $K\subset G$ such that $M_0\coloneqq M_{(K)}$ is open and dense in $M$. The subset $M_0$ is called the principal orbit and the conjugacy class class $(K)$ is called the principal orbit type of the action. Moreover, the quotient space $M_0/G$ is also a connected manifold. 
\end{proposition}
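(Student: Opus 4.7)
My proof plan rests on the Slice Theorem (Theorem \ref{Thm:SliceThm}) together with a semi-continuity property for isotropy types. Endow the set of conjugacy classes of closed subgroups of $G$ with the partial order $(H)\leq (K)$ iff $H$ is conjugate to a subgroup of $K$; the principal orbit type will be characterized as a \emph{minimum} in this order among the isotropy classes realized on $M$.

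\textbf{Semi-continuity of isotropy.} For any $x\in M$, the Slice Theorem furnishes an equivariant diffeomorphism $\mathcal{O}_x\cong G\times_{G_x}B$. A direct computation, essentially the one used in the proof of Lemma \ref{Lemma:LocOrbitModel}, shows that the isotropy of $[g,v]\in G\times_{G_x}B$ equals $g(G_x)_v g^{-1}$, where $(G_x)_v\subseteq G_x$ is the stabilizer of $v\in B$ under the linear $G_x$-action. Consequently the orbit type is locally non-increasing: for every $y\in\mathcal{O}_x$, $(G_y)\leq (G_x)$.

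\textbf{Existence of a minimum, openness, density.} Starting from any $x\in M$ whose $(G_x)$ is not already minimal, semi-continuity together with the slice construction at a point of strictly smaller isotropy (obtained by picking $v\in B$ with $(G_x)_v\subsetneq G_x$ up to conjugation) produces a descending chain in the partial order. This chain must stabilize: a proper inclusion $G'\subsetneq G''$ of closed subgroups of the compact group $G$ forces the pair $(\dim G',|\pi_0(G')|)$ to strictly decrease in lexicographic order, and this pair is bounded below by $(0,1)$. Let $(K)$ be a resulting minimum. Openness of $M_0:=M_{(K)}$ follows immediately: for $y$ near $x\in M_{(K)}$, semi-continuity yields $(G_y)\leq (K)$, and minimality then forces $(G_y)=(K)$. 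For density, apply the same descent inside any nonempty open $U\subseteq M$ to produce a minimal orbit type $(K')$ realized in $U$; global minimality of $(K)$ forces $(K')=(K)$, so $U\cap M_0\neq\emptyset$.

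\textbf{Quotient structure and connectedness.} Setting $N:=N_G(K)$, the fact that every isotropy in $M_0$ is conjugate to $K$ implies that the inclusion $M_0^K\hookrightarrow M_0$ descends to a homeomorphism $M_0^K/(N/K)\cong M_0/G$. The residual action of $N/K$ on the submanifold $M_0^K$ is free by the definition of the normalizer, and in the local slice model this identification is explicitly $M_0/G\supseteq \mathcal{O}_x/G\cong B^K/(N/K)$ (compare Remark \ref{Rmk:QuotientLocModelOrbTyp}). Corollary \ref{Coro:FreeActMnfld} then endows $M_0/G$ with the structure of a smooth manifold. Connectedness of $M_0/G$ follows from the connectedness of $M/G$ (continuous image of $M$) combined with a local analysis on the slice quotients $B/G_x$ showing that the principal stratum persists across the singular ones (cf.\ \cite[Section VI.3]{B72}).

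The main obstacle is the termination of the descending chain of isotropy types, i.e., the actual existence of a minimum $(K)$: this relies on the well-foundedness of the partial order on conjugacy classes of closed subgroups of the compact group $G$, equivalently on the Mostow--Palais finiteness of orbit types on compacta. Once this minimality is established, openness, density, and the manifold structure on the quotient all reduce to direct applications of the Slice Theorem and Corollary \ref{Coro:FreeActMnfld}.
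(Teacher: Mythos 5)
The paper does not actually prove this proposition; it is quoted directly from Bredon and Duistermaat--Kolk, so there is no in-text argument to compare yours against. Judged on its own, your outline assembles the right ingredients: the semi-continuity of isotropy via the Slice Theorem is correct, and so is the termination of strictly descending chains of closed subgroups of a compact group via the lexicographic invariant $(\dim G',\,|\pi_0(G')|)$.

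The genuine gap is the passage from ``a minimal orbit type exists'' to ``the minimal orbit type is unique, its stratum is dense, and its quotient is connected.'' Your descending chain argument only produces \emph{a} minimal element of the partial order on realized isotropy classes; since this order is not total, there could a priori be several pairwise incomparable minimal orbit types, each with its own open stratum. Your density argument is circular at exactly this point: you run the descent inside an arbitrary open $U$ to obtain a minimal type $(K')$ and then invoke ``global minimality of $(K)$'' to force $(K')=(K)$ --- but global minimality (equivalently, uniqueness of the minimal realized class on a connected $M$) is precisely the content of the theorem and has not been established. The standard repair is an induction on $\dim M$: in a tube $G\times_{G_x}B$ one applies the inductive hypothesis to the $G_x$-action on the unit sphere of $B$ (a compact $G_x$-manifold of strictly smaller dimension) to see that each tube contains an open dense set of points of a single locally principal type whose local quotient is connected; the connectedness of $M/G$ then propagates this local type across tubes and yields simultaneously the uniqueness of $(K)$, the density of $M_{(K)}$, and the connectedness of $M_{(K)}/G$. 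You have in effect deferred this entire connectedness/induction step to the citation of Bredon, which is where the real work lies. The remaining pieces of your outline --- openness of $M_{(K)}$ once global minimality is granted, and the manifold structure on $M_{(K)}/G$ via the local model of Lemma \ref{Lemma:LocOrbitModel} and Remark \ref{Rmk:QuotientLocModelOrbTyp} or via the free action of $N_G(K)/K$ on $M^K\cap M_{(K)}$ --- are fine.
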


Now let us equip the $G$-manifold $M$ with a Riemannian metric and suppose the compact Lie group $G$ acts on $M$ by orientation preserving isometries. This can always achieved my means of the averaging procedure described below. Let $M_0$ be the principal orbit from Proposition \ref{Prop:PrincipalOrTyp}, which we know is an open and dense subset of $M$. We equip the manifold $M_0/G$ with a Riemannian metric such that $\pi_G|_{M_0}:M_0\longrightarrow M_0/G$ becomes a Riemannian submersion, i.e. we require $d\pi_G\big{|}_{(\ker d\pi_G)^\perp}:(\ker d\pi_G)^\perp\longrightarrow T(M_0/G)$ to be an isometry. We call this the associated {\em quotient metric}.

\subsubsection{$G$-vector bundles}
In addition, let $\pi_E: E\longrightarrow M$ be a complex vector bundle with Hermitian metric $\inner{\cdot}{\cdot}_E$.  This metric induces an inner product on the space of continuous sections with compact support $C_c(M,E)$ by
\begin{equation}\label{Eqn:HermE}
(s, s')_{L^2(E)}\coloneqq \int_M \inner{s(x)}{s'(x)}_E \vol_M(x),
\end{equation}
where $s,s'\in C_c(M,E)$, $x\in M$ and $\vol_M$ denotes the Riemannian volume element on $M$. Define the space $L^2(E)$  as the Hilbert space completion of $C_c(M,E)$ with respect to the inner product \eqref{Eqn:HermE}.
\begin{remark}\label{Rmk:M-M0}
By \cite[Proposition IV.3.7]{B72} it follows  that $M-M_0$ has measure zero with respect to the Riemannian measure, hence $L^2(E)=L^2(E|_{M_0})$. 
\end{remark}
Assume further that $E$ is a  {\em $G$-equivariant vector bundle}, i.e. $G$ acts on $E$ preserving the Hermitian metric and for each $g\in G$ the following diagram commutes 
$$\xymatrixrowsep{2cm}\xymatrixcolsep{2cm}\xymatrix{
E \ar[d]_{\pi_E} \ar[r]^-{g} & E\ar[d]^{\pi_E}\\
M \ar[r]^-{g} & M.
}$$
In this context there is an induced action of $G$ on the space of continuous sections $C(M,E)$ defined by the relation
\begin{equation}\label{Eqn:GActionSections}
(U_gs)(x)\coloneqq g(s(g^{-1}x)),
\end{equation}
where $g\in G$ $s\in C(M,E)$ and  $x\in M$. This action turns out to be an unitary representation of $G$ in $L^2(E)$. To see this we compute the norm
\begin{align*}
\norm{U_g s}^2_{L^{2}(E)}=\int_M \norm{g(s(g^{-1}x))}^2_E \vol_M(x)=\int_M \norm{s(g^{-1}x)}^2_E \vol_M(x)=\norm{s}^2_{L^{2}(E)},
\end{align*}
where the second equality follows since the $G$-action preserves the Hermitian metric and the third equality follows from the fact that $G$ acts on $M$ by orientation preserving isometries. We say that a section $s\in C(M,E)$ is {\em $G$-invariant} if $U_g s=s$ for all $g\in G$ and we denote by $C(M,E)^G$ and $L^2(E)^G$ the $G$-invariant subspaces of $C(M,E)$ and $L^2(E)$ respectively.\\
Let us discuss now the construction of the {\em orthogonal projection}  
$Q:L^2(E)\longrightarrow L^2(E)^G$.
We begin by recalling some results from the theory of integration on Lie groups (\cite[Section 4.2]{DK00}). Since $G$ is a compact Lie group then we can choose a left invariant Riemannian metric on it with associated volume element $dG$. In order to understand the construction of $Q$ it is essential to study the process of averaging over $G$ a continuous function $f\in C(G,\mathcal{V})$ with values in a complete, locally convex, topological vector space $\mathcal{V}$. Such a function is automatically uniformly continuous because $G$ is compact. A locally convex space is a vector space  $\mathcal{V}$ together with a family of seminorms $\{\nu_a\}_{a\in\mathcal{A}}$. The topology on $\mathcal{V}$ is the one generated by sets of the form 
$\mathcal{O}_{\mathcal{B},\epsilon}(p) \coloneqq \{q\in \mathcal{V}\:|\: \nu_a(p-q)<\epsilon\: ,\:\forall a\in\mathcal{B}\}$,
for $p\in\mathcal{V}$, $\mathcal{B}\subseteq \mathcal{A}$ and $\epsilon>0$ (\cite[Chapter 1]{RUDINFA}). For each $\epsilon >0$ and each seminorm $\nu_a$ we can use the uniform continuity of $f$ to construct a finite partition of unity $\{(\mathcal{O}_j,h_j)\}_{j=1}^N$ on $G$ where $\mathcal{O}_j\subset G$ is open, $h_j:G\longrightarrow \mathbb{C}$ is a non negative continuous function with $\supp(h_j)\subset\mathcal{O}_j$ for each $j$, the pointwise sum of all $h_j$ is equal to one and such that  $\nu_a(f(g_0)-f(g_1))<\epsilon$ whenever $g_0,g_1\in\supp(h_j)\subset G$. If we choose $g_j\in\supp(h_j)$ then we can consider the  ``Riemann sums" 
\begin{align}\label{RiemmSum}
\sum_{j=1}^N \left(\int_G h_j(g)dG(g)\right)f(g_j)\in \mathcal{V}. 
\end{align}
These sums depend of course on the partition of unity, however they form a Cauchy net and therefore converge to an element in $\mathcal{V}$, which is by definition the integral
\begin{align*}
\int_G f(g)dG(g). 
\end{align*}
Moreover, if $\phi:\mathcal{V}\longrightarrow \mathbb{C}$ is a continuous linear functional then $\phi\circ f\in C(G)$ and 
\begin{align*}
\phi\left(\int_G f(g)dG(g)\right)=\int_G \phi(f(g))dG(g). 
\end{align*}
Now let $U$ be a representation of $G$ in $\mathcal{V}$. It can be shown (using the Banach-Steinhaus theorem) that for each $h\in C(G)$, the map 
$$U(h):\xymatrixrowsep{0.01cm}\xymatrixcolsep{2cm}\xymatrix{
\mathcal{V}\ar[r] & \mathcal{V}\\
p \ar@{|->}[r] & \displaystyle{\int_G h(g)U(g)(p)dG(g)}
}$$
is continuous. In particular, we can consider the special case  $h(g)=1$ and define the {\em average of the representation} $U$ by
\begin{align}\label{Eqn:AV}
\text{av}(U):\xymatrixrowsep{0.01cm}\xymatrixcolsep{1.5cm}\xymatrix{
\mathcal{V}\ar[r] & \mathcal{V}\\
p \ar@{|->}[r] & \displaystyle{\frac{1}{\vol(G)}\int_G U(g)(p)dG(g)}. 
}
\end{align}
We summarize what we have described in the following theorem.
\begin{theorem}[Averaging Principle, {\cite[Proposition 4.2.1]{DK00}}]\label{Thm:AVPrincp}
Let $G$ be a compact group, and $U$ be a representation of $G$ in a complete, locally convex, topological vector space $\mathcal{V}$. Then $\textnormal{av}(U)$ is a linear projection of $\mathcal{V}$ onto the space 
$$\mathcal{V}^{U(G)}\coloneqq \{p\in \mathcal{V}\:|\: U(g)p=p\:,\:\forall g\in G\}.$$
If $U$ is a unitary representation of $G$ on a Hilbert space $\mathcal{V}$, then $\textnormal{av}(U)$ is equal to the orthogonal projection from $\mathcal{V}$ onto $\mathcal{V}^{U(G)}$. 
\end{theorem}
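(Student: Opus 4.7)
The plan is to verify in turn: linearity of $\textnormal{av}(U)$, that its range lies in $\mathcal{V}^{U(G)}$, that it acts as the identity on $\mathcal{V}^{U(G)}$, and, in the Hilbert-space setting, that it is self-adjoint. The first three assertions yield the projection statement, and self-adjointness combined with idempotency automatically gives an orthogonal projection onto the range. Linearity is immediate from the linearity of the Riemann-type sums \eqref{RiemmSum} and passage to the limit in the locally convex topology.

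For the range, I would fix $p\in\mathcal{V}$ and $h\in G$. Continuity of $U(h):\mathcal{V}\to\mathcal{V}$ allows it to pass through the integral (apply it termwise to the approximating sums and take the limit), giving
$$U(h)\textnormal{av}(U)(p)=\frac{1}{\vol(G)}\int_G U(hg)p\,dG(g)=\textnormal{av}(U)(p),$$
after substituting $g\mapsto h^{-1}g$ and invoking left-invariance of $dG$. That $\textnormal{av}(U)$ fixes $\mathcal{V}^{U(G)}$ pointwise is even more direct: if $U(g)p=p$ for every $g$, then the integrand $g\mapsto U(g)p$ is constantly equal to $p$, so every Riemann sum of the form \eqref{RiemmSum} reduces to $p\cdot\frac{1}{\vol(G)}\sum_j\int_G h_j(g)\,dG(g)=p$, and so does the limit.

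For the Hilbert case, I would verify self-adjointness directly. For $p,q\in\mathcal{V}$, continuity of the linear functional $(\,\cdot\,,q)$ lets it commute with the integral, hence
$$(\textnormal{av}(U)p,q)=\frac{1}{\vol(G)}\int_G(U(g)p,q)\,dG(g)=\frac{1}{\vol(G)}\int_G(p,U(g^{-1})q)\,dG(g)=(p,\textnormal{av}(U)q),$$
using unitarity $U(g)^\ast=U(g^{-1})$ and, at the last step, the inversion-invariance of Haar measure on the compact group $G$. The only real subtlety throughout is the systematic exchange of continuous linear maps with the $\mathcal{V}$-valued integral, and this is precisely the compatibility delivered by the Riemann-sum construction preceding the theorem; the remainder is bookkeeping with the invariance properties of $dG$.
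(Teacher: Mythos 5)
Your argument is correct. Note that the paper does not actually prove this statement -- it is quoted from Duistermaat--Kolk (Proposition 4.2.1) as a summary of the preceding construction -- so your proposal supplies the proof the text omits, and it does so using exactly the tools the paper has just set up: the Riemann-sum definition of the $\mathcal{V}$-valued integral, the fact that continuous linear maps commute with it, and the invariance of $dG$. Two small points are worth making explicit. First, the substitution $g\mapsto h^{-1}g$ uses \emph{left}-invariance of $dG$, which is what the paper's left-invariant metric provides (left translations are isometries, hence volume-preserving). Second, your last step invokes inversion-invariance of $dG$; this is not part of the paper's stated hypotheses but does hold here because a compact group is unimodular, so its left Haar measure is also right- and inversion-invariant -- a one-line remark you should include. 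With those observations, the chain ``linear, idempotent, range equal to $\mathcal{V}^{U(G)}$, self-adjoint in the Hilbert case'' is complete and gives both assertions of the theorem.
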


Applying Theorem \ref{Thm:AVPrincp} to our case of interest, namely $\mathcal{V}=L^2(M,E)$ and the unitary representation $U$ defined by \eqref{Eqn:GActionSections}, we see then that the orthogonal projection $Q\coloneqq\text{av}(U)$ is explicitly given by
\begin{align}\label{Def:Q}
Qs=\frac{1}{\vol(G)}\int _G(U_gs) d G(g),\quad\text{for $s\in L^2(M,E)$.}
\end{align}

\begin{example}[Exterior Algebra]\label{Ex:ExtAlg}
As before let $M$ be an oriented Riemannian manifold on which $G$ acts by orientation preserving isometries. The action on $M$ induces an action on the exterior algebra bundle  $E=\wedge T^*M=\bigoplus_r \wedge^r T^* M$ as follows: For $x\in M$, $\alpha\in\wedge^r T_x^* M$ and $g\in G$, we want to define the element $g\alpha\in\wedge^r T_{gx}^* M$. To do this is enough to describe its action on tangent vectors. For $v_1,\cdots v_r\in T_{gx} M$ define
\begin{align*}
(g\alpha )(v_1,\cdots v_r)\coloneqq  \alpha((dg^{-1}|_{gx})v_1,\cdots, (dg^{-1}|_{gx})v_r).
\end{align*}
This is well defined since $(dg^{-1}|_{gx}): T_{gx} M\longrightarrow T_x M$.  With this action $E$ becomes a $G$-vector bundle over $M$. We now describe the action \eqref{Eqn:GActionSections}. Let $X_1,\cdots, X_r\in C^\infty(M,TM)$ be vector fields on $M$ and  $\omega\in\Omega^r(M)\coloneqq C^\infty(M,\wedge T^rM)$ be a differential $r$-form, then by definition
\begin{align*}
(U_g\omega)(x)(X_1 (x)\cdots, X_r (x))&= (g\omega(g^{-1}x))(X_1 (x)\cdots, X_r (x))\\
&=\omega(g^{-1}x)((dg^{-1}|_{x})X_1 (x),\cdots,(dg^{-1}|_{x})X_r (x))\\
&=((g^{-1})^*\omega)(x)(X_1 (x)\cdots, X_r (x)).
\end{align*}
Thus, the $G$-action on differential forms is simply given by $U_g\omega=(g^{-1})^{*}\omega$. In particular, an element $\omega\in\Omega(M)^G$ is characterized by the relation $(g^{-1})^*\omega=\omega$ for all $g\in G$. 
\end{example}

The final aim of this section is to carefully study the construction, introduced in \cite[Section 1.]{BH78}, of a Hilbert space isomorphism between $L^2(M,E)^G$ and $L^2(M_0/G,F)$ where $F$ is a Hermitian vector bundle on $M_0/G$ constructed from $E$.  To begin consider the subset
\begin{equation}\label{Eqn:DefE'}
E'\coloneqq \bigcup_{x\in M_0} E^{G_x}_x
\end{equation}
where $E^{G_x}_x$ denotes the elements of the fiber $E_x\coloneqq \pi_E^{-1}(x)$ which are invariant under the $G_x$-action.

\begin{lemma}[{\cite[Lemma 1.2]{BH78}}]
If $M$ is connected then $E'$ is a $G$-equivariant subbundle of $E\big{|}_{M_0}$.
\end{lemma}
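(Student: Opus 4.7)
The plan has two logically independent parts: first verify that the set $E'$ is $G$-stable, then promote it to a smooth subbundle via the Slice Theorem and a fiberwise averaging argument.

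First I would check $G$-equivariance at the level of sets. Given $v \in E_x^{G_x}$ and $g \in G$, one has $gv \in E_{gx}$, and for any $h \in G_{gx} = gG_xg^{-1}$, writing $h = gkg^{-1}$ with $k \in G_x$ gives $h(gv) = gkv = gv$; hence $gv \in E_{gx}^{G_{gx}}$ and so $g\cdot E' \subseteq E'$.

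Next I would establish the smooth subbundle structure locally on $M_0$. Fix $x \in M_0$. By Theorem \ref{Thm:SliceThm} (the Slice Theorem) there is a $G$-invariant neighbourhood $\mathcal{O}_x$ which is $G$-equivariantly identified with $G \times_{G_x} B$ for a $G_x$-invariant ball $B$ in $T_xM/\alpha_x(\mathfrak{g})$. Since $x \in M_0$, Lemma \ref{Lemma:LocOrbitModel} together with Remark \ref{Rmk:QuotientLocModelOrbTyp} shows that the subset of $\mathcal{O}_x$ whose points have isotropy conjugate to $G_x$ (that is, $\mathcal{O}_x \cap M_0$) corresponds to $G \times_{G_x} B^{G_x}$, and for every $y$ in the slice $S \coloneqq B^{G_x}$ the isotropy is \emph{equal} to $G_x$. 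In particular, the restriction $E|_S$ is a $G_x$-equivariant Hermitian vector bundle over $S$ on which $G_x$ acts trivially on the base and fiberwise by unitary automorphisms $\rho_y \colon G_x \to U(E_y)$ that vary smoothly in $y$.

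Now I would construct the fiberwise orthogonal projection onto the $G_x$-invariants via averaging: for $y \in S$, set
\begin{equation*}
P_y \coloneqq \frac{1}{\vol(G_x)}\int_{G_x} \rho_y(g)\, dG_x(g) \colon E_y \longrightarrow E_y.
\end{equation*}
By Theorem \ref{Thm:AVPrincp} each $P_y$ is the orthogonal projection onto $E_y^{G_x} = E_y^{G_y} = E'_y$. The family $\{P_y\}_{y \in S}$ depends smoothly on $y$ (the integrand does, and the integral is over the compact group $G_x$), so $y \mapsto \rk P_y = \tr P_y$ is a continuous integer-valued function on the connected slice $S$, hence constant. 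A smooth family of orthogonal projections of constant rank has smoothly varying image, so $E'|_S = \bigcup_{y \in S} \im P_y$ is a smooth Hermitian subbundle of $E|_S$.

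Finally, I would spread this along orbits using the $G$-equivariance established in the first step. The map $(g,e) \mapsto g\cdot e$ identifies $G \times_{G_x}(E|_S)$ with $E|_{\mathcal{O}_x \cap M_0}$ as a $G$-equivariant bundle, and sends the subbundle $G \times_{G_x}(E'|_S)$ onto $E'|_{\mathcal{O}_x \cap M_0}$; this furnishes a local trivialization of $E'$ on the $G$-invariant open set $\mathcal{O}_x \cap M_0$. Since $M$ is connected, Proposition \ref{Prop:PrincipalOrTyp} guarantees that $M_0$ is connected, so the locally constant rank of $E'$ is globally constant. Patching the local trivializations over a cover of $M_0$ by such $G$-invariant neighborhoods, $E'$ is a $G$-equivariant smooth subbundle of $E|_{M_0}$.

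The main technical point is the passage from a $G_x$-equivariant bundle on the slice to a $G$-equivariant bundle on the tube: everything hinges on Lemma \ref{Lemma:LocOrbitModel}, which forces the isotropy to be constant (not merely conjugate) along the slice, so that the projection $P_y$ projects onto the correct space $E_y^{G_y}$ uniformly in $y$.
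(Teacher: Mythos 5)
Your proof is correct and follows essentially the same route as the paper: $G$-equivariance of $E'$ via $G_{gx}=gG_xg^{-1}$, then local triviality by passing to a slice where the isotropy is constantly $G_x$, averaging over $G_x$ to get a fiberwise projection onto the invariants, and spreading the resulting subbundle along orbits using the tube $G\times_{G_x}B$. The one place you are slightly more careful than the paper is in justifying that the averaged projection $P_y$ has locally constant rank (via continuity of $\tr P_y$) before invoking that a constant-rank smooth family of projections has a smooth image bundle; the paper simply asserts that the image of the averaging homomorphism is a vector bundle.
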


\begin{proof}
To show that $E'$ is $G$-equivariant we use the relation $G_{gx}=gG_xg^{-1}$ for $x\in M$ and $g\in G$ to conclude that $g(E_x^{G_x})=E_{gx}^{G_{gx}}$. To prove local triviality of $E'$ we use the local description of Lemma \ref{Lemma:LocOrbitModel}, that is, we assume that $M_0=G/H\times B$ where $B$ is an open ball in some Euclidean space. The action of $g\in G$ in this local model is $g(g_0H,v)=(gg_0H,v)$ for $(g_0 H,v)\in G/H\times B$. In particular if $x\in \mathcal{O}\coloneqq H\times B$ then $G_x=H$. Define now $E''\coloneqq E'\big{|}_{\mathcal{O}}=(E\big{|}_\mathcal{O})^H$, where $(E\big{|}_\mathcal{O})^H$ denotes the image of $(E\big{|}_\mathcal{O})$ under the averaging map (Theorem \ref{Thm:AVPrincp})
\begin{align*}
\frac{1}{\vol{(H)}}\int_H U_h(\cdot) dH(h).
\end{align*}
Being the image of a vector bundle homomorphism, as $U_h\in C^{\infty}(M,\Hom(E'\big{|}_\mathcal{O},E'\big{|}_\mathcal{O}))$, we can conclude that $E''$ is a vector bundle. Thus we can assume that $E''$ is trivial (since we are interested in a local condition). Now we want to prove that $E'$ is trivial by extending the trivialization of $E''$. Let $(s_j)_{j=1}^k$ be a maximal set of linear independent sections of $E''$. Define associated sections $(\bar{s}_j)_{j=1}^k$ on $E'$ by 
$
\bar{s}_j((gH,v))\coloneqq gs_j((H,v)).
$
Clearly the sections $(\bar{s}_j)_{j=1}^k$ are also maximal linearly independent, which implies that $E'$ is also locally trivial. Finally, since $M$ is connected,  $M_0/G$ is also connected by Proposition \ref{Prop:PrincipalOrTyp} and this implies the rank of $E'$ is constant on $M_0$.  
\end{proof}

From the proof of last lemma we see that $E'$ has just one orbit type $(H)$ with respect to the $G$ action. Hence,from Remark \ref{Rmk:QuotientLocModelOrbTyp}, it follows that $F\coloneqq E'/G$ is a manifold. Let $\pi'_G:E'\longrightarrow F$ denote the orbit map and $\pi_{E'}:E'\longrightarrow M_0$ denote the projection. Locally we can assume $E'=G/H\times \tilde{B}$ and $M_0=G/H\times B$ where $\tilde{B}$ and $B$ are two open balls in some Euclidean space with $B\subset \tilde{B}$. Using Remark \ref{Rmk:QuotientLocModelOrbTyp} we observe that $\pi_{E'}: \tilde{B}\longrightarrow B$ induces a map $\pi_F: \tilde{B}^{H}\longrightarrow B^H$ which makes the following diagram commute:
\begin{align}\label{Diag:LocalSliceBundle}
\xymatrixrowsep{2cm}\xymatrixcolsep{2cm}\xymatrix{
G/H\times \tilde{B} \ar[d]_-{\pi_{E'}}  \ar[r]^-{\pi'_G}  & \tilde{B} \ar[d]^-{\pi_{F}} \\
G/H\times B \ar[r]^-{\pi_G} & B.
}
\end{align}
This shows that $F$ is a vector bundle over $M_0/G$. Globally the last diagram looks like 
\begin{align}\label{Diag:OrbitMaps}
\xymatrixrowsep{2cm}\xymatrixcolsep{2cm}\xymatrix{
E' \ar[d]_-{\pi_{E'}}  \ar[r]^-{\pi'_G}  & F \ar[d]^-{\pi_{F}} \\
M_0 \ar[r]^-{\pi_G} & M_0/G.
}
\end{align}
\begin{remark}\label{Remark:rkF}
Observe from Lemma \ref{Lemma:LocOrbitModel} and Remark \ref{Rmk:QuotientLocModelOrbTyp} that the dimension  of the spaces involved  in the construction are related by the formulas
\begin{align*}
\dim (M_0)=&\dim(M_0/G)+\dim(G/H),\\
\dim (E')=&\dim (F)+\dim(G/H). 
\end{align*}
In particular we see that the rank, as vector bundles, of $E'$ and $F$ coincide. Indeed, from the relations above,
\begin{align*}
\rk(E')=\dim (E')-\dim(M_0)=\dim (F)-\dim(M_0/G)=\rk(F). 
\end{align*}
\end{remark}

\begin{lemma}\label{Lemma:InducedMetricF}
The bundle $F$ inherits a Hermitian metric $\inner{\cdot}{\cdot}_F$ from $E'$ defined by the relation 
$\inner{\pi'_G v_1}{\pi'_G v_2}_F(y)\coloneqq \inner{v_1}{v_2}_E(x),$ where $x\in M_0$, $v_1,v_2\in E'_x$ and $\pi_G(x)=y$.
\end{lemma}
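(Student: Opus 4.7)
The content of the statement is that the formula $\langle \pi'_G v_1, \pi'_G v_2\rangle_F(y) \coloneqq \langle v_1, v_2\rangle_E(x)$ does not depend on the choice of representative $(x, v_1, v_2)$, and that the resulting assignment defines a smooth Hermitian metric on $F$. So the plan is first to verify well-definedness, then to check positive definiteness and sesquilinearity, and finally to verify smoothness.

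For well-definedness, suppose $(x', v_1', v_2')$ is another triple with $x' \in M_0$, $v_1', v_2' \in E'_{x'}$, $\pi_G(x') = y$, and $\pi'_G v_i' = \pi'_G v_i$ for $i = 1, 2$. Since $\pi_G(x') = \pi_G(x) = y$, there exists $g \in G$ with $x' = gx$, and the $G$-action on $E'$ restricts to a bijection $E'_x \to E'_{x'}$ via $v \mapsto gv$. The condition $\pi'_G v_i' = \pi'_G v_i$ in $F$ means that $v_i'$ and $v_i$ lie in the same $G$-orbit in $E'$; since both $v_i, v_i'$ are already tied to the fibers over $x, x'$, this forces $v_i' = g v_i$ for the same $g \in G$ that moves $x$ to $x'$ (up to the $G_x$-stabilizer, whose elements fix $v_i$ by the definition \eqref{Eqn:DefE'} of $E'$). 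Then, using that the $G$-action on $E$ preserves the Hermitian metric $\langle \cdot, \cdot \rangle_E$,
\begin{equation*}
\langle v_1', v_2'\rangle_E(x') = \langle g v_1, g v_2\rangle_E(gx) = \langle v_1, v_2\rangle_E(x),
\end{equation*}
so the definition depends only on $y$ and on $\pi'_G v_1, \pi'_G v_2$.

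Once well-definedness is established, sesquilinearity of $\langle \cdot, \cdot\rangle_F$ in each fiber $F_y$ follows from sesquilinearity of $\langle \cdot, \cdot\rangle_E$ on $E'_x$ together with the fact that the restriction $\pi'_G : E'_x \to F_y$ is a linear isomorphism (see Remark \ref{Remark:rkF}, which matches the ranks, together with the fact that $v \mapsto \pi'_G v$ is fiberwise a linear map between vector spaces of the same dimension whose kernel is trivial since $G_x$ acts trivially on $E'_x$). Positive definiteness likewise transfers from $E'$ to $F$ via this isomorphism.

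The remaining point is smoothness of $\langle \cdot, \cdot\rangle_F$, which is a local question. Working in the local slice model from diagram \eqref{Diag:LocalSliceBundle}, we may assume $M_0 = (G/H) \times B$, $E' = (G/H) \times \tilde B$, and that $E'$ admits a trivializing frame $(\bar s_j)_{j=1}^k$ obtained by spreading $H$-invariant sections $(s_j)$ on the slice $\{H\} \times B$ around via the $G$-action; their images $(\pi'_G \bar s_j) = (\pi'_G s_j)$ form a smooth frame of $F$ over $B^H \cong M_0/G$. In this frame the matrix coefficients of $\langle \cdot, \cdot\rangle_F$ are by construction equal to the matrix coefficients of $\langle \cdot, \cdot\rangle_E$ evaluated on $(s_j)$ along the slice, which are smooth functions of the base coordinate. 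Hence $\langle \cdot, \cdot\rangle_F$ is smooth.

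The only nontrivial step is the first one: the key input is not merely that $G$ acts by isometries on $E$, but also that along the fiber $E'_x$ the action factors through the quotient by $G_x$, which is exactly what the definition \eqref{Eqn:DefE'} of $E'$ guarantees; without this, one could not conclude that the choice of $g$ transporting $x$ to $x'$ is irrelevant.
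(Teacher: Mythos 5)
Your proof is correct and the key step — showing that the group elements transporting $v_1$ and $v_2$ can differ from the element transporting the base point only by stabilizer elements, which act trivially on $E'$ fibers by the very definition \eqref{Eqn:DefE'} — is exactly the argument the paper gives. The additional verification of fiberwise sesquilinearity, positive definiteness, and smoothness via the local frame is routine and correct; the paper's proof silently omits it and only checks well-definedness.
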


\begin{proof}
We need to show that the relation above is well defined. Assume $\pi'_G v_1=\pi'_G \tilde{v}_1$ and $\pi'_G v_2=\pi'_G \tilde{v}_2$ for $\tilde{v}_1,\tilde{v}_2\in E'_{\tilde{x}}$, where $x=g\tilde{x}$. Then there exist $g_1,g_2\in G$ such that $v_1=g_1\tilde{v}_1$ and $v_2=g_2\tilde{v}_2$. Since $E'$ is  $G$-equivariant then
\begin{align*}
x=\pi_{E'}(v_1)=&\pi_{E'}(g\tilde{v}_1)=g_1\pi_{E'}(\tilde{v}_1)=g_1\tilde{x},\\
x=\pi_{E'}(v_2)=&\pi_{E'}(g\tilde{v}_2)=g_2\pi_{E'}(\tilde{v}_2)=g_2\tilde{x}.
\end{align*}
These relations show $g^{-1}g_1,g^{-1}g_2\in G_{\tilde{x}}$. Hence,
\begin{align*}
\inner{v_1}{v_2}_E(x)=&\inner{ g_1 \tilde{v}_1}{g_2 \tilde{v}_2}_E(x)\\
=&\inner{ g^{-1}g_1 \tilde{v}_1}{ g^{-1}g_2 \tilde{v}_2}_E(g^{-1}x)\\
=&\inner{ \tilde{v}_1}{ \tilde{v}_2}_E(\tilde{x}),
\end{align*}
where the second equality holds since the metric on $E$ is preserved by the $G$-action and the third equality follows from the definition of $E'$. 
\end{proof}

For $y\in M_0/G$ let $h(y)\coloneqq \vol(\pi_G^{-1}(y))$ be the {\em volume of the orbit} containing a point in $\pi_G^{-1}(y)$. We can consider the weighted inner product on $C_c(M_0/G,F)$ defined by the formula
\begin{equation}\label{Def:L2FhGen}
(s,s')_{L^2(F,h)}\coloneqq \int_{M_0/G}\inner{s(y)}{s'(y)}_F h(y) \vol_{M_0/G}(y),
\end{equation}
where  $\vol_{M_0/G}$ denotes the Riemannian volume element of $M_0/G$ with respect to the quotient metric.  Analogously we define $L^2(F,h)$ to be the completion of $C_c(M_0/G,F)$ with respect to this inner product.\\

We now describe one of the most important results of \cite{BH78}. The spirit of the proof relies on the construction above  and  Remark \ref{Rmk:M-M0}.

\begin{theorem}[{\cite[Theorem $1.3$]{BH78}}]\label{Thm:Fund}
There is an isometric isomorphism of Hilbert spaces
$$\Phi:L^2(E)^{G}\longrightarrow L^2(F,h).$$
With $\pi'_G:E'\longrightarrow F$ denoting the orbit map $\Phi$ is given by
$$\Phi s_1 \circ \pi_G(x)=\pi'_G\circ s_1(x),$$
where $s_1\in C_c(M_0,E)^G$ and $x\in M_0$. Its inverse map is given by 
$$\Phi^{-1}s_2(x)=s_2\circ \pi_G(x)\cap E_x,$$
where $s_2\in C_c(M_0/G,F)$ and $x\in M_0$. 
\end{theorem}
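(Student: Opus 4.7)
The plan is to first establish the statement on the level of continuous compactly supported sections and then extend by density. By Remark \ref{Rmk:M-M0} we may work entirely on the principal orbit $M_0$.

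First I would check that the definition of $\Phi$ makes sense. For $s_1 \in C_c(M_0,E)^G$ and $g\in G_x$, the invariance condition $(U_g s_1)(x)=s_1(x)$ gives $g\cdot s_1(x) = s_1(x)$, so the value $s_1(x)$ lies in $E_x^{G_x}=E'_x$; hence $\pi'_G\circ s_1(x)$ is defined in $F$. Moreover, if $x'=g_0 x$ is another point in the same $G$-orbit, then $s_1(x')=g_0\cdot s_1(x)$ by $G$-invariance, and $\pi'_G\circ s_1(x')=\pi'_G\circ s_1(x)$ by definition of the orbit map $\pi'_G:E'\to F$. Hence the prescription $\Phi s_1(\pi_G(x)):=\pi'_G\circ s_1(x)$ yields a well-defined section of $F\to M_0/G$; commutativity of \eqref{Diag:OrbitMaps} shows $\pi_F\circ\Phi s_1=\mathrm{id}$, and continuity plus compact support of $\Phi s_1$ follow from properness of $\pi_G$ (orbits are compact) together with continuity of $\pi'_G$.

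Next I would prove the isometry property. Because $\pi_G:M_0\to M_0/G$ is a Riemannian submersion with compact fibres, Fubini for submersions gives, for any $f\in C_c(M_0)$,
\begin{equation*}
\int_{M_0} f(x)\,\vol_{M_0}(x) \;=\; \int_{M_0/G}\!\!\left(\int_{\pi_G^{-1}(y)} f\,\vol_{\pi_G^{-1}(y)}\right)\vol_{M_0/G}(y).
\end{equation*}
Applying this to $f(x):=\inner{s_1(x)}{s_1'(x)}_E$ for $s_1,s_1'\in C_c(M_0,E)^G$, the inner integrand is constant along the orbit (by $G$-invariance of $s_1,s_1'$ and $G$-invariance of $\inner{\cdot}{\cdot}_E$), hence equals $\inner{\Phi s_1(y)}{\Phi s_1'(y)}_F$ by Lemma \ref{Lemma:InducedMetricF}. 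The inner integral therefore contributes exactly $h(y)\inner{\Phi s_1(y)}{\Phi s_1'(y)}_F$, and one reads off
\begin{equation*}
(s_1,s_1')_{L^2(E)} \;=\; (\Phi s_1,\Phi s_1')_{L^2(F,h)}.
\end{equation*}

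It remains to show surjectivity. Given $s_2\in C_c(M_0/G,F)$, I want to define $\Phi^{-1}s_2$ as in the statement. The key observation, visible from the local model $E'=G/H\times \tilde B$ in diagram \eqref{Diag:LocalSliceBundle}, is that each $G$-orbit in $E'$ meets every fibre $E'_x$ (for $x$ in the underlying $G$-orbit in $M_0$) in exactly one point; equivalently, $\pi'_G$ restricts to a linear isomorphism $E'_x\xrightarrow{\cong} F_{\pi_G(x)}$. Thus, for each $x\in M_0$, there is a unique element of $E'_x\subset E_x$ whose image in $F$ equals $s_2(\pi_G(x))$, and this is the meaning of the formula $\Phi^{-1}s_2(x) = s_2\circ\pi_G(x)\cap E_x$. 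Continuity follows from the smoothness of $\pi'_G$ and a local section argument (pulling back a local trivialization of $F$ through the $G/H$-factor), and $G$-equivariance is automatic from the definition. A direct calculation gives $\Phi\circ\Phi^{-1}=\mathrm{id}$ and $\Phi^{-1}\circ\Phi=\mathrm{id}$ on continuous compactly supported sections.

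Finally, since $C_c(M_0,E)^G$ is dense in $L^2(E)^G$ (apply the orthogonal projection $Q$ of \eqref{Def:Q} to a dense family in $L^2(E)$) and $C_c(M_0/G,F)$ is dense in $L^2(F,h)$, the isometric map $\Phi$ extends uniquely to an isometric isomorphism of the Hilbert space completions. The main obstacle I anticipate is the submersion integration formula together with the careful identification of $F_{\pi_G(x)}$ with $E'_x$ needed to make $\Phi^{-1}$ well defined; once those are in place, everything else is routine.
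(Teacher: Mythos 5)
Your proposal is correct and follows essentially the same route as the paper's proof: well-definedness of $\Phi$ via $C_c(M_0,E)^G\subset C_c(M_0,E')$, surjectivity via the formula for $\Phi^{-1}$ and the local slice model, the isometry via Fubini for Riemannian submersions, and extension by density. The only cosmetic difference is that you phrase surjectivity through the fibrewise linear isomorphism $E'_x\cong F_{\pi_G(x)}$ where the paper verifies $G$-invariance of $\Phi^{-1}s_2$ in slice coordinates, but these are the same argument.
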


\begin{proof}
We divide the proof in several steps:
\begin{itemize}
\item \underline {Step 1: $\Phi$ is well defined}.\\
 Let $s_1\in C_c(M_0,E)^G$ and $x\in M_0$, then if $g\in G_x$ we have 
$$gs_1(x)=g(s_1(g^{-1}x))=(U_g s_1)(x)=s_1(x),$$
thus $C_c(M_0,E)^G\subset C_c(M_0,E')$. This shows that the map $\Phi$ is well defined. 
\item  \underline{Step 2: $\Phi(C_c^\infty(M_0,E)^G)=C_c^\infty(M_0/G,F)$.}\\
From the definition of $\Phi$ it follows directly that $\Phi(C_c^\infty(M_0,E)^G)\subset C_c^\infty(M_0/G,F)$. Let $s_2\in C^\infty_c(M_0/G,F)$  and $x\in M_0$, then using the slice theorem and \eqref{Diag:LocalSliceBundle} we observe that  we can write $x=(g_0H,b)\in G/H\times B$ and $s_2(\pi_G(x))=\pi'_G(g_0H,v)$ for some $(g_0 H,v)\in G/H\times\tilde{B}$. We now compute
\begin{center}
$(U_g(\Phi^{-1}s_2))(x)=g((\Phi^{-1}s_2)(g^{-1}x))=g(g^{-1}g_0H,v)=(g_0H,v)=(\Phi^{-1}s_2)(x).$
\end{center}
This completes the proof of the relation $\Phi(C_c^\infty(M_0,E)^G)=C_c^\infty(M_0/G,F)$.
\item \underline{Step 3: $\Phi$ is an isometry.}\\
Since $C_c(M_0,E)^G$ and $C_c(M_0/G,F)$ are dense in $L^{2}(E\big{|}_{M_0})=L^2(E)$ and $L^2(F)$ respectively, it is enough to show that $\Phi$ is an isometry on $C_c(M_0,E)^G$. For a section $s_1\in C_c(M_0,E)^G$ we compute $\norm{\Phi s_1}^2_{L^2(F,h)}$ using Fubini's theorem for Riemannian submersions (\cite[Proposition A.III.5]{BGM}),
\begin{center}
\begin{align*}
\norm{\Phi s_1}^2_{L^2(F,h)}=&\int_{M_0/G}\norm{\Phi s_1(y)}^2_F(y) h(y) \vol_{M_0/G}(y)\\
=&\int_{M_0/G}\norm{\pi'_G (s_1(x))}^2_F(y) h(y) \vol_{M_0/G}(y)\\
=&\int_{M_0/G}\left(\int_{\pi^{-1}_G(y)}\norm{ s_1(x)}^2_F(y) \vol_{\pi^{-1}_G(y)}(x)\right)\vol_{M_0/G}(y)\\
=&\int_{M_0}\norm{s_1(x)}^2_E(x)\vol_{M_0}(x)\\
=&\norm{s_1}_{L^2(E{|}_{M_0})}^2.
\end{align*}
\end{center}
\end{itemize}
\end{proof}
From the proof of the theorem above we have the following immediate consequences.
\begin{coro}[{\cite[Corollary 1.4]{BH78}}]\label{Coro:PropPhi}
The map $\Phi$ satisfies:
\begin{enumerate}
\item $\Phi(C_c^\infty(M_0,E)^G)=C_c^\infty(M_0/G,F)$.
\item $\norm{\Phi s_1(\pi_G(x))}_{F}=\norm{s_1(x)}_E$ for $s_1\in C_c^\infty(M_0,E)^G$ and $x\in M_0$.
\item $\Phi^{-1}(\phi s_2)=(\phi\circ\pi_G)\Phi^{-1}s_2$ for $s_2\in C_c(M_0/G,F)$ and $\phi\in C(M_0/G)$.
\item $\supp(\Phi s_1)=\pi_G(\supp(s_1))$ for $s_1\in C_c(M_0,E)^G$ and $\supp(\Phi^{-1} s_2)=\pi^{-1}_G(\supp(s_2))$ for $s_2\in C_c(M_0/G,F)$. 
\end{enumerate}
\end{coro}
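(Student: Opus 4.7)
The four items are all essentially bookkeeping consequences of the explicit definition of $\Phi$ given in Theorem \ref{Thm:Fund}, so the plan is to unpack that definition and the diagram \eqref{Diag:OrbitMaps} in each case.

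For (1), the inclusion $\Phi(C_c^\infty(M_0,E)^G)\subseteq C_c^\infty(M_0/G,F)$ is immediate because $\pi_G'$ and $\pi_G$ are smooth submersions, so the composition defining $\Phi s_1$ descends to a smooth section. The reverse inclusion is exactly the computation written out in Step 2 of the proof of Theorem \ref{Thm:Fund}: given $s_2\in C_c^\infty(M_0/G,F)$, one uses the local slice model \eqref{Diag:LocalSliceBundle} to verify that $\Phi^{-1}s_2$ is $G$-invariant, hence lies in $C_c^\infty(M_0,E)^G$. So (1) is really just a reference back to Step 2.

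For (2), I would go back to Lemma \ref{Lemma:InducedMetricF}, which defines the Hermitian metric on $F$ by $\inner{\pi'_G v_1}{\pi'_G v_2}_F(\pi_G(x)) = \inner{v_1}{v_2}_E(x)$ for $v_1,v_2\in E'_x$. Applying this with $v_1=v_2=s_1(x)$ (which lies in $E'_x$ by Step 1 of the previous proof) gives the desired norm identity. For (3), I would simply compute
\begin{align*}
\Phi^{-1}(\phi s_2)(x) = (\phi s_2)(\pi_G(x))\cap E_x = \phi(\pi_G(x))\cdot\bigl(s_2(\pi_G(x))\cap E_x\bigr) = (\phi\circ\pi_G)(x)\,\Phi^{-1}s_2(x),
\end{align*}
using that the scalar multiplication by $\phi(\pi_G(x))$ preserves fibres so commutes with the intersection $\cap E_x$.

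For (4), I would use that $\pi_G$ is continuous, surjective, and closed (the latter because $G$ is compact, so orbit maps are proper). From $\Phi s_1 \circ \pi_G = \pi'_G\circ s_1$, a point $y\in M_0/G$ lies in $\supp(\Phi s_1)$ iff every preimage $x\in\pi_G^{-1}(y)$ satisfies $s_1(x)\neq 0$ in a neighbourhood, which is equivalent to $y\in \pi_G(\supp(s_1))$; closedness of $\pi_G$ ensures the image is already closed, so no closure operation is needed. The symmetric statement $\supp(\Phi^{-1}s_2)=\pi_G^{-1}(\supp(s_2))$ follows the same way from $\Phi^{-1}s_2(x)=s_2(\pi_G(x))\cap E_x$. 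The only mildly subtle point in this entire proposition is (4), where one needs to be careful that properness of $\pi_G$ (from compactness of $G$) is what guarantees $\pi_G(\supp(s_1))$ is closed, but this is standard and will not be the bottleneck.
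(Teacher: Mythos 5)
Your proposal is correct and matches the paper's treatment: the paper states this corollary with no proof beyond the remark that it is an immediate consequence of the proof of Theorem \ref{Thm:Fund}, and your unpacking of each item from the explicit formulas for $\Phi$ and $\Phi^{-1}$ is exactly what is intended. The only cosmetic point is in (4): for $\supp(\Phi^{-1}s_2)=\pi_G^{-1}(\supp(s_2))$ the inclusion $\pi_G^{-1}(\overline{B})\subseteq\overline{\pi_G^{-1}(B)}$ uses openness of the orbit map rather than closedness, but both properties hold for quotients by compact group actions, so nothing breaks.
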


Now we comment on a further property of the map $\Phi$ with respect to the jet maps, which will be relevant in the next section.
For $k\in\mathbb{Z}_+$ and $s\in C^\infty_c(M_0,F)$ consider 
\begin{align*}
j_k(s):
\xymatrixcolsep{2cm}\xymatrixrowsep{0.01cm}\xymatrix{
M_0 \ar[r] & J^k(F)\\
x \ar@{|->}[r] & j_k(s)(x),
}
\end{align*} 
where $j_k(s)(x)$ denotes the $k$-jet of $s$ at $x$  and $J^k(F)$ denotes that $k$-jet bundle associated with $s$ (\cite[Chapter IV.2]{PALAIS}) . 

\begin{coro}[{\cite[Corollary 1.4(5)]{BH78}}]\label{Coro:jets}
For $s\in C^\infty_c(M_0/G,F)$ and $x\in M_0$ we have $j_k(\Phi^{-1}s)(x)=0$ whenever $j_k(s)(\pi_G(x))=0$.
\end{coro}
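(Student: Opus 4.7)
The statement is local around the chosen point $x \in M_0$, so the plan is to reduce everything to a single coordinate chart via the slice theorem and then invoke the chain rule. Vanishing of the $k$-jet at a point is a well-defined notion on a vector bundle (it does not depend on the choice of local frame or local coordinates on the base), so it suffices to verify the conclusion in any convenient local trivialization.

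First, I would pick $x\in M_0$ and set $y_0 \coloneqq \pi_G(x) \in M_0/G$. By Theorem \ref{Thm:SliceThm} together with the local description encoded in diagram \eqref{Diag:LocalSliceBundle}, there is a $G$-invariant neighborhood of $x$ of the form $G/H \times B$ on which $\pi_G$ becomes the projection $G/H \times B \to B$ onto the slice $B$, while $M_0/G$ is identified with $B$ near $y_0$. After possibly shrinking $B$, I would further choose local trivializations of $F \to M_0/G$ and $E' \to M_0$ compatible with the diagram \eqref{Diag:OrbitMaps}: since $\pi_G'\colon E'\to F$ is the quotient of a vector bundle by a free action with local model $G/H\times\tilde{B}\to\tilde{B}$ and since $\rk(E')=\rk(F)$ by Remark \ref{Remark:rkF}, one can take a local frame $(e_1,\dots,e_r)$ of $F$ over a neighborhood $U$ of $y_0$ and lift it to a $G$-invariant local frame $(\tilde{e}_1,\dots,\tilde{e}_r)$ of $E'$ over $\pi_G^{-1}(U)$ by applying $\Phi^{-1}$ fiberwise. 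In these two matched frames the isomorphism $\Phi$ of Theorem \ref{Thm:Fund} acts componentwise.

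In those trivializations write $s = \sum_{j=1}^r s^j\, e_j$ with $s^j \in C_c^\infty(U)$. By the explicit formula for $\Phi^{-1}$ in Theorem \ref{Thm:Fund} and the compatibility of the frames,
\[
\Phi^{-1}s = \sum_{j=1}^r (s^j \circ \pi_G)\, \tilde{e}_j
\]
on $\pi_G^{-1}(U)$. Thus the component functions of $\Phi^{-1}s$ in the frame $(\tilde{e}_j)$ are simply the pullbacks $s^j \circ \pi_G$. The hypothesis $j_k(s)(y_0)=0$ means precisely that for every $j$, all partial derivatives of $s^j$ of order $\le k$ vanish at $y_0$. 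By the Faà di Bruno / chain rule, each partial derivative of $s^j\circ \pi_G$ at $x$ of order $\le k$ is a polynomial in derivatives of $\pi_G$ of order $\le k$ with coefficients given by derivatives of $s^j$ of order $\le k$ evaluated at $\pi_G(x)=y_0$, hence it vanishes. Consequently $j_k(\Phi^{-1}s)(x)=0$, which is what we had to prove.

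There is no real obstacle here; the only point that has to be checked with care is that the local $G$-invariant frame $(\tilde e_j)$ of $E'$ can be chosen to correspond to the local frame $(e_j)$ of $F$ under $\pi'_G$, so that $\Phi^{-1}$ really does act componentwise. This is ensured by the local model \eqref{Diag:LocalSliceBundle} together with the equality of ranks in Remark \ref{Remark:rkF}. Once that identification is in place, the result is a direct application of the chain rule.
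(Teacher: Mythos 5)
Your proof is correct, but it takes a different route from the paper's. The paper argues purely through the module structure: it first factors a section with vanishing $k$-jet at $y=\pi_G(x)$ as a finite sum $s=\sum_i\phi_i s_i$ with $\phi_i\in C^\infty_c(M_0/G)$ satisfying $j_k(\phi_i)(y)=0$ (a Taylor/Hadamard-type factorization), then applies the $C^\infty(M_0/G)$-linearity of $\Phi^{-1}$ from Corollary~\ref{Coro:PropPhi}(3), i.e.\ $\Phi^{-1}(\phi_i s_i)=(\phi_i\circ\pi_G)\Phi^{-1}s_i$, and concludes via the Leibniz rule for jets, since $j_k(\phi_i\circ\pi_G)(x)=0$. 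This avoids any choice of local frames and uses only the already established properties of $\Phi$. You instead pass to matched local trivializations furnished by the slice theorem, observe that $\Phi^{-1}$ acts componentwise there, and invoke the chain rule on the component functions $s^j\circ\pi_G$. Both arguments are sound; note that your ``frame compatibility'' step is in fact an immediate consequence of the same Corollary~\ref{Coro:PropPhi}(3) (apply $\Phi^{-1}$ to $\sum_j s^j e_j$, after localizing with a cutoff), so you could shorten your proof by citing it rather than re-deriving the componentwise action from the local model. The paper's version is slicker and frame-free; yours is more concrete and makes the mechanism (pullback along the submersion $\pi_G$ kills no derivatives) explicit.
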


\begin{proof}
Let $\pi_G(x)=y\in M_0/S^1$ and suppose $j_k(s)(y)=0$. This implies we can express
\begin{align*}
s=\sum_{i=1}^{r}\phi_i s_i
\end{align*}
with $\phi_i\in C^\infty_c(M_0/G)$ and $s_i\in C^\infty_c(M_0/G,F)$ with $j_{k}(\phi_i)(y)=0$ for $1\leq r\leq k$. From Corollary \ref{Coro:PropPhi}(3) it then follows
\begin{align*}
j_k(\Phi^{-1}s)(x)=\sum_{i=1}^r j_k((\phi_i\circ\pi_G)\Psi^{-1}s_i)(x)=\sum_{i=1}^{r}j_k(\phi_i)(y)j_k(\Phi^{-1}s_i)(x)=0.
\end{align*}
\end{proof}

\subsubsection{$G$-invariant and basic differential forms}
We end this section recalling some facts and fixing some notation regarding $G$-invariant differential forms.  Let $\mathfrak{g}$ denote the Lie algebra of $G$ and consider the setting as above. To each element $X\in\mathfrak{g}$ we can associate a first order differential operator acting on $s\in C^\infty(M,E)$, called the {\em Lie derivative} (\cite[Section 1.1]{BGV}), defined by the formula
\begin{equation}\label{Eqn:LieDerE}
L^E_X s\coloneqq \frac{d}{d t}\bigg{|}_{t=0}U_{\exp(tX)}s,
\end{equation}
where $\exp:\mathfrak{g}\longrightarrow G$ denotes the exponential map. It is easy to see that $s\in C^\infty(M,E)^G$ if and only if $L^E_X s=0$ for all $X\in\mathfrak{g}$ if $G$ is connected. 

\begin{remark}\label{Rmk:KillingVF}
In the special case when $E=M\times \mathbb{C}$ the operator \eqref{Eqn:LieDerE} defines a vector field, which we still denote by $X$ and is called the {\em generating} or {\em Killing vector field}. This vector field acts on a function $f\in C^{\infty}(M)$ as
\begin{align*}
(Xf)(x)=\frac{d}{d t}\bigg{|}_{t=0}f(\exp(-tX)x).
\end{align*}
\end{remark}

Consider now the setting of Example \ref{Ex:ExtAlg} in which $E=\wedge T^* M$. In this case the differential operator defined by \eqref{Eqn:LieDerE} is the usual Lie derivative $L_X$ acting on differential forms. For future reference we recall {\em Cartan's formula}
\begin{equation}\label{Eqn:Cartan}
L_X=d\iota_X+\iota_X d,
\end{equation}
where $\iota_X: \Omega^r(M)\longrightarrow\Omega^{r-1}(M)$ denotes the inner multiplication, or contraction,  by $X$ and $d:\Omega^r(M)\longrightarrow\Omega^{r+1}(M)$ is the exterior derivative. We now define the space of {\em basic differential forms} as
\begin{align}\label{Def:BasicForms}
\Omega_\text{bas}(M)\coloneqq \{\omega\in\Omega(M)\:|\:L_X\omega=0\:\:\text{and}\:\:\iota_X\omega=0 \:\:\forall X\in\mathfrak{g}\}
\end{align}
Using Cartan's formula it is easy to verify that $(\Omega_\text{bas}(M),d)$ is a sub-complex of $(\Omega(M),d)$, the de Rham complex of $M$. Observe also the inclusion $\Omega_\text{bas}(M)\subset \Omega(M)^G$ where $\Omega(M)^G\coloneqq C^\infty(M,\wedge T^*M)^G$ is the space of {\em $G$-invariant differential forms}. This is basically due to the condition $L_X\omega=0$. On the other hand, a differential forms $\omega$ satisfying the condition $\iota_X\omega=0$ for all $X\in\mathfrak{g}$ is called {\em horizontal}. The space of horizontal differential forms on $M$ is denoted by $\Omega_\text{hor}(M)$. Thus, $\Omega_\text{bas}(M)=\Omega_\text{hor}(M)\cap \Omega(M)^G$.\\

The following result relates the space of basic forms and the space of differential forms on the orbit space when the action is free. 
\begin{proposition}[{\cite[Proposition 1.9]{BGV},\cite[Lemma 6.44]{M00}}]\label{Prop:BasicPullback}
Let $G$ act on $M$ freely, then for each basic form $\omega\in \Omega_\textnormal{bas}(M)$ there exists a unique differential form $\bar{\omega}\in\Omega(M/G)$ on the orbit space such that $\omega=\pi_G^*\bar{\omega}$.  
\end{proposition}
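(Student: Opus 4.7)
The plan is to define $\bar\omega$ pointwise using arbitrary lifts, then establish smoothness via local sections of the principal $G$-bundle $\pi_G:M\to M/G$, whose existence is guaranteed by Corollary \ref{Coro:FreeActMnfld}. Uniqueness is immediate: since $\pi_G$ is a surjective submersion, the pullback $\pi_G^*:\Omega(M/G)\to\Omega(M)$ is injective, so any candidate $\bar\omega$ with $\pi_G^*\bar\omega=\omega$ is forced to be unique.

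For existence, given $y\in M/G$ and tangent vectors $v_1,\ldots,v_r\in T_y(M/G)$, I would set
\[
\bar\omega_y(v_1,\ldots,v_r):=\omega_x(\tilde v_1,\ldots,\tilde v_r),
\]
where $x$ is any point of $\pi_G^{-1}(y)$ and $\tilde v_i\in T_xM$ are any lifts with $d\pi_G|_x(\tilde v_i)=v_i$. The heart of the proof is a two-fold well-definedness check. First, if $\tilde v_i'$ is another lift, then $\tilde v_i-\tilde v_i'\in\ker d\pi_G|_x$, which under the free action equals the tangent space to the orbit at $x$, namely $\alpha_x(\mathfrak g)$; hence the difference has the form $X_i(x)$ for some $X_i\in\mathfrak g$, and a multilinear expansion of $\omega_x(\tilde v_1,\ldots,\tilde v_r)$ shows that each correction term contains a factor annihilated by the horizontality condition $\iota_{X_i}\omega=0$. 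Second, any other preimage of $y$ has the form $gx$ with $g\in G$, and one may choose $dg|_x(\tilde v_i)$ as lifts there; then
\[
\omega_{gx}(dg(\tilde v_1),\ldots,dg(\tilde v_r))=(g^*\omega)_x(\tilde v_1,\ldots,\tilde v_r)=\omega_x(\tilde v_1,\ldots,\tilde v_r),
\]
where the last equality uses $G$-invariance of $\omega$ (which, as noted after \eqref{Def:BasicForms}, follows from $L_X\omega=0$).

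For smoothness, I would cover $M/G$ by open sets $U$ over which $\pi_G$ admits smooth local sections $s:U\to M$. Taking $x=s(y)$ and $\tilde v_i=ds|_y(v_i)$ in the defining formula shows $\bar\omega|_U=s^*\omega$, which is smooth; the well-definedness above ensures that these local representatives agree on overlaps and hence define a global smooth form. The identity $\pi_G^*\bar\omega=\omega$ is then tautological: for $x\in M$ and $v_i\in T_xM$, the vectors $v_i$ themselves lift their own projections $d\pi_G|_x(v_i)$, so the defining formula returns $\omega_x(v_1,\ldots,v_r)$.

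The main obstacle is conceptual rather than technical: it is the well-definedness of the pointwise formula, which requires \emph{both} conditions in the definition of a basic form. Horizontality handles a change of lift over the same base point, while $G$-invariance handles a change of base point within the orbit, thereby explaining precisely why both conditions must appear in the definition of $\Omega_{\textnormal{bas}}(M)$.
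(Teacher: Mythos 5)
Your proof is correct and is the standard descent argument found in the cited references (the paper itself gives no proof, deferring to \cite[Proposition 1.9]{BGV}): uniqueness from injectivity of $\pi_G^*$ for a surjective submersion, pointwise well-definedness split into horizontality (change of lift over a fixed $x$, using $\ker d\pi_G|_x=\alpha_x(\mathfrak g)$) and $G$-invariance (change of point within the orbit), and smoothness via $\bar\omega|_U=s^*\omega$ for local sections $s$ of the principal bundle. The only point worth flagging is that your appeal to ``$G$-invariance follows from $L_X\omega=0$'' requires $G$ connected, as the paper's own remark notes; this is harmless here since $G=S^1$, but for a disconnected $G$ the definition \eqref{Def:BasicForms} alone would not suffice and one would need to assume $\omega\in\Omega(M)^G$ explicitly.
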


\subsection{Induced operators on the principal orbit type}\label{Section:InduedOperatorsGen}

In this subsection we are going to describe how to use Theorem \ref{Thm:Fund} to construct a self-adjoint operator on $L^2(F,h)$ from a given $G$-invariant self-adjoint operator on $L^2(E)$. Let $\pi_E:E\longrightarrow M$ be a $G$-equivariant vector bundle as above and let $R:\dom(R)\subseteq L^{2}(E)\longrightarrow L^2(E)$ be a self-adjoint operator with domain of definition $\dom(R)$. 
\begin{definition}\label{Def:GInvOp}
We say that $R$ is {\em $G$-invariant} if it commutes with $G$, that is
\begin{enumerate}
\item $U_g(\dom(R))\subset \dom(R)$ for all $g\in G$.
\item $U_g R(s)=RU_g(s)$ for all $g\in G$ and $s\in\dom(R)$.
\end{enumerate}

\end{definition}

Let us denote its spectral resolution by 
\begin{align}\label{Eqn:SpecResR}
R=\int_{-\infty}^\infty t dR_t,
\end{align}
where $t\longmapsto R_t$ is  left continuous (\cite[Section VI.5]{KATO},\cite[Section 5.3]{S12}). When we identify $R_t$ with its image in $L^2(E)$ we see that $R_t$ is $G$-invariant for $t\in\mathbb{R}$. 
\begin{lemma}[{\cite[Lemma $2.2$]{BH78}}]\label{Lemma:OpS}
The operator $S\coloneqq R|_{\dom(R)\cap L^2(E)^G}$ is a well defined self-adjoint operator on $L^2(E)^G$ with $\dom(S)=\dom(R)^G$.
\end{lemma}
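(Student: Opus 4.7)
The plan is to deduce self-adjointness of $S$ from the spectral resolution \eqref{Eqn:SpecResR} of $R$, by showing that $L^2(E)^G$ is a reducing subspace. First I would check that $S$ is well defined: if $s\in \dom(R)\cap L^2(E)^G$, then for every $g\in G$, $G$-invariance gives $U_g R s = R U_g s = R s$, so $Rs\in L^2(E)^G$. Symmetry of $S$ is then immediate from symmetry of $R$, and the equality $\dom(S)=\dom(R)^G=\dom(R)\cap L^2(E)^G$ is essentially part of the definition.

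For self-adjointness, the key observation is that the spectral projections $R_t$ inherit $G$-invariance from $R$. Concretely, since $U_g\,\dom(R)\subseteq \dom(R)$ and $U_g R = R U_g$ with $U_g$ unitary, the operator $U_g R U_g^{-1}$ is self-adjoint and equals $R$; by uniqueness of the spectral resolution this forces $U_g R_t = R_t U_g$ for all $g\in G$ and $t\in\mathbb{R}$. Integrating this identity against $dG(g)/\vol(G)$ in the sense of the Averaging Principle (Theorem~\ref{Thm:AVPrincp}), and using the fact that $Q=\textnormal{av}(U)$ is exactly the orthogonal projection onto $L^2(E)^G$, yields $Q R_t = R_t Q$.

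From $Q R_t = R_t Q$ one concludes that $L^2(E)^G=\im Q$ is invariant under each $R_t$, i.e.\ $L^2(E)^G$ reduces $R$. Hence the restrictions $\widetilde{R}_t \coloneqq R_t\big|_{L^2(E)^G}$ form a left-continuous spectral family on the Hilbert space $L^2(E)^G$, and the associated self-adjoint operator $\int_{-\infty}^{\infty} t\, d\widetilde{R}_t$ has domain
\begin{align*}
\Bigl\{s\in L^2(E)^G \;\Big|\; \int_{-\infty}^{\infty} t^2\, d\|\widetilde{R}_t s\|^2 < \infty \Bigr\}
= \dom(R)\cap L^2(E)^G,
\end{align*}
and acts as $R$ on this domain. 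This is precisely $S$, so $S$ is self-adjoint on $L^2(E)^G$ with the stated domain.

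I expect the main technical obstacle to be the rigorous verification that the spectral projections commute with $U_g$; once this is established, the reducing-subspace argument is standard and the rest falls out cleanly. An alternative route, avoiding spectral theory, would be to use the criterion that a symmetric operator $S$ is self-adjoint iff $\ran(S\pm i)$ is dense: starting from $R\pm i$ being surjective onto $L^2(E)$ and applying $Q$ (which commutes with $R\pm i$ by the same averaging argument applied to $R_t$) produces preimages inside $L^2(E)^G$, giving surjectivity of $S\pm i$ onto $L^2(E)^G$ directly.
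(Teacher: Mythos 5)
Your proof is correct, but it takes a genuinely different route from the paper's. The paper proves directly that the averaging projection $Q$ maps $\dom(R)$ into $\dom(R)$: it revisits the Riemann-sum construction of the average, observes that each finite sum lies in $\dom(R)$, and argues that the sums converge in the graph norm of $R$ (a Hilbert space), so the limit $Qs$ stays in $\dom(R)$. With that in hand, the paper derives the operator identity $RQ=QRQ=QR$ on $\dom(R)$ via density and symmetry, and then reads off self-adjointness of $S$ from $(RQ)^*=Q^*R^*=QR=RQ$, without ever invoking the spectral theorem inside the proof. Your route instead pushes the $G$-invariance all the way down to the spectral projections (via uniqueness of the spectral resolution and averaging of the identity $U_gR_t=R_tU_g$), concludes that $Q$ commutes with every $R_t$, and reads off both the domain inclusion $Q(\dom R)\subset\dom R$ and self-adjointness as standard facts about reducing subspaces. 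The two approaches are essentially dual: the paper's argument is more elementary in the sense that it uses only symmetry, density and graph-norm convergence, while yours outsources the technical work to the spectral theorem and obtains the identification of $\dom(S)$ by inspection of $\int t^2\,d\|\widetilde R_t s\|^2$. Your observation is also consistent with the paper's Remark~\ref{Rmk:SpecResS}, which records the spectral resolution $S_t=R_t^G$ as a consequence — you are effectively running that remark in reverse as the engine of the proof. One small thing worth making explicit in your write-up: density of $\dom(S)=\dom(R)\cap L^2(E)^G$ in $L^2(E)^G$ is needed and is not purely definitional; in your framework it follows because $Q$ commutes with the spectral projections, hence preserves $\dom(R)$, hence $Q(\dom R)\subset\dom(R)\cap L^2(E)^G$ is dense in $\im Q$.
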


\begin{proof}
First we want to show the restriction of $Q$ to $\dom(R)$ maps into $\dom(R)$. Let $s\in\dom(R)$ then, using the notation of the construction of the average representation from the last section, we see each finite Riemann sum \eqref{RiemmSum} satisfies
\begin{align*}
\sum_{j=1}^N\left(\frac{1}{\vol(G)}\int_G h_j(g) dG(g) \right) U_{g_j} s\in\dom(R),
\end{align*}
since $U_g(\dom(R))\subset\dom(R)$ for all $g\in G$. Moreover, applying $R$ to such a sum we get
\begin{align*}
R\left(\sum_{j=1}^N\left(\frac{1}{\vol(G)}\int_G h_j(g) dG(g) \right) U_{g_j} s\right)=&\left(\sum_{j=1}^N\left(\frac{1}{\vol(G)}\int_G h_j(g) dG(g) \right) R(U_{g_j} s)\right)\\
=&\left(\sum_{j=1}^N\left(\frac{1}{\vol(G)}\int_G h_j(g) dG(g) \right) U_{g_j} (Rs)\right),
\end{align*}
so we see that these sums converge in $(\dom(R),\norm{\cdot}_R)$, where $\norm{\cdot}_R$ denotes the graph norm of $R$, as this space is a Hilbert space. As a result, $Qs$ belongs also to $\dom(R)$.\\
Regarding $R:\dom(R)\longrightarrow L^2(E)$ as a bounded linear operator we have easily see $U_gRQ=RU_gQ=RQ$, on $\dom(R)$,  for all $g\in G$. Thus, $R (\dom(R)\cap L^2(E)^G)\subset L^2(E)^G.$ Using the fact that both $Q$ and $R$ are symmetric we compute for $s,s'\in\dom(R)$,
\begin{align*}
(s,QRQs')_{L^2(E)}=(QRQs,s')_{L^2(E)}=(RQs,s')_{L^2(E)}=(s,QRs')_{L^2(E)},
\end{align*}
which implies $QRQ=QR$ since $R$ is densely defined.  Hence, $RQ=QRQ=QR$ on $\dom(R)$. This shows the operator $S$ is well defined. Finally note that as bounded operators $(RQ)^*=Q^*R^*=QR=RQ$ on $\dom(S)$, thus $S$ is self-adjoint. 
\end{proof}
\begin{remark}\label{Rmk:SpecResS}
The associated spectral resolution of $S$ is obtained from \eqref{Eqn:SpecResR} as 
\begin{align*}
S= \int_{-\infty}^\infty t \: dS_t\quad \text{with}\quad S_t=R_t^G.
\end{align*}
\end{remark}

From Theorem \ref{Thm:Fund} and Lemma \ref{Lemma:OpS} we deduce the following remarkable result. 
\begin{proposition}[{\cite[pg. 178-179]{BH78}}]\label{Prop:OpT}
Define an operator $$T:\dom(T)\subseteq L^2(F,h)\longrightarrow L^2(F,h)$$ by the relation
\begin{equation*}
T \coloneqq \Phi\circ S\circ \Phi^{-1}\big{|}_{\Phi(\dom(S))},
\end{equation*}
with $\dom(T)\coloneqq \Phi(\dom(S))$. Then $T$ is a self-adjoint operator. 
\end{proposition}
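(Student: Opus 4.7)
The proof is essentially a verification that self-adjointness is preserved under conjugation by a unitary isomorphism of Hilbert spaces. The plan is to use Theorem \ref{Thm:Fund}, which identifies $\Phi:L^2(E)^G\to L^2(F,h)$ as an isometric (hence unitary) isomorphism, together with the self-adjointness of $S$ on $L^2(E)^G$ from Lemma \ref{Lemma:OpS}. Since conjugation by a unitary map is a well-known self-adjointness-preserving operation, the claim should follow by a short direct verification.

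First I would check that $T$ is well defined and densely defined. By construction $\dom(T)=\Phi(\dom(S))$, and since $\dom(S)=\dom(R)^G$ is dense in $L^2(E)^G$ and $\Phi$ is an isometric isomorphism onto $L^2(F,h)$, the image $\Phi(\dom(S))$ is dense in $L^2(F,h)$. Also $T$ maps into $L^2(F,h)$ because $S(\dom(S))\subset L^2(E)^G$ and $\Phi$ is defined on $L^2(E)^G$.

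Next I would verify symmetry: for $\bar s_1,\bar s_2\in\dom(T)$, write $s_i=\Phi^{-1}\bar s_i\in\dom(S)$. Then
\begin{align*}
(T\bar s_1,\bar s_2)_{L^2(F,h)}
&=(\Phi S\Phi^{-1}\bar s_1,\bar s_2)_{L^2(F,h)}
=(Ss_1,s_2)_{L^2(E)}\\
&=(s_1,Ss_2)_{L^2(E)}
=(\bar s_1,T\bar s_2)_{L^2(F,h)},
\end{align*}
where I used the isometry property from Theorem \ref{Thm:Fund} and the self-adjointness of $S$ from Lemma \ref{Lemma:OpS}.

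The one real step is showing $\dom(T^*)\subseteq\dom(T)$. Let $\bar s\in\dom(T^*)$; then there exists $\bar u\in L^2(F,h)$ with $(T\bar s_1,\bar s)_{L^2(F,h)}=(\bar s_1,\bar u)_{L^2(F,h)}$ for all $\bar s_1\in\dom(T)$. Translating via $\Phi$, writing $s=\Phi^{-1}\bar s$ and $u=\Phi^{-1}\bar u$, this reads $(Ss_1,s)_{L^2(E)}=(s_1,u)_{L^2(E)}$ for all $s_1\in\dom(S)$, so $s\in\dom(S^*)=\dom(S)$ with $Ss=u$. Hence $\bar s=\Phi s\in\Phi(\dom(S))=\dom(T)$ and $T^*\bar s=\Phi Ss=T\bar s$. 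Combined with the symmetry step, this gives $T^*=T$. The main obstacle, if any, is merely being careful that the conjugation $\Phi\circ S\circ\Phi^{-1}$ is interpreted with the correct domain so that the adjoint computation goes through; once that is set up, unitarity of $\Phi$ does all the work.
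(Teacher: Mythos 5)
Your proof is correct and is exactly the argument the paper intends: the paper states the proposition without an explicit proof, simply noting it is deduced from Theorem \ref{Thm:Fund} (isometric isomorphism $\Phi$) and Lemma \ref{Lemma:OpS} (self-adjointness of $S$), and your write-up is the standard verification that self-adjointness is preserved under unitary conjugation. No gaps.
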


\begin{lemma}[{\cite[Lemma 2.3]{BH78}}]
We have the following relation between the spectrum of $R$ and the spectrum of $T$:
\begin{enumerate}
\item $\spec(T)\subset \spec(R)$.
\item If $\lambda\in\spec(T)$ is an isolated eigenvalue of $R$ then $R$ is also an eigenvalue of $T$.
\end{enumerate} 
\end{lemma}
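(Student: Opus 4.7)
Since $T=\Phi\circ S\circ \Phi^{-1}$ and $\Phi$ is a unitary isomorphism between $L^2(E)^{G}$ and $L^2(F,h)$, the two operators $S$ and $T$ are unitarily equivalent, so $\spec(T)=\spec(S)$ and eigenvalues (and their multiplicities, via $\Phi^{-1}$) are preserved. Thus both statements reduce to comparing $\spec(S)$ and $\spec(R)$, which is natural since $S$ is the restriction of $R$ to the $G$-invariant subspace $L^2(E)^{G}=QL^{2}(E)$, and by Remark \ref{Rmk:SpecResS} the corresponding spectral resolutions are related by $S_{t}=R_{t}Q=QR_{t}$.

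For part (1), I would argue via resolvents. Fix $\lambda\in\mathbb{C}\setminus\spec(R)$. Then $(R-\lambda)^{-1}$ is a bounded operator on $L^{2}(E)$. Because $R$ commutes with the unitary representation $U_{g}$, and hence with the averaging projection $Q$ (see the proof of Lemma \ref{Lemma:OpS}), the resolvent also commutes with $Q$: the identity $(R-\lambda)Q=Q(R-\lambda)$ on $\dom(R)$ passes to $Q(R-\lambda)^{-1}=(R-\lambda)^{-1}Q$ on $L^{2}(E)$. Consequently $(R-\lambda)^{-1}$ maps $L^{2}(E)^{G}$ into itself, and its restriction is a bounded two-sided inverse for $S-\lambda$. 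Hence $\lambda\notin \spec(S)$, proving $\spec(S)\subset \spec(R)$, and therefore $\spec(T)\subset \spec(R)$.

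For part (2), suppose $\lambda\in\spec(T)=\spec(S)$ is an isolated point of $\spec(R)$. Since $\spec(S)\subset\spec(R)$ by part (1), $\lambda$ is also isolated in $\spec(S)$. By the spectral theorem an isolated point of the spectrum of a self-adjoint operator is an eigenvalue; concretely, pick $\epsilon>0$ with $(\lambda-\epsilon,\lambda+\epsilon)\cap \spec(R)=\{\lambda\}$ and consider the projection $P_{\lambda}\coloneqq R_{\lambda+\epsilon/2}-R_{\lambda-\epsilon/2}$ onto the eigenspace of $R$ for $\lambda$. Using $S_{t}=QR_{t}$, the analogous spectral projection for $S$ on the same interval is $QP_{\lambda}$, which is nonzero because $\lambda\in\spec(S)$. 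Its range is exactly the space of $G$-invariant eigenvectors of $R$ with eigenvalue $\lambda$, and these are eigenvectors of $S$ with eigenvalue $\lambda$. Applying $\Phi$ transports them to eigenvectors of $T$ with eigenvalue $\lambda$, so $\lambda$ is an eigenvalue of $T$.

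The only real subtlety is justifying that $Q$ commutes with $R_{t}$ (and hence with spectral projections on intervals around $\lambda$); this follows from the fact that $Q$ commutes with $R$ together with the functional calculus, noting that Lemma \ref{Lemma:OpS} already established $RQ=QR$ on $\dom(R)$, so by applying this identity to the resolvent and then to bounded Borel functions of $R$, the spectral projections of $R$ all commute with $Q$. This is the step I would handle most carefully, as the rest is essentially bookkeeping once the spectral projection identity $S_{t}=QR_{t}$ is firmly in hand.
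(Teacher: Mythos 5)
Your proposal is correct and follows essentially the same route as the paper: part (1) via the commutation of $Q$ with the resolvent of $R$, and part (2) via the identity $S_t=QR_t$ relating the spectral projections, which the paper phrases contrapositively (if $Q(W)=0$ then $S_t$ is constant near $\lambda$) while you argue the direct implication. The only cosmetic difference is this direction of argument; the mathematical content is identical.
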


\begin{proof}
For $z\notin\Sp(R)$ the projection $Q$ commutes with the resolvent $(R-z)^{-1}$ and therefore the $(R-z)^{-1}$ can be regarded as an operator on $L^2(E)^G$, thus $z\notin\spec(S)$. This shows $\spec(S)\subset \spec(R)$, and since $S$ and $T$ are unitarily equivalent then the first assertion follows. For the second assertion recall first that each isolated point of the spectrum of a self-adjoint operator is en eigenvalue (\cite[Corollary 5.11]{S12}). Now assume that $\lambda$ is an isolated eigenvalue of $R$ with eigenspace $W$ and $Q(W)=0$ then it follows from Remark \ref{Rmk:SpecResS} that $S_t$ is constant near $t=\lambda$ and therefore $\lambda\notin\spec(S)$ (\cite[Proposition 5.10(i)]{S12}). 
\end{proof}

\subsubsection{Example: Differential operators}\label{Section:DiffOp}

To finish this section we are going to describe a special case of the construction above for which the operator $R$ is generated by a differential operator. One defines a {\em $k$-th order differential operator} $D$ acting between sections of two vector bundles $E_1$ and $E_2$ over a manifold $M$ as a linear map $D:C^\infty(M,E_1)\longrightarrow C^\infty(M,E_2)$ satisfying the following property:
For each $x\in M$ and $s\in C^\infty(M,E_1)$ the condition $j_k(s)(x)=0$ implies $Ds(x)=0$ (\cite[Chapter IV.3]{PALAIS}). Given such an operator we can consider its restriction to smooth sections with compact support $D:C^\infty_c(M,E_1)\longrightarrow C_c^\infty(M,E_2)$.
Let us assume further that $M$ is an oriented, Riemannian and that $E_1, E_2$ are Hermitian vector bundles. These additional structures allow us to define the {\em formal adjoint} operator  $D^\dagger :C^\infty_c(M,E_2)\longrightarrow C_c^\infty(M,E_1)$ of $D$ by the relation
\begin{align*}
(Ds_1,s_2)_{L^2(E_2)}=(s_1,D^\dagger s_2)_{L^2(E_1)}, \quad\forall s_1\in C^\infty_c(M,E_1), s_2\in C^\infty_c(M,E_2).
\end{align*}
It can be shown, using integration by parts in local charts, that $D^\dagger$ exists and it is also a differential operator. The existence of the formal adjoint implies that $D$ is a {\em closable operator,} i.e. given a sequence $(s_n)_n\subset C^\infty_c(M,E_1)$ such that $s_n\longrightarrow s$ and $Ds_n\longrightarrow 0$ (here we mean $L^2$-convergence) then $s=0$. Hence, we can define its closure $\bar{D}\eqqcolon D_\text{min}$, also called the {\em minimal extension}, which is a closed operator with  domain of definition
\begin{align*}
\dom(D_\text{min})\coloneqq 
\left\{
\begin{array}{c}
s\in L^2(M,E_1)\: : \: \exists (s_n)_n \subset C_c(M,E_1), \tilde{s}\in L^2(M,E_2)\:\:\text{such that}\\
\quad \quad \quad \:\quad s_n\longrightarrow s\:\:\text{and}\:\: Ds_n\longrightarrow \tilde{s}\in L^2(M,F).
\end{array}
\right\}
\end{align*}
For such $s\in\dom(D_\text{min})$ we define $D_\text{min}s\coloneqq \tilde{s}$. \\
There is another important closed extension of a differential operator, the so called {\em maximal extension}, which is defined by its distributional action. More precisely, for $s\in L^2(M,E_1)$ we say that $Ds\in L^2(M,E_2)$ if there exists $\tilde{s} \in L^2(M,E_2)$ such that for all $s'\in C_c^\infty(M,E_1)$ the relation 
$$(s',s)_{L^2(E_1)}=(D^\dagger s',\tilde{s})_{L^2(E_2)},$$  
holds true. We define
\begin{align*}
\dom(D_\text{max})\coloneqq
\{s\in L^2(M,E_1)\:|\: Ds\in L^2(M,E_2)\},
\end{align*}
and for $s\in\dom(D_\text{max})$ we set $D_\text{max}s\coloneqq Ds$. Any choice of closed extension between the minimal extension and maximal extension is called an {\em ideal boundary condition} (\cite{Ch79}). 

The following result shows that the difference between the minimal and the maximal extension for a first order differential operator happens at ``infinity''.
\begin{lemma}[{\cite[Lemma 2.1]{GL02}}]\label{LemmaGL02}
Let $D:C^\infty_c(M,E_1)\longrightarrow C_c^\infty(M,E_2)$ be a first order differential operator. If $s\in\dom(D_\textnormal{max})$ and $\phi\in C_c^\infty (M)$ then $\phi s\in\dom(D_\textnormal{min})$. 
\end{lemma}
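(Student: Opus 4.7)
The plan is to reduce the assertion to a purely local statement and then apply Friedrichs mollification. First I would verify that $\phi s\in\dom(D_\textnormal{max})$ with compact support. Since $D$ is first order, the commutator $[D,\phi]$, initially defined on $C_c^\infty(M,E_1)$ by $[D,\phi]u\coloneqq D(\phi u)-\phi Du$, is a zero-order operator, i.e.\ a smooth bundle homomorphism $E_1\to E_2$ determined by the principal symbol of $D$ evaluated at $d\phi$. Testing against $s'\in C_c^\infty(M,E_2)$ and using $\phi s'\in C_c^\infty$, one computes distributionally
\begin{align*}
(D^\dagger s',\phi s)_{L^2(E_1)}
&=(D^\dagger(\phi s'),s)_{L^2(E_1)}-([D^\dagger,\phi]s',s)_{L^2(E_1)}\\
&=(s',\phi D_\textnormal{max} s)_{L^2(E_2)}+(s',[D,\phi]s)_{L^2(E_2)},
\end{align*}
so $\phi s\in\dom(D_\textnormal{max})$ and $D_\textnormal{max}(\phi s)=\phi D_\textnormal{max}s+[D,\phi]s\in L^2(M,E_2)$. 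In particular $\phi s$ is a compactly supported element of $\dom(D_\textnormal{max})$.

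Next I would approximate $\phi s$ by elements of $C_c^\infty(M,E_1)$ using Friedrichs mollifiers. Cover $K\coloneqq\supp(\phi)$ by finitely many trivializing charts $(\mathcal{O}_i,\kappa_i)$ with $E_1,E_2$ trivial over $\mathcal{O}_i$, choose a subordinate partition of unity $\{\chi_i\}$, and write $\phi s=\sum_i \chi_i\phi s$. Each piece $\chi_i\phi s$ is a compactly supported $L^2$ section in a single chart, where $D$ takes the form $D=\sum_j A_j(x)\partial_j+B(x)$ with smooth coefficients. Let $J_\varepsilon$ denote convolution with a standard mollifier; then $J_\varepsilon(\chi_i\phi s)\in C_c^\infty$ for small $\varepsilon$ and converges to $\chi_i\phi s$ in $L^2$.

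The key ingredient is the Friedrichs commutator lemma: for any first order operator $D$ with smooth coefficients and any $u\in L^2$ of compact support, $[D,J_\varepsilon]u\to 0$ in $L^2$ as $\varepsilon\to 0$. Granting this,
\begin{align*}
D\bigl(J_\varepsilon(\chi_i\phi s)\bigr)=J_\varepsilon\bigl(D_\textnormal{max}(\chi_i\phi s)\bigr)+[D,J_\varepsilon](\chi_i\phi s)\longrightarrow D_\textnormal{max}(\chi_i\phi s)
\end{align*}
in $L^2$, so $\chi_i\phi s\in\dom(D_\textnormal{min})$, and summing over $i$ yields $\phi s\in\dom(D_\textnormal{min})$ with $D_\textnormal{min}(\phi s)=D_\textnormal{max}(\phi s)$.

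The main obstacle is the Friedrichs commutator estimate, but it is a standard fact: writing $[A_j\partial_j,J_\varepsilon]u(x)$ explicitly as an integral against $\varepsilon^{-n-1}(\nabla\rho)((x-y)/\varepsilon)\cdot(A_j(x)-A_j(y))u(y)$ plus a term involving $\partial_j A_j$, one obtains uniform $L^2$-boundedness via Young's inequality and convergence to zero by density of $C_c^\infty$ in $L^2$. The compactness of $\supp(\chi_i\phi s)$ is what allows this local argument to close, and compact support is precisely where the hypothesis $\phi\in C_c^\infty(M)$ enters in an essential way, ensuring that the possibly pathological behavior of $s$ ``at infinity'' in $M$ plays no role.
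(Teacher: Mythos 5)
Your proof is correct. The paper itself gives no proof of this lemma—it is quoted directly from the cited reference \cite[Lemma 2.1]{GL02}—and your argument (first showing $\phi s\in\dom(D_\textnormal{max})$ with $D_\textnormal{max}(\phi s)=\phi D_\textnormal{max}s+[D,\phi]s$ via the zero-order commutator, then localizing with a partition of unity and applying Friedrichs' commutator lemma for mollifiers) is precisely the standard argument used there, with no gaps.
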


Let us recall the notion of {\em Hilbert space adjoint}  $D^*$ of the operator $D$. We say that $s\in\dom(D^*)\subset L^2(M,E_2)$ if the map
\begin{align*}
\xymatrixcolsep{2cm}\xymatrixrowsep{0.01cm}\xymatrix{
C_c(M,E_1 ) \ar[r] & \mathbb{C}\\
\tilde{s} \ar@{|->}[r] & (D\tilde{s},s)_{L^2(E_2)}
}
\end{align*} 
is continuous. In this case, since $D$ is densely defined, there exists a unique section $s'\in L^2(M,E_2)$ such that $(D\tilde{s},s)_{L^2(E_1)}=(\tilde{s},s')_{L^2(E_1)}$. For such $s\in\dom(D^*)$ we set $D^* s\coloneqq s'$. 
\begin{definition}
A differential operator $D:C^\infty_c(M,E_1)\longrightarrow C^\infty_c(M,E_2)$ is called 
\begin{itemize}
\item {\em Symmetric} if $D=D^\dagger.$
\item {\em Self-adjoint} if $D=D^*$.
\item {\em Essentially self-adjoint} if $\bar{D}=D^*$.
\end{itemize}
\end{definition}

\begin{remark}
Note that the definition of the maximal extension of $D $ given above is equivalent to $D_\textnormal{max}=(D^\dagger)^*$, i.e. $D_\text{max}$ is the Hilbert space adjoint of $D^\dagger$. 
\end{remark}

\begin{remark}\label{Remark:GL02}
Lemma \ref{LemmaGL02} shows in particular that any symmetric first order differential operator $D:C^\infty(M,E_1)\longrightarrow C^\infty(M,E_2)$ over a closed manifold $M$, i.e. compact without boundary, satisfies 
\begin{align*}
D_\textnormal{min}=D_\textnormal{max}=(D^\dagger)^*=D^*=(D_\textnormal{min})^*,
\end{align*}
i.e. is $D$ essentially self-adjoint. 
\end{remark}

Let $D:C_c^\infty(M,E_1)\longrightarrow C_c^\infty(M,E_2)$ be a differential operator of order $k$, recall that the {\em principal symbol} $\sigma_P(D)(x,\xi):E_{1,x}\longrightarrow E_{2,x}$ of $D$ at a point $(x,\xi)\in T^*M$ is the linear map defined by
\begin{align}\label{Def:PrincipalSymbol}
\sigma_P(D)(x,\xi)(e)\coloneqq D\left(\frac{(-i)^k}{k!}\phi^k s\right)(x), 
\end{align}
where $\phi\in C^\infty (M)$ is any function with $d\phi(x)=\xi$ and $\phi(x)=0$, $s\in C^\infty(M,E_1)$ such that $s(x)=e$. We say that $D$ is {\em elliptic} if $\sigma_P(D)(x,\xi)$ is invertible for all $\xi\neq 0$.\\
In the context in which a compact Lie group $G$ acts on $M$, as in last section, we define
\begin{align*}
T_G^*M\coloneqq \{(x,\xi)\in T^*M\:|\:\xi(v)=0,\:\forall v\in T_x M\:\text{tangent to the orbit $Gx$}\}.
\end{align*}
We say that $D$ is {\em transversally elliptic} if $\sigma_P(D)(x,\xi)$ is inverible for all non-zero $\xi\in T^*_GM$.\\

The following theorem shows one, among many others, important consequence of the ellipticity condition. 
\begin{theorem}[{\cite[Theorem III.5.8]{LM89}}]\label{Thm:MainEllipticOp}
Let $D:C^\infty(M,E)\longrightarrow C^\infty(M,E)$ be a symmetric $k$-th order elliptic differential operator over a closed Riemanian manifold $M$. Then $D$ is essentially self-adjoint and discrete. Moreover, we have an orthogonal decomposition 
\begin{align*}
L^2(E)=\bigoplus_{\lambda\in\spec D}E_\lambda,
\end{align*}
where $E_\lambda\subset C^\infty(M,E)$ is the finite dimensional eigenspace of $\lambda\in\spec(D)\subset \mathbb{R}$.
\end{theorem}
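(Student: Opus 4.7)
The plan is to prove the three claims (essential self-adjointness, discreteness, and the orthogonal eigendecomposition) by combining elliptic regularity with the compactness of Sobolev embeddings on a closed Riemannian manifold. The linchpin is the interior elliptic estimate (a consequence of constructing a pseudodifferential parametrix for $D$), which I would quote as a black box: for every $s\in\mathbb{R}$ there exists $C_s>0$ such that
\begin{equation*}
\norm{u}_{H^{s+k}(M,E)}\leq C_s\bigl(\norm{Du}_{H^s(M,E)}+\norm{u}_{H^s(M,E)}\bigr),\qquad u\in C^\infty(M,E),
\end{equation*}
where the Sobolev spaces $H^s(M,E)$ are defined via local trivializations and a partition of unity, and Rellich-Kondrachov gives the compactness of the inclusion $H^s(M,E)\hookrightarrow H^t(M,E)$ whenever $s>t$.

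First I would establish essential self-adjointness. Because $M$ is closed there is no boundary to worry about, so the estimate above together with a standard duality/difference-quotient argument identifies the maximal domain as $\dom(D_{\max})=H^k(M,E)$. Smooth sections $C^\infty(M,E)$ are dense in $H^k(M,E)$ (by convolution mollification in local charts, glued with a partition of unity), so $\dom(D_{\min})=H^k(M,E)=\dom(D_{\max})$. Combined with the hypothesis $D=D^\dagger$, this gives $D_{\min}=D_{\max}=(D^\dagger)^*=D^*$, which is precisely essential self-adjointness (compare Remark \ref{Remark:GL02}, which handles the first-order case in the same spirit).

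Next I would prove discreteness. Denoting by $\bar D$ the unique self-adjoint extension, functional calculus guarantees that $\bar D\pm i$ are bijections from $H^k(M,E)$ onto $L^2(E)$, with bounded inverses $(\bar D\pm i)^{-1}:L^2(E)\to H^k(M,E)$; composing with the Rellich inclusion $H^k(M,E)\hookrightarrow L^2(E)$ shows that $(\bar D\pm i)^{-1}$ is a compact self-adjoint operator on $L^2(E)$. The classical spectral theorem for compact self-adjoint operators then yields an orthonormal basis of eigenvectors for $(\bar D+i)^{-1}$, whose eigenvalues accumulate only at $0$; translating back, the spectrum of $\bar D$ consists of real eigenvalues of finite multiplicity accumulating only at $\pm\infty$, and $L^2(E)=\bigoplus_{\lambda\in\spec D}E_\lambda$ orthogonally.

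Finally, to see $E_\lambda\subset C^\infty(M,E)$, I would bootstrap the elliptic estimate: if $u\in L^2(E)$ satisfies $\bar D u=\lambda u$, then $u\in H^k$ by definition of the maximal domain, hence $Du=\lambda u\in H^k$, so $u\in H^{2k}$, and iterating gives $u\in\bigcap_s H^s(M,E)=C^\infty(M,E)$ by Sobolev embedding. The main obstacle is the elliptic regularity estimate itself; everything else is then abstract functional analysis combined with Rellich compactness.
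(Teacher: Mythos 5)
The paper offers no proof of this statement: it is quoted directly from \cite[Theorem III.5.8]{LM89}, so there is no internal argument to compare against. Your sketch is the standard textbook proof (parametrix and elliptic a priori estimate, identification of $\dom(D_{\textnormal{max}})$ with $H^k(M,E)$ by elliptic regularity, density of $C^\infty(M,E)$ in $H^k(M,E)$ to conclude $D_{\textnormal{min}}=D_{\textnormal{max}}$, Rellich compactness for the resolvent, and bootstrapping for smoothness of eigensections), and it is essentially correct. One small repair is needed in the discreteness step: $(\bar D+i)^{-1}$ is compact but \emph{not} self-adjoint, since its adjoint is $(\bar D-i)^{-1}$; it is, however, normal, so you should either invoke the spectral theorem for compact normal operators, or replace it by the genuinely compact self-adjoint operator $(\bar D^2+1)^{-1}$ (or $(\bar D-\lambda_0)^{-1}$ for a real $\lambda_0$ in the resolvent set), or simply cite the fact that a self-adjoint operator with compact resolvent has discrete real spectrum with finite-dimensional eigenspaces. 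With that adjustment the orthogonal eigendecomposition and the inclusion $E_\lambda\subset C^\infty(M,E)$ follow exactly as you describe.
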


After this brief discussion on differential operators on vector bundles and in view of Theorem \ref{Thm:MainEllipticOp} it is natural to be interested in the particular  case of the construction described in Section \ref{Section:InduedOperatorsGen} when $R$ {\em is generated by a differential operator} $D$, that is, when $C^\infty_c(M,E)\subset\dom(R)$ and $R|_{C^\infty_c(M,E)}=D$. The following result states that in this case the induced operator $T$  on $M_0/G$ of Proposition \ref{Prop:OpT} is also generated by a differential operator of the same order. 
\begin{proposition}[{\cite[Theorem 2.4]{BH78}}]\label{Prop:SDiff}
If $R$ is generated by a differential operator $D$ of order $k$, then $T$ is also generated by a certain differential operator $D'$ of order $k$. Their principal symbols are related by the formula
\begin{equation*}
\sigma_P(D')(y,\xi)(\pi_G'(e))=\pi_G'(\sigma_P(D)(x,\pi_G^*\xi)(e)),
\end{equation*}
where $y\in M_0/G$, $\xi\in T^*_y (M_0/G)$, $x\in\pi^{-1}_G(y)$ and $e\in E'_x$. 
\end{proposition}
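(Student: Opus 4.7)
The plan is to define the candidate $D' := \Phi \circ D \circ \Phi^{-1}$ on $C_c^\infty(M_0/G, F)$, show it generates $T$ on this space, verify it is a $k$-th order differential operator via the jet criterion, and finally read off its principal symbol from the definition.

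First I would verify the construction is legitimate. Since $R$ is generated by $D$, every compactly supported smooth section on $M$ lies in $\dom(R)$; in particular, extending by zero, $C_c^\infty(M_0, E)^G \subset \dom(R) \cap L^2(E)^G = \dom(S)$. By Corollary \ref{Coro:PropPhi}(1), $\Phi$ maps this space onto $C_c^\infty(M_0/G, F)$, which thus lies in $\dom(T)$. Because $D$ commutes with every $U_g$, it preserves $G$-invariant sections, so $D' := \Phi \circ D \circ \Phi^{-1}$ is a well-defined linear map $C_c^\infty(M_0/G, F) \to C_c^\infty(M_0/G, F)$, and $T$ restricted to this space coincides with $D'$ by construction.

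Next I would show that $D'$ is a differential operator of order $k$ by checking the defining locality property. Suppose $s \in C_c^\infty(M_0/G, F)$ satisfies $j_k(s)(y) = 0$ at some $y \in M_0/G$, and pick any $x \in \pi_G^{-1}(y)$. Corollary \ref{Coro:jets} gives $j_k(\Phi^{-1} s)(x) = 0$, hence $D(\Phi^{-1} s)(x) = 0$ because $D$ has order $k$, and therefore $D's(y) = \pi_G'\bigl(D(\Phi^{-1}s)(x)\bigr) = 0$, as required.

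For the principal symbol identity I would unwind the definition in \eqref{Def:PrincipalSymbol}. Fix $y \in M_0/G$, $\xi \in T^*_y(M_0/G)$, $x \in \pi_G^{-1}(y)$, $e \in E'_x$, and choose $\phi \in C^\infty(M_0/G)$ with $\phi(y) = 0$, $d\phi(y) = \xi$, together with $s \in C_c^\infty(M_0/G, F)$ with $s(y) = \pi_G'(e)$. Set $\tilde\phi := \phi \circ \pi_G$ and $\tilde s := \Phi^{-1} s$; then $\tilde\phi(x) = 0$, $d\tilde\phi(x) = \pi_G^*\xi$, and $\tilde s(x) = e$ (since $\pi_G'$ restricted to $E'_x$ is injective, so $\Phi^{-1}s(x)$ equals the unique preimage $e \in E'_x$ of $\pi_G'(e)$). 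Iterating Corollary \ref{Coro:PropPhi}(3) yields $\Phi^{-1}(\phi^k s) = \tilde\phi^k \tilde s$, and applying first the definition of $\sigma_P(D)$ at $x$ and then $\Phi$ at $y$ produces exactly $\sigma_P(D')(y,\xi)(\pi_G'(e)) = \pi_G'\bigl(\sigma_P(D)(x, \pi_G^*\xi)(e)\bigr)$.

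The one point that deserves care — and in my view the main obstacle beyond routine bookkeeping — is that $\sigma_P(D)(x, \pi_G^*\xi)(e)$ must actually lie in $E'_x$ for $\pi_G'$ to act on it. This follows because the $G$-invariance of $D$ forces its principal symbol to be $G_x$-equivariant, while the basic covector $\pi_G^*\xi$ is itself $G_x$-invariant, so $\sigma_P(D)(x, \pi_G^*\xi)$ preserves the fixed subspace $E_x^{G_x} = E'_x$. Apart from this equivariance check, the proposition is essentially a transport of $D$ through the unitary $\Phi$, with Corollary \ref{Coro:jets} doing the locality work.
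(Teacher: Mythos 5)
Your proposal is correct and follows essentially the same route as the paper: conjugate $D$ by $\Phi$, use Corollary \ref{Coro:jets} to verify the order-$k$ locality criterion, and unwind the symbol definition via $\Phi^{-1}(\phi^k s)=(\phi\circ\pi_G)^k\,\Phi^{-1}s$. The only differences are cosmetic — the paper additionally extends $D'$ to all of $C^\infty(M_0/G,F)$ by a partition of unity, and your explicit equivariance check that $\sigma_P(D)(x,\pi_G^*\xi)$ preserves $E'_x$ is handled implicitly in the paper by noting that $D(\Phi^{-1}(\phi^k s))$ is itself a $G$-invariant section, so its value at $x$ automatically lies in $E_x^{G_x}=E'_x$.
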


\begin{proof}
From Corollary \ref{Coro:PropPhi} we know that 
$$C^\infty_c(M_0/G,F)\subset\dom(T) \quad\text{and}\quad \Phi(C_c^\infty(M_0,E)^G)=C_c^\infty(M_0/G,F),$$
thus for $s\in C^\infty_c(M_0/G,F)$  we have $\supp(Ts)=\supp(\Phi(D(\Phi^{-1}s)))\subset\supp(s)$,
since $R$ is generated by the differential operator $D$. Now let $\{\phi_i\}_i$ be a partition of unity on $M_0/G$. Define for $s\in C^\infty(M_0/G,F)$ 
\begin{align*}
D'(s)\coloneqq \sum_{i}T(\phi_i s).
\end{align*}
Then $D'$ is a linear extension of $T|_{C^\infty_c(M_0/G,F)}$ to $C^\infty(M_0/G,F)$ and from the discussion above we have $\supp(D's)\subset\supp(s)$ for all $s\in C^\infty(M_0/G,F)$. Next we verify that $D'$ is indeed a differential operator of order $k$.  Fix $y\in M_0/G$, $x\in\pi^{-1}_G(y)$ and let  $s\in C^{\infty}(M_0/G,F)$ be such that $j_ks(y)=0$, we want to show $D's(y)=0$. Choose a cut-off function $\psi\in C_c^\infty(M_0/G)$ such that $\psi=1$ in a neighborhood around $y$, then from Corollary \ref{Coro:PropPhi} and Corollary \ref{Coro:jets} we get
\begin{align*}
D's(y)=D'(\psi s)(y)=\Phi(D(\Phi^{-1}\psi s))(\pi_G(x))=\pi'_G(D(\Phi^{-1}(\psi s)(x))=0. 
\end{align*}
Hence, $T$ is generated by a differential operator of order $k$.\\
To derive the relation between the principal symbols we just compute \eqref{Def:PrincipalSymbol} using Corollary \ref{Coro:PropPhi}. Concretely, choose $y\in M_0/G$, $x\in \pi_G^{-1}(y)$, $e\in E'_x$, $\xi\in T^*_y(M_0/G)$, $\phi\in C^{\infty}_c(M_0/G)$ so that $\phi(y)=0$ and $d\phi(y)=\xi$, and $s\in C_c(M_0/G,F)$ such that $s(y)=\pi'_G(e)$. Now we calculate
\begin{align*}
\sigma_P(D')(y,\xi)(\pi_G'(e))=&D'\left(\frac{(-1)^k}{k!}\phi^k s\right)(y)\\
=&\Phi\left(D\left(\Phi^{-1}\left(\frac{(-1)^k}{k!}\phi^k s\right)\right)\right)(\pi_G(x))\\
=&\pi_G'\left(D\left(\frac{(-1)^k}{k!}(\phi\circ\pi_G)^k \Phi^{-1}s\right)(x)\right)\\
=&\pi_G'(\sigma(D)(x,\pi_G^*\xi)(e)). 
\end{align*}
\end{proof}
\begin{coro}
The operator $D'$ is elliptic if $D$ is transversally elliptic. 
\end{coro}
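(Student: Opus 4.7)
My plan is to use the principal symbol identity
\[\sigma_P(D')(y,\xi)(\pi_G'(e))=\pi_G'(\sigma_P(D)(x,\pi_G^*\xi)(e))\]
from Proposition \ref{Prop:SDiff} to reduce ellipticity of $D'$ at a nonzero covector $(y,\xi)\in T^*(M_0/G)$ to transversal ellipticity of $D$ at the pulled-back covector on $M$.

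The first step is to verify that $\pi_G^*\xi$ is a nonzero element of $T^*_GM$ whenever $\xi\neq 0$. Since $\pi_G:M_0\to M_0/G$ is a submersion whose fibers are the $G$-orbits, the differential $d\pi_G|_x$ is surjective with kernel spanned by vectors tangent to $Gx$. Dually, $\pi_G^*$ is injective and any vector tangent to $Gx$ is annihilated by $\pi_G^*\xi$; this places $\pi_G^*\xi$ in $T^*_GM$. Transversal ellipticity of $D$ then yields that $\sigma_P(D)(x,\pi_G^*\xi):E_x\to E_x$ is a linear isomorphism.

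The second step is to show that this isomorphism restricts to an isomorphism on $E'_x=E_x^{G_x}$. Here I would invoke the $G$-equivariance of $\sigma_P(D)$ coming from the $G$-invariance of $D$, together with the fact that $\pi_G^*\xi$ is $G_x$-invariant since $\pi_G\circ g=\pi_G$ for all $g\in G_x$. These imply that $\sigma_P(D)(x,\pi_G^*\xi)$ commutes with the tangent representation $\rho_x$ of \eqref{Eqn:rho}, so it preserves $E'_x$; injectivity on $E'_x$ is immediate from injectivity on $E_x$, while surjectivity is obtained by applying the $G_x$-averaging projection of Theorem \ref{Thm:AVPrincp} to any preimage in $E_x$.

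The final step is to transfer the conclusion to $F_y$ via $\pi'_G$. By the construction of $F$ and the fact that $G_x$ acts trivially on $E'_x$, the restriction $\pi'_G|_{E'_x}:E'_x\to F_y$ is a linear bijection, consistent with the rank equality in Remark \ref{Remark:rkF}. The symbol formula then exhibits $\sigma_P(D')(y,\xi)$ as a conjugate of the restriction of $\sigma_P(D)(x,\pi_G^*\xi)$ to $E'_x$, hence an isomorphism; since $\xi\neq 0$ was arbitrary, $D'$ is elliptic. The main obstacle is really only the surjectivity of $\sigma_P(D)(x,\pi_G^*\xi)|_{E'_x}$, but the averaging argument in the second step handles it in a single line.
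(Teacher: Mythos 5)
Your proposal is correct and is exactly the argument the paper intends: the corollary is stated without proof as an immediate consequence of the symbol identity in Proposition \ref{Prop:SDiff}, and your three steps (pulling $\xi$ back to a nonzero covector in $T^*_GM$, checking that the invertible symbol $\sigma_P(D)(x,\pi_G^*\xi)$ preserves and hence restricts to an isomorphism of $E'_x$, and transporting this through the fiberwise bijection $\pi'_G|_{E'_x}:E'_x\to F_y$) are precisely the details one must fill in. The only remark is that surjectivity on $E'_x$ follows already from injectivity since $E'_x$ is finite dimensional and preserved by the symbol, so the averaging step, while valid, is not needed.
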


\section{Adabiatic limit of the eta invariant}\label{Sec:AdiabLimit}

This chapter is intended to collect of various kown results from the literature which will serve as elements of a toolbox for the subsequent chapters.  In spite of the wide range of topics presened here, there is a concrete goal: to understand Dai's formula for the adiabatic limit of the eta invariant associated to the signature operator studied in detail in \cite{D91}. This formula contains various kind of terms from different nature and it is essential for our objective to understand their definitions and main properties. We begin by collecting some geometric and analytic properties of the signature operator and its relation to the signature of manifolds. Then, we explain how to construct this operator in  the framework of Hilbert complexes, introduced by Br\"uning and Lesch in \cite{BL92}, and  its relation to $L^2$-cohomology. Next we describe how to decompose the exterior derivative on a fibered manifold. This decomposition will be a recurrent tool throughout the whole document. In addition, we discuss an important invariant associated to the Leray spectral sequence of such fibration.  This will allow us to understand the right hand side of Dai's result.  Finally we will briefly recall some important equivariant techniques, which will play a fundamental role in the proof of Lott's formula for the $S^1$-signature. These equivariant methods are treated in detail in \cite{BGV}.

\subsection{The signature operator}\label{Section:SignatureOp} 
Let $M$ be an oriented Riemannian manifold of dimension $m$. The metric on $M$, denoted by $\inner{\cdot}{\cdot}$, defines two {\em musical isomorphisms}
\begin{align}\label{Eqn:Musical}
\flat:
\xymatrixcolsep{2cm}\xymatrixrowsep{0.01cm}\xymatrix{
TM \ar[r] & T^*M\\
Y \ar@{|->}[r] & Y^\flat\coloneqq \inner{Y}{\cdot} 
}
\quad\quad\quad
\sharp:
\xymatrixcolsep{2cm}\xymatrixrowsep{0.01cm}\xymatrix{
T^*M \ar[r] & TM\\
\omega \ar@{|->}[r] & \flat^{-1}(\omega). 
}
\end{align}
These maps induce a metric on the cotangent bundle $\wedge T^*M$, which we still denote by $\inner{\cdot}{\cdot}$, so that $\wedge T^*M=\bigoplus_{r}\wedge^r T^* M$ is a orthogonal decomposition. The metric and the orientation of $M$ allow us to define the  {\em Hodge star operator} $*:\wedge^r T^*M\longrightarrow \wedge ^{m-r} T^*M$ by the relation
\begin{equation}
\alpha\wedge *\beta\coloneqq \inner{\alpha}{\beta}\vol_M \quad\text{for}\quad \alpha,\beta\in\wedge^r T^*M,
\end{equation}
where $\vol_M$ is the Riemannian volume element defined by the orientation and the metric. 
This operator satisfies  $\inner{*\alpha}{*\beta}=\inner{\alpha}{\beta}$ for $\alpha,\beta\in\wedge^r T^*M$ and $*^2=(-1)^{r(m-r)}$ on $r$-forms (\cite[Proposition 4.7]{M00}). \\
For $\omega,\omega\in\Omega_c(M)$  differential forms with compact support, we can use the Hodge star operator to express the  $L^2$-inner product of  \eqref{Eqn:HermE}  as
$$(\omega,\omega')_{L^2(\wedge T^* M)}=\int_M\inner{\omega}{\omega'}\vol_M=\int _M \omega\wedge *\omega '.$$
Let $d:\Omega_c^r(M)\longrightarrow\Omega_c^{r+1}(M)$ be the exterior derivative on $M$. As this operator is a first order differential, we can consider its formal adjoint $d^\dagger:\Omega_c^r(M)\longrightarrow\Omega_c^{r-1}(M)$ defined by the condition
\begin{align*}
(d\omega,\omega')_{L^2(\wedge T^* M)}=(\omega,d^\dagger \omega')_{L^2(\wedge T^* M)}\quad\forall\omega,\omega'\in\Omega_c(M).
\end{align*} 
We can express $d^\dagger$ explicitly in terms of the Hodge star operator as $d^\dagger=(-1)^{m(r+1)+1}*d*$ on $r$-forms.\\

The Hermitian vector bundle $\wedge_\mathbb{C}T^*M\coloneqq \wedge T^*M\otimes\mathbb{C}$ carries an additional geometric structure induced from the metric, the Clifford action (\cite[Section 3.6]{BGV},\cite{LM89}). That is, $\wedge_\mathbb{C}T^*M$ is equipped with the left Clifford multiplication,
\begin{align*}
c:\xymatrixcolsep{2cm}\xymatrixrowsep{0.01cm}\xymatrix{
T^*M \ar[r] & \End(\wedge_\mathbb{C} T^*M)\\
\alpha\ar@{|->}[r] & c(\alpha)\coloneqq \alpha\wedge-\iota_{\alpha^\sharp}.
}
\end{align*}
It is easy to verify the relation $c(\alpha)c(\beta)+c(\beta)c(\alpha)=-2\inner{\alpha}{\beta}$, for all $\alpha,\beta\in T^*M$. The left Clifford multiplication satisfies $(c(\alpha)\omega,\omega')_{L^2(\wedge T^* M)}=-(\omega,c(\alpha)\omega')_{L^2(\wedge T^* M)},$ i.e. $c(\alpha)^\dagger=-c(\alpha)$. This relation follows from the fact that $(\alpha\wedge)^\dagger=\iota_{\alpha^\sharp}$ with respect to the $L^2$-inner product. One can also define the {\em right Clifford multiplication} by $\widehat{c}(\alpha)\coloneqq\alpha\wedge+\iota_{\alpha^\sharp}$.\\

To each Clifford bundle we can associate its corresponding {\em chirality operator} or {\em complex volume element} (\cite[Lemma 3.17]{BGV}), which for $\wedge_\mathbb{C} T^*M$ equipped with the left Clifford action described above it is explicitly given by 
\begin{align}\label{Def:Chirl}
\star\xymatrixcolsep{2cm}\xymatrixrowsep{0.01cm}\xymatrix{
\wedge^r_\mathbb{C}T^*M \ar[r] & \wedge^{m-r}_\mathbb{C}T^*M\\
\omega \ar@{|->}[r] & \star\omega\coloneqq i^{[(m+1)/2]+2mr+r(r-1)}*\omega,
}
\end{align}
where  $[\cdot]$ denotes the integer part and $i\coloneqq \sqrt{-1}$. The following proposition summarizes some important properties of the chirality operator. 
\begin{proposition}[{\cite[Proposition 3.58]{BGV}}]\label{Prop:Chirl}
The chirality operator $\star$ satisfies:
\begin{enumerate}
\item $\star^2=1$.
\item $\star^\dagger =\star$.
\item For $\alpha\in T^*M$, $\star (\alpha \wedge)\star=(-1)^m\iota_{\alpha^\sharp}$.
\item $d^\dagger =(-1)^{m+1}\star d\star$.
\end{enumerate}
\end{proposition}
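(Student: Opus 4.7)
The plan is to verify all four properties by direct computation from the definition $\star = i^{c(m,r)}\,{*}$ on $r$-forms, where $c(m,r) := [(m+1)/2] + 2mr + r(r-1)$. All the ingredients needed are already available in the excerpt: ${*}^2 = (-1)^{r(m-r)}$ on $r$-forms, the isometry of ${*}$ (which gives ${*}^\dagger = {*}^{-1} = (-1)^{r(m-r)}\,{*}$), the adjointness $(\alpha\wedge)^\dagger = \iota_{\alpha^\sharp}$, and the expression $d^\dagger = (-1)^{m(r+1)+1}\,{*}d\,{*}$ displayed just above the proposition.

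For (1), applying $\star$ twice on an $r$-form yields
\[
\star^{2}\omega \;=\; i^{c(m,r)+c(m,m-r)}\,(-1)^{r(m-r)}\,\omega.
\]
A short expansion gives $c(m,r)+c(m,m-r) = 2[(m+1)/2] + 3m^{2} - m + 2r(r-m)$, so after absorbing the factor $(-1)^{r(m-r)}$ the identity $\star^{2}=1$ collapses to the scalar statement
\[
(-1)^{[(m+1)/2]}\,i^{\,3m^{2}-m} \;=\; 1,
\]
which is immediate by treating $m=2k$ and $m=2k+1$ separately.

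For (2), the isometry of ${*}$ together with the complex-conjugation of $i^{c(m,r)}$ gives $\star^{\dagger} = i^{-c(m,r)}(-1)^{r(m-r)}\,{*}$ as an operator from $(m-r)$-forms to $r$-forms; equating this with $\star = i^{c(m,m-r)}\,{*}$ on $(m-r)$-forms produces exactly the same scalar identity encountered in (1), and hence $\star^{\dagger}=\star$. For (3), the classical formula ${*}(\alpha\wedge\eta) = (-1)^{\deg\eta}\iota_{\alpha^{\sharp}}({*}\eta)$ (a direct consequence of $(\alpha\wedge)^{\dagger}=\iota_{\alpha^\sharp}$ and the definition of ${*}$) allows one to push $\alpha\wedge$ through the inner $\star$ in $\star(\alpha\wedge\star\omega)$ for $\omega$ an $r$-form; tracking the phases $i^{c(m,r)}$ and $i^{c(m,m-r+1)}$ together with the sign $(-1)^{r(m-r)}$ from ${*}^2$ reduces the claim to the second scalar identity
\[
(-1)^{[(m+1)/2]}\,i^{\,3m^{2}+3m} \;=\; 1,
\]
again verified by the two parities of $m$. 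Finally, (4) requires no extra work: writing $\star\,d\,\star$ on $r$-forms as $i^{c(m,m-r+1)+c(m,r)}\,{*}d\,{*}$ and comparing with $d^{\dagger} = (-1)^{m(r+1)+1}\,{*}d\,{*}$ reduces $d^{\dagger} = (-1)^{m+1}\star d\star$ to precisely the same scalar equality used in (3).

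The only real obstacle is the bookkeeping of the phases $i^{c(m,s)}$ as the form degree $s$ varies through each composition; once the exponents are combined correctly, the two identities $(-1)^{[(m+1)/2]}\,i^{\,3m^{2}\mp m} = 1$ collapse the four properties into a single pair of elementary parity checks, making the whole proposition a self-contained exercise in manipulating the normalization constant in the definition of $\star$.
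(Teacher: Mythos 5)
Your computations are correct: I have checked that $c(m,r)+c(m,m-r)=2[(m+1)/2]+3m^2-m+2r(r-m)$ and $c(m,m-r+1)+c(m,r)=2[(m+1)/2]+3m^2+3m+2(r^2-r-mr)$, that the two scalar identities $(-1)^{[(m+1)/2]}i^{3m^2\mp m\,/\,+3m}=1$ hold for both parities of $m$, and that the residual degree-dependent factor $(-1)^{mr}$ in (4) exactly reproduces the $(-1)^{m(r+1)+1}$ in the displayed formula for $d^\dagger$. Note that the paper itself gives no proof of this proposition — it is quoted verbatim from Berline--Getzler--Vergne — so there is no argument to compare yours against; your direct verification from the normalization constant is a legitimate self-contained substitute, with the only step deserving a line of its own being the identity $*(\alpha\wedge\eta)=(-1)^{\deg\eta}\iota_{\alpha^\sharp}(*\eta)$, which follows from pairing against an arbitrary form of complementary degree and using $(\alpha\wedge)^\dagger=\iota_{\alpha^\sharp}$ together with the Leibniz rule for $\iota_{\alpha^\sharp}$.
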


\begin{coro}
The chirality operator and the Clifford map satisfy
\begin{align*}
\star c(\alpha)&=(-1)^{m+1}c(\alpha)\star,\\
\star \widehat{c}(\alpha)&=(-1)^{m}\widehat{c}(\alpha)\star.
\end{align*}
\end{coro}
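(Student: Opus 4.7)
My plan is to derive both identities as immediate consequences of parts (1) and (3) of Proposition \ref{Prop:Chirl}, since $c(\alpha)$ and $\widehat{c}(\alpha)$ differ only in a sign between $\alpha\wedge$ and $\iota_{\alpha^\sharp}$.

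First I would use (1) and (3) to move $\alpha\wedge$ past $\star$ on one side. Starting from $\star(\alpha\wedge)\star=(-1)^m\iota_{\alpha^\sharp}$ and right-multiplying by $\star$, (1) gives
\begin{equation*}
\star(\alpha\wedge)=(-1)^m\iota_{\alpha^\sharp}\star.
\end{equation*}
Next I need the analogous formula for $\star\iota_{\alpha^\sharp}$. For that I would conjugate the identity in Proposition \ref{Prop:Chirl}(3) by $\star$: multiplying $\star(\alpha\wedge)\star=(-1)^m\iota_{\alpha^\sharp}$ by $\star$ on both the left and the right and using $\star^2=1$ yields $\alpha\wedge=(-1)^m\star\iota_{\alpha^\sharp}\star$, equivalently
\begin{equation*}
\star\iota_{\alpha^\sharp}=(-1)^m(\alpha\wedge)\star.
\end{equation*}

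With these two commutation relations in hand, the corollary is a one-line computation. Substituting into the definitions $c(\alpha)=\alpha\wedge-\iota_{\alpha^\sharp}$ and $\widehat{c}(\alpha)=\alpha\wedge+\iota_{\alpha^\sharp}$ gives
\begin{align*}
\star c(\alpha)&=\star(\alpha\wedge)-\star\iota_{\alpha^\sharp}=(-1)^m\iota_{\alpha^\sharp}\star-(-1)^m(\alpha\wedge)\star=(-1)^{m+1}c(\alpha)\star,\\
\star\widehat{c}(\alpha)&=\star(\alpha\wedge)+\star\iota_{\alpha^\sharp}=(-1)^m\iota_{\alpha^\sharp}\star+(-1)^m(\alpha\wedge)\star=(-1)^m\widehat{c}(\alpha)\star.
\end{align*}

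There is essentially no obstacle: the only non-obvious move is recognizing that the companion identity for $\star\iota_{\alpha^\sharp}$ is obtained by conjugating Proposition \ref{Prop:Chirl}(3) with $\star$ (rather than, say, taking formal adjoints, which would require extra care with signs). Once that observation is in place, the two identities fall out by combining terms with the appropriate relative sign.
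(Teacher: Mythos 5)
Your proof is correct and is exactly the intended derivation: the paper states this as an immediate corollary of Proposition \ref{Prop:Chirl} without writing out the computation, and the route you take --- using parts (1) and (3) to obtain the commutation rules for $\alpha\wedge$ and $\iota_{\alpha^\sharp}$ separately, then combining them with the signs in $c(\alpha)$ and $\widehat{c}(\alpha)$ --- is the standard one. No gaps.
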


From the last proposition we see that $\star$ is a self-adjoint involution in $\wedge_\mathbb{C} T^*M$. Another endomorphism of this type is the so called {\em Gau\ss-Bonnet involution} defined by $\varepsilon\coloneqq (-1)^r$ on $r$-forms. This involution satisfies the relations
\begin{align}
\varepsilon\star=&(-1)^{m}\star\varepsilon, \label{PropertiesEpsilonStar}\\
\varepsilon c(\alpha)=&-c(\alpha)\varepsilon, \label{PropertiesEpsilonCliff}\\
\varepsilon \widehat{c}(\alpha)=&-\widehat{c}(\alpha)\varepsilon.
\end{align}

Given the Clifford bundle structure described above we can associate to it a first order differential operator $D:\Omega_c(M)\longrightarrow\Omega_c(M)$ called the {\em Dirac operator}. For our case of study it is explicitly given by $D=d+d^\dagger$. This operator is also known in the literature as the {\em Hodge-de Rahm operator}. Clearly $D$ is a symmetric operator on $\Omega_c(M)$. The operator $D$ satisfies the following relations
\begin{align}
D\star=&(-1)^{m+1}\star D,\label{Eqn:DChirl}\\
\varepsilon D=&-D\varepsilon.
\end{align}
 
\begin{proposition}\label{Prop:PSD}
The principal symbol of the  Hodge-de Rham operator $D$ is given by $\sigma_P(D)(x,\xi)=-ic(\xi)$; therefore, it is an elliptic operator. 
\end{proposition}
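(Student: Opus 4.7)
My plan is to compute the principal symbol directly from the defining formula \eqref{Def:PrincipalSymbol}, using $k=1$ for the first-order operator $D = d + d^\dagger$. Fix $x \in M$, $\xi \in T_x^* M$, and $e \in \wedge^r_\mathbb{C} T_x^* M$. Choose $\phi \in C^\infty(M)$ with $\phi(x) = 0$ and $d\phi(x) = \xi$, and a smooth section $s$ of $\wedge_\mathbb{C} T^* M$ with $s(x) = e$. By definition,
\begin{equation*}
\sigma_P(D)(x,\xi)(e) = -i\, D(\phi s)(x) = -i\bigl(d(\phi s)(x) + d^\dagger(\phi s)(x)\bigr).
\end{equation*}

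The first term is immediate from the Leibniz rule: $d(\phi s) = d\phi \wedge s + \phi\, ds$, so evaluating at $x$ and using $\phi(x) = 0$ gives $d(\phi s)(x) = \xi \wedge e$. For the second term I would use the standard formula $d^\dagger = -\sum_i \iota_{e_i} \nabla_{e_i}$ in a local orthonormal frame (or alternatively Proposition \ref{Prop:Chirl}(4) combined with a local computation), which yields the pointwise identity $d^\dagger(\phi \alpha) = \phi\, d^\dagger \alpha - \iota_{(d\phi)^\sharp} \alpha$. At $x$ this becomes $d^\dagger(\phi s)(x) = -\iota_{\xi^\sharp} e$. Adding the two contributions gives
\begin{equation*}
D(\phi s)(x) = \xi \wedge e - \iota_{\xi^\sharp} e = c(\xi)(e),
\end{equation*}
by the very definition of the Clifford multiplication in the previous subsection. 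Hence $\sigma_P(D)(x,\xi) = -i c(\xi)$, which is the stated formula.

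For ellipticity I would invoke the Clifford relation $c(\alpha)c(\beta) + c(\beta)c(\alpha) = -2\langle \alpha, \beta\rangle$ specialized to $\alpha = \beta = \xi$, giving $c(\xi)^2 = -\|\xi\|^2$. Consequently
\begin{equation*}
\bigl(-i c(\xi)\bigr)^2 = -\,c(\xi)^2 = \|\xi\|^2,
\end{equation*}
so for every nonzero $\xi \in T_x^* M$ the symbol $\sigma_P(D)(x,\xi)$ is invertible with inverse $i c(\xi)/\|\xi\|^2$, proving ellipticity. The only genuinely delicate point is getting the sign conventions right in the commutator $[d^\dagger, \phi]$; this is not difficult but it is the one place where a careless computation could introduce a spurious sign, so I would double-check it against the identity $d^\dagger = (-1)^{m+1} \star d \star$ of Proposition \ref{Prop:Chirl}(4).
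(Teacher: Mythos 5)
Your proof is correct and takes essentially the same route as the paper: compute $\sigma_P(d) = -i\,\xi\wedge$ and $\sigma_P(d^\dagger) = i\,\iota_{\xi^\sharp}$ from the definition, add them by linearity to get $-ic(\xi)$, and then use the Clifford relation $c(\xi)^2 = -\|\xi\|^2$ for ellipticity. You just spell out the Leibniz-rule and commutator calculations that the paper records more tersely.
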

\begin{proof}
Using Equation \eqref{Def:PrincipalSymbol} we can compute the principal symbols
\begin{align*}
\sigma_P(d)(x,\xi)=&-i\xi\wedge,\\
\sigma_P(d^\dagger)(x,\xi)=&i\iota_{\xi^\sharp},
\end{align*}
and conclude that $\sigma_P(D)(x,\xi)=-ic(\xi)$. Note that $-c(\xi)c(\xi)= \norm{\xi}^2\neq 0$ if $\xi\neq 0$, hence $D$ is elliptic. 
\end{proof}
The Hodge-de Rham operator satisfies the following important property
\begin{proposition}[{\cite[Proposition 3.38]{BGV}}]\label{Prop:CommDirOpFunct}
If $f\in C^\infty(M)$ and $\omega\in\Omega(M)$, then 
\begin{align*}
D(f\omega)= c(df)\omega+fD\omega.
\end{align*}
\end{proposition}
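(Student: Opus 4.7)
The plan is to split $D = d + d^\dagger$ and handle each summand separately, so that the Leibniz-type failure of $D$ becomes exactly the Clifford action $c(df) = df\wedge - \iota_{(df)^\sharp}$.

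First I would dispatch the $d$-piece using the ordinary Leibniz rule for the exterior derivative:
\begin{equation*}
d(f\omega) = df\wedge \omega + f\,d\omega.
\end{equation*}
This contributes the $df\wedge \omega$ part of $c(df)\omega$ and the $f\,d\omega$ part of $f D\omega$.

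The main step is to establish the analogous identity
\begin{equation*}
d^\dagger(f\omega) = -\iota_{(df)^\sharp}\omega + f\, d^\dagger\omega,
\end{equation*}
which contributes the remaining $-\iota_{(df)^\sharp}\omega$ of $c(df)\omega$ and the $f\,d^\dagger\omega$ of $f D\omega$. My preferred route is via Proposition \ref{Prop:Chirl}(4), $d^\dagger = (-1)^{m+1}\star d\star$, since $\star$ is $C^\infty(M)$-linear (it is a pointwise fiberwise map). Indeed,
\begin{align*}
d^\dagger(f\omega) = (-1)^{m+1}\star d(f\star\omega)
= (-1)^{m+1}\star\bigl(df\wedge\star\omega\bigr) + (-1)^{m+1}f\, \star d\star\omega.
\end{align*}
The second term is precisely $f\,d^\dagger\omega$. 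For the first term I would invoke Proposition \ref{Prop:Chirl}(3), which gives $\star(\alpha\wedge)\star = (-1)^m \iota_{\alpha^\sharp}$, applied to $\alpha = df$. Using $\star^2 = 1$, one gets $\star(df\wedge\star\omega) = (-1)^m \iota_{(df)^\sharp}\omega$, and the overall sign becomes $(-1)^{m+1}(-1)^m = -1$, yielding the desired $-\iota_{(df)^\sharp}\omega$.

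Adding the two Leibniz identities then gives
\begin{equation*}
D(f\omega) = \bigl(df\wedge - \iota_{(df)^\sharp}\bigr)\omega + f(d + d^\dagger)\omega = c(df)\omega + f\,D\omega,
\end{equation*}
which is the claim. I do not foresee a real obstacle: the only subtle point is keeping track of the sign from Proposition \ref{Prop:Chirl}(3) and the sign in the $\star d\star$ formula for $d^\dagger$, which cancel to $-1$ uniformly in $m$ and in the form-degree of $\omega$ (so one does not need to separate degrees). An alternative, slightly less economical proof would use the local adjointness identity $(d^\dagger(f\omega),\eta)_{L^2} = (f\omega, d\eta)_{L^2}$ and rewrite $f\, d\eta = d(f\eta) - df\wedge \eta$, combined with $(\alpha\wedge)^\dagger = \iota_{\alpha^\sharp}$, which yields the same formula directly without passing through $\star$; I would mention this as a sanity check.
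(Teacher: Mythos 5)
Your proof is correct. The paper itself does not prove this proposition---it simply cites \cite[Proposition 3.38]{BGV}---so there is no ``paper proof'' to compare against. In BGV the statement is proved for a general Dirac operator $D=\sum_i c(e^i)\nabla^E_{e_i}$ on a Clifford module by commuting $D$ with $f$ in an orthonormal frame, obtaining $[D,f]=\sum_i c(e^i)(e_i f)=c(df)$; your computation instead exploits the concrete splitting $D=d+d^\dagger$ of the Hodge--de~Rham operator, proving the Leibniz identity for $d$ by the ordinary product rule and the one for $d^\dagger$ via $d^\dagger=(-1)^{m+1}\star d\star$ together with $\star(\alpha\wedge)\star=(-1)^m\iota_{\alpha^\sharp}$ from Proposition~\ref{Prop:Chirl}, and then adding the two. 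The sign bookkeeping is right: using the chirality operator $\star$ (rather than the Hodge star $*$) is exactly what makes the exponents degree-independent and lets $(-1)^{m+1}(-1)^m=-1$ fall out uniformly; and $\star$ commuting with multiplication by $f$ is automatic since it is a fiberwise bundle endomorphism. The alternative you sketch via $(\alpha\wedge)^\dagger=\iota_{\alpha^\sharp}$ and integration by parts is also valid. The BGV route has the advantage of generality (it works verbatim for any Dirac operator on a Clifford module, including twisted ones), while yours has the advantage of using only facts already recorded in the paper and no connection machinery; for the operator actually under discussion both are equally conclusive.
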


Another relevant differential operator associated to this Clifford bundle is the {\em Laplacian} $\Delta\coloneqq D^2=dd^\dagger+d^\dagger d$, defined also on $\Omega_c(M)$.  

\begin{proposition}
The operator $\Delta:\Omega_c(M)\longrightarrow\Omega_c(M)$ is a second order, symmetric differential operator which satisfies the relations
\begin{enumerate}
\item $\varepsilon\Delta=\Delta\varepsilon$. 
\item $\star\Delta=\Delta\star$. 
\item $(\Delta \omega,\omega)_{L^2(\wedge T^* M)}\geq 0$ for all $\omega\in \Omega_c(M)$. 
\item $\sigma_P(\Delta)(x,\xi)=\norm{\xi}^2$.
\end{enumerate}
In particular, $\Delta$ is an elliptic operator. 
\end{proposition}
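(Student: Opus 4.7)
The plan is to derive every property of $\Delta = D^2$ by leveraging the corresponding properties of $D$ already established in the preceding propositions, together with the Clifford relation, so that essentially no new computations are needed.

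First I would observe that since $d$ and $d^\dagger$ are first order differential operators, so is $D=d+d^\dagger$; therefore its square $\Delta=D^2$ is a differential operator of order at most two. Symmetry follows formally: for $\omega,\omega'\in\Omega_c(M)$ one has
\begin{align*}
(\Delta\omega,\omega')_{L^2(\wedge T^*M)}=(D\omega,D\omega')_{L^2(\wedge T^*M)}=(\omega,\Delta\omega')_{L^2(\wedge T^*M)},
\end{align*}
by applying the symmetry of $D$ twice. The same identity with $\omega'=\omega$ immediately gives (3), since $(\Delta\omega,\omega)_{L^2(\wedge T^*M)}=\|D\omega\|^2_{L^2(\wedge T^*M)}\geq 0$.

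For (1) I would use the anticommutation $\varepsilon D=-D\varepsilon$ twice, yielding $\varepsilon D^2=-D\varepsilon D=D^2\varepsilon$. For (2), property (4) of Proposition \ref{Prop:Chirl} and Equation \eqref{Eqn:DChirl} give $D\star=(-1)^{m+1}\star D$; squaring this sign then produces $D^2\star=\star D^2$. Neither step is more than a two-line calculation.

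The only nontrivial point is the identification of the principal symbol, and this is the step I expect to be the computational heart of the argument. Using multiplicativity of the principal symbol under composition together with Proposition \ref{Prop:PSD} I would write
\begin{align*}
\sigma_P(\Delta)(x,\xi)=\sigma_P(D)(x,\xi)^2=(-ic(\xi))^2=-c(\xi)^2.
\end{align*}
The Clifford relation $c(\alpha)c(\beta)+c(\beta)c(\alpha)=-2\langle\alpha,\beta\rangle$ specialized to $\alpha=\beta=\xi$ gives $c(\xi)^2=-\|\xi\|^2$, so $\sigma_P(\Delta)(x,\xi)=\|\xi\|^2$. Since this is invertible (as a scalar endomorphism) whenever $\xi\neq 0$, $\Delta$ is elliptic, and (4) together with the ellipticity claim follows. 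This establishes all four properties.
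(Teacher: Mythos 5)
Your proof is correct, and since the paper states this proposition without supplying a proof, your argument — deriving each property formally from the symmetry of $D$, the (anti)commutation relations \eqref{Eqn:DChirl} and $\varepsilon D=-D\varepsilon$, and the multiplicativity of the principal symbol combined with the Clifford relation $c(\xi)^2=-\norm{\xi}^2$ — is exactly the intended standard one. If you wish to avoid invoking multiplicativity of symbols as a black box, the identity $\sigma_P(\Delta)(x,\xi)=\norm{\xi}^2$ also follows directly from definition \eqref{Def:PrincipalSymbol} by applying Proposition \ref{Prop:CommDirOpFunct} twice to $\Delta(\tfrac{(-i)^2}{2!}\phi^2 s)$ and evaluating at the point where $\phi$ vanishes.
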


\begin{remark}\label{Rmk:ScaleHodgeStar}
For $a>0$ consider the rescaled metric $g(a)\coloneqq a^2 g$. We want to compare the Laplacians $\Delta_{g(a)}$ and $\Delta_g$  associated to the metrics $g$ and $g(a)$ respectively. To do this we first compare the Hodge star operators $*_a$ and $*$.  From definition we compute for $r$-forms $\omega_0$ and $\omega_1$, 
\begin{align*}
\omega_0\wedge *_{a}\omega_1=\inner{\omega_0}{\omega_1}_a  \vol_{g(a)}=(a^{-2r}\inner{\omega_0}{\omega_1})a^{m}\vol_{g}=\omega_0\wedge (a^{m-2r}*\omega_1). 
\end{align*}
Thus, we see that $*_a=a^{m-2r}*$ on $r$-forms and in particular
\begin{align*}
*_a d*_a=a^{m-2(m-r+1)} * d a^{m-2r} *=a^{-2}*d*. 
\end{align*}
Hence, we see that the Laplacians are related by $\Delta_{g(a)}=  a^{-2}\Delta_{g}$.
\end{remark}

Let us assume $m$ is even. By the properties of $\star$ described in Proposition \ref{Prop:Chirl}  we see we can decompose  $\Omega(M)=\Omega^+(M)\oplus\Omega^-(M)$ where $\Omega^\pm(M)$ are the $\pm1$-eigenspaces of $\star$. These eigenspaces are known as spaces of {\em self-dual} ($+1$-eigenvalue) and {\em anti-self-dual} ($-1$-eigenvalue) forms respectively. From \eqref{Eqn:DChirl} it follows that  $D$ can be also decomposed with respect to this splitting as 
\begin{equation}
D=\left(
\begin{array}{cc}
0 & D^-\\
D^+ & 0
\end{array}
\right),
\end{equation}
where $D:\Omega_c^\pm(M)\longrightarrow\Omega_c^{\mp}(M)$. We define the {\em signature operator} as the component
\begin{equation}\label{Eqn:SigOp}
D^+:\Omega_c^+(M)\longrightarrow\Omega_c^-(M).
\end{equation}

\subsubsection{The signature theorem for closed manifolds}
Suppose now $M$ closed and $m=4k$. In this case, Poincar\'e duality ensures that the quadratic form
\begin{align*}
\xymatrixcolsep{2cm}\xymatrixrowsep{0.01cm}\xymatrix{
H^{2k}(M;\mathbb{C})\times H^{2k}(M;\mathbb{C}) \ar[r] & H^{4k}(M;\mathbb{C}) \ar[r] & \mathbb{C}\\
(\omega,\omega') \ar@{|->}[r] & \omega\wedge\omega' \ar@{|->}[r]  & \displaystyle{\int_M \omega\wedge\omega'},
}
\end{align*}
is non-degenerate. Here $H^*(M;\mathbb{C})$ denotes the de Rham cohomology ring of $M$ with $\mathbb{C}$ coefficients.
\begin{definition} 
The {\em signature of $M$}, denoted by $\sigma(M)$, is defined to be the signature of this quadratic form. 
\end{definition}

\begin{proposition}[{\cite[Chapter 2]{HZ74}}]\label{Prop:PropSignatureClosed}
The signature satisfies the following properties:
\begin{enumerate}
\item Let  $-M$ denote the manifold $M$ with reversed orientation,  then $\sigma(-M)=-\sigma(M)$.
\item If $M=\partial \widetilde{M}$ is the boundary of a compact manifold $\widetilde{M}$, then $\sigma(M)=0$.
\item $\sigma(M_1\times M_2)=\sigma(M_1)\sigma(M_2)$.
\end{enumerate}
\end{proposition}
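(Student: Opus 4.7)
All three statements are classical facts that follow from properties of the Poincar\'e duality pairing on middle cohomology, but it is worth outlining the approach I would take for each.

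For (1), reversing the orientation of $M$ replaces the volume form $\vol_M$ by $-\vol_M$, and consequently the cup-product pairing $(\omega,\omega')\longmapsto\int_M\omega\wedge\omega'$ changes sign. Since the signature of a symmetric bilinear form $Q$ and of $-Q$ differ by a sign (the positive and negative eigenspaces exchange roles), one gets $\sigma(-M)=-\sigma(M)$ immediately.

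For (3), I would use the Künneth decomposition
\begin{equation*}
H^{2k}(M_1\times M_2;\mathbb{C})\cong \bigoplus_{p+q=2k}H^p(M_1;\mathbb{C})\otimes H^q(M_2;\mathbb{C}),
\end{equation*}
where $\dim M_1 = 4k_1$, $\dim M_2 = 4k_2$ with $k_1+k_2 = k$. The product pairing decomposes as a sum of tensor products of the pairings on the factors (up to signs coming from graded commutativity). One then splits off the diagonal piece $H^{2k_1}(M_1)\otimes H^{2k_2}(M_2)$, whose signature is $\sigma(M_1)\sigma(M_2)$, and argues that the off-diagonal pieces $H^p(M_1)\otimes H^{2k-p}(M_2)$ paired with $H^{2k-p}(M_1)\otimes H^p(M_2)$ form hyperbolic summands, which contribute zero to the signature. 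A basis adapted to Poincar\'e duality on each factor makes this bookkeeping routine.

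For (2), which is the main obstacle, I would apply the standard cobordism argument. Consider the long exact sequence of the pair $(\widetilde{M},M)$ in cohomology, together with Poincar\'e--Lefschetz duality $H^r(\widetilde{M},M;\mathbb{C})\cong H_{4k+1-r}(\widetilde{M};\mathbb{C})$. Let $j^*\colon H^{2k}(\widetilde{M};\mathbb{C})\to H^{2k}(M;\mathbb{C})$ be the restriction, and set $L\coloneqq \im(j^*)\subset H^{2k}(M;\mathbb{C})$. The key two claims are:
\begin{enumerate}
\item $L$ is isotropic for the intersection pairing on $M$: if $\omega=j^*\tilde\omega$ and $\omega'=j^*\tilde\omega'$, then by Stokes' theorem
\begin{equation*}
\int_M \omega\wedge\omega' = \int_{\partial\widetilde M}\tilde\omega\wedge\tilde\omega' = \int_{\widetilde M} d(\tilde\omega\wedge\tilde\omega')=0.
\end{equation*}
\item $\dim_{\mathbb{C}} L = \tfrac{1}{2}\dim_{\mathbb{C}} H^{2k}(M;\mathbb{C})$. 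This follows from a diagram chase combining the long exact sequence and the duality isomorphism: $L$ is simultaneously the image of $j^*$ and, up to the duality identification, the kernel of the dual map, so its dimension equals half that of $H^{2k}(M;\mathbb{C})$.
\end{enumerate}
Once $L$ is a Lagrangian (maximal isotropic) subspace of the intersection pairing, a standard linear-algebra argument shows that the pairing has equal numbers of positive and negative eigenvalues, so $\sigma(M)=0$. The technical point to handle carefully is the second claim, since it requires the precise identification of the dual of $j^*$ via Poincar\'e--Lefschetz duality and the exactness of the sequence at the relevant spot; this is where most of the work lies.
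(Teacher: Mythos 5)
The paper does not prove this proposition; it is stated as a classical fact with a citation to Hirzebruch--Zagier, so there is no in-paper argument to compare against. Your outline reproduces the standard textbook proofs correctly: sign reversal of the pairing for (1), the K\"unneth decomposition into a diagonal tensor-product piece plus hyperbolic off-diagonal summands for (3), and the Thom--Hirzebruch Lagrangian argument (isotropy of $\im(j^*)$ via Stokes plus the half-dimension count from Poincar\'e--Lefschetz duality and the long exact sequence) for (2) -- all sound, with the technical weight correctly placed on the dimension count in (2).
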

Equipp $M$ with a Riemannian metric $g^{TM}$. As $M$ is assumed to be a closed manifold then $D$ is a Fredholm operator since it is elliptic (\cite[Theorem III.5.2]{LM89}). Using the Hodge Theorem (\cite[Proposition 3.48]{BGV}) one verifies that the index of the signature operator is equal to the signature of $M$, that is, $\ind(D^{+})=\sigma(M)$.  
From this relation and applying the celebrated Atiyah-Singer index theorem, one can show that the signature of $M$ can be expressed as (\cite[Theorem 4.8]{BGV}, \cite[Theorem III.13.9]{LM89})
\begin{equation}\label{Eqn:SignTheoClosed}
\sigma(M)=\int_M L(TM,g^{TM}),
\end{equation}
where $L(TM,g^{TM})$ is {\em Hirzebruch's $L$-polynomial} of the Levi-Civita connection of $g^{TM}$ (\cite[Section 1.5]{BGV}).  It is important to bear in mind that $L(TM,g^{TM})$ is an even polynomial in the components of the curvature $\Omega\in\Omega(M,\mathfrak{gl}(4k))$. The first terms are 
\begin{align*}
L(TM,g^{TM})=1-\frac{1}{24\pi^2}\tr(\Omega\wedge\Omega)+\cdots \in\Omega(M).
\end{align*}
The relation \eqref{Eqn:SignTheoClosed}, known as the {\em signature theorem}, was first proven by Hirzebruch using Thom's computations of the oriented cobordism rings (\cite[Chapter 19]{MS74}).
\begin{remark}
Actually, since $M$ is closed, the right hand side of Equation \eqref{Eqn:SignTheoClosed} can be computed using any connection since the difference will be an exact form, which integrates to zero. This of course something to be expected since the signature of $M$ does not depend on the metric. 
\end{remark} 

\subsubsection{The signature of theorem for compact manifolds with boundary}\label{Section:SigThmMB}

When the dimension of $M$ is odd one can also define a signature operator. In this case, set $\mathbb{R}_+\coloneqq [0,\infty)$ and  consider the even dimensional manifold  $M\times \mathbb{R}_+$ equipped with the product metric $dr^2+g^{TM}$ where $r\geq 0$ denotes the $\mathbb{R}_+$ coordinate. It can be shown  that the associated signature operator $D^{+}_{M\times\mathbb{R}_+}$ on $M\times\mathbb{R}_+$ can be written as (\cite[Section 4.1]{D91}\cite[Section 4.3]{G95} and Section \ref{Section:DecConeOp} below)
\begin{equation}
D^{+}_{M\times\mathbb{R}_+}=c(dr)\left(\frac{\partial}{\partial r}+A\right),
\end{equation}
where $A$ a first order, self-adjoint elliptic operator when restricted to $M$. The operator $A$, which can be seen as the tangential part of the signature operator $D^{+}_{M\times\mathbb{R}_+}$, is called the {\em tangential} or {\em odd signature operator}. This operator can be written explicitly using only geometric information on $M$ as
\begin{equation}\label{Eqn:DefOddSignOp}
A=\star_M(d_M+d^\dagger_M),
\end{equation}
where $\star_M$ and $d_M+d_M^\dagger $ are the corresponding chirality and Hodge-de Rham  operator on $M$. For the Gau\ss-Bonnet involution $\varepsilon_M$ on $M$ we can deduce from \eqref{PropertiesEpsilonStar}, 
\begin{align*}
\varepsilon_M A=\varepsilon_M\star_M(d_M+d^\dagger_M)=-\star_M\varepsilon_M(d_M+d^\dagger_M)=\star_M(d_M+d^\dagger_M)\varepsilon_M=A\varepsilon_M,
\end{align*}
thus we can decompose $A=A^\text{ev}\oplus A^\text{odd}$ where $A^\text{ev/odd}:\Omega^\text{ev/odd}(M)\longrightarrow\Omega^\text{ev/odd}(M)$. 

\begin{remark}
Observe that $A^2=\Delta_M$, thus $A$ is a generalized Dirac operator in the sense of \cite[Definition 2.2]{BGV}.
\end{remark}

Although the index of an elliptic operator over a closed odd dimensional manifold always vanishes (\cite[Theorem III.13.12]{LM89}), the odd signature operator described above plays an essential role in the generalization of the signature theorem for manifolds with boundary derived by Atiyah, Patodi and Singer in the fundamental article \cite{APSI}. We will briefly recall the main ingredients of its formulation. First we come to the definition of the signature for a manifold with boundary (\cite[Section I.2]{HZ74}). 
\begin{definition}
Let $N$ be a compact, oriented $4k$-dimensional manifold with boundary $\partial N=M$. The {\em signature} $\sigma(N)$ of $N$ is defined as the signature of the quadratic form induced by the cup product on the image of the map $H^{2k}(N,M)\longrightarrow H^{2k}(N)$, which is the associated map in cohomology of the inclusion of the pair $(N,\emptyset)\longrightarrow (N,M)$. 
\end{definition}
The following remarkable result can be shown using purely topological methods (see \cite[Theorem 2.8.1]{H95}, \cite[Chapter 2]{HZ74}).
\begin{proposition}[Novikov additivity of the signature]\label{Prop:Novikov}
Let $N_1,N_2$ be two oriented $4k$-dimensional manifolds with boundary $\partial N_1=\partial N_2=M$. By reverting the orientation of $N_2$ we can glue it to $N_1$ along $M$  and obtain a closed oriented $4k$-dimensional manifold $N\coloneqq N_1\cup_{M}(-N_2)$. The signature of $N$ satisfies $\sigma(N)=\sigma(N_1)+\sigma(-N_2)$.
\end{proposition}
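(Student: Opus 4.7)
The strategy is to exhibit the intersection form of $N$ on $H^{2k}(N;\mathbb{R})$ as an orthogonal direct sum, two of whose summands carry forms isomorphic to the ones defining $\sigma(N_1)$ and $\sigma(-N_2)$, while any remaining summand turns out to be a null space (hence contributes $0$ to the signature). Throughout I would work with real coefficients so that signatures and pairings behave well.

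First, by excision applied to the decomposition $N = N_1 \cup_M (-N_2)$ we have canonical isomorphisms $H^{*}(N,-N_2;\mathbb{R}) \cong H^{*}(N_1,M;\mathbb{R})$ and $H^{*}(N,N_1;\mathbb{R}) \cong H^{*}(-N_2,M;\mathbb{R})$. Composing with the maps of pairs gives homomorphisms
\begin{equation*}
j_1 \colon H^{2k}(N_1,M;\mathbb{R}) \longrightarrow H^{2k}(N;\mathbb{R}), \qquad j_2 \colon H^{2k}(-N_2,M;\mathbb{R}) \longrightarrow H^{2k}(N;\mathbb{R}),
\end{equation*}
and I let $V_i \coloneqq \operatorname{im}(j_i)$. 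The first key step is to show $V_1 \perp V_2$ for the intersection pairing on $N$: for $\alpha \in H^{2k}(N,-N_2)$ and $\beta \in H^{2k}(N,N_1)$, naturality of the cup product lands $\alpha \cup \beta$ in $H^{4k}(N, N_1 \cup (-N_2)) = H^{4k}(N,N) = 0$, so the pairing vanishes on mixed terms.

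Next I would identify the pairing on $V_i$ with the form computing $\sigma(N_i)$. By definition $\sigma(N_i)$ is the signature of the pairing on the image $I(N_i) := \operatorname{im}(H^{2k}(N_i,M) \to H^{2k}(N_i))$ obtained by cupping and integrating against the fundamental class $[N_i,M]$. Pulling back via the excision isomorphism and the inclusion $N_i \hookrightarrow N$, compatibility of the cup product with restriction together with the identity $[N] = [N_1,M] + [-N_2,M]$ (in the sense that integration of a relative class supported on $N_i$ agrees whether one uses $[N]$ or $[N_i,M]$, the other piece vanishing because the representative is relative) shows that $j_i$ intertwines the pairing on $I(N_i)$ with the restriction of the intersection form on $N$ to $V_i$. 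In particular $V_i$, with its restricted form, is isometric to the form defining $\sigma(N_i)$ or $\sigma(-N_2)$ modulo the radical.

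Finally, I would quotient out by radicals and show that $V_1 + V_2$ accounts for all of the signature. Mayer–Vietoris for $N = N_1 \cup (-N_2)$ identifies the cokernel of $V_1 \oplus V_2 \to H^{2k}(N;\mathbb{R})$ with a subquotient of $H^{2k-1}(M;\mathbb{R})$, and in this range Poincar\'e duality on the odd-dimensional $M$ makes the induced pairing alternating, so the corresponding orthogonal summand of the intersection form is a hyperbolic (null-signature) piece. Combined with the orthogonality and isometry above, the signature of $N$ equals $\sigma(N_1)+\sigma(-N_2)$. The main obstacle is precisely this last step: making rigorous that the complementary summand has signature zero requires a careful diagram chase combining Mayer–Vietoris with Lefschetz duality on $N_i$ and on $M$, and identifying the relevant pairing as alternating via the antisymmetry of cup product on $H^{2k-1}(M)$ when $M$ is $(4k-1)$-dimensional.
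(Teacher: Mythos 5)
Your argument is correct in outline but follows a genuinely different route from the one in the text. Where the proposition is stated the paper only points to Hirzebruch--Zagier for the topological argument; its own derivation comes later, in Section \ref{Section:ExNovikov}, where Novikov additivity is obtained \emph{analytically}: the signature operator of $N$ is written as the gluing of two compatible Dirac systems over $N_1$ and $-N_2$ with complementary APS boundary conditions, and the identity follows by cancelling the eta invariants and kernel terms in the Atiyah--Patodi--Singer formulas \eqref{Eqn:IndexDBoundary} and \eqref{Eqn:IndexVSSignatureAPS}. Your cohomological decomposition of the intersection form is more elementary --- it needs no metrics, no product structure near $M$, and no index theory --- and is essentially the classical proof. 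The paper's heavier route is deliberate: the example exists to illustrate the Dirac-system gluing theorem (Theorem \ref{BBCThm4.17}) that is later applied to the singular quotient $M/S^1$, so the comparison is really between a self-contained topological proof and a consistency check of the analytic machinery.

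The one step that needs repair is your treatment of the complement of $V_1+V_2$. Set $L\coloneqq V_1+V_2$. The relative Mayer--Vietoris sequence identifies $L$ with $\operatorname{im}\bigl(H^{2k}(N,M)\to H^{2k}(N)\bigr)=\ker\bigl(H^{2k}(N)\to H^{2k}(M)\bigr)$, and a duality argument (excision plus Lefschetz duality on each $N_i$) identifies $L^{\perp}$ with $\operatorname{im}\bigl(\delta\colon H^{2k-1}(M)\to H^{2k}(N)\bigr)$, the Mayer--Vietoris connecting map. Classes in $\operatorname{im}(\delta)$ restrict to zero on $N_1$ and $N_2$, hence on $M$, so $L^{\perp}\subseteq L$; therefore $L^{\perp}$ is isotropic, and the standard linear-algebra lemma then gives $\sigma(N)=\sigma(L)=\sigma(V_1)+\sigma(V_2)$. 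Your proposed mechanism --- that the leftover summand carries an alternating pairing coming from the cup product on $H^{2k-1}(M)$ --- does not work as stated: on the $(4k-1)$-dimensional manifold $M$ the product of two classes of degree $2k-1$ lands in $H^{4k-2}(M)$, which is not the top degree, so there is no intersection form there whose antisymmetry could be invoked. The vanishing of the extra contribution comes from the isotropy of $L^{\perp}$ inside $L$, not from antisymmetry on $M$; with that substitution your proof closes.
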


In view of formula \eqref{Eqn:SignTheoClosed} for closed manifolds, a natural question which arises is whether one can express the signature of a manifold with boundary as the integral of Hirzebruch's $L$-polynomial with respect to a given Riemannian metric on $N$. By computing some examples (e.g. Lens spaces) one can see that \eqref{Eqn:SignTheoClosed} does not hold true. The correction term, known as the signature defect, was discovered by Atiyah, Patodi and Singer long after (around 10 years) the proof of the index theorem for closed manifolds. The main difficulty was to set up an appropriate boundary condition. To describe it let us consider as before the cylinder $M\times\mathbb{R}_+$ with the product metric $dr^2\oplus g^{TM}$. The model operator considered by Atiyah, Patodi and Singer on the cylinder is
\begin{align}\label{Eqn:ModelOpCylinder}
\frac{\partial }{\partial r}+A,
\end{align} 
where $A$ is the odd signature operator on $M$. Since $A$ is elliptic and self-adjoint we obtain from Theorem \ref{Thm:MainEllipticOp}  a discrete orthogonal spectral resolution 
$$L^2(\wedge T^* M)=\bigoplus_{\lambda\in\spec A}E_\lambda,$$
where $E_\lambda\subset \Omega(M)$ is the finite dimensional eigenspace associated with the eigenvalue $\lambda$. Let $P\coloneqq Q_{\geq}(A)$ denote the spectral projection onto the space generated by the eigensections corresponding to non-negative eigenvalues.  The boundary condition imposed for the model operator \eqref{Eqn:ModelOpCylinder} is $Ps|_M\coloneqq Ps(\cdot, 0)=0$. This boundary condition is now called the {\em APS boundary condition}. In \cite[Section 2]{APSI} they constructed the solution operator associated to this boundary value problem. They also showed that the spectral function
\begin{equation}\label{Def:eta}
\eta_A(z)\coloneqq\sum_{\substack{\lambda\in\spec A \\ \lambda\neq 0}}\sign (\lambda)|\lambda|^{-z},
\end{equation}
is holomorphic near $z=0$ and its value at zero  $\eta_A(0)$, called the {\em eta invariant} of $A$, is proportional to the signature defect. Observe from the decomposition $A=A^{\text{ev}}\oplus A^{\text{odd}}$ that $\eta_{A}(0)=2\eta_{A^{\text{ev}}}(0)$. 

\begin{remark}[Adjoint boundary condition]\label{Rmk:AdjointBoundCond}
The formal adjoint of the operator \eqref{Eqn:ModelOpCylinder} is simply
\begin{align*}
-\frac{\partial }{\partial r}+A.
\end{align*}
We want to investigate now what would be the corresponding adjoint boundary condition associated to $P$. Let $s_1,s_2\in C_c(M\times\mathbb{R}_+)$, then integrating by parts
\begin{align*}
\left(\left(\frac{\partial }{\partial r}+A\right)s_1,s_2\right)_{L^2(M\times\mathbb{R}_+)}
-\left(s_1,\left(-\frac{\partial }{\partial r}+A\right)s_2\right)_{L^2(M\times\mathbb{R}_+)}
=(s_1|_M,s_2|_M)_{L^2(M)}.
\end{align*}
Thus, if $Ps_1|_M=0$ then, in order to get $(s_1|_M,s_2|_M)_{L^2(M)}=0$ we must require $s_2|_M$ to be in the image of $P$, i.e. $(I-P)s_2|_M=0$. That is, the adjoint boundary condition is defined by the spectral projection $I-P=Q_<(A)$.
\end{remark}

Now let us describe how these results can be used to derive the signature formula for a closed, oriented Riemannian manifold $N$ with boundary $M$. Assume the metric on $N$ is a product metric near the boundary $M$. Using the solution operator of \eqref{Eqn:ModelOpCylinder} with the APS boundary condition Atiyah, Patodi and Singer showed that the signature operator $D^+$ on $N$ has a well defined index and derived the index formula 
\begin{equation}\label{Eqn:IndexDBoundary}
\ind(D^+)=\int_N L(TN,g^{TN})-\dim(\ker A^{\text{ev}})-\eta_{A^{\text{ev}}}(0).
\end{equation}
On the other hand, they also showed using a version of the Hodge theorem for manifolds with cylindrical ends, the equality
\begin{equation}\label{Eqn:IndexVSSignatureAPS}
\ind(D^{+})=\sigma(N)-\dim(\ker A^{\text{ev}}).
\end{equation}
Combining \eqref{Eqn:IndexDBoundary} and \eqref{Eqn:IndexVSSignatureAPS} one concludes
\begin{align*}
\sigma(N)=\int_N L(TN,g^{TN})-\eta_{A^{\text{ev}}}(0).
\end{align*}
One can relax the metric condition around the boundary by introducing a transgression term, which is a secondary characteristic class, that interpolates the $L$-polynomial between the given metric and the product metric close to the boundary. These transgression forms will be discussed in Section \ref{Sect:PfoofSignatureFormula}.

\begin{theorem}[{\cite{APSI},\cite[Theorem 4.3.10]{G95}}]\label{Thm:SignThmMBound}
Let $(N,g^{TN})$ be a compact, oriented Riemannian manifold of dimension $4k$ with boundary $\partial N=M$. Then we can compute the signature of $N$ as
$$\sigma(N)=\int_N L\left(TN,g^{TN}\right)-\int_M TL(g^{TN})-\eta_{A^\textnormal{ev}}(0).$$
Here $L(TN,g^{TN})$ is Hirzebruch's $L$-polynomial constructed from the Riemannian metric $g^{TN}$, $TL(g^{TN})$ is a transgression form and  $\eta_{A^{\textnormal{ev}}}(0)$ is the eta invariant of the even part of the tangential signature operator $A^{\textnormal{ev}}$. 
\end{theorem}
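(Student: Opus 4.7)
The plan is to follow the strategy outlined in the paragraphs preceding the statement, which combines the APS index theorem for the signature operator with a Hodge-theoretic identification of the analytic index with the topological signature. Let me break it into four steps.

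First, I would reduce to the case where $g^{TN}$ is a product metric in a collar neighborhood $M \times [0,1)$ of the boundary. Given an arbitrary metric, one constructs a smooth path of metrics $g_t^{TN}$ on $N$ interpolating between $g^{TN}$ and a metric that is a product near $M$, then defines the transgression form $TL(g^{TN})$ by the standard Chern--Simons prescription associated with Hirzebruch's $L$-polynomial and this path. This form satisfies $d\, TL(g^{TN}) = L(TN, g^{TN}) - L(TN, g^{TN}_{\text{prod}})$ in the collar and is supported there, so Stokes' theorem shows that the contribution $\int_M TL(g^{TN})$ is exactly the difference in the $L$-integrals for the two metrics. Since $\sigma(N)$ is a topological invariant, the theorem then reduces to the product-metric case.

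Second, in the product-metric case I would attach the infinite cylinder $M \times \mathbb{R}_+$ to $N$ to produce a complete manifold with cylindrical ends $\widetilde{N}$. On the cylinder the signature operator takes the form $D^+ = c(dr)(\partial_r + A^{\text{ev}})$ with $A^{\text{ev}} = \star_M (d_M + d_M^\dagger)|_{\Omega^{\text{ev}}(M)}$ as in \eqref{Eqn:DefOddSignOp}. Imposing the APS boundary condition $Q_{\geq}(A^{\text{ev}})s|_M = 0$ (with the adjoint condition from Remark \ref{Rmk:AdjointBoundCond} on $D^-$), one obtains a Fredholm operator and computes its index by a McKean--Singer heat kernel argument: $\ind(D^+) = \Tr(e^{-tD^-D^+}) - \Tr(e^{-tD^+D^-})$. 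The $t\to 0^+$ limit gives the interior contribution $\int_N L(TN, g^{TN})$ via the local index theorem, while the long-time analysis on the cylindrical end produces the boundary correction $-\tfrac{1}{2}(\dim\ker A^{\text{ev}} + \eta_{A^{\text{ev}}}(0))$ for each chirality component, together yielding \eqref{Eqn:IndexDBoundary}.

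Third, I would establish \eqref{Eqn:IndexVSSignatureAPS} via an APS-style Hodge theorem on $\widetilde{N}$: the space of $L^2$-harmonic self-dual (resp. anti-self-dual) forms represents the image $H^{2k}(N, M; \mathbb{C}) \to H^{2k}(N; \mathbb{C})$, and the signature of the intersection form on this image equals the index of $D^+$ plus the contribution of extended solutions whose boundary values lie in $\ker A^{\text{ev}}$. Subtracting \eqref{Eqn:IndexDBoundary} from \eqref{Eqn:IndexVSSignatureAPS} causes the $\dim(\ker A^{\text{ev}})$ terms to cancel, and restoring the transgression contribution from step one yields the stated formula.

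The principal obstacle is step two, specifically the cylindrical end analysis required for the heat kernel computation. One must prove that the spectral function $\eta_{A^{\text{ev}}}(z)$ in \eqref{Def:eta} is holomorphic at $z = 0$, and must identify the contribution of non-$L^2$ extended solutions to the heat supertrace as $t \to \infty$. This requires careful Mellin transform arguments on $\Tr(A^{\text{ev}} e^{-t(A^{\text{ev}})^2})$, together with a parametrix construction adapted to the APS boundary condition that separates the small-time (local geometry) and large-time (spectral) regimes cleanly. Once this delicate analytic input is in place, steps one, three, and four are essentially bookkeeping.
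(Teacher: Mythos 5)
Your proposal follows essentially the same route the paper takes (and that the cited source \cite{APSI} takes): reduce to a product metric via the Chern--Simons transgression term, establish the APS index formula \eqref{Eqn:IndexDBoundary} for the signature operator with APS boundary conditions, identify the index with $\sigma(N)-\dim\ker A^{\textnormal{ev}}$ via the $L^2$-Hodge theorem on the manifold with cylindrical ends as in \eqref{Eqn:IndexVSSignatureAPS}, and subtract. The hard analytic inputs you flag (regularity of $\eta$ at $z=0$, the extended solutions) are exactly the ones the paper defers to \cite{APSI}, so the outline is correct and matches the paper's treatment.
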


\begin{remark}[Induced orientation on the boundary]\label{Rmk:OrBound}
Given an orientation on $N$ we choose an orientation on the boundary $\partial N=M$ by requiring a local orthonormal basis $\{e_1, \cdots, e_{4k-1}\}$ of $TM$ to be oriented if and only if $\{-\partial_r, e_1, \cdots, e_{4k-1}\}$ is oriented in $TN$. Here $r\geq 0$ denotes the inward normal coordinate close to the boundary. 
\end{remark}
\begin{remark}
We will show in Section \ref{Section:DiracSystems} how to derive Novikov's additivity formula for the signature (Proposition \ref{Prop:Novikov}) using  Theorem \ref{Thm:SignThmMBound} and a gluing index theorem.
\end{remark}

\subsubsection{Vanishing results for the eta invariant}\label{Section:VanishEta}
It is important to emphasize that the fact that the eta invariant is finite at $z=0$ for operators arising in Riemannian geometry (studied in \cite{APSI}), for example the odd signature operator, is a consequence of the APS index theorem \cite[Theorem 3.10]{APSI}. This regularity result can be generalized to a larger family of operators.  Indeed, let $A$ be a self-adjoint elliptic pseudo-differential operator over a closed manifold $M$, then $A$ is discrete and therefore we can define its corresponding  eta function $\eta_A(z)$ by means of \eqref{Def:eta}. Atiyah, Patodi and Singer proved in \cite{APSIII}, using the regularity result for geometric operators and some K-theoretical methods, that $\eta_A(z)$ is holomorphic at $z=0$ for odd dimensional manifolds 
(\cite[Theorem 4.5]{APSIII}). \\

Let us now recall some vanishing results (\cite[Remark 3]{APSI}) of the eta invariant for geometric operators on a closed manifold $M$ of dimension $\dim M=2n-1$ for $n$ odd, i.e. $n=2\ell-1$. Both observations come from the representation theory of Clifford algebras. Let $A$ denote a geometric operator (think about the odd signature operator). We have to distinguish two cases separately:
\begin{enumerate}
\item If $\ell=2k$ then  $n=2(2k)-1\equiv 3(\text{mod 4})$ and $\dim M=8k-3\equiv 5(\text{mod 8})$.\\
In this case the associated spinor bundle is endowed with quaternionic structure (\cite[Remark I.5.13]{LM89}) which anti-commutes with $A$. This shows that the spectrum of $A$ is symmetric about the origin which implies $\eta_A(0)=0$.
\item If $\ell=2k-1$ then  $n=2(2k-1)-1\equiv 1(\text{mod 4})$ and $\dim M=8k-7=1(\text{mod 8})$.\\
In this case the operator $A$ is of the form $A=iB$ where $B$ is a real skew-adjoint operator. We claim this implies that also in this case the spectrum of $A$ is symmetric about the origin. \\
This follows from the fact that since $B$ is similar to its transpose operator then the must have the same eignevalues, which must be purely imaginary by the spectral theorem. In particular, if $i\lambda\in\spec(B)$ then we must have also $-i\lambda\in\spec(B)$. 
\end{enumerate}

\subsubsection{The Euler characteristic}
Another well known topological invariant, which can also be computed as an index, is the Euler characteristic. If $M$ is a closed and oriented manifold, then its {\em Euler characteristic} is defined by
\begin{align*}
\chi(M)\coloneqq \sum_{j=0}^{\dim M} \dim H^j(M;\mathbb{R}). 
\end{align*} 
The famous {\em Chern-Gau\ss-Bonnet theorem} states that we can compute this invariant as 
\begin{align*}
\chi(M)=\int_{M}e(TM, g^{TM}), 
\end{align*}
where $e(TM, g^{TM})$ is the {\em Euler class} of the tangent bundle, which can be constructed as a polynomial in the curvature of any Riemannian metric. For a purely topological proof see \cite[Chapter 11, Appendix C]{MS74}. On the other hand, this theorem can also be obtained from the Atiyah-Singer index theorem. Indeed, it is not hard to verify, using the Hodge decomposition, that the index of the graded Hodge-de Rham operator $D^{\text{ev}}:\Omega^{\text{ev}}(M)\longrightarrow \Omega^{\text{odd}}$, with respect to the involution $\varepsilon$, computes the Euler characteristic of $M$ (see \cite[Section 4.1]{BGV}). \\

In the case of a compact and oriented manifold $N$ with non-empty boundary $\partial N =M$, we can define a {\em relative Euler characteristic} by means of the relative cohomology groups 
\begin{align*}
\chi(N,M)\coloneqq \sum_{j=0}^{\dim N}(-1)^j \dim H^j(N,M;\mathbb{R}). 
\end{align*} 
Using the long exact sequence in cohomology of the pair $(N,M)$,
\begin{center}
\begin{tikzpicture}[descr/.style={fill=white,inner sep=1.5pt}]
        \matrix (m) [
            matrix of math nodes,
            row sep=1em,
            column sep=2.5em,
            text height=1.5ex, text depth=0.25ex
        ]
        { 0 & H^0(N,M) & H^0(N) & H^0(M) \\
            & H^1(N,M) & H^1(N) & H^1(M) \\
             & H^2(N, M) & H^2(N) & H^2(M) \\
            &\cdots \mbox{}         &                 & \mbox{}         \\
        };

        \path[overlay,->, font=\scriptsize,>=latex]
        (m-1-1) edge (m-1-2) 
        (m-1-2) edge (m-1-3)
        (m-1-3) edge (m-1-4)
        (m-1-4) edge[out=355,in=175] node {} (m-2-2)
        (m-2-2) edge (m-2-3)
        (m-2-3) edge (m-2-4)
        (m-2-4) edge[out=355,in=175] node {} (m-3-2)
        (m-3-2) edge (m-3-3)
        (m-3-3) edge (m-3-4)
        (m-3-4) edge[out=355,in=175] node {} (m-4-2);
\end{tikzpicture}
\end{center}
one easily verifies the relation
\begin{align}\label{Eqn:EulerCharLES}
\chi(N)=\chi(N,M)+\chi(M). 
\end{align}
We now analyze two different cases:
\begin{itemize}
\item If $\dim N$ is even then $\chi(M)=0$ by Poincar\'e duality, and therefore \eqref{Eqn:EulerCharLES} implies $\chi(N)=\chi(N,M)$.
\item If $\dim N$ is odd then we can consider the double $2N\coloneqq N\cup (-N)$ of $N$, which is the closed odd dimensional manifold  obtained by gluing two copies of $N$ along the boundary $M$. Again, by Poincar\'e duality $\chi(2N)=0$. On the other hand, we can use the Mayer-Vietoris sequence to verify $\chi(2N)=2\chi(N)-\chi(M)$. Combining this relation with \eqref{Eqn:EulerCharLES} we get $\chi(N)=-\chi(N,M)=\chi(M)/2$.
\end{itemize}

In contrast to the signature theorem, it is not hard to derive the generalization of the Chern-Gau\ss-Bonnet formula to manifolds with boundary. In fact, it can be deduced from the formula for closed manifolds.  

\begin{theorem}[{\cite[Section 4.2]{G95}}]\label{Thm:GBManBound}
Let $(N, g^{TN})$ be a compact, oriented Riemannian manifold with boundary $\partial N=M$. 
\begin{enumerate}
\item If the dimension of $N$ is even, then 
\begin{align*}
\chi(N)=\int_N e(TN, g^{TN})-\int_M Te(g^{TN}), 
\end{align*}
where $e(TN, g^{TN})$ is the Euler form on $N$ constructed from the metric $g^{TN}$ and  $Te(g^{TN})$ is a transgression  form. 
\item If the dimension of $N$ is odd, then 
\begin{align*}
\chi(N)=\frac{1}{2}\int_{M} e(TM, g^{TM}),
\end{align*}
where $e(TM, g^{TM})$ is the Euler form on $M$ constructed from induced metric  $g^{TM}$ on the boundary. 
\end{enumerate}
\end{theorem}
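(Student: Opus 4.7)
The plan is to reduce both cases to the Chern--Gau\ss--Bonnet theorem for closed manifolds via the doubling construction $2N\coloneqq N\cup_M(-N)$, combined with Mayer--Vietoris for the Euler characteristic and a transgression argument for the curvature form.

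\textbf{Case (2), odd dimensional.} This is the short case, and the excerpt has already essentially done the bookkeeping: using Poincar\'e duality on the closed odd dimensional manifold $2N$ gives $\chi(2N)=0$, and Mayer--Vietoris yields $\chi(2N)=2\chi(N)-\chi(M)$, so $\chi(N)=\chi(M)/2$. Since $M$ is closed, oriented and even dimensional, the classical Chern--Gau\ss--Bonnet theorem applies to give $\chi(M)=\int_M e(TM,g^{TM})$ for the induced metric on $M$. Substituting yields the desired formula.

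\textbf{Case (1), even dimensional.} Here $\dim M$ is odd, so $\chi(M)=0$ by Poincar\'e duality, and Mayer--Vietoris gives $\chi(2N)=2\chi(N)$. The first step is to replace $g^{TN}$ by a metric $\tilde g$ which is a product in a collar neighborhood of $M$ and which agrees with $g^{TN}$ outside a larger collar. Then $\tilde g$ extends smoothly and symmetrically to $2N$, the involution exchanging the two copies of $N$ is an orientation reversing isometry of $(2N,\tilde g)$, and the closed manifold Chern--Gau\ss--Bonnet theorem combined with this symmetry gives
\begin{equation*}
2\chi(N)=\chi(2N)=\int_{2N} e(T(2N),\tilde g)=2\int_N e(TN,\tilde g),
\end{equation*}
so that $\chi(N)=\int_N e(TN,\tilde g)$.

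The remaining step is to compare $\int_N e(TN,g^{TN})$ with $\int_N e(TN,\tilde g)$. Choose a smooth path of metrics $g_s$, $s\in[0,1]$, on $TN$ with $g_0=\tilde g$ and $g_1=g^{TN}$, supported in the collar where the two metrics differ. The standard Chern--Simons / Chern--Weil transgression procedure produces a form $Te(g^{TN})\in\Omega^{\dim N-1}(N)$, depending only on $g^{TN}$ modulo an exact form, such that
\begin{equation*}
e(TN,g^{TN})-e(TN,\tilde g)=d\bigl(Te(g^{TN})\bigr)
\end{equation*}
inside the collar, and $Te(g^{TN})$ vanishes at the inner boundary of the collar where $g_s$ is constant. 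Applying Stokes' theorem, and using the orientation convention for $\partial N=M$ (Remark \ref{Rmk:OrBound}), the integral of $d(Te(g^{TN}))$ over $N$ picks up the boundary contribution $\int_M Te(g^{TN})$. Rearranging gives
\begin{equation*}
\chi(N)=\int_N e(TN,\tilde g)=\int_N e(TN,g^{TN})-\int_M Te(g^{TN}),
\end{equation*}
as claimed.

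The main technical obstacle is the transgression step: one must verify that the resulting form on $M$ is independent (modulo exact terms on $M$) of the chosen interpolation $g_s$ and of the particular product extension $\tilde g$, so that $Te(g^{TN})$ is a well defined secondary invariant depending only on the metric $g^{TN}$ near $M$. This is standard Chern--Weil theory applied to the Pfaffian polynomial, but it is the only step whose sign and normalization require care, and it exactly parallels the transgression term $TL(g^{TN})$ appearing in the signature formula of Theorem \ref{Thm:SignThmMBound}.
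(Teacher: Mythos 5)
Your proof is correct, and for case (2) it is exactly the bookkeeping already laid out in the paper (Poincar\'e duality on $2N$, Mayer--Vietoris, then the classical Chern--Gau\ss--Bonnet theorem on the even-dimensional closed boundary $M$). For case (1), the paper only gestures at a proof, stating that the formula ``can be deduced from the formula for closed manifolds,'' and your doubling argument --- deform to a product metric near $\partial N$, extend symmetrically to $2N$, apply the closed Chern--Gau\ss--Bonnet theorem and exploit the orientation-reversing isometry, then Chern--Weil transgression to compare the two metrics and pick up the boundary term via Stokes --- is the natural elaboration of that hint, and is the route Gilkey follows in the cited reference. The paper's follow-up remark records two \emph{alternative}, index-theoretic derivations that it prefers to connect with the rest of the text: (i) realize $\chi(N)$ as $\ind(D^{\mathrm{ev}})$ for the absolute boundary condition in the Hilbert-complex framework of \cite{BL92}, and (ii) following \cite{APSI}, add the APS index contributions of the even and odd halves of the signature operator and observe that the eta invariants cancel. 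Your approach is more elementary and self-contained (it needs only the closed Gau\ss--Bonnet theorem and Chern--Weil transgression, no heat-kernel or APS machinery); the index-theoretic routes are heavier but tie the result into the boundary-value problem formalism that pervades the rest of this paper. One small point you could tighten: in the doubling step you should make explicit that the integral $\int_N e(TN,g)$ is orientation-independent (both sides of Gau\ss--Bonnet are), so the contributions from $N$ and $-N$ really do add rather than cancel when you split $\int_{2N}$; and you should remark that the transgression form $Te(g^{TN})$ built from a path of metrics supported in the collar has $Te\equiv 0$ on the inner collar boundary precisely because the path is constant there, which is what makes Stokes produce only the $\int_M$ term.
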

In the even dimensional case, the Gau\ss-Bonnet formula can be obtained as the index of the operator $D^{\text{ev}}$ induced from the maximal extension of the de Rham complex (the absolute boundary condition), which we will describe in some detail in the next section (\cite[Theorem 4.1]{BL92},\cite[Theorem 4.2.7]{G95}). Alternatively, it was proven in \cite[Section 4]{APSI} that Theorem \ref{Thm:GBManBound} can be derived as the difference between two index problems in which the eta invariant cancels out. The main idea is to add up the contributions of the  index theorem \cite[Theorem 3.10]{APSI} applied to even and odd components of the signature operator. Indeed, note that $D^{+}$ interchanges even forms and odd forms.

\subsection{$L^2$-cohomology}\label{Sec:L2}

In this section we are going to study the de Rham complex as a Hilbert complex in the sense of Br\"uning and Lesch (\cite{BL92}) by considering closed extensions of the exterior derivative. We will also recall the definition of $L^2$-cohomology and the $L^2$-signature and discuss when this latter invariant can be computed as an index. As an intermediate step we we will describe the strong Hodge theorem. For a detailed and comprehensive treatment of these topics refer to \cite{BL92}, \cite{CD09}, \cite{D11} or \cite{GL02}.\\

Let us continue in the general setting where $M$ is an oriented Riemannian manifold of dimension $m$. Consider the associated de Rham complex of $M$
\begin{align*}
\xymatrixcolsep{2pc}\xymatrix{
0 \ar[r] &\Omega_c^0(M) \ar[r]^-{d_0} &\Omega_c^1(M) \ar[r]^-{d_1} &\cdots \ar[r]^-{d_{m-1}} &\Omega_c^m(M) \ar[r] & 0.
}
\end{align*}
This elliptic complex admits {\em ideal boundary conditions}, i.e. consistent closed extensions of each $d_r:\Omega_c^r(M)\longrightarrow\Omega_c^{r+1}(M)$, such that it becomes a Hilbert complex  (\cite[Lemma 3.1]{BL92}). There are two particularly important choices: the minimal extension and the maximal extension of the exterior derivative (see Section \ref{Section:DiffOp}), i.e 
\begin{align*}
\xymatrixcolsep{2.1pc}\xymatrix{
0 \ar[r] &\dom(d_{0,\text{min}})\ar[r]^-{d_{0,\text{min}}} &\dom(d_{1,\text{min}}) \ar[r]^-{d_{1,\text{min}}} &\cdots \ar[r]^-{d_{m-1,\text{min}}} & \:\:\dom(d_{m,\text{min}}) \ar[r] & 0.
}
\end{align*}
and
\begin{align}\label{Diag:MaximalExtdeRham}
\xymatrixcolsep{2.1pc}\xymatrix{
0 \ar[r] &\dom(d_{0,\text{max}})\ar[r]^-{d_{0,\text{max}}} &\dom(d_{1,\text{max}}) \ar[r]^-{d_{1,\text{max}}} &\cdots \ar[r]^-{d_{m-1,\text{max}}} & \:\:\dom(d_{m,\text{max}}) \ar[r] & 0.
}
\end{align}
These are also known as the {\em relative} and {\em absolute} boundary conditions respectively.  In the case where $d_{r,\text{min}}=d_{r,\text{max}}$ for all $0\leq r\leq m$ we say that {\em the case of uniqueness} or {\em $L^2$-Stokes theorem} holds for $M$. 
\begin{remark}
From Lemma \ref{LemmaGL02} it follows that if $M$ is compact then the $L^2$-Stokes theorem holds for $M$. Moreover, Gaffney proved in \cite{G51} that this is also true for $M$ complete. 
\end{remark}

The cohomology groups of the complex \eqref{Diag:MaximalExtdeRham}, denoted by $H^*_{(2)}(M)$, are called the {\em $L^2$-cohomology groups} of the Riemannian manifold $M$. Since the de Rham complex is elliptic, the cohomology of the complex \eqref{Diag:MaximalExtdeRham} is isomorphic to the cohomology of the complex of smooth $L^2$-forms (\cite[Theorem 3.5]{BL92}),
\begin{align*}
\xymatrixcolsep{2pc}\xymatrix{
0 \ar[r] &\Omega_{(2)}^0(M) \ar[r]^-{d_0} &\Omega_{(2)}^1(M) \ar[r]^-{d_1} &\cdots \ar[r]^-{d_{m-1}} &\Omega_{(2)}^m(M) \ar[r] & 0, 
}
\end{align*}
where $\Omega^r_{(2)}(M)\coloneqq \{\omega\in\Omega^r(M)\cap L^2(\wedge T^*M)\:|\:d_r\omega\in L^2(\wedge T^*M)\}$. That is, 
\begin{align*}
H^r_{(2)}(M)\cong \frac{\{\omega\in\Omega^r(M)\cap L^2(\wedge T^*M)\:|\:d_r\omega=0\}}{\{d_{r-1}\omega\:|\: \omega\in\Omega^{r-1}(M)\cap L^2(\wedge T^*M),\: d_{r-1}\omega\in\Omega^{r}(M)\cap L^2(\wedge T^*M)\}}.
\end{align*}
In view of this result one defines the space of $L^2$-harmonic forms as
\begin{align*}
\mathcal{H}^*_{(2)}(M)\coloneqq\{\omega\in\Omega^*(M)\cap L^2(\wedge T^*M)\:|\: d\omega=0, d^\dagger\omega=0\}.
\end{align*}
When the natural inclusion $\mathcal{H}_{(2)}(M)\longrightarrow H^*_{(2)}(M)$ is an isomorphism we say that the {\em strong Hodge theorem} holds. Let us analyze under which conditions this is the case. To begin with, consider the weak Hodge orthogonal decomposition of \cite[Lemma 2.1]{BL92} induced by the maximal extension, 
\begin{align}\label{Eqn:WeakHodgeMax}
L^2(\wedge^r T^*M)=\widehat{\mathcal{H}}^r(M)\oplus \overline{\text{ran}(d_{r-1,\text{max}})}
\oplus \overline{\text{ran}(d_{r,\text{max}}^*)}.
\end{align}
Here $\widehat{\mathcal{H}}^r(M)\coloneqq\ker(d_{r,\text{max}})\cap \ker(d_{r-1,\text{max}}^*)$. From this decomposition and the definition of the $L^2$-cohomology groups we see 
\begin{align*}
H^{r}_{(2)}(M)=\widehat{\mathcal{H}}^r(M)\oplus\left(\frac{\overline{\text{ran}(d_{r-1,\text{max}})}}{\text{ran}(d_{r-1,\text{max}})}\right). 
\end{align*}
In particular, $H^{r}_{(2)}(M)\cong \widehat{\mathcal{H}}^r(M)$ if, and only if, $d_{r-1,\text{max}}$ has closed image; which is the case if, for example, $H^{r}_{(2)}(M)$ is finite dimensional. This follows from the restriction
\begin{align*}
\dim\left(\frac{\overline{\text{ran}(d_{r-1,\text{max}})}}{\text{ran}(d_{r-1,\text{max}})}\right)=\text{$0$ or $\infty$},
\end{align*}
by the closed graph theorem (\cite[Lemma 19.1.1]{HOIII}).\\

Now we want to understand the relation between $\mathcal{H}^*_{(2)}(M)$ and $\widehat{\mathcal{H}}^*(M)$. On the one hand we have the inclusion
\begin{align*}
\ker(d_{\text{max}})\cap \ker(d^\dagger_{\text{max}})\subseteq \ker(d+d^\dagger)_\text{max},
\end{align*}
and on the other $\ker(d+d^\dagger)_\text{max}\subset \Omega(M)$ by elliptic regularity. This shows 
\begin{align}\label{Eqn:L2HarmonicForms}
\mathcal{H}^*_{(2)}(M)=\ker(d_\text{max})\cap\ker(d^\dagger_\text{max})=\ker(d_\text{max})\cap\ker((d_\text{min})^*). 
\end{align}
Summarizing we have the following result.
\begin{proposition}[{\cite{Ch79}}]\label{Prop:StrongHodgeTheorem}
If the $L^2$-cohomology of $M$ has finite dimension and the $L^2$-Stokes’ theorem holds then the strong Hodge theorem holds, i.e  the $L^2$-cohomology of $M$ is isomorphic to the space of $L^2$-harmonic forms.
\end{proposition}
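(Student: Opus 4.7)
The strategy is to assemble the two observations made immediately before the statement, namely the identification of $H^{r}_{(2)}(M)$ with $\widehat{\mathcal{H}}^{r}(M)$ via the closed range property, and the identification of $\widehat{\mathcal{H}}^{r}(M)$ with $\mathcal{H}^{r}_{(2)}(M)$ via the $L^{2}$-Stokes hypothesis, and then chain the two isomorphisms.

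First, I would exploit the finite dimensionality of $H^{r}_{(2)}(M)$. Starting from the weak Hodge decomposition \eqref{Eqn:WeakHodgeMax} one obtains the quotient description
\begin{equation*}
H^{r}_{(2)}(M)\;\cong\;\widehat{\mathcal{H}}^{r}(M)\oplus\Bigl(\overline{\mathrm{ran}(d_{r-1,\mathrm{max}})}\big/\mathrm{ran}(d_{r-1,\mathrm{max}})\Bigr).
\end{equation*}
By the closed graph theorem applied to the closed operator $d_{r-1,\mathrm{max}}$, the quotient on the right is either $\{0\}$ or infinite dimensional. The assumption $\dim H^{r}_{(2)}(M)<\infty$ therefore forces it to vanish, so $\mathrm{ran}(d_{r-1,\mathrm{max}})$ is closed and $H^{r}_{(2)}(M)\cong\widehat{\mathcal{H}}^{r}(M)$.

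Second, I would use the $L^{2}$-Stokes hypothesis to identify $\widehat{\mathcal{H}}^{r}(M)$ with the space of smooth $L^{2}$-harmonic forms. The equality $d_{r,\mathrm{min}}=d_{r,\mathrm{max}}$ for every $r$ implies, by taking Hilbert-space adjoints, that $(d_{r,\mathrm{min}})^{*}=(d_{r,\mathrm{max}})^{*}$; since for a first order differential operator one has $(D_{\mathrm{min}})^{*}=(D^{\dagger})_{\mathrm{max}}$, this gives $d^{\dagger}_{\mathrm{min}}=d^{\dagger}_{\mathrm{max}}$ as well. Substituting into \eqref{Eqn:L2HarmonicForms} yields
\begin{equation*}
\mathcal{H}^{r}_{(2)}(M)\;=\;\ker(d_{r,\mathrm{max}})\cap\ker\bigl((d_{r-1,\mathrm{min}})^{*}\bigr)\;=\;\ker(d_{r,\mathrm{max}})\cap\ker\bigl((d_{r-1,\mathrm{max}})^{*}\bigr)\;=\;\widehat{\mathcal{H}}^{r}(M).
\end{equation*}

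Combining the two identifications produces the desired isomorphism $H^{r}_{(2)}(M)\cong\mathcal{H}^{r}_{(2)}(M)$, and one checks that it is induced by the inclusion $\mathcal{H}^{r}_{(2)}(M)\hookrightarrow\ker(d_{r,\mathrm{max}})$ followed by passage to the cohomology class, as required by the statement of the strong Hodge theorem. The only subtle point is the passage from $L^{2}$-Stokes to the coincidence of the Hilbert-space adjoints of the formal adjoint operators; this is where one must be careful with the identification $(D_{\mathrm{min}})^{*}=(D^{\dagger})_{\mathrm{max}}$ for first order differential operators, and it is the step I would treat with most attention since the remaining arguments are essentially bookkeeping on the decomposition \eqref{Eqn:WeakHodgeMax}.
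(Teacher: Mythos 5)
Your proposal is correct and follows essentially the same route as the paper: the finite-dimensionality hypothesis forces $\mathrm{ran}(d_{r-1,\mathrm{max}})$ to be closed via the dichotomy from the closed graph theorem, giving $H^r_{(2)}(M)\cong\widehat{\mathcal{H}}^r(M)$, and the $L^2$-Stokes hypothesis combined with \eqref{Eqn:L2HarmonicForms} (which rests on elliptic regularity) identifies $\widehat{\mathcal{H}}^r(M)$ with $\mathcal{H}^r_{(2)}(M)$. The paper presents exactly these two observations in the discussion preceding the proposition and then states the result as a summary, so your argument matches it step for step.
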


\begin{remark}
When $M$ is closed all the conditions of Proposition \ref{Prop:StrongHodgeTheorem} are satisfied and we obtain the usual Hodge theorem. 
\end{remark}

\begin{remark}\label{L2StokesQI}
The $L^2$-Stokes theorem condition as well as the $L^2$-cohomology groups are not topological invariants. They depend on the quasi-isometry class of the Riemannian metric. Two metrics $g$ and $g'$ are {\em quasi-isometric} if there exists a constant $C>0$ such that $C^{-1}g\leq g'\leq C g.$
\end{remark}
The following result will be used in later chapters. 
\begin{proposition}[{\cite[Lemma 2.3]{BL92}}]\label{Prop:Lemma2.3BL}
If the Hodge-de Rham operator $D=d+d^\dagger$ is essentially self-adjoint, then the $L^2$-Stokes theorem holds. 
\end{proposition}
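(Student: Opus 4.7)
The plan is to show $d_{r,\min}=d_{r,\max}$ for every $0\le r\le m$. The central analytic input is a Pythagorean-type identity on $\Omega_c$: integrating by parts,
\begin{equation*}
\|D\phi\|^2 = (\phi,D^2\phi) = (\phi,(dd^\dagger+d^\dagger d)\phi) = \|d\phi\|^2+\|d^\dagger\phi\|^2,
\end{equation*}
and since $d\phi_r$ lies in degree $r+1$ while $d^\dagger\phi_r$ lies in degree $r-1$, the orthogonality of the pure-degree decomposition upgrades this to
\begin{equation*}
\|D\phi\|^2 = \sum_r \bigl(\|d\phi_r\|^2+\|d^\dagger\phi_r\|^2\bigr).
\end{equation*}
Thus the graph norm of $D$ on $\Omega_c$ separates cleanly by degree, and convergence in the $D$-graph norm forces simultaneous convergence in the $d$- and $d^\dagger$-graph norms of each pure-degree component.

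Next, I would observe that $D\omega=d\omega+d^\dagger\omega$ splits into the orthogonal pieces of degrees $r+1$ and $r-1$ whenever $\omega$ has pure degree $r$; consequently
\begin{equation*}
\dom(D_{\max})\cap L^2(\wedge^r T^*M) = \dom(d_{r,\max})\cap \dom(d^\dagger_{r,\max}).
\end{equation*}
Given $\omega$ in this intersection, essential self-adjointness of $D$ yields a sequence $\phi^{(n)}\in\Omega_c$ with $\phi^{(n)}\to\omega$ and $D\phi^{(n)}\to D\omega$ in $L^2$. Applying the identity above to the difference $\phi^{(n)}-\omega$ (viewed as pure-degree $r$) and taking pure-degree-$r$ projections produces smooth compactly supported $r$-forms $\phi^{(n)}_r$ with $\phi^{(n)}_r\to\omega$, $d\phi^{(n)}_r\to d\omega$, and $d^\dagger\phi^{(n)}_r\to d^\dagger\omega$ separately in $L^2$. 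This establishes the intersection equality
\begin{equation*}
\dom(d_{r,\max})\cap \dom(d^\dagger_{r,\max})=\dom(d_{r,\min})\cap \dom(d^\dagger_{r,\min}).
\end{equation*}

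The final and most delicate step is to upgrade this intersection identity to the unconditional $L^2$-Stokes equality $\dom(d_{r,\max})=\dom(d_{r,\min})$, since an arbitrary $\omega\in\dom(d_{r,\max})$ need not satisfy $d^\dagger\omega\in L^2$ and hence need not lie in $\dom(D_{\max})$ at all. For this I would combine the Hilbert-space adjoint relations $(d_{r,\min})^*=d^\dagger_{r+1,\max}$ and $(d_{r,\max})^*=d^\dagger_{r+1,\min}$ with the chirality identity $d^\dagger=(-1)^{m+1}\star d\star$ from Proposition \ref{Prop:Chirl}(4). Under $\star$, the minimal and maximal extensions of $d$ in degree $r$ intertwine with those of $d^\dagger$ in degree $m-r$, and the adjoint relations convert max-min equality for $d$ at degree $r$ into max-min equality for $d^\dagger$ at degree $r+1$. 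Iterating these dualities and invoking the intersection equality at every degree simultaneously forces the full separation, yielding $d_{r,\min}=d_{r,\max}$ for each $r$. The main obstacle is precisely controlling this duality bookkeeping: ensuring that the intersection equality, which only constrains those forms regular for both $d$ and $d^\dagger$, propagates through the chirality-adjoint loop to capture the potentially larger class of forms lying in $\dom(d_{r,\max})$ alone.
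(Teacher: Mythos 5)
Your first two steps are sound: the degree-wise Pythagoras identity $\norm{D\phi}^2=\sum_r\left(\norm{d\phi_r}^2+\norm{d^\dagger\phi_r}^2\right)$ on $\Omega_c(M)$, and the resulting equality $\dom(d_{r,\textnormal{max}})\cap\dom(d^\dagger_{r,\textnormal{max}})=\dom(d_{r,\textnormal{min}})\cap\dom(d^\dagger_{r,\textnormal{min}})$, are exactly the right way to exploit essential self-adjointness. The gap is in your final step, and the mechanism you propose there cannot close it. The adjoint relations combined with $d^\dagger=(-1)^{m+1}\star d\star$ only show that the statement $d_{r,\textnormal{min}}=d_{r,\textnormal{max}}$ is \emph{equivalent} to the statement $d_{m-r-1,\textnormal{min}}=d_{m-r-1,\textnormal{max}}$; iterating sends $r\mapsto m-r-1\mapsto r$, a two-cycle that returns you to your starting point and never produces any information about the forms in $\dom(d_{r,\textnormal{max}})$ that fail to lie in $\dom(d^\dagger_{r,\textnormal{max}})$ --- and those are precisely the forms your intersection equality does not see.

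The missing ingredient is an orthogonal decomposition together with an induction on the degree. Since $(d^\dagger_{r,\textnormal{min}})^*=d_{r-1,\textnormal{max}}$, one has the weak Hodge-type splitting $L^2(\wedge^r T^*M)=\ker(d^\dagger_{r,\textnormal{min}})\oplus\overline{\ran(d_{r-1,\textnormal{max}})}$ (compare \eqref{Eqn:WeakHodgeMax}). Given $\omega\in\dom(d_{r,\textnormal{max}})$, write $\omega=\alpha+\beta$ accordingly. Because $\overline{\ran(d_{r-1,\textnormal{max}})}\subseteq\ker(d_{r,\textnormal{max}})$, the component $\alpha$ still lies in $\dom(d_{r,\textnormal{max}})$; and since $\alpha\in\ker(d^\dagger_{r,\textnormal{min}})\subseteq\dom(d^\dagger_{r,\textnormal{max}})$, your intersection equality applies and yields $\alpha\in\dom(d_{r,\textnormal{min}})$. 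For $\beta$, assume inductively that $d_{r-1,\textnormal{min}}=d_{r-1,\textnormal{max}}$ (the case $r=0$ is trivial, as $\beta=0$ there); then $\overline{\ran(d_{r-1,\textnormal{max}})}=\overline{\ran(d_{r-1,\textnormal{min}})}\subseteq\ker(d_{r,\textnormal{min}})$, so $\beta\in\dom(d_{r,\textnormal{min}})$ as well, and hence $\omega\in\dom(d_{r,\textnormal{min}})$. This is, in substance, the Br\"uning--Lesch argument: the operators $d_{\textnormal{max}}+(d_{\textnormal{max}})^*$ and $d_{\textnormal{min}}+(d_{\textnormal{min}})^*$ are both self-adjoint extensions of $D|_{\Omega_c(M)}$, essential self-adjointness identifies their domains, and the Hodge decomposition transfers that identification to $d$ alone, one degree at a time. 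Without this decomposition-plus-induction step your proof does not reach the conclusion.
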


Let us now describe how the Hodge star operator behaves under the maximal and minimal extensions. First of all note that this operator induces a unitary map
$$*:L^2(\wedge^r T^*M)\longrightarrow L^2(\wedge^{m-r} T^*M).$$
It is  not hard to see that 
\begin{align*}
*\dom(d_{r,\text{min/max}})\subseteq \dom(d^\dagger_{m-r,\text{min/max}})
\end{align*}
and
\begin{align*}
d^\dagger_{r,\text{min/max}}= (-1)^{m(r+1)+1}* d_{m-r,\text{min/max}} *.
\end{align*}
\begin{example}
Let $\omega\in\dom(d_{r,\text{min}})$, then there exists a sequence $(\omega_n)_n\subset \Omega^r_c(M)$ such that $\omega_n\longrightarrow \omega$ and $d_r\omega\longrightarrow d_{r,\text{min}}\omega$. Since $*$ is unitary we see that $*\omega_n\longrightarrow *\omega$. On the other hand 
\begin{align*}
d^\dagger_{m-r}*\omega_n=\pm * d_r\omega_n\longrightarrow \pm *  d_{r,\text{min}}\omega, 
\end{align*}
again since $*$ is unitary. Hence verify that $*\dom(d_{r,\text{min}})\subseteq \dom(d^\dagger_{m-r,\text{min}})$.
\end{example}
From the relations above we get
\begin{align*}
*:\ker(d_\text{max/min})\cap\ker((d_\text{max/min})^*)\longrightarrow\ker(d_\text{min/max})\cap\ker((d_\text{min/max})^*).
\end{align*}
This observation and \eqref{Eqn:L2HarmonicForms} imply the following result. 
\begin{proposition}[{\cite[Lemma 3.7]{BL92}}]\label{Prop:PoincareDuality}
Let $M$ be an oriented Riemannian manifold. If the $L^2$-Stokes’ theorem holds then the Hodge star operator induces an isomorphism  
\begin{align*}
*:\mathcal{H}^r_{(2)}(M)\longrightarrow \mathcal{H}^{m-r}_{(2)}(M). 
\end{align*}
\end{proposition}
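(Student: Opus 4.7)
The plan is to leverage the observation stated immediately before the proposition---that the Hodge star intertwines the closed extensions of $d$ and $d^\dagger$ under the degree symmetry $r\leftrightarrow m-r$---and combine it with equation \eqref{Eqn:L2HarmonicForms} under the $L^2$-Stokes hypothesis.

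First I would fix $\omega\in\mathcal{H}^r_{(2)}(M)=\ker(d_{r,\text{max}})\cap\ker(d^\dagger_{r,\text{max}})$. Since the $L^2$-Stokes theorem gives $d_{r,\text{min}}=d_{r,\text{max}}$ in every degree, taking Hilbert-space adjoints yields $d^\dagger_{r,\text{min}}=d^\dagger_{r,\text{max}}$ as well. Hence the three descriptions
\begin{align*}
\ker(d_{r,\text{max}})\cap\ker(d^\dagger_{r,\text{max}}), \quad \ker(d_{r,\text{min}})\cap\ker(d^\dagger_{r,\text{min}}), \quad \ker(d_{r,\text{max}})\cap\ker((d_{r-1,\text{min}})^*)
\end{align*}
all coincide with $\mathcal{H}^r_{(2)}(M)$; this flexibility is what allows me to freely mix the min and max closures when applying the intertwining formulas.

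Next I would invoke the two identities recorded just above the proposition,
\begin{align*}
d^\dagger_{r,\text{max}}=(-1)^{m(r+1)+1}\,*\,d_{m-r,\text{max}}\,*,\qquad d^\dagger_{m-r,\text{max}}=(-1)^{m(m-r+1)+1}\,*\,d_{r,\text{max}}\,*,
\end{align*}
together with $**=(-1)^{r(m-r)}$ on $r$-forms. Applied to $\omega$, the first identity gives $d_{m-r,\text{max}}(*\omega)=\pm\, *(d^\dagger_{r,\text{max}}\omega)=0$, while the second, evaluated at $*\omega$, yields $d^\dagger_{m-r,\text{max}}(*\omega)=\pm\, *d_{r,\text{max}}(**\omega)=\pm\, *d_{r,\text{max}}\omega=0$. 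In both cases I use only that $*$ is a bijection to cancel the leading Hodge star. Therefore $*\omega\in\mathcal{H}^{m-r}_{(2)}(M)$, and the Hodge star restricts to a well-defined linear map $*\colon\mathcal{H}^r_{(2)}(M)\to\mathcal{H}^{m-r}_{(2)}(M)$.

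The isomorphism property is then immediate: $*$ is unitary on $L^2$ and satisfies $**=\pm 1$ on each degree, so the same argument applied in the reverse direction exhibits an explicit inverse. I do not anticipate a serious obstacle. The one subtle point worth spelling out is the verification that the formal identity $d^\dagger_r=\pm *d_{m-r}*$ really does pass to both the min and the max closures; this is implicit in the paper's summary of the $*$-action on domains, and it follows by a short approximation argument using that $*$ is an isometric bijection between $\Omega^r_c(M)$ and $\Omega^{m-r}_c(M)$ and between the ambient $L^2$-spaces, so that defining sequences for one closure transport to defining sequences for the other.
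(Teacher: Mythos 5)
Your argument is correct and follows essentially the same route as the paper: the key input in both cases is the intertwining identity $d^\dagger_{r,\text{min/max}}=(-1)^{m(r+1)+1}*d_{m-r,\text{min/max}}*$ together with \eqref{Eqn:L2HarmonicForms}, with the $L^2$-Stokes hypothesis used to identify the min and max closures so that the Hodge star's swapping of the two boundary conditions is harmless. The only cosmetic difference is that the paper records the map as $*:\ker(d_{\text{max}})\cap\ker((d_{\text{max}})^*)\to\ker(d_{\text{min}})\cap\ker((d_{\text{min}})^*)$ and then collapses the two spaces, whereas you invoke $d_{\text{min}}=d_{\text{max}}$ at the outset; the content is identical.
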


For $m=4k$ we define the {\em $L^2$-signature} of $M$, denoted by $\sigma_{(2)}(M)$, as the signature of the quadratic form 
\begin{align*}
\xymatrixcolsep{2cm}\xymatrixrowsep{0.01cm}\xymatrix{
H^{2k}_{(2)}(M)\times H^{2k}_{(2)}(M)  \ar[r] & \mathbb{C}\\
(\omega,\omega') \ar@{|->}[r] & \displaystyle{\int_M \omega\wedge\omega'}.
}
\end{align*}

In view of the signature theorem for closed manifolds, one can wonder whether the $L^2$-signature can be computed as an index. This is not always the case. Nevertheless, under certain conditions one can obtain such an index theorem. Since the $L^2$-cohomology groups are defined using the complex \eqref{Diag:MaximalExtdeRham} a natural candidate to consider would be the operator $D_\textnormal{max}\coloneqq d_\text{max}+(d_\text{max})^*$ with domain of definition $\text{Dom}(D_\textnormal{max})\coloneqq \text{Dom}(d_\text{max})\cap \text{Dom}((d_\text{max})^*)$. Since this operator is induced from a Hilbert complex it is automatically is self-adjoint  (\cite[pg 92]{BL92}). Suppose that the $L^2$-Stokes theorem holds, then from the discussion above we see that 
$*\text{Dom}(D_\textnormal{max})\subset \text{Dom}(D_\textnormal{max})$ and therefore $\star\text{Dom}(D_\textnormal{max})\subset \text{Dom}(D_\textnormal{max})$, where $\star$ is the chirality operator of $M$. This allow us to define 
\begin{align}\label{Def:Dmax+}
D^{+}_{\textnormal{max}}\coloneqq \frac{1}{2}(1-\star)D_\text{max}\frac{1}{2}(1+\star). 
\end{align} 

\begin{theorem}[{\cite[Lemma 3.2]{BL92}}]\label{Prop:L2singnatureIndex}
Let $M$ be an oriented Riemannian manifold of dimension $4k$. If the $L^2$-cohomology of $M$ has finite dimension and the $L^2$-Stokes theorem holds then $\sigma_{(2)}(M)=\ind(D^{+}_{\textnormal{max}})$.
\end{theorem}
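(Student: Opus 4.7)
The plan is to combine the strong Hodge theorem (available under the hypotheses) with the $\star$-grading decomposition of harmonic forms, and then identify each summand of the resulting index computation with the corresponding summand of the signature pairing on middle-degree $L^{2}$-harmonic forms.

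First I would verify that $D^{+}_{\textnormal{max}}$ as defined in \eqref{Def:Dmax+} is a genuine Fredholm operator and that its index equals $\dim \mathcal{H}^{+}_{(2)}(M) - \dim \mathcal{H}^{-}_{(2)}(M)$, where $\mathcal{H}^{\pm}_{(2)}(M)$ are the $\pm 1$-eigenspaces of the chirality involution $\star$ inside $\mathcal{H}^{*}_{(2)}(M)$. Since the $L^{2}$-Stokes theorem holds, $\star$ preserves $\dom(D_{\textnormal{max}})$, and since $m = 4k$ is even the identity \eqref{Eqn:DChirl} yields $D_{\textnormal{max}}\star = -\star D_{\textnormal{max}}$. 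The splitting $L^{2}(\wedge T^{*}M) = L^{2,+} \oplus L^{2,-}$ according to $\star$ therefore decomposes $D_{\textnormal{max}}$ into off-diagonal pieces, the upper-left block being $D^{+}_{\textnormal{max}}$. By the finite-dimensionality of $H^{*}_{(2)}(M)$ combined with Proposition~\ref{Prop:StrongHodgeTheorem}, we have the strong Hodge theorem $H^{r}_{(2)}(M) \cong \mathcal{H}^{r}_{(2)}(M) = \ker D_{\textnormal{max}} \cap \Omega^{r}(M)$, and in particular $\ker D_{\textnormal{max}}$ is finite-dimensional. This gives the Fredholm property and the formula $\ind(D^{+}_{\textnormal{max}}) = \dim \mathcal{H}^{+}_{(2)}(M) - \dim \mathcal{H}^{-}_{(2)}(M)$.

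Next I would analyse how $\star$ acts on $\mathcal{H}^{*}_{(2)}(M)$ degree by degree. The definition \eqref{Def:Chirl} shows that $\star$ sends $r$-forms to $(4k-r)$-forms, so for $r \neq 2k$ the map $\star$ interchanges $\mathcal{H}^{r}_{(2)}(M)$ and $\mathcal{H}^{4k-r}_{(2)}(M)$. On the direct sum $\mathcal{H}^{r}_{(2)}(M) \oplus \mathcal{H}^{4k-r}_{(2)}(M)$, the involution $\star$ therefore splits into $\pm 1$-eigenspaces of equal dimension (both equal to $\dim \mathcal{H}^{r}_{(2)}(M) = \dim \mathcal{H}^{4k-r}_{(2)}(M)$ by Proposition~\ref{Prop:PoincareDuality}). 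Consequently all contributions from degrees $r \neq 2k$ cancel in the difference $\dim \mathcal{H}^{+}_{(2)}(M) - \dim \mathcal{H}^{-}_{(2)}(M)$, so that
\begin{equation*}
\ind(D^{+}_{\textnormal{max}}) = \dim \mathcal{H}^{2k,+}_{(2)}(M) - \dim \mathcal{H}^{2k,-}_{(2)}(M),
\end{equation*}
where $\mathcal{H}^{2k,\pm}_{(2)}(M)$ denotes the $\pm 1$-eigenspace of $\star$ on the middle-degree harmonic forms.

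Finally I would identify the right-hand side with $\sigma_{(2)}(M)$. Under the isomorphism $H^{2k}_{(2)}(M) \cong \mathcal{H}^{2k}_{(2)}(M)$, the intersection pairing is represented by $(\omega_{1},\omega_{2}) \mapsto \int_{M} \omega_{1} \wedge \omega_{2}$. In middle degree $r = 2k$ of a $4k$-manifold, a direct computation of the exponent in \eqref{Def:Chirl} gives $\star = *$ on $\Omega^{2k}$ (the exponent $2k + 16k^{2} + 2k(2k-1) = 20k^{2}$ is divisible by $4$), and moreover $** = 1$ in middle degree. Hence for $\omega_{1},\omega_{2}\in \mathcal{H}^{2k}_{(2)}(M)$,
\begin{equation*}
\int_{M} \omega_{1}\wedge\omega_{2} = \int_{M} \omega_{1}\wedge *(\star\omega_{2}) = \langle \omega_{1},\star\omega_{2}\rangle_{L^{2}},
\end{equation*}
so the intersection form is positive definite on $\mathcal{H}^{2k,+}_{(2)}(M)$, negative definite on $\mathcal{H}^{2k,-}_{(2)}(M)$, and these eigenspaces are orthogonal for the $L^{2}$-inner product. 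Thus $\sigma_{(2)}(M) = \dim \mathcal{H}^{2k,+}_{(2)}(M) - \dim \mathcal{H}^{2k,-}_{(2)}(M) = \ind(D^{+}_{\textnormal{max}})$.

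The main subtlety I expect to have to be careful about is the passage from the $L^{2}$-signature, defined a priori as a pairing on cohomology classes, to a pairing on chosen harmonic representatives: this is exactly what the strong Hodge theorem (Proposition~\ref{Prop:StrongHodgeTheorem}) and the $L^{2}$-Stokes assumption guarantee. Everything else is an orthogonality computation, but without these two hypotheses the identification $H^{2k}_{(2)}(M)\cong \mathcal{H}^{2k}_{(2)}(M)$ and the $\star$-invariance of $\dom(D_{\textnormal{max}})$ can fail, so the hypotheses of the theorem are really essential rather than decorative.
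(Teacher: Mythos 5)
Your proof is correct and follows essentially the same route as the paper: identify $\ker D_{\textnormal{max}}$ with the $L^2$-harmonic forms via the strong Hodge theorem and then read off the index from the $\star$-eigenspace decomposition, cancelling the degrees $r\neq 2k$ by Poincar\'e duality and identifying the middle-degree contribution with the signature; in fact you spell out the final identification that the paper dispatches with the single phrase ``the index relation follows directly from Proposition~\ref{Prop:PoincareDuality}''. The one point to tighten is your claim that finite-dimensionality of $\ker D_{\textnormal{max}}$ ``gives the Fredholm property'': one also needs closed range of $D_{\textnormal{max}}$, which the paper obtains by appealing to the argument of \cite[Theorem 2.4]{BL92} and which ultimately rests on the finite-dimensionality of the $L^2$-cohomology.
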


\begin{proof}
Since $d_\text{max}=d_\text{min}$ then the operator
\begin{align*}
D\coloneqq D_\textnormal{max}=d_\text{max}+(d_\text{max})^*=d_\text{min}+d^\dagger_\text{min},
\end{align*}
is defined on 
\begin{align*}
\text{Dom}(D)\coloneqq \text{Dom}(d_\text{max})\cap \text{Dom}((d_\text{max})^*)=\text{Dom}(d_\text{min})\cap \text{Dom}(d^\dagger_\text{min}).
\end{align*}
Assume $\omega\in \ker D$, then there exist sequences $(\omega_n)_n,(\alpha_l)_l\subset\Omega_c(M)$ such that 
\begin{align*}
\omega_n &\longrightarrow \omega,  &\alpha_l  &\longrightarrow \omega,\\
 d\omega_n &\longrightarrow d_\text{min}\omega,  &d^\dagger\alpha_l &\longrightarrow d^\dagger_\text{min}\omega,
\end{align*}
and $d_\text{min}\omega+d^\dagger_\text{min}\omega=0$. Thus, by continuity of the $L^2$-inner product 
\begin{align*}
(d_\text{min}\omega,d^\dagger_\text{min}\omega)_{L^2(\wedge T^*M)}=&\lim_{n\rightarrow\infty}\lim_{l\rightarrow\infty}(d\omega_n,d^\dagger\alpha_l)_{L^2(\wedge T^*M)}\\
=&\lim_{n\rightarrow\infty}\lim_{l\rightarrow\infty}(d^2\omega_n,\alpha_l)_{L^2(\wedge T^*M)}\\
=&0.
\end{align*}
Hence, $\ker D=\ker(d_\text{min})\cap\ker(d^\dagger_\text{min})$. Now, from Proposition \ref{Prop:StrongHodgeTheorem} we conclude that $\ker D=\mathcal{H}^*_{(2)}(M)=H^r_{(2)}(M)$. One can prove that the image of $D$ is closed as in the proof of  \cite[Theorem 2.4]{BL92}.  The index relation follows directly from   Proposition \ref{Prop:PoincareDuality}.
\end{proof}

\subsection{Exterior derivative of a Riemannian submersion}\label{Sect:RiemFibr}
This section collects some geometric constructions in the setting of Riemannian fiber bundles. We describe how the exterior derivative on a total space  of a fibration can be decomposed into a ``vertical'' and a ``horizontal'' part. This decomposition will often be used during the next chapters. Then we recall the structure equations of satisfied by the connection and the curvature forms of the Levi-Civita connection of a submersion metric.  The aim of this section is to fix some notation and state some results without proofs. A detailed treatment can be found in \cite{BGV} and  \cite{BL95}. 

\subsubsection{Decomposition of the exterior derivative} 
Let $\pi:N\longrightarrow B$ be a fiber bundle with fiber $Y$ and set ${v}\coloneqq \dim Y$ and ${h}\coloneqq \dim B$. Define the {\em vertical tangent bundle} as the collection of subspaces
$$T_V N_x\coloneqq \{v\in T_x N\: : \: d\pi |_x(v)=0\in T_{\pi(x)}B\}\subset T_x N,\quad\text{for $x\in B$}.$$ 
Let us assume we are given a {\em connection}, i.e an {\em horizontal subbundle} $T_H N\subset TN$ such that $TN=T_H N\oplus T_V N$ and which we can identify $T_HN\cong \pi^*TB$.  This splitting of $T M$ induces a splitting of the exterior bundle 
$$\wedge T^* N=\wedge T_H^*N\otimes\wedge T^*_V N =\bigoplus_{p,q}\wedge^p T^*_H N\otimes \wedge^q T^*_V N\eqqcolon \bigoplus_{p,q}\wedge^{p,q}T^* N.$$ 
We denote the space of differential $(p,q)$-forms by $\Omega^{p,q}(N)\coloneqq C^\infty(N,\wedge^{p,q}T^*N)$. With respect to this decomposition we define the operators which count the horizontal and vertical degree by
\begin{align*}
\text{hd}|{\wedge^{p,q} T^*N}\coloneqq p\quad \text{and} \quad \text{vd}|{\wedge^{p,q} T^*N}\coloneqq q.
\end{align*}
Similarly, let $\varepsilon_H\coloneqq (-1)^{\text{hd}}$ and $\varepsilon_V\coloneqq (-1)^\text{vd}$ be the horizontal and vertical Gau\ss-Bonnet involution respectively, i.e 
 \begin{align*}
\varepsilon_H|{\wedge^{p,q} T^*N}\coloneqq (-1)^p\quad \text{and} \quad \varepsilon_V|{\wedge^{p,q} T^*N}\coloneqq (-1)^q.
\end{align*}

Let $d_N:\Omega^r(N)\longrightarrow\Omega^{r+1}(N)$ be the exterior derivative operator of $N$.  It is proven in \cite[Section 10.1]{BGV}, \cite[Proposition 3.4]{BL92} and \cite[Lemma 2.1]{B09} that we can decompose $d_N$ as the sum 
\begin{equation}\label{Eqn:d}
d_N=d_{N,H}^{(1)}+d_{N,H}^{(2)}+d_{N,V},
\end{equation}
where 
\begin{align*}
d_{N,H}^{(1)}&:\Omega^{p,q}(N)\longrightarrow \Omega^{p+1,q}(N),\\
d_{N,H}^{(2)}&:\Omega^{p,q}(N)\longrightarrow \Omega^{p+2,q-1}(N),\\
d_{N,V}&:\Omega^{p,q}(N)\longrightarrow \Omega^{p,q+1}(N).
\end{align*}
Here we list some properties of these operators:
\begin{itemize}
\item $d_{N,V}$ is given explicitly by $d_{N,V}(\pi^*\omega_1\otimes\omega_2)\coloneqq \varepsilon_H(\pi^*\omega_1)\otimes d_{Y}\omega_2$, where $d_Y$ is the exterior derivative on the fiber. Thus, $d_{N,V}$ is a first order vertical differential operator. 
\item $d_{N,H}^{(1)}$ is a first order horizontal operator. 
\item $d_{N,H}^{(2)}$ is a bounded operator involving the curvature of the fibration.
\end{itemize}
Using the equation $d_N^2=0$ we obtain the following relations (\cite[Equations (3.12)]{BL95}),
\begin{align*}
(d_{N,H}^{(2)})^2=&0.\\
d_{N,V}d_{N,H}^{(1)}+d_{N,H}^{(1)}d_{N,V}=&0,\\
(d_{N,H}^{(1)})^2+d_{N,V}d_{N,H}^{(2)}+d_{N,H}^{(2)}d_{N,V}=&0,\\
d_{N,H}^{(1)}d_{N,H}^{(2)}+d_{N,H}^{(2)}d_{N,H}^{(1)}=&0,\\
(d_{N,V})^2=&0.
\end{align*}

\begin{remark}\label{Rmk:FlatConnHarmForms}
Let us consider the {\em vertical cohomology bundle} $\mathscr{H}^*(Y)\longrightarrow B$ defined by the vertical differential $d_{N,V}$, i.e. it is the bundle of de Rham cohomology groups of the fibers. This collection of spaces is indeed locally trivial by the results of \cite[Section 9.2]{BGV}. Moreover, from the first two relations above  it follows that $d_{N,H}^{(1)}$ induces a flat connection on this bundle (\cite[pg. 12]{ALP13}). 
\end{remark}

\subsubsection{The Levi-Civita connection}
Let us equip the vertical tangent bundle $T_V N$ with a Riemannian metric $g^{T_V N}$ and the horizontal bundle $T_H N$  with a metric $g^{T_H N}$ by pulling up with $\pi$ a metric $g^{TB}$ from the base. With these two metrics we can construct a {\em submersion metric}  on $TN$ by setting $g^{TN}\coloneqq g^{T_H N}\oplus g^{T_V N}$ and denote by $\nabla$ its associated Levi-Civita connection . Consider now a local oriented orthonormal basis of $TN$ of the form $\{e_i\}_{i=1}^{{v}}\cup\{f_\alpha\}_{\alpha=1}^{{h}}$ where $e_i\in T_V N$ and $f_\alpha\in T_H N\cong TB$.  Let $\{e^i\}_{i=1}^{{v}}\cup\{f^\alpha\}_{\alpha=1}^{{h}}$ denote its associated dual basis.  With respect to this local frame we construct the associated connection $1$-form $\omega$ of the Levi-Civita connection $\nabla$, whose components are defined by the relations
\begin{equation}\label{Eqns:DefConnection1form}
\nabla e_j\eqqcolon \omega^i_{j} \otimes e_i+\omega^\alpha_j \otimes f_\alpha\quad \text{and}\quad \nabla f_\alpha\eqqcolon \omega^i_{\alpha} \otimes e_i+\omega^\alpha_\beta\otimes f_\beta,
\end{equation}
where the sum over repeated indices is understood. Since the Levi-Civita connection preserves the metric then $\omega$ is anti-symmetric, i.e. $\omega^I_J=-\omega^J_I$, where $I,J$ denote vertical (latin) or horizontal (greek) indices.  On the other hand since the Levi-Civita connection is torsion free then it satisfies the  {\em structure equations}
\begin{align}\label{Eqns:StructureEquations}
de^i+\omega^i_j\wedge e^j+\omega^i_\alpha\wedge f^\alpha=0,\\
df^\alpha+\omega^\alpha_j\wedge e^j+\omega^\alpha_\beta\wedge f^\beta=0.\notag
\end{align}
We can express each component of $\omega$ in terms of the basis elements, for example
\begin{align*}
\omega^\alpha_j=\omega^\alpha_{jk}e^k+\omega^{\alpha}_{j\beta} f^\beta\in\Omega^1(N),
\end{align*}
where $\omega^\alpha_{jk}$ and $\omega^{\alpha}_{j\beta}$ are smooth (local) functions on $N$. These components are computed as follows: First we see from \eqref{Eqns:DefConnection1form},
\begin{align*}
\nabla_{e_k}e_j=\omega^i_{j} (e_k)\otimes e_i+\omega^\alpha_j(e_k)\otimes f_\alpha.
\end{align*}
Then to pick up the desired component we just contract with the metric $\omega^\alpha_{jk}=\inner{\nabla_{e_k}e_j}{f_\alpha}$.
\begin{align*}
\omega^\alpha_{jk}=\inner{\nabla_{e_k}e_j}{f_\alpha}. 
\end{align*}
\begin{lemma}[{\cite[Equations (3.21)]{BL95}}]\label{Lemma:SymmOmega}
The components of $\omega$ satisfy 
\begin{enumerate}
\item $\omega^{\alpha}_{jk}=\omega^{\alpha}_{kj}$.
\item $\omega^{\alpha}_{j\beta}=\omega^{\alpha}_{k\beta}$.
\end{enumerate}
\end{lemma}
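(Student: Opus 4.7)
The plan is to derive both identities from torsion-freeness of the Levi-Civita connection, combined with two structural facts about the submersion: the integrability of the vertical distribution and the existence of basic lifts of horizontal frames. (I read the second identity as $\omega^{\alpha}_{j\beta}=\omega^{\alpha}_{\beta j}$, interpreting the index $k$ on the right as a typo, since the statement as written would assert that the component is independent of $j$, which cannot hold in general.)

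First I would rewrite the four scalars intrinsically. Pairing the expansions $\omega^\alpha_j=\omega^\alpha_{jk}e^k+\omega^\alpha_{j\beta}f^\beta$ and $\omega^\alpha_\beta=\omega^\alpha_{\beta k}e^k+\omega^\alpha_{\beta\gamma}f^\gamma$ with elements of the dual frame and using \eqref{Eqns:DefConnection1form} gives
\begin{align*}
\omega^\alpha_{jk}&=\inner{\nabla_{e_k}e_j}{f_\alpha}, & \omega^\alpha_{kj}&=\inner{\nabla_{e_j}e_k}{f_\alpha},\\
\omega^\alpha_{j\beta}&=\inner{\nabla_{f_\beta}e_j}{f_\alpha}, & \omega^\alpha_{\beta j}&=\inner{\nabla_{e_j}f_\beta}{f_\alpha}.
\end{align*}
Applying torsion-freeness $\nabla_X Y-\nabla_Y X=[X,Y]$ to the two differences yields
\begin{equation*}
\omega^\alpha_{jk}-\omega^\alpha_{kj}=\inner{[e_k,e_j]}{f_\alpha},\qquad \omega^\alpha_{j\beta}-\omega^\alpha_{\beta j}=\inner{[f_\beta,e_j]}{f_\alpha}.
\end{equation*}
Both identities thus reduce to showing that the relevant bracket is vertical.

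For (1), the vertical subbundle $T_V N=\ker d\pi$ is integrable with the fibers of $\pi:N\longrightarrow B$ as integral manifolds, so the Lie bracket of any two vertical vector fields is again vertical. Hence $[e_k,e_j]\in T_VN$ and $\inner{[e_k,e_j]}{f_\alpha}=0$, which is the classical statement that the second fundamental form of the fibers is symmetric. For (2), I would take the horizontal frame $\{f_\alpha\}$ to consist of basic vector fields, i.e.\ horizontal lifts of a local orthonormal frame on $B$; this is the natural choice behind the identification $T_HN\cong\pi^*TB$ already used in the setup. Since $f_\beta$ is $\pi$-related to some $\bar f_\beta\in C^\infty(B,TB)$ while $e_j$ is $\pi$-related to $0$, the bracket $[f_\beta,e_j]$ is $\pi$-related to $[\bar f_\beta,0]=0$; hence $[f_\beta,e_j]\in T_VN$, giving $\inner{[f_\beta,e_j]}{f_\alpha}=0$.

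The only real subtlety, and the step I would flag as requiring attention, is the choice of frame in (2): unlike (1), identity (2) is not intrinsic to an arbitrary adapted orthonormal frame but depends on the horizontal vectors being basic. Under that choice the proof amounts to the two short computations above; no index manipulations or structure-equation bookkeeping are required.
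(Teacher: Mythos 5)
Your proof is correct and follows the same route as the paper: torsion-freeness of $\nabla$ reduces both identities to verticality of $[e_k,e_j]$ (from integrability of $T_V N$) and of $[f_\beta,e_j]$ (from $\pi$-relatedness of a basic horizontal frame), and you correctly read the typo in (2) as $\omega^{\alpha}_{j\beta}=\omega^{\alpha}_{\beta j}$. Your flag on the frame-dependence of (2) is warranted — the paper simply asserts that $[f_\beta,e_j]$ is vertical without noting that this rests on the $f_\alpha$ being horizontal lifts of a frame on $B$, a choice that is implicit in the setup where $g^{T_H N}$ is pulled back from the base.
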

\begin{proof}
For the first identity we just using the relation above
\begin{align*}
\omega^{\alpha}_{jk}-\omega^{\alpha}_{kj}=&\inner{\nabla_{e_k}e_j}{f_\alpha}-\inner{\nabla_{e_j}e_k}{f_\alpha}\\
=&\inner{\nabla_{e_k}e_j-\nabla_{e_j}e_k}{f_\alpha}\\
=&\inner{[e_k,e_j]}{f_\alpha}\\
=&0, 
\end{align*}
since $[e_k,e_j]$ is a vertical vector field.  Similarly we verify $\omega^{\alpha}_{j\beta}-\omega^{\alpha}_{\beta j}=\inner{[e_k,f_\beta]}{f_\alpha}=0$, since $[f_\beta,e_j]$  is also vertical.
\end{proof}

The associated associated curvature $2$-form $\Omega$ of $\nabla$ has components (\cite[Theorem 5.21]{M00})
\begin{align}\label{Eqns:ComponentsCurvatureForm}
\Omega^i_j=& d\omega^i_j+\omega^i_k\wedge\omega^k_j +\omega^i_\alpha\wedge\omega^\alpha_j,\notag \\
\Omega^i_\alpha=& d\omega^i_\alpha+\omega^i_k\wedge\omega^k_\alpha +\omega^i_\beta\wedge\omega^\beta_\alpha, \\
\Omega^\alpha_\beta=& d\omega^\alpha_\beta+\omega^\alpha_k\wedge\omega^k_\beta +\omega^\alpha_\gamma\wedge\omega^\gamma_\beta. \notag
\end{align} 
These components also satisfy the relations $\Omega^I_J=-\Omega^J_I$.

\subsection{Adiabatic limit of the eta invariant of the signature operator}\label{Section:Dai}

In this section we are going to describe Dai's formula for the adiabatic limit of the eta invariant derived in \cite{D91}. The original motivation for such a formula goes back to Witten in his study of certain types of anomalies (\cite{W85}). The adiabatic formula was studied in the particular case when the base space had dimension one. Rigorous treatments of Witten's ideas were developed further in \cite{BFI86}, \cite{BFII86}. A remarkable generalization for general compact base manifolds was developed by Bismut and Cheeger in \cite{BC89}. One of the main ingredients of their adiabatic limit formula is the appearance  of the $\widetilde{\eta}$-form,a differential form on the base space. Here are two important features of $\widetilde{\eta}$:
\begin{enumerate}
\item It arises as the transgression form on Bismut's family index theorem (\cite[Section 10.5]{BGV}).
\item It can be seen as a higher dimensional analogue of the eta invariant in view of the family index theorem for manifolds with boundary of Bismut and Cheeger (\cite{BCI90}, \cite{BCII90}). 
\end{enumerate}
In the fundamental paper \cite{BC89}  Bismut and Cheeger studied in detail the case when the Dirac operators along the fibers are invertible. Note that this is not necessary the case for the vertical signature operator since its kernel, by Hodge theory, is given by the cohomology of the fiber.  In Dai's Ph.D. thesis he studied the case when the dimension of the kernel of the vertical Dirac operators is locally constant (\cite{D91}). He found for the adiabatic limit formula of the eta invariant of the signature operator a new term arising from the Leray spectral sequence of the fibration, the so-called $\tau$-invariant. The aim of this section is to understand the meaning of the terms involved in Dai's result. Nevertheless, we are not going to dive into the details of the derivation of the formula. Instead, we will focus in some particular situations in which we have vanishing results for the $\widetilde{\eta}$-form and the $\tau$-invariant. These  results will be essential for next chapter.

\subsubsection{Description of the adiabatic limit formula for the signature operator}
Let us consider a fibration of closed manifolds $N\longrightarrow B$ with typical fiber $Y$ and such that the dimension of the total space $N$ is $4k-1$ . We will be mainly interested in the case where fiber $Y$ has even dimension $\dim Y=2N$, which implies that the dimension of the base space is $\dim B=4k-2N-1$ is odd. We further assume the fibration is {\em oriented}, meaning that $TB$ and $T_VN$ are both oriented. As before consider the submersion metric $g^{TN}=g^{T_H N}\oplus g^{T_V N}$ on $TN$ such that $g^{T_H N}$ comes from a metric $g^{TB}$ on $TB$. For a parameter $r>0$ define a new metric by  
\begin{align*}\label{Eqn:AdiabaticMetric}
g^{TN}(r)\coloneqq (r^{-2}g^{T_H M})\oplus g^{T_V M}.
\end{align*}
Let $A_r$ denote the odd signature operator \eqref{Eqn:DefOddSignOp} of $N$ with respect to the metric $g^{TN}(r)$. In various applications one is interested in the behavior,  as the parameter $r$ goes to zero, of $\eta(A_r)\coloneqq\eta_{A_r}(0)$. This limit is called the {\em adiabatic limit of the eta invariant}. In \cite{D91} Dai found an expression of this limit in terms of several geometric and topological quantities which we now describe (see \cite[Section 4.1]{D91})
\begin{itemize}
\item Let $L(TB,g^{TB})$ be the $L$-polynomial of the Levi-Civita connection of $g^{TB}$.
\item Let $A_B$ denote the odd signature operator of $B$ with respect to the metric $g^{TB}$.
\item Let $D_Y$ denote the family of Hodge-de Rham operators along the fibers and let $\ker D_Y$ denote its corresponding index bundle. Note that $\ker D_Y=\mathscr{H}^*(Y)$ admits a flat connection (see Remark \ref{Rmk:FlatConnHarmForms}).
\item Let $\widetilde{\eta}$ be the eta-form introduced in \cite{BC89} by Bismut and Cheeger. This is an odd differential form on $B$ since the dimension of the fiber $Y$ is even. It is constructed using the Bismut super connection (\cite[Section 10.5]{BGV}, \cite{BC89}, \cite[Section 1.1]{D91}).
\item Let $\tau$ be Dai's invariant coming from the Leray's spectral sequence of the fibration (which will be explained in detail below).
\end{itemize}
\begin{theorem}[{\cite[Theorem 0.3]{D91}}]\label{Thm:Dai}
Suppose that the fibration $N\longrightarrow B$ with typical fiber $Y$ is oriented, then
\begin{align*}
\lim_{r\rightarrow 0}\eta(A_r)=2\int_B L(TB,g^{TB})\wedge \widetilde{\eta}+\eta(A_B\otimes \ker D_Y)+2\tau,
\end{align*}
where $\eta(A_B\otimes \ker D_Y)$ is the eta invariant of the odd signature operator $A_B$ of $B$ twisted by the bundle of vertical harmonic forms $\ker D_Y$.
\end{theorem}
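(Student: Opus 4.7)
The plan is to follow Dai's strategy, which refines the Bismut--Cheeger adiabatic analysis by a careful treatment of the eigenvalues that collapse to zero. First I would write the rescaled odd signature operator $A_r$ with respect to the decomposition $\wedge T^*N = \wedge T_H^*N \otimes \wedge T_V^*N$ introduced in Section 2.3, using the splitting $d_N = d_{N,H}^{(1)} + d_{N,H}^{(2)} + d_{N,V}$. After the standard unitary rescaling by $r^{\mathrm{hd}}$ that converts the adiabatic family $g^{TN}(r)$ into a one-parameter family of operators on a fixed Hilbert space, the horizontal terms acquire factors of $r$, so at leading order $r \to 0$ one is left with the vertical Hodge--de Rham operator $D_Y$ acting fiberwise, while the base derivatives enter only through $r$-weighted perturbations.

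Next I would decompose the spectrum of (the rescaled) $A_r$ into a ``large'' part, uniformly bounded away from zero as $r \to 0^+$, and a ``small'' part of eigenvalues $\lambda(r) \to 0$. For the contribution of the large eigenvalues I would use the heat-kernel representation of $\eta(A_r)$ and a Duhamel/transgression argument, exactly as in Bismut--Cheeger, so that the short-time part produces the integral $2\int_B L(TB,g^{TB})\wedge \widetilde{\eta}$, with the factor $2$ reflecting that $A$ decomposes as $A^{\mathrm{ev}}\oplus A^{\mathrm{odd}}$ (cf.\ Section 2.1.2). Here one exploits that $\widetilde{\eta}$ is the transgression form of Bismut's superconnection, so that it naturally appears when comparing the finite-time heat supertrace to its large-time limit. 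The large-time limit, restricted to the index bundle $\ker D_Y = \mathscr{H}^*(Y)$ of the fiberwise Hodge--de Rham operators (a flat bundle by Remark 2.3.1), yields $\eta(A_B \otimes \ker D_Y)$: the base operator twisted by the flat bundle of vertical harmonic forms.

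The main obstacle, and Dai's key innovation over Bismut--Cheeger, is the analysis of the small eigenvalues, which reflect the failure of the vertical cohomology to give the full cohomology of $N$. I would identify these eigenvalues by constructing, on $\Omega(B, \ker D_Y)$, the differentials $d_2, d_3, \dots$ of the Leray spectral sequence of $N\to B$ via iterated use of $d_{N,H}^{(1)}$ and $d_{N,H}^{(2)}$: the classes surviving to $E_k$ but killed at $E_{k+1}$ correspond to eigenvalues of order $r^{k-1}$. A careful asymptotic analysis (using Mazzeo--Melrose type arguments or Dai's direct comparison method) then shows these eigenvalues contribute to $\eta(A_r)$ a limit equal to $2\tau$, where $\tau$ is the signature of the pairing induced on the higher pages $E_k$ of the spectral sequence. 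This identification is the technical heart of the theorem, requiring both a careful parametrix construction and a cohomological interpretation of the limiting signs of $\lambda(r)$.

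Finally I would combine the three contributions and verify that no double counting occurs: the large eigenvalues contribute the $\widetilde{\eta}$ integral and, in their long-time limit, the twisted $\eta(A_B \otimes \ker D_Y)$; the small eigenvalues contribute $2\tau$; and the strictly zero eigenvalues reflect the $L^2$-cohomology and drop out of the reduced eta invariant used on the left-hand side. The hardest step, as noted, is the small-eigenvalue analysis and the resulting identification with $2\tau$, since it requires translating purely analytic spectral asymptotics into the algebraic language of the Leray spectral sequence.
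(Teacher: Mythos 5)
The paper does not prove this statement: Theorem \ref{Thm:Dai} is quoted verbatim from Dai's work \cite[Theorem 0.3]{D91}, and the surrounding text explicitly declines to derive it, concentrating instead on the special situations where $\widetilde{\eta}$ and $\tau$ vanish. So there is no in-paper proof to compare against; the relevant comparison is with Dai's original argument.

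Measured against that, your proposal is a faithful outline of the correct strategy --- rescaling to a fixed Hilbert space, splitting the spectrum of $A_r$ into a part uniformly bounded away from zero and a part of eigenvalues tending to zero, recovering the Bismut--Cheeger terms $2\int_B L\wedge\widetilde{\eta}$ and $\eta(A_B\otimes\ker D_Y)$ from the large eigenvalues via the heat-kernel/transgression argument, and attributing $2\tau$ to the small eigenvalues through the Leray spectral sequence. But as written it is a proof plan, not a proof. The step you yourself identify as the technical heart --- showing that the eigenvalues decaying polynomially in $r$ are in bijection with classes killed at successive pages $E_k$, with the rate of decay determined by $k$, and that the limiting signs of these eigenvalues reproduce the signature of the pairing $\tau_k$ --- is only asserted, not carried out; this occupies the bulk of Dai's paper and cannot be absorbed into a sentence invoking ``a careful asymptotic analysis.'' Likewise the claim that the long-time limit of the heat supertrace localizes onto the flat bundle $\ker D_Y$ requires the hypothesis (standing in Dai's setting, and satisfied here by Remark \ref{Rmk:FlatConnHarmForms}) that $\dim\ker D_{Y_x}$ is locally constant in $x$; you use this implicitly but should state it, since without it the index bundle and hence $\eta(A_B\otimes\ker D_Y)$ need not be defined. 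If the intent is merely to justify citing the theorem, your summary is accurate; if the intent is to supply a proof, the small-eigenvalue analysis remains entirely open.
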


\begin{remark}
The theorem above also holds when the dimension of the fiber is odd. 
\end{remark}

\subsubsection{Definition of the $\widetilde{\eta}$-form}	
We will comment very briefly the definition of the form $\widetilde{\eta}$ for the case of even dimensional fibers. Let $\mathcal{B}_t$ be the {\em Bismut super-connection} associated to the fibration $N\longrightarrow B$ and the odd signature operator on $N$ introduced in \cite{B86(2)}. The $\widetilde{\eta}$-form is defined, up to constants, by 
\begin{equation}\label{Def:EtaForm}
\widetilde{\eta}\coloneqq \int_{0}^\infty \tr_s\left(\frac{d \mathcal{B}_t}{dt}e^{-{\mathcal{B}_t}^2}\right)dt\in \Omega^{\text{odd}}(B),
\end{equation}
where $\tr_s$ denotes the super-trace in the Clifford algebra (\cite[Section 1.5]{BGV}). It is not straightforward to verify that this integral is is well defined. In general, this is not true for an arbitrary super-connection.		

\subsubsection{Definition of the $\tau$-invariant}
Now we are going to illustrate the construction of the  invariant $\tau$ appearing in the adiabatic limit formula of Theorem \ref{Thm:Dai}. To begin, we recall the properties of the Leray spectral sequence of a fibration. 
\begin{theorem}[Leray, {\cite[Theorem 14.18]{BT82}}]\label{Thm:Leray}
Given a fiber bundle ${N}\longrightarrow B$ with fiber $Y$ over a manifold $B$ and a good cover $\mathfrak{U}$ on $B$ there is a spectral sequence $\{(E^{p,q}_r,d_r)\}$, with
\begin{align*}
E^s_r\coloneqq \bigoplus_{p+q=s}E^{p,q}_r\quad \text{and}\quad E_r\coloneqq \bigoplus_{s\geq 1}E^s_r,
\end{align*}
which converges to the cohomology of the total space $H^*({N})$ and has $E_2$-term
$$E^{p,q}_2=H^{p}(\mathfrak{U},\mathscr{H}^q),$$
where $\mathscr{H}^q$ is the locally constant presheaf $\mathscr{H}^q(U)\coloneqq H^q(\pi^{-1}U)$ for $U\in \mathfrak{U}$. 
\end{theorem}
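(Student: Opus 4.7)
The plan is to adapt the standard \v{C}ech--de Rham machinery of Bott--Tu to the fibration $\pi:N\longrightarrow B$. Given the good cover $\mathfrak{U}=\{U_\alpha\}$ of $B$, I would consider the pullback cover $\pi^{-1}\mathfrak{U}=\{\pi^{-1}U_\alpha\}$ of $N$ and form the double complex
\begin{equation*}
K^{p,q}\coloneqq C^p(\pi^{-1}\mathfrak{U},\Omega^q),
\end{equation*}
of \v{C}ech $p$-cochains with values in the presheaf of smooth $q$-forms. Equipped with the \v{C}ech coboundary $\delta$ and (a sign-twisted) exterior derivative $d$, its total complex $(\mathrm{Tot}(K),D)$ with $D=\delta+(-1)^p d$ carries two natural filtrations, hence two spectral sequences which both converge to $H^*(\mathrm{Tot}(K))$.

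Next I would compute each spectral sequence separately. Filtering by \v{C}ech degree and taking $d$-cohomology first: on each $(p+1)$-fold intersection $U_{\alpha_0\cdots\alpha_p}$, contractibility (good cover) together with local triviality of $\pi$ gives a diffeomorphism $\pi^{-1}(U_{\alpha_0\cdots\alpha_p})\cong U_{\alpha_0\cdots\alpha_p}\times Y$, so by the Poincar\'e lemma and K\"unneth its de Rham cohomology is $H^q(Y)$. Thus the first page is $E_1^{p,q}=C^p(\mathfrak{U},\mathscr{H}^q)$, and taking $\delta$-cohomology yields precisely the claimed $E_2^{p,q}=H^p(\mathfrak{U},\mathscr{H}^q)$. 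Filtering in the opposite direction and taking $\delta$-cohomology first: the presheaf $\Omega^q$ on $N$ is fine (it admits partitions of unity subordinate to $\pi^{-1}\mathfrak{U}$), so its higher \v{C}ech cohomology vanishes and the $E_1$ page collapses to the column $p=0$ with entries $\Omega^q(N)$. A second application of the de Rham theorem then gives $H^*(N)$ as the abutment. Since both spectral sequences converge to the same total cohomology, the first one converges to $H^*(N)$ as well.

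The main obstacle I expect is not the algebra of the spectral sequence, but the verification that the presheaf $\mathscr{H}^q$ is genuinely locally constant and that the identification $\pi^{-1}(U_{\alpha_0\cdots\alpha_p})\simeq Y$ is compatible with restriction maps, so that the $E_2$-term really equals \v{C}ech cohomology with coefficients in the prescribed presheaf. This requires choosing local trivializations of $\pi$ compatibly over intersections and checking that the restriction maps on vertical cohomology agree, up to canonical isomorphism, with those induced by parallel transport of the flat connection on $\mathscr{H}^q$ (cf.\ Remark \ref{Rmk:FlatConnHarmForms}). Once this bookkeeping is in place, the rest of the argument is a routine double-complex spectral sequence computation.
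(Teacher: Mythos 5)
Your argument is the standard \v{C}ech--de Rham double-complex proof, which is exactly how the cited source (Bott--Tu, Theorem 14.18) establishes this result; the paper itself offers no independent proof, only the citation. The outline is correct, and the bookkeeping point you flag (local constancy of $\mathscr{H}^q$ and compatibility of the fiberwise identifications with restriction, which follows because all intersections in a good cover are contractible) is precisely the part Bott--Tu also single out, so no further gap remains.
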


In addition, this spectral sequence has a {\em multiplicative structure} (\cite[Chapter 1.2]{H04}): It is equipped with a bilinear product $E^{p,q}_r\times E^{s,t}_r\longrightarrow E^{p+s,q+t}_r$ such that for each $r\geq 1$ the differential $$d_r:E^{p,q}_r\longrightarrow E^{p+r,q-r+1}_r$$
acts as a derivation, i.e. $d_r(ab)=(d_ra)b+(-1)^{p+q}a(d_rb)$ for $a\in E^{p,q}_r$. Moreover,  since $E^{p,q}_{r+1}=\ker( d_r)/\ran  (d_r)$ at $E^{p,q}_r$ the product structure on $E^{p,q}_{r+1}$ is induced by the product structure on $E^{p,q}_{r}$.
This multiplicative structure satisfies the relation 
\begin{equation}\label{Eqn:CommSS}
ab=(-1)^{(p+q)(s+t)}ba,\quad\text{for $a\in E^{p,q}_r$ and $b\in E^{s,t}_r$}.
\end{equation}

Now let us see how these results allow us to define the invariant $\tau$. Assume that the fiber bundle $N\longrightarrow B$ is oriented, then the orientation of $T_V N$ induces a  trivalization of the top-degree flat line bundle $\mathscr{H}^{v}(Y)$ (Remark \ref{Rmk:FlatConnHarmForms}). Here we again use the previous notation $v\coloneqq \dim Y$ and $h\coloneqq \dim B$. Together with the orientation of $B$ we obtain an identification
\begin{align*}
E^{h,v}_2=H^{h}(B,\mathscr{H}^{v}(Y))\cong H^{h}(B)\cong \mathbb{R}.
\end{align*}
In particular, the multiplicative structure gives a Poincar\'e pairing
$$E^{p,q}_r\times E^{h-p,v-q}_r\longrightarrow E^{h,v}_r\cong\mathbb{R}.$$
Denote by $\xi_2\in E^{h+v}_2$ the generator induced by the orientations of $B$ and $T_V N$. By Theorem \ref{Thm:Leray} we know that $E_\infty^{m}=H^{h+v}(N)\cong \mathbb{R}$ so $\dim(E_\infty^{h+v})=1$, which implies $\dim(E^{h+v}_r)=1$ for all $r\geq 2$. Let us denote by $\xi_r\in E^{m}_r$ the generator induced by $\xi_2$.\\

We now consider, for each $r$, a  the bilinear pairing induced from the multiplicative structure described above.  Define
\begin{align*}
\tau_r:
\xymatrixcolsep{2cm}\xymatrixrowsep{0.01cm}\xymatrix{
E_r \times E_r \ar[r] & \mathbb{R}\\
(a,b) \ar@{|->}[r] & \inner{a(d_rb)}{\xi_r}.
}
\end{align*}
More concretely let $a\in E^{p,q}_r$ and $b\in E^{s,t}_r$ such that 
\begin{align*}
s=&h-p-r,\\
t=&v-q+r-1,
\end{align*}
then $d_r b\in E^{h-p,v-q}$ and so $a(d_r b)\in E^{h,v}_r$. Thus $a(d_r b)$ must be a multiple of $\xi_r$, we define $\inner{a}{d_rb}\in\mathbb{R}$ to be such a multiple. This bilinear pairing satisfies 
\begin{align*}
\tau_r(a,b)=(-1)^{(p+q+1)(s+t+1)}\tau_r(b,a),
\end{align*}
for $a\in E^{p,q}_r$ and $b\in E^{s,t}_r$. To see this observe from the derivation property of $d_r$ and \eqref{Eqn:CommSS},
\begin{align*}
a(d_r b)=&d_r(ab)+(-1)^{p+q+1}(d_ra)b\\
=&d_r(ab)+(-1)^{p+q+1}(-1)^{(p+q+1)(s+t)}b(d_ra)\\
=&d_r(ab)+(-1)^{(p+q+1)(s+t+1)}b(d_ra).
\end{align*}
Finally note that the term $d_r(ab)$ is zero in cohomology. 
In particular, if $h+v=4k-1$, the map $\tau_r: E_r^{2k-1}\times E_r^{2m-1}\longrightarrow\mathbb{R}$ is symmetric so it has a well-defined signature $\sigma(\tau_r)$. Define the $\tau$ {\em invariant} associated to the fibration $N\longrightarrow B$ as the sum of these signatures,
\begin{equation}\label{Def:TauInv}
\tau\coloneqq \sum_{r\geq 2}\sigma(\tau_r). 
\end{equation}

\begin{example}[Projectivization bundle]\label{Example:ProjBundle}
Let $E{\longrightarrow}B$ be a complex vector bundle of rank $N+1$ and let $P(E)\longrightarrow B$ be its associated projectivization bundle. It can be shown that there exists a cohomology class $c\in H^{2}(P(E))$ such that $1,c,\cdots, c^{N}$ are global classes on $P(E)$ whose restrictions to the fiber $P(E_x)$ freely generate the cohomology of the fiber over $x\in B$ (\cite[pg. 270]{BT82}). This implies the $E_2$-term of the spectral sequence of Theorem \ref{Thm:Leray} associated  to the fibration $P(E)\longrightarrow B$ is $E^{p,q}_2=H^{p}(B)\otimes H^{q}(\mathbb{C}P^N)$. Since each term of $E_2$ is already global, it can be seen from the proof of Theorem \ref{Thm:Leray} that the differentials $d_2=d_3=\cdots=0$ and therefore $E_2=E_\infty$. In particular, the $\tau$ invariant of $\pi:P(E)\longrightarrow B$ is zero.
\end{example}

\subsection{Equivariant methods}\label{Section:CompactStrGroup}
The aim of this last section of the chapter is to gather some results concerning fibrations with compact structure group. These results will be used in the next chapter. For a detailed treatment on the subject we refer, for example, to \cite{BGV}, \cite[Chapte IV]{BL95} and \cite{DK00}.

\subsubsection{Equivariant Chern-Weil homomorphism}

Let $G$ be a Lie group with associated Lie algebra $\mathfrak{g}$ and let $Y$ be an oriented $G$-manifold. Denote by $\mathbb{C}[\mathfrak{g}]$ the algebra of complex valued polynomial functions on $\mathfrak{g}$. An element $g\in G$ acts on $\alpha\in\mathbb{C}[\mathfrak{g}]\otimes\Omega(Y)$ as 
\begin{align*}
(g\alpha)(X)\coloneqq g(\alpha(g^{-1}X))\:,\quad \text{for $X\in\mathfrak{g}$.}
\end{align*}
Here $G$ acts on $\mathfrak{g}$ via the adjoint representation. We denote by $(\mathbb{C}[\mathfrak{g}]\otimes\Omega(Y))^G$ the space of {\em equivariant differential forms}, i.e. forms which are invariant under this $G$-action. We can define a $\mathbb{Z}$-grading on the space $\mathbb{C}[\mathfrak{g}]\otimes\Omega(Y)$ by setting 
\begin{align*}
\deg(f\otimes\beta)\coloneqq 2\deg(f)+\deg(\beta), 
\end{align*}
where $f\in\mathbb{C}[\mathfrak{g}]$ and $\beta\in\Omega(Y)$. The {\em equivariant exterior differential} $d_\mathfrak{g}$ on $\mathbb{C}[\mathfrak{g}]\otimes\Omega(M)$ is defined by the relation
\begin{align*}
d_\mathfrak{g}(f\otimes\beta)(X)\coloneqq d(f(X)\beta)-\iota_X(f(X)\beta)). 
\end{align*}
It is easy to see that $d_\mathfrak{g}$ increases the total degree by one and when restricted to $(\mathbb{C}[\mathfrak{g}]\otimes\Omega(Y))^G$ it satisfies $d_\mathfrak{g}^2=0$. \\

In addition let $P\longrightarrow B$ be a $G$-principal bundle with connection form $\boldsymbol{\omega}\in\Omega^1(P,\mathfrak{g})^G$ and curvature form $\boldsymbol{\Omega}=d\boldsymbol{\omega}+\boldsymbol{\omega}\wedge \boldsymbol{\omega}\in\Omega^2(P,\mathfrak{g})^G$. We can construct the associated bundle $N\coloneqq P\times_G Y$ over $B$ with typical fiber $Y$ which has an induced connection induced from $\boldsymbol{\omega}$ (\cite[Proposition 1.6]{BGV}). Recall that this bundle is defined as a quotient of a $G$ free action on $P\times Y$ (Section \ref{Section:G-manifolds}). Let $f\otimes\beta\in (\mathbb{C}[\mathfrak{g}]\otimes\Omega(Y))^G$, then by Proposition \ref{Prop:BasicPullback} we can regard $f(\boldsymbol{\Omega})\otimes\beta\in\Omega_\text{bas}(P\times Y)\cong \Omega(N)$. With this observation in mind we can make sense of the following fundamental result.
\begin{theorem}[Chern-Weil, {\cite[Theorem 7.33]{BGV}}]
Let $G$ be a Lie group, $P\longrightarrow B$ be a principal bundle with structure group $G$ and connection form $\boldsymbol{\omega}$ and let $Y$ be a $G$-manifold. Then the Chern-Weil homomorphism
$$\phi_{\boldsymbol{\omega}}:
\xymatrixcolsep{2cm}\xymatrixrowsep{0.01cm}\xymatrix{(\mathbb{C}[\mathfrak{g}]\otimes\Omega(Y))^G,d_\mathfrak{g}) \ar[r] & (\Omega(N),d_N)},$$
defined by $\phi_{\boldsymbol{\omega}}(f\otimes\beta)\coloneqq f(\boldsymbol{\Omega})\otimes\beta$
is a homomorphism of differential graded Lie algebras. 
\end{theorem}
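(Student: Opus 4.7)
The theorem has three substantive claims to verify: (i) $\phi_{\boldsymbol{\omega}}$ is well-defined as a map into $\Omega(N)$; (ii) it preserves the grading and the wedge-product structure; and (iii) it intertwines the two differentials. My plan is organized around these three points.

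For (i), the strategy is to first view $f(\boldsymbol{\Omega}) \otimes \beta$ as a form on the product $P \times Y$ via exterior product, and then use Proposition \ref{Prop:BasicPullback} to identify the basic subcomplex (for the diagonal $G$-action $g \cdot (p,y) = (p g^{-1}, g y)$) with $\Omega(N)$. $G$-invariance of the form is inherited from the $G$-invariance of $f \otimes \beta$ together with the Ad-equivariance $R_g^* \boldsymbol{\Omega} = \mathrm{Ad}_{g^{-1}} \boldsymbol{\Omega}$. Horizontality against a fundamental vector field $X^*$ follows because the $P$-component of the contraction annihilates $\boldsymbol{\Omega}$ (by horizontality of the curvature), while the $Y$-component is controlled by the Cartan-model structure of the source. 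Step (ii) is immediate: since $\boldsymbol{\Omega}$ is a 2-form, substitution into a polynomial of degree $k$ yields a $2k$-form on $P$, so total degrees match, and multiplicativity is tautological from the definition.

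Step (iii) is the substantive one. I would pull back to $P \times Y$ and expand via the Leibniz rule:
\begin{equation*}
d\bigl(f(\boldsymbol{\Omega}) \otimes \beta\bigr) = \bigl(d f(\boldsymbol{\Omega})\bigr) \otimes \beta + f(\boldsymbol{\Omega}) \otimes d\beta.
\end{equation*}
The second term matches the $d(f(X)\beta)$ piece of $d_{\mathfrak{g}}(f \otimes \beta)(X) = d(f(X)\beta) - \iota_X(f(X)\beta)$ after formal substitution $X = \boldsymbol{\Omega}$. For the first term, I would apply the Bianchi identity $d \boldsymbol{\Omega} + [\boldsymbol{\omega}, \boldsymbol{\Omega}] = 0$ inside the polynomial expression $f$, using the chain/Leibniz rule on $S(\mathfrak{g}^*)$. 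Once restricted to the basic subcomplex, where contributions containing an uncontracted $\boldsymbol{\omega}$ vanish, the remaining terms reassemble into precisely the substitution of $\boldsymbol{\Omega}$ into $-\iota_X(f(X)\beta)$. Tracking Koszul signs in the bigrading $(2\deg f, \deg \beta)$ then completes the identification.

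The main obstacle is step (iii): reconciling the Bianchi-generated $\boldsymbol{\omega}$ terms on $P \times Y$ with the algebraic contraction $\iota_X$ defining $d_{\mathfrak{g}}$. The conceptually cleanest route is to factor $\phi_{\boldsymbol{\omega}}$ through the Weil algebra $W(\mathfrak{g}) = S(\mathfrak{g}^*) \otimes \wedge(\mathfrak{g}^*)$: the pair $(\boldsymbol{\omega}, \boldsymbol{\Omega})$ defines a canonical DGA morphism $W(\mathfrak{g}) \to \Omega(P)$ sending the universal connection and curvature to $\boldsymbol{\omega}$ and $\boldsymbol{\Omega}$, and a Kalkman-type transformation identifies the basic subcomplex of $W(\mathfrak{g}) \otimes \Omega(Y)$ with the Cartan model $(\mathbb{C}[\mathfrak{g}] \otimes \Omega(Y))^G$ equipped with $d_{\mathfrak{g}}$. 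Since both ingredients are then manifestly DGA morphisms, the intertwining property of $\phi_{\boldsymbol{\omega}}$ follows formally, bypassing the direct sign-tracking computation.
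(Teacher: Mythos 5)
The paper does not prove this theorem: it is quoted verbatim from Berline--Getzler--Vergne (Theorem 7.33 there) with no argument supplied, so there is no ``paper's own proof'' to compare your proposal against. Your outline is a reasonable sketch of the standard Chern--Weil/Cartan-model argument, and your preferred route through the Weil algebra $W(\mathfrak{g})$ with Kalkman's automorphism is precisely the cleanest mechanism the literature uses to avoid the direct Bianchi-plus-sign bookkeeping.

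One genuine gap in your step (i) deserves attention, since it actually affects the paper's own informal remark preceding the theorem. You assert that $f(\boldsymbol{\Omega})\otimes\beta$, viewed on $P\times Y$, is horizontal for the diagonal action because ``the $P$-component of the contraction annihilates $\boldsymbol{\Omega}$ $\ldots$ while the $Y$-component is controlled by the Cartan-model structure.'' The first half is fine: $\iota_{X^P}\boldsymbol{\Omega}=0$ since the curvature is horizontal. But the second half does not hold: for a fundamental field $X^*=(X^P,X^Y)$ one gets $\iota_{X^*}\bigl(f(\boldsymbol{\Omega})\wedge\beta\bigr)=\pm f(\boldsymbol{\Omega})\wedge\iota_{X^Y}\beta$, and $\iota_{X^Y}\beta$ has no reason to vanish for a general $\beta\in\Omega(Y)$ appearing in an invariant element of the Cartan model. (The Cartan model imposes $G$-invariance of the total element, not $\mathfrak{g}$-horizontality of the $\Omega(Y)$-legs.) Thus the raw formula $f(\boldsymbol{\Omega})\otimes\beta$ is $G$-invariant but not basic, and a horizontal projection relative to the connection pulled back to $P\times Y$ must be inserted; this projection is what the Weil model handles automatically and what the short formula elides. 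You should either write this projection explicitly in step (i), or commit from the start to the Weil--Kalkman factorization you propose as the ``cleanest route'' and derive the $\phi_{\boldsymbol{\omega}}$ formula, horizontal projection included, as a corollary. Apart from this, the proposal is sound, and the route you take is the one in BGV.
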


\begin{coro}[{\cite[Proposition 7.35]{BGV})}]
Due the functoriality of the Chern-Weil homomorphism, if $Y$ is compact and oriented then we have the following commutative diagram
\begin{equation}\label{DiagCW}
\xymatrixcolsep{4pc}\xymatrixrowsep{4pc}\xymatrix{
(\mathbb{C}[\mathfrak{g}]\otimes\Omega(Y))^G \ar[d] \ar[r]^-{\phi_{\boldsymbol{\omega}}} & \Omega(N) \ar[d]\\
\mathbb{C}[\mathfrak{g}]^G \ar[r]^-{\phi_{\boldsymbol{\omega}}}  & \Omega(B),
}
\end{equation}
where the left vertical arrow is integration over $Y$ and the right vertical arrow is integration along the fibers of $N\longrightarrow B$ (\cite[Chapter 1.6]{BT82}).
\end{coro}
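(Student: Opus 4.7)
The plan is to identify $\Omega(N)$ with the space of $G$-basic forms $\Omega_{\mathrm{bas}}(P\times Y)^G$ via Proposition \ref{Prop:BasicPullback} applied to the free diagonal $G$-action on $P\times Y$ (whose quotient is $N$), under which $\phi_{\boldsymbol{\omega}}(f\otimes\beta)$ corresponds to the form $f(\boldsymbol{\Omega})\wedge\mathrm{pr}_Y^*\beta$ on $P\times Y$. The decisive feature is that $f(\boldsymbol{\Omega})$ is pulled back from $P$ and contains no $Y$-components, so integration along the $Y$-fibers of $N\to B$ only sees the factor $\mathrm{pr}_Y^*\beta$.

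First I would check that the left vertical arrow is well defined. Setting $\bigl(\int_Y\alpha\bigr)(X):=\int_Y\alpha(X)$ produces a polynomial on $\mathfrak{g}$ vanishing except on the top-degree part of $\alpha$. The invariance $(g^{-1})^*\alpha(X)=\alpha(\mathrm{Ad}(g^{-1})X)$, combined with the orientation-preserving character of the $G$-action on $Y$ (automatic when $G$ is connected), yields $\int_Y\alpha(\mathrm{Ad}(g^{-1})X)=\int_Y(g^{-1})^*\alpha(X)=\int_Y\alpha(X)$, proving that $\int_Y\alpha\in\mathbb{C}[\mathfrak{g}]^G$.

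Next I would trace a simple tensor $f\otimes\beta$ through both paths; by $\mathbb{C}$-linearity of both compositions this suffices. Along the bottom-left route, integration over $Y$ produces $f\cdot\int_Y\beta\in\mathbb{C}[\mathfrak{g}]^G$, and $\phi_{\boldsymbol{\omega}}$ then sends it to $f(\boldsymbol{\Omega})\cdot\int_Y\beta\in\Omega(B)$. For the top-right route, I would work in a local trivialization $P|_U\cong U\times G$ so that $N|_U\cong U\times Y$; the form $f(\boldsymbol{\Omega})\wedge\mathrm{pr}_Y^*\beta$ on $P\times Y$ then descends to a form on $U\times Y$ whose $U$-dependence comes from $f(\boldsymbol{\Omega})$ and whose $Y$-dependence comes entirely from $\beta$. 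Fubini yields $f(\boldsymbol{\Omega})\cdot\int_Y\beta\in\Omega(U)$ for the fiber integration over $Y$, matching the other path on $U$; a partition-of-unity argument on $B$ globalizes the identity.

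The main obstacle will be verifying carefully that, under the identification $\Omega(N)\cong\Omega_{\mathrm{bas}}(P\times Y)^G$ and $\Omega(B)\cong\Omega_{\mathrm{bas}}(P)^G$, fiber integration along the $Y$-fibers of $N\to B$ corresponds to integration along $Y$ in $P\times Y$ followed by $G$-descent to $P/G=B$. This is immediate in a local trivialization but requires tracking carefully how a $G$-basic form on $P\times Y$, integrated over the $Y$-factor, produces a $G$-basic form on $P$; the orientation-preserving nature of the $G$-action on $Y$ is precisely what guarantees that the value of the fiber integral is independent of the representative $Y\subset P\times Y$ chosen within a given $G$-orbit.
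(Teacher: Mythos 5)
The paper itself offers no proof of this corollary; it is stated as a citation of \cite[Proposition 7.35]{BGV}. Your argument is the standard functoriality proof and is essentially correct: the well-definedness of the left vertical arrow via $\mathrm{Ad}$-invariance, the reduction to simple tensors, and the local-trivialization-plus-Fubini computation showing both routes yield $f(\boldsymbol{\Omega})\cdot\int_Y\beta$ are all sound, and you correctly isolate the only point needing care, namely that fiber integration on $N\to B$ matches integration over the $Y$-factor of $P\times Y$ followed by $G$-descent, with orientation-preservation of the action ensuring the fiber orientation is well defined.

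One imprecision worth flagging, which you inherit from the paper's own informal description of $\phi_{\boldsymbol{\omega}}$: the form $f(\boldsymbol{\Omega})\wedge\mathrm{pr}_Y^*\beta$ on $P\times Y$ is $G$-invariant but \emph{not} horizontal for the diagonal action (contracting $\mathrm{pr}_Y^*\beta$ with a fundamental vector field $(X_P,X_Y)$ gives $\mathrm{pr}_Y^*(\iota_{X_Y}\beta)\neq 0$ in general), so it does not literally descend to $N$; the actual Chern--Weil map of \cite{BGV} applies a horizontal projection first. Your argument survives this correction because the horizontal projection alters $\mathrm{pr}_Y^*\beta$ only by terms of strictly lower vertical degree (trading $Y$-components for connection-form components), and fiber integration along $Y$ detects only the top vertical-degree part, which is unchanged. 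It would strengthen the write-up to say this explicitly rather than asserting that the $Y$-dependence "comes entirely from $\beta$."
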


\subsubsection{Vanishing of the $\widetilde{\eta}$-form}

Let us continue in the setting described above where $N=P\times_GY\longrightarrow B$ is a bundle associated to a principal $G$-bundle $P\longrightarrow B$ with connection. Let us further assume that the Lie group $G$ is compact. Equip the $N$ with a submersion metric $g^{T_H N}\oplus g^{T_V N}$ as described in Section \ref{Sect:RiemFibr}.  Since $G$ is compact we can assume without loss of generality that it preserves the vertical metric. It was proven in \cite{HERMANN} that in this case the fibers of $N$ are totally geodesic, i.e. the mean curvature $1$-form of the Riemannian fibration vanishes (\cite[Section 10.1]{BGV}). In particular, the Bismut super-connection $\mathcal{B}_t$ simplifies and its derivative with respect to $t$, as well as its square, can be written conveniently  so that the argument of the super-trace \eqref{Def:EtaForm} has coefficients only in the odd part of the Clifford algebra (\cite[Sections 9.4, 10.7]{BGV}, \cite[Section 1.c]{G00}). As a result, the form $\widetilde{\eta}$ vanishes as the supertrace of the integrand is zero.

\begin{proposition}[{\cite[Remark 1.15]{G00}}]\label{Prop:VanishinEtaForm}
Let $N\longrightarrow B$ be a fibration with compact structure group $G$ such that $\dim N=4k-1$. If we equip $N$ with a  submersion metric $g^{T_H N}\oplus g^{T_V N}$ such that $g^{T_V N}$ is preserved by $G$, then the $\widetilde{\eta}$-form of Theorem \ref{Thm:Dai} is zero.
\end{proposition}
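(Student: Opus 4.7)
The plan is to exploit the very special geometric situation imposed by the hypotheses in order to kill the integrand of \eqref{Def:EtaForm} pointwise in the super-trace, so that the whole integral vanishes term-by-term rather than by any delicate cancellation.

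First I would invoke Hermann's theorem: since the structure group $G$ is compact and acts on the typical fiber $Y$ by isometries of $g^{T_V N}$, the fibers of $N \longrightarrow B$ are totally geodesic in $(N, g^{T_H N} \oplus g^{T_V N})$. In the language of Riemannian submersions this is equivalent to the vanishing of the mean curvature 1-form $\kappa \in \Omega^1(N)$ of the fibration. This is a crucial simplification because $\kappa$ controls the ``non-product'' defects in the identification of horizontal and vertical geometry, and its vanishing forces many otherwise-present terms in the Bismut super-connection to drop out.

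Next I would write down the Bismut super-connection $\mathcal{B}_t$ for the signature operator along the fibers in the standard Clifford-theoretic form (as in \cite[Chapter 10]{BGV}). In general $\mathcal{B}_t$ is a sum of pieces of various Clifford degrees (some coming from the vertical Dirac operator, some from the horizontal connection, and some curvature-type zero-order terms including those built from $\kappa$). When the mean curvature vanishes, each surviving summand of $\mathcal{B}_t$ has Clifford degree of a fixed parity, and a direct inspection (carried out in \cite[Sections 9.4, 10.7]{BGV} and \cite[Section 1.c]{G00}) shows that $\frac{d\mathcal{B}_t}{dt}$ together with all terms produced in the Duhamel expansion of $e^{-\mathcal{B}_t^2}$ lie only in the odd part of the Clifford algebra of $T_V N$.

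Finally, since the super-trace $\tr_s$ on the Clifford algebra of an even-dimensional fiber vanishes on elements whose Clifford component is of odd degree, the integrand of
\begin{equation*}
\widetilde{\eta} = \int_0^\infty \tr_s\left(\frac{d\mathcal{B}_t}{dt}\, e^{-\mathcal{B}_t^2}\right)\, dt
\end{equation*}
vanishes identically on $B$, and consequently $\widetilde{\eta} = 0$. The main obstacle in carrying this out fully is the bookkeeping step: one has to be careful in verifying that after setting $\kappa = 0$ every single surviving piece of $\mathcal{B}_t$ and $d\mathcal{B}_t/dt$ genuinely has odd Clifford degree, which requires a concrete case analysis of the vertical and horizontal summands of the super-connection. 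Everything else is a direct application of a standard super-trace vanishing lemma.
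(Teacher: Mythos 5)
Your proposal is correct and follows essentially the same route as the paper: Hermann's theorem gives totally geodesic fibers (vanishing mean curvature), which forces the integrand of the $\widetilde{\eta}$-form to have coefficients only in the odd part of the Clifford algebra, so the super-trace vanishes pointwise. The paper cites exactly the same references for the bookkeeping step you flag as the main obstacle, so there is nothing to add.
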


\section{Lott's $S^1$-equivariant signature formula}\label{Sect:Lott}
In this chapter we study the definition and some fundamental properties of the equivariant $S^1$-signature. In particular we provide a detailed proof of Lott's formula \eqref{Eqn:Lott} for semi-free $S^1$-actions, presented in \cite{L00}, using the tools described in Chapter \ref{Sec:AdiabLimit}.

\subsection{Basics and definitions}\label{Section:BasicsDef}

Let $M$ be an $4k+1$ dimensional Riemannian, closed, oriented manifold on which $S^1$ acts by orientation-preserving isometries. Let us denote by $V$ the generating vector field of the action discussed in Remark \ref{Rmk:KillingVF}.
\begin{lemma}\label{Lemma:KillingEq}
Let $\nabla$ be the Levi-Civita connection associated to the Riemannian metric on $M$. The generating vector field $V$ satisfies the relation 
\begin{align*}
\inner{\nabla_Y V}{Z}+\inner{Y}{\nabla_Z V}=0,
\end{align*}
for all $Y,Z\in C^{\infty}(M,TM)$. 
\end{lemma}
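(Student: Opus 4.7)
The statement is the standard Killing equation for the generating vector field of an isometric flow, so my plan is to derive it from the vanishing of the Lie derivative of the metric along $V$, combined with the two defining properties of the Levi-Civita connection (metric compatibility and zero torsion).

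First I would observe that, since $S^1$ acts by isometries, the diffeomorphism $\exp(tV):M\longrightarrow M$ pulls back the Riemannian metric $g$ to itself for every $t\in\mathbb{R}$. Differentiating the identity $\exp(tV)^*g=g$ at $t=0$ yields $L_V g=0$. I would then expand the Lie derivative of the $(0,2)$-tensor $g$ in the standard way: for any $Y,Z\in C^\infty(M,TM)$,
\begin{equation*}
(L_V g)(Y,Z)=V\bigl(g(Y,Z)\bigr)-g\bigl([V,Y],Z\bigr)-g\bigl(Y,[V,Z]\bigr).
\end{equation*}

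Next I would substitute the two characterizing properties of the Levi-Civita connection $\nabla$. Metric compatibility gives
\begin{equation*}
V\bigl(g(Y,Z)\bigr)=g(\nabla_V Y,Z)+g(Y,\nabla_V Z),
\end{equation*}
while the torsion-free condition gives $[V,Y]=\nabla_V Y-\nabla_Y V$ and $[V,Z]=\nabla_V Z-\nabla_Z V$. Plugging these into the previous display, the terms involving $\nabla_V Y$ and $\nabla_V Z$ cancel, leaving
\begin{equation*}
(L_V g)(Y,Z)=g(\nabla_Y V,Z)+g(Y,\nabla_Z V)=\inner{\nabla_Y V}{Z}+\inner{Y}{\nabla_Z V}.
\end{equation*}
Combined with $L_V g=0$, this is precisely the required identity.

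There is no real obstacle here: everything is a direct unwinding of definitions once one has recorded that $L_V g=0$, which itself is immediate from the compactness of $S^1$ (so that the flow is globally defined) and the hypothesis that the action is by isometries. The only mild care needed is the sign bookkeeping in expanding the Lie derivative of $g$ and in substituting the torsion-free relation; both are routine.
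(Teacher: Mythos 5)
Your proposal is correct and follows essentially the same route as the paper: both start from $L_Vg=0$ (a consequence of the isometric action), expand the Lie derivative of the metric on a pair of vector fields, and then substitute metric compatibility and the torsion-free condition to cancel the $\nabla_V Y$ and $\nabla_V Z$ terms. No gaps.
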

\begin{proof}
As the $S^1$-action is isometric then the Lie derivative of the metric $L_V\inner{\cdot}{\cdot}$ vanishes. We compute this Lie derivative explicitly for $Y,Z\in C^{\infty}(TM)$,
\begin{align*}
(L_V\inner{\cdot}{\cdot})(Y,Z)=&V\inner{Y}{Z}-\inner{[V,Y]}{Z}-\inner{Y}{[V,Z]}\\
=&\inner{\nabla_V Y-[V,Y]}{Z}+\inner{Y}{\nabla_V Z-[V,Z]}\\
=&\inner{\nabla_Y V}{Z}+\inner{Y}{\nabla_Z V}. 
\end{align*}
Here we have used that $\nabla$ is metric and torsion-free. 
\end{proof}
Let  $\imath:M^{S^1}\longrightarrow M$  be the inclusion of the fixed point set. We can define two sub-complexes of the de Rham complex of $M$ (see \eqref{Def:BasicForms}),
\begin{align*}
\Omega_\text{bas}(M)\coloneqq &\{\omega\in\Omega(M)\:|\:L_V\omega=0\:\:\text{and}\:\:\iota_V\omega=0\},\\
\Omega_\text{bas}(M,M^{S^1})\coloneqq &\{\omega\in\Omega(M)_\text{bas}\:|\: \imath^*\omega=0\}.
\end{align*}
Denote by $H^*_\text{bas}(M)$ and $H^*_\text{bas}(M,M^{S^1})$ their corresponding cohomology groups.  It can be shown that there exist isomorphisms (\cite[Proposition 1]{L00})
\begin{align}\label{IsomsBasic}
H^{*}_\text{bas}(M,M^{S^1})\cong H^{*}_{\text{bas},c}(M-M^{S^1})\cong H^{*}(M/S^1,M^{S^1};\mathbb{R}),
\end{align}
where the subscript $c$ denotes cohomology with compact support. These  cohomology groups are all $S^1$-homotopy invariant (\cite[Proposition 2]{L00}).

\subsubsection{Definition of the $S^1$-equivariant signature}
Using the musical isomorphim \eqref{Eqn:Musical} we define the $1$-form $\alpha$ on $M-M^{S^1}$ by 
\begin{equation*}
\alpha\coloneqq \frac{V^\flat}{\norm{V}^2},
\end{equation*}
so that $\alpha(V)=1$. Note that as $\norm{V}\neq 0$ on $M-M^{S^1}$ then $\alpha$ is well defined. 
\begin{lemma}[{\cite[Proposition 3]{L00}}]\label{Lema:dalphabas}
The $2$-form $d\alpha$ is basic. 
\end{lemma}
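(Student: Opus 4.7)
The plan is to verify directly the two defining conditions for basicness: $L_V(d\alpha)=0$ and $\iota_V(d\alpha)=0$. Since the exterior derivative commutes with the Lie derivative, the first reduces to showing $L_V\alpha=0$, and then the second will follow automatically from Cartan's formula \eqref{Eqn:Cartan} combined with the fact that $\iota_V\alpha=\alpha(V)=1$ is a constant and hence killed by $d$.

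To establish $L_V\alpha=0$, I would exploit the Killing property of $V$ established in Lemma \ref{Lemma:KillingEq}. Writing $\alpha=V^\flat/\norm{V}^2$, it suffices to show separately that $L_V V^\flat=0$ and $L_V\norm{V}^2=0$. For the second, note $\norm{V}^2=\inner{V}{V}$, and differentiating along $V$ using that $\nabla$ is metric gives $V\inner{V}{V}=2\inner{\nabla_V V}{V}$, which vanishes by Lemma \ref{Lemma:KillingEq} with $Y=Z=V$. For the first, the cleanest route is to evaluate on an arbitrary vector field $Y$: since $L_V$ commutes with the metric pairing (the action being isometric) and $L_V Y=[V,Y]$, we obtain
\begin{equation*}
(L_V V^\flat)(Y)=V\inner{V}{Y}-\inner{V}{[V,Y]}=\inner{\nabla_V V}{Y}+\inner{V}{\nabla_Y V},
\end{equation*}
where the last equality follows from $\nabla$ being torsion-free and metric. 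The first term vanishes by Lemma \ref{Lemma:KillingEq} with $Z=V$ (which gives $\inner{\nabla_V V}{Y}=-\inner{V}{\nabla_Y V}$), so the two terms cancel. Hence $L_V V^\flat=0$, and combining the two vanishings yields $L_V\alpha=0$.

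Having this, $L_V(d\alpha)=d(L_V\alpha)=0$, and Cartan's formula gives $\iota_V d\alpha=L_V\alpha-d\iota_V\alpha=0-d(1)=0$. Both basic conditions are satisfied, completing the argument.

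The only mild subtlety, which I would highlight in the write-up, is the commutation of $L_V$ with raising/lowering indices under the Killing assumption; all other ingredients are standard Cartan calculus. No obstacle is anticipated.
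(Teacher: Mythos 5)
Your proof is correct and follows essentially the same route as the paper's: establish $L_V V^\flat = 0$ via the Killing equation (Lemma~\ref{Lemma:KillingEq}) and the torsion-free metric connection, conclude $L_V\alpha = 0$, then get $L_V d\alpha = 0$ by commuting $L_V$ with $d$ and $\iota_V d\alpha = 0$ by Cartan's formula and $\alpha(V)=1$. The only difference is that you explicitly verify $L_V\norm{V}^2 = 0$ from the Killing equation, whereas the paper simply asserts it from the fact that the action preserves the metric; also note the phrasing ``the first term vanishes'' is slightly off --- the Killing equation shows the \emph{sum} $\inner{\nabla_V V}{Y}+\inner{V}{\nabla_Y V}$ vanishes, as your parenthetical correctly indicates.
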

\begin{proof}
First we show that $L_V\alpha=0$. Indeed, since the action preserves the metric then 
\begin{align*}
L_V\alpha=\frac{1}{\norm{V}^2}L_V V^\flat.
\end{align*}
Let $\nabla$ denote the Levi-Civita connection on $M$, then we compute for a vector field $Z\in C^\infty(M,TM)$, 
\begin{align*}
(L_V V^\flat)(Z)=&V(\inner{V}{Z})-\inner{V}{[V,Z]}\\
=&(\inner{\nabla_V V}{Z}+\inner{V}{\nabla_V Z})-(\inner{V}{\nabla_V Z}+\inner{V}{\nabla_Z V})\\
=&\inner{\nabla_V V}{Z}+\inner{V}{\nabla_Z V}\\
=&0.
\end{align*}
For the second equality we have used that the Levi-Civita connection preserves the metric and that is torsion free. The last equality follows by Lemma \ref{Lemma:KillingEq}.
Hence, $L_Vd\alpha=d(L_V\alpha)=0$, which shows that $d\alpha$ is $S^1$-invariant. On the other hand using Cartan's equation \eqref{Eqn:Cartan} we calculate $\iota_{V}d\alpha={L}_V\alpha-d\iota_V \alpha=-d(\alpha(V))=0$.
\end{proof}

Form the proof of the lemma above we obtain the following fact. 
\begin{coro}\label{Coro:alphaInv}
The form $\alpha$ satisfies $L_V\alpha=0$, that is, $\alpha$ is $S^1$-invariant. 
\end{coro}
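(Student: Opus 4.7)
The claim is essentially a byproduct of the proof of Lemma \ref{Lema:dalphabas}, so my plan is to simply isolate the relevant computation performed there and package it as a standalone argument. The key observation is that $\alpha = V^\flat/\|V\|^2$ is built from two pieces, each of which is separately $S^1$-invariant: the $1$-form $V^\flat = \inner{V}{\cdot}$ and the smooth function $\|V\|^2 = \inner{V}{V}$. Since the Lie derivative along $V$ is a derivation compatible with quotients of functions, it will suffice to check $L_V V^\flat = 0$ and $L_V \|V\|^2 = 0$ separately.

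For the function $\|V\|^2$, invariance follows immediately: the $S^1$-action is isometric, so $V$ is a Killing vector field, and applying Lemma \ref{Lemma:KillingEq} with $Y = Z = V$ gives $\inner{\nabla_V V}{V} = 0$, which together with compatibility of $\nabla$ with the metric yields $V(\|V\|^2) = 2\inner{\nabla_V V}{V} = 0$, i.e.\ $L_V \|V\|^2 = 0$.

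For the $1$-form $V^\flat$, the verification is the computation already performed inside the proof of Lemma \ref{Lema:dalphabas}: for any $Z \in C^\infty(M, TM)$, using that $\nabla$ is metric and torsion-free,
\begin{align*}
(L_V V^\flat)(Z)
&= V\inner{V}{Z} - \inner{V}{[V,Z]} \\
&= \inner{\nabla_V V}{Z} + \inner{V}{\nabla_V Z} - \inner{V}{\nabla_V Z - \nabla_Z V} \\
&= \inner{\nabla_V V}{Z} + \inner{V}{\nabla_Z V} \\
&= 0,
\end{align*}
where the last equality uses Lemma \ref{Lemma:KillingEq} with $Y = V$.

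Combining these two facts, since $\alpha = \|V\|^{-2} V^\flat$ on $M - M^{S^1}$, we obtain $L_V \alpha = -\|V\|^{-4}(L_V \|V\|^2) V^\flat + \|V\|^{-2} L_V V^\flat = 0$. There is no real obstacle here, since the corollary is literally a restatement of an intermediate step already established in Lemma \ref{Lema:dalphabas}; the only ``work'' is recognizing that isolating this fact is worthwhile because $L_V \alpha = 0$ will be invoked repeatedly in the sequel (for instance when passing to basic cohomology and defining the quadratic form underlying $\sigma_{S^1}(M)$).
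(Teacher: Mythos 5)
Your proposal is correct and follows the same line as the paper: the corollary is exactly the first half of the proof of Lemma \ref{Lema:dalphabas}, where it is shown (using the Killing equation and torsion-freeness of $\nabla$) that $L_V V^\flat = 0$ and hence $L_V\alpha=0$. The only difference is that you make the step $L_V\norm{V}^2=0$ explicit via Lemma \ref{Lemma:KillingEq} with $Y=Z=V$, whereas the paper subsumes it into the phrase "since the action preserves the metric"; this is a harmless and indeed slightly cleaner presentation of the same argument.
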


\begin{proposition}[{\cite[Proposition 4]{L00}}]\label{Prop:WellDefTau}
If $\omega\in\Omega_{\textnormal{bas},c}^{4k-1}(M-M^{S^1})$ then 
$$\int_{M}\alpha\wedge d\omega=0. $$
\end{proposition}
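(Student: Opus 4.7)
The plan is to use the Leibniz rule to rewrite $\alpha \wedge d\omega$ as the sum of an exact form and a basic top-degree form, and then argue that each piece contributes zero to the integral.

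First, since $\alpha$ is a $1$-form, we have the identity
\begin{equation*}
d(\alpha \wedge \omega) = d\alpha \wedge \omega - \alpha \wedge d\omega,
\end{equation*}
so that $\alpha \wedge d\omega = d\alpha \wedge \omega - d(\alpha \wedge \omega)$. I would integrate each term separately over $M$.

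For the exact term, note that $\omega$ has compact support in $M-M^{S^1}$, which is an open subset of $M$, and $\alpha$ is smooth there. Hence $\alpha \wedge \omega$ extends by zero to a smooth differential form on the closed manifold $M$, and Stokes' theorem gives $\int_M d(\alpha\wedge\omega) = 0$.

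The core point is to show $\int_M d\alpha \wedge \omega = 0$, and I would do this by showing the integrand itself vanishes pointwise on $M - M^{S^1}$ (its support). By Lemma \ref{Lema:dalphabas}, $d\alpha$ is basic, and $\omega$ is basic by assumption, so $d\alpha \wedge \omega$ is a basic form of degree $4k+1$ on $M - M^{S^1}$. I now claim that any horizontal $(4k+1)$-form on $M - M^{S^1}$ must vanish. Indeed, since $V$ is non-vanishing on $M - M^{S^1}$, one can locally complete $V$ to a frame $V, e_2, \dots, e_{4k+1}$ with dual frame $\chi, e^2, \dots, e^{4k+1}$ where $\chi(V) = 1$; writing a $(4k+1)$-form as $\eta = f\, \chi \wedge e^2 \wedge \cdots \wedge e^{4k+1}$ gives $\iota_V \eta = f\, e^2 \wedge \cdots \wedge e^{4k+1}$, so $\iota_V\eta=0$ forces $f=0$ and hence $\eta=0$. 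Applying this to $\eta = d\alpha \wedge \omega$ finishes the argument.

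There is no real obstacle here; the only subtle point is to justify that $\alpha\wedge\omega$ really is a smooth form on all of $M$ (including $M^{S^1}$, where $\alpha$ itself is singular), which is where the compact-support assumption on $\omega$ in $M-M^{S^1}$ is essential.
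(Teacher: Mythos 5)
Your proposal is correct and follows essentially the same route as the paper: integrate by parts via the Leibniz rule and Stokes' theorem (the boundary/exact term vanishing because $M$ is closed and $\alpha\wedge\omega$ extends by zero), then observe that $d\alpha\wedge\omega$ is a basic form of top degree and hence vanishes identically. Your explicit verification that a horizontal top-degree form must vanish is a detail the paper leaves implicit, but the argument is the same.
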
 
\begin{proof}
On the one hand, we use Stoke's theorem
$$\int_{M}\alpha\wedge d\omega=-\int_{\partial M}\alpha\wedge\omega+\int_{M}d\alpha\wedge\omega.$$
The first integral is zero because $\partial M=\emptyset$. On the other hand since $d\alpha$ and $\omega$ are basic forms then so is $d\alpha\wedge\omega$. Observe however that every top degree basic form must be zero. 
\end{proof}

\begin{definition}[{\cite[Definition 2]{L00}}]
We define the  {\em $S^1$-fundamental class of $M$ } to be the linear map 
$$
\xymatrixcolsep{2cm}\xymatrixrowsep{0.01cm}\xymatrix{
H^{4k}_{\text{bas},c} (M-M^{S^1}) \ar[r] &\mathbb{R}\\
\omega \ar@{|->}[r]& \displaystyle{\int_{M}\alpha\wedge\omega.}
}$$
\end{definition}
Note from Proposition \ref{Prop:WellDefTau} that the $S^1$-fundamental class of $M$ is a well-defined map in cohomology. 
\begin{proposition}[{\cite[Proposition 5]{L00}}]
The $S^1$-fundamental class of $M$ is independent of the Riemannian metric.
\end{proposition}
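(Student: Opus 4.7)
The plan is to compare two different choices of $S^1$-invariant Riemannian metrics on $M$ and show that the induced $1$-forms produce the same value of the pairing. Let $g_0$ and $g_1$ be two such metrics (any Riemannian metric on $M$ can be averaged over $S^1$ to obtain an invariant one, so this is the relevant class). Denote by $\alpha_0$ and $\alpha_1$ the associated $1$-forms constructed as in the definition, i.e.\ $\alpha_i = V^{\flat_i}/\|V\|_i^2$ on $M-M^{S^1}$. Both satisfy $\alpha_i(V)=1$ and, by Corollary \ref{Coro:alphaInv}, $L_V\alpha_i = 0$.

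First I would set $\beta \coloneqq \alpha_0 - \alpha_1 \in \Omega^1(M-M^{S^1})$ and verify that $\beta$ is basic: linearity gives $L_V\beta = 0$, while the normalization condition yields
\begin{equation*}
\iota_V\beta = \alpha_0(V) - \alpha_1(V) = 1 - 1 = 0.
\end{equation*}
Hence $\beta\in\Omega^1_{\text{bas}}(M-M^{S^1})$.

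The next step is to consider, for $\omega\in\Omega^{4k}_{\text{bas},c}(M-M^{S^1})$ closed, the product $\beta\wedge\omega$. This is a compactly supported basic $(4k+1)$-form on $M-M^{S^1}$. But the principal orbit quotient $(M-M^{S^1})/S^1$ has dimension $4k$, and by the pullback isomorphism for basic forms (Proposition \ref{Prop:BasicPullback} applied to the free $S^1$-action on $M-M^{S^1}$) any basic form of degree strictly greater than $4k$ must vanish — this is the same degree argument already used in the proof of Proposition \ref{Prop:WellDefTau}. Therefore $\beta\wedge\omega = 0$ and consequently
\begin{equation*}
\int_M \alpha_0\wedge\omega \;-\; \int_M \alpha_1\wedge\omega \;=\; \int_M \beta\wedge\omega \;=\; 0.
\end{equation*}
Since both sides are well defined because $\omega$ has compact support in $M-M^{S^1}$ (where $\alpha_0,\alpha_1$ are smooth), this shows that the map $[\omega]\mapsto \int_M\alpha\wedge\omega$ is independent of the choice of $S^1$-invariant Riemannian metric.

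There is no serious obstacle in this plan; the only subtlety is ensuring that one restricts attention to $S^1$-invariant metrics (so that $L_V\alpha_i=0$ and $\beta$ is genuinely basic) and observing that $\omega$ being compactly supported away from $M^{S^1}$ is exactly what makes the integrals involving $\alpha_i$ finite despite $\alpha_i$ blowing up at the fixed set.
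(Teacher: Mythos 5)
Your proposal is correct and is essentially the paper's own argument: form the difference $\beta=\alpha_0-\alpha_1$, observe it is a basic $1$-form, and conclude that $\beta\wedge\omega$ is a top-degree basic form and hence vanishes, exactly as in the proof of Proposition \ref{Prop:WellDefTau}. Your write-up is in fact slightly more careful than the paper's (which only states that $\alpha_1-\alpha_2$ is basic and invokes the earlier degree argument); the extra remarks about $S^1$-invariance of the metrics and compact support of $\omega$ are correct and harmless.
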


\begin{proof}
Let $\alpha_1$ and $\alpha_2$ be two $1$-forms constructed from two Riemannian metrics on $M$. Since $\alpha_1 -\alpha_2$ is a basic $1$-form it follows, as before, that
$$\int_M (\alpha_1-\alpha_1)\wedge\omega=0,\quad \text{for all $\omega\in\Omega^{4k}_{\text{bas},c} (M-M^{S^1}).$}$$
\end{proof}
\begin{definition}[{\cite[Definition 4]{L00}}]
We define the {\em  equivariant $S^1$-signature}  $\sigma_{S^1}(M)$ of $M$ with respect to the $S^1$-action as the signature of the symmetric quadratic form
\begin{align*}
\xymatrixcolsep{2cm}\xymatrixrowsep{0.01cm}\xymatrix{
H^{2k}_{\text{bas},c}(M-M^{S^1})\times H^{2k}_{\text{bas},c}(M-M^{S^1}) \ar[r] & \mathbb{R}\\
(\omega,\omega') \ar@{|->}[r] & \displaystyle{\int_M\alpha\wedge \omega\wedge\omega'}.
}
\end{align*}
\end{definition}
Observe that this intersection form is well defined: Let $\omega\in\Omega^{2k-1}_{\text{bas},c}(M-M^{S^1})$ and $\omega'\in\Omega^{2k}_{\text{bas},c}(M-M^{S^1})$  be forms such that $d\omega'=0$, then by Proposition \ref{Prop:WellDefTau}
\begin{align*}
\int_M \alpha\wedge d\omega\wedge \omega'=\int_M\alpha\wedge d(\omega\wedge \omega')=0.
\end{align*}

\begin{proposition}[{\cite[Proposition 6]{L00}}]
If $f:M\longrightarrow N$ is a orientation-preserving $S^1$-homotopy equivalence then $\sigma_{S^1}(M)=\sigma_{S^1}(N)$. 
\end{proposition}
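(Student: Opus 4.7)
The plan is to show that $f^{*}$ is an isomorphism of the pairing spaces which preserves the intersection form, so that the signatures must agree. The argument breaks into three ingredients: $f^{*}$ is defined on basic forms with compact support in the complement of the fixed point set; it intertwines the cup product; and it preserves the $S^1$-fundamental class.

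First, since $f$ is $S^1$-equivariant it restricts to a map $M^{S^1}\to N^{S^1}$ and to an equivariant map $M-M^{S^1}\to N-N^{S^1}$. Pullback therefore sends $\Omega_{\mathrm{bas},c}(N-N^{S^1})$ into $\Omega_{\mathrm{bas},c}(M-M^{S^1})$ and commutes with $d$, so one obtains a chain map. By the $S^1$-homotopy invariance of these cohomology groups stated after \eqref{IsomsBasic} (\cite[Proposition 2]{L00}), $f^{*}$ descends to an isomorphism on $H^{*}_{\mathrm{bas},c}(\cdot-(\cdot)^{S^1})$. Moreover $f^{*}(\omega\wedge\omega')=f^{*}\omega\wedge f^{*}\omega'$, so $f^{*}$ intertwines the cup product pairings. (If $f$ is merely continuous, one first replaces it by an $S^1$-equivariant smooth approximation in its equivariant homotopy class; this does not affect the induced map on cohomology.)

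The main step is to verify that the $S^1$-fundamental class is preserved, i.e.\ that
\begin{equation}\label{Eqn:FundClassPullback}
\int_{M}\alpha_{M}\wedge f^{*}\omega\;=\;\int_{N}\alpha_{N}\wedge\omega,\qquad \omega\in H^{4k}_{\mathrm{bas},c}(N-N^{S^1}).
\end{equation}
To see this, apply Fubini's theorem for the Riemannian submersion $\pi_{M}\colon M_{0}\to M_{0}/S^{1}$ on $M_{0}\coloneqq M-M^{S^1}$. If $\omega=\pi_{N}^{*}\bar{\omega}$ then $f^{*}\omega=\pi_{M}^{*}(\bar{f}^{*}\bar{\omega})$, where $\bar{f}\colon M_{0}/S^{1}\to N_{0}/S^{1}$ is the map induced on quotients. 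Because $\alpha(V)=1$, integration along each $S^{1}$-orbit of $\alpha_{M}\wedge f^{*}\omega$ yields exactly $2\pi\cdot \bar{f}^{*}\bar{\omega}$, so
\begin{equation*}
\int_{M}\alpha_{M}\wedge f^{*}\omega \;=\; 2\pi\int_{M_{0}/S^{1}}\bar{f}^{*}\bar{\omega},
\end{equation*}
and likewise for $N$. Under the isomorphism $H^{*}_{\mathrm{bas},c}(M-M^{S^1})\cong H^{*}(M/S^1,M^{S^1};\mathbb{R})$ of \eqref{IsomsBasic}, the $S^{1}$-fundamental class thus corresponds (up to the universal factor $2\pi$) to evaluation against the relative fundamental class of the oriented pair $(M/S^1,M^{S^1})$. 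Since $f$ is an orientation-preserving $S^{1}$-homotopy equivalence, $\bar f$ is a homotopy equivalence of pairs which is orientation-preserving on the smooth part $M_{0}/S^{1}\to N_{0}/S^{1}$ (the orientation descends from the orientation of $M$ together with the oriented $S^{1}$-orbits). Hence $\bar f$ preserves the relative fundamental class, which gives \eqref{Eqn:FundClassPullback}.

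Putting the pieces together, for $\omega,\omega'\in H^{2k}_{\mathrm{bas},c}(N-N^{S^1})$,
\begin{equation*}
\int_{M}\alpha_{M}\wedge f^{*}\omega\wedge f^{*}\omega' \;=\; \int_{M}\alpha_{M}\wedge f^{*}(\omega\wedge\omega') \;=\; \int_{N}\alpha_{N}\wedge\omega\wedge\omega',
\end{equation*}
so $f^{*}$ is an isomorphism of symmetric quadratic forms and therefore $\sigma_{S^1}(M)=\sigma_{S^1}(N)$. I expect the main obstacle to be the orientation bookkeeping for $\bar f$: one needs that the orientation chosen on $M_{0}/S^{1}$ via the Riemannian submersion is the same as the topological orientation induced by the pair $(M/S^1,M^{S^1})$, and that $\bar f$ preserves it precisely because $f$ is simultaneously $S^1$-equivariant and orientation-preserving on $M$. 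Everything else reduces to the metric-independence already established in Proposition \ref{Prop:WellDefTau} and to the cohomological $S^1$-homotopy invariance from \cite[Proposition 2]{L00}.
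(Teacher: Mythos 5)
Your overall strategy (show $f^{*}$ intertwines the pairings and preserves the $S^1$-fundamental class) is the right one, and the first and third paragraphs are fine. The problem is that you have pushed the entire content of the proposition into the unproven assertion that $\bar f$ ``preserves the relative fundamental class'' of $(M/S^1,M^{S^1})$, i.e.\ that $\int_{M_0/S^1}\bar f^{*}\bar\omega=\int_{N_0/S^1}\bar\omega$ for compactly supported top-degree forms on the quotients. This is exactly equivalent to \eqref{Eqn:FundClassPullback} and is not a routine orientation check: $M_0/S^1$ and $N_0/S^1$ are open (the quotients themselves are stratified, not manifolds), so one must set up degree theory for the proper map $\bar f$ between open oriented manifolds, show that $\bar f$ is a \emph{proper} homotopy equivalence (so that its degree is $\pm1$), and then pin the sign down. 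None of this is supplied, and it is considerably harder than the statement being proved. So as written there is a genuine gap at the one step that matters.

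The paper avoids all of this by never leaving the closed manifold $M$. Since $\alpha_N\wedge\omega$ is closed (its differential $d\alpha_N\wedge\omega$ is a basic form of degree $4k+2$, hence zero) and $f$ is an orientation-preserving homotopy equivalence of \emph{closed} manifolds, $\int_N\alpha_N\wedge\omega=\int_M f^{*}\alpha_N\wedge f^{*}\omega$ by the standard degree-one argument. Then one checks directly that $f^{*}\alpha_N-\alpha_M$ is a \emph{basic} $1$-form on $M_0$: it is $S^1$-invariant because $L_{V_M}f^{*}\alpha_N=f^{*}L_{V_N}\alpha_N=0$, and horizontal because equivariance gives $f_{*}V_M=V_N$, so $\iota_{V_M}(f^{*}\alpha_N-\alpha_M)=1-1=0$. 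Consequently $(f^{*}\alpha_N-\alpha_M)\wedge f^{*}\omega$ is a basic form of degree $4k+1>\dim(M_0/S^1)$ and therefore vanishes identically, giving $\int_M f^{*}\alpha_N\wedge f^{*}\omega=\int_M\alpha_M\wedge f^{*}\omega$. This two-line comparison of $f^{*}\alpha_N$ with $\alpha_M$ is the same trick used to prove metric-independence of the fundamental class (Proposition \ref{Prop:WellDefTau} and the one following it), and it replaces your entire Fubini/fundamental-class argument. If you want to salvage your route, you would need to actually prove the degree-one statement for $\bar f$; it is much easier to switch to the comparison of the two $1$-forms upstairs.
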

\begin{proof}
Let $\alpha_N$ be the $1$-form defined by a metric on $N$. For $\omega\in\Omega_{\text{bas},c}^{4k}(N-N^{S^1})$ we have 
$$\int_{N}\alpha_N\wedge\omega=\int_{M}f^*\alpha_N\wedge f^*\omega.$$
If $\alpha_M$ is the $1$-form defined by the metric of $M$ we claim that $f^{*}\alpha_{N}-\alpha_{M}$ is a basic $1$-form  on $M$. This is easy to see since $L_{V_M}(f^{*}\alpha_{N}-\alpha_{M})=f^{*}(L_{V_N}\alpha_N)=0$ and on the other hand $\iota_{X_M}(f^{*}\alpha_{N}-\alpha_{M})=f^{*}(\iota_{X_N}\alpha_N)-\iota_{X_M}\alpha_M=1-1=0$. Therefore
$$\int_M (f^{*}\alpha_{N}-\alpha_{M})\wedge f^{*}\omega=0.$$
This shows that
$$\int_N \alpha_N\wedge\omega=\int_M \alpha_M\wedge f^{*}\omega,$$
i.e. the $S^1$-fundamental class of $M$ pushes forward to the $S^1$-fundamental class of $N$ and the result follows.
\end{proof}

\subsubsection{$S^1$-signature for semi-free actions}\label{Section:S1 Signature}

Let us now assume that the action is semi-free (Definition \ref{Def:Semi-Free}) and let $M_0:=M-M^{S^1}$ be the principal orbit where the action is free (Proposition \ref{Prop:PrincipalOrTyp}). We equip the manifold $M_0/S^1$ with the quotient metric $g^{T(M_0/S^1)}$ as in Section \ref{Section:G-manifolds}. As we will see below, in this case the dimension of the fixed point set $M^{S^1}$ must be odd, so we can consider its associated  odd signature operator \eqref{Eqn:DefOddSignOp} and the corresponding eta invariant which we denote  by $\eta(M^{S^1})$. One of the most important results in \cite{L00} is the following {\em index-type formula} for the  equivariant $S^1$-signature. 
\begin{theorem}[{\cite[Theorem 4]{L00}}]\label{Thm:S1SignatureThm}
Suppose $S^1$ acts effectively and semifreely on $M$, then 
$$\sigma_{S^1}(M)=\int_{M_0/S^1}L\left(T(M_0/S^1),g^{T(M_0/S^1)}\right)+\eta(M^{S^1}).$$
\end{theorem}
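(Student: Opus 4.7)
The plan is to approximate the singular stratum $M^{S^1}$ in $M/S^1$ by the boundary of a compact manifold, apply the APS signature theorem, and then take the adiabatic limit using Theorem \ref{Thm:Dai}. First I would identify $\sigma_{S^1}(M)$ with the signature of an intersection pairing on the quotient $M_0/S^1$. By Proposition \ref{Prop:BasicPullback}, pullback along $\pi_{S^1}$ is an isomorphism $\Omega_c(M_0/S^1) \cong \Omega_{\text{bas},c}(M_0)$, and Fubini for Riemannian submersions together with $\alpha(V)=1$ yields
$$\int_M \alpha\wedge \pi_{S^1}^*\bar\omega_1 \wedge \pi_{S^1}^*\bar\omega_2 = 2\pi \int_{M_0/S^1}\bar\omega_1\wedge\bar\omega_2,$$
so $\sigma_{S^1}(M)$ equals the signature of the intersection pairing on $H^{2k}_c(M_0/S^1)$, which via the isomorphism \eqref{IsomsBasic} coincides with the signature of the pair $(M/S^1, M^{S^1})$.

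Next, using the decomposition $M/S^1 = Z_t\cup U_t$ of Figure \ref{Fig:Decomposition}, where $Z_t$ is a compact manifold with boundary $\mathcal{F}_t$ sitting at radial distance $t$ from each component of $F\subset M^{S^1}$, I would apply the APS signature theorem (Theorem \ref{Thm:SignThmMBound}) to $Z_t$ with the restricted quotient metric:
$$\sigma(Z_t) = \int_{Z_t}L\bigl(T(M_0/S^1),g^{T(M_0/S^1)}\bigr) - \int_{\mathcal{F}_t}TL\bigl(g^{T(M_0/S^1)}\bigr) - \eta\bigl(A^{\text{ev}}_{\mathcal{F}_t}\bigr).$$
The left-hand side equals $\sigma_{S^1}(M)$ for $t$ sufficiently small, by Novikov additivity (Proposition \ref{Prop:Novikov}) applied to $M/S^1$ cut along the collar of $M^{S^1}$. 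As $t\to 0^+$, the first integral on the right converges to $\int_{M_0/S^1} L$ by monotone convergence in conjunction with the explicit local model \eqref{Eqn:OpSing}, which simultaneously establishes the convergence claim stated in the theorem.

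The main analytic input is Dai's adiabatic formula (Theorem \ref{Thm:Dai}) applied to the $\mathbb{C}P^N$-fibration $\mathcal{F}\to F$. Three vanishing observations are essential: the fibration has compact structure group $U(N+1)$ preserving the Fubini--Study vertical metric, so Proposition \ref{Prop:VanishinEtaForm} gives $\widetilde{\eta}=0$; it is a projectivization bundle, so Example \ref{Example:ProjBundle} gives $\tau=0$; and the flat bundle $\ker D_{\mathbb{C}P^N}$ of vertical harmonic forms is globally trivial, generated by powers of the fiberwise K\"ahler class, which reduces $\eta(A_F\otimes \ker D_{\mathbb{C}P^N})$, after a vertical-degree count under the chirality involution, to a definite multiple of $\eta(F)$ that sums over components of $M^{S^1}$ to $\eta(M^{S^1})$.

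The hard step will be controlling the transgression $\int_{\mathcal{F}_t} TL$ in the adiabatic limit, since the quotient metric is not of product form near $\mathcal{F}_t$. The approach is to use the equivariant Chern--Weil formalism of Section \ref{Section:CompactStrGroup}: expressing the interpolation between the submersion metric and a product metric equivariantly, the transgression form lifts to an element of $(\mathbb{C}[\mathfrak{u}(N+1)]\otimes\Omega(\mathbb{C}P^N))^{U(N+1)}$, and integration along the fiber via diagram \eqref{DiagCW} produces a characteristic form on $F$ whose dependence on the horizontal curvature of $\mathcal{F}_t\to F$ is rescaled by the adiabatic parameter $t$ and vanishes in the limit. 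Combining the three vanishings, the convergence of the $L$-integral, and the limit of the transgression yields
$$\sigma_{S^1}(M) = \int_{M_0/S^1}L\bigl(T(M_0/S^1),g^{T(M_0/S^1)}\bigr)+\eta(M^{S^1}),$$
as claimed.
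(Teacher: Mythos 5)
Your proposal follows essentially the same route as the paper's proof: localize $\sigma_{S^1}(M)$ to the signature of the compact manifold with boundary obtained by excising a tubular neighborhood of $M^{S^1}$, apply the APS signature theorem, and take the adiabatic limit via Theorem \ref{Thm:Dai}, with the same three vanishing inputs (compact structure group kills $\widetilde{\eta}$, the projectivization-bundle structure kills $\tau$, and triviality of the flat bundle $\ker D_{\mathbb{C}P^N}$ reduces the twisted eta invariant to $\eta(M^{S^1})$).

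The one place your justification goes astray is the transgression term. Its limit as $t\to 0$ is a finite, $t$-independent quantity $\int_F \widehat{L}(F)\wedge L(TF)$, where $\widehat{L}(F)$ is obtained by integrating the $L$-form of an interpolating family of connections over $[0,1]\times \mathbb{C}P^N$; it does not vanish because the horizontal curvature dependence is ``rescaled by the adiabatic parameter.'' It vanishes by a parity argument: since the vertical metric can be taken invariant under the compact structure group, $\widehat{L}(F)$ lies in the image of the equivariant Chern--Weil homomorphism via diagram \eqref{DiagCW} and is therefore an even form on $F$, yet it is an odd form by construction, so it is zero. Since you already set up the equivariant Chern--Weil framework and the fiber-integration diagram, this correction is available within your own argument and does not change the overall structure of the proof.
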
 

\begin{remark}
It is important to emphasize  that part of the conclusion of Theorem \ref{Thm:S1SignatureThm} it the convergence of the integral of the $L$-polynomial over the open manifold $M_0/S^1$. 
\end{remark}

The following result establishes an analogous property as for the signature for closed manifolds (Proposition \ref{Prop:PropSignatureClosed}).
\begin{coro}[{\cite[Proposition 7]{L00}}]
Let $W$ be a semifree $S^1$-cobordism between $M_1$ and $M_2$. Then, $\sigma_{S^1}(W)=\sigma_{S^1}(M_1)-\sigma_{S^1}(M_2)$. 
\end{coro}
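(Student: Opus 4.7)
The plan is to mirror the classical proof that the ordinary signature vanishes on a cobordism (Proposition \ref{Prop:PropSignatureClosed}(2), combined with the orientation-reversal behavior of part (1)), lifted to the $S^1$-equivariant setting via the bilinear pairing that defines $\sigma_{S^1}$. First I would extend the definition of $\sigma_{S^1}$ to the cobordism $W$: assuming $W$ is a compact $(4k+2)$-dimensional manifold with semifree $S^1$-action whose boundary is $\partial W = M_1 \sqcup (-M_2)$ (with the induced orientation of Remark \ref{Rmk:OrBound}), choose an $S^1$-invariant Riemannian metric on $W$ that is of product type near $\partial W$ and which restricts to the prescribed metrics on $M_1$ and $M_2$. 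Construct the global characteristic $1$-form $\alpha_W = V^\flat/\|V\|^2$ on $W_0 \coloneqq W - W^{S^1}$; by Lemma \ref{Lema:dalphabas} and Corollary \ref{Coro:alphaInv} it is $S^1$-invariant with basic differential, and it restricts near the boundary to the forms $\alpha_1$, $\alpha_2$ associated to $M_1$, $M_2$. Then $\sigma_{S^1}(W)$ is defined as the signature of the pairing on the middle-degree relative basic cohomology $H^{*}_{\mathrm{bas},c}(W_0, \partial W_0)$ induced by $(\omega,\omega')\mapsto \int_W \alpha_W \wedge \omega \wedge \omega'$, which is well-defined by the same Stokes' argument as in Proposition \ref{Prop:WellDefTau}.

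Next, the core of the proof is the boundary computation. Given closed basic forms $\omega,\omega'$ on $W_0$ whose restrictions represent classes in $H^{*}_{\mathrm{bas},c}(M_{1,0})$ and $H^{*}_{\mathrm{bas},c}(M_{2,0})$, I would apply Stokes' theorem to $d(\alpha_W\wedge\omega\wedge\omega')=d\alpha_W\wedge\omega\wedge\omega'$ on $W$; the bulk contribution $\int_W d\alpha_W \wedge \omega\wedge\omega'$ vanishes because $d\alpha_W$ is basic and $d\alpha_W\wedge\omega\wedge\omega'$ is a basic top-degree form on a fibration whose fibers have positive dimension, forcing the integral over each orbit (hence over $W$) to be zero. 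The boundary integral then produces
\[
\int_W \alpha_W\wedge \omega\wedge \omega' \;=\; \int_{M_1}\alpha_1\wedge \omega|_{M_1}\wedge \omega'|_{M_1}\;-\;\int_{M_2}\alpha_2\wedge \omega|_{M_2}\wedge \omega'|_{M_2},
\]
where the minus sign comes from the induced orientation on $-M_2\subset\partial W$. This identifies the pairing on $W$ with the orthogonal (in the sense of quadratic forms) difference of the pairings on $M_1$ and $-M_2$, up to the subspace coming from classes that extend across $W$, which is Lagrangian for the $W$-pairing.

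Finally I would invoke the standard algebraic fact: if a symmetric bilinear form $Q$ decomposes as $Q_1 \oplus (-Q_2)$ on $V_1 \oplus V_2$ and is restricted to a subspace admitting a Lagrangian, the signatures satisfy $\sigma(Q) = \sigma(Q_1)-\sigma(Q_2)$. Applied to our situation, combined with the long exact sequence in basic cohomology of the pair $(W_0,\partial W_0)$ obtained from the isomorphisms \eqref{IsomsBasic} and the equivariant Mayer-Vietoris setup, this yields $\sigma_{S^1}(W)=\sigma_{S^1}(M_1)-\sigma_{S^1}(M_2)$.

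The main obstacle I expect is making the boundary/relative cohomology bookkeeping rigorous in the presence of the fixed-point set $W^{S^1}$, whose connected components can have positive dimension and can reach the boundary. One must verify that the sub-Lagrangian argument for the intersection pairing, which is classical for compact manifolds with boundary, carries over to the basic cohomology on $W_0$, using the isomorphisms \eqref{IsomsBasic} to translate everything into the cohomology of the quotient pair $(W/S^1, \partial W/S^1)$ relative to the singular strata; this is also the step where the semifreeness hypothesis on the cobordism is essential.
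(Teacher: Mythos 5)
The paper itself states this corollary without proof, quoting Lott's Proposition~7, so your argument has to stand on its own. Your overall strategy --- Stokes' theorem on the cobordism together with a Lagrangian-subspace argument, in analogy with Proposition \ref{Prop:PropSignatureClosed} --- is the right one, and the content to be proved is really $\sigma_{S^1}(\partial W)=\sigma_{S^1}(M_1)-\sigma_{S^1}(M_2)=0$. But as written there are concrete errors. First, your definition of $\sigma_{S^1}(W)$ is vacuous: on the $(4k+2)$-dimensional $W$ the integrand $\alpha_W\wedge\omega\wedge\omega'$ has total degree $1+2r$ when $\deg\omega=\deg\omega'=r$, which is odd and can never equal $\dim W$, so there is no middle-degree symmetric pairing on $H^{*}_{\text{bas},c}(W_0,\partial W_0)$ of the kind you describe. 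Second, your displayed identity is not what Stokes gives. For closed basic $2k$-forms $\omega,\omega'$ compactly supported in $W_0$ one has
\[
\int_{M_1}\alpha_1\wedge\omega\wedge\omega'-\int_{M_2}\alpha_2\wedge\omega\wedge\omega'
=\int_{\partial W}\alpha_W\wedge\omega\wedge\omega'
=\int_{W}d\alpha_W\wedge\omega\wedge\omega'=0,
\]
the last integral vanishing because $d\alpha_W\wedge\omega\wedge\omega'$ is a basic form of top degree $4k+2$ on $W_0$. The quantity $\int_W\alpha_W\wedge\omega\wedge\omega'$ appearing on the left of your display is not an integral of a top-degree form at all. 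What the computation actually establishes is only that the image of the restriction map $H^{2k}_{\text{bas},c}(W_0)\longrightarrow H^{2k}_{\text{bas},c}(M_{1,0})\oplus H^{2k}_{\text{bas},c}(M_{2,0})$ is isotropic for $Q_{M_1}\oplus(-Q_{M_2})$.

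Isotropy alone proves nothing, and this is where your proof has its real gap: the entire content of the proposition is that this isotropic subspace is Lagrangian (half-dimensional modulo the radical of the possibly degenerate form), which is what forces $\sigma\bigl(Q_{M_1}\oplus(-Q_{M_2})\bigr)=0$ and hence $\sigma_{S^1}(M_1)=\sigma_{S^1}(M_2)$. That half-dimensionality requires a Poincar\'e--Lefschetz duality for the basic cohomology of the pair $(W_0,\partial W_0)$ --- equivalently, via \eqref{IsomsBasic}, for $(W/S^1,\partial W/S^1)$ relative to the fixed strata --- and you explicitly defer exactly this step as ``the main obstacle'' without resolving it; the proof is therefore incomplete at its crux. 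Finally, the algebraic lemma you invoke at the end is misstated: $\sigma(Q_1\oplus(-Q_2))=\sigma(Q_1)-\sigma(Q_2)$ holds unconditionally and yields nothing; the role of the Lagrangian is precisely to force $\sigma(Q_1\oplus(-Q_2))=0$.
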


Now we study two examples where we see that the $S^1$-signature coincides with the notions of signature discussed in Section \ref{Section:SignatureOp}. \label{Ex:SpiningM}
\begin{example}[Spinning a closed manifold]
Let $X$ be closed oriented Riemannian manifold of dimension $4k$. Let us consider the product space $M\coloneqq X\times S^1$ equipped with the product metric and on which $S^1$ acts by multiplication on the second factor. It is straightforward to see that the action is free and $\sigma_{S^1}(M)=\sigma(X)$. 
\end{example}

\begin{example}[Spinning a manifold with boundary ({\cite[pg. 628]{L00}})]\label{Ex:SpiningMwB}
Let $X$ be an oriented compact manifold with boundary of dimension $4k$. We construct from it a closed manifold $M$ of dimension $4k+1$ as follows: Let $D^2$ be the closed disk of dimension $2$ and consider the product space $D^2\times X$. Define $M\coloneqq\partial(D^2\times X)$. An equivalent construction is $M=(D^2\times\partial X)\cup_{S^1\times\partial X}(S^1\times X)$, where we glue using the appropriate orientations (see Figure \ref{Fig:SpinningMnfld}). The natural $S^1$-action on $M$, induced by rotations on $D^2$,  is semi-free and has as fixed point set  $M^{S^1}=-\partial X$. Hence, it follows that $H^*_{\text{bas}}(M,M^{S^1})=H^{*}(X,\partial X;\mathbb{R})$ and $\sigma_{S^1}(M)=\sigma(X)$. Moreover, Theorem \ref{Thm:S1SignatureThm} reduces to the APS signature theorem (Theorem \ref{Thm:SignThmMBound}).

\begin{figure}[h]
\begin{center}
\begin{tikzpicture}
\draw (0,-2)--(0,2);
\draw (4,-2)--(4,2);
\draw[dashed,->] (5,-2.5)--(5,3.5);
\draw (6,-2)--(6,2);
\draw (5,2) ellipse (28.5pt and 10pt);
\draw (5,-2) ellipse (28.5pt and 10pt);
\draw (5,2) node {$\bullet$};
\draw (5,-2) node {$\bullet$};
\draw (0,2) node {$\bullet$};
\draw (0,-2) node {$\bullet$};
\draw [thick,->] (5.5,3) arc ( 0:315:15pt and 5pt);
\draw (10,2) node {$\bullet$};
\draw (10,-2) node {$\bullet$};
\draw (0,-3) node {$X$};
\draw (5,-3) node {$M$};
\draw (10,-3) node {$M^{S^1}=-\partial X$};
\end{tikzpicture}
\caption{Schematic description of the process of spinning a manifold with boundary $X$.}\label{Fig:SpinningMnfld}
\end{center}
\end{figure}
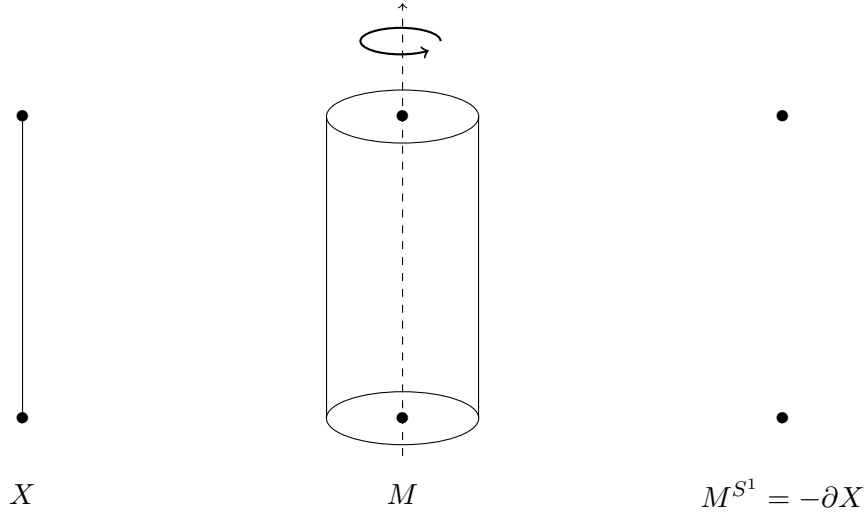

\end{example}

\subsection{Proof of the signature formula} \label{Sect:PfoofSignatureFormula}

The aim of this section is to study in detail the proof of Theorem \ref{Thm:S1SignatureThm} given in \cite[Section 2.3]{L00}. In order to clarify the exposition we are going to divide the proof in several steps. 

\subsubsection*{Step 1: Local description}

The first step of the proof is to understand the geometric model of a neighborhood of a connected component $F\subseteq M^{S^1}$ of the fixed point set. The set $F$ is a closed submanifold of $M$ and for each $x\in F$, by the Slice Theorem (Theorem \ref{Thm:SliceThm}), there exists an $S^1$-invariant open neighborhood $\mathcal{O}_x$ of $x$ in $M$ such that the $S^1$-action in $\mathcal{O}_x$ is equivalent to the action on the associated bundle  $S^1\times_{G_x} T_x M$ where $G_x=S^1$.

\begin{figure}[h]
\begin{center}
\begin{tikzpicture}
\draw [style=very thick](-1,1)--(2,0);
\draw [style=very thick](-1,-2)--(2,-3);
\draw [style=very thick](-1,1)--(-1,-2);
\draw [style=very thick](2,0)--(2,-3);
\draw [style=very thick](-3,-1)--(4,-1);
\draw[rounded corners=30pt](-2,-4)--(-4,-3)--(-2.5,-1)--(3.5,-1)--(5,0.5)--(6,0.7);
\node at (0.5,-1) {\pgfuseplotmark{*}};
\node at (0.5,-1.3) {$x$};
\node at (-3.5,-1) {$HF_x$};
\node at (-0.2,0.2) {$VF_x$};
\node at (5,0.7) {$F$};
\end{tikzpicture}
\caption{Local slice for a fixed point set $x\in F$.}
\end{center}
\end{figure}
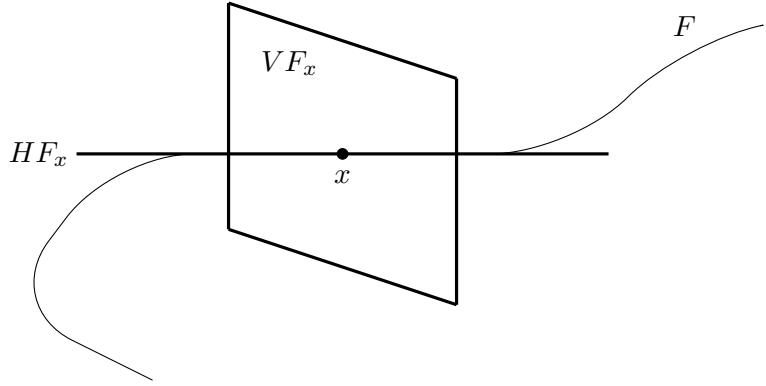

Next we want to understand the induced action $\rho_x:S^1\longrightarrow \text{Aut}(T_xM)$ of \eqref{Eqn:rho}. Note that since the action is orientation and metric preserving then we can regard it as a representation $\rho_x:S^1\longrightarrow SO(T_xM)$. This implies that for each $g\in S^1$ we have the following commutative diagram
\begin{equation}\label{Diag:ExpG}
\xymatrixcolsep{2cm}\xymatrixrowsep{2cm}\xymatrix{
T_x M\ar[r]^-{\rho_x(g)} \ar[d]_-{\exp_x} & T_x M \ar[d]^-{\exp_x}\\
M \ar[r]^-{g}& M,
}
\end{equation}
where $\exp_x:T_xM\longrightarrow M$ is the exponential map at $x$. Consider now the following two subspaces of $T_xM$,
\begin{align*}
HF_x\coloneqq \{v\in T_x M\:|\: \rho_x(g)v=v\:,\:\forall g\in S^1\}\quad\text{and}\quad NF_x\coloneqq HF_x^{\perp},
\end{align*}
called the {\em horizontal} and {\em normal} spaces respectively.
Let $v\in HF_x$, then by the commutativity of the diagram \eqref{Diag:ExpG} we have $g\exp_x(v)=\exp(\rho_x(g)v)=\exp_x(v)$ for all $g\in S^1$, thus $\exp_x(v)\in F$. This shows that the restriction of the exponential map to $HF_x$ is a local diffeomorphism into  $F$ since $\exp_x:T_xM\longrightarrow M$ is a local diffeomorphism itself. 
Similarly it is easy to see that $NF_x$ is a $S^1$-invariant subspace of $T_x M$ and that  the action of  $\rho_x$ on $NF_x-\{0\}$ is free. Hence,  we can decompose $\rho_x(g)$, for each $g\in S^1$, as 
\begin{align*}
\rho_x(g):
\xymatrixcolsep{3cm}\xymatrixrowsep{1.5cm}\xymatrix{
{\begin{array}{c}
HF_x\\
\oplus\\
NF_x
\end{array}}
\ar[r] ^{\left(
\begin{array}{cc}
1 & 0\\
0 & \rho_x(g)
\end{array}\right)
}&
{\begin{array}{c}
HF_x\\
\oplus\\
NF_x.
\end{array}}
}
\end{align*}
 
For two distinct points $x, y\in F$ the representations $\rho_x$ and $\rho_y$ are equivalent. This can be seen for example by considering the parallel transport along a curve joining $x$ and $y$ (\cite[pg. 263]{LM89}). The family of subspaces $\{NF_x\:|\:x\in F\}$ form a vector bundle over $F$ called the {\em normal bundle} of $F$ in $M$. \\

From representation theory we know that there is an orthogonal decomposition 
\begin{equation}\label{Eqn:DecompNF}
NF_x=NF_x(\pi)\oplus \left(\bigoplus_{0<\theta<\pi} NF_x(\theta)\right),
\end{equation}
where $\rho_x(g)| NF_x(\pi)=-1$ and $NF_x(\theta)$ decomposes into $2$-dimensional subspaces on which $\rho_x(g)$ acts as a rotation by an angle $\theta$ for some  fixed $g\in S^1$. Since the action is orientation preserving then the dimension of $NF_x(\pi)$ must be even and therefore the rank of $NF$ must also be even. We write this rank as $\text{rk}(NF)=2(N+1)$ for some $N\in\mathbb{N}_0$. In particular the dimension of $F$ should be $\dim F=(4k+1)-2(N+1)=4k-2N-1$, i.e. each connected component of the fixed point set is a odd dimensional closed submanifold of $M$. We claim that since the action is free on $NF-F$ (where $F$ is regarded as the image of the zero section) then the decomposition of \eqref{Eqn:DecompNF} only contains one value of $\theta$. For example let us assume that the action is given for $N=0$ by 
$$
e^{i\theta}\longrightarrow
\left(\begin{array}{cc}
e^{i\theta} & 0\\
0 & e^{i\mu\theta}
\end{array}
\right),
$$
for some $0<\mu< 1$. Then for $\theta=\mu^{-1}$ the vector
$$
\left(\begin{array}{c}
0\\
1
\end{array}
\right)$$
is fixed and therefore contradicts the fact that the action is free. Arguing by induction the claim follows. We  rephrase this in the following proposition.
\begin{proposition}[{\cite[Lemma 2.2]{U70}}]\label{Prop:S1actionNF}
The normal bundle $NF\longrightarrow F$ has naturally a complex structure such that the induced action on $NF$ is a scalar multiplication. 
\end{proposition}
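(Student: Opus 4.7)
The plan is to promote the pointwise analysis of $\rho_x$ on $NF_x$ already begun in the paragraphs preceding the proposition into a global statement.

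First I would fix $x\in F$ and work out the representation theory of $\rho_x\colon S^1\to SO(NF_x)$ carefully. By the standard decomposition of real orthogonal representations of $S^1$, we can write $NF_x=\bigoplus_{\theta}NF_x(\theta)$ as a sum of $2$-dimensional isotypic components on which a generator $e^{i\theta_0}$ of a dense subgroup acts through rotations by angles $n_\theta \theta_0$ for integers $n_\theta$; the existence of a $(-1)$-eigenspace is excluded because that would force $\dim NF_x(\pi)$ to be even (orientability) and, more importantly, would contradict freeness outside the origin for any element in $S^1$ with this eigenvalue. The argument already sketched in the text (the induction using the element $e^{i/\mu}$) actually shows more: all the integers $n_\theta$ must agree up to sign, and effectiveness together with semi-freeness forces $|n_\theta|=1$ on each summand. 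Together with the fact that the $S^1$-action preserves orientation, all signs can be arranged to agree, so there is a unique integer $n=\pm 1$ (say $n=1$) with
\begin{equation*}
\rho_x(e^{i\theta})=\operatorname{diag}(R(\theta),\ldots,R(\theta))\in SO(NF_x),
\end{equation*}
where $R(\theta)$ is the standard planar rotation by $\theta$. This already gives a complex structure $J_x\coloneqq \rho_x(e^{i\pi/2})$ on each fiber with $J_x^2=-I$ and with $\rho_x(e^{i\theta})$ equal to scalar multiplication by $e^{i\theta}$.

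Next I would globalize $J_x$ to a smooth endomorphism of $NF$. The most natural candidate is the infinitesimal version: since $V$ vanishes along $F$, the covariant derivative $\nabla V\colon TM|_F\to TM|_F$ is a well-defined skew-symmetric tensor by Lemma~\ref{Lemma:KillingEq}, and preserves the splitting $TM|_F=TF\oplus NF$ (the $HF$ direction is killed because $\rho_x$ acts trivially there, the normal direction is preserved because $NF$ is $S^1$-invariant). On $NF_x$ it is the derivative at $e\in S^1$ of $\theta\mapsto \rho_x(e^{i\theta})$, which by the previous paragraph is $R'(0)$ acting diagonally, i.e.\ a skew-symmetric endomorphism squaring to $-I$. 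Setting $J\coloneqq (\nabla V)|_{NF}$ (or its normalization so that $J^2=-I$) produces a smooth bundle endomorphism of $NF$ with $J^2=-I$, hence a complex structure. By construction $\rho_x(e^{i\theta})=\exp(\theta J_x)=\cos\theta\cdot I+\sin\theta\cdot J_x$ on every fiber, which is exactly scalar multiplication by $e^{i\theta}$ in the complex structure defined by $J$.

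The only subtle point is ensuring that the signs/integers $n_\theta$ obtained fiberwise are actually constant on connected components of $F$; this is automatic from the smooth dependence of $\nabla V$ and the fact that $J$ is continuous with $J^2=-I$, so the eigenvalues $\pm i$ of $J$ have locally constant multiplicities. The main obstacle I anticipate is the bookkeeping in the representation-theoretic step, in particular showing cleanly that effectiveness plus semi-freeness plus orientation preservation rule out all weights except $\pm 1$; once that is established, the construction of $J$ as the normalized $\nabla V$ is straightforward and geometric.
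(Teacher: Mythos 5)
Your proposal is correct and follows essentially the same route as the paper: linearize the action on $NF_x$, decompose into two-dimensional rotation blocks, and use semi-freeness (freeness on $NF\setminus F$) to force every weight to be $\pm 1$, exactly as in the discussion preceding the proposition (the paper then simply cites Uchida for the global statement, whereas you supply the globalization explicitly via $J=\rho(e^{i\pi/2})$ or the normalized $\nabla V$, which is a welcome addition). One small imprecision: orientation preservation does not "arrange the signs" of the weight-$\pm 1$ blocks (each planar rotation has determinant $1$ either way); the sign issue is simply vacuous because you define $J_x\coloneqq\rho_x(e^{i\pi/2})$ from the action itself, so $\rho_x(e^{i\theta})=\cos\theta\, I+\sin\theta\, J_x$ holds on every block regardless of its orientation.
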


Let $\pi_\mathcal{D}:\mathcal{D}\longrightarrow F$ be the associated disc bundle of $NF$, i.e. is the fiber bundle with fiber over $x\in F$ given by $\mathcal{D}_x:=\{v\in NF_x\: : \:\norm{v}\leq 1\}$. Then, by means of the exponential map we see that $\mathcal{D}$ is $S^1$-diffeomorphic to a neighborhood of $F$ in $M$. If we restrict the action to the associated sphere bundle $\pi_\mathcal{S}:\mathcal{S}\longrightarrow F$, then Proposition \ref{Prop:S1actionNF} shows that the quotient space $ \mathcal{S}/S^{1}$ becomes the total space of a Riemannian fiber bundle $\pi_{\mathcal{F}}:\mathcal{F}\longrightarrow F$ whose fibers $Y$ are copies of $\mathbb{C}P^N$.
The quotient $\mathcal{D}/S^{1}$ is homeomorphic to the mapping cylinder $C(\mathcal{F})$ of the projection $\pi_{\mathcal{F}}:\mathcal{F}\longrightarrow F$, see Figure \ref{Fig:LocalDescrF}.


 \begin{figure}[h]
\begin{center}
\begin{tikzpicture}
\draw (-2,5)--(2,5);
\draw (-3,3)--(1,3);
\draw (-2,5)--(-3,3);
\draw (2,5)--(1,3);
\draw (-2.5,1)--(1.5,1);
\draw (-3,3)--(-2.5,1);
\draw (2,5)--(1.5,1);
\draw (1,3)--(1.5,1);
\draw [dashed](-2,5)--(-2.25,3);
\draw (-2.5,1)--(-2.25,3);
\draw (-1,3)--(-0.5,1);
\draw [dashed](0,5)--(-0.25,3);
\draw (-0.5,1)--(-0.25,3);
\draw (0,5)--(-1,3);
\draw (-2.4,2) arc (87:155:0.3 and 0.5);
\draw (1.6,2) arc (87:155:0.3 and 0.5);
\draw (-0.4,2) arc (87:155:0.3 and 0.5);
\draw (-2.7,1.7)--(1.33,1.7);
\draw (-2.4,2)--(1.62,2);
\node (a) at (-1,4) {$\mathbb{C}P^N$};
\node (b) at (0,0.7) {$F$};
\draw (-2.84,1.68)--(-2.65,1);
\node (c) at (-3,1.3) {$r$};
\node (d) at (-2.5,0.8) {$0$};
\node (e) at (-3.2,3) {$1$};
\end{tikzpicture}
\caption{Mapping cylinder of the $\mathbb{C}P^N$-fibration $\pi_{\mathcal{F}}:\mathcal{F}\longrightarrow F.$}\label{Fig:LocalDescrF}
\end{center}
\end{figure}
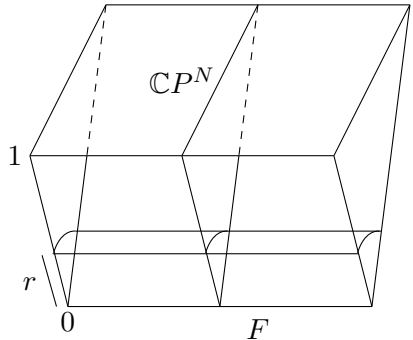

\subsubsection*{Step 2: Strategy of the proof}  
Having understood the local description close to the a singular stratum we explain now the spirit of the proof. As a first approximation we will assume that a neighborhood of $F$ in $M$ is actually $S^1$-isometric to $\mathcal{D}$. We will comment more about this assumption at the end of the section.  For $r>0$, let $N_r(F)$ be the $r$-neighborhood of $F$ in $M/S^1$. We claim that for $r$ small enough
$\sigma_{S^1}(M)$ is the signature $\sigma(M/S^1- N_r(F))$ of the manifold with boundary $M/S^1 -N_r(F)$. To see this let $\omega\in \Omega^{4k}_{\text{bas,c}}(M_0)$, then by Proposition \ref{Prop:BasicPullback} we know that there exists $\bar{\omega}\in\Omega^{4k}_c(M_0/S^1)$ with $\pi^*(\bar{\omega})=\omega$ and $\supp(\bar{\omega})\subset M_0/S^1-N_r(F)$ for $r$ small enough. Since the action on $M_0$ is free we can compute the $S^1$-fundamental class of $M$ using the Fubini theorem for Riemannian submersions (\cite[Proposition A.III.5]{BGM}) 
\begin{align*}
\int_M\alpha \wedge\omega=\int_{M_0/S^1-N_r(F)}\left(\int_{S^1 x}\alpha(x)\right)\bar{\omega},
\end{align*}
where $S^1 x$ denotes the orbit of $x\in M$.  Observe that the integral over the orbit is a constant function on $M$. To see this we just apply the exterior derivative and use \cite[Proposition 6.14.1]{BT82}, which states that this operation commutes with integration along the fiber, i.e
\begin{align*}
d \left(\int_{S^1 x}\alpha(x)\right)=\int_{S^1 x}d\alpha (x)=0,
\end{align*}
where the last equality holds by Lemma \ref{Lema:dalphabas} since as the form $d\alpha$ is basic it is in particular horizontal. Hence, 
we see that, for $r$ sufficiently small, 
\begin{align}\label{Eqn:S1signSignMwB}
\sigma_{S^1}(M)=\sigma(M/S^1-N_{r}(F)).
\end{align} 
Then, applying the Atiyah-Patodi-Singer signature theorem (Theorem \ref{Thm:SignThmMBound}) we get
\begin{align}\label{Eq:MainIdea}
\sigma( M/S^1 - N_r(F))=&\int_{M/S^1-N_r(F)}L\left(T(M/S^1 - N_r(F)),g^{T(M/S^1 - N_r(F))}\right)\\
& +\int_{\partial N_r(F)}TL(\partial N_r(F))+\eta(\partial N_r(F))\notag.
\end{align}
The sign difference of the last two terms of equation \eqref{Eq:MainIdea} with respect to Theorem \ref{Thm:SignThmMBound} is because we give $\partial N_r (F)$ the orientation induced from $N_r(F)$. The main idea of Lott's proof is to study the behavior of the terms in \eqref{Eq:MainIdea} as $r\longrightarrow 0$.

\subsubsection*{Step 3: Computation of the curvature}
In this step we are going deal with the first term of the right hand side of \eqref{Eq:MainIdea}.  Let $\{e^i\}_{i=1}^{v}\cup\{f^\alpha\}_{\alpha=1}^{h}$, where $v\coloneqq 2N$ and  $h\coloneqq 4k-2N-1$, be a local orthonormal basis for $T^*{\mathcal{F}}$ with respect to the submersion metric
\begin{align*}
g^{T\mathcal{F}}= g^{T_H \mathcal{F}}\oplus g^{T_V \mathcal{F}},
\end{align*}
 as in Section \ref{Sect:RiemFibr}. Let $\omega$ be the connection $1$-form of the Levi-Civita connection associated to this basis. As we have seen before, the structure equations satisfied by $\omega$ are 
\begin{align*}
de^i+\omega^i_j\wedge e^j+\omega^i_\alpha\wedge f^\alpha=0,\\
df^\alpha+\omega^\alpha_j\wedge e^j+\omega^\alpha_\beta\wedge f^\beta=0.
\end{align*}
Let $\Omega$ denote the curvature $2$-form on $\mathcal{F}$ of the Levi-Civita connection defined by \eqref{Eqns:ComponentsCurvatureForm}. Our aim is to compute the analog quantities for the metric of $C(\mathcal{F})$
\begin{align}\label{Eqn:MetricC(F)}
g^{TC(\mathcal{F})}=dr^2\oplus g^{T_H \mathcal{F}}\oplus r^2 g^{T_V \mathcal{F}},
\end{align}
in terms of $\omega$, $\Omega$ and $r$. Observe that we can construct a local orthonormal basis for $T^*C(\mathcal{F})$ from the basis above as $\{dr\}\cup\{\widehat{e}^i\}_{i=1}^{v}\cup\{\widehat{f}^\alpha\}_{\alpha=1}^{h}$ where
\begin{align*}
\widehat{e}^i\coloneqq &re^i,\\
\widehat{f}^\alpha\coloneqq &f^\alpha.
\end{align*}
Denote by $\widehat{\omega}$ the connection $1$-form of the metric \eqref{Eqn:MetricC(F)} with respect to this basis. The structure  equations defining  the components of $\widehat{\omega}$ are
\begin{align*}
\widehat{\omega}^r_i\wedge \widehat{e}^i+\widehat{\omega}^r_\alpha\wedge \widehat{f}^\alpha=&0,\\
d\widehat{e}^i+\widehat{\omega}^i_j\wedge \widehat{e}^j+\widehat{\omega}^j_\alpha\wedge \widehat{f}^\alpha+\widehat{\omega}^i_r\wedge dr=&0,\\
d\widehat{f}^\alpha+\widehat{\omega}^\alpha_j\wedge \widehat{e}^j+\widehat{\omega}^\alpha_\beta\wedge \widehat{f}^\beta++\widehat{\omega}^\alpha_r\wedge dr=&0.
\end{align*}
Expanding the second equation, using the structure the equations for $\omega$, we get 
\begin{align*}
0&=d\widehat{e}^i+\widehat{\omega}^i_j\wedge \widehat{e}^j+\widehat{\omega}^i_\alpha\wedge \widehat{f}^\alpha+\widehat{\omega}^i_r\wedge dr\\
&=dr\wedge {e}^i+rde^i+r\widehat{\omega}^i_j\wedge {e}^j+\widehat{\omega}^i_\alpha\wedge {f}^\alpha+\widehat{\omega}^i_r\wedge dr\\
&=dr\wedge(e^i-\widehat{\omega}^i_r)+(rde^i+r\widehat{\omega}^i_j\wedge {e}^j+\widehat{\omega}^i_\alpha\wedge {f}^\alpha)\\
&=dr\wedge(e^i-\widehat{\omega}^i_r)+r(-\omega^i_j+\widehat{\omega}^i_j)\wedge e^j+(-r\omega^i_\alpha+\widehat{\omega}^i_\alpha)\wedge {f}^\alpha,
\end{align*}
so we conclude that $\widehat{\omega}^i_r=e^i$, $\widehat{\omega}^i_j=\omega^i_j$ and $\widehat{\omega}^j_\alpha=r\omega^i_\alpha$. From the first equation we get similarly
\begin{align*}
0=&\widehat{\omega}^r_i\wedge \widehat{e}^i+\widehat{\omega}^r_\alpha\wedge \widehat{f}^\alpha\\
=& -e^i\wedge(re^i)+\widehat{\omega}^r_\alpha\wedge {f}^\alpha\\
=&\widehat{\omega}^{r}_{\alpha j}e^j\wedge f^\alpha+\widehat{\omega}^r_{\alpha\beta}f^\beta\wedge f^\alpha+\omega^r_{\alpha r}dr\wedge f^\alpha,
\end{align*}
thus $\widehat{\omega}^r_\alpha=0$. Finally, from the third structure equation,
\begin{align*}
0&=d\widehat{f}^\alpha+\widehat{\omega}^\alpha_j\wedge \widehat{e}^j+\widehat{\omega}^\alpha_\beta\wedge \widehat{f}^\beta\\
&=d{f}^\alpha+r^2{\omega}^\alpha_j\wedge {e}^j+\widehat{\omega}^\alpha_\beta\wedge {f}^\beta.
\end{align*}
We now expand the $1$-forms $\omega^\alpha_j$ and  $\widehat{\omega}^\alpha_\beta$ in terms of the given basis
\begin{align*}
\omega^\alpha_j=&\omega^\alpha_{jk}e^k+\omega^\alpha_{j\beta}f^\beta,\\
\widehat{\omega}^\alpha_\beta=&\widehat{\omega}^\alpha_{\beta j}\widehat{e}^j+\widehat{\omega}^\alpha_{\beta\gamma}\widehat{f}^\gamma
=r\widehat{\omega}^\alpha_{\beta j}{e}^j+\widehat{\omega}^\alpha_{\beta\gamma} f^\gamma,
\end{align*}
and insert them in the expression above to get
\begin{align}\label{Eqn:dfalpha}
0=df^\alpha+r^2{\omega}^\alpha_{jk}e^k\wedge{e}^j+r^2{\omega}^\alpha_{j\beta} f^\beta\wedge{e}^j+r\widehat{\omega}^\alpha_{\beta j}e^j\wedge {f}^\beta+\widehat{\omega}^\alpha_{\beta\gamma}f^\gamma\wedge {f}^\beta.
\end{align}
Observe that Lemma \ref{Lemma:SymmOmega}(1) implies ${\omega}^\alpha_{jk}e^k\wedge{e}^j=0$, thus by replacing  the structure equation $df^\alpha=-\omega^\alpha_j\wedge e^j-\omega^\alpha_\beta\wedge f^\beta$ in \eqref{Eqn:dfalpha} we obtain
\begin{align*}
0=&(- \omega^\alpha_{j\beta} f^\beta\wedge e^j-\omega^\alpha_{\beta j}e^j\wedge f^\beta-\omega^{\alpha}_{\beta\gamma}f^\gamma\wedge f^\beta)\\
&+(-r^2{\omega}^\alpha_{j\beta} +r\widehat{\omega}^\alpha_{\beta j})e^j\wedge {f}^\beta+\widehat{\omega}^\alpha_{\beta\gamma}f^\gamma\wedge {f}^\beta\\
=&(-r^2{\omega}^\alpha_{j\beta} +r\widehat{\omega}^\alpha_{\beta j})e^j\wedge {f}^\beta+(-\omega^\alpha_{\beta\gamma}+\widehat{\omega}^\alpha_{\beta\gamma})f^\gamma\wedge {f}^\beta,
\end{align*}
where we have used Lemma \ref{Lemma:SymmOmega}(2). Altogether we conclude that  
\begin{align*}
\widehat{\omega}^\alpha_{\beta j}=&r{\omega}^\alpha_{j\beta},\\
\omega^\alpha_{\beta\gamma}=&\widehat{\omega}^\alpha_{\beta\gamma}, 
\end{align*}
i.e. $\widehat{\omega}^{\alpha}_\beta = \omega^{\alpha}_{\beta\gamma}f^\gamma+r^2\omega^{\alpha}_{\beta j}e^j$.
Summarizing, the components of the connection $1$-form $\widehat{\omega}$ are
\begin{align}\label{Eqns:Conn1From}
\widehat{\omega}^{i}_j &= \omega^i_j,\nonumber\\
\widehat{\omega}^{i}_r  &= e^i,\nonumber\\
\widehat{\omega}^{i}_\alpha &= r\omega^i_\alpha ,\\
\widehat{\omega}^{\alpha}_\beta &= \omega^{\alpha}_{\beta\gamma}f^\gamma+r^2\omega^{\alpha}_{\beta j}e^j,\nonumber \\
\widehat{\omega}^{\alpha}_r &=0 .\nonumber  
\end{align}
Having computed the connection $1$-form we now want to compute the curvature $2$-form $\widehat{\Omega}(r)$ using the equations 
\begin{equation*}
\widehat{\Omega}^I_J(r)=d\widehat{\omega}^I_J+\widehat{\omega}^I_K\wedge\widehat{\omega}^K_J,
\end{equation*}
 where $I,J,K\in\{i,\alpha,r\}$. We are mainly interested in computing $\widehat{\Omega}^I_J(r)$  as $r\longrightarrow 0$.

\begin{itemize}
\item \underline{$\widehat{\Omega}^i_j$ component} 
\begin{align*}
\widehat{\Omega}^i_j (r)=&d\widehat{\omega}^i_j+\widehat{\omega}^i_k\wedge\widehat{\omega}^k_j
+\widehat{\omega}^i_\alpha\wedge\widehat{\omega}^\alpha_j+
\widehat{\omega}^i_r\wedge\widehat{\omega}^r_j\\
=&d{\omega}^i_j+{\omega}^i_k\wedge{\omega}^k_j
+(r\omega^i_\alpha)\wedge (r{\omega}^\alpha_j)-e^i\wedge e^j.
\end{align*}
Now we take the limit as $r\longrightarrow 0$
$$\widehat{\Omega}^i_j:=\displaystyle\lim_{r\rightarrow 0}\widehat{\Omega}^i_j(r)=d{\omega}^i_j+{\omega}^i_k\wedge{\omega}^k_j-e^i\wedge e^j
=(R^{TY})^i_j-e^i\wedge e^j,$$
where $R^{TY}$ denotes the curvature in the vertical direction.\\

\item \underline{$\widehat{\Omega}^i_r$ component} 
\begin{align*}
\widehat{\Omega}^i_r (r)&=d\widehat{\omega}^i_r+\widehat{\omega}^i_j\wedge\widehat{\omega}^j_r\\
&=d e^i+{\omega}^i_j\wedge e^j\\
&=-{\omega}^i_j\wedge e^j-{\omega}^i_\alpha\wedge f^\alpha+
{\omega}^i_j\wedge e^j\\
&=-{\omega}^i_\alpha\wedge f^\alpha.
\end{align*}
Therefore $\widehat{\Omega}^i_r:=\displaystyle\lim_{r\rightarrow 0}\widehat{\Omega}^i_r(r)=-\widehat{\omega}^i_\alpha\wedge f^\alpha$.

\item \underline{$\widehat{\Omega}^i_\alpha$ component} 
\begin{align*}
\widehat{\Omega}^i_\alpha(r)&=d\widehat{\omega}^i_\alpha+\widehat{\omega}^i_j\wedge\widehat{\omega}^j_\alpha
+\widehat{\omega}^i_\beta\wedge\widehat{\omega}^\beta_\alpha\\
&=d(r\omega^i_\alpha)+\omega^i_j\wedge(r\omega^j_\alpha)+ (r\omega^i_\beta)\wedge(r^2\omega^\beta_{\alpha j}e^j+\omega^\beta_{\alpha\gamma} f^\gamma),
\end{align*}
so $\widehat{\Omega}^i_\alpha:=\displaystyle\lim_{r\rightarrow 0}\widehat{\Omega}^i_\alpha(r)=dr\wedge\omega^i_\alpha$.

\item \underline{$\widehat{\Omega}^\alpha_r$ component} 
\begin{align*}
\widehat{\Omega}^\alpha_r(r)=\widehat{\omega}^\alpha_i\wedge\widehat{\omega}^i_r=r\omega^\alpha_i\wedge e^i\quad\text{so}\quad \widehat{\Omega}^\alpha_r:=\lim_{r\rightarrow 0}\widehat{\Omega}^\alpha_r(r)=0.
\end{align*}

\item \underline{$\widehat{\Omega}^\alpha_\beta$ component}
\begin{align*}
\widehat{\Omega}^\alpha_\beta (r)=&d\widehat{\omega}^\alpha_\beta+
\widehat{\omega}^\alpha_i\wedge\widehat{\omega}^i_\beta+
\widehat{\omega}^\alpha_\gamma\wedge\widehat{\omega}^\gamma_\beta\\
=& d(r^2\omega^\alpha_{\beta i}e^i+\omega^\alpha_{\beta\gamma}f^\gamma)
+\widehat{\omega}^\alpha_i\wedge(r\omega^i_\beta)\\
&+(r^2\omega^\alpha_{\gamma i}e^i+\omega^\alpha_{\gamma \delta}f^\delta)\wedge(r^2\omega^\gamma_{\beta j}e^j+\omega^\gamma_{\beta \rho} f^\rho).
\end{align*}
Hence $$\widehat{\Omega}^\alpha_\beta:=\displaystyle\lim_{r\rightarrow 0}\widehat{\Omega}^\alpha_\beta(r)=d(\omega^\alpha_{\beta\gamma}\tau^\gamma)+\omega^\alpha_{\gamma \delta} f^\delta\wedge\omega^\gamma_{\beta \rho}f^\rho=(R^{TF})^\alpha_\beta,$$
where $R^{TF}$ denotes the curvature in the horizontal direction.
\end{itemize}
Thus, the components of the curvature $2$-form as $r\longrightarrow 0$ are given by 
\begin{align}\label{Eqns:Curv}
\widehat{\Omega}^{i}_j &= (R^{TY})^i_j-e^i\wedge e^j,\nonumber\\
\widehat{\Omega}^{i}_r  &= -{\omega}^i_\alpha\wedge f^\alpha, \nonumber\\
\widehat{\Omega}^{i}_\alpha &= dr\wedge \omega^i_\alpha, \\
\widehat{\Omega}^{\alpha}_\beta &= (R^{TF})^\alpha_\beta,\nonumber\\
\widehat{\Omega}^{\alpha}_r &=0. \nonumber  
\end{align}

We can conclude that the limit
$$\lim_{r\rightarrow 0}\int_{M/S^1-N_r(F)}L\left(T(M/S^1 - N_r(F)),g^{T(M/S^1 - N_r(F))}\right),$$
is well defined.

\subsubsection*{Step 4:  Transgression term}
In this step we are going to study the transgression term $TL(N_r(F))$ in \eqref{Eq:MainIdea}.This term measures how the metric close to $\partial N_r(F)$ fails to be a product. To begin with, we recall the concrete definition of the transgression form (\cite[Section 11.1]{N03}). 
Let $P_l:\mathfrak{gl}(N,\mathbb{C})\longrightarrow\mathbb{C}$ be an invariant homogeneous polynomial of degree $l$. Given a connection $1$-form $\omega$, with associated curvature form $\Omega$, we can define the characteristic class associated to $P_l$ as the cohomology class $[P_l(\Omega)]$. If we choose another connection $1$-form $\omega'$, with curvature $\Omega'$, then by Chern-Weil theory we know that $[P_l(\Omega)]=[P_l(\Omega')]$ in cohomology. Nevertheless, as differential forms $P_l(\Omega)$ and $P_r(\Omega')$ can differ by at most an exact form, which we call a {\em transgression form} between $\omega$ and $\omega'$. Despite the fact that such a form is not unique, there is a concrete method  to construct one as follows: Set $\theta\coloneqq\omega'-\omega$ and consider for $0\leq t\leq 1$ the path $\omega(t):=\omega+t\theta$ between $\omega$ and $\omega'$, i.e for each fixed $t$ the form $\omega(t)$ is a connection $1$-form, $\omega(0)=\omega$ and $\omega(1)=\omega'$. The associated curvature form is computed again from the structure equation $\Omega(t)=d\omega(t)+\omega(t)\wedge\omega(t).$
\ If ${p}_r$ is the associated polarization polynomial of $P_l$ (\cite[pg. 375]{N03}) then one verifies that
$$P_l(\Omega')-P_l(\Omega)=d\left[l\int_0^1{p}_{l}(\omega'-\omega,\Omega(t), \cdots, \Omega(t))dt\right],$$
thus a transgression form is then
\begin{equation}\label{Eqn:Transgression}
T{P}_l(\omega',\omega)=l\int_0^1{p}_{l}(\omega'-\omega,\Omega(t), \cdots, \Omega(t))dt. 
 \end{equation}
Now we want to use this construction for our particular case of interest where we want to compute the limit of this form as $r\longrightarrow 0$. To do so we consider as above a path of connections which interpolates between the Riemannian connection of $C(\mathcal{F})$, pulled back to $\partial N_r(F)$, and the Riemannian connection of a product metric, at least as $r\longrightarrow 0$. For $t\in[0,1]$ set
\begin{align*}
\widehat{\omega}^i_j(t)\coloneqq &\omega^i_j,\\
\widehat{\omega}^i_r(t)\coloneqq &te^i,\\
 \widehat{\omega}^\alpha_\beta(t)\coloneqq & \omega^\alpha_{\beta\gamma} f^\gamma.
\end{align*}
This path satisfies:
\begin{itemize}
\item At $t=0$, the connection $\widehat{\omega}(0)$ is the connection of a product metric.
\item At $t=1$ we obtain
\begin{align}\label{Eqn:tConnection}
\widehat{\omega}^i_j(1)\coloneqq &\omega^i_j,\notag\\
\widehat{\omega}^i_r(1)\coloneqq &e^i,\\
 \widehat{\omega}^\alpha_\beta(1)\coloneqq & \omega^\alpha_{\beta\gamma} f^\gamma,\notag
\end{align}
which are precisely the non-zero components of the pullback connection $1$-form on $\partial N_r(F)$ as $r\longrightarrow 0$ obtained in \eqref{Eqns:Conn1From}.
\end{itemize}

\begin{remark}\label{Rmk:SFF}
Note that the only component of the second fundamental form of $\partial N_r(F)$ is $\widehat{\omega}^i_r(1)-\widehat{\omega}^i_r(0)=e^i$.
\end{remark}

Now we compute the curvature $2$-form $\Omega_t$ using the relation $\Omega_t=d\omega_t+\omega_t\wedge\omega_t$. In view of  \eqref{Eqns:Conn1From} and \eqref{Eqn:tConnection} we can easily see that the non-zero components of $\Omega_t$ are
\begin{align*}
\widehat{\Omega}^i_j(t)=&(R^{TY})^{i}_j-t^2 e^i\wedge e^j,\notag \\
\widehat{\Omega}^i_r(t)=&- t\omega^i_\alpha\wedge f^\alpha, \\
\widehat{\Omega}^\alpha_\beta(t)=&(R^{TF})^{\alpha}_\beta.\notag
\end{align*}

Note however, that if we consider the form $\widehat{\omega} (t)$ as a form $\check{\omega}$ on $[0,1]\times\mathcal{F}$, then the associated curvature components in this product space are
\begin{align*}
\check{\Omega}^i_j(t)=&(R^{TY})^{i}_j-t^2 e^i\wedge e^j,\notag \\
\check{\Omega}^i_r(t)=&dt\wedge e^i- t\omega^i_\alpha\wedge f^\alpha, \\
\check{\Omega}^\alpha_\beta(t)=&(R^{TF})^{\alpha}_\beta.\notag
\end{align*} 
The difference term between these to curvature forms is $dt\wedge e^{i}$, which by Remark \ref{Rmk:SFF} is precisely the term $dt\wedge(\widehat{\omega}(1)-\widehat{\omega}(0))$ in the transgression formula. This observation allows us to conclude that  
$$\lim_{r\rightarrow 0}\int_{\partial N_r(F)}TL(\partial N_r(F))=\int_{[0,1]\times{\mathcal{F}}}L(\check{\Omega}(t)).$$
Finally note that $\check{\Omega}^i_j(t)$ and $\check{\Omega}^i_r(t)$ do not contain horizontal terms and $\check{\Omega}^\alpha_\beta(t)$ consists only of horizontal terms. Hence, if we define $\underline{\Omega}\in\Omega^2([0,1]\times\CL{F})\otimes\End(TY\oplus\BB{R})$ by  
\begin{align*}
\underline{\Omega}^i_j(t):=&(R^Z)^{i}_j-t^2 e^i\wedge e^j\\
\underline{\Omega}^i_r(t):=&dt\wedge e^i- t\omega^i_\alpha\wedge f^\alpha
\end{align*}
we  we can factorize the $L$-polynomial and obtain
\begin{equation*}
\int_{[0,1]\times{\mathcal{F}}}L(\widehat{\Omega}(t))=\int_{F}\widehat{L}(F)\wedge L(TF),
\end{equation*}
where
$$\widehat{L}(F):=\int_Y\int_{0}^1L(\underline{\Omega})\in \Omega^{\text{odd}}(F).$$
Here the integral over $Y$ means integration along the fibers of $\mathcal{F}$. We will handle the form $\widehat{L}(F)$ later in the proof.

\subsubsection*{Step 5: Adiabatic limit computation}
In this step we want to study the behavior of the term $\eta(\partial N_{r}(F))$ as $r\longrightarrow 0$. Here is where the adiabatic limit methods of the eta invariant described in Section \ref{Section:Dai} appear on the scene. Namely, by Theorem \ref{Thm:Dai} we obtain 
\begin{equation}\label{Eqn:AdiabaticFormulaProof}
\lim_{r\rightarrow 0} \eta(\partial N_r(F))=\int_F L(TF, g^{TF})\wedge \widetilde{\eta}+\eta(A_F\otimes \ker D_Y)+\tau.
\end{equation}
Let us see various features of this limit formula:
\begin{itemize}
\item The global factor $2$ cancels in both sides because $\eta(\partial N_r(F))$ is the eta invariant for the even part of the tangential signature operator.
\item From Remark \ref{Rmk:FlatConnHarmForms} and Hodge theory we see that $\ker(D_Y)$ is the difference of the bundles $H^N_{+}(Y)$ and $H^N_{-}(Y)$ of self-dual and anti-self-dual cohomology groups. In our particular case $Y=\BB{C}P^N$ so $H^N_{\pm}(\BB{C}P^N)$ vanishes if $N$ is odd. If $N$ is even, then $H^N_{+}(Y;\BB{R})=\BB{R}$ and $H^N_{-}(Z)=0$. This real line bundle admits a flat connection by Remark \ref{Rmk:FlatConnHarmForms}. Moreover since it admits a non-vanishing global section as described in Example \ref{Example:ProjBundle} we see that this bundle is actually trivial. 
Hence we see that $ \eta(A_F\otimes \ker D_Y)=\eta(F)$. 
\item From Example \ref{Example:ProjBundle} we see that the $\tau$-invariant in \eqref{Eqn:AdiabaticFormulaProof} vanishes. 
\end{itemize}

\subsubsection*{Step 6: Equivariant methods}
This is the final step of the proof. So far we have shown that 
$$\sigma_{S^1}(M)=\int_{M_0/S^1}L\left(T(M_0/S^1),g^{T(M_0/S^1)}\right)+\eta(F)+\int_F \widehat{L}(TF)\wedge L(TF)+\int_F  L(TF)\wedge\widetilde{\eta}.$$
Lott claims in his work that  $\widehat{L}(F)$ and $\widetilde{\eta}$ are zero as a result of some equivariant techniques. We want to develop his arguments in some more detail. The key observation is that, from Theorem \ref{Thm:SliceThm} and the local description given in {\em Step 2}, the fibration $\mathcal{F}\longrightarrow F$ is associated to a principal $G$-bundle over $F$ with compact structure group, thus we can apply the results described in Section \ref{Section:CompactStrGroup}. 
\begin{enumerate}
\item Since $G$ is compact, we can assume that the vertical metric $g^{T_V\mathcal{F}}$ is $G$-invariant. Using the commutativity of the diagram \eqref{DiagCW} we see that 
$\widehat{L}(F)$ belongs indeed to the image of the Chern-Weil homomorphism and therefore it should be an even form. More concretely, from the commutative diagram 
\begin{equation*}
\xymatrixcolsep{4pc}\xymatrixrowsep{4pc}\xymatrix{
(\mathbb{C}[\mathfrak{g}]\otimes\Omega([0,1]\times Y))^G \ar[d]_-{\displaystyle{\int_Y}} \ar[r]^-{\phi_{\boldsymbol{\omega}}} & \Omega([0,1]\times\mathcal{F}) \ar[d]^-{\displaystyle{\int_Y \int_0^1}}\\
\mathbb{C}[\mathfrak{g}]^G \ar[r]^-{\phi_{\boldsymbol{\omega}}}  & \Omega(F),
}
\end{equation*}
it follows that $\widehat{L}(F)\in \ran(\phi_{\boldsymbol{\omega}})\in\Omega^{\text{ev}}(F)$. However, by construction this form is odd so we must have that $\widehat{L}(F)=0$. 
\item From Proposition \ref{Prop:VanishinEtaForm} it follows that in this case $\widetilde{\eta}=0$. 
\end{enumerate}

This concludes the proof of Theorem \ref{Thm:S1SignatureThm} under the assumption a neighborhood of $F$ in $M$ is $S^1$-isometric to $DNF$. In the general case, as we will see in the examples below, in the limit as one approaches the fixed point set, the calculations above remain still valid. 

\subsection{The Witt condition}

In this section we comment on an important topological interpretation of the $S^1$-signature in the context of intersection homology theory introduced by Goresky and McPhearson in \cite{GMcP80}.\\

As we saw in the proof of Theorem \ref{Thm:S1SignatureThm} above, each connected component $F\subseteq M^{S^1}$ of the fixed point set is an odd-dimensional closed manifold, whose dimension can be written as
$$\dim F=4k-2N-1, \quad \text{for some $N\in\mathbb{N}_0$}.$$  
We now distinguish the two possible cases for $N$. 
\begin{definition}\label{Def:Witt}
We say that $M/S^1$ satisfies the {\em Witt condition}, if $N$ is odd, that is, the codimension of the fixed point set $M^{S^1}$ in $M$ is divisible by four.
\end{definition}

\begin{lemma}\label{Lemma:VanishingEtaWitt}
If $M/S^1$ satisfies the Witt condition then $\eta(M^{S^1})=0$. 
\end{lemma}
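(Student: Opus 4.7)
The plan is to reduce the claim to the representation-theoretic vanishing results for the eta invariant on odd-dimensional closed manifolds recalled in Section \ref{Section:VanishEta}.

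First, I would decompose
\[
\eta(M^{S^1}) = \sum_{F} \eta(A^{\textnormal{ev}}_F),
\]
where the sum runs over the connected components $F$ of the fixed point set, and $A^{\textnormal{ev}}_F$ denotes the even part of the tangential signature operator on $F$. Since the statement to be proved is about the eta invariant of the odd signature operator on $M^{S^1}$, it suffices to show that each summand vanishes.

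Next, I would read off the dimension of each $F$ from the local analysis in Step 1 of the proof of Theorem~\ref{Thm:S1SignatureThm}. There we saw that every connected component $F \subseteq M^{S^1}$ is an odd-dimensional closed submanifold with $\dim F = 4k - 2N - 1$, where $2(N+1)$ is the rank of the normal bundle $NF$. The Witt condition (Definition~\ref{Def:Witt}) asserts that $N$ is odd; writing $\dim F = 2n - 1$, this gives $n = 2k - N$, which is odd since $N$ is odd and $2k$ is even. Equivalently, $\dim F \equiv 1 \pmod 4$, and in terms of residues mod $8$ one has $\dim F \equiv 1$ or $5 \pmod 8$.

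Finally, I would apply verbatim the two vanishing arguments outlined in Section~\ref{Section:VanishEta}, both of which apply precisely to closed manifolds of dimension $2n-1$ with $n$ odd. When $n \equiv 0 \pmod 2$ (i.e.\ $\dim F \equiv 5 \pmod 8$) the associated spinor bundle carries a quaternionic structure which anti-commutes with the geometric operator, forcing the spectrum of $A^{\textnormal{ev}}_F$ to be symmetric about the origin. When $n \equiv 1 \pmod 2$ (i.e.\ $\dim F \equiv 1 \pmod 8$) the operator is of the form $iB$ with $B$ real skew-adjoint, again yielding a symmetric spectrum. In either case $\eta(A^{\textnormal{ev}}_F)=0$, and summing over connected components gives $\eta(M^{S^1}) = 0$. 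The only delicate point is checking that the odd signature operator genuinely fits into the Clifford-algebraic framework of Section~\ref{Section:VanishEta}, but this is standard Clifford module theory and was already invoked in that section.
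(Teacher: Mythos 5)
Your argument is correct and follows essentially the same route as the paper: both reduce the claim to the vanishing results of Section~\ref{Section:VanishEta} by writing $\dim F = 4k-2N-1 = 2(2k-N)-1$ and observing that $2k-N$ is odd exactly when $N$ is odd. One small slip in your final case split: the dichotomy there is governed by the parity of $\ell$ (where $n = 2\ell-1$), not of $n$ itself --- $n$ is odd in both sub-cases, which correspond to $n\equiv 3$ and $n\equiv 1 \pmod 4$ respectively; since both sub-cases yield a spectrum symmetric about the origin, the conclusion is unaffected.
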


\begin{proof}
This follows from the discussion in Section \ref{Section:VanishEta} since $4k-2N-1=2(2k-N)-1$ and $2k-N$ is odd if and only if $N$ is odd. 
\end{proof}

Stratified spaces for which the middle dimensional cohomology of the links vanish are called {\em Witt spaces} (see \cite{S83}). This is of course consistent with Definition \ref{Def:Witt} since $H^N(\mathbb{C}P^N)$ vanishes if and only if $N$ is odd. For this kind of stratified spaces one can always construct the Goresky-McPhearson $L$-class following the procedure described in detail in \cite[Section 5.3]{B07}.  The following result describes an explicit form of such a $L$-homology class for our case of interest.  

\begin{proposition}[$L$-homology Class, {\cite[Proposition 8]{L00}}]\label{Prop:LHomologyClass}
In the Witt case the differential form $L(T(M/S^1))$ represents the $L$-homology class of $M/S^1$.
\end{proposition}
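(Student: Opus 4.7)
The Goresky--MacPherson $L$-homology class $L_*(M/S^1)\in H_*(M/S^1;\mathbb{Q})$ of a Witt space is uniquely characterized by the Thom--Pontryagin identity (see \cite[Section 5.3]{B07}): for every smooth map $f\colon M/S^1\longrightarrow S^{4k-i}$, transverse to the singular stratum $M^{S^1}$ and to a regular value $p\in S^{4k-i}$, the preimage $P\coloneqq f^{-1}(p)$ is a closed Witt subspace of $M/S^1$ of dimension $i$ and
$$\langle L_*(M/S^1),f^*[S^{4k-i}]\rangle=\sigma(P).$$
The plan is to verify this identity with $L_*(M/S^1)$ replaced by the Poincar\'e dual of the de Rham class of the closed form $L(T(M_0/S^1),g^{T(M_0/S^1)})$, for every test map $f$.

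First I would lift $f$ to the $S^1$-equivariant smooth map $\tilde f\coloneqq f\circ\pi_{S^1}\colon M_0\longrightarrow S^{4k-i}$. Transversality of $f$ to $M^{S^1}$ ensures that $\tilde f$ extends smoothly across the fixed point set and that its preimage $\tilde P\coloneqq\tilde f^{-1}(p)\subseteq M$ is a closed $S^1$-invariant submanifold with $\tilde P/S^1=P$. The $S^1$-action on $\tilde P$ is again semi-free, its fixed point set $\tilde P\cap M^{S^1}$ has the same codimension in $\tilde P$ as $M^{S^1}$ has in $M$, and hence $P$ inherits the Witt condition. Applying Theorem~\ref{Thm:S1SignatureThm} to $\tilde P$ and invoking Lemma~\ref{Lemma:VanishingEtaWitt} to kill the eta contribution yields
$$\sigma(P)=\sigma_{S^1}(\tilde P)=\int_{\tilde P_0/S^1}L(T(\tilde P_0/S^1),g^{T(\tilde P_0/S^1)}),$$
where the first equality is the identification \eqref{Eqn:S1signSignMwB} between the signature of the quotient and the $S^1$-fundamental class pairing.

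Next I would represent $f^*[S^{4k-i}]$ by $\omega_p\coloneqq f^*\omega$, with $\omega\in\Omega_c^{4k-i}(S^{4k-i})$ a bump form near $p$ representing the fundamental class. Combining the naturality of the Pontryagin $L$-polynomial under the Riemannian submersion defined by $f$ on the principal stratum with a fibre-integration argument analogous to Step~2 of the proof of Theorem~\ref{Thm:S1SignatureThm} (cf.\ \cite[Proposition 6.14.1]{BT82}) would give
$$\int_{\tilde P_0/S^1}L(T(\tilde P_0/S^1),g^{T(\tilde P_0/S^1)})=\int_{M_0/S^1}L(T(M_0/S^1),g^{T(M_0/S^1)})\wedge\omega_p,$$
and the right-hand side is, by definition, the pairing $\langle[L(T(M_0/S^1))],f^*[S^{4k-i}]\rangle$. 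Matching both computations proves the proposition.

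The main obstacle is analytic rather than topological: one has to justify the convergence of the pairing integral near the singular stratum and show that it really computes a pairing with $f^*[S^{4k-i}]$ in Goresky--MacPherson's sense, i.e.\ that it equals the signature of a Whitney smoothing of $P$. This relies on the local description of $M/S^1$ as the mapping cylinder $C(\mathcal{F})$, on the explicit $r\longrightarrow 0$ behaviour of the curvature components $\widehat\Omega(r)$ of \eqref{Eqns:Curv}, and on the observation that transversality of $f$ to $M^{S^1}$ makes $\omega_p$ a pullback from the base of a product collar near the singular stratum, so that the integrability arguments of Steps~3--6 in the proof of Theorem~\ref{Thm:S1SignatureThm} apply verbatim with an extra bounded factor. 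Once these local estimates are in place, the remaining naturality and Fubini manipulations are routine.
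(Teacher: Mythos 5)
Your route is genuinely different from the paper's. The paper disposes of this proposition in two lines: it invokes Bismut and Cheeger's result \cite[Theorem 5.7]{BC91}, which says that for a singular space whose singularities are modelled on mapping cylinders of fibrations the $L$-homology class is represented by the pair $(L(T(M/S^1)),\, L(TM^{S^1})\wedge\widetilde{\eta})$, and then kills the second component using the vanishing of the eta form $\widetilde{\eta}$ already established in Step 6 of the proof of Theorem \ref{Thm:S1SignatureThm} (the compact-structure-group argument of Proposition \ref{Prop:VanishinEtaForm}). You instead propose to verify the Thom--Pontryagin characterization of the Goresky--MacPherson class directly by pulling test maps back to $M$ and applying Lott's formula to the preimages. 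That is a legitimate first-principles strategy, but it amounts to re-deriving the substance of the Bismut--Cheeger theorem rather than citing it, and as written it has two concrete gaps.

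First, in the chain $\sigma(P)=\sigma_{S^1}(\tilde P)$ you appeal to \eqref{Eqn:S1signSignMwB}, but that identity only equates $\sigma_{S^1}(\tilde P)$ with the signature of the manifold with boundary obtained by deleting a neighbourhood of the singular stratum of $P$. The $\sigma(P)$ in the Thom--Pontryagin characterization is the Goresky--MacPherson intersection-homology signature of the Witt space $P$; the equality of these two signatures is Siegel's theorem (the fact underlying Corollary \ref{Coro:IH}) and must be invoked separately --- it is not \eqref{Eqn:S1signSignMwB}. Second, the asserted fibre-integration identity is not ``naturality of $L$'': what is needed is $L(T(\tilde P_0/S^1))=L(T(M_0/S^1))\big|_{\tilde P_0/S^1}$, which holds only after observing that the normal bundle of the preimage is the pullback of $T_pS^{4k-i}$ and hence trivial, and one must then check that this restriction argument and the convergence of the pairing survive the degeneration of the quotient metric near the stratum --- precisely the point you defer to your final paragraph. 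There is also the standard caveat that the sphere-map characterization determines the $L$-class only in a range of degrees and is extended by crossing with spheres. None of these obstacles is fatal, but each must be supplied; the paper's citation of \cite{BC91} avoids all of them.
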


\begin{proof}
In \cite[Section 4]{BC91} Bismut and Cheeger constructed homology classes for certain singular manifolds  where the singularities are modeled as the mapping cone of a certain fibration as described above for $M/S^1$. By \cite[Theorem 5.7]{BC91} the $L$-homology class of $M/S^1$ in the Witt case is represented by the pair $(L(T(M/S^1),L(T M^{S^1})\wedge\widetilde{\eta})$, where $\widetilde{\eta}$ is the eta form of the $\mathbb{C}P^N$-fibration over $M^{S^1}$. From {\em Step 6} in the section above we know that this eta form vanishes.  Hence, the differential form  $L(T(M/S^1))$ represents the $L$-homology class of $M/S^1$ in the mentioned Bismut-Cheeger model.
\end{proof}

In addition, for Witt spaces there is a well defined non-degenerate  pairing in intersection homology (\cite[Section 4.4]{B07}) that gives rise to a signature invariant. As a consequence of Lemma \ref{Lemma:VanishingEtaWitt} and Proposition \ref{Prop:LHomologyClass} we obtain the following remarkable corollary of Theorem \ref{Thm:S1SignatureThm}. 

\begin{coro}[{\cite[Corollary 1]{L00}}]\label{Coro:IH}
In the Witt case, 
\begin{align*}
\sigma_{S^1}(M)=\int_{M_0/S^1} L(T(M_0/S^1)),
\end{align*}
equals the intersection homology signature of $M/S^1$. 
\end{coro}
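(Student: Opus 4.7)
The plan is to split the corollary into its two assertions and dispatch them separately, drawing on the results assembled immediately above it.

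For the first equality, I would simply invoke Theorem \ref{Thm:S1SignatureThm} to write
\begin{equation*}
\sigma_{S^1}(M) = \int_{M_0/S^1} L\bigl(T(M_0/S^1),g^{T(M_0/S^1)}\bigr) + \eta(M^{S^1}),
\end{equation*}
and then apply Lemma \ref{Lemma:VanishingEtaWitt}, which guarantees $\eta(M^{S^1})=0$ in the Witt case via the Clifford-algebraic symmetry argument in Section \ref{Section:VanishEta}. This already yields the displayed identity of the corollary, and this step is essentially a bookkeeping exercise.

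For the second assertion, identifying this integer with the intersection-homology signature, I would proceed in two moves. First, by Proposition \ref{Prop:LHomologyClass} the differential form $L(T(M_0/S^1))$ represents the Goresky--MacPherson $L$-homology class $[L(M/S^1)] \in H_{*}(M/S^1;\mathbb{Q})$ of the Witt space $M/S^1$, via the Bismut--Cheeger construction for singular spaces whose singularities are modelled on mapping cones of fibrations. Second, I would use the general fact, recalled in \cite[Section 5.3]{B07}, that for any closed oriented Witt space $X$ of dimension $4k$ the integration pairing
\begin{equation*}
\int_X L(X) \;=\; \langle [L(X)], [X]\rangle
\end{equation*}
coincides with the signature of the non-degenerate symmetric intersection pairing on middle-perversity intersection cohomology $IH^{2k}(X;\mathbb{Q})$; this is precisely the content of the $L$-class construction of Goresky and MacPherson, which was designed to make the Hirzebruch signature theorem extend to Witt spaces.

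Combining these two moves produces the chain
\begin{equation*}
\int_{M_0/S^1} L(T(M_0/S^1)) \;=\; \langle [L(M/S^1)],[M/S^1]\rangle \;=\; \sigma_{IH}(M/S^1),
\end{equation*}
which, together with the first step, delivers the corollary. The main obstacle here is essentially invisible in the proof itself but sits in the machinery being cited: one needs that the explicit representative provided by Proposition \ref{Prop:LHomologyClass} (a smooth form on the principal stratum with controlled asymptotics near the singular stratum) really does pair with the fundamental class in the expected way, i.e.\ that $\int_{M_0/S^1}$ computes the Kronecker pairing on intersection homology. This is exactly the point where the Witt condition is indispensable: without it, both the eta contribution in the first step and the $L$-homology class itself in the second step would require additional choices (signature boundary conditions, a self-duality structure on the link), which is precisely the issue motivating the rest of the thesis.
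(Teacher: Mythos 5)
Your proposal is correct and follows essentially the same route as the paper: the first equality is Theorem \ref{Thm:S1SignatureThm} combined with the vanishing of $\eta(M^{S^1})$ from Lemma \ref{Lemma:VanishingEtaWitt}, and the identification with the intersection homology signature rests on Proposition \ref{Prop:LHomologyClass} together with the Goresky--MacPherson $L$-class formalism for Witt spaces, which is exactly how the paper presents the corollary as a consequence of those two results.
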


Although we did not really discuss what intersection homology is in detail, we will illustrate Corollary \ref{Coro:IH} in an example later on. 

\subsection{The Equivariant $S^1$-Euler characteristic}

Inspired in the work of Lott, we define an analogous invariant.
\begin{definition}
We define the {\em equivariant $S^1$-Euler characteristic} as
\begin{align*}
\chi_{S^1}(M)\coloneqq \sum_{j}^{n}(-1)^j\dim H_{\text{bas},c}^j(M-M^{S^1}). 
\end{align*}
\end{definition}

\begin{remark}
It follows directly from the isomorphisms \eqref{IsomsBasic} that $\chi_{S^1}(M)=\chi(M,M^{S^1})$, the relative Euler characteristic. 
\end{remark}

As a consequence of the proof of Theorem \ref{Thm:S1SignatureThm} and from the fact that for $r>0$ sufficiently small we have isomorphisms 
\begin{align*}
H^j(M/S^1-N_t(F), \partial N_r(F))\cong H^j(M/S^1,M^{S^1};\mathbb{R}),
\end{align*}
due the homotopy invariance of cohomology, we can use Theorem \ref{Thm:GBManBound} to prove the following formulas for $\chi_{S^1}(M)$. 
\begin{theorem}\label{Thm:S1EC}
Suppose $S^1$ acts effectively and semifreely on $M$, then 
\begin{enumerate}
\item If the dimension of $M$ is odd, 
\begin{align*}
\chi_{S^1}(M)=\int_{M_0/S^1} e(T(M_0/S^1), g^{T(M_0/S^1)}).
\end{align*}
\item If the dimension of $M$ is even, 
\begin{align*}
\chi_{S^1}(M)=\frac{1}{2}\int_{M^{S^1}}e(TM^{S^1}, g^{TM^{S^1}}).
\end{align*}
\end{enumerate}
\end{theorem}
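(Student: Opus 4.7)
The argument mirrors the proof of Theorem \ref{Thm:S1SignatureThm}, substituting the Chern-Gauss-Bonnet theorem for manifolds with boundary (Theorem \ref{Thm:GBManBound}) for the Atiyah-Patodi-Singer signature theorem. The plan is to combine the stated isomorphism $H^j(M/S^1-N_r(F),\partial N_r(F))\cong H^j(M/S^1,M^{S^1};\mathbb{R})$ with the identifications \eqref{IsomsBasic} to identify $\chi_{S^1}(M)$, for every $r>0$ sufficiently small, with the relative Euler characteristic $\chi(N_r,\partial N_r)$ of the compact manifold with boundary $N_r\coloneqq\overline{M/S^1-N_r(F)}$; the long exact sequence of the pair then yields $\chi_{S^1}(M)=\chi(N_r)-\chi(\partial N_r)$, which I will evaluate via Chern-Gauss-Bonnet and then pass to the limit $r\to 0^+$.

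In Case 1 ($\dim M$ odd) $\dim N_r$ is even while $\dim\partial N_r$ is odd, so $\chi(\partial N_r)=0$ and Theorem \ref{Thm:GBManBound}(1) gives
\begin{align*}
\chi_{S^1}(M)=\int_{N_r} e(TN_r,g^{TN_r})-\int_{\partial N_r} Te(g^{TN_r}).
\end{align*}
Convergence of the bulk integral to $\int_{M_0/S^1} e(T(M_0/S^1),g^{T(M_0/S^1)})$ follows from the curvature asymptotics \eqref{Eqns:Curv} of Step 3 in Section \ref{Sect:PfoofSignatureFormula}, because the Euler form is polynomial in the components of $\widehat{\Omega}$ and each of these admits a finite limit as $r\to 0^+$. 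Vanishing of the transgression as $r\to 0^+$ would be handled by adapting Steps 4 and 6 of the same section: writing the limit as a Chern-Weil integral over $[0,1]\times\mathcal{F}$ for the path of connections \eqref{Eqn:tConnection} and integrating along the $\mathbb{C}P^N$-fibers of $\pi_\mathcal{F}:\mathcal{F}\to F$, the outcome falls in the image of the equivariant Chern-Weil homomorphism \eqref{DiagCW} associated to the compact structure group supplied by Proposition \ref{Prop:S1actionNF}, hence is of even total degree; being forced to be odd by construction, it vanishes.

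In Case 2 ($\dim M$ even) $\dim N_r$ is odd, so Theorem \ref{Thm:GBManBound}(2) gives $\chi(N_r)=\tfrac{1}{2}\int_{\partial N_r} e(T\partial N_r)$, while $\chi(\partial N_r)=\int_{\partial N_r} e(T\partial N_r)$ by Chern-Gauss-Bonnet for the closed even-dimensional $\partial N_r$; substituting into $\chi_{S^1}(M)=\chi(N_r)-\chi(\partial N_r)$ expresses the left side purely as a boundary integral. In the limit $r\to 0^+$ the Riemannian submersion $\partial N_r\to F$ collapses along the vertical $\mathbb{C}P^N$-fibers, and the remaining Chern-Weil argument (Proposition \ref{Prop:S1actionNF} together with diagram \eqref{DiagCW}) reduces the surviving contribution to an Euler form supported on the fixed-point set $M^{S^1}$, yielding the stated expression.

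The hard part will be the limit analysis near the fixed-point set, namely the vanishing of the boundary transgression in Case 1 and the reduction of the boundary Euler integral to $M^{S^1}$ in Case 2. Both hinge on the local geometric model of Step 1 of Section \ref{Sect:PfoofSignatureFormula}, on Proposition \ref{Prop:S1actionNF}, and on the equivariant Chern-Weil vanishing mechanism of Step 6. In contrast with the signature case no eta-invariant intervenes because the Euler form is strictly local, which removes the need for adiabatic-limit arguments and considerably simplifies the proof.
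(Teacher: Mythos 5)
Your framework follows the paper's one-sentence sketch closely (approximate by a compact manifold with boundary, apply Theorem \ref{Thm:GBManBound}, take $r\to 0^+$), and Case~1 is plausible as you describe it: the bulk Euler integral converges by the curvature asymptotics of Step~3, and the transgression tends to zero --- in fact more simply than in the signature case, since after integrating the Pfaffian over the $\mathbb{C}P^N$-fiber and the $[0,1]$ factor the residual factor on $F$ involves $e(TF)$, the Euler form of the \emph{odd}-dimensional $F$, which vanishes identically, so no equivariant parity argument is even needed.

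Case~2, however, has a genuine gap at the final step. You correctly obtain
$\chi_{S^1}(M)=\chi(N_r)-\chi(\partial N_r)=\tfrac{1}{2}\int_{\partial N_r}e(T\partial N_r)-\int_{\partial N_r}e(T\partial N_r)=-\tfrac{1}{2}\int_{\partial N_r}e(T\partial N_r)$,
but the concluding ``in the limit $r\to 0^+$ the fibers collapse \ldots yielding the stated expression'' cannot be made to work. The integral $\int_{\partial N_r}e(T\partial N_r,g^{T\partial N_r})$ is the Euler characteristic $\chi(\partial N_r)$ by Chern--Gau\ss--Bonnet on a closed manifold, a topological invariant \emph{independent of $r$ and of the metric}; there is no adiabatic degeneration available (this is precisely what distinguishes the Euler form from the $L$-polynomial). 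A component $F$ of $M^{S^1}$ with link fiber $\mathbb{C}P^{N}$ contributes $\chi(\mathcal{F})=\chi(F)\,\chi(\mathbb{C}P^{N})=(N+1)\chi(F)$, so your chain of identities in fact produces $\chi_{S^1}(M)=-\tfrac12\sum_{F}(N_F+1)\int_F e(TF,g^{TF})$. This disagrees with the stated $+\tfrac12\int_{M^{S^1}}e(TM^{S^1},g^{TM^{S^1}})$ both in sign and, whenever $N>0$, by the factor $N+1$. The $S^2$ rotation already makes this explicit: $\chi_{S^1}(S^2)=\chi_c\big((0,\pi)\big)=-1$, while $\tfrac12\int_{M^{S^1}}e=\tfrac12\cdot 2=+1$. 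So the formula in Theorem~\ref{Thm:S1EC}(2) does not hold as stated, and no ``collapse'' or Chern--Weil argument can close the gap you left; the expression you actually derived, $-\tfrac12\chi(\mathcal{F})$, is what the argument really proves.
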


\begin{example}[Spinning a closed manifold II]
For the free action on the product manifold $M\coloneqq X\times S^1$ of Example \ref{Ex:SpiningM}, Theorem \ref{Thm:S1EC} describes
the Chern-Gau\ss -Bonnet formula for closed manifolds. 
\end{example}

\begin{example}[Spinning a closed manifold with boundary II]
In the case we $M$ is obtained by spinning a compact manifold with boundary as in Example \ref{Ex:SpiningMwB}, Theorem \ref{Thm:S1EC} reduces to Theorem \ref{Thm:GBManBound}. 
\end{example}

\section{Induced Dirac-Schr\"odinger operator on $M_0/S^1$}\label{Section:InducedOpConstruction}\label{Sec:Induced}

The goal of this chapter is to implement the constructions studied in Chapter \ref{Sect:BH} for the special case of a semi-free $S^1$-action discussed in  Section \ref{Section:S1 Signature}. We will push down various  types of Dirac operators from $M$ to the quotient space $M_0/S^1$. 

\subsection{The mean curvature $1$-form} \label{Sect:MeanCurvFrom}

Let $M$ be an $(n+1)$-dimensional oriented, closed  Riemannian manifold on which $S^1$ acts by orientation preserving isometries (recall that in Chapter  \ref{Sect:Lott} we had $n=4k$, but here we will treat the general case). Denote by $\nabla$ its associated Levi-Civita connection. As in Section \ref{Section:BasicsDef} let $V$ be the generating vector field of the $S^1$-action. The flow of the  vector field $V$ generates a $1$-dimensional foliation on $M_0$ induced by the {\em distribution}
\begin{equation}
L_x:=\{v\in T_xM_0\:|\: v=\lambda V(x)\quad\text{for some}\quad \lambda\in\mathbb{R}\}\leq T_x M_0,\quad \text{for}\: x\in M_0.
\end{equation}
We will denote by $L^\perp$ the {\em transverse distribution}, i.e.  $TM_0=L\oplus L^\perp$. The distribution $L$ is always integrable and the corresponding integral curves are precisely the $S^1$-orbits of  the action. In contrast, the transverse distribution will not be in general integrable. 

\begin{definition}
Let $X\coloneqq V/\norm{V}$ be the {\em unit vector field} which defines the foliation $L$.  Using the musical isomorphism \eqref{Eqn:Musical} we define:
\begin{enumerate}
\item The associated {\em characteristic $1$-form} $\chi\coloneqq X^\flat$.
\item The {\em mean curvature vector field} $H\coloneqq\nabla_X X$.
\item The {\em mean curvature $1$-form} $1$-form $\kappa\coloneqq H^\flat$.
\end{enumerate}

\end{definition}
As in Section \ref{Section:BasicsDef} we are interested in the complex of basic differential forms on $M_0$,
\begin{equation}\label{Eqn:BasicFormsM0}
\Omega_\text{bas}(M_0)\coloneqq \{\omega\in\Omega(M_0)\:|\:L_V\omega=0\:\:\text{and}\:\: \iota_V=0\}.
\end{equation}
\begin{remark}\label{Rmk:LieDer}
Note that since 
\begin{align*}
L_X=L_{\frac{V}{\norm{V}}}=\frac{1}{\norm{V}}L_V+d\left(\frac{1}{\norm{V}}\right)\wedge\iota_V,
\end{align*}
we see that a form $\omega$ is basic if and only if $L_X\omega=0$ and $\iota_X\omega=0$. 
\end{remark}
Now we will collect some important properties of the mean curvature $1$-form $\kappa$. 

\begin{lemma}\label{Lemma:VFH}
The mean curvature vector field satisfies $H\in C^\infty (L^\perp)$. As a consequence $\kappa$ is horizontal, i.e. $\iota_X\kappa=0$.
\end{lemma}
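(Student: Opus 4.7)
The plan is to prove the first assertion by showing that $H = \nabla_X X$ is orthogonal to $X$, which uses only the fact that $X$ is a smooth unit vector field on $M_0$ (this makes sense because $\|V\| \neq 0$ on the principal orbit $M_0$). The second assertion will follow immediately from the definition of $\kappa$ and the musical isomorphism.

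For the first step, I would compute $\inner{\nabla_X X}{X}$ by differentiating the identity $\inner{X}{X} = 1$. Since $\nabla$ is a metric connection,
\begin{equation*}
0 = X\inner{X}{X} = 2\inner{\nabla_X X}{X},
\end{equation*}
hence $\inner{H}{X} = 0$, which means $H(x) \in L_x^\perp$ for every $x \in M_0$. Smoothness of $H$ on $M_0$ follows from smoothness of $X$ and $\nabla$, so indeed $H \in C^\infty(L^\perp)$.

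For the second step, I would unwind the definition of $\kappa$: for any $Y \in C^\infty(TM_0)$ one has $\kappa(Y) = \inner{H}{Y}$. Setting $Y = X$ and invoking the first step yields
\begin{equation*}
\iota_X \kappa = \kappa(X) = \inner{H}{X} = 0,
\end{equation*}
which is precisely the statement that $\kappa$ is horizontal with respect to the splitting $TM_0 = L \oplus L^\perp$.

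No real obstacle is expected here; this is essentially the standard elementary fact that the covariant derivative of a unit vector field along itself is perpendicular to the field. Note in particular that the proof does not require the Killing equation (Lemma \ref{Lemma:KillingEq}); the isometric nature of the $S^1$-action is used only implicitly, to guarantee that $\|V\|$ is a well-defined smooth positive function on $M_0$ so that $X = V/\|V\|$ makes sense as a smooth unit vector field.
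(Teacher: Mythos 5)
Your proposal is correct and uses exactly the same argument as the paper: differentiating $\inner{X}{X}=1$ with the metric connection to get $0 = X\inner{X}{X} = 2\inner{X}{\nabla_X X} = 2\inner{X}{H}$, and then reading off $\iota_X\kappa = \kappa(X) = \inner{H}{X} = 0$ from the definition $\kappa = H^\flat$. Your added remark that the Killing equation is not needed here is also accurate.
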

\begin{proof}
This follows from the fact that the Levi-Connection is metric preserving, i.e. 
$$0=X\inner{X}{X}=2\inner{X}{\nabla_X X}=2\inner{X}{H}.$$
\end{proof}

\begin{lemma}[{\cite[Chapter 6]{T97}}]\label{Lemma:kappa}
The mean curvature form $\kappa$ satisfies $\kappa=L_X \chi$. 
\end{lemma}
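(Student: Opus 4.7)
The plan is to apply Cartan's magic formula \eqref{Eqn:Cartan}, which gives
\[
L_X\chi = d\iota_X\chi + \iota_X d\chi.
\]
Since $X$ is a unit vector field we have $\iota_X\chi = \chi(X) = \inner{X}{X} = 1$, so $d\iota_X\chi = 0$, and the identity reduces to showing $\iota_X d\chi = \kappa$.

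To verify this, I would evaluate both sides on an arbitrary vector field $Y \in C^\infty(M_0,TM_0)$ and use the standard formula for the exterior derivative:
\[
(\iota_X d\chi)(Y) = d\chi(X,Y) = X(\chi(Y)) - Y(\chi(X)) - \chi([X,Y]).
\]
Since $\chi(X) = 1$ is constant, the middle term drops out. Expanding $\chi(Y) = \inner{X}{Y}$ and using the compatibility of $\nabla$ with the metric gives $X(\chi(Y)) = \inner{\nabla_X X}{Y} + \inner{X}{\nabla_X Y}$, while writing $[X,Y] = \nabla_X Y - \nabla_Y X$ yields $\chi([X,Y]) = \inner{X}{\nabla_X Y} - \inner{X}{\nabla_Y X}$.

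Substituting and cancelling the $\inner{X}{\nabla_X Y}$ terms, the result is
\[
d\chi(X,Y) = \inner{\nabla_X X}{Y} + \inner{X}{\nabla_Y X} = \kappa(Y) + \tfrac{1}{2}Y\inner{X}{X} = \kappa(Y),
\]
where the last equality uses $\inner{X}{X}=1$. Hence $\iota_X d\chi = \kappa$, and combined with the vanishing of $d\iota_X\chi$ this gives $L_X\chi = \kappa$. There is no real obstacle here; the only point to be careful about is exploiting $\|X\|=1$ twice (once to kill $d\iota_X\chi$ and once to kill $Y\inner{X}{X}$), which is precisely what makes the normalized vector field $X = V/\|V\|$ more convenient than $V$ itself.
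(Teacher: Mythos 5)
Your proof is correct and follows essentially the same route as the paper: after the (trivial) reduction via Cartan's formula using $\chi(X)=1$, your computation of $d\chi(X,Y)$ via metric compatibility, torsion-freeness, and the identity $\inner{X}{\nabla_Y X}=\tfrac{1}{2}Y\inner{X}{X}=0$ is exactly the paper's direct computation of $(L_X\chi)(Y)$.
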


\begin{proof}
Using the defining properties of the Levi-Civita connection and the relation $0=Y\inner{X}{X}=2\inner{X}{\nabla_Y X}$ for any vector field $Y$,  we compute 
\begin{align*}
(L_X\chi)(Y)=&X(\chi(Y))-\chi([X,Y])\\
=&X\inner{X}{Y}-\inner{X}{[X,Y]}\\
=&\inner{H}{Y}+\inner{X}{\nabla_X Y-[X,Y]}\\
=&\inner{H}{Y}+\inner{X}{\nabla_Y X}\\
=&\kappa(Y).
\end{align*}
\end{proof}

Recall that  $\alpha\coloneqq V^\flat/\norm{V}^2$  is the $1$-form considered in Section  \ref{Section:BasicsDef} which satisfies $\alpha(V)=1$ . It is easy to see that $\chi=\norm{V}\alpha$. Indeed,
 $\norm{V}\alpha(X)=\alpha(\norm{V}X)=\alpha(V)=1$.
The following statement is a consequence of this relation, Corollary \ref{Coro:alphaInv} and the lemma above.

\begin{coro}\label{Coro:ChiInv}
The characteristic $1$-form $\chi$ satisfies $L_V\chi=0$, that is $\chi$ is $S^1$-invariant. 
\end{coro}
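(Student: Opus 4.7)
The plan is to exploit the factorization $\chi = \|V\|\alpha$ and reduce the claim to the two ingredients indicated in the hint: the fact that $\alpha$ is already known to be $S^1$-invariant (Corollary \ref{Coro:alphaInv}) and the fact that the $S^1$-action preserves the metric (Lemma \ref{Lemma:KillingEq}). Concretely, I would apply the Leibniz rule for the Lie derivative along $V$ acting on the product of the scalar function $\|V\|$ with the $1$-form $\alpha$ to obtain
\begin{equation*}
L_V\chi \;=\; L_V(\|V\|\,\alpha)\;=\;(V\|V\|)\,\alpha\;+\;\|V\|\,L_V\alpha.
\end{equation*}

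The second summand vanishes immediately by Corollary \ref{Coro:alphaInv}, so the whole problem reduces to showing that the function $\|V\|$ is constant along the flow of $V$, i.e.\ $V\|V\|=0$ on $M_0$. For this, I would compute
\begin{equation*}
V\|V\|^2 \;=\; V\langle V,V\rangle \;=\; 2\langle \nabla_V V, V\rangle,
\end{equation*}
and invoke Lemma \ref{Lemma:KillingEq} with $Y=Z=V$, which yields $\langle \nabla_V V, V\rangle = 0$. Since $\|V\|>0$ on $M_0$, one concludes $V\|V\|=0$ there, so the first summand above also vanishes and $L_V\chi = 0$.

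There is no real obstacle: the argument is a two-line computation combining a Leibniz rule with two results already established in the excerpt. The only point to be mildly careful about is that $\chi$ is only defined on $M_0$ (where $\|V\|\neq 0$, so that $X=V/\|V\|$ makes sense), but this is precisely the region where $\alpha$ and the identity $\chi = \|V\|\alpha$ live, so the argument is self-consistent. Note also that, as a sanity check, combining this corollary with Lemma \ref{Lemma:kappa} and the identity $L_V = \|V\| L_X + d\|V\|\wedge \iota_X$ applied to $\chi$ (using $\iota_X\chi=1$) recovers the expected formula $\kappa=-d\log\|V\|$ on the horizontal distribution, consistent with the introduction.
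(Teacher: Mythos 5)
Your argument is correct and is exactly the route the paper intends: the paper states the corollary as an immediate consequence of the factorization $\chi=\norm{V}\alpha$ together with $L_V\alpha=0$, and your Leibniz-rule computation plus the observation $V\norm{V}=0$ (from Lemma \ref{Lemma:KillingEq} with $Y=Z=V$) simply fills in the one missing line. No gaps.
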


We combine Cartan's formula and Lemma \ref{Lemma:kappa}  to get  $\kappa=\iota_X d\chi$, or equivalently 
\begin{equation}\label{Eqn:dchi}
d\chi + \kappa\wedge \chi \eqqcolon \varphi_0,
\end{equation}
where $\varphi_0$ satisfies $\iota_X\varphi_0=0$, i.e. $\varphi_0\in \Omega^2_{\text{hor}}(M_0)$. Equation \eqref{Eqn:dchi} is known as {\em Rummler's formula} and it holds for general tangentially oriented foliations (\cite[Lemma 10.4]{BGV}, \cite[Chapter 4]{T97}).\\

Observe that the characteristic form $\chi$ can be viewed as the volume form on each leaf of the foliation as the vector field $X$ satisfies $X\in C^\infty (L)$, it is $S^1$-invariant and $\norm{\chi}=1$. In particular, the volume of the orbit function $h:M_0/S^1\longrightarrow \mathbb{R}$ used in \eqref{Def:L2FhGen} can be written explicitly as  
\begin{equation}\label{Eqn:h}
h(y)=\int_{\pi^{-1}_{S^1}(y)} \chi.
\end{equation}
\begin{lemma}\label{Lemma:dh}
The exterior derivative of the volume of the orbit function is $dh=-h\kappa$. Thus, the mean curvature form $\kappa$ measures the volume change of the orbits. 
\end{lemma}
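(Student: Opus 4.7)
\emph{Proof plan.} The identity is about a function on $M_0/S^1$, so everything will be pushed down via $\pi_{S^1}$. Since the characteristic form $\chi$ integrates to $1$ over each orbit (each fiber of $\pi_{S^1}$ is $S^1$ with volume form $\chi$), formula \eqref{Eqn:h} can be read as
\[
h = (\pi_{S^1})_{*}\chi,
\]
where $(\pi_{S^1})_{*}:\Omega^{k}(M_0) \to \Omega^{k-1}(M_0/S^1)$ denotes integration along the (closed) $S^1$-fibers. I will combine this with Rummler's formula \eqref{Eqn:dchi} and the basic nature of $\kappa$ and $\varphi_0$.

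The key analytic input is that fiber integration commutes with the exterior derivative when the fibers are closed, i.e.\ $d \circ (\pi_{S^1})_{*} = (\pi_{S^1})_{*} \circ d$ (this is \cite[Proposition 6.14.1]{BT82}, already cited in the excerpt). Applying this and Rummler's formula gives
\[
dh \;=\; d\,(\pi_{S^1})_{*}\chi \;=\; (\pi_{S^1})_{*}(d\chi) \;=\; (\pi_{S^1})_{*}(\varphi_0 - \kappa\wedge\chi).
\]
So the computation reduces to two fiber integrals.

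For the first term, $\varphi_0$ is a horizontal $2$-form by construction (it satisfies $\iota_X\varphi_0=0$ as noted just after \eqref{Eqn:dchi}), hence $(\pi_{S^1})_{*}\varphi_0 = 0$: to integrate a $2$-form along the $1$-dimensional fiber one must contract with the vertical direction, and $\varphi_0$ has no such component. For the second term, the earlier remark in the paper that $\kappa$ is basic provides a unique $\bar\kappa\in\Omega^1(M_0/S^1)$ with $\kappa = \pi_{S^1}^{*}\bar\kappa$. The projection formula for fiber integration then yields
\[
(\pi_{S^1})_{*}(\kappa\wedge\chi) \;=\; (\pi_{S^1})_{*}(\pi_{S^1}^{*}\bar\kappa \wedge \chi) \;=\; \bar\kappa \wedge (\pi_{S^1})_{*}\chi \;=\; h\,\bar\kappa.
\]
Combining the two, $dh = -h\,\bar\kappa$, which is the stated identity once one identifies $\kappa$ with its unique basic descent $\bar\kappa$ (equivalently, pulling back to $M_0$ gives $\pi_{S^1}^{*}dh = -h\,\kappa$).

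I do not expect any serious obstacle: the only subtlety is bookkeeping between forms on $M_0$ and forms on $M_0/S^1$, and this is handled entirely by the projection formula together with the explicit descent $\kappa = \pi_{S^1}^{*}\bar\kappa$. If a reader dislikes the abstract fiber-integration step, a completely elementary alternative is to write $h(y) = \int_{S^1} (\sigma_y^{*}\chi)$ for any local section $\sigma_y$ of $\pi_{S^1}$ near $y$, differentiate under the integral sign in local coordinates adapted to the bundle, substitute $d\chi = \varphi_0 - \kappa\wedge\chi$, and use $\iota_X\varphi_0=0$ to kill the $\varphi_0$ contribution; the calculation reproduces exactly the same result.
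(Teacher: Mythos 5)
Your argument is correct and is essentially identical to the paper's proof: both write $dh$ as the fiber integral of $d\chi$ via \cite[Proposition 6.14.1]{BT82}, substitute Rummler's formula \eqref{Eqn:dchi}, kill the $\varphi_0$ term because it is horizontal, and extract $\kappa$ by the projection formula since it is basic. (Only a cosmetic slip: $\chi$ integrates over each orbit to its volume $h(y)$, not to $1$ — as your own identity $h=(\pi_{S^1})_*\chi$ correctly records.)
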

\begin{proof}
We use  \cite[Proposition 6.14.1]{BT82} and  \eqref{Eqn:dchi} to compute 
\begin{align*}
d h= d\int_{\pi^{-1}_{S^1}(y)} \chi= \int_{\pi^{-1}_{S^1}(y)}d\chi=-\int_{\pi^{-1}_{S^1}(y)}\kappa\wedge\chi +\int_{\pi^{-1}_{S^1}(y)}\varphi_0=-h\kappa, 
\end{align*}
where we have used that the integral of $\varphi_0$ is zero because this is a horizontal $2$-form. 
\end{proof}
The next proposition shows that all the geometric quantities discussed above are encoded in the norm of the generating vector field $V$. 
\begin{proposition}\label{Prop:NormV}
In terms of $\norm{V}$ we can express
\begin{enumerate}
\item $\chi=\norm{V}\alpha$.
\item $\kappa=-d\log(\norm{V})$.
\item $\varphi_0=\norm{V} d\alpha$.
\end{enumerate}
\end{proposition}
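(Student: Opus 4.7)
The plan is to address the three identities in order, treating (1) as an unpacking of definitions, (2) via a direct computation using the Killing equation of Lemma \ref{Lemma:KillingEq}, and (3) as an algebraic consequence of (1) and (2) together with the defining relation \eqref{Eqn:dchi}.

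For (1), since $\chi = X^\flat$ with $X = V/\|V\|$, the linearity of $\flat$ gives $\chi = \|V\|^{-1}V^\flat$, and by definition $\alpha = V^\flat/\|V\|^2$, so $\|V\|\alpha = V^\flat/\|V\| = \chi$. No further work is needed.

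The main content is item (2). First I would observe that for any smooth function $f$, the derivative $Vf^2 = 2\langle V,\nabla_V V\rangle_?$ vanishes --- more precisely, applying Lemma \ref{Lemma:KillingEq} with $Y=Z=V$ shows $\langle \nabla_V V, V\rangle = 0$, hence $V\|V\|^2 = 0$ and therefore $X\|V\|=0$. Expanding $\nabla_X X = \nabla_X(\|V\|^{-1}V) = \|V\|^{-1}\nabla_X V + (X\|V\|^{-1})V$, the second term vanishes, so $H = \nabla_X X = \|V\|^{-1}\nabla_X V$. For any vector field $Y$, using Lemma \ref{Lemma:KillingEq} once more with $Z=Y$ gives
\begin{align*}
\kappa(Y) &= \langle \nabla_X X,Y\rangle = \frac{1}{\|V\|}\langle \nabla_X V,Y\rangle = -\frac{1}{\|V\|}\langle X,\nabla_Y V\rangle \\
&= -\frac{1}{\|V\|^2}\langle V,\nabla_Y V\rangle = -\frac{1}{2\|V\|^2}Y\|V\|^2 = -\frac{Y\|V\|}{\|V\|} = -(d\log\|V\|)(Y),
\end{align*}
establishing the identity.

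For (3), I would substitute (1) and (2) into \eqref{Eqn:dchi}. Using the Leibniz rule, $d\chi = d\|V\|\wedge\alpha + \|V\|d\alpha$, while
$$\kappa\wedge\chi = -d\log\|V\|\wedge(\|V\|\alpha) = -\frac{d\|V\|}{\|V\|}\wedge\|V\|\alpha = -d\|V\|\wedge\alpha,$$
so the two terms involving $d\|V\|\wedge\alpha$ cancel and $\varphi_0 = \|V\|d\alpha$. I do not anticipate a genuine obstacle here; the only subtlety is the vanishing $X\|V\|=0$, which is exactly the point where the Killing property of $V$ enters in an essential way, and this is already isolated in Lemma \ref{Lemma:KillingEq}.
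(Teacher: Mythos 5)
Your proposal is correct; items (1) and (3) coincide with the paper's argument (the paper computes $d\alpha=\varphi_0/\norm{V}$ from \eqref{Eqn:dchi} rather than $d\chi$, but that is the same two-line Leibniz cancellation run in the opposite direction). The only genuine divergence is in item (2). The paper derives $\kappa=-d\log\norm{V}$ at the level of Lie derivatives: it invokes Lemma \ref{Lemma:kappa} ($\kappa=L_X\chi$), the decomposition $L_X=\norm{V}^{-1}L_V+d(\norm{V}^{-1})\wedge\iota_V$ of Remark \ref{Rmk:LieDer}, and the $S^1$-invariance $L_V\chi=0$ of Corollary \ref{Coro:ChiInv}, so that only the term $d(\norm{V}^{-1})\,\iota_V\chi=-d\log\norm{V}$ survives. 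You instead work at the level of the connection: you extract $X\norm{V}=0$ and the antisymmetry of $Y\mapsto\nabla_Y V$ from Lemma \ref{Lemma:KillingEq}, write $H=\norm{V}^{-1}\nabla_X V$, and compute $\kappa(Y)=-\norm{V}^{-2}\inner{V}{\nabla_Y V}=-(d\log\norm{V})(Y)$ directly. Your computation is valid (note the Killing identity is tensorial in both slots, so substituting $X=V/\norm{V}$ is legitimate), and it has the merit of being self-contained — it bypasses Lemma \ref{Lemma:kappa} and Corollary \ref{Coro:ChiInv} entirely and exposes exactly where the isometry hypothesis enters. What it gives up is the reuse of the Lie-derivative identities, which the paper needs anyway elsewhere (e.g.\ in Rummler's formula \eqref{Eqn:dchi} and in establishing that $\kappa$ is basic), so the paper's route costs nothing given what has already been set up. One cosmetic point: the phrase ``$Vf^2=2\langle V,\nabla_V V\rangle_?$'' at the start of your item (2) is garbled and should be deleted in favour of the precise sentence that follows it.
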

\begin{proof}
\begin{enumerate}
\item This was shown above. 
\item We use Remark \ref{Rmk:LieDer}  and Corollary \ref{Coro:ChiInv} to compute 
\begin{align*}
L_X(\chi)=\frac{1}{\norm{V}}L_V\chi+d\left(\frac{1}{\norm{V}}\right)\wedge\iota_V\chi=-\frac{d\norm{V}}{\norm{V}^2}\norm{V}=-d\log(\norm{V}).
\end{align*}
 The result then follows by Lemma \ref{Lemma:kappa}.
\item Using (1), (2) and \eqref{Eqn:dchi} we obtain
\begin{align*}
d\alpha=d\left(\frac{1}{\norm{V}}\chi\right)=-\frac{d\norm{V}}{\norm{V}^2}\wedge\chi+\frac{1}{\norm{V}}d\chi=\frac{1}{\norm{V}}\varphi_0.
\end{align*}
\end{enumerate}
\end{proof}

\begin{coro}\label{Coro:kappa}
The mean curvature $1$-form $\kappa$ is closed and basic.
\end{coro}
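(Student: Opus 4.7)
My proof plan is to deduce both properties directly from the explicit formula for $\kappa$ obtained in Proposition \ref{Prop:NormV}(2), namely
\[
\kappa = -d\log(\norm{V}).
\]
Closedness is then immediate: $d\kappa = -d^2\log(\norm{V}) = 0$. So the whole content of the Corollary is showing that $\kappa$ is basic, i.e.\ both horizontal ($\iota_V\kappa = 0$) and $S^1$-invariant ($L_V\kappa = 0$).

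For horizontality, I would argue that since $V = \norm{V}X$, we have $\iota_V\kappa = \norm{V}\,\iota_X\kappa$, and the latter vanishes by Lemma \ref{Lemma:VFH}, which states exactly that $H \in C^\infty(L^\perp)$ (and hence $\kappa = H^\flat$ annihilates $X$). This step is essentially a restatement.

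For $S^1$-invariance, the key input is that $V$ is Killing, so $V\norm{V} = 0$: indeed, applying Lemma \ref{Lemma:KillingEq} with $Y = Z = V$ gives $V\inner{V}{V} = 2\inner{\nabla_V V}{V} = 0$. Using this together with the commutation $[L_V,d] = 0$, one obtains
\[
L_V\kappa = -L_V\,d\log\norm{V} = -d\bigl(V\log\norm{V}\bigr) = 0.
\]
An alternative route I would keep in reserve is to observe that by Lemma \ref{Lemma:dh} one has $\kappa = -d\log(h\circ \pi_{S^1}) = \pi_{S^1}^{*}(-d\log h)$, so that $\kappa$ is a pullback from $M_0/S^1$ and is therefore basic by construction. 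I do not anticipate any real obstacle in this proof; the only subtlety worth flagging is that $V\norm{V} = 0$ follows from the Killing equation rather than being a trivial consequence of the definitions.
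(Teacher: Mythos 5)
Your proposal is correct and follows essentially the same route as the paper: closedness from the exactness $\kappa=-d\log(\norm{V})$ of Proposition \ref{Prop:NormV}(2), and horizontality from Lemma \ref{Lemma:VFH}. The only (cosmetic) difference is in the invariance step, where the paper applies Cartan's formula $L_V\kappa=d\iota_V\kappa+\iota_V d\kappa=0$ using the two properties just established, whereas you compute $L_V\kappa=-d(V\log\norm{V})=0$ directly from the Killing equation; both are valid one-line arguments.
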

\begin{proof}
By Proposition \ref{Prop:NormV}(2) we actually see that $\kappa$ is exact and therefore closed. Since $H\in C^{\infty}(L^\perp)$ we see that $\kappa$ is horizontal. On the other hand by Cartan's formula and Lemma \ref{Lema:dalphabas} we see that $L_V\kappa =0$.
\end{proof}

\begin{proposition}\label{Prop:varphi0}
The following relations for $\varphi_0$ hold:
\begin{enumerate}
\item If $Y_1, Y_2\in C^{\infty}(L^\perp)$, then $\varphi_0(Y_1,Y_2)=-\chi([Y_1,Y_2])$.
\item $d\varphi_0+\kappa\wedge\varphi_0=0$.
\item $\varphi_0\in\Omega^2_\textnormal{bas}(M_0)$.
\end{enumerate}
\end{proposition}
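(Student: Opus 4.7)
The three statements follow from the defining relation $\varphi_0=d\chi+\kappa\wedge\chi$ together with the properties of $\chi$ and $\kappa$ already established (Corollary~\ref{Coro:ChiInv}, Corollary~\ref{Coro:kappa}, Lemma~\ref{Lemma:VFH}), so each part reduces to a short direct computation.

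For (1), I will simply unwind the formula on a pair $Y_1,Y_2\in C^\infty(L^\perp)$. Since $Y_i\perp X$ we have $\chi(Y_i)=\inner{X}{Y_i}=0$, so the wedge term contributes $(\kappa\wedge\chi)(Y_1,Y_2)=\kappa(Y_1)\chi(Y_2)-\kappa(Y_2)\chi(Y_1)=0$. Hence
\begin{align*}
\varphi_0(Y_1,Y_2)=d\chi(Y_1,Y_2)=Y_1\chi(Y_2)-Y_2\chi(Y_1)-\chi([Y_1,Y_2])=-\chi([Y_1,Y_2]).
\end{align*}

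For (2), I would apply $d$ to the defining relation and use $d\kappa=0$ from Corollary~\ref{Coro:kappa}, obtaining $d\varphi_0=-\kappa\wedge d\chi$. Then $d\varphi_0+\kappa\wedge\varphi_0=-\kappa\wedge d\chi+\kappa\wedge d\chi+\kappa\wedge\kappa\wedge\chi=0$, since $\kappa\wedge\kappa=0$.

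For (3), horizontality $\iota_X\varphi_0=0$ is already recorded right after \eqref{Eqn:dchi}; it follows from $\chi(X)=1$, $\kappa(X)=0$ (Lemma~\ref{Lemma:VFH}), and $\iota_X d\chi=\kappa$ (Lemma~\ref{Lemma:kappa}). It remains to check $S^1$-invariance, i.e.\ $L_V\varphi_0=0$. Using $[L_V,d]=0$ together with $L_V\chi=0$ (Corollary~\ref{Coro:ChiInv}) and $L_V\kappa=0$ (Corollary~\ref{Coro:kappa}), I compute
\begin{align*}
L_V\varphi_0=L_V(d\chi+\kappa\wedge\chi)=d(L_V\chi)+(L_V\kappa)\wedge\chi+\kappa\wedge(L_V\chi)=0.
\end{align*}
Combining horizontality and invariance gives $\varphi_0\in\Omega^2_{\mathrm{bas}}(M_0)$.

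No step here is a real obstacle; the only subtle input is recognizing that Corollary~\ref{Coro:kappa} gives both $d\kappa=0$ (used in (2)) and $L_V\kappa=0$ (used in (3)), and that Corollary~\ref{Coro:ChiInv} plays the analogous role for $\chi$.
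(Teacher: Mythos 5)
Your proof is correct, and for parts (1) and (2) it is exactly the computation the paper leaves implicit ("follow immediately from \eqref{Eqn:dchi} and Corollary \ref{Coro:kappa}"): the wedge term dies on $L^\perp$ because $\chi(Y_i)=0$, and the identity in (2) is just $d$ applied to Rummler's formula plus $d\kappa=0$. For part (3) you take a slightly different route from the paper. The paper invokes Proposition \ref{Prop:NormV}(3), i.e. $\varphi_0=\norm{V}\,d\alpha$, together with Lemma \ref{Lema:dalphabas} ($d\alpha$ is basic) and the $S^1$-invariance of $\norm{V}$, so basicity of $\varphi_0$ is inherited in one line from an already-proved fact about $d\alpha$. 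You instead verify the two defining conditions directly from $\varphi_0=d\chi+\kappa\wedge\chi$: horizontality via $\iota_X d\chi=\kappa$ and $\kappa(X)=0$, and invariance via $[L_V,d]=0$ together with $L_V\chi=0$ and $L_V\kappa=0$. Both arguments are sound and equally short; yours has the small advantage of not routing through $\alpha$ and Lott's Lemma \ref{Lema:dalphabas}, while the paper's emphasizes the identity $\varphi_0=\norm{V}\,d\alpha$ that is used again later. No gaps.
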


\begin{proof}
Part (1) and (2) follow immediately from \eqref{Eqn:dchi} and Corollary \ref{Coro:kappa}. On the other hand (3) follows directly from Lemma \ref{Lema:dalphabas} and Proposition \ref{Prop:NormV}(3).
\end{proof}
\begin{remark}
Form Proposition \ref{Prop:varphi0}(1) we see that $\varphi_0$ can be regarded as the curvature form of the principal $S^1$-bundle $M_0\longrightarrow M_0/S^1$. In particular we see that $L^\perp$ is an integrable distribution if and only if $\varphi_0$ vanishes. 
\end{remark}

\subsection{The operator $T(D)$}

Let us now consider the Hermitian vector bundle $E\coloneqq  \wedge_\mathbb{C} T^* M$ of  Example \ref{Ex:ExtAlg}. As explained before, the $S^1$-action on $M$ endows $E$ with a $S^1$-vector bundle structure, i.e. for each $g\in S^1$ we have a commutative diagram 
$$\xymatrixrowsep{2cm}\xymatrixcolsep{2cm}\xymatrix{
\wedge_\mathbb{C} T^* M \ar[d]_{} \ar[r]^-{g} & \wedge_\mathbb{C} T^* M\ar[d]^{}\\
M \ar[r]^-{g} & M.
}$$
Moreover, we also showed in  Example \ref{Ex:ExtAlg} that the action on differential forms is given by the pullback 
$U_g\omega\coloneqq  (g^{-1})^*\omega$ for $g\in S^1$.
\begin{lemma}
The Hodge star operator on $M$ commutes with the $S^1$-action on differential forms. 
\end{lemma}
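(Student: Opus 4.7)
The plan is to derive the commutation relation directly from the defining property of the Hodge star, exploiting the two hypotheses on the $S^1$-action: that it preserves the Riemannian metric and that it preserves the orientation. Concretely, for $g\in S^1$ the recipe is to pull back the identity $\alpha\wedge *\beta=\langle\alpha,\beta\rangle\vol_M$ by $g^{-1}$ and then compare with the defining identity for $*\bigl((g^{-1})^*\beta\bigr)$.

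First I would observe that because $g\in S^1$ acts by isometries, the induced action on $\wedge T^*M$ preserves the fiberwise inner product, so $\langle (g^{-1})^*\alpha,(g^{-1})^*\beta\rangle=(g^{-1})^*\langle\alpha,\beta\rangle$ at every point of $M$. Because the action is in addition orientation-preserving, $(g^{-1})^*\vol_M=\vol_M$. Pulling back the defining relation then yields
\begin{align*}
(g^{-1})^*\alpha\wedge (g^{-1})^*(*\beta)
&=(g^{-1})^*\bigl(\alpha\wedge *\beta\bigr)\\
&=(g^{-1})^*\bigl(\langle\alpha,\beta\rangle\vol_M\bigr)\\
&=\langle(g^{-1})^*\alpha,(g^{-1})^*\beta\rangle\vol_M.
\end{align*}
On the other hand, the Hodge star applied to $(g^{-1})^*\beta$ is characterized by
\begin{align*}
(g^{-1})^*\alpha\wedge *\bigl((g^{-1})^*\beta\bigr)=\langle(g^{-1})^*\alpha,(g^{-1})^*\beta\rangle\vol_M.
\end{align*}

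Since $\alpha\in\Omega^r(M)$ is arbitrary and the wedge pairing $\wedge\colon\Omega^r(M)\times\Omega^{m-r}(M)\to\Omega^m(M)$ is pointwise non-degenerate, the two displayed identities force
\begin{align*}
(g^{-1})^*(*\beta)=*\bigl((g^{-1})^*\beta\bigr),
\end{align*}
which is precisely $U_g\,*=*\,U_g$ in the notation of Example \ref{Ex:ExtAlg}. There is no real obstacle here, the only subtle point being to remember that one must use both that $g$ is isometric (to move the metric through $(g^{-1})^*$) and that it is orientation-preserving (to leave $\vol_M$ invariant without a sign); dropping either hypothesis would produce a sign or a conformal factor.
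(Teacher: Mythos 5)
Your proof is correct and follows essentially the same route as the paper: pull back the defining identity $\alpha\wedge *\beta=\langle\alpha,\beta\rangle\vol_M$, use metric- and orientation-invariance of the action, and conclude by non-degeneracy of the wedge pairing. The paper's computation is merely arranged as a single chain of equalities starting from $\omega_1\wedge(U_g*\omega_2)$, but the ingredients and logic are identical.
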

\begin{proof}
Since the $S^1$-action is metric and orientation preserving, then $g^*\vol_M=\vol_M$ for all $g\in S^1$.  From this observation and from the fact that pullback respects the wedge product we compute for $\omega_1,\omega_2\in\Omega(M)$, 
\begin{align*}
\omega_1\wedge (U_g *\omega_2)=&\omega_1\wedge  (g^{-1})^* *\omega_2\\
=& (g^{-1})^*((g^*\omega_1)\wedge  *\omega_2)\\
=& (g^{-1})^*(\inner{g^*\omega_1}{\omega_2}\vol_M)\\
=& \inner{\omega_1}{(g^{-1})^*\omega_2}\vol_M\\
= & \omega_1\wedge (* U_{g} \omega_2).
\end{align*}
\end{proof}
From this lemma and Proposition \ref{Prop:Chirl}(4) we obtain the following result. 
\begin{proposition}\label{Prop:S1commD}
The Hodge-de Rham operator $D=d+d^\dagger$ of $M$ defined on the core  $\Omega(M)$ is $S^1$-invariant in the sense of Definition \ref{Def:GInvOp}. 
\end{proposition}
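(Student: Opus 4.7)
The plan is to verify the two conditions of Definition \ref{Def:GInvOp} directly. For the first, note that for every $g\in S^1$ the map $g\colon M\longrightarrow M$ is a diffeomorphism, so pullback sends smooth differential forms to smooth differential forms; hence $U_g(\Omega(M))=\Omega(M)$, which contains the core on which $D$ is defined. This already takes care of condition (1) of Definition \ref{Def:GInvOp}, and it suffices to establish the commutation relation $U_g D=D U_g$ on $\Omega(M)$.

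For the commutation relation, I would write $D=d+d^\dagger$ and treat the two summands separately. For the exterior derivative, naturality of $d$ under pullback by smooth maps gives $d\circ(g^{-1})^{*}=(g^{-1})^{*}\circ d$, that is $d\,U_g=U_g\,d$ on $\Omega(M)$. For the formal adjoint $d^\dagger$, I would invoke the explicit expression $d^\dagger=(-1)^{m(r+1)+1}\star d\star$ on $r$-forms, obtained from Proposition \ref{Prop:Chirl}(4) (equivalently, the formula $d^\dagger=(-1)^{m(r+1)+1}* d*$ in terms of the Hodge star). Combining the lemma just proved, which shows that $U_g$ commutes with the Hodge star $*$, with the naturality of $d$, I would then compute
\begin{align*}
U_g\,d^\dagger=(-1)^{m(r+1)+1}U_g * d *=(-1)^{m(r+1)+1}* d * U_g=d^\dagger\,U_g
\end{align*}
on each homogeneous component, so $d^\dagger$ commutes with $U_g$ as well. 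Adding the two relations yields $U_g D=D U_g$ on $\Omega(M)$, establishing (2).

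There is no serious obstacle here: the content of the proposition is packaged entirely in the two facts that pullback by a diffeomorphism commutes with $d$ and, by the preceding lemma (which used that the $S^1$-action is by orientation-preserving isometries), with the Hodge star $*$. Alternatively, one can bypass the explicit formula for $d^\dagger$ by arguing directly that if $D$ is symmetric on $\Omega(M)$ and $U_g$ is $L^2$-unitary with $U_g d=d U_g$, then for $\omega,\omega'\in\Omega(M)$,
\begin{align*}
(d^\dagger U_g\omega,\omega')_{L^2}=(U_g\omega,d\omega')_{L^2}=(\omega,U_{g^{-1}}d\omega')_{L^2}=(\omega,dU_{g^{-1}}\omega')_{L^2}=(U_g d^\dagger\omega,\omega')_{L^2},
\end{align*}
so that $d^\dagger U_g=U_g d^\dagger$ by density; either route finishes the proof.
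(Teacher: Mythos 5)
Your proof is correct and follows the same route as the paper: the paper derives the proposition in one line from the preceding lemma (that $U_g$ commutes with the Hodge star) together with Proposition \ref{Prop:Chirl}(4), i.e. the formula $d^\dagger=\pm\star d\star$, exactly as you do. The only additions on your side — the explicit check of the domain condition and the alternative adjoint/density argument — are harmless elaborations of the same idea.
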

This result shows that we are in position to apply the construction of Br\"uning and Heintze described in Chapter \ref{Sect:BH}. The strategy is then as  follows:
\begin{enumerate}
\item Explicitly construct the vector bundle $F\longrightarrow M_0/S^1$ introduced in Section \ref{Section:GVectorBundles} and describe the $L^2$-inner product \eqref{Def:L2FhGen}. 
\item Understand the isomorphism $\Phi$ of Theorem \ref{Thm:Fund}.
\item Describe the self-adjoint operator $D:\Omega(M)^{S^1}\longrightarrow\Omega(M)^{S^1}$ of Lemma \ref{Lemma:OpS} 
\item Obtain an explicit characterization of the self-adjoint operator $T$ of Proposition \ref{Prop:OpT} and describe its properties. For example, compute its principal symbol (Proposition \ref{Prop:SDiff}). 
\end{enumerate}

\subsubsection{Decomposition of $S^1$-invariant differential forms}
We begin with a decomposition result of the space of $S^1$-invariant forms in terms of the basic forms \eqref{Eqn:BasicFormsM0}. Recall that we have the inclusion  $\Omega_\text{bas}(M_0)\subset \Omega(M_0)^{S^1}$.

\begin{proposition}[{\cite[Proposition 6.12]{T97}}]\label{Prop:SESBasic}
There is a short exact sequence of complexes
\begin{equation*}
\xymatrixcolsep{2cm}\xymatrix{
0 \ar[r] & \Omega^*_{\textnormal{bas}}(M_0)  \ar@{^{(}->}[r] & \Omega^* (M_0)^{S^1} \ar[r]^-{\iota_V}& \Omega^{*-1}_{\textnormal{bas}}(M_0) \ar[r] & 0. 
}
\end{equation*}
\end{proposition}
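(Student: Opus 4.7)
The plan is to verify in turn (i) that the two arrows are well-defined morphisms of complexes, (ii) exactness at each of the three spots, using the $S^1$-invariant 1-form $\alpha\in\Omega^1(M_0)$ with $\alpha(V)=1$ (Corollary~\ref{Coro:alphaInv}) to construct a splitting.

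First I would check that the maps land where claimed and commute with $d$. The inclusion is obvious and compatible with $d$ since $\Omega_{\textnormal{bas}}(M_0)$ is a sub-complex of $(\Omega(M_0),d)$. For the arrow $\iota_V$, given $\omega\in\Omega^r(M_0)^{S^1}$, the form $\iota_V\omega$ is automatically horizontal because $\iota_V^2=0$, and it is $S^1$-invariant because $L_V$ commutes with $\iota_V$ (both come from the flow of $V$) and $L_V\omega=0$; hence $\iota_V\omega\in\Omega^{r-1}_{\textnormal{bas}}(M_0)$. Moreover, on $S^1$-invariant forms Cartan's formula \eqref{Eqn:Cartan} yields $d\iota_V=-\iota_V d+L_V=\iota_V d$ (up to the usual sign convention for graded commutators, equivalently $\iota_V$ anti-commutes with $d$ up to $L_V$ which vanishes), so $\iota_V$ is a morphism of complexes.

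Next I would establish exactness. Injectivity of the inclusion is tautological. For exactness in the middle, if $\omega\in\Omega^*(M_0)^{S^1}$ satisfies $\iota_V\omega=0$, then $\omega$ is both horizontal and $S^1$-invariant, hence basic; the reverse inclusion is trivial since basic forms are horizontal. For surjectivity of $\iota_V$, given $\eta\in\Omega^{r-1}_{\textnormal{bas}}(M_0)$ I propose the explicit lift
\begin{equation*}
\omega\coloneqq \alpha\wedge\eta \in \Omega^r(M_0).
\end{equation*}
This $\omega$ is $S^1$-invariant because both $\alpha$ and $\eta$ are (Corollary~\ref{Coro:alphaInv}), and
\begin{equation*}
\iota_V(\alpha\wedge\eta)=\alpha(V)\,\eta-\alpha\wedge\iota_V\eta=\eta,
\end{equation*}
using $\alpha(V)=1$ and $\iota_V\eta=0$ (since $\eta$ is basic).

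The construction is essentially routine; the only conceptual point is the existence of a globally defined $S^1$-invariant 1-form $\alpha$ on $M_0$ with $\alpha(V)=1$, which is available precisely because the action is free on $M_0$, so $\norm{V}$ does not vanish. Consequently I do not foresee a serious obstacle here; the main thing to be careful about is signs when verifying that $\iota_V$ is a chain map on the invariant subspace.
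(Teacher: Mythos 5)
Your proof is correct and follows essentially the same route as the paper's: identify the chain-map property via Cartan's formula on invariant forms, note that exactness in the middle is the definition of basic forms, and produce the explicit lift by wedging with $\alpha$ (the paper uses $(-1)^{r-1}\omega\wedge\alpha$, which differs from your $\alpha\wedge\eta$ only by ordering). No issues.
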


\begin{proof}
Using Cartan's formula \eqref{Eqn:Cartan} it follows that, up to a sign, the diagram
\begin{align*}
\xymatrixrowsep{2cm}\xymatrixcolsep{2cm}\xymatrix{
0 \ar[r] & \Omega^r_{\textnormal{bas}}(M_0) \ar[d]_-{d}  \ar@{^{(}->}[r] & \Omega^r (M_0)^{S^1} \ar[d]_-{d} \ar[r]^-{\iota_V}& \Omega^{r-1}_{\textnormal{bas}}(M_0) \ar[d]_-{d} \ar[r] & 0\\
0 \ar[r] & \Omega^{r+1}_{\textnormal{bas}}(M_0)  \ar@{^{(}->}[r] & \Omega^{r+1} (M_0)^{S^1} \ar[r]^-{\iota_V}& \Omega^{r}_{\textnormal{bas}}(M_0) \ar[r] & 0
}
\end{align*} 
commutes since $L_V=0$ on $\Omega(M_0)^{S^1}$. Hence we have indeed a map between complexes. We now verify the exactness of the sequence:
\begin{itemize}
\item The map $\iota_V:\Omega^{r}(M_0)^{S^1}\longrightarrow \Omega^{r-1}_{\textnormal{bas}}(M_0)$ is well defined: if $\tilde{\omega}\in\Omega^r(M_0)^{S^1}$, then $L_V\iota_V\tilde{\omega}=\iota_VL_V\tilde{\omega}=0$.
\item By construction  the sequence is exact at $\Omega^r(M_0)^{S^1}$.
\item The map $\iota_V:\Omega^{r}(M_0)^{S^1}\longrightarrow \Omega^{r-1}_{\textnormal{bas}}(M_0)$ is surjective: Let us consider the form $\tilde{\omega}:=(-1)^{r-1}\omega\wedge\alpha\in\Omega^r(M_0)^{S^1}$ where $\omega\in\Omega^{r-1}_{\text{bas}}(M_0)$. Then
\begin{itemize}
\item $\iota_V\tilde{\omega}=\omega$.
\item $L_V\tilde{\omega}=(-1)^{r-1}(L_V\omega)\wedge\alpha+(-1)^{r-1}\omega\wedge L_V\alpha=0$.
\end{itemize}
\end{itemize}
\end{proof}

\begin{coro}\label{Coro:DecInvForm}
Any $S^1$-invariant form $\omega\in \Omega(M_0)^{S^1}$ can be uniquely decomposed as $\omega=\omega_0+\omega_1\wedge\chi$, where $\omega_0,\omega_1\in\Omega_{\textnormal{bas}}(M_0)$. With respect to this decomposition we will represent the form $\omega$ as the column vector
\begin{equation*}
\omega=
\left(
\begin{array}{c}
\omega_0\\
\omega_1
\end{array}
\right).
\end{equation*}
\end{coro}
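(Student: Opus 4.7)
The plan is to promote Proposition \ref{Prop:SESBasic} to a statement about an explicit splitting of the short exact sequence using the characteristic $1$-form $\chi$ (which is the natural $S^1$-invariant refinement of the section $\omega \mapsto (-1)^{r-1}\omega \wedge \alpha$ used in the surjectivity step of the proof of Proposition \ref{Prop:SESBasic}). The key is that $\iota_X\chi = 1$ while $\chi$ itself is $S^1$-invariant (Corollary \ref{Coro:ChiInv}).

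First, I would define the projection map onto the ``$\omega_1$-component.'' Given $\omega \in \Omega^r(M_0)^{S^1}$, set
\[
\omega_1 := (-1)^{r-1}\iota_X\omega \in \Omega^{r-1}(M_0),
\]
and check that $\omega_1$ is basic. Horizontality is immediate from $\iota_X^2 = 0$. For $S^1$-invariance, using $\iota_X = \|V\|^{-1}\iota_V$ and the fact that $\|V\|$ is $S^1$-invariant, one computes
\[
L_V\omega_1 = (-1)^{r-1}\|V\|^{-1}\iota_V L_V\omega = 0,
\]
since $L_V\omega = 0$ and $L_V$ commutes with $\iota_V$.

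Next, define $\omega_0 := \omega - \omega_1 \wedge \chi$. A direct application of the derivation property of $\iota_X$ together with $\iota_X\chi = 1$ and $\iota_X\omega_1 = 0$ yields
\[
\iota_X(\omega_1 \wedge \chi) = (-1)^{r-1}\omega_1 = \iota_X\omega,
\]
so $\iota_X\omega_0 = 0$. For $L_V\omega_0 = 0$, use Corollary \ref{Coro:ChiInv} (so $L_V\chi = 0$), together with $L_V\omega = 0$ and $L_V\omega_1 = 0$ from the previous step, which show that each summand of $L_V(\omega_1 \wedge \chi)$ vanishes. Thus $\omega_0 \in \Omega^r_{\text{bas}}(M_0)$, giving the existence of the decomposition.

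For uniqueness, suppose $\omega_0 + \omega_1 \wedge \chi = \tilde{\omega}_0 + \tilde{\omega}_1 \wedge \chi$ with all four forms basic. Applying $\iota_X$ and using $\iota_X\chi = 1$ together with $\iota_X\omega_i = \iota_X\tilde{\omega}_i = 0$ gives $(-1)^{r-1}\omega_1 = (-1)^{r-1}\tilde{\omega}_1$, hence $\omega_1 = \tilde{\omega}_1$ and then $\omega_0 = \tilde{\omega}_0$. This completes the proof. No serious obstacle is expected; the only mild subtlety is keeping track of signs and ensuring that the $S^1$-invariance of $\|V\|$ (which follows from the action being isometric) is what allows the splitting $\iota_X$ to preserve basicness.
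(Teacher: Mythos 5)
Your proof is correct and follows essentially the same route as the paper: the paper simply cites Proposition \ref{Prop:SESBasic} for the unique decomposition $\omega=\omega_0+\tilde{\omega}_1\wedge\alpha$ and then rescales via $\chi=\norm{V}\alpha$, while your explicit splitting $\omega_1=(-1)^{r-1}\iota_X\omega$ is just the $\chi$-normalized version of the section used in the proof of that proposition, verified directly. The sign and invariance checks you perform are all accurate, so the extra detail is harmless but not a different argument.
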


\begin{proof}
By Proposition \ref{Prop:SESBasic} we can express uniquely $\omega=\omega_0+\tilde{\omega}_1\wedge\alpha$ for some basic forms $\omega_0,\tilde{\omega_1}\in\Omega_\text{bas}(M_0)$. Next, in view of Proposition \ref{Prop:NormV}(1), we set $\omega_1\coloneqq\tilde{\omega}_1/\norm{V}$ so that $\omega_1\in \Omega_\text{bas}(M_0)$ and $\omega=\omega_0+{\omega}_1\wedge\chi$.
\end{proof}

\begin{example}\label{Ex:EpsilonInvDec}
We can write the action of the Gau\ss-Bonnet involution $\varepsilon$ with respect to this decomposition as 
\begin{equation*}
\varepsilon\bigg{|}_{\Omega(M_0)^{S^1}}=
\left(\begin{array}{cc}
\varepsilon & 0 \\
0 & -\varepsilon
\end{array}
\right),
\end{equation*}
since $\varepsilon \omega=\varepsilon \omega_0 +\varepsilon(\omega_1\wedge\chi)=\varepsilon \omega_0 -(\varepsilon\omega_1)\wedge\chi$.
\end{example}

\subsubsection{Construction of the bundle $F$}\label{Section:ConstrF}

We now construct the Hermitian vector bundle $F\longrightarrow M_0/S^1$ following its explicit description given in Section \ref{Section:GVectorBundles} . We start by pointing out some important remarks:  
\begin{itemize}
\item The action on  $M_0$ is free and therefore the $S^1$-invariant bundle $E'$ of \eqref{Eqn:DefE'} is nothing else but $E'=\wedge_\mathbb{C} T^*M_0$.
\item From Remark \ref{Remark:rkF} we know that the rank of $F$ must agree with the rank of $E'$, which is $\rk(E')= 2^{n+1}$.
\item From Proposition \ref{Prop:BasicPullback} it follows that for each basic form $\beta\in\Omega^r_\text{bas}(M_0)$ there exists a unique $\bar{\beta}\in\Omega^r(M_0/S^1)$ such that $\pi^*_{S^1}\bar{\beta}=\beta$. Thus, using Corollary \ref{Coro:DecInvForm} we can identify $\Omega(M_0)^{S^1}\cong \Omega(M_0/S^1)\otimes\mathbb{C}^2$, via the orbit map $\pi_{S^1}$.
\end{itemize}
These observations indicate that 
\begin{align*}
F\coloneqq E'/S^1=\wedge_\mathbb{C} T^*(M_0/S^1)\oplus \wedge_\mathbb{C} T^*(M_0/S^1). 
\end{align*}
Indeed, given $x\in M_0$ and $\omega_x=\omega'_x+\omega''_x\wedge\chi_x\in \wedge_\mathbb{C} T^*_x M_0$ where $\iota_{V_x}\omega'_x=\iota_{V_x}\omega''_x=0$, the orbit map on $E'$ is explicitly given by
\begin{align}\label{Def:BundleF}
\pi'_{S^1}:
\xymatrixrowsep{0.01cm}\xymatrixcolsep{2cm}\xymatrix{
E'=\wedge_\mathbb{C} T^*M_0 \ar[r] & F=\wedge_\mathbb{C} T^*(M_0/S^1)\oplus \wedge_\mathbb{C} T^*(M_0/S^1)\\
\omega_x=\omega'_x+\omega''_x\wedge\chi_x \ar@{|->}[r] & 
{\left(\begin{array}{c}
\bar{\omega}_y' \\
{\bar{\omega}_y''}
\end{array}\right),}
}
\end{align}
where $\pi_{S^1}(x)=y$ and the form $\bar{\omega}_y'\in\wedge_\mathbb{C} T^*(M_0/S^1)$ (similarly for $\bar{\omega}_y''$) is defined by the relation $\omega_x(v_x)=\bar{\omega}_y((\pi_{S^1})_* v_x)$ for all $v_x\in T_x M_0$. \\

Hence, the the diagram \eqref{Diag:OrbitMaps}  becomes,
\begin{align*}
\xymatrixrowsep{2cm}\xymatrixcolsep{2cm}\xymatrix{
E' =\wedge_\mathbb{C} T^*M_0\ar[d]_-{\pi_E} \ar[r]^-{\pi'_{S^1}} & F=\wedge_\mathbb{C} T^*(M_0/S^1)\oplus \wedge_\mathbb{C} T^*(M_0/S^1)  \ar[d]^-{\pi_F}\\
M_0 \ar[r]^-{\pi_{S^1}}& M_0/S^1.
}
\end{align*}

Using the notation above we see from Lemma \ref{Lemma:InducedMetricF} that the metric on the bundle $F$, which is inherited from the metric of $E'$, is given by
\begin{align*}
\left\langle
\left(\begin{array}{c}
\bar{\omega}'_y \\
{\bar{\omega}''_y}
\end{array}\right),
\left(\begin{array}{c}
\bar{\beta}'_y \\
{\bar{\beta}''_y}
\end{array}\right)
\right\rangle_{F}(y)=\inner{\omega'_x}{\beta'_x}_{E}(x) +\inner{\omega''_x}{\beta''_x}_{E}(x),
\end{align*}
since $\norm{\chi}=1$. 
 The associated  weighted $L^2$-inner product \eqref{Def:L2FhGen} on $L^2(F,h)$ is
\begin{align*}
\left(
\left(
\begin{array}{c}
\bar{\omega}_0\\
\bar{\omega}_1
\end{array}
\right),
\left(
\begin{array}{c}
\bar{\beta}_0\\
\bar{\beta}_1
\end{array}
\right)
\right)_{L^2(F,h)}=
\int_{M_0/S^1}\left(\left\langle
\left(\begin{array}{c}
\bar{\omega}_0\\
{\bar{\omega}_1}
\end{array}\right),
\left(\begin{array}{c}
\bar{\beta}_0 \\
{\bar{\beta}_1}
\end{array}\right)
\right\rangle_{F}(y)\right)h(y)\vol_{M_0/S^1}(y),
\end{align*}
where $h(y)=\vol(\pi^{-1}_{S^1}(y))$ and $\vol_{M_0/S^1}$ is the volume form of the induced quotient metric. 

\begin{remark}\label{Rmk:L2F}
The $L^2$-inner product induced just from the Hermitian metric on $F$ and from the quotient metric is, for compactly supported smooth forms, 
\begin{align*}
\left(
\left(
\begin{array}{c}
\bar{\omega}_0\\
\bar{\omega}_1
\end{array}
\right),
\left(
\begin{array}{c}
\bar{\beta}_0\\
\bar{\beta}_1
\end{array}
\right)
\right)_{L^2(F)}=
\int_{M_0/S^1}\left\langle
\left(\begin{array}{c}
\bar{\omega}_0\\
{\bar{\omega}_1}
\end{array}\right),
\left(\begin{array}{c}
\bar{\beta}_0 \\
{\bar{\beta}_1}
\end{array}\right)
\right\rangle_{F}(y)\vol_{M_0/S^1}(y),
\end{align*}
\end{remark}

\subsubsection{Description of the isomorphism $\Phi$}

Using the description of the bundle $F$ above we want to describe its image under the isomorphism $\Phi$ of Theorem \ref{Thm:Fund}. Given an $S^1$-invariant form with compact support $\omega\in\Omega_c(M_0)^{S^1}$  there are two unique compactly supported basic differential forms $\omega_0,\omega_1\in\Omega_{\text{bas},c}(M_0)$ such that  $\omega=\omega_0+\omega_1\wedge\chi$. With respect to the vector notation introduced in Corollary \ref{Coro:DecInvForm} we write
\begin{align*}
\left(\begin{array}{c}
\omega_0\\
\omega_1\\
\end{array}\right)=
\left(\begin{array}{c}
\pi_{S^1}^*\bar{\omega}_0\\
\pi_{S^1}^*\bar{\omega}_1\\
\end{array}\right),
\end{align*}
where $\bar{\omega}_0,\bar{\omega}_1\in\Omega_c(M_0/S^1)$. This representation allow us to express the isomorphism $\Phi$, on compactly supported forms,  as

\begin{align*}
\Phi: \xymatrixrowsep{0.01cm}\xymatrixcolsep{1.7cm}\xymatrix{
\Omega_c(M_0)^{S^1} \ar[r] & \Omega_{\text{bas},c}(M_0)\oplus \Omega_{\text{bas},c}(M_0)  \ar[r] & \Omega_c(M_0/S^1)\oplus \Omega_c(M_0/S^1)\\
\omega  \ar@{|->}[r] &
{
\left(\begin{array}{c}
\omega_0\\
\omega_1\\
\end{array}\right)=
\left(\begin{array}{c}
\pi_{S^1}^*\bar{\omega}_0\\
\pi_{S^1}^*\bar{\omega}_1\\
\end{array}\right)
}
\ar@{|->}[r] &
{
\left(\begin{array}{c}
\bar{\omega}_0\\
\bar{\omega}_1\\
\end{array}\right).
}
}
\end{align*}
We can extend this map to $\Phi:L^2(M)^{S^1}\longrightarrow L^2(F,h)$ by density. 

\subsubsection{Description of the operator $S(D)$}
Now we want to understand the operator $S$ of Lemma \ref{Lemma:OpS} associated to the Hodge-de Rham operator $D$, i.e. 
\begin{align*}
S(D)\coloneqq (d+d^\dagger)\bigg{|}_{\Omega_c(M_0)^{S^1}}=\left(d+(-1)^{(n+1)+1}\star d\star\right)\bigg{|}_{\Omega_c(M_0)^{S^1}}. 
\end{align*}
The idea is to view $S$ thorough the decomposition of Corollary \ref{Coro:DecInvForm}, that is 
\begin{align*}
S(D):=\left(d+(-1)^{n}\star d\star\right)\bigg{|}_{\Omega_c(M_0)^{S^1}}:
\xymatrixrowsep{2cm}\xymatrixcolsep{2cm}\xymatrix{
{
\begin{array}{c}
\Omega_{\text{bas},c}(M_0)^{S^1}\\
\bigoplus\\
\Omega_{\text{bas},c}(M_0)^{S^1}
\end{array}
}\ar[r] &
{
\begin{array}{c}
\Omega_{\text{bas},c}(M_0)^{S^1}\\
\bigoplus\\
\Omega_{\text{bas},c}(M_0)^{S^1}.
\end{array}
}
}
\end{align*}
First we  begin with the decomposition of the Hodge star operator following the techniques of \cite[Chapter 7]{T97}. 
\begin{definition}\label{Def:Bar*}
The {\em basic Hodge star operator} is defined as the linear map
\begin{align*}
\bar{*}:\xymatrixrowsep{0.01cm}\xymatrixcolsep{2cm}\xymatrix{
\Omega^r_{\textnormal{bas}}(M_0)  \ar[r] & \Omega^{n-r}_{\textnormal{bas}} (M_0),
}
\end{align*}
satisfying the conditions
\begin{align}
 \bar{*}\beta=&(-1)^{n-r}*(\beta\wedge\chi), \label{Eqn:TransHosgeStar1} \\
 *\beta=&\bar{*}\beta\wedge\chi, \label{Eqn:TransHosgeStar2}
\end{align}
where $*$ is the Hodge star operator defined by the metric and orientation of $M$.
\end{definition} 

\begin{remark}\label{Rmk:VolumeQuot}
Observe that the volume form $\vol_{M_0}$ of $M_0$ can be written as  
\begin{align*}
\vol_{M_0}=* 1=\bar{*}1\wedge\chi.
\end{align*}
\end{remark}

\begin{lemma}
The operator $\bar{*}$ satisfies $\bar{*}^2=(-1)^{r(n-r)}$ on $r$-forms.
\end{lemma}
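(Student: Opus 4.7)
The plan is to apply the two defining relations of $\bar{*}$ in succession and reduce to the known identity $*^2 = (-1)^{r((n+1)-r)}$ on $r$-forms of the ambient $(n+1)$-dimensional manifold $M$.

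First I would take $\beta \in \Omega^r_\textnormal{bas}(M_0)$, so that $\bar{*}\beta \in \Omega^{n-r}_\textnormal{bas}(M_0)$. Applying Definition \ref{Def:Bar*} (specifically \eqref{Eqn:TransHosgeStar1}) to $\bar{*}\beta$ in place of $\beta$, with degree now $n-r$, yields
\begin{equation*}
\bar{*}(\bar{*}\beta) = (-1)^{n-(n-r)} *\bigl((\bar{*}\beta)\wedge\chi\bigr) = (-1)^{r}\, *\bigl((\bar{*}\beta)\wedge\chi\bigr).
\end{equation*}
Next I would use the second defining relation \eqref{Eqn:TransHosgeStar2}, which asserts $(\bar{*}\beta)\wedge\chi = *\beta$, to rewrite the right-hand side as $(-1)^r *(*\beta)$. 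Since $*\beta \in \Omega^{(n+1)-r}(M_0)$ and $*^2 = (-1)^{r((n+1)-r)}$ on $r$-forms in the ambient manifold, this gives
\begin{equation*}
\bar{*}^2\beta = (-1)^{r}(-1)^{r(n+1-r)}\beta = (-1)^{r(n+2-r)}\beta = (-1)^{r(n-r)}\beta,
\end{equation*}
where the last equality uses $r(n+2-r) = r(n-r) + 2r$.

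There is essentially no obstacle: the argument is a direct two-step reduction to a known identity for $*$. The only point that deserves a sentence of justification is that the two relations in Definition \ref{Def:Bar*} are consistent and that $\bar{*}\beta$ is indeed basic (so that plugging $\bar{*}\beta$ back into \eqref{Eqn:TransHosgeStar1} is legitimate); both are part of the setup around the definition.
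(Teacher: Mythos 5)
Your proof is correct and follows essentially the same route as the paper's: apply \eqref{Eqn:TransHosgeStar1} to the $(n-r)$-form $\bar{*}\beta$, substitute $(\bar{*}\beta)\wedge\chi = *\beta$ via \eqref{Eqn:TransHosgeStar2}, and reduce to the ambient identity $*^2=(-1)^{r(n+1-r)}$. The sign bookkeeping matches the paper's computation exactly.
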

\begin{proof}
We know that the Hodge star operator on $M$ satisfies $*^2\beta=(-1)^{r(n+1-r)}$ for $\beta\in\Omega^r_\text{bas}(M_0)$. On the other hand by \eqref{Eqn:TransHosgeStar2} we have $*^2\beta=*(\bar{*}\beta\wedge\chi )$. We now compute using \eqref{Eqn:TransHosgeStar1}, 
\begin{align*}
\bar{*}^2\beta=(-1)^{n-(n-r)}{*}(\bar{*}\beta\wedge\chi)=(-1)^{r+r(n+1-r)}=(-1)^{r(n-r)}. 
\end{align*}
\end{proof}

In view of this lemma we can define a chirality operator on basic differential forms as in \cite[Section 5]{HR13}. The following result follows from Proposition \ref{Prop:Chirl}.

\begin{proposition}\label{Prop:BarChirl}
The basic chirality operator
\begin{align*}
\bar{\star}:\xymatrixrowsep{0.01cm}\xymatrixcolsep{2cm}\xymatrix{
\Omega^r_{\textnormal{bas}}(M_0)  \ar[r] & \Omega^{n-r}_{\textnormal{bas}} (M_0)
}
\end{align*}
defined by $\bar{\star}\coloneqq i^{[(n+1)/2]+2nr+r(r-1)}\bar{*}$, satisfies the relations
\begin{enumerate}
\item $\bar{\star}^2=1$.
\item $\varepsilon\bar{\star}=(-1)^n\bar{\star}\varepsilon$. 
\item $\bar{\star}\circ(Y^\flat)\circ\bar{\star}=(-1)^n\iota_Y \quad\text{for}\quad Y^\flat\in\Omega_\textnormal{bas}(M_0)$.
\end{enumerate}
\end{proposition}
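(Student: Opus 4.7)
The three assertions are precise analogues of Proposition \ref{Prop:Chirl} applied to the $n$-dimensional Riemannian manifold $M_0/S^1$, so the plan is to reduce to that statement via the pullback isomorphism $\pi_{S^1}^*:\Omega(M_0/S^1)\xrightarrow{\sim}\Omega_{\text{bas}}(M_0)$ of Proposition \ref{Prop:BasicPullback}.

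First I would verify that $\bar{*}$ is the pullback of the Hodge star operator $*_{M_0/S^1}$ of the quotient. Since $\pi_{S^1}:M_0\to M_0/S^1$ is a Riemannian submersion and $\chi$ restricts to the unit volume form along the one-dimensional fibres, the volume form of $M_0$ decomposes as $\vol_{M_0}=\pi_{S^1}^*\vol_{M_0/S^1}\wedge\chi$, so Remark \ref{Rmk:VolumeQuot} yields $\bar{*}1=\pi_{S^1}^*\vol_{M_0/S^1}$. Combined with the defining relation $\beta_1\wedge\bar{*}\beta_2=\inner{\beta_1}{\beta_2}\bar{*}1$ for basic forms (which follows from $\beta_1\wedge *\beta_2=\inner{\beta_1}{\beta_2}\vol_{M_0}$ together with \eqref{Eqn:TransHosgeStar2}) and the fact that $\pi_{S^1}^*$ is an isometry on basic forms (by definition of the quotient metric), this gives $\bar{*}\,\pi_{S^1}^*=\pi_{S^1}^*\,*_{M_0/S^1}$. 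Since $\bar{\star}$ is obtained from $\bar{*}$ by the same $i$-prefactor as $\star_{M_0/S^1}$ is obtained from $*_{M_0/S^1}$ via \eqref{Def:Chirl} (with $m=n$), we deduce
\begin{equation*}
\bar{\star}\circ\pi_{S^1}^*=\pi_{S^1}^*\circ\star_{M_0/S^1}.
\end{equation*}

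With this identification in place, assertions (1) and (2) are immediate transcriptions of Proposition \ref{Prop:Chirl}(1) and of relation \eqref{PropertiesEpsilonStar}, respectively, applied to the $n$-dimensional manifold $M_0/S^1$; the sign $(-1)^n$ in (2) arises precisely because $\dim M_0/S^1=n$. For assertion (3), observe that $Y^\flat\in\Omega^1_{\textnormal{bas}}(M_0)$ forces $Y$ to be $S^1$-invariant and horizontal, so by Proposition \ref{Prop:BasicPullback} there is a unique vector field $\bar{Y}\in C^\infty(M_0/S^1,T(M_0/S^1))$ with $Y^\flat=\pi_{S^1}^*\bar{Y}^\flat$; the Riemannian submersion property also gives $\iota_Y\circ\pi_{S^1}^*=\pi_{S^1}^*\circ\iota_{\bar{Y}}$ and $(Y^\flat\wedge)\circ\pi_{S^1}^*=\pi_{S^1}^*\circ(\bar{Y}^\flat\wedge)$ on $\Omega(M_0/S^1)$. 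Applying Proposition \ref{Prop:Chirl}(3) on $M_0/S^1$ yields $\star_{M_0/S^1}\circ(\bar{Y}^\flat\wedge)\circ\star_{M_0/S^1}=(-1)^n\iota_{\bar{Y}}$, and pulling back via $\pi_{S^1}^*$ gives the claimed identity on basic forms.

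The only delicate step is the first one, which requires careful bookkeeping of the submersion geometry and the normalization of $\chi$; once it is carried out, the remaining content is formal and explains the author's remark that the proposition ``follows from Proposition \ref{Prop:Chirl}''.
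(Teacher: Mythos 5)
Your proof is correct, but it takes a different route from the one the paper intends. The paper's justification ("follows from Proposition \ref{Prop:Chirl}") points to repeating the algebraic argument verbatim on $\Omega_{\textnormal{bas}}(M_0)$ itself: the lemma immediately preceding the proposition establishes $\bar{*}^2=(-1)^{r(n-r)}$ on basic $r$-forms, and from this one checks (1) by computing the power of $i$, (2) from $\bar{*}\varepsilon=(-1)^n\varepsilon\bar{*}$ on $r$- versus $(n-r)$-forms, and (3) from the adjointness $(\beta\wedge)^\dagger=\iota_{\beta^\sharp}$ restricted to horizontal data — no mention of the quotient manifold is needed. You instead identify $\bar{\star}$ with $\pi_{S^1}^*\circ\star_{M_0/S^1}\circ(\pi_{S^1}^*)^{-1}$ and transport Proposition \ref{Prop:Chirl} from the $n$-dimensional quotient; this is exactly the content of Lemma \ref{Lemma:CommStar} and Corollary \ref{Coro:ChirBasicDiag}, which the paper only proves \emph{after} this proposition. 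Your argument is not circular, since those later results do not rely on Proposition \ref{Prop:BarChirl}, and your re-derivation of $\bar{*}\circ\pi_{S^1}^*=\pi_{S^1}^*\circ *_{M_0/S^1}$ is sound provided you make explicit the orientation convention $\pi_{S^1}^*(\vol_{M_0/S^1})=\bar{*}1$ (the sign choice recorded before Lemma \ref{Lemma:CommStar}); without fixing that sign the intertwining relation, and hence your proof of (1), could fail by a sign. What your route buys is a conceptual explanation — $\bar{\star}$ literally \emph{is} the chirality operator of the quotient — at the cost of the Riemannian-submersion bookkeeping; the paper's route is shorter and stays entirely on $M_0$, which is also what makes the subsequent Lemma \ref{Lemma:CommStar} a genuinely new statement rather than a restatement.
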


\begin{lemma}\label{Lemma:star}
With respect to the decomposition of Corollary \ref{Coro:DecInvForm} we can express the operator $\star$ as 
\begin{equation*}
\star\bigg{|}_{\Omega(M_0)^{S^1}}=i^{q(n)}(-1)^n\left(\begin{array}{cc}
0 & -\varepsilon\bar{\star}\\
\varepsilon\bar{\star} & 0
\end{array}
\right),
\end{equation*}
where 
\begin{displaymath}
   q(n)\coloneqq (n-1)\textnormal{mod}(2)=\left\{
     \begin{array}{ccc}
       1 & ,& \:n \:\text{even},\\
       0 & ,& \:n \:\text{odd}.
     \end{array}
   \right.
\end{displaymath}
\end{lemma}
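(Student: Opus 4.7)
The plan is to compute $\star\omega$ separately on the two summands $\omega_0$ and $\omega_1\wedge\chi$ coming from the decomposition of Corollary \ref{Coro:DecInvForm}, convert everything from the Hodge star $*$ on $M$ to the basic Hodge star $\bar{*}$ on $M_0/S^1$ using the defining relations \eqref{Eqn:TransHosgeStar1}--\eqref{Eqn:TransHosgeStar2}, and then pass from $\bar{*}$ to $\bar{\star}$ via the formula of Proposition \ref{Prop:BarChirl}. The key arithmetic is to compare the phase factor $i^{[(n+2)/2]+2(n+1)r+r(r-1)}$ defining $\star$ on $r$-forms of $M$ (recall $\dim M=n+1$) with the phase factor $i^{[(n+1)/2]+2ns+s(s-1)}$ defining $\bar{\star}$ on basic $s$-forms.

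For the first column, take $\omega_0\in\Omega_{\textnormal{bas}}^{r}(M_0)$. By \eqref{Eqn:TransHosgeStar2}, $*\omega_0=\bar{*}\omega_0\wedge\chi$, so $\star\omega_0$ is of the form $(\,\textrm{scalar}\,)\bar{*}\omega_0\wedge\chi$. Replacing $\bar{*}\omega_0$ by $\bar{\star}\omega_0$ multiplies by $i^{-[(n+1)/2]-2nr-r(r-1)}$. Since $[(n+2)/2]-[(n+1)/2]=q(n)$ and the remaining exponent simplifies to $2r$, we obtain $\star\omega_0=i^{q(n)}(-1)^r\,\bar{\star}\omega_0\wedge\chi$; using $(-1)^r=(-1)^n(-1)^{n-r}$ and recognising $\varepsilon\bar{\star}\omega_0=(-1)^{n-r}\bar{\star}\omega_0$, this is exactly $i^{q(n)}(-1)^n\,\varepsilon\bar{\star}\omega_0\wedge\chi$, matching the $(2,1)$-entry of the claimed matrix.

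For the second column, take $\omega_1\wedge\chi$ with $\omega_1\in\Omega_{\textnormal{bas}}^{r-1}(M_0)$, so the total degree is $r$. By \eqref{Eqn:TransHosgeStar1}, $*(\omega_1\wedge\chi)=(-1)^{n-r+1}\bar{*}\omega_1$, producing a purely basic form. The resulting coefficient of $\bar{*}\omega_1$ is $(-1)^{n-r+1}i^{[(n+2)/2]+2(n+1)r+r(r-1)}$, and replacing $\bar{*}\omega_1$ by $\bar{\star}\omega_1$ multiplies by $i^{-[(n+1)/2]-2n(r-1)-(r-1)(r-2)}$. A direct collection of exponents gives total power $q(n)+4r+2n-2$ in $i$, and combining with the $(-1)^{n-r+1}$ yields $i^{q(n)}(-1)^r\,\bar{\star}\omega_1$. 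Rewriting $(-1)^r=-(-1)^n(-1)^{n-r+1}$ and using $\varepsilon\bar{\star}\omega_1=(-1)^{n-r+1}\bar{\star}\omega_1$ produces $-i^{q(n)}(-1)^n\varepsilon\bar{\star}\omega_1$, which is the $(1,2)$-entry of the matrix.

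The main obstacle is simply the bookkeeping: two different integer-part exponents, the $(-1)^{n-r}$ appearing when one slides $\chi$ past $*$, and the need to rewrite everything symmetrically in terms of $\varepsilon$ (acting on the image form, whose degree is shifted by $n-r$ or $n-r+1$). Once this arithmetic is organised carefully, assembling the two columns yields the stated matrix form; no deeper structural argument is needed beyond the three defining relations of $\bar{*}$ and $\bar{\star}$.
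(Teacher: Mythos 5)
Your computation is correct and follows essentially the same strategy as the paper: decompose an invariant form as $\omega_0+\omega_1\wedge\chi$, convert $*$ to $\bar{*}$ via the defining relations of the basic Hodge star, and track the discrepancy between the phase exponents of $\star$ (with $m=n+1$) and $\bar{\star}$, which is exactly where $q(n)=[(n+2)/2]-[(n+1)/2]$ and the factor $(-1)^r=\varepsilon$ come from. The only difference is in the second column, where the paper avoids redoing the exponent arithmetic by reducing $\star(\omega_1\wedge\chi)$ to the first case via the operator identities $\star(\chi\wedge)\star=(-1)^{n+1}\iota_{\chi^\sharp}$ and $\varepsilon\star=(-1)^{n+1}\star\varepsilon$, whereas you recompute directly from \eqref{Eqn:TransHosgeStar1}; both routes land on the same matrix.
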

\begin{proof}
Recall that $[\cdot]$ denotes the integer part function. First observe the relation
 \begin{align*}
\left[\frac{n+1}{2}\right]+q(n)=\left[\frac{n}{2}\right]+1.
\end{align*}
For $\beta$ a basic $r$-form we calculate using \eqref{Def:Chirl} with $m=n+1$,
\begin{align*}
\star\beta =&i^{[n/2]+1+2(n+1)r+r(r-1)}*\beta\\
=&i^{q(n)+2r+[(n+1)/2]+2nr+r(r-1)}\bar{*}\beta\wedge\chi\\
=&(i^{q(n)}\bar{\star}\varepsilon \beta)\wedge\chi\\
=&(i^{q(n)}(-1)^n\varepsilon\bar{\star} \beta)\wedge\chi.
\end{align*}
On the other hand using Proposition \ref{Prop:Chirl} we compute,
\begin{align*}
\star (\beta\wedge\chi)=&(\star\circ(\chi\wedge)\circ \varepsilon )\beta= (-1)^{n+1}(\iota_{\chi^\sharp}\circ \star\circ\varepsilon)\beta=(\iota_{\chi^\sharp}\circ\varepsilon\circ \star)\beta.
\end{align*}
Finally, using the first computation above we conclude that 
\begin{align*}
(\iota_{\chi^\sharp}\circ\varepsilon\circ \star)\beta=&\iota_{\chi^\sharp}\varepsilon((i^{q(n)}(-1)^n\varepsilon\bar{\star} \beta)\wedge\chi)\\
=&-i^{q(n)}(-1)^n\iota_{\chi^\sharp}(\chi\wedge (\varepsilon\bar{\star} \beta))\\
=&-i^{q(n)}(-1)^n\varepsilon\bar{\star} \beta.
\end{align*}
\end{proof}

We are now ready to describe the operator $S(D)$ of Lemma \ref{Lemma:OpS}.
\begin{theorem}\label{Thm:OpInv}
With respect to the decomposition of Corollary \ref{Coro:DecInvForm} the exterior derivative decomposes as
\begin{align*}
d\bigg{|}_{\Omega(M_0)^{S^1}}=\left(\begin{array}{cc}
d & \varepsilon\varphi_0\wedge\\
0 & d-\kappa\wedge
\end{array}
\right)
\end{align*}
and its formal adjoint as
\begin{align*}
d^\dagger\bigg{|}_{\Omega(M_0)^{S^1}}=
\left(\begin{array}{cc}
(-1)^{n+1}\bar{\star}d\bar{\star} +\iota_H & 0\\
-\varepsilon\bar{\star}(\varphi_0\wedge)\bar{\star} & (-1)^{n+1}\bar{\star}d\bar{\star}
\end{array}
\right).
\end{align*}
Hence, the restriction of the Hodge-de Rham operator $D$ to the space of $S^1$-invariant forms with respect to this decomposition is
\begin{equation*}
S(D)\coloneqq
D\bigg{|}_{\Omega(M_0)^{S^1}}=
\left(\begin{array}{cc}
d+(-1)^{n+1}\bar{\star}d\bar{\star} +\iota_H & \varepsilon(\varphi_0\wedge)\\
-\varepsilon\bar{\star}(\varphi_0\wedge)\bar{\star} & d+(-1)^{n+1}\bar{\star}d\bar{\star}-\kappa\wedge
\end{array}
\right).
\end{equation*}
\end{theorem}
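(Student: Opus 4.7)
The plan is to compute $d\omega$ and $d^\dagger\omega$ directly on an arbitrary invariant form $\omega\in\Omega(M_0)^{S^1}$, written uniquely as $\omega=\omega_0+\omega_1\wedge\chi$ via Corollary \ref{Coro:DecInvForm}, and then read off the matrix entries. For the exterior derivative, Leibniz together with Rummler's formula \eqref{Eqn:dchi} ($d\chi=\varphi_0-\kappa\wedge\chi$) will give
\begin{equation*}
d\omega=\bigl(d\omega_0+(-1)^{|\omega_1|}\omega_1\wedge\varphi_0\bigr)+\bigl(d\omega_1-\kappa\wedge\omega_1\bigr)\wedge\chi,
\end{equation*}
after moving the $1$-form $\kappa$ past $\omega_1$. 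Rewriting $(-1)^{|\omega_1|}\omega_1\wedge\varphi_0=\varepsilon(\varphi_0\wedge\omega_1)$ produces the upper-triangular matrix of the theorem; Corollary \ref{Coro:kappa} and Proposition \ref{Prop:varphi0}(3) guarantee that both bracketed outputs are themselves basic.

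For the formal adjoint I would invoke Proposition \ref{Prop:Chirl}(4), i.e.\ $d^\dagger=(-1)^{n}\star d\star$ (since $\dim M=n+1$), substitute the block form of $\star|_{\Omega(M_0)^{S^1}}$ from Lemma \ref{Lemma:star}, and use the just-derived formula for $d$. The overall numerical prefactor will collapse to $-1$ uniformly in $n$, since $i^{2q(n)}(-1)^{2n}(-1)^{n}=(-1)^{q(n)+n}=-1$ for both parities of $n$. Matrix multiplication leaves four entries, each a composition of the form $\varepsilon\bar\star\,(\cdot)\,\varepsilon\bar\star$ with $(\cdot)$ equal to $d$, $-\kappa\wedge$, or $\varepsilon\varphi_0\wedge$.

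These entries will then be collapsed to the stated form using three identities: $d\varepsilon=-\varepsilon d$, Proposition \ref{Prop:BarChirl}(2) ($\bar\star\varepsilon=(-1)^{n}\varepsilon\bar\star$), and Proposition \ref{Prop:BarChirl}(3). The first two turn $\varepsilon\bar\star d\varepsilon\bar\star$ into $(-1)^{n+1}\bar\star d\bar\star$ on the diagonal. The third, applied to $Y=H$ with $Y^\flat=\kappa$, gives $\bar\star(\kappa\wedge)\bar\star=(-1)^{n}\iota_H$; after pushing the $\varepsilon$'s through $\kappa\wedge$ (which anticommute, $\kappa$ being a $1$-form) this is exactly what manufactures the $+\iota_H$ summand in the $(0,0)$ entry of $d^\dagger$. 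The $(1,0)$ entry follows from the even parity of $\varphi_0$, which makes $\varepsilon$ commute with $\varphi_0\wedge$. Adding the two matrices yields $S(D)=d+d^\dagger$.

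The main obstacle will be sign bookkeeping: the topological parity $(-1)^n$, the factor $i^{q(n)}$ from Lemma \ref{Lemma:star}, and the graded-commutativity signs $(-1)^{|\omega_i|}$ appearing each time $\varepsilon$ is moved past $d$, $\kappa\wedge$, or $\varphi_0\wedge$ all interact simultaneously. A useful internal consistency check is that the upper-triangular shape of $d$ must turn into a lower-triangular $d^\dagger$, with the off-diagonal zeros precisely swapped. Finally, the $\iota_H$ term is the one genuinely new operator in $d^\dagger$: it has no direct counterpart in $d$ and is produced solely by the double Hodge conjugation converting $-\kappa\wedge$ into contraction with the mean curvature vector through Proposition \ref{Prop:BarChirl}(3).
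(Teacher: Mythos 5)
Your proposal is correct and essentially reproduces the paper's own argument: compute $d$ on an invariant form via Leibniz and Rummler's formula \eqref{Eqn:dchi} to get the upper-triangular matrix, then conjugate with the block form of $\star$ from Lemma \ref{Lemma:star} using $d^\dagger=(-1)^n\star d\star$. Your prefactor check $i^{2q(n)}(-1)^{2n}(-1)^n=(-1)^{q(n)+n}=-1$ is right, and the subsequent collapse via Proposition \ref{Prop:BarChirl}(2)--(3), the anticommutation of $\varepsilon$ with $d$ and $\kappa\wedge$, and the commutation of $\varepsilon$ with $\varphi_0\wedge$ is exactly how the paper produces $\iota_H$ and the off-diagonal $-\varepsilon\bar\star(\varphi_0\wedge)\bar\star$.
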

\begin{proof}
Let $\omega=\omega_0+\omega_1\wedge\chi$ be an invariant $S^1$-form, we use \eqref{Eqn:dchi} to compute (compare with  \cite[Proposition 10.1]{BGV}),
\begin{align*}
d(\omega_0+\omega_1\wedge\chi)=&d\omega_0+d\omega_1\wedge\chi+(\varepsilon\omega_1)\wedge d\chi\\
=&d\omega_0+d\omega_1\wedge\chi-(\varepsilon\omega_1)\wedge \kappa\wedge\chi+(\varepsilon\omega_1)\wedge\varphi_0\\
=&(d\omega_0+\varepsilon\varphi_0\wedge\omega_1)+(d\omega_1-\kappa\wedge\omega_1)\wedge\chi, 
\end{align*}
from where we obtain the desired decomposition for the exterior derivative. To compute the analogous expression for  the adjoint we use the relation $d^\dagger=(-1)^{n}\star d\star$.  We first calculate using the decomposition of $d$ and Lemma \ref{Lemma:star},
\begin{align*}
d\star \bigg{|}_{\Omega(M_0)^{S^1}}=&
i^{q(n)}(-1)^n
\left(\begin{array}{cc}
d & \varepsilon\varphi_0\wedge\\
0 & d-\kappa\wedge
\end{array}
\right)
\left(\begin{array}{cc}
0 & -\varepsilon\bar{\star}\\
\varepsilon \bar{\star}& 0 
\end{array}
\right)\\
=& 
i^{q(n)}(-1)^n\left(\begin{array}{cc}
\varphi_0\wedge\bar{\star}& -d\varepsilon \bar{\star}\\
(d-\kappa\wedge)\varepsilon\bar{\star} & 0
\end{array}
\right)\\
=&
i^{q(n)}(-1)^n\left(\begin{array}{cc}
\varphi_0\wedge\bar{\star}& \varepsilon d\bar{\star}\\
-\varepsilon(d-\kappa\wedge)\bar{\star} & 0
\end{array}
\right)
\end{align*}
Finally, using the relation $(i^{q(n)})^2=(-1)^{n+1}$ and  Proposition \ref{Prop:BarChirl} we expand the product 
\begin{align*}
d^\dagger\bigg{|}_{\Omega(M_0)^{S^1}}
=& (-1)^{n}\star d\star \bigg{|}_{\Omega(M_0)^{S^1}}\\
=&-\left(\begin{array}{cc}
0 & -\varepsilon\bar{\star}\\
\varepsilon \bar{\star}& 0 
\end{array}
\right)
\left(\begin{array}{cc}
\varphi_0\wedge\bar{\star}& \varepsilon d\bar{\star}\\
-\varepsilon(d-\kappa\wedge)\bar{\star} & 0
\end{array}
\right)\\
=& 
\left(\begin{array}{cc}
0 & -(-1)^{n+1}\bar{\star}\varepsilon\\
(-1)^{n+1}\bar{\star}\varepsilon& 0 
\end{array}
\right)
\left(\begin{array}{cc}
\varphi_0\wedge\bar{\star}& \varepsilon d\bar{\star}\\
-\varepsilon(d-\kappa\wedge)\bar{\star} & 0
\end{array}
\right)\\
=&
\left(\begin{array}{cc}
(-1)^{n+1}\bar{\star}(d-\kappa\wedge)\bar{\star} & 0\\
(-1)^{n+1}\bar{\star}(\varepsilon\varphi_0\wedge)\bar{\star} & (-1)^{n+1}\bar{\star}d\bar{\star}
\end{array}
\right)\\
=&
\left(\begin{array}{cc}
(-1)^{n+1}\bar{\star}d\bar{\star} +\iota_H & 0\\
-\varepsilon\bar{\star}(\varphi_0\wedge)\bar{\star} & (-1)^{n+1}\bar{\star}d\bar{\star}
\end{array}
\right).
\end{align*}
\end{proof}

\begin{proposition}
Let $c(\kappa)=\kappa\wedge-\iota_{\kappa^\sharp}=\kappa\wedge-\iota_{H}$ be the left Clifford multiplication by the mean curvature $1$-form. Then the following relations hold true
\begin{enumerate}
\item $c(\kappa)\varepsilon(\varepsilon\varphi_0\wedge)+(\varepsilon\varphi_0\wedge)c(\kappa)\varepsilon=-(\iota_H\varphi_0)\wedge$.
\item $d (\varepsilon\varphi_0\wedge)+(\varepsilon\varphi_0\wedge) d=-\kappa\wedge (\varepsilon\varphi_0\wedge)$.
\end{enumerate}
\end{proposition}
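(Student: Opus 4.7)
The plan is to verify both identities by purely algebraic manipulations with the graded commutation relations among $\varepsilon$, $d$, $c(\kappa)$ and the multiplication operator $A \coloneqq \varphi_0\wedge$. The key preparatory observation is that, because $\varphi_0$ has even degree $2$, the operator $A$ commutes with the Gau\ss-Bonnet involution, i.e.\ $\varepsilon A = A\varepsilon$, so the ambiguous symbol $\varepsilon\varphi_0\wedge$ can be read in either order.

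For identity (1), I would first rewrite the left-hand side using $\varepsilon^2 = 1$, $\varepsilon A = A\varepsilon$, and the anti-commutation $\varepsilon\, c(\kappa)\,\varepsilon = -c(\kappa)$ from \eqref{PropertiesEpsilonCliff}:
\begin{align*}
c(\kappa)\,\varepsilon(\varepsilon A) + (\varepsilon A)\,c(\kappa)\,\varepsilon
= c(\kappa)A + A\,\varepsilon\, c(\kappa)\,\varepsilon
= c(\kappa)A - A\,c(\kappa)
= [c(\kappa), A].
\end{align*}
Now expand $c(\kappa) = \kappa\wedge - \iota_H$. The part $[\kappa\wedge, \varphi_0\wedge]$ vanishes because $\kappa$ is of degree $1$ and $\varphi_0$ of degree $2$, so $\kappa\wedge\varphi_0 = \varphi_0\wedge\kappa$. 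The part $[\iota_H, \varphi_0\wedge]$ is handled by the graded Leibniz rule for interior multiplication: $\iota_H(\varphi_0\wedge\omega) = (\iota_H\varphi_0)\wedge\omega + \varphi_0\wedge\iota_H\omega$, so $[\iota_H, \varphi_0\wedge] = (\iota_H\varphi_0)\wedge$. Thus $[c(\kappa), A] = -(\iota_H\varphi_0)\wedge$, as claimed.

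For identity (2), the analogous reduction uses that $d$ anti-commutes with $\varepsilon$ (since it raises the form degree by one) together with $A\varepsilon = \varepsilon A$:
\begin{align*}
d(A\varepsilon) + (A\varepsilon)d = dA\varepsilon + A\varepsilon d = dA\varepsilon - Ad\varepsilon = (dA - Ad)\varepsilon.
\end{align*}
Applying the graded Leibniz rule for $d$ and using that $\varphi_0$ has even degree yields $dA - Ad = d\varphi_0\wedge$. The final step invokes Proposition \ref{Prop:varphi0}(2), which states $d\varphi_0 = -\kappa\wedge\varphi_0$, giving $(dA - Ad)\varepsilon = -\kappa\wedge\varphi_0\wedge\varepsilon = -\kappa\wedge(\varepsilon\varphi_0\wedge)$.

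The only real obstacle in either calculation is careful sign bookkeeping in the graded Leibniz rules; no further geometric input is needed beyond the even degree of $\varphi_0$ and the closed form $d\varphi_0 = -\kappa\wedge\varphi_0$ already derived in Proposition \ref{Prop:varphi0}(2).
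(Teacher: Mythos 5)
Your proof is correct and follows essentially the same route as the paper: both identities are verified by direct computation using the graded Leibniz rules for $\iota_H$ and $d$, the commutation $\varepsilon(\varphi_0\wedge)=(\varphi_0\wedge)\varepsilon$, the anti-commutation of $c(\kappa)$ and $d$ with $\varepsilon$, and $d\varphi_0=-\kappa\wedge\varphi_0$ from Proposition \ref{Prop:varphi0}(2). Your preliminary reduction to the commutators $[c(\kappa),\varphi_0\wedge]$ and $[d,\varphi_0\wedge]$ is a slightly cleaner packaging of the same calculation.
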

\begin{proof}
For the first relation we just compute,
\begin{align*}
c(\kappa)\varepsilon(\varepsilon\varphi_0\wedge)=&(\kappa\wedge-\iota_H)(\varphi_0\wedge)\\
=&\varphi_0\wedge\kappa\wedge-(\iota_H\varphi_0)\wedge-\varphi_0\wedge\iota_H\\
=&(\varphi_0\wedge)c(\kappa)-(\iota_H\varphi_0)\wedge\\
=&-(\varepsilon\varphi_0\wedge)c(\kappa)\varepsilon-(\iota_H\varphi_0)\wedge.
\end{align*}
For the second one we use Proposition \ref{Prop:varphi0}(2),
\begin{align*}
d(\varepsilon\varphi_0\wedge)=-\varepsilon d(\varphi_0\wedge)=\varepsilon \kappa\wedge\varphi_0\wedge-\varepsilon\varphi_0\wedge d.
\end{align*}
\end{proof}

\subsubsection{Construction of the operator $T(D)$}

Now that we have described the isomorphism $\Phi$ and the operator $S(D)$ we can compute the self-adjoint operator $T\coloneqq\Phi\circ S\circ \Phi^{-1}$ of Proposition \ref{Prop:OpT}. As $D$ is a first order differential operator,  Proposition \ref{Prop:SDiff} states that $T$ is also generated by a differential operator of the same order.  Let us begin by analyzing the zero order terms. Since the forms $\kappa$ and $\varphi_0$ are both basic there exist unique $\bar{\kappa}\in\Omega^1(M_0/S^1)$ and $\bar{\varphi}_0\in\Omega^2(M_0/S^1)$ such that $\kappa=\pi^*_{S^1}(\bar{\kappa})$ and $\varphi_0=\pi^*_{S^1}(\bar{\varphi}_0)$. Moreover, as pullback commutes with the wedge product, the following diagrams commute
\begin{align}\label{Diag:KappaAction}
\xymatrixrowsep{2cm}\xymatrixcolsep{2cm}\xymatrix{
\Omega_{\textnormal{bas}}(M_0)  \ar[r]^-{\kappa\wedge} & \Omega_{\textnormal{bas}} (M_0)\\
\Omega(M_0/S^1) \ar[u]^-{\pi_{S^1}^*} \ar[r]^-{\bar{\kappa}\wedge}& \Omega(M_0/S^1), \ar[u]_-{\pi_{S^1}^*}
}
\end{align}
\begin{align}\label{Diag:Varphi0Action}
\xymatrixrowsep{2cm}\xymatrixcolsep{2cm}\xymatrix{
\Omega_{\textnormal{bas}}(M_0)  \ar[r]^-{\varphi_0\wedge} & \Omega_{\textnormal{bas}} (M_0)\\
\Omega(M_0/S^1) \ar[u]^-{\pi_{S^1}^*} \ar[r]^-{\bar{\varphi}_0\wedge}& \Omega(M_0/S^1). \ar[u]_-{\pi_{S^1}^*}
}
\end{align}

Now we study the Hodge star operator. In view of Remark \ref{Rmk:VolumeQuot}, we choose the sign of the  volume form $\vol_{M_0/S^1}$ on $M_0/S^1$ so that $\pi_{S^1}^*(\vol_{M_0/S^1})\coloneqq\bar{*}1$. This means that we can express $\vol_{M_0}=\pi_{S^1}^*(\vol_{M_0/S^1})\wedge\chi$. With this choice we can identify $\bar{*}$, via the orbit map $\pi_{S^1}$, with the Hodge star operator $*_{M_0/S^1}$ of $M_0/S^1$ with respect to the quotient metric.

\begin{lemma}\label{Lemma:CommStar}
Then the following diagram commutes:
\begin{align*}
\xymatrixrowsep{2cm}\xymatrixcolsep{2cm}\xymatrix{
\Omega_{\textnormal{bas}}(M_0)  \ar[r]^-{\bar{*}} & \Omega_{\textnormal{bas}} (M_0)\\
\Omega(M_0/S^1) \ar[u]^-{\pi_{S^1}^*} \ar[r]^-{*_{M_0/S^1}}& \Omega(M_0/S^1). \ar[u]_-{\pi_{S^1}^*}
}
\end{align*}
\end{lemma}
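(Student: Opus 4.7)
My plan is to exploit the defining characterization of $\bar{*}$ from Definition \ref{Def:Bar*}, namely the identity $*\beta=\bar{*}\beta\wedge\chi$ for $\beta\in\Omega_{\textnormal{bas}}(M_0)$, and compare it against the defining property of the Hodge star on the quotient via the chosen normalization $\vol_{M_0}=\pi_{S^1}^*(\vol_{M_0/S^1})\wedge\chi$. The claim reduces to showing that for every $\bar{\beta}\in\Omega^r(M_0/S^1)$, setting $\beta\coloneqq\pi_{S^1}^*\bar{\beta}$, one has $\bar{*}\beta=\pi_{S^1}^*(*_{M_0/S^1}\bar{\beta})$.

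First, I would observe that since $\pi_{S^1}:M_0\longrightarrow M_0/S^1$ is a Riemannian submersion and $\pi_{S^1}^*$ identifies $T^*(M_0/S^1)$ isometrically with the horizontal cotangent bundle, the pullback preserves fiberwise inner products of basic forms: for any $\bar{\alpha},\bar{\beta}\in\Omega^r(M_0/S^1)$,
\begin{equation*}
\inner{\pi_{S^1}^*\bar{\alpha}}{\pi_{S^1}^*\bar{\beta}}=\pi_{S^1}^*\inner{\bar{\alpha}}{\bar{\beta}}.
\end{equation*}
Then I would compute $(\pi_{S^1}^*\bar{\alpha})\wedge *(\pi_{S^1}^*\bar{\beta})$ in two ways. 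On one hand, by the defining property of $*$ on $M_0$ and the relation $\vol_{M_0}=\pi_{S^1}^*(\vol_{M_0/S^1})\wedge\chi$, this equals $\pi_{S^1}^*(\bar{\alpha}\wedge *_{M_0/S^1}\bar{\beta})\wedge\chi$. On the other hand, using \eqref{Eqn:TransHosgeStar2}, it equals $\pi_{S^1}^*\bar{\alpha}\wedge\bar{*}\beta\wedge\chi=\pi_{S^1}^*\bar{\alpha}\wedge(\bar{*}\beta)\wedge\chi$.

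Subtracting the two expressions yields $\pi_{S^1}^*\bar{\alpha}\wedge\bigl(\bar{*}\beta-\pi_{S^1}^*(*_{M_0/S^1}\bar{\beta})\bigr)\wedge\chi=0$ for every basic $(n-r)$-form $\pi_{S^1}^*\bar{\alpha}$. The difference $\gamma\coloneqq\bar{*}\beta-\pi_{S^1}^*(*_{M_0/S^1}\bar{\beta})$ is itself basic (the first term by Definition \ref{Def:Bar*}, the second by construction), so by Proposition \ref{Prop:BasicPullback} we may write $\gamma=\pi_{S^1}^*\bar{\gamma}$ for a unique $\bar{\gamma}\in\Omega^{n-r}(M_0/S^1)$. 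The vanishing relation becomes $\pi_{S^1}^*(\bar{\alpha}\wedge\bar{\gamma})\wedge\chi=0$ for all $\bar{\alpha}$; since wedging with $\chi$ is injective on pullbacks of top-degree forms (the fibers are one-dimensional and $\chi$ is a unit co-generator), and $\pi_{S^1}^*$ is injective, we get $\bar{\alpha}\wedge\bar{\gamma}=0$ for every $\bar{\alpha}\in\Omega^r(M_0/S^1)$.

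Finally, taking $\bar{\alpha}\coloneqq *_{M_0/S^1}\bar{\gamma}$ gives $\norm{\bar{\gamma}}^2\vol_{M_0/S^1}=0$, hence $\bar{\gamma}=0$, and therefore $\bar{*}\beta=\pi_{S^1}^*(*_{M_0/S^1}\bar{\beta})$, which is precisely the commutativity of the diagram. There is no real obstacle here; the only point requiring a bit of care is the sign/orientation bookkeeping so that $\pi_{S^1}^*(\vol_{M_0/S^1})\wedge\chi$ agrees with $\vol_{M_0}$, but this is already built into the choice of sign of $\vol_{M_0/S^1}$ made just before the lemma.
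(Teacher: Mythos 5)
Your proof is correct and follows essentially the same route as the paper: both compare $\pi_{S^1}^*\bar{\alpha}\wedge *\,\pi_{S^1}^*\bar{\beta}$ computed via \eqref{Eqn:TransHosgeStar2} against the same quantity computed via the metric and the normalization $\vol_{M_0}=\pi_{S^1}^*(\vol_{M_0/S^1})\wedge\chi$, then contract with $\iota_X$. The only difference is that the paper records just the diagonal case $\bar{\alpha}=\bar{\beta}=\bar{\omega}$ and leaves the final non-degeneracy step implicit, whereas you polarize and close that step explicitly (modulo a harmless slip where $\bar{\alpha}$ should be an $r$-form, not an $(n-r)$-form).
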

\begin{proof}
Using \eqref{Eqn:TransHosgeStar1} and \eqref{Eqn:TransHosgeStar2}, we compute for $\bar{\omega}\in\Omega(M_0/S^1)$,  

\begin{align*}
\pi^*_{S^1}\bar{\omega}\wedge\bar{*}\pi^*_{S^1}\bar{\omega}\wedge\chi=&\pi^*_{S^1}\bar{\omega}\wedge*\pi^*_{S^1}\bar{\omega}\\
=&\inner{\pi^*_{S^1}\bar{\omega}}{\pi^*_{S^1}\bar{\omega}}\vol_{M_0}\\
=&\pi_{S^1}^*(\inner{\bar{\omega}}{\bar{\omega}})\bar{*}1\wedge\chi,
\end{align*}

and therefore, by contracting with $\iota_X$, we obtain $\pi^*_{S^1}\bar{\omega}\wedge\bar{*}\pi^*_{S^1}\bar{\omega}=\pi_{S^1}^*(\inner{\bar{\omega}}{\bar{\omega}})\bar{*}1$. Now, in view of our convention 
\begin{align*}
\pi^*_{S^1}\bar{\omega}\wedge\bar{*}\pi^*_{S^1}\bar{\omega}=&\pi_{S^1}^*(\inner{\bar{\omega}}{\bar{\omega}})\bar{*}1\\
=&\pi_{S^1}^*(\inner{\bar{\omega}}{\bar{\omega}}\vol_{M_0/S^1})\\
=&\pi_{S^1}^*(\bar{\omega}\wedge*_{M_0/S^1}\bar{\omega})\\
=&\pi_{S^1}^*\bar{\omega}\wedge \pi_{S^1}^**_{M_0/S^1}\bar{\omega}.
\end{align*}
Combining these two computations we obtain the desired result. 
\end{proof}

\begin{coro}\label{Coro:ChirBasicDiag}
We have an analogous commutative diagram for the chirality operators 
\begin{align*}
\xymatrixrowsep{2cm}\xymatrixcolsep{2cm}\xymatrix{
\Omega_{\textnormal{bas}}(M_0)  \ar[r]^-{\bar{\star}} & \Omega_{\textnormal{bas}} (M_0)\\
\Omega(M_0/S^1) \ar[u]^-{\pi_{S^1}^*} \ar[r]^-{\star_{M_0/S^1}}& \Omega(M_0/S^1).\ar[u]_-{\pi_{S^1}^*}
}
\end{align*}
\end{coro}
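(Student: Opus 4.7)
The plan is to reduce the claim for $\bar{\star}$ to the already established analogue for $\bar{*}$, i.e.\ to Lemma~\ref{Lemma:CommStar}. The key observation is that both chirality operators are obtained from their respective Hodge star operators by multiplication with a universal scalar that depends only on the form degree and on the dimensions of the ambient manifolds, and these two dimensions are set up so that the scalars coincide.

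More precisely, on $r$-forms $\bar{\star}$ is defined in Proposition~\ref{Prop:BarChirl} as $\bar{\star} = i^{[(n+1)/2] + 2nr + r(r-1)}\,\bar{*}$. On the other hand, $M_0/S^1$ has dimension $n$, so the general formula \eqref{Def:Chirl} applied with $m=n$ gives, on $r$-forms,
\begin{equation*}
\star_{M_0/S^1} = i^{[(n+1)/2] + 2nr + r(r-1)}\,*_{M_0/S^1}.
\end{equation*}
Thus the two scalar prefactors agree on forms of the same degree. Since $\pi_{S^1}^{*}$ is degree-preserving, for any $\bar{\omega}\in\Omega^r(M_0/S^1)$ we compute
\begin{equation*}
\pi_{S^1}^{*}\bigl(\star_{M_0/S^1}\bar{\omega}\bigr)
= i^{[(n+1)/2] + 2nr + r(r-1)}\,\pi_{S^1}^{*}\bigl(*_{M_0/S^1}\bar{\omega}\bigr)
= i^{[(n+1)/2] + 2nr + r(r-1)}\,\bar{*}\,\pi_{S^1}^{*}\bar{\omega}
= \bar{\star}\,\pi_{S^1}^{*}\bar{\omega},
\end{equation*}
where the middle equality is precisely Lemma~\ref{Lemma:CommStar}.

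There is no genuine obstacle here; the only care needed is to check that the degree conventions line up, i.e.\ that the combinatorial prefactor used in the definition of $\bar{\star}$ really is the one that the formula \eqref{Def:Chirl} assigns to the $n$-dimensional manifold $M_0/S^1$, and to note that $\pi_{S^1}^{*}$ preserves the grading so that the same prefactor may be pulled out on either side. Once this is observed, the diagram commutes by direct substitution into Lemma~\ref{Lemma:CommStar}.
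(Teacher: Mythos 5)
Your proof is correct and is exactly the intended argument: the paper states this as an unproved corollary of Lemma~\ref{Lemma:CommStar}, and the only thing to check is that the scalar prefactor $i^{[(n+1)/2]+2nr+r(r-1)}$ appearing in the definition of $\bar{\star}$ (Proposition~\ref{Prop:BarChirl}) is precisely the prefactor that \eqref{Def:Chirl} assigns to the $n$-dimensional quotient $M_0/S^1$, so that the degree-preserving map $\pi_{S^1}^*$ intertwines the two chirality operators once it intertwines the Hodge stars. You spell out that matching cleanly.
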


Next we are going to study  the term $-\bar{\star}({\varphi}_0\wedge)\bar{\star}$ as an operator on the quotient space. In view of Corollary \ref{Coro:ChirBasicDiag} and to lighten the notation we are going to identify $\bar{\star}\coloneqq\star_{M_0/S^1}$.
\begin{proposition}\label{Prop:CliffMultVarphi}
With respect to the quotient metric on $M_0/S^1$ we have
$$(\bar{\varphi}_0\wedge)^\dagger=-\bar{\star}(\bar{\varphi_0}\wedge)\bar{\star}.$$
In particular, the symmetric operator $\widehat{c}(\bar{\varphi}_0)\coloneqq(\bar{\varphi}_0\wedge)+(\bar{\varphi}_0\wedge)^\dagger$ satisfies 
\begin{enumerate}
\item $\widehat{c}(\bar{\varphi}_0)\bar{\star}+\bar{\star}\widehat{c}(\bar{\varphi}_0)=0$.
\item $\widehat{c}(\bar{\varphi}_0)\varepsilon-\varepsilon\widehat{c}(\bar{\varphi}_0)=0$. 
\end{enumerate}
\end{proposition}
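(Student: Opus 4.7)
The plan is to derive the adjoint identity directly from the properties of the chirality operator collected in Proposition \ref{Prop:Chirl}, and then deduce the two (anti-)commutation relations for $\widehat{c}(\bar\varphi_0)$ as purely algebraic consequences. Since $M_0/S^1$ has dimension $n$ and Proposition \ref{Prop:Chirl}(3) (applied on the quotient, where we identify $\bar\star=\star_{M_0/S^1}$ by Corollary \ref{Coro:ChirBasicDiag}) gives
\[
\bar\star\,(\alpha\wedge)\,\bar\star=(-1)^{n}\iota_{\alpha^\sharp}\qquad\text{for }\alpha\in\Omega^1(M_0/S^1),
\]
together with the standard fact $(\alpha\wedge)^\dagger=\iota_{\alpha^\sharp}$, we obtain $(\alpha\wedge)^\dagger=(-1)^n\bar\star(\alpha\wedge)\bar\star$ for every $1$-form.

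First I would upgrade this from degree one to degree two by a local factorization argument. Since $\bar\varphi_0$ is a smooth $2$-form, it can be written locally as a finite sum of decomposable terms $\alpha\wedge\beta$ with $\alpha,\beta\in\Omega^1(M_0/S^1)$, and by linearity it suffices to verify the identity on such a term. Inserting $\bar\star^2=1$ (Proposition \ref{Prop:Chirl}(1)) and using the degree-one formula twice,
\[
\bar\star\,(\alpha\wedge\beta\wedge)\,\bar\star
=\bigl(\bar\star(\alpha\wedge)\bar\star\bigr)\bigl(\bar\star(\beta\wedge)\bar\star\bigr)
=(-1)^{2n}\iota_{\alpha^\sharp}\iota_{\beta^\sharp}
=\iota_{\alpha^\sharp}\iota_{\beta^\sharp}.
\]
On the other hand $(\alpha\wedge\beta\wedge)^\dagger=\iota_{\beta^\sharp}\iota_{\alpha^\sharp}=-\iota_{\alpha^\sharp}\iota_{\beta^\sharp}$ by the anticommutativity of interior products, which yields $(\bar\varphi_0\wedge)^\dagger=-\bar\star(\bar\varphi_0\wedge)\bar\star$ as desired. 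I expect no real obstacle here; the only thing to be careful about is checking that the local identity patches together globally, which is immediate since both sides are defined globally and agree on an open cover.

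For (1), I will substitute the just-proved formula into $\widehat{c}(\bar\varphi_0)=(\bar\varphi_0\wedge)-\bar\star(\bar\varphi_0\wedge)\bar\star$ and compute both $\widehat{c}(\bar\varphi_0)\bar\star$ and $\bar\star\,\widehat{c}(\bar\varphi_0)$ using only $\bar\star^2=1$: the two cross terms cancel in pairs, yielding anticommutation. For (2), I will use that $\bar\varphi_0$ has even degree, so $(\bar\varphi_0\wedge)$ shifts form degree by $2$ and hence commutes with the Gauss-Bonnet involution $\varepsilon=(-1)^r$; the same holds for its adjoint $(\bar\varphi_0\wedge)^\dagger$, which lowers degree by $2$. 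Summing gives $[\widehat{c}(\bar\varphi_0),\varepsilon]=0$. Both (1) and (2) are then immediate algebraic consequences of the adjoint identity combined with the parity properties of $\bar\star$ and $\varepsilon$ from Proposition \ref{Prop:Chirl} and the definition of $\varepsilon$.
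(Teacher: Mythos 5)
Your proposal is correct and follows essentially the same route as the paper: the paper also expands $\bar\varphi_0$ into decomposable terms (in an orthonormal coframe), applies the degree-one identity $\bar\star(\alpha\wedge)\bar\star=(-1)^n\iota_{\alpha^\sharp}$ twice, and picks up the sign from anticommuting the two interior products. The deductions of (1) and (2) from the adjoint formula are the same immediate algebra the paper leaves implicit.
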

\begin{proof}
We want to compute the adjoint of $(\bar{\varphi}_0\wedge)$ with respect to the quotient metric. To do this we expand $\bar{\varphi}_0$ with respect to an orthonormal basis $\{\bar{e}^i\}^{n}_{i=1}$ of $\wedge_\mathbb{C}T^*(M_0/S^1)$ as
\begin{align*}
\bar{\varphi_0}\wedge=\sum\inner{\varphi_0}{\bar{e}^i\wedge \bar{e}^j} \bar{e}^i\wedge \bar{e}^j\wedge,
\end{align*}
and use  Proposition \ref{Prop:Chirl}(3) to verify
\begin{align*}
(\bar{\varphi_0}\wedge)^\dagger =&\sum_{i<j}\inner{\bar{\varphi}_0}{\bar{e}^i\wedge \bar{e}^j}\iota_{\bar{e}_j}\circ\iota_{\bar{e}_i}=\sum_{i<j}\inner{\bar{\varphi}_0}{\bar{e}^i\wedge \bar{e}^j}\bar{\star}\circ \bar{e}^i\wedge \bar{e}^j\circ \bar{\star}=-\bar{\star}(\bar{\varphi}_0\wedge)\bar{\star}.
\end{align*}
\end{proof}

We are ready to handle the first order terms of $S(D)$ in Theorem \ref{Thm:OpInv}.  For the exterior derivative, as it also commutes with pullbacks, we have an analogous commutative diagram, 
\begin{align}\label{Diag:dBasic}
\xymatrixrowsep{2cm}\xymatrixcolsep{2cm}\xymatrix{
\Omega_{\textnormal{bas}}(M_0)  \ar[r]^-{d} & \Omega_{\textnormal{bas}} (M_0)\\
\Omega(M_0/S^1) \ar[u]^-{\pi_{S^1}^*} \ar[r]^-{d_{M_0/S^1}}& \Omega(M_0/S^1), \ar[u]_-{\pi_{S^1}^*}
}
\end{align}
where $d_{M_0/S^1}$ is the exterior derivative of $M_0/S^1$. Hence, it remains to study the operator 
\begin{align*}
(-1)^{n+1}\bar{\star}d\bar{\star}:
\xymatrixrowsep{2cm}\xymatrixcolsep{2cm}\xymatrix{
\Omega^r_{\textnormal{bas}}(M_0)  \ar[r] & \Omega^{r-1}_{\textnormal{bas}} (M_0).
}
\end{align*}
\begin{remark}\label{Rmk:AdjointNotPresBas}
Let $d^\dagger_{M_0/S^1}=(-1)^{n+1}\star_{M_0/S^1}d_{M_0/S^1}\star_{M_0/S^1}$ be the $L^2$-formal adjoint of $d_{M_0/S^1}$ with respect to the quotient metric (Proposition \ref{Prop:Chirl}(4)). One might think that there is an analogous commutative diagram as \eqref{Diag:dBasic} where $d^{\dagger}$ and $d^\dagger_{M_0/S^1}$ are placed instead. Note however that $d^\dagger$ does not preserve the space of basic forms, as it can be explicitly seen from Theorem \ref{Thm:OpInv}, and in general
$$d^\dagger\circ \pi_{S^1}^* \neq\pi_{S^1}^*\circ d^\dagger_{M_0/S^1}.$$
\end{remark}

Observe that \eqref{Diag:dBasic} and Corollary \ref {Coro:ChirBasicDiag} can be combined to obtain the following commutative diagram 

\begin{align*}
\xymatrixrowsep{2cm}\xymatrixcolsep{2cm}\xymatrix{
\Omega_{\textnormal{bas}}(M_0)  \ar[r]^-{(-1)^{n+1}\bar{\star}} & \Omega_{\textnormal{bas}} (M_0)   \ar[r]^-{d} & \Omega_{\textnormal{bas}} (M_0) \ar[r]^-{\bar{\star}} & \Omega_{\textnormal{bas}} (M_0)\\
\Omega(M_0/S^1) \ar[u]^-{\pi_{S^1}^*} \ar[r]^-{(-1)^{n+1}\star_{M_0/S^1}}& \Omega(M_0/S^1) \ar[u]^-{\pi_{S^1}^*}
 \ar[r]^-{d_{M_0/S^1}} & \Omega(M_0/S^1) \ar[u]_-{\pi_{S^1}^*}   \ar[r]^-{\star_{M_0/S^1}}& \Omega(M_0/S^1), \ar[u]_-{\pi_{S^1}^*}
}
\end{align*}
which shows that 
\begin{align*}
\pi^*_{S^1}\circ d^\dagger_{M_0/S^1}=(-1)^{n+1}\bar{\star}d\bar{\star}\circ \pi^*_{S^1}.
\end{align*}
That is, the following diagram commutes
\begin{align}\label{Diag:ddaggerBasic}
\xymatrixrowsep{2cm}\xymatrixcolsep{2cm}\xymatrix{
\Omega_{\textnormal{bas}}(M_0)  \ar[r]^-{(-1)^{n+1}\bar{\star}d\bar{\star}} & \Omega_{\textnormal{bas}} (M_0)\\
\Omega(M_0/S^1) \ar[u]^-{\pi_{S^1}^*} \ar[r]^-{d^{\dagger}_{M_0/S^1}}& \Omega(M_0/S^1). \ar[u]_-{\pi_{S^1}^*}
}
\end{align}

Altogether, from the discussion of Section \ref{Section:ConstrF}, Theorem \ref{Thm:OpInv}, Corollary \ref{Coro:ChirBasicDiag} and \eqref{Diag:KappaAction}, \eqref{Diag:Varphi0Action}, \eqref{Diag:dBasic}, \eqref{Diag:ddaggerBasic} we can describe explicitly  the operator $T$ of Proposition \ref{Diag:dBasic}.

\begin{theorem}\label{Theorem:OpT}
The operator $T$ of Proposition \ref{Prop:OpT} for a semi-free $S^1$-action can be written as
\begin{equation*}
T=\left(
\begin{array}{cc}
D_{M_0/S^1}+\iota_{\bar{\kappa}^\sharp} & \varepsilon(\bar{\varphi_0}\wedge) \\
 \varepsilon (\bar{\varphi}_0\wedge)^\dagger & D_{M_0/S^1}-\bar{\kappa}\wedge
\end{array}
\right),
\end{equation*}
where $D_{M_0/S^1}\coloneqq d_{M_0/S^1}+d^\dagger_{M_0/S^1}$is the Hodge-de Rham operator on $M_0/S^1$. The operator $T$, when defined on the core $\Omega_c(M/S^1)$, is essentially self adjoint. 
\end{theorem}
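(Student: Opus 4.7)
The plan is to verify the explicit matrix formula by transferring the decomposition of $S(D)$ from Theorem \ref{Thm:OpInv} across the unitary $\Phi$, and then to derive essential self-adjointness from the corresponding property of $D$ on the closed manifold $M$ together with a cutoff argument near the fixed-point set.

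First, to obtain the formula for $T$ I would start with the expression
\begin{align*}
S(D)=\begin{pmatrix}
d+(-1)^{n+1}\bar\star d\bar\star+\iota_H & \varepsilon(\varphi_0\wedge) \\
-\varepsilon\bar\star(\varphi_0\wedge)\bar\star & d+(-1)^{n+1}\bar\star d\bar\star-\kappa\wedge
\end{pmatrix}
\end{align*}
given by Theorem \ref{Thm:OpInv}, where each entry acts on $\Omega_{\mathrm{bas},c}(M_0)$. Conjugating by $\Phi$ amounts, in each entry, to replacing a basic operator on $\Omega_{\mathrm{bas},c}(M_0)$ by its counterpart on $\Omega_c(M_0/S^1)$ through the identification $\pi_{S^1}^*$. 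The commutative diagrams \eqref{Diag:KappaAction}, \eqref{Diag:Varphi0Action}, \eqref{Diag:dBasic} identify $\kappa\wedge\leftrightarrow\bar{\kappa}\wedge$, $\varphi_0\wedge\leftrightarrow\bar{\varphi}_0\wedge$, $d\leftrightarrow d_{M_0/S^1}$, while Corollary \ref{Coro:ChirBasicDiag} gives $\bar\star\leftrightarrow\star_{M_0/S^1}$ so that \eqref{Diag:ddaggerBasic} yields $(-1)^{n+1}\bar\star d\bar\star\leftrightarrow d^{\dagger}_{M_0/S^1}$. Since $H=\kappa^\sharp$, the contraction $\iota_H$ corresponds to $\iota_{\bar\kappa^\sharp}$, and Proposition \ref{Prop:CliffMultVarphi} rewrites $-\varepsilon\bar\star(\varphi_0\wedge)\bar\star$ as $\varepsilon(\bar\varphi_0\wedge)^\dagger$. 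Assembling these identifications delivers the claimed matrix form
\begin{align*}
T=\begin{pmatrix}
D_{M_0/S^1}+\iota_{\bar\kappa^\sharp} & \varepsilon(\bar\varphi_0\wedge)\\
\varepsilon(\bar\varphi_0\wedge)^\dagger & D_{M_0/S^1}-\bar\kappa\wedge
\end{pmatrix}.
\end{align*}

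For essential self-adjointness, Proposition \ref{Prop:OpT} already gives that $T$ with domain $\Phi(\mathrm{Dom}(S))$ is self-adjoint, so it suffices to show that $\Phi(\Omega_c(M_0)^{S^1})=\Omega_c(M_0/S^1)$ is a core for $T$, equivalently that $\Omega_c(M_0)^{S^1}$ is a core for $S$. The Hodge--de Rham operator $D$ on the closed manifold $M$ is essentially self-adjoint on $\Omega(M)$ by Remark \ref{Remark:GL02}. Applying the averaging projection $Q$ of \eqref{Def:Q} (which commutes with $D$ by Proposition \ref{Prop:S1commD} and with the spectral resolution of its closure, cf.\ Lemma \ref{Lemma:OpS}), one obtains that the restriction $D|_{\Omega(M)^{S^1}}$ is essentially self-adjoint on $L^2(\wedge T^*M)^{S^1}$; by definition this closure coincides with $S$.

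The main remaining step, and the place I expect the real work to be, is to pass from $\Omega(M)^{S^1}$ to the smaller core $\Omega_c(M_0)^{S^1}$. For this I would use that each connected component of $M^{S^1}$ has codimension $2(N+1)\geq 2$ in $M$ (Step 1 of Section \ref{Sect:PfoofSignatureFormula}), which makes it possible to construct a family of $S^1$-invariant cutoff functions $\phi_\epsilon\in C^\infty(M)^{S^1}$ with $0\le\phi_\epsilon\le 1$, $\phi_\epsilon\equiv 0$ in an $\epsilon$-neighborhood of $M^{S^1}$, $\phi_\epsilon\equiv 1$ outside a $2\epsilon$-neighborhood, and $\|d\phi_\epsilon\|_{L^\infty}\cdot\mathrm{vol}(\mathrm{supp}\, d\phi_\epsilon)^{1/2}\to 0$ as $\epsilon\to 0$ (this works precisely because of the codimension bound). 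Given $\omega\in\mathrm{Dom}(S)$, take $\omega_n\in\Omega(M)^{S^1}$ with $\omega_n\to\omega$ and $D\omega_n\to S\omega$; then $\phi_{\epsilon_n}\omega_n\in\Omega_c(M_0)^{S^1}$ and, using Proposition \ref{Prop:CommDirOpFunct} to write $D(\phi_{\epsilon_n}\omega_n)=c(d\phi_{\epsilon_n})\omega_n+\phi_{\epsilon_n}D\omega_n$, one checks along an appropriate subsequence that $\phi_{\epsilon_n}\omega_n\to\omega$ and $D(\phi_{\epsilon_n}\omega_n)\to S\omega$ in $L^2$. This establishes that $\Omega_c(M_0)^{S^1}$ is a core for $S$, and therefore $\Omega_c(M_0/S^1)$ is a core for $T$, completing the proof of essential self-adjointness.
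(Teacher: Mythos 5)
Your derivation of the matrix formula coincides with the paper's own argument: the theorem is stated there as the outcome of the preceding assembly of Theorem \ref{Thm:OpInv} with the commutative diagrams \eqref{Diag:KappaAction}, \eqref{Diag:Varphi0Action}, \eqref{Diag:dBasic}, \eqref{Diag:ddaggerBasic} and Proposition \ref{Prop:CliffMultVarphi}, which is exactly what you do, so that half is fine.

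For the essential self-adjointness the paper offers no explicit argument at this point beyond Proposition \ref{Prop:OpT}; the rigorous core statement really rests on the cut-off machinery of Appendix \ref{App:Seq} used later (cf.\ the proof of Corollary \ref{Coro:DESA}). Your reduction to the claim that $\Omega_c(M_0)^{S^1}$ is a core for $S$, and the averaging step showing that $\Omega(M)^{S^1}$ is a core, are correct. The cut-off step, however, contains a genuine flaw. First, the condition $\norm{d\phi_\epsilon}_{L^\infty}\cdot\vol(\supp(d\phi_\epsilon))^{1/2}\to 0$ cannot be satisfied by functions vanishing on an $\epsilon$-neighbourhood of $M^{S^1}$ and equal to $1$ outside a $2\epsilon$-neighbourhood in the case $N=0$: there the codimension of $M^{S^1}$ is exactly $2$, so $\norm{d\phi_\epsilon}_{L^\infty}\sim\epsilon^{-1}$ while $\vol(\supp(d\phi_\epsilon))\sim\epsilon^{2}$, and the product stays bounded away from zero. (This case is not vacuous; it is the second example of Section \ref{Sect:S5}.) Second, even where your condition holds it only bounds $\norm{c(d\phi_{\epsilon_n})\omega_n}_{L^2}$ through $\norm{\omega_n}_{L^\infty}$, and graph-norm convergence of $\omega_n$ gives no uniform $L^\infty$ control, so the asserted subsequence need not exist. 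Both defects are repairable: use logarithmic cut-offs, or the Br\"uning--Seeley sequence $\psi_n=\chi(r)^{\vartheta_n}(1-\varphi(nr))$ of Appendix \ref{App:Seq}, for which $\norm{d\phi_\epsilon}_{L^2}\to 0$ in every codimension $\geq 2$; then, for each fixed smooth $\omega_n$, choose $\epsilon_n$ so small that $\norm{c(d\phi_{\epsilon_n})\omega_n}_{L^2}\leq\norm{\omega_n}_{L^\infty}\norm{d\phi_{\epsilon_n}}_{L^2}<1/n$ together with $\norm{(1-\phi_{\epsilon_n})\omega_n}+\norm{(1-\phi_{\epsilon_n})D\omega_n}<1/n$. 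With that correction your route becomes a legitimate and more self-contained justification of the core claim than the paper's implicit deferral.
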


\begin{remark}[Principal symbol]
From Theorem \ref{Theorem:OpT} we clearly see that $T$ is a first order differential operator, which was to be expected by Proposition \ref{Prop:SDiff}. Recall from Proposition \ref{Prop:PSD} that the principal symbol of the Hodge-de Rahm operator is $\sigma_P(D)(x,\xi)=-ic(\xi)$ for $(x,\xi)\in T^*M$. In particular if $(x,\omega'_x+\omega''_x\wedge\chi_x)\in\wedge_\mathbb{C}T^*M$ and $(y,\bar{\xi})\in T_{\mathbb{C}}^*(M_0/S^1)$ are such that $\pi_{S^1}(x)=y$, then 
\begin{align*}
c(\pi^*_{S^1}\bar{\xi})(\omega'_x+\omega''_x\wedge\chi)=&c(\pi^*_{S^1}\bar{\xi})\omega'_x+c(\pi^*_{S^1}\bar{\xi})(\omega''_x\wedge\chi_x)\\
=&c(\pi^*_{S^1}\bar{\xi})\omega'_x+(c(\pi^*_{S^1}\bar{\xi})\omega''_x)\wedge\chi_x +\chi_x((\pi^*_{S^1}\bar{\xi})^\sharp)(\varepsilon\omega''_x)\\
=&c(\pi^*_{S^1}\bar{\xi})\omega'_x+(c(\pi^*_{S^1}\bar{\xi})\omega''_x)\wedge\chi_x ,
\end{align*} 
since $\chi_x((\pi^*_{S^1}\bar{\xi})^\sharp)=\inner{\chi}{\pi^*_{S^1}\bar{\xi}}=0$.
Therefore, with respect to the decomposition of Corollary \ref{Coro:DecInvForm} we have
\begin{align*}
\sigma_P(D)(x,\pi_{S^1}^*\bar{\xi})
\left(\begin{array}{c}
\omega'_x\\
\omega''_x
\end{array}
\right)
=-ic(\pi_{S^1}^*\bar{\xi})\left(\begin{array}{c}
\omega'_x\\
\omega''_x
\end{array}
\right)
=
\left(\begin{array}{c}
-ic(\pi_{S^1}^*\bar{\xi})\omega'_x\\
-ic(\pi_{S^1}^*\bar{\xi})\omega''_x
\end{array}
\right).
\end{align*} 
On the other hand, we see from the explicit expression of the operator $T$ described in Theorem \ref{Theorem:OpT} that its principal symbol is 
\begin{align*}
\sigma_{P}(T)(y,\bar{\xi})=\sigma_{p}(D_{M_0/S^1})(y,\bar{\xi})\otimes
\left(\begin{array}{cc}
1&0\\
0&1
\end{array}
\right)
=-ic(\bar{\xi})\otimes
\left(\begin{array}{cc}
1&0\\
0&1
\end{array}
\right),
\end{align*}
where $(y,\bar{\xi})\in T_{\mathbb{C}}^*(M_0/S^1)$. Hence, using the notation of Section \ref{Section:ConstrF}, we see that
\begin{align*}
\pi_{S^1}'\left(\sigma_P(D)(x,\pi_{S^1}^*\bar{\xi})
\left(\begin{array}{c}
\omega'_x\\
\omega''_x
\end{array}
\right)\right)
=\left(\begin{array}{c}
-ic(\bar{\xi})\omega'_y\\
-ic(\bar{\xi})\omega''_y\\
\end{array}\right)
=
\sigma_P(T)(y,\bar{\xi})
\pi'_{S^1}
\left(\begin{array}{c}
\omega'_x\\
\omega''_x
\end{array}
\right),
\end{align*}
which verifies the symbol equation of Proposition \ref{Prop:SDiff}. In particular, we see that $T$ is elliptic. 
\end{remark}

\subsection{Dirac-Schr\"odinger operators}

We have obtained in Theorem \ref{Theorem:OpT} an operator $T$ which is self-adjoint in $L^2(F,h)$ but not in $L^{2}(F)$, the $L^2$-inner product without the weight $h$ (Remark \ref{Rmk:L2F}). For example if we took the adjoint in $L^2(F)$  of $D_{M_0/S^1}+\iota_{\bar{H}}$ we would obtain $(D_{M_0/S^1}+\iota_{\bar{H}})^\dagger=D_{M_0/S^1}+\bar{\kappa}\wedge$. To obtain a self-adjoint operator in $L^2(F)$ we perform the following unitary transformation:
\begin{equation}\label{Eqn:U}
\omega=\left(
\begin{array}{c}
\omega_0\\
\omega_1
\end{array}
\right)\longmapsto 
U(\omega)\coloneqq
h^{-1/2}
\left(
\begin{array}{c}
\omega_0\\
\omega_1
\end{array}
\right),
\end{equation}
for $\omega_0,\omega_1\in\Omega_c(M_0/S^1)$. Note that $\norm{U(\omega)}_{L^2(F,h)}=\norm{\omega}_{L^2(F)}$. Using this transformation we want to compute an explicit formula for the operator $\widehat{T}\coloneqq U^{-1}TU$, i.e. the operator defined by the following commutative diagram:
\begin{align*}
\xymatrixrowsep{2cm}\xymatrixcolsep{2cm}\xymatrix{
 L^2(F)  \ar[d]_-{U}\ar[r]^-{\widehat{T}} & L^2(F) \ar[d]^-{U}\\
\text{Dom}(T)\subset L^2(F,h)  \ar[r]^-{T} & L^2(F,h),
}
\end{align*}
with $\text{Dom}(\widehat{T} )\coloneqq U^{-1}(\text{Dom}(T))$. Clearly the operator $\widehat{T}$ is a self-adjoint operator on $L^2(F)$ since it is unitarily equivalent to $T$. To begin the transformation of this operator it is convenient to compute the exterior derivative of $h^{-1/2}$.
\begin{lemma}\label{Lemmadh}
The volume of the orbit function $h:M_0/S^1\longrightarrow \mathbb{R}$ satisfies 
\begin{align*}
d(h^{\pm1/2})=\mp\frac{1}{2}h^{\pm1/2}\bar{\kappa}.
\end{align*}
\end{lemma}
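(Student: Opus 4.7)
The plan is to reduce this statement to a direct consequence of Lemma \ref{Lemma:dh}, which already computes $dh$ on $M_0$ in terms of the mean curvature form $\kappa$. The key point is that $h$ is naturally a function on the quotient $M_0/S^1$, and the form $\kappa$ on $M_0$ is basic (Corollary \ref{Coro:kappa}), so it descends via $\pi_{S^1}^*$ to the form $\bar\kappa$ on $M_0/S^1$ used in the statement.

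First I would record that $dh=-h\bar\kappa$ on $M_0/S^1$. To see this, pull back both sides by $\pi_{S^1}$: the left-hand side becomes $\pi_{S^1}^*(dh)=d(\pi_{S^1}^* h)$, and by definition of $h$ together with Lemma \ref{Lemma:dh} this equals $-(\pi_{S^1}^* h)\,\kappa = -(\pi_{S^1}^*h)(\pi_{S^1}^*\bar\kappa)=\pi_{S^1}^*(-h\bar\kappa)$. Since $\pi_{S^1}^*$ is injective on forms on $M_0/S^1$ (as $\pi_{S^1}$ is a surjective submersion), this gives the claimed identity on the quotient.

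Next I would apply the chain rule for exterior differentiation of a composition with a smooth function. For any positive smooth function $h$ and any real exponent, $d(h^{s})=s\,h^{s-1}dh$. Taking $s=\pm 1/2$ and substituting $dh=-h\bar\kappa$ yields
\begin{equation*}
d(h^{\pm 1/2})=\pm\tfrac{1}{2}h^{\pm 1/2 - 1}\,dh=\pm\tfrac{1}{2}h^{\pm 1/2 - 1}(-h\bar\kappa)=\mp\tfrac{1}{2}h^{\pm 1/2}\bar\kappa,
\end{equation*}
which is exactly the asserted formula. There is no substantive obstacle here; the only thing to verify carefully is the descent of Lemma \ref{Lemma:dh} from $M_0$ to $M_0/S^1$, but this is immediate from the basic-ness of $\kappa$ combined with Proposition \ref{Prop:BasicPullback}.
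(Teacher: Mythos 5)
Your proposal is correct and follows essentially the same route as the paper: apply the chain rule $d(h^{\pm1/2})=\pm\tfrac{1}{2}h^{\pm1/2}\,dh/h$ and substitute $dh=-h\kappa$ from Lemma \ref{Lemma:dh}. The extra care you take in descending that identity from $M_0$ to $M_0/S^1$ via the basic-ness of $\kappa$ is a welcome clarification of a point the paper leaves implicit, but it does not change the argument.
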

\begin{proof}
We compute using Lemma \ref{Lemma:dh},
\begin{equation*}
d(h^{\pm1/2})=\pm\frac{1}{2}h^{\pm1/2}\frac{dh}{h}=\mp\frac{1}{2}h^{\pm1/2}\bar{\kappa}.
\end{equation*}
\end{proof}

\begin{theorem}\label{Thm:THat}
The operator $\widehat{T}=U^{-1}TU$ is given by 
\begin{equation*}
\widehat{T}=
\left(
\begin{array}{cc}
D_{M_0/S^1}+\frac{1}{2}\widehat{c}(\bar{\kappa})&\varepsilon \bar{\varphi}_0\wedge\\
\varepsilon(\bar{\varphi}_0\wedge)^\dagger &  D_{M_0/S^1}-\frac{1}{2}\widehat{c}(\bar{\kappa})
\end{array}
\right),
\end{equation*}
where $\widehat{c}(\bar{\kappa})\coloneqq\bar{\kappa}\wedge+\iota_{\bar{\kappa}^\sharp}$ is the right Clifford multiplication. 
\end{theorem}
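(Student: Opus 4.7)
The plan is to conjugate the matrix expression for $T$ given in Theorem \ref{Theorem:OpT} by the scalar multiplication operator $U = h^{-1/2}\cdot$, noting that $U^{-1}TU = h^{1/2}T h^{-1/2}$ acts diagonally on the block decomposition since $h^{1/2}$ is a scalar function on $M_0/S^1$ and therefore commutes with the off-diagonal multiplication operators $\varepsilon(\bar{\varphi}_0\wedge)$ and $\varepsilon(\bar{\varphi}_0\wedge)^\dagger$. Hence the entire computation reduces to evaluating the two commutator corrections
\begin{align*}
h^{1/2}(D_{M_0/S^1}+\iota_{\bar{\kappa}^\sharp})h^{-1/2}\quad\text{and}\quad h^{1/2}(D_{M_0/S^1}-\bar{\kappa}\wedge)h^{-1/2}
\end{align*}
on the diagonal.

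The key input is Proposition \ref{Prop:CommDirOpFunct}, which says that for any smooth function $f$ and form $\omega$ on $M_0/S^1$, $D_{M_0/S^1}(f\omega)=c(df)\omega+f D_{M_0/S^1}\omega$. Applied to $f=h^{-1/2}$ and combined with Lemma \ref{Lemmadh}, which gives $dh^{-1/2}=\tfrac{1}{2}h^{-1/2}\bar{\kappa}$, this yields the commutator identity
\begin{align*}
h^{1/2}D_{M_0/S^1}h^{-1/2}=D_{M_0/S^1}+\tfrac{1}{2}c(\bar{\kappa})=D_{M_0/S^1}+\tfrac{1}{2}(\bar{\kappa}\wedge-\iota_{\bar{\kappa}^\sharp}).
\end{align*}
Since the zero-order operators $\iota_{\bar{\kappa}^\sharp}$ and $\bar{\kappa}\wedge$ commute with $h^{\pm 1/2}$, substituting gives on the top-left block
\begin{align*}
D_{M_0/S^1}+\tfrac{1}{2}(\bar{\kappa}\wedge-\iota_{\bar{\kappa}^\sharp})+\iota_{\bar{\kappa}^\sharp}=D_{M_0/S^1}+\tfrac{1}{2}(\bar{\kappa}\wedge+\iota_{\bar{\kappa}^\sharp})=D_{M_0/S^1}+\tfrac{1}{2}\widehat{c}(\bar{\kappa}),
\end{align*}
and on the bottom-right block
\begin{align*}
D_{M_0/S^1}+\tfrac{1}{2}(\bar{\kappa}\wedge-\iota_{\bar{\kappa}^\sharp})-\bar{\kappa}\wedge=D_{M_0/S^1}-\tfrac{1}{2}\widehat{c}(\bar{\kappa}).
\end{align*}
Assembling the four blocks yields the stated expression for $\widehat{T}$.

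There is no real obstacle here: the argument is a two-line commutator calculation once the derivation formula $D_{M_0/S^1}(f\omega)=c(df)\omega+fD_{M_0/S^1}\omega$ and $dh^{-1/2}=\tfrac{1}{2}h^{-1/2}\bar{\kappa}$ are in hand. The only thing to be slightly careful about is the sign convention for $c(\bar{\kappa})=\bar{\kappa}\wedge-\iota_{\bar{\kappa}^\sharp}$ versus $\widehat{c}(\bar{\kappa})=\bar{\kappa}\wedge+\iota_{\bar{\kappa}^\sharp}$, so that the mismatch between the diagonal correction $\tfrac{1}{2}c(\bar{\kappa})$ coming from the conjugation and the existing zero-order terms $\pm\bar{\kappa}\wedge,\pm\iota_{\bar{\kappa}^\sharp}$ in $T$ combines precisely into $\pm\tfrac{1}{2}\widehat{c}(\bar{\kappa})$. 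The fact that $\widehat{T}$ is again self-adjoint requires no separate proof because $U$ is unitary between $L^2(F,h)$ and $L^2(F)$ by construction \eqref{Eqn:U}, so self-adjointness is inherited from Theorem \ref{Theorem:OpT}.
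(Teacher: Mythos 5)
Your proof is correct and follows essentially the same route as the paper: Lemma \ref{Lemmadh} gives $d(h^{-1/2}) = \tfrac{1}{2}h^{-1/2}\bar{\kappa}$, conjugation by $h^{\pm 1/2}$ produces the commutator correction $\tfrac{1}{2}c(\bar{\kappa})$ on the diagonal, and the off-diagonal zero-order blocks are untouched. The only (cosmetic) difference is that you obtain $h^{1/2}D_{M_0/S^1}h^{-1/2}=D_{M_0/S^1}+\tfrac{1}{2}c(\bar{\kappa})$ in one shot from Proposition \ref{Prop:CommDirOpFunct}, whereas the paper derives it by conjugating $d$ and $d^{\dagger}$ separately and taking adjoints; both are the same calculation.
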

\begin{proof}
Using the last lemma we get
\begin{align*}
h^{1/2}d_{M_0/S^1}h^{-1/2}=d_{M_0/S^1}+\frac{1}{2}\bar{\kappa}\wedge.
\end{align*}
Similarly, by taking the adjoint, we get 
\begin{align*}
h^{1/2}d^\dagger_{M_0/S^1}h^{-1/2}=(h^{-1/2}d_{M_0/S^1}h^{1/2})^\dagger=\left(d_{M_0/S^1}-\frac{1}{2}\bar{\kappa}\wedge\right)^\dagger=d^\dagger_{M_0/S^1}-\frac{1}{2}\iota_{\bar{\kappa}^\sharp}.
\end{align*}
Therefore, the operator $D_{M_0/S^1}$ transforms as
\begin{align*}
h^{1/2}D_{M_0/S^1}h^{-1/2}=D_{M_0/S^1}+\frac{1}{2}c(\bar{\kappa}).
\end{align*}
The result follows now immediately from Theorem \ref{Theorem:OpT}.
\end{proof}

\begin{remark}
Observe from Lemma \ref{Lemma:star} that
\begin{align*}
\left(U^{-1}\circ\Phi\circ\star\bigg{|}_{\Omega(M_0)^{S^1}}\circ \Phi^{-1}\circ U\right)^\dagger
=&i^{q(n)}(-1)^n(-1)^{q(n)}\left(\begin{array}{cc}
0 & (\varepsilon\bar{\star})^\dagger\\
(-\varepsilon\bar{\star})^\dagger & 0
\end{array}
\right)\\
=&i^{q(n)}(-1)^n(-1)^{q(n)}\left(\begin{array}{cc}
0 & \bar{\star}\varepsilon\\
- \bar{\star}\varepsilon& 0
\end{array}
\right)\\
=&i^{q(n)}(-1)^n(-1)^{q(n)+n+1}\left(\begin{array}{cc}
0 & -\varepsilon\bar{\star}\\
 \varepsilon \bar{\star}& 0
\end{array}
\right)\\
=&i^{q(n)}(-1)^n\left(\begin{array}{cc}
0 & -\varepsilon\bar{\star}\\
 \varepsilon \bar{\star}& 0
\end{array}
\right)\\
=&\star\bigg{|}_{\Omega(M_0)^{S^1}},
\end{align*}
where we have used the relation $(q(n)+n+1)\text{mod}(2)=0$. This computation just verifies what we expected.
\end{remark}

\subsubsection{An involution on $F$}\label{Sec:AnInvolution}
As we are interested in Fredholm indices, we would like to find a self-adjoint involution which anti-commutes with $\widehat{T}$ in order to split this operator. Since the dimension of $M_0/S^1$ is $n$ then $\bar{\star}D_{M_0/S^1}+(-1)^n D_{M_0/S^1}\bar{\star}=0$, thus a  first natural candidate is 
\begin{align}\label{Def:BigStar}
{\bigstar}\coloneqq
\left(
\begin{array}{cc}
0 & \bar{\star}\\
\bar{\star} &0
\end{array}
\right).
\end{align}
From Proposition \ref{Prop:Chirl} and Proposition \ref{Prop:CliffMultVarphi} we compute 
\begin{align*}
\bigstar\widehat{T} =&
\left(
\begin{array}{cc}
0 & \bar{\star}\\
\bar{\star} &0
\end{array}
\right)
\left(
\begin{array}{cc}
D_{M_0/S^1}+\frac{1}{2}\widehat{c}(\bar{\kappa})&\varepsilon \bar{\varphi}_0\wedge\\
\varepsilon(\bar{\varphi}_0\wedge)^\dagger &  D_{M_0/S^1}-\frac{1}{2}\widehat{c}(\bar{\kappa})
\end{array}
\right)\\
=&
\left(
\begin{array}{cc}
\bar{\star} \varepsilon (\bar{\varphi}_0\wedge)^\dagger & \bar{\star}( D_{M_0/S^1}-\frac{1}{2}\widehat{c}(\bar{\kappa}))\\
\bar{\star}( D_{M_0/S^1}+\frac{1}{2}\widehat{c}(\bar{\kappa})) & \bar{\star} \varepsilon \bar{\varphi}_0\wedge 
\end{array}
\right)\\
=&
(-1)^{n+1}\left(
\begin{array}{cc}
 \varepsilon (\bar{\varphi}_0\wedge)\bar{\star} & ( D_{M_0/S^1}+\frac{1}{2}\widehat{c}(\bar{\kappa}))\bar{\star}\\
( D_{M_0/S^1}-\frac{1}{2}\widehat{c}(\bar{\kappa})) \bar{\star}& \varepsilon (\bar{\varphi}_0\wedge)^\dagger \bar{\star}
\end{array}
\right)\\
=&
(-1)^{n+1}\widehat{T}\bigstar.
\end{align*}
This implies that if $n$ is even then we can decompose 
\begin{align*}
\widehat{T}=
\left(
\begin{array}{cc}
0 & \widehat{T}^-\\
\widehat{T}^+ & 0
\end{array}
\right),
\end{align*}
with respect to the involution $\bigstar$. 
\begin{example}[Spinning a closed manifold III]\label{Ex:ClosedManifold}
Consider again the free action treated in Example \ref{Ex:SpiningM} for the closed manifold $M\coloneqq X\times S^1$. Since the volume of the orbits are constant then $\kappa=0$ and $\varphi_0=0$. Therefore the operator $\widehat{T}$ of Theorem \ref{Thm:THat} reduces to 
\begin{align*}
\widehat{T}=
\left(
\begin{array}{cc}
d_X+d^\dagger_X & 0\\
0 & d_X+d^\dagger_X
\end{array}
\right).
\end{align*}
Taking its square we get  
\begin{align*}
\widehat{T}^2=
\left(
\begin{array}{cc}
\Delta_X & 0\\
0 & \Delta_X 
\end{array}
\right),
\end{align*}
where $\Delta_X$ denotes the Laplacian on $X$. Observe that $\ker\widehat{T}=\ker \widehat{T}^2=\mathcal{H}_\Delta \otimes\mathbb{C}^2$, where $\mathcal{H}_\Delta$ denotes the space of harmonic differential forms on $X$. The involution \eqref{Def:BigStar} in this example is 
\begin{align*}
\bigstar\coloneqq
\left(
\begin{array}{cc}
0 & \star_X\\
\star_X &0
\end{array}
\right),
\end{align*}
 where $\star_X$ is the chirality operator of $X$. Observe that if $\alpha\in \mathcal{H}^l_\Delta$ is an harmonic differential form of degree $l$ then $\star_X\alpha\in \mathcal{H}^{4k-l}_\Delta$ is an harmonic $(4k-l)$-differential form and 
\begin{align*}
\bigstar\left(\begin{array}{c}
\alpha\\
\pm\star_X\alpha
\end{array}\right)
=\pm
\left(\begin{array}{c}
\alpha\\
\pm \star_X\alpha
\end{array}\right).
\end{align*}
This shows that the map 
\begin{align*}
\left(\begin{array}{c}
\alpha\\
\star_X\alpha
\end{array}\right)
\longmapsto
\left(\begin{array}{c}
\alpha\\
-\star_X\alpha
\end{array}\right),
\end{align*}
induces an isomorphism between $\ker\widehat{T}^+$ and $\ker\widehat{T}^-$, so we see that $\ind (\widehat{T}^+)=0$. 
\end{example}

We can give a geometric interpretation of the involution $\bigstar$ in terms of an involution on $M$ in the general case of a semi-free $S^1$-action. Assume that $n$ is even so that the closed manifold $M$ has odd dimension. Define the operator $a\coloneqq i\varepsilon\star$ acting on $\wedge_\mathbb{C}T^*M$. Using \eqref{PropertiesEpsilonStar} we can easily verify
\begin{itemize}
\item $a^\dagger=-i\star\varepsilon=i\varepsilon\star=a$.
\item $a^2 =(i\varepsilon\star)^2=-\varepsilon\star \varepsilon\star=1$.
\end{itemize}
In addition, if $D$ is the Hodge- de Rham operator of $M$, then
\begin{align*}
aD=i\varepsilon\star D=i\varepsilon D\star=-iD\varepsilon\star=-Da.
\end{align*}
Thus, we can decompose $D$ with respect to $a$ as 
\begin{align*}
D=
\left(\begin{array}{cc}
0 & D^{(+)}\\
D^{(-)} & 0
\end{array}\right).
\end{align*}
Recall however, the index of an elliptic operator on an odd dimensional closed manifold vanishes (\cite[Theorem III.13.12]{LM89}). We claim that the involution $a$ restricted to $S^1$-invariant forms is precisely $\bigstar$. To see this recall from Example \ref {Ex:EpsilonInvDec}  and Lemma \ref{Lemma:star},  
\begin{equation*}
\star\bigg{|}_{\Omega(M_0)^{S^1}}= 
\left(\begin{array}{cc}
0 & -i\varepsilon\bar{\star}\\
i\varepsilon \bar{\star}& 0 
\end{array}
\right)\quad\text{and}\quad
\varepsilon\bigg{|}_{\Omega(M_0)^{S^1}}=
\left(\begin{array}{cc}
\varepsilon & 0\\
0 & -\varepsilon
\end{array}
\right).
\end{equation*}
Thus, we simply compute and verify
\begin{align*}
a\bigg{|}_{\Omega(M_0)^{S^1}}=
\left(\begin{array}{cc}
i\varepsilon & 0\\
0 & -i\varepsilon
\end{array}
\right)
\left(\begin{array}{cc}
0 & -i\varepsilon\bar{\star}\\
i\varepsilon \bar{\star}& 0 
\end{array}
\right)=
\left(\begin{array}{cc}
0 & \bar{\star}\\
\bar{\star} & 0
\end{array}
\right)=\bigstar.
\end{align*}
This explains the relation $\bigstar\widehat{T}+\widehat{T}\bigstar=0$ and clarifies why $\ind(\widehat{T}^+)=\ind(D^{(+)})=0$ in Example \ref{Ex:ClosedManifold} is zero.\\

This result motivates the need to consider two possible directions:
\begin{enumerate}
\item Find another involution anti-commuting with $\widehat{T}$ which produces a non-trivial index. 
\item Perform the push down procedure for other geometric operators on $M$ so that the resulting operators anti-commute with $\bar{\star}$. 
\end{enumerate}
As we are interested in the equivariant $S^1$-signature and in view of Example \ref{Ex:SpiningM} and Example \ref{Ex:SpiningMwB}, it seems more convenient to follow the second option. This will be the content of the following sections.

\subsubsection{The positive signature operator}

Let us assume now that $n=4k$ . In this situation the Hodge-de Rham operator $D$ on $M$ commutes with $\star$ and therefore it defines the graded component $D^+:\Omega^+(M)\longrightarrow\Omega^+(M)$ where $\Omega^+(M)$ denotes the $+1$-eigenforms of $\star$. Our aim is to perform the analogous push down construction for the operator $D^+$ to obtain the corresponding self-adjoint operator $\widehat{T}(D^+)$.  We begin by computing the restriction operator 
$$S(D^+)\coloneqq D^+\bigg{|}_{\Omega_c^+(M_0)^{S^1}}=(d+\star d)\bigg{|}_{\Omega_c^+(M_0)^{S^1}}.$$ 
Observe from Corollary \ref{Coro:DecInvForm} and  Lemma \ref{Lemma:star}  that we can identify
\begin{align}\label{Eqn:BasS1InvPlus}
\xymatrixrowsep{0.01cm}\xymatrixcolsep{2cm}\xymatrix{
\Omega_{\text{bas}}(M_0) \ar[r] & \Omega^{+}(M_0)^{S^1}\\
\omega\ar@{|->}[r] & 
{\left(\begin{array}{c}
\omega \\
{i\varepsilon \bar{\star}}\omega
\end{array}\right).}
}
\end{align}
Indeed, for $\omega\in\Omega_\text{bas}(M_0)$ we verify the parity condition
\begin{align*}
\left(\begin{array}{cc}
0 & -i\varepsilon \bar{\star} \\
i\varepsilon \bar{\star} & 0 
\end{array}\right)
\left(\begin{array}{c}
\omega \\
{i\varepsilon \bar{\star}}\omega
\end{array}\right)
=
\left(\begin{array}{c}
\varepsilon\bar{\star}\varepsilon\bar{\star}\omega\\
i\varepsilon\bar{\star}\omega
\end{array}\right)
=
\left(\begin{array}{c}
\omega \\
{i\varepsilon \bar{\star}}\omega
\end{array}\right).
\end{align*}

Now we calculate $S(D^+)$ by means of this identification. Using Theorem \ref{Thm:OpInv} we compute for $\omega\in\Omega_{\text{bas}}(M_0)$,
\begin{align*}
d\left(\begin{array}{c}
\omega\\
i\varepsilon\bar{\star}\omega
\end{array}
\right)=&\left(
\begin{array}{cc}
d & \varepsilon\varphi_0\wedge\\
0 & d-\kappa\wedge
\end{array}
\right)
\left(
\begin{array}{c}
\omega\\
i\varepsilon\bar{\star}\omega
\end{array}
\right)\\
=&
\left(
\begin{array}{c}
d \omega+\varepsilon\varphi_0\wedge i\varepsilon\bar{\star}\omega\\
d i\varepsilon\bar{\star}\omega-\kappa \wedge i\varepsilon\bar{\star}\omega
\end{array}
\right)\\
=&
\left(
\begin{array}{c}
d \omega+i\varphi_0\wedge \bar{\star}\omega\\
i\varepsilon\bar{\star}((-\bar{\star}d\bar{\star})\omega+\iota_H\omega)
\end{array}
\right),
\end{align*}
and 
\begin{align*}
\star d\left(\begin{array}{c}
\omega\\
i\varepsilon\bar{\star}\omega
\end{array}
\right)=&
\left(\begin{array}{cc}
0 & -i\varepsilon\bar{\star}\\
 i\varepsilon\bar{\star} & 0
\end{array}
\right)
\left(
\begin{array}{cc}
d & \varepsilon\varphi_0\wedge\\
0 & d-\kappa\wedge
\end{array}
\right)
\left(
\begin{array}{c}
\omega\\
i\varepsilon\bar{\star}\omega
\end{array}
\right)\\
=&
\left(\begin{array}{cc}
0 & -i\varepsilon\bar{\star}\\
 i\varepsilon\bar{\star} & 0
\end{array}
\right)
\left(
\begin{array}{c}
d \omega+i\varphi_0\wedge \bar{\star}\omega\\
i\varepsilon\bar{\star}((-\bar{\star}d\bar{\star})\omega+\iota_H\omega)
\end{array}
\right)\\
=&
\left(
\begin{array}{c}
(-\bar{\star}d\bar{\star})\omega+\iota_H\omega\\
i\varepsilon\bar{\star} (d \omega+i\varphi_0\wedge \bar{\star}\omega)\\
\end{array}
\right).
\end{align*}
Therefore,
\begin{align*}
S(D^+):=D^+\bigg{|}_{\Omega_c^+(M_0)^{S^1}}:
\left(
\begin{array}{c}
\omega\\
i\varepsilon\bar{\star}\omega
\end{array}
\right)
\longmapsto
\left(
\begin{array}{c}
d \omega+(-\bar{\star}d\bar{\star})\omega+\iota_H\omega +i\varphi_0\wedge \bar{\star}\omega\\
i\varepsilon\bar{\star}(d \omega+(-\bar{\star}d\bar{\star})\omega+\iota_H\omega +i\varphi_0\wedge \bar{\star}\omega)\\
\end{array}
\right). 
\end{align*}
Under the identification $\Omega_{\text{bas}}(M_0)$ with $\Omega^+(M_0)^{S^1}$ of \eqref{Eqn:BasS1InvPlus} we see that the induced operator on basic forms $S_\text{bas}(D^+):\Omega_{\text{bas},c}(M_0)\longrightarrow \Omega_{\text{bas},c}(M_0)$ is 
\begin{align*}
S_\text{bas}(D^+)\coloneqq d +(-\bar{\star}d\bar{\star})+\iota_H+i\varphi_0\wedge \bar{\star}. 
\end{align*}
The associated operator $T(D^+):\Omega_c(M_0/S^1)\longrightarrow\Omega_c(M_0/S^1)$ induced by $\pi^*_{S^1}$ is then
\begin{align*}
T(D^+)=D_{M_0/S^1}+\iota_{\bar{\kappa}^\sharp}+i\bar{\varphi}_0\wedge \bar{\star}.
\end{align*}
As before, in order to obtain an essentially self-adjoint operator $\widehat{T}(D^+)$ on the core $\Omega_c(M_0/S^1)$ with respect to the $L^2$-inner product on sections of $\wedge_\mathbb{C} T^* (M_0/S^1)$ without the weight $h$, we implement the unitary transformation \eqref{Eqn:U} and use Lemma \ref{Lemmadh},
\begin{align*}
\widehat{T}(D^+)=D_{M_0/S^1}+\frac{1}{2}\widehat{c}(\bar{\kappa})+i\bar{\varphi}_0\wedge\bar{\star}.
\end{align*}
\begin{remark}
From Proposition \ref{Prop:CliffMultVarphi} we see that 
\begin{align*}
(i\bar{\varphi}_0\wedge\bar{\star})^\dagger=-i\bar{\star}(\bar{\varphi}_0\wedge)^\dagger=i\bar{\varphi}_0\wedge\bar{\star},
\end{align*}
thus we see that $\widehat{T}(D^+)$ is indeed symmetric.
\end{remark}
Note however, that the zero order part does not anti-commute with $\bar{\star}$ since
\begin{align*}
\widehat{c}(\bar{\kappa})\bar{\star}=\bar{\star}\widehat{c}(\bar{\kappa}) \quad\text{and}\quad \bar{\star}(i\bar{\varphi}_0\wedge \bar{\star})=-i(\bar{\varphi}_0\wedge)^\dagger.
\end{align*}

\subsubsection{The odd signature operator}

Now we study, still in the case $n=4k$, the odd signature operator of $M$,  
$$A\coloneqq\star D=\star d+ d\star.$$ 
We have seen that $A$ commutes with the involution $\varepsilon$ so we can decompose it into the components $A^{\text{ev/ odd}}:\Omega^{\text{ev/ odd}}(M)\longrightarrow \Omega^{\text{ev/ odd}}(M)$. Our objective is to compute the corresponding operator $\widehat{T}(A^{\text{ev}})$, which is possible since $A^{\text{ev}}$ commutes with the $S^1$-action. \\

Using the decomposition of Corollary \ref{Coro:DecInvForm} and Example \ref{Ex:EpsilonInvDec} we  identify
\begin{align}\label{Eqn:BasS1InvEv}
\xymatrixrowsep{0.01cm}\xymatrixcolsep{2cm}\xymatrix{
{
\begin{array}{c}
\Omega^{\text{ev}}_{\text{bas}}(M_0)\\
\oplus\\
\Omega^{\text{odd}}_{\text{bas}}(M_0)
\end{array}
}
 \ar[r] & \Omega^{\text{ev}}(M_0)^{S^1}\\
{\left(\begin{array}{c}
\omega_0 \\
\omega_1
\end{array}\right)}
\ar@{|->}[r] & 
\omega_0+\omega_1\wedge\chi.}
\end{align}
From this identification and Theorem \ref{Thm:OpInv} we calculate
\begin{align*}
\star d\bigg{|}_{\Omega^\text{ev}(M_0)^{S^1}}=&
\left(
\begin{array}{cc}
0 & -i\varepsilon\bar{\star}\\
i\varepsilon\bar{\star} & 0
\end{array}
\right)
\left(
\begin{array}{cc}
d & \varepsilon\varphi_0\wedge\\
0 & d -\kappa\wedge
\end{array}
\right)=
\left(
\begin{array}{cc}
0 & -i\bar{\star}(d -\kappa\wedge)\\
-i\bar{\star}d & i\bar{\star}(\varphi_0\wedge)
\end{array}
\right),\\
 d\star\bigg{|}_{\Omega^\text{ev}(M_0)^{S^1}}=&
\left(
\begin{array}{cc}
d & \varepsilon\varphi_0\wedge\\
0 & d -\kappa\wedge
\end{array}
\right)
\left(
\begin{array}{cc}
0 & -i\varepsilon\bar{\star}\\
i\varepsilon\bar{\star} & 0
\end{array}
\right)=
\left(
\begin{array}{cc}
i\varphi_0\wedge\bar{\star} & id\bar{\star}\\
i(d-\kappa\wedge)\bar{\star} & 0
\end{array}
\right),
\end{align*}
and obtain
\begin{align*}
 S(A_\text{ev})\coloneqq
 d\star+\star d\bigg{|}_{\Omega^\text{ev}(M_0)^{S^1}}=&
 \left(
\begin{array}{cc}
i\varphi_0\wedge\bar{\star} & id\bar{\star}-i\bar{\star}(d -\kappa\wedge)\\
i(d-\kappa\wedge)\bar{\star}-i\bar{\star}d & i\bar{\star}\varphi_0\wedge
\end{array}
\right)\\
=&
i \left(
\begin{array}{cc}
\varphi_0\wedge\bar{\star} & (d-\bar{\star}d\bar{\star})\bar{\star}+\bar{\star}(\kappa\wedge)\\
(d-\bar{\star}d\bar{\star})\bar{\star}-(\kappa\wedge)\bar{\star} & \bar{\star}\varphi_0\wedge
\end{array}
\right).
\end{align*}
Hence the induced operator in $ L^2(\wedge_{\mathbb{C}}T^*(M_0/S^1),h)$ is 
\begin{align*}
T(A^\text{ev})=
i \left(
\begin{array}{cc}
\bar{\varphi}_0\wedge\bar{\star} & D_{M_0/S^1}\bar{\star}+\bar{\star}(\bar{\kappa}\wedge)\\
 D_{M_0/S^1}\bar{\star}-(\bar{\kappa}\wedge)\bar{\star} & \bar{\star}\bar{\varphi}_0\wedge
\end{array}
\right).
\end{align*}
After performing the unitary transformation \eqref{Eqn:U} we finally obtain 
\begin{align*}
\widehat{T}(A^\text{ev})=i \left(
\begin{array}{cc}
\bar{\varphi}_0\wedge\bar{\star} &  D_{M_0/S^1}\bar{\star}+\frac{1}{2}\widehat{c}(\bar{\kappa})\bar{\star} \\
D_{M_0/S^1}\bar{\star}-\frac{1}{2}\widehat{c}(\bar{\kappa})\bar{\star} & \bar{\star}\bar{\varphi}_0\wedge
\end{array}
\right)
: \begin{array}{c}
\Omega^\text{ev}(M_0/S^1)\\
\bigoplus\\
\Omega^\text{odd}(M_0/S^1)
\end{array}
\longrightarrow
\begin{array}{c}
\Omega^\text{ev}(M_0/S^1)\\
\bigoplus\\
\Omega^\text{odd}(M_0/S^1),
\end{array}
\end{align*}
which can be written in the simpler way
\begin{align*}
\widehat{T}(A^\text{ev})=i\left( D_{M_0/S^1}+\frac{1}{2}\varepsilon \widehat{c}(\bar{\kappa})\right)\bar{\star}+i\left(\frac{1+\varepsilon}{2}\right)\bar{\varphi}_0\wedge\bar{\star}-i\left(\frac{1-\varepsilon}{2}\right)(\bar{\varphi}_0\wedge)^\dagger\bar{\star}.
\end{align*}

\begin{remark}
It is easy to verify that this operator is symmetric. For example, using Proposition \ref{Prop:CliffMultVarphi} we calculate the adjoint of a zero order term
\begin{align*}
\left(i\left(\frac{1+\varepsilon}{2}\right)\bar{\varphi}_0\wedge\bar{\star}\right)^{\dagger}=-i\bar{\star}(\bar{\varphi}_0\wedge)^\dagger\left(\frac{1+\varepsilon}{2}\right)
=i(\bar{\varphi}_0\wedge)\bar{\star}\left(\frac{1+\varepsilon}{2}\right)
=i\left(\frac{1+\varepsilon}{2}\right)(\bar{\varphi}_0\wedge)\bar{\star}.
\end{align*}
Moreover, for the first order part we see that $(iD_{M_0/S^1}\bar{\star})^\dagger=-i\bar{\star}D_{M_0/S^1}=iD_{M_0/S^1}\bar{\star}.$
\end{remark}

Observe however that also in this case the zero order part does not anti-commute with $\bar{\star}$ since $\bar{\star}\varepsilon \widehat{c}(\bar{\kappa})=\varepsilon \widehat{c}(\bar{\kappa})\bar{\star}$.

\subsubsection{The Dirac-Schr\"odinger signature operator}\label{Sect:ConstDiracSchrOp}

Our aim is to implement a variant of the construction described above to construct an operator defined on $\Omega_c(M_0/S^1)$ such that: 
\begin{enumerate}
\item Its first order part is  $D_{M_0/S^1}$. 
\item It anti-commutes with the chirality operator $\bar{\star}$. 
\item It is essentially self-adjoint. 
\end{enumerate}
\begin{remark}\label{Rmk:Heuristic}
For instance, in view of the zero order part of the induced operator $\widehat{T}$ of Theorem \ref{Thm:THat},  we would like to have a term containing $c(\bar{\kappa})$ instead of $\widehat{c}(\bar{\kappa})$ since $\bar{\star}c(\bar{\kappa})+c(\bar{\kappa})\bar{\star}=0$. However, this term would need to me modified somehow since $c(\bar{\kappa})^\dagger=-c(\bar{\kappa})$. 
\end{remark}
In view of Proposition \ref{Prop:SESBasic} and \eqref{Eqn:U} we introduce, for $r=0,1,\cdots 4k$, the unitary transformation
\begin{align*}
\psi_r:
\xymatrixcolsep{2cm}\xymatrixrowsep{0.01cm}\xymatrix{
\Omega_c^{r-1}(M_0/S^1)\oplus \Omega_c^r(M_0/S^1) \ar[r]& \Omega_c^r(M_0)^{S^1}\\
(\omega_{r-1},\omega_{r})\ar@{|->}[r] & h^{-1/2}\left(\pi_{S^1}^*\omega_r+(\pi_{S^1}^*\omega_{r-1})\wedge \chi\right).
}
\end{align*}
From the proof of Theorem \ref{Thm:THat} we immediately obtain the following expressions.
\begin{lemma}\label{Lemma:Psir}
For the maps $\psi_r$ we have the relations
\begin{align*}
d\psi_r(\omega_{r-1},\omega_r)
=&\psi_{r+1}\left(d_{M_0/S^1}\omega_{r-1}-\frac{1}{2}\bar{\kappa}\wedge\omega_{r-1},d_{M_0/S^1}\omega_{r}+\frac{1}{2}\bar{\kappa}\wedge\omega_r+\varepsilon(\bar{\varphi_0}\wedge)\omega_{r-1}\right),\\
d^\dagger\psi_r(\omega_{r-1},\omega_r)
=&\psi_{r-1}\left(d^{\dagger}_{M_0/S^1}\omega_{r-1}-\frac{1}{2}\iota_{\bar{\kappa}^\sharp} \omega_{r-1}+\varepsilon(\bar{\varphi}_0\wedge)^\dagger \omega_r ,d^{\dagger}_{M_0/S^1}\omega_{r}+\frac{1}{2}\iota_{\bar{\kappa}^\sharp}\omega_r\right).
\end{align*}
\end{lemma}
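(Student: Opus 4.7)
My plan is to verify both identities by direct computation, leveraging three basic facts: the graded Leibniz rule, the structural identity $d\chi=-\kappa\wedge\chi+\varphi_0$ from \eqref{Eqn:dchi} (together with $\kappa=\pi_{S^1}^{*}\bar{\kappa}$ and $\varphi_0=\pi_{S^1}^{*}\bar{\varphi}_0$), and the relation $d(h^{-1/2})=\tfrac{1}{2}h^{-1/2}\bar{\kappa}$ from Lemma \ref{Lemmadh}. Since the exterior derivative commutes with $\pi_{S^1}^{*}$, every step reduces to manipulation of forms on $M_0$ which can be read off as a pair of forms on $M_0/S^1$ via the decomposition of Corollary \ref{Coro:DecInvForm}.

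For the first identity, I would write $\psi_r(\omega_{r-1},\omega_r)=h^{-1/2}\pi_{S^1}^{*}\omega_r+(h^{-1/2}\pi_{S^1}^{*}\omega_{r-1})\wedge\chi$ and apply $d$ term by term. The first summand contributes $h^{-1/2}\bigl(\tfrac{1}{2}\kappa\wedge\pi_{S^1}^{*}\omega_r+\pi_{S^1}^{*}d_{M_0/S^1}\omega_r\bigr)$, which is purely horizontal. The second summand contributes
\begin{align*}
h^{-1/2}\bigl(\tfrac{1}{2}\kappa\wedge\pi_{S^1}^{*}\omega_{r-1}+\pi_{S^1}^{*}d_{M_0/S^1}\omega_{r-1}\bigr)\wedge\chi + (-1)^{r-1}h^{-1/2}\pi_{S^1}^{*}\omega_{r-1}\wedge d\chi.
\end{align*}
Substituting \eqref{Eqn:dchi} and using the sign identity $(-1)^{r-1}\pi_{S^1}^{*}\omega_{r-1}\wedge\kappa=\kappa\wedge\pi_{S^1}^{*}\omega_{r-1}$ produces $-\kappa\wedge\pi_{S^1}^{*}\omega_{r-1}\wedge\chi$ plus a horizontal remainder $(-1)^{r-1}h^{-1/2}\pi_{S^1}^{*}\omega_{r-1}\wedge\varphi_0=h^{-1/2}\pi_{S^1}^{*}\bigl(\varepsilon(\bar{\varphi}_0\wedge)\omega_{r-1}\bigr)$. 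Collecting the $\chi$-coefficient gives $d_{M_0/S^1}\omega_{r-1}-\tfrac{1}{2}\bar{\kappa}\wedge\omega_{r-1}$ and the horizontal coefficient becomes $d_{M_0/S^1}\omega_r+\tfrac{1}{2}\bar{\kappa}\wedge\omega_r+\varepsilon(\bar{\varphi}_0\wedge)\omega_{r-1}$; repackaging via $\psi_{r+1}$ gives the claimed formula.

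For the formal adjoint identity, the most economical route is to take the $L^2$-adjoint of the first identity. The family $\{\psi_r\}$ is an $L^2$-isometry onto its image (it is the composition of $\Phi^{-1}$ from Theorem \ref{Thm:Fund} with the pointwise rescaling \eqref{Eqn:U}), so applying the formal adjoint to the matrix operator obtained above and using $(d_{M_0/S^1})^\dagger=d^\dagger_{M_0/S^1}$, $(\bar{\kappa}\wedge)^\dagger=\iota_{\bar{\kappa}^\sharp}$, $(\varepsilon(\bar{\varphi}_0\wedge))^\dagger=\varepsilon(\bar{\varphi}_0\wedge)^\dagger$ (since $\varepsilon$ is self-adjoint and commutes with $\bar{\varphi}_0\wedge$, which shifts degree by $2$) yields precisely the stated formula for $d^\dagger\psi_r$. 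As a sanity check, one may alternatively recompute $d^\dagger\psi_r$ directly via $d^\dagger=(-1)^{n(r+1)+1}\star d\star$ and Lemma \ref{Lemma:star}, or mimic the conjugation by $h^{-1/2}$ used in the proof of Theorem \ref{Thm:THat}.

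The main obstacle is purely bookkeeping of signs: one must track the factors of $(-1)^{r-1}$ arising from the degree of $\omega_{r-1}$, from commuting $\chi$ and $\kappa$ past $\pi_{S^1}^{*}\omega_{r-1}$, and from the action of $\varepsilon$, and verify that the sign of the cross-term $\kappa\wedge\pi_{S^1}^{*}\omega_{r-1}\wedge\chi$ flips from $+\tfrac{1}{2}$ (from $d(h^{-1/2})$) to an overall $-\tfrac{1}{2}$ after absorbing the $-\kappa\wedge\chi$ piece of $d\chi$. Once these signs are settled, identifying the result with the image of $\psi_{r+1}$ (respectively $\psi_{r-1}$) is immediate.
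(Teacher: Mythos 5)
Your proof is correct and follows essentially the same route as the paper, which derives this lemma from the Leibniz-rule computations behind Theorems \ref{Thm:OpInv} and \ref{Thm:THat} (the structural identity $d\chi=-\kappa\wedge\chi+\varphi_0$, $d(h^{-1/2})=\tfrac{1}{2}h^{-1/2}\bar\kappa$, and the isometry of $\psi_r$). Your shortcut of obtaining the $d^\dagger$ identity by taking the formal adjoint of the first is a mild streamlining of the paper's direct $\bar\star$-computation, but it is the same underlying strategy.
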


Now consider the transformations introduced in \cite[Section 5]{BS88},
\begin{align*}
\psi_\text{ev}:&
\xymatrixcolsep{3pc}\xymatrixrowsep{0.5pc}\xymatrix{
\Omega_c(M_0/S^1)\ar[r]& \Omega_c^\text{ev}(M_0)^{S^1}\\
(\omega_0,\cdots,\omega_{4k} )\ar@{|->}[r] & (\psi_0(0,\omega_0),\psi_2(\omega_1,\omega_2),\cdots,\psi_{4k}(\omega_{4k-1},\omega_{4k})),
}\\\\\
\psi_\text{odd}:&
\xymatrixcolsep{3pc}\xymatrixrowsep{0.5pc}\xymatrix{
\Omega_c(M_0/S^1)\ar[r]& \Omega_c^\text{odd}(M_0)^{S^1}\\
(\omega_0,\cdots,\omega_{4k} )\ar@{|->}[r] & (\psi_1(\omega_0,\omega_1),\psi_3(\omega_2,\omega_3),\cdots,\psi_{4k-1}(\omega_{4k-2},\omega_{4k-1})).
}
\end{align*}
Motivated by Remark \ref{Rmk:Heuristic} and Lemma \ref{Lemma:Psir} we define the operator
\begin{equation}\label{Eqn:ConstrD'}
\mathscr{D}'\coloneqq \psi_{\text{odd}}^{-1}d\psi_{\text{ev}}+\psi_{\text{ev}}^{-1}d^\dagger\psi_{\text{odd}}:\Omega_c(M_0/S^1)\longrightarrow\Omega_c(M_0/S^1).
\end{equation}
Clearly $\mathscr{D}'$ is symmetric since
\begin{align*}
(\mathscr{D}')^\dagger
= \psi_{\text{ev}}^{\dagger}d^\dagger(\psi_{\text{odd}}^{-1})^\dagger+\psi_{\text{odd}}^{\dagger}d (\psi_{\text{ev}}^{-1})^\dagger
= \psi_{\text{ev}}^{-1}d^\dagger\psi_{\text{odd}}+\psi_{\text{odd}}^{-1}d\psi_{\text{ev}}
= \mathscr{D}'.
\end{align*}
\begin{example}
Let us describe the action of the operator $\mathscr{D}'$, using Lemma \ref{Lemma:Psir}, in two particular cases:
\begin{itemize}
\item Let $\omega_1\in\Omega^1_c(M_0/S^1)$, then 
\begin{align*}
\psi_{\text{odd}}^{-1}d\psi_{\text{ev}}\omega_1=&\psi_{\text{odd}}^{-1}d\psi_{2}(\omega_1,0)\\
=&\psi_{\text{odd}}^{-1}\psi_{3}\left(d_{M_0/S^1}\omega_1-\frac{1}{2}\bar{\kappa}\wedge\omega_1,-\bar{\varphi}_0\wedge\omega_1\right)\\
=&d_{M_0/S^1}\omega_1-\frac{1}{2}\bar{\kappa}\wedge\omega_1-\bar{\varphi}_0\wedge\omega_1.
\end{align*}
Similarly, we compute
\begin{align*}
\psi_{\text{ev}}^{-1}d^\dagger\psi_{\text{odd}}\omega_1=&\psi_{\text{odd}}^{-1}d^\dagger \psi_{1}(0,\omega_1)\\
=&\psi_{\text{ev}}^{-1}\psi_{0}\left(0 ,d^\dagger_{M_0/S^1}\omega_1+\frac{1}{2}\iota_{\bar{\kappa}^\sharp}\omega_1\right)\\=& d^\dagger_{M_0/S^1}\omega_1+\frac{1}{2}\iota_{\bar{\kappa}^\sharp}\omega_1.
\end{align*}
Hence 
\begin{align*}
\mathscr{D}'\omega_1=D_{M_0/S^1}\omega_1-\frac{1}{2}c(\bar{\kappa})\omega_1-\bar{\varphi}_0\wedge\omega_1.
\end{align*}
\item On the other hand, for $\omega_2\in\Omega^2_c(M_0/S^1)$, we have
\begin{align*}
\psi_{\text{odd}}^{-1}d\psi_{\text{ev}}\omega_2=&\psi_{\text{odd}}^{-1}d\psi_{2}(0,\omega_2)\\
=&\psi_{\text{odd}}^{-1}\psi_{3}\left(0,d_{M_0/S^1}\omega_2+\frac{1}{2}\bar{\kappa}\wedge\omega_2\right)\\
=&d_{M_0/S^1}\omega_1+\frac{1}{2}\bar{\kappa}\wedge\omega_2
\end{align*}
and 
\begin{align*}
\psi_{\text{ev}}^{-1}d^\dagger\psi_{\text{odd}}\omega_2=&\psi_{\text{odd}}^{-1}d^\dagger \psi_{3}(\omega_2,0)\\
=&\psi_{\text{ev}}^{-1}\psi_{2}\left(0 ,d^\dagger_{M_0/S^1}\omega_1+\frac{1}{2}\iota_{\bar{\kappa}^\sharp}\omega_1\right)\\=& d^\dagger_{M_0/S^1}\omega_1+\frac{1}{2}\iota_{\bar{\kappa}^\sharp}\omega_1.
\end{align*}
Thus we get
\begin{align*}
\mathscr{D}'\omega_2=D_{M_0/S^1}\omega_2+\frac{1}{2}c(\bar{\kappa})\omega_2.
\end{align*}
\end{itemize}
\end{example}

In view of this example it is easy to verify, using Lemma \ref{Lemma:Psir} and the explicit expression of $\psi_{\text{ev/odd}}$, that with respect to the decomposition  $\Omega_c(M_0/S^1)=\bigoplus_{r\geq 0} \Omega^r_c(M_0/S^1)$ we can express $\mathscr{D}'$ in matrix form as
\begin{align*}
\mathscr{D}'
=&\left(
\begin{array}{ccccc}
0 & d^{\dagger}_{M_0/S^1} & 0& \cdots & 0\\
d_{M_0/S^1} & 0 & 0 & \cdots & 0\\
\vdots & & & & \vdots\\
0 & \cdots& 0&  0 &   d^{\dagger}_{M_0/S^1} \\
0 & \cdots& 0& d_{M_0/S^1} & 0
\end{array}
\right)+\\
&\left(
\begin{array}{cccccccccc}
0 & \frac{1}{2}\iota_{\bar{\kappa}^\sharp} & 0& 0 &0 &0&0&\cdots&& 0\\
\frac{1}{2}\bar{\kappa}\wedge & 0 & -\frac{1}{2}\iota_{\bar{\kappa}^\sharp} & \varepsilon(\bar{\varphi}_0\wedge)^\dagger& 0 &0&0&\cdots&& 0\\
0 &-\frac{1}{2}\bar{\kappa}\wedge& 0& \frac{1}{2}\iota_{\bar{\kappa}^\sharp} &0 &0&0&\cdots&& 0\\
0 & \varepsilon(\bar{\varphi}_0\wedge)& \frac{1}{2}\bar{\kappa}\wedge & 0 &-\frac{1}{2}\iota_{\bar{\kappa}^\sharp} & \varepsilon(\bar{\varphi}_0\wedge)^\dagger &0&\cdots&& 0\\
0&0 & 0&-\frac{1}{2}\bar{\kappa}\wedge & 0&\frac{1}{2}\iota_{\bar{\kappa}^\sharp} &0&\cdots&& 0\\
0&0 & 0&  \varepsilon(\bar{\varphi}_0\wedge)& \frac{1}{2}\bar{\kappa}\wedge&0& &&&\vdots\\
\vdots & & & & &&\ddots&&& \\
0 & \cdots& &&&   &  &0&-\frac{1}{2}\iota_{\bar{\kappa}^\sharp}& \varepsilon(\bar{\varphi}_0\wedge)^\dagger\\
0 & \cdots& &&&  &  &-\frac{1}{2}\bar{\kappa}\wedge&0& \frac{1}{2}\iota_{\bar{\kappa}^\sharp} \\
0 & \cdots&  &&& &  & \varepsilon(\bar{\varphi}_0\wedge)&\frac{1}{2}\bar{\kappa}\wedge& 0
\end{array}
\right).
\end{align*}
From this description we obtain the following result.
\begin{proposition}
The operator $\mathscr{D}'\coloneqq \psi_{\textnormal{odd}}^{-1}d\psi_{\textnormal{ev}}+\psi_{\textnormal{ev}}^{-1}d^\dagger\psi_{\textnormal{odd}}:\Omega_c(M_0/S^1)\longrightarrow\Omega_c(M_0/S^1)$ is explicitly given by 
\begin{equation*}
\mathscr{D}'=D_{M_0/S^1}+\frac{1}{2}c(\bar{\kappa})\varepsilon-\widehat{c}(\bar{\varphi}_0)\left(\frac{1-\varepsilon}{2}\right).
\end{equation*}
Thus, it is a first order elliptic operator. Moreover, it is symmetric on $\Omega_c(M_0/S^1)$ and anti-commutes with the chirality operator $\bar{\star}$. 
\end{proposition}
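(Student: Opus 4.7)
The explicit formula is essentially already visible from the matrix representation displayed above the statement; the plan is to read it off component-wise and then to verify the three remaining assertions (first-order ellipticity, symmetry, anti-commutation with $\bar\star$) as short consequences.

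First, I would apply $\mathscr D'$ to a pure-degree form $\omega_r\in\Omega^r_c(M_0/S^1)$ and use Lemma \ref{Lemma:Psir} twice, splitting into the four cases $r$ even/odd and whether we are computing $\psi_\text{odd}^{-1}d\psi_\text{ev}$ or $\psi_\text{ev}^{-1}d^\dagger\psi_\text{odd}$. For $r$ even we have $\psi_\text{ev}\omega_r=\psi_r(0,\omega_r)$, so Lemma \ref{Lemma:Psir} produces $\psi_{r+1}(d_{M_0/S^1}\omega_r+\tfrac12\bar\kappa\wedge\omega_r,\,0)$, contributing the degree-$(r+1)$ term $d_{M_0/S^1}\omega_r+\tfrac12\bar\kappa\wedge\omega_r$; likewise $\psi_\text{odd}\omega_r=\psi_{r+1}(\omega_r,0)$ gives a contribution $d_{M_0/S^1}^\dagger\omega_r-\tfrac12\iota_{\bar\kappa^\sharp}\omega_r+\varepsilon(\bar\varphi_0\wedge)^\dagger(0)$ in degree $r-1$. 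For $r$ odd the signs of the $\bar\kappa$-terms flip, because $\omega_r$ now sits in the ``$\omega_{r-1}$-slot'' of $\psi_{r+1}$ (respectively the ``$\omega_{r}$-slot'' of $\psi_{r-1}$). The $\bar\varphi_0$-contributions come only from the ``$\omega_{r-1}$-slot'' cases in Lemma \ref{Lemma:Psir}, and therefore only arise on odd-degree inputs, producing $\varepsilon(\bar\varphi_0\wedge)\omega_r=-\bar\varphi_0\wedge\omega_r$ in degree $r+2$ and, symmetrically from the $d^\dagger$-side, $-\,(\bar\varphi_0\wedge)^\dagger\omega_r$ in degree $r-2$.

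Collecting these contributions over all $r$, the $\bar\kappa$-terms assemble into $\tfrac12(\bar\kappa\wedge)\varepsilon-\tfrac12\iota_{\bar\kappa^\sharp}\varepsilon=\tfrac12 c(\bar\kappa)\varepsilon$ (the sign pattern $(-1)^r$ is exactly $\varepsilon$), and the $\bar\varphi_0$-terms assemble into $-(\bar\varphi_0\wedge+(\bar\varphi_0\wedge)^\dagger)$ applied only to odd forms, i.e.\ $-\widehat c(\bar\varphi_0)\tfrac{1-\varepsilon}{2}$. Together with the first-order diagonal $d_{M_0/S^1}+d^\dagger_{M_0/S^1}=D_{M_0/S^1}$ this yields exactly the claimed formula
\begin{equation*}
\mathscr D'=D_{M_0/S^1}+\tfrac12 c(\bar\kappa)\varepsilon-\widehat c(\bar\varphi_0)\tfrac{1-\varepsilon}{2}.
\end{equation*}

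The three remaining properties are then essentially immediate. Since the perturbation is a bundle endomorphism of order zero, $\sigma_P(\mathscr D')=\sigma_P(D_{M_0/S^1})=-ic(\bar\xi)$ by Proposition \ref{Prop:PSD}, so $\mathscr D'$ is a first-order elliptic differential operator. Symmetry has already been noted from $(\mathscr D')^\dagger=\psi_\text{ev}^{-1}d^\dagger\psi_\text{odd}+\psi_\text{odd}^{-1}d\psi_\text{ev}=\mathscr D'$ using unitarity of $\psi_\text{ev},\psi_\text{odd}$. For the anti-commutation with $\bar\star$, recall that $\dim(M_0/S^1)=4k$ is even, so by Proposition \ref{Prop:Chirl} and the corollary following it we have $D_{M_0/S^1}\bar\star=-\bar\star D_{M_0/S^1}$, $c(\bar\kappa)\bar\star=-\bar\star c(\bar\kappa)$, $\varepsilon\bar\star=\bar\star\varepsilon$, and from Proposition \ref{Prop:CliffMultVarphi} $\widehat c(\bar\varphi_0)\bar\star=-\bar\star\widehat c(\bar\varphi_0)$ and $\widehat c(\bar\varphi_0)\varepsilon=\varepsilon\widehat c(\bar\varphi_0)$; each of the three summands therefore anti-commutes with $\bar\star$.

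The only non-routine step is the careful bookkeeping in the first paragraph: keeping track of which degree-$r$ form sits in which slot of which $\psi_{r\pm 1}$ and getting the signs to line up with $\varepsilon$. Once this is done, everything else is a one-line verification using the Clifford algebra identities already collected in Section \ref{Section:SignatureOp} and Proposition \ref{Prop:CliffMultVarphi}.
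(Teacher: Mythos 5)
Your proof is correct and follows essentially the same route as the paper: the explicit formula is obtained by the same degree-by-degree application of Lemma \ref{Lemma:Psir} (which the paper packages as the matrix representation preceding the proposition), and the three properties are verified with the same symbol, adjointness and $\bar\star$-commutation identities. The only blemish is a slot transposition in the even case — Lemma \ref{Lemma:Psir} applied to $\psi_r(0,\omega_r)$ yields $\psi_{r+1}\bigl(0,\,d_{M_0/S^1}\omega_r+\tfrac12\bar\kappa\wedge\omega_r\bigr)$, the nonzero entry sitting in the second (degree-$(r+1)$) slot — but since you read off the contribution in the correct degree anyway, this is cosmetic.
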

\begin{proof}
Observe for the principal symbol  that $\sigma_P(\mathscr{D}')=\sigma_P(D_{M_0/S^1})$, so indeed $\mathscr{D}'$ is first order elliptic. For the symmetry assertion we just compute 
\begin{align*}
(c(\bar{\kappa})\varepsilon)^\dagger=\varepsilon c(\bar{\kappa})^{\dagger}=-\varepsilon c(\bar{\kappa})=c(\bar{\kappa})\varepsilon,
\end{align*}
and 
\begin{align*}
(\widehat{c}(\bar{\varphi}_0)(1-\varepsilon))^\dagger=(1-\varepsilon)\widehat{c}(\bar{\varphi}_0)=\widehat{c}(\bar{\varphi}_0)(1-\varepsilon). 
\end{align*}
For the last assertion we calculate using Proposition \ref{Prop:CliffMultVarphi}, 
\begin{align*}
\bar{\star}\mathscr{D}'=&\bar{\star}D_{M_0/S^1} +\bar{\star}\frac{1}{2}c(\bar{\kappa})\varepsilon-\bar{\star}\widehat{c}(\bar{\varphi}_0)\left(\frac{1-\varepsilon}{2}\right)\\
=&-D_{M_0/S^1}\bar{\star}-\frac{1}{2}c(\bar{\kappa})\bar{\star}\varepsilon+\widehat{c}(\bar{\varphi}_0)\bar{\star}\left(\frac{1-\varepsilon}{2}\right)\\
=&-D_{M_0/S^1}\bar{\star}-\frac{1}{2}c(\bar{\kappa})\varepsilon\bar{\star}+\widehat{c}(\bar{\varphi}_0)\left(\frac{1-\varepsilon}{2}\right)\bar{\star}\\
=&-\mathscr{D}'\bar{\star}. 
\end{align*}
\end{proof}

Due the nature of the potential of the operator $\mathscr{D}'$ and in view of Theorem \ref{Thm:THat} we would expect $\mathscr{D}'$ to be essentially self-adjoint on the core $\Omega_c(M_0/S^1)$. In order to apply the construction of Br\"uning and Heintze we need to find an operator on $M$, commuting with the $S^1$-action, so that when pushed down to $M_0/S^1$ coincides with $\mathscr{D}'$. Let us explore how to find such an operator. In view of \eqref{Eqn:ConstrD'} we define 
\begin{align*}
\text{d}\coloneqq \psi_\text{odd}^{-1}d\psi_\text{ev} \quad \text{and}\quad \text{d}^\dagger\coloneqq \psi_\text{ev}^{-1}d^\dagger\psi_\text{odd},
\end{align*}
so that $\mathscr{D}'=\text{d}+\text{d}^\dagger$.  Observe that these operators fit in the commutative diagrams
\begin{align*}
\xymatrixcolsep{4pc}\xymatrixrowsep{3pc}\xymatrix{
\Omega_c^\text{ev}(M_0)^{S^1} \ar[r]^-{d} & \Omega_c^\text{odd}(M_0)^{S^1} \ar[r]^-{\psi_\text{ev}\psi_\text{odd}^{-1}} & \Omega_c^\text{ev}(M_0)^{S^1}\\
\Omega_c(M_0/S^1) \ar[u]^-{\psi_\text{ev}} \ar[r]^{\text{d}} & \Omega_c(M_0/S^1) \ar[u]^-{\psi_\text{odd}} \ar@{=}[r] & \Omega_c(M_0/S^1), \ar[u]^-{\psi_\text{ev}} 
}
\end{align*}
and
\begin{align*}
\xymatrixcolsep{4pc}\xymatrixrowsep{3pc}\xymatrix{
\Omega_c^\text{ev}(M_0)^{S^1} \ar[r]^-{\psi_\text{odd}\psi_\text{ev}^{-1}} & \Omega_c^\text{odd}(M_0)^{S^1} \ar[r]^-{d^\dagger} & \Omega_c^\text{ev}(M_0)^{S^1}\\
\Omega_c(M_0/S^1) \ar[u]^-{\psi_\text{ev}} \ar@{=}[r]& \Omega_c(M_0/S^1) \ar[u]^-{\psi_\text{odd}} \ar[r]^{\text{d}^\dagger} & \Omega_c(M_0/S^1).\ar[u]^-{\psi_\text{ev}} 
}
\end{align*}
\begin{lemma}
Let $c(\chi)$ be the left Clifford multiplication by the characteristic $1$-form. Then the following relations hold,
\begin{align*}
\psi_\textnormal{ev}\psi_\textnormal{odd}^{-1}=&-c(\chi),\\
\psi_\textnormal{odd}\psi_\textnormal{ev}^{-1}=& c(\chi).
\end{align*}
\end{lemma}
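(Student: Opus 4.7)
The plan is to compute both sides explicitly on an arbitrary invariant form, relying on the decomposition from Corollary \ref{Coro:DecInvForm}. Since $\|\chi\|=1$, $\iota_X\chi=\chi(X)=1$, and any basic form $\alpha$ satisfies $\iota_X\pi^*_{S^1}\alpha=0$, the Clifford action $c(\chi)=\chi\wedge-\iota_X$ will simply swap the two components of the basic-form decomposition (up to signs that will cancel), which is precisely what $\psi_{\text{ev}}\psi_{\text{odd}}^{-1}$ or $\psi_{\text{odd}}\psi_{\text{ev}}^{-1}$ does.

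First I would unravel what $\psi_{\text{ev}}$ actually does: from its definition,
\[
\psi_{\text{ev}}(\omega_0,\omega_1,\dots,\omega_{4k})
=h^{-1/2}\Big(\sum_{k}\pi^*_{S^1}\omega_{2k}+\pi^*_{S^1}\omega_{2k-1}\wedge\chi\Big),
\]
and analogously $\psi_{\text{odd}}(\omega_0,\ldots,\omega_{4k})=h^{-1/2}\sum_k\bigl(\pi^*_{S^1}\omega_{2k+1}+\pi^*_{S^1}\omega_{2k}\wedge\chi\bigr)$. Given $\tilde\omega\in\Omega_c^{\text{ev}}(M_0)^{S^1}$, Corollary \ref{Coro:DecInvForm} and Proposition \ref{Prop:BasicPullback} let me write uniquely
\[
\tilde\omega=h^{-1/2}\bigl(\pi^*_{S^1}\alpha+\pi^*_{S^1}\beta\wedge\chi\bigr),
\qquad \alpha\in\Omega^{\text{ev}}_c(M_0/S^1),\;\beta\in\Omega^{\text{odd}}_c(M_0/S^1).
\]
Comparing with the formula for $\psi_{\text{ev}}$ shows that $\psi_{\text{ev}}^{-1}\tilde\omega$ is the tuple whose even-degree components are the homogeneous pieces of $\alpha$ and whose odd-degree components are those of $\beta$. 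Plugging this tuple into $\psi_{\text{odd}}$ interchanges the two roles and yields
\[
\psi_{\text{odd}}\psi_{\text{ev}}^{-1}\tilde\omega
=h^{-1/2}\bigl(\pi^*_{S^1}\beta+\pi^*_{S^1}\alpha\wedge\chi\bigr).
\]

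Next I would compute $c(\chi)\tilde\omega=\chi\wedge\tilde\omega-\iota_X\tilde\omega$ piece by piece. Since $\chi\wedge\chi=0$ and $\pi^*_{S^1}\alpha$ has even degree, $\chi\wedge\tilde\omega=h^{-1/2}\chi\wedge\pi^*_{S^1}\alpha=h^{-1/2}\pi^*_{S^1}\alpha\wedge\chi$. For the interior product, using $\iota_X\pi^*_{S^1}\alpha=\iota_X\pi^*_{S^1}\beta=0$ together with the Leibniz rule and $\iota_X\chi=1$, a straightforward sign check (using that $\beta$ has odd degree) gives $\iota_X\tilde\omega=-h^{-1/2}\pi^*_{S^1}\beta$. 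Combining,
\[
c(\chi)\tilde\omega=h^{-1/2}\bigl(\pi^*_{S^1}\beta+\pi^*_{S^1}\alpha\wedge\chi\bigr)=\psi_{\text{odd}}\psi_{\text{ev}}^{-1}\tilde\omega,
\]
which proves the second identity. The first identity then follows automatically because $c(\chi)^2=-\|\chi\|^2=-1$, so $\psi_{\text{ev}}\psi_{\text{odd}}^{-1}=(\psi_{\text{odd}}\psi_{\text{ev}}^{-1})^{-1}=c(\chi)^{-1}=-c(\chi)$; alternatively one redoes the same calculation starting from an odd invariant form.

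There is no real obstacle here: the only care needed is sign bookkeeping when moving $\chi$ through forms of prescribed parity and when applying $\iota_X$ to the wedge $\pi^*_{S^1}\beta\wedge\chi$. Everything else is a direct unwinding of the definitions of $\psi_{\text{ev}}$ and $\psi_{\text{odd}}$.
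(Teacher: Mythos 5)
Your proof is correct and follows essentially the same route as the paper: both unwind $\psi_{\text{ev}}$, $\psi_{\text{odd}}$ on a general invariant form and compute $c(\chi)=\chi\wedge{}-\iota_X$ term by term using $\iota_X$ of basic forms being zero and $\iota_X\chi=1$. The only cosmetic difference is that you establish the second identity first and deduce the first from $c(\chi)^2=-1$, whereas the paper establishes the first identity and obtains the second by taking adjoints; these are equivalent.
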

\begin{proof}
We compute the left hand side of the first relation for an element in the image of $\psi_\text{odd}$,
\begin{align*}
\psi_\text{ev}\psi_\text{odd}^{-1}\left((h^{-1/2}\left(\pi_{S^1}^*\omega_{2r+1}+(\pi_{S^1}^*\omega_{2r})\wedge \chi\right)\right)=&\psi_\text{ev}\psi_\text{odd}^{-1}(\psi_{2r+1}(\omega_{2r},\omega_{2r+1}))\\
=&\psi_\text{ev}(\omega_{2r},\omega_{2r+1})\\
=&\psi_{2r}(0,\omega_{2r})+\psi_{2r+2}(\omega_{2r+1},0)\\
=&h^{-1/2}\left(\pi_{S^1}^*\omega_{2r}+(\pi_{S^1}^*\omega_{2r+1})\wedge \chi\right).
\end{align*}
On the other hand,
\begin{align*}
c(\chi)\left((h^{-1/2}\left(\pi_{S^1}^*\omega_{2r+1}+(\pi_{S^1}^*\omega_{2r})\wedge \chi\right)\right)
=&h^{-1/2}\left(\chi\wedge(\pi_{S^1}^*\omega_{2r+1})-\pi_{S^1}^*\omega_{2r}\right)\\
=&-h^{-1/2}\left((\pi_{S^1}^*\omega_{2r+1})\wedge\chi+\pi_{S^1}^*\omega_{2r}\right).
\end{align*}
This shows the first relation. The second one follows by taking adjoints.
\end{proof}
From this lemma we obtain the commutative diagram
\begin{align}\label{Diag:B}
\xymatrixcolsep{3cm}\xymatrixrowsep{2cm}\xymatrix{
\Omega_c^\text{ev}(M_0)^{S^1} \ar[r]^-{-c(\chi)d+d^\dagger c(\chi)} & \Omega_c^\text{ev}(M_0)^{S^1} \\
\Omega_c(M_0/S^1) \ar[u]^-{\psi_\text{ev}} \ar[r]^-{\mathscr{D}'}& \Omega_c(M_0/S^1), \ar[u]^-{\psi_\text{ev}}
 }
\end{align}
and conclude that the operator $\mathscr{D}'$ is unitary equivalent to the operator $ -c(\chi)d+c(\chi)d^\dagger$ when restricted to $\Omega^\text{ev}_c(M_0)^{S^1}$.
\begin{proposition}\label{Prop:OpB}
The operator $B\coloneqq -c(\chi)d+d^\dagger c(\chi)$ satisfies:
\begin{enumerate}
\item It is a transversally elliptic first order differential operator with principal symbol
\begin{align*}
\sigma_P(B)(x,\xi)=-i(\inner{\chi}{\xi}+c(\xi)c(\chi)).
\end{align*}
\item It can be extended to $M$ and $B:\Omega(M)\longrightarrow \Omega(M)$  it essentially self-adjoint when defined on this core. 
\item It commutes with the $S^1$-action on differential forms.
\item It commutes with the Gau\ss-Bonnet involution $\varepsilon$ and therefore it can be decomposed as $B=B^\textnormal{ev}\oplus B^\textnormal{odd}$ where $B^\textnormal{ev/odd}:\Omega^{\textnormal{ev/odd}}(M)\longrightarrow \Omega^{\textnormal{ev/odd}}(M)$. 
\end{enumerate}
\end{proposition}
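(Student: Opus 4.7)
Here is the plan.

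\medskip

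\noindent\textbf{Overview.} I will handle the four claims in the order (1), (3), (4), (2). The first three are short consequences of the Clifford calculus that has already been developed in the excerpt; claim (2) is the real content and will be the main obstacle. Once $B$ is identified with a first-order symmetric differential operator on the closed manifold $M$, essential self-adjointness on the core $\Omega(M)$ is automatic by Remark \ref{Remark:GL02}.

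\medskip

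\noindent\textbf{Claim (1): principal symbol and transversal ellipticity.} Since $c(\chi)$ is of order zero, the symbol computation is
\begin{align*}
\sigma_P(B)(x,\xi) &= -c(\chi)\sigma_P(d)(x,\xi)+\sigma_P(d^\dagger)(x,\xi)c(\chi)\\
&= ic(\chi)(\xi\wedge)+i\iota_{\xi^\sharp}c(\chi).
\end{align*}
Using $\xi\wedge = c(\xi)+\iota_{\xi^\sharp}$ and the fact that $\iota_X$ and $\iota_{\xi^\sharp}$ anti-commute, this rearranges to $ic(\chi)c(\xi)+i\{c(\chi),\iota_{\xi^\sharp}\}$; a direct expansion gives $\{c(\chi),\iota_{\xi^\sharp}\}=\langle\chi,\xi\rangle$. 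Applying the Clifford identity $c(\chi)c(\xi)+c(\xi)c(\chi)=-2\langle\chi,\xi\rangle$ then yields the asserted form $\sigma_P(B)(x,\xi)=-i(\langle\chi,\xi\rangle+c(\xi)c(\chi))$. For transversal ellipticity, note that $T^*_{S^1}M$ consists of covectors annihilating $V$, i.e.\ satisfying $\langle\chi,\xi\rangle=0$, so on this set the symbol reduces to $-ic(\xi)c(\chi)$. Using $\langle\chi,\xi\rangle=0$ and $\|\chi\|=1$, one computes $(c(\xi)c(\chi))^2=-\|\xi\|^2$, which is invertible for $\xi\neq 0$.

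\medskip

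\noindent\textbf{Claims (3) and (4): $S^1$-invariance and parity.} By Corollary \ref{Coro:ChiInv} the form $\chi$ is $S^1$-invariant, hence $c(\chi)$ commutes with $U_g$; since $d$ and $d^\dagger$ also commute with $U_g$ (the action is by orientation-preserving isometries, Proposition \ref{Prop:S1commD}), $B$ commutes with the $S^1$-action. For the parity: $d$ and $d^\dagger$ are odd, and Clifford multiplication by a $1$-form also anti-commutes with $\varepsilon$ by \eqref{PropertiesEpsilonCliff}. Therefore each of $c(\chi)d$ and $d^\dagger c(\chi)$ is a product of two odd operators and commutes with $\varepsilon$; hence so does $B$, giving the splitting $B=B^{\textnormal{ev}}\oplus B^{\textnormal{odd}}$.

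\medskip

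\noindent\textbf{Claim (2): extension to $M$ and essential self-adjointness.} This is the hard step. A priori $B$ is defined only on $M_0$ because $\chi = V^\flat/\|V\|$ is singular on $M^{S^1}$. The plan is to show that in the full operator $-c(\chi)d+d^\dagger c(\chi)$ the singular contributions coming from the $\iota_X$-piece of $c(\chi)$ are cancelled by compensating terms in the derivatives, so that $B$ extends to a first-order symmetric differential operator on all of $M$. Concretely, one works in the local slice model of Section \ref{Section:G-manifolds} near a component $F\subset M^{S^1}$ (coordinates in $\mathbb{C}^{N+1}$ with $V = \sum_j(x_j\partial_{y_j}-y_j\partial_{x_j})$, $\|V\|^2 = r^2$), expands $B$ using $c(\chi)= \|V\|^{-1}c(V^\flat)$, and checks that after collecting commutator terms with $d,d^\dagger$ the coefficient of every negative power of $r$ vanishes. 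Symmetry on the core is clear because $(c(\chi)d)^\dagger = -d^\dagger c(\chi)$. Once $B$ is established as a symmetric first-order differential operator acting on $\Omega(M)$ on a closed manifold, essential self-adjointness is immediate from Remark \ref{Remark:GL02}.

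\medskip

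\noindent\textbf{Main obstacle.} The delicate point is the smooth extension across $M^{S^1}$: the individual pieces $\chi\wedge$ and $\iota_X$ are each singular, so the proof of (2) hinges on a careful local computation (using the linearised $S^1$-action) showing that only non-negative powers of $r$ survive. All other statements reduce to algebraic manipulations with Clifford multiplication and the standard behaviour of $d,d^\dagger$ under the $S^1$-action.
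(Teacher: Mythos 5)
Parts (1), (3) and (4) of your proposal coincide with the paper's proof and are correct; the symbol computation, the use of the $S^1$-invariance of $\chi$, and the parity argument via \eqref{PropertiesEpsilonCliff} are exactly what the paper does.

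Your plan for part (2), however, rests on a claim that is false: $B$ does \emph{not} extend to a first-order differential operator with smooth (or even continuous) coefficients on $M$, and the cancellation of singular terms you propose to verify does not occur. The obstruction is already visible in the principal symbol you computed in (1): $\sigma_P(B)(x,\xi)=-i(\inner{\chi}{\xi}+c(\xi)c(\chi))$ depends on $\chi=V^\flat/\norm{V}$, which has unit norm on $M_0$ but no continuous extension across $M^{S^1}$ (it behaves like $x/\norm{x}$ in the linearised model; on $S^2$ one has $\chi=(x\,dy-y\,dx)/\sqrt{x^2+y^2}$ near a pole). Since the principal symbol is intrinsic to $B$, no rearrangement of lower-order terms can repair this discontinuity; and in fact the zero-order part of $B$ genuinely contains contributions of size $1/\norm{V}$ coming from $\nabla\chi$ --- this is precisely the $1/r$ behaviour that later feeds into the cone coefficient of $\mathscr{D}'$. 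So the local computation you describe as the "main obstacle" cannot succeed as stated.

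The point is that no such smooth extension is needed. The paper only uses that $c(\chi)$ is \emph{pointwise unitary} on $M_0$ ($c(\chi)^\dagger=-c(\chi)$, $c(\chi)^2=-1$, $\norm{\chi}=1$), hence extends by density to a bounded operator on $L^2(\wedge T^*M)=L^2(\wedge T^*M|_{M_0})$, using that $M^{S^1}$ has measure zero (Remark \ref{Rmk:M-M0}). Consequently $B$ is densely defined with core $\Omega(M)$ and symmetric there (your observation $(c(\chi)d)^\dagger=-d^\dagger c(\chi)$ is the right one), and essential self-adjointness follows from the compactness of $M$ via Lemma \ref{LemmaGL02} and Remark \ref{Remark:GL02}; the mechanism behind that lemma only requires bounded commutators $[B,f]=c(df)c(\chi)+\inner{\chi}{df}$ for $f\in C^\infty(M)$ (cf.\ Lemma \ref{lemma:CommBf}), not smoothness of the coefficients of $B$ across $M^{S^1}$. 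You should replace your local cancellation argument by this boundedness argument.
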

\begin{proof}
To prove the first statement recall from the proof of Proposition \ref{Prop:PSD} the expressions for the principal symbols
\begin{align*}
\sigma_P(d)(x,\xi)=&-i\xi\wedge,\\
\sigma_P(d^\dagger)(x,\xi)=&i\iota_{\xi^\sharp}.
\end{align*}
Using the relation
\begin{align*}
c(\chi)\circ (\xi\wedge)=&\chi\wedge\xi\wedge-\iota_{X}\circ (\xi\wedge)=\chi\wedge\xi\wedge -\inner{\chi}{\xi}+\xi\wedge \iota_X =-\xi\wedge c(\chi)-\inner{\chi}{\xi},
\end{align*}
we calculate the principal symbol of the operator $B$,
\begin{align*}
\sigma_p(B)(x,\xi)=& -c(\chi)\sigma_P(d)(x,\xi)+\sigma_P(d^\dagger)(x,\xi)c(\chi)\\
=&i\left(c(\chi)\circ(\xi\wedge)  +(\iota_{\xi^\sharp})\circ c(\chi)\right)\\
=&i\left(-\xi\wedge c(\chi)-\inner{\chi}{\xi} +(\iota_{\xi^\sharp})\circ c(\chi)\right)\\
=&-i(\inner{\chi}{\xi}+c(\xi)c(\chi)).
\end{align*}
In particular we see that if $\inner{\chi}{\xi}=0$ then 
\begin{align*}
\sigma_P(B)(x,\xi)^2=(-ic(\xi)c(\chi))^2=-c(\xi)c(\chi)c(\xi)c(\chi)=c(\xi)c(\chi)^2c(\xi)=\norm{\xi}^2,
\end{align*}
from which follows that $B$ is a transversally elliptic first order differential operator. To prove the second assertion observe that the Clifford multiplication operator $c(\chi)$ has domain $\dom(c(\chi))=\Omega_c(M_0)$. For $\omega\in\Omega_c(M_0)$ compute the $L^2$-norm
\begin{align*}
\norm{c(\chi)\omega}^2_{L^2(\wedge T^* M)}=\int_M\inner{c(\chi)\omega}{c(\chi)\omega}(x)\vol_M(x)=\int_M\inner{\omega}{\omega}(x)\vol_M(x)=\norm{\omega}^2_{L^2(\wedge T^* M)},
\end{align*}
where we have used that $c(\chi)^\dagger=-c(\chi)$ and $c(\chi)^2=-1$. Hence we can extend the operator $c(\chi)$ to all $L^2(M,\wedge_\mathbb{C} T^*M)$ by density. Using this fact we see that $B$ is indeed densely defined with core $\Omega(M)$.
Since $M$ is compact we can use Remark \ref{Remark:GL02} to conclude that $B$ is an essentially self-adjoint operator. The last two assertions follow easily from Proposition \ref{Prop:S1commD} and \eqref{PropertiesEpsilonCliff}.
\end{proof}

Now we can implement the construction described in Section \ref{Section:InduedOperatorsGen} in this setting: the restriction of $B$ to the $S^1$-invariant forms remains essentially self-adjoint and since this operator is unitary equivalent to $\mathscr{D}'$ through $\psi_\text{ev}$ we conclude that $\mathscr{D}'$ is essentially self-adjoint with core $\Omega_c(M_0/S^1)$. We summarize these results in the next theorem.

\begin{theorem}\label{Thm:InducedDiracOp}
The Dirac-Schr\"odinger operator 
\begin{align*}
\mathscr{D}'=D_{M_0/S^1} +\frac{1}{2}c(\bar{\kappa})\varepsilon-\frac{1}{2}\widehat{c}(\bar{\varphi}_0)(1-\varepsilon),
\end{align*}
defined on on $\Omega_c(M_0/S^1)$, is a first order elliptic differential operator which is essentially self-adjoint.  As the dimension of  $M$ is odd then $\mathscr{D}'$ anti-commutes with the chirality operator $\bar{\star}$ on $M_0/S^1$ and therefore we can define the operator
\begin{align*}
\mathscr{D}'^{+}:\Omega^+_c(M_0/S^1)\longrightarrow \Omega^-_c(M_0/S^1),
\end{align*}
where $\Omega^\pm_c(M_0/S^1)$ is the  $\pm 1$-eigenspace of $\bar{\star}$. 
\end{theorem}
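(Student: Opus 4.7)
The plan is to verify the four claims in order, noting that three of them have essentially been established in the preceding discussion and only the essential self-adjointness requires substantial new work. First, the statement that $\mathscr{D}'$ is a first order elliptic differential operator is immediate from its explicit formula, since the zero order terms $\tfrac{1}{2}c(\bar{\kappa})\varepsilon$ and $\tfrac{1}{2}\widehat{c}(\bar{\varphi}_0)(1-\varepsilon)$ do not contribute to the principal symbol, so that $\sigma_P(\mathscr{D}')=\sigma_P(D_{M_0/S^1})=-ic(\bar{\xi})$, which is invertible for $\bar{\xi}\neq 0$ by Proposition \ref{Prop:PSD}. The anti-commutation relation $\bar{\star}\mathscr{D}'+\mathscr{D}'\bar{\star}=0$ was also verified explicitly in the proposition immediately before the theorem statement, from which the decomposition $\mathscr{D}'^{+}:\Omega^+_c(M_0/S^1)\longrightarrow \Omega^-_c(M_0/S^1)$ is formal.

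The core of the proof is therefore the essential self-adjointness. The strategy is exactly the Br\"uning--Heintze push-down principle of Chapter \ref{Sect:BH}, applied not to the Hodge-de Rham operator on $M$ but to the auxiliary operator $B=-c(\chi)d+d^\dagger c(\chi)$ on $M$. The diagram \eqref{Diag:B} exhibits $\mathscr{D}'$ as unitarily equivalent, via the map $\psi_{\text{ev}}$, to the restriction of $B^{\text{ev}}$ to the space $\Omega^{\text{ev}}_c(M_0)^{S^1}$ of $S^1$-invariant compactly supported even forms on $M_0$. So I would first invoke Proposition \ref{Prop:OpB}, which provides the four key properties of $B$: it is a first order differential operator that extends to all of $M$; it is essentially self-adjoint on $\Omega(M)$ (because $M$ is closed, so Remark \ref{Remark:GL02} applies to the symmetric first order operator $B$); it commutes with the $S^1$-action; and it commutes with $\varepsilon$, so that $B^{\text{ev}}$ is a well-defined essentially self-adjoint operator on $\Omega^{\text{ev}}(M)$.

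Next I would apply Lemma \ref{Lemma:OpS} to the closure of $B^{\text{ev}}$: its restriction $S(B^{\text{ev}})$ to the $S^1$-invariant subspace $L^2(\wedge_{\mathbb{C}}T^*M)^{S^1}\cap\Omega^{\text{ev}}$ is a self-adjoint operator in $L^2(\wedge_{\mathbb{C}}T^*M)^{S^1,\text{ev}}$. By Proposition \ref{Prop:OpT} the conjugated operator $T(B^{\text{ev}})\coloneqq \psi_{\text{ev}}^{-1}\circ S(B^{\text{ev}})\circ\psi_{\text{ev}}$ is self-adjoint on $L^2(\wedge_{\mathbb{C}}T^*(M_0/S^1))$. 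Combining with the commutative diagram \eqref{Diag:B}, this self-adjoint operator agrees on the core $\Omega_c(M_0/S^1)$ with $\mathscr{D}'$. To conclude essential self-adjointness I would argue that $\Omega_c(M_0/S^1)$ maps under $\psi_{\text{ev}}$ into $\Omega^{\text{ev}}_c(M_0)^{S^1}$, which in turn sits inside a core of essential self-adjointness of $B^{\text{ev}}$ (an application of the averaging projection $Q$ of Theorem \ref{Thm:AVPrincp} to smooth compactly supported forms on $M_0$ yields elements of $\Omega^{\text{ev}}(M)^{S^1}$, and invariance of $\Omega(M)$ as a core under $Q$, established within the proof of Lemma \ref{Lemma:OpS}, transfers the essential self-adjointness to the invariant core).

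The main obstacle I anticipate is the last step: verifying that $\psi_{\text{ev}}(\Omega_c(M_0/S^1))$ is dense in the graph norm of $S(B^{\text{ev}})$, equivalently that one can approximate arbitrary elements of $\dom(S(B^{\text{ev}}))$ by $S^1$-invariant \emph{smooth} forms with compact support in $M_0$. This is subtle because the fixed point set $M^{S^1}$ is where the metric on $M_0/S^1$ degenerates, and smooth compactly supported forms on the quotient correspond only to forms vanishing near $M^{S^1}$. The resolution is that $B$ is essentially self-adjoint on $\Omega(M)$ (not just on forms vanishing near $M^{S^1}$), hence the averaged forms $Q(\phi\cdot\omega)$ for $\omega\in\Omega(M)$ and $\phi\in C^\infty_c(M_0)$ suffice; the identification of their image under $\psi_{\text{ev}}^{-1}$ with $\Omega_c(M_0/S^1)$ follows from Theorem \ref{Thm:Fund} and Corollary \ref{Coro:PropPhi}(4).
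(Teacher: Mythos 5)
Your proposal is correct and follows essentially the same route as the paper: ellipticity and the anti-commutation with $\bar{\star}$ are read off from the explicit formula and the preceding proposition, while essential self-adjointness is obtained by exhibiting $\mathscr{D}'$ via the diagram \eqref{Diag:B} as the $\psi_{\text{ev}}$-conjugate of the restriction of the essentially self-adjoint operator $B=-c(\chi)d+d^{\dagger}c(\chi)$ (Proposition \ref{Prop:OpB}, Remark \ref{Remark:GL02}) to $S^1$-invariant forms, and then applying the Br\"uning--Heintze push-down (Lemma \ref{Lemma:OpS}, Proposition \ref{Prop:OpT}). Your closing remark on the density of $\psi_{\text{ev}}(\Omega_c(M_0/S^1))$ in the graph norm is exactly the point the paper resolves by invoking Lemma \ref{LemmaGL02} together with Theorem \ref{Thm:Fund} and Corollary \ref{Coro:PropPhi}, so no gap remains.
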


\begin{example}[Closed manifold IV]
In the situation of  Example \ref{Ex:ClosedManifold} we of course have $\ind(\mathscr{D}'^+)=\ind(D_X^+)=\sigma(X)$. 
\end{example}

\begin{example}[Free action]
Let us consider the case in which the action on $M$ is free. In this situation, by  Corollary \ref{Coro:FreeActMnfld}, the quotient space $M/S^1$ is a closed manifold. Moreover,  $\kappa$ and $\varphi_0$ are both bounded, so the zero order term
\begin{align*}
\frac{1}{2}c(\bar{\kappa})\varepsilon-\widehat{c}(\bar{\varphi}_0)\left(\frac{1-\varepsilon}{2}\right),
\end{align*}
is a bounded operator on $L^2(\wedge_\mathbb{C} T^*(M/S^1))$. Since the operators $\mathscr{D}$ and $D$ have the same principal symbol then $\ind(\mathscr{D}'^+)=\ind(D^+)=\sigma(M/S^1)$. 
\end{example}

\begin{remark}
Let us explore in some detail the relation $\mathscr{D}'\bar{\star}=-\bar{\star}\mathscr{D}'$ at the level of $S^1$-invariant forms on $M$.  To do so, consider the operator $\blacklozenge\coloneqq i\varepsilon\star c(\chi)$ on $\Omega(M_0)^{S^1}$, then
\begin{enumerate}
\item $\blacklozenge^\dagger=-i(-c(\chi))\star\varepsilon=i\star c(\chi)\varepsilon=-i\star \varepsilon c(\chi)=i\varepsilon\star  c(\chi)=\blacklozenge$.
\item $\blacklozenge^2=-\varepsilon\star c(\chi)\varepsilon\star c(\chi)=\varepsilon\star \varepsilon c(\chi)\star c(\chi)=-\star c(\chi)\star c(\chi)=1$.
\end{enumerate}
This shows that $\blacklozenge$ is a self-adjoint involution. Moreover, 
\begin{align*}
\blacklozenge B=&i\varepsilon\star c(\chi) (-c(\chi)d+d^\dagger c(\chi))\\
=&i\varepsilon\star (d+c(\chi)d^\dagger c(\chi))\\
=&i\varepsilon\star (-dc(\chi)+c(\chi)d^\dagger)c(\chi)\\
=&i\varepsilon(-d^\dagger\star c(\chi)+c(\chi)d\star)c(\chi)\\
=&(-d^\dagger c(\chi)+c(\chi)d)i\varepsilon\star c(\chi)\\
=&-B\blacklozenge.
\end{align*}
We now compute using Lemma \ref{Lemma:star},
\begin{align*}
\blacklozenge\bigg{|}_{\Omega(M_0)^{S^1}}=& 
i
\left(\begin{array}{cc}
\varepsilon & 0\\
0 & -\varepsilon 
\end{array}
\right)
\left(\begin{array}{cc}
0 & -i\varepsilon\bar{\star}\\
i\varepsilon\bar{\star}& 0 
\end{array}
\right)
\left(\begin{array}{cc}
0 & -\varepsilon\\
\varepsilon & 0
\end{array}
\right)\\
=&
i
\left(\begin{array}{cc}
\varepsilon & 0\\
0 & -\varepsilon 
\end{array}
\right)
\left(\begin{array}{cc}
-i\bar{\star} & 0\\
0 & -i\bar{\star} 
\end{array}
\right)\\
=&
\left(\begin{array}{cc}
\varepsilon\bar{\star} & 0\\
0 & -\varepsilon\bar{\star} 
\end{array}
\right). 
\end{align*}
In particular,
\begin{align*}
\blacklozenge\bigg{|}_{\Omega^\text{ev}(M_0)^{S^1}}=
\left(\begin{array}{cc}
\bar{\star} & 0\\
0 & \bar{\star}
\end{array}
\right),
\end{align*}
which shows that $\bar{\star}\mathscr{D}=-\mathscr{D}\bar{\star}$ since $\blacklozenge B=-B \blacklozenge$.
\end{remark}

\begin{remark}\label{Rmk:OpD}
We will later give a detailed description of the operator $\mathscr{D}'$ close to a connected component of the fixed point set. We will see that the term containing $\widehat{c}(\bar{\varphi}_0)$ is actually bounded and therefore, by the Kato-Rellich theorem, it will be enough to consider the operator
\begin{align*}
\mathscr{D}\coloneqq D_{M_0/S^1}+\frac{1}{2}c(\bar{\kappa})\varepsilon.
\end{align*}
We will also see that the factor $1/2$ is fundamental for the essential self-adjointness of $\mathscr{D}$. 
\end{remark}

\begin{remark}[Gau\ss-Bonet Involution]\label{Rmk:GBInv}
It is straightforward to verify that $\mathscr{D}$ anti-commutes with $\varepsilon$, while $\mathscr{D}'$ does not. Thus, we can split
\begin{align*}
\mathscr{D}=
\left(
\begin{array}{cc}
0 & \mathscr{D}^{\text{odd}}\\
\mathscr{D}^{\text{ev}} & 0 
\end{array}
\right),
\end{align*}
where $\mathscr{D}^{\text{ev/odd}}:\Omega^{\text{ev/odd}}_c(M_0/S^1)\longrightarrow \Omega^{\text{odd/ev}}_c(M_0/S^1)$.
\end{remark}

The following result shows that $\mathscr{D}^2$ is a generalized Laplacian in the sense of \cite[Definition 2.2]{BGV}. 

\begin{lemma}\label{Lemma:SquareD}
Let $D_{M_0/S^1}^2=\Delta_{M_0/S^1}$ be the Laplacian on $M_0/S^1$ with respect to the quotient metric, then 
\begin{align*}
\left(D_{M_0/S^1}+\frac{1}{2}c(\bar{\kappa})\varepsilon\right)^2=\Delta_{M_0/S^1}-\nabla_{\bar{\kappa}^\sharp}^{M_0/S^1}\varepsilon-c(\bar{\kappa})D_{M_0/S^1}\varepsilon+\frac{1}{2}d_{M_0/S^1}^\dagger(\bar{\kappa})\varepsilon+\frac{1}{4}\norm{\bar{\kappa}^\sharp}^2,
\end{align*}
where $\nabla^{M_0/S^1}$ denotes the induced Levi-Civita connection.
\end{lemma}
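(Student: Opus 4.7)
Expanding the square directly,
\begin{equation*}
\left(D_{M_0/S^1}+\tfrac{1}{2}c(\bar{\kappa})\varepsilon\right)^{\!2}
=\Delta_{M_0/S^1}
+\tfrac{1}{2}\bigl(D_{M_0/S^1}\,c(\bar{\kappa})\varepsilon+c(\bar{\kappa})\varepsilon\,D_{M_0/S^1}\bigr)
+\tfrac{1}{4}\bigl(c(\bar{\kappa})\varepsilon\bigr)^{\!2},
\end{equation*}
so I would handle the three pieces separately. The zero-order piece is immediate: using $c(\bar{\kappa})\varepsilon=-\varepsilon c(\bar{\kappa})$ from \eqref{PropertiesEpsilonCliff} together with the Clifford identity $c(\bar{\kappa})^{2}=-\inner{\bar{\kappa}}{\bar{\kappa}}=-\norm{\bar{\kappa}^{\sharp}}^{2}$, one finds $(c(\bar{\kappa})\varepsilon)^{2}=-c(\bar{\kappa})^{2}\varepsilon^{2}=\norm{\bar{\kappa}^{\sharp}}^{2}$, producing the term $\tfrac{1}{4}\norm{\bar{\kappa}^{\sharp}}^{2}$. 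For the cross terms, the anticommutation $\varepsilon D_{M_0/S^1}=-D_{M_0/S^1}\varepsilon$ lets me move $\varepsilon$ past $D_{M_0/S^1}$ on the right and collapse the symmetrized product into a commutator:
\begin{equation*}
\tfrac{1}{2}\bigl(D_{M_0/S^1}c(\bar{\kappa})\varepsilon+c(\bar{\kappa})\varepsilon D_{M_0/S^1}\bigr)=\tfrac{1}{2}\bigl[D_{M_0/S^1},c(\bar{\kappa})\bigr]\varepsilon.
\end{equation*}

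The main computation is then to identify $[D_{M_0/S^1},c(\bar{\kappa})]$. I would work in a local orthonormal coframe $\{\bar{e}^{i}\}$ on $M_{0}/S^{1}$ and use the standard Clifford representation $D_{M_0/S^1}=\sum c(\bar{e}^{i})\nabla^{M_0/S^1}_{\bar{e}_{i}}$ of the Hodge--de Rham operator. Expanding the anticommutator $\{D_{M_0/S^1},c(\bar{\kappa})\}$ produces two contributions: the pure derivative term
\begin{equation*}
\sum \{c(\bar{e}^{i}),c(\bar{\kappa})\}\,\nabla^{M_0/S^1}_{\bar{e}_{i}}=-2\sum \bar{\kappa}(\bar{e}_{i})\,\nabla^{M_0/S^1}_{\bar{e}_{i}}=-2\nabla^{M_0/S^1}_{\bar{\kappa}^{\sharp}},
\end{equation*}
arising from the Clifford relation $\{c(\bar{e}^{i}),c(\bar{\kappa})\}=-2\inner{\bar{e}^{i}}{\bar{\kappa}}$, together with a zero-order contribution $\sum c(\bar{e}^{i})c(\nabla^{M_0/S^1}_{\bar{e}_{i}}\bar{\kappa})$. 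Splitting each factor via $c(\mu)c(\nu)=-\inner{\mu}{\nu}+\tfrac{1}{2}[c(\mu),c(\nu)]$, the symmetric trace $-\sum(\nabla^{M_0/S^1}_{\bar{e}_{i}}\bar{\kappa})(\bar{e}_{i})=d^{\dagger}_{M_0/S^1}\bar{\kappa}$ recovers the scalar codifferential, while the skew part matches, under Clifford multiplication, the $2$-form $\sum \bar{e}^{i}\wedge\nabla^{M_0/S^1}_{\bar{e}_{i}}\bar{\kappa}=d_{M_0/S^1}\bar{\kappa}$. This yields the Weitzenb\"ock-type identity
\begin{equation*}
\{D_{M_0/S^1},c(\bar{\kappa})\}=c(d_{M_0/S^1}\bar{\kappa})+d^{\dagger}_{M_0/S^1}\bar{\kappa}-2\nabla^{M_0/S^1}_{\bar{\kappa}^{\sharp}},
\end{equation*}
and hence $[D_{M_0/S^1},c(\bar{\kappa})]=c(d_{M_0/S^1}\bar{\kappa})+d^{\dagger}_{M_0/S^1}\bar{\kappa}-2\nabla^{M_0/S^1}_{\bar{\kappa}^{\sharp}}-2c(\bar{\kappa})D_{M_0/S^1}$.

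To finish, I would invoke Corollary \ref{Coro:kappa}, which asserts that $\kappa$ is closed on $M_{0}$; since $\pi_{S^{1}}^{*}$ is injective on differential forms and $\pi_{S^{1}}^{*}(d_{M_0/S^1}\bar{\kappa})=d\kappa=0$, this forces $d_{M_0/S^1}\bar{\kappa}=0$, so the term $c(d_{M_0/S^1}\bar{\kappa})$ drops out. Substituting the resulting expression into $\tfrac{1}{2}[D_{M_0/S^1},c(\bar{\kappa})]\varepsilon$ and combining with the zero-order and Laplacian contributions from the first paragraph produces exactly the claimed identity. I expect the main obstacle to be the Weitzenb\"ock-style identification of $\sum c(\bar{e}^{i})c(\nabla^{M_0/S^1}_{\bar{e}_{i}}\bar{\kappa})$ as $d^{\dagger}_{M_0/S^1}\bar{\kappa}+c(d_{M_0/S^1}\bar{\kappa})$: the signs in the Clifford identities, and the correct pairing of the symmetric trace with the codifferential and of the skew part with Clifford multiplication by the exterior derivative, require some careful bookkeeping; once that is done, the remainder reduces to formal algebra together with the closedness of $\kappa$.
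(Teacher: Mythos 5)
Your proof is correct and follows essentially the same route as the paper: expand the square, use $\varepsilon D_{M_0/S^1}=-D_{M_0/S^1}\varepsilon$ to collapse the cross terms into $\tfrac{1}{2}[D_{M_0/S^1},c(\bar{\kappa})]\varepsilon$, compute $(c(\bar{\kappa})\varepsilon)^2=\norm{\bar{\kappa}^\sharp}^2$, and conclude via the anticommutator identity together with $d\bar{\kappa}=0$. The only difference is that the paper simply cites \cite[Proposition 3.45]{BGV} for the identity $c(\bar{\kappa})D_{M_0/S^1}+D_{M_0/S^1}c(\bar{\kappa})=-2\nabla^{M_0/S^1}_{\bar{\kappa}^\sharp}+d^\dagger_{M_0/S^1}(\bar{\kappa})$ (valid since $\bar{\kappa}$ is closed), whereas you derive that Weitzenb\"ock-type formula by hand in a local orthonormal frame.
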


\begin{proof}
We first expand
\begin{align*}
\left(D_{M_0/S^1}+\frac{1}{2}c(\bar{\kappa})\varepsilon\right)^2=&\Delta_{M_0/S^1}+\frac{1}{2}D_{M_0/S^1}c(\bar{\kappa})\varepsilon+\frac{1}{2}c(\bar{\kappa})\varepsilon D_{M_0/S^1}+\frac{1}{4}\norm{\bar{\kappa}^\sharp}^2.
\end{align*}
Then, from \cite[Proposition 3.45]{BGV} and the fact that $d\kappa=0$ we obtain
\begin{align*}
c(\bar{\kappa})D_{M_0/S^1}+D_{M_0/S^1}c(\bar{\kappa})=-2\nabla_{\bar{\kappa}^\sharp}^{M_0/S^1}+d_{M_0/S^1}^\dagger(\bar{\kappa}). 
\end{align*}
\end{proof}

\begin{remark}
Observe that we can write 
\begin{align*}
\left(D_{M_0/S^1}+\frac{1}{2}c(\bar{\kappa})\varepsilon\right)^2=\Delta_{M_0/S^1}+\frac{\varepsilon}{2}[D_{M_0/S^1},c(\bar{\kappa})]+\frac{1}{4}\norm{\bar{\kappa}^\sharp}^2.
\end{align*}
In particular, for $f\in C^{\infty}(M_0/S^1)$ we have by Proposition \ref{Prop:CommDirOpFunct},
\begin{align*}
[D_{M_0/S^1},c(\bar{\kappa})]f=&D_{M_0/S^1}(fc(\bar{\kappa}))-c(\bar{\kappa})D_{M_0/S^1}f\\
=&c(df)c(\bar{\kappa})+fD_{M_0/S^1}c(\bar{\kappa})-c(\bar{\kappa})c(df)-fc(\bar{\kappa})D_{M_0/S^1}\\
=&f[D_{M_0/S^1},c(\bar{\kappa})]+[c(df),c(\bar{\kappa})].
\end{align*}
\end{remark}

Even though we are not going to make use of it,  we want to finish this section by showing that the operator $\mathscr{D}'$ is also essentially self-adjoint  whenever $M$ is a complete (not necessarily compact) manifold on which $S^1$ acts by orientation preserving isometries. The strategy of the proof is inspired in the similar result for Dirac operators. 

\begin{lemma}\label{lemma:CommBf}
For $f\in C^\infty(M)$ we have $[B,f]=c(df)c(\chi)+\inner{\chi}{d f}$, where $[\cdot, \cdot]$ denotes the commutator. 
\end{lemma}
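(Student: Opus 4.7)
The plan is to reduce the claim to elementary Clifford algebra by exploiting the fact that $[B,f]$ is a commutator of a first-order operator with a smooth function, hence a zero-order pointwise operator determined entirely by $df$ and the geometric data $\chi$.

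First, I would observe that $c(\chi)$, being a pointwise endomorphism of $\wedge T^*M$, commutes with multiplication by $f$, so only the commutators of $d$ and $d^\dagger$ with $f$ contribute. These are standard: from the Leibniz rule $d(f\omega)=df\wedge\omega+fd\omega$ one gets $[d,f]=df\wedge$, and taking formal adjoints yields $[d^\dagger,f]=-\iota_{(df)^\sharp}$ (since $(df\wedge)^\dagger=\iota_{(df)^\sharp}$ and $[d,f]^\dagger=-[d^\dagger,f]$ for real $f$). Inserting these into $B=-c(\chi)d+d^\dagger c(\chi)$ gives
\[
[B,f] = -c(\chi)\bigl(df\wedge\bigr) - \iota_{(df)^\sharp}\,c(\chi).
\]

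Next I would simplify both summands using the defining identity $c(\alpha)=\alpha\wedge-\iota_{\alpha^\sharp}$ and the elementary contraction rule $\iota_X(\beta\wedge\omega)=(\iota_X\beta)\omega-\beta\wedge\iota_X\omega$, together with $\iota_X\chi=\iota_X(X^\flat)=1$ and $\iota_X(df)=df(X)=\inner{\chi}{df}$. A direct calculation of two lines each yields
\[
c(\chi)(df\wedge) = -(df\wedge)\,c(\chi) - \inner{\chi}{df},\qquad \iota_{(df)^\sharp}\,c(\chi)=\inner{\chi}{df}-c(\chi)\,\iota_{(df)^\sharp}.
\]
Substituting back, the two $\inner{\chi}{df}$ contributions cancel and one obtains $[B,f]=(df\wedge)c(\chi)+c(\chi)\iota_{(df)^\sharp}$. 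Finally, expanding $c(df)c(\chi)=(df\wedge-\iota_{(df)^\sharp})c(\chi)$ and applying the second identity above to $\iota_{(df)^\sharp}c(\chi)$ shows that $(df\wedge)c(\chi)+c(\chi)\iota_{(df)^\sharp}=c(df)c(\chi)+\inner{\chi}{df}$, which is the claim.

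As a shortcut (and a useful sanity check), for any first-order differential operator $P$ one has the general identity $[P,f]=i\,\sigma_P(P)(x,df)$, as is immediate from the symbols $\sigma_P(d)(x,\xi)=-i\xi\wedge$ and $\sigma_P(d^\dagger)(x,\xi)=i\iota_{\xi^\sharp}$ computed in Proposition \ref{Prop:PSD}. Combined with the principal symbol $\sigma_P(B)(x,\xi)=-i(\inner{\chi}{\xi}+c(\xi)c(\chi))$ already derived in Proposition \ref{Prop:OpB}, this gives the formula in one line. There is no real obstacle; the only thing requiring care is the sign bookkeeping when moving $c(\chi)$ past $df\wedge$ and $\iota_{(df)^\sharp}$, which is why running both routes in parallel is reassuring.
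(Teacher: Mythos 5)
Your proposal is correct and follows essentially the same route as the paper: reduce $[B,f]$ to $-c(\chi)(df\wedge)-\iota_{(df)^\sharp}c(\chi)$ via $[d,f]=df\wedge$ and $[d^\dagger,f]=-\iota_{(df)^\sharp}$, then rearrange with the anticommutation identity $c(\chi)(\xi\wedge)=-\xi\wedge c(\chi)-\inner{\chi}{\xi}$ already used in Proposition \ref{Prop:OpB}. The paper's version is marginally shorter (it moves $c(\chi)$ past $df\wedge$ once and factors $c(df)$ directly, rather than cancelling and reintroducing $\inner{\chi}{df}$), but the content is identical, and your symbol-based cross-check is a valid restatement of the same computation.
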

\begin{proof}
First observe from Proposition \ref{Prop:Chirl}(3) that $\star (df\wedge)\star=-\iota_{(df)^\sharp}$. Then we just compute as in the proof of Proposition \ref{Prop:OpB},
\begin{align*}
[B,f]=&-c(\chi)\circ (df\wedge)+(\star (df\wedge)\star)\circ c(\chi)\\
=&-c(\chi)\circ (df\wedge)-\iota_{(df)^\sharp}\circ c(\chi)\\
=&df\wedge c(\chi)+\inner{\chi}{d f}-\iota_{(df)^\sharp}\circ c(\chi)\\
=& c(df)c(\chi)+\inner{\chi}{d f}.
\end{align*}
\end{proof}

\begin{coro}\label{Coro:BComplete}
Let $M$ be a complete Riemannian manifold. Then the operator $B$ is essentially self-adjoint. 
\end{coro}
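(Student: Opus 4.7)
The plan is to adapt the standard Chernoff-type argument used for Dirac operators on complete manifolds (see e.g.\ \cite[Theorem II.5.7]{LM89}), exploiting the fact that, by Lemma \ref{lemma:CommBf}, the commutator of $B$ with a smooth function is a zero order operator whose norm is controlled by $\|df\|_\infty$. Specifically, for $f\in C^\infty(M)$, the commutator $[B,f]=c(df)c(\chi)+\langle\chi,df\rangle$ satisfies $\|[B,f]\omega\|_{L^2}\leq 2\|df\|_\infty\|\omega\|_{L^2}$, since $\|c(\chi)\|=\|\chi\|=1$ and the Clifford multiplication $c(df)$ is bounded by $\|df\|_\infty$.

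First, using the completeness of $M$, I would invoke a standard result (e.g.\ \cite[Proposition 10.1.4]{N07}) to construct a sequence $(\phi_n)_n\subset C_c^\infty(M)$ of cutoff functions with $0\leq\phi_n\leq 1$, $\phi_n\nearrow 1$ pointwise, and $\|d\phi_n\|_\infty\longrightarrow 0$ as $n\longrightarrow\infty$.  The strategy is then to prove $B_\textnormal{min}=B_\textnormal{max}$ by a two-step approximation: for $\omega\in\dom(B_\textnormal{max})$, set $\omega_n\coloneqq\phi_n\omega$. Since multiplication by a smooth bounded function preserves $\dom(B_\textnormal{max})$ and
\begin{align*}
B_\textnormal{max}(\phi_n\omega)=\phi_n B_\textnormal{max}\omega+[B,\phi_n]\omega
\end{align*}
holds in the distributional sense, dominated convergence gives $\phi_n B_\textnormal{max}\omega\longrightarrow B_\textnormal{max}\omega$ in $L^2$, while the commutator estimate above gives $\|[B,\phi_n]\omega\|_{L^2}\leq 2\|d\phi_n\|_\infty\|\omega\|_{L^2}\longrightarrow 0$. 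Thus $\omega_n\longrightarrow\omega$ in the graph norm of $B_\textnormal{max}$.

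The second step is to show each $\omega_n$, which is an element of $\dom(B_\textnormal{max})$ with compact support, lies in $\dom(B_\textnormal{min})$. This is the classical Friedrichs mollifier argument: since $B$ is a first order differential operator with smooth coefficients, Friedrichs' lemma (see for instance \cite[Lemma 17.1.5]{HOIII}) guarantees that if one mollifies $\omega_n$ by a smooth approximation of the identity $(J_\epsilon)_{\epsilon>0}$ with support shrinking to a point, then $J_\epsilon\omega_n\in\Omega_c(M)$, $J_\epsilon\omega_n\longrightarrow \omega_n$ in $L^2$, and $BJ_\epsilon\omega_n-J_\epsilon B\omega_n\longrightarrow 0$ in $L^2$. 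Hence $\omega_n\in\dom(B_\textnormal{min})$. A diagonal sequence then produces compactly supported smooth approximations of $\omega$ in the graph norm of $B$, showing $\omega\in\dom(B_\textnormal{min})$ and therefore $B_\textnormal{min}=B_\textnormal{max}$.

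The main obstacle is that $B$ is only transversally elliptic, not elliptic, so one cannot invoke any elliptic regularity shortcut. Fortunately the Friedrichs mollification lemma is purely about first order differential operators with smooth coefficients and makes no ellipticity assumption, so it applies verbatim. Apart from this subtlety, the argument is entirely standard; the key geometric input is the bound $\|c(\chi)\|=1$ (since $\chi$ is a unit characteristic form on $M_0$ and extends continuously by zero across $M^{S^1}$, so the pointwise estimate on $[B,\phi_n]$ is uniform on all of $M$), which ensures that the commutator estimate is global.
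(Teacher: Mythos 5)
Your proof is correct and follows essentially the same route as the paper: a cutoff argument exploiting completeness together with the commutator formula of Lemma \ref{lemma:CommBf} to reduce to compactly supported elements of $\dom(B_\textnormal{max})$, and then a Friedrichs-mollification step, which in the paper is simply delegated to Lemma \ref{LemmaGL02}. (One small inaccuracy: $\chi$ does not extend by zero across $M^{S^1}$ — it has unit norm on all of $M_0$ — but the relevant fact, that $c(\chi)$ is an $L^2$-isometry and hence the commutator bound is global, is exactly what Proposition \ref{Prop:OpB} establishes, so nothing is lost.)
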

\begin{proof}
We are going to adapt the proof for Dirac operators given in \cite[Chapter 4]{F00}, \cite[Theorem II.5.7]{LM89} and \cite{W73}. Let $\beta\in\dom(B^*)$, we want to show that $\beta\in\dom (B)$. Choose a real-valued smooth function  $\varrho\in C^\infty(M)$ satisfying the following conditions:
\begin{itemize}
\item $0\leq \varrho\leq 1 $
\item $\varrho(t)=1$ for $t\leq 1$.
\item $\varrho(t)=0$ for $t\geq 2$.
\item $|\varrho'(t)|\leq 2$.
\end{itemize}
Fix an element $x_0\in M$ and let $d:M\longrightarrow \mathbb{R}$ be the distance function at $x_0$, which is a locally Lipschitz continuous function and therefore differentiable with $|d'(x)|\leq 1$ almost everywhere. Since $M$ is complete, for each $r>1$ the closure of the open ball $B_r\coloneqq \{x\in M\:|\: d(x)\leq r\}$  is compact. Define a sequence of functions on $M$ by 
\begin{align*}
\varrho_n(x):=\varrho\left(\frac{d(x)}{n}\right),
\end{align*}
whose support satisfies $\supp(\varrho_n)\subset \overline{B_{2n}}$. Thus, each $\varrho_n$ has compact support. Moreover, these functions are also locally Lipschitz and therefore differentiable almost everywhere. On these points  we have the estimate 
\begin{align*}
\norm{d\varrho_n(x)}^2\leq\frac{4}{n^2}.
\end{align*}
Define now $\beta_n:=\varrho_n \beta\in\dom(B^*)$, we want to show that $\beta_n\longrightarrow \beta $ and $B^*\beta_n\longrightarrow B^*\beta $. First note that 
\begin{align*}
\int_M \norm{\beta_n-\beta}^2dx=\int_M|\varrho_n(x)-1|^2\norm{\beta}^2 dx \leq \int_{M-B_{2n}}\norm{\beta}^2 \longrightarrow 0
\end{align*}
as $n\longrightarrow \infty$. Now we deal with the sequence $(B^*\beta_n)_n$. If follows from \cite[Remark II.5.6]{LM89} that the analogous of Lemma \ref{lemma:CommBf} holds also in the distributional sense, 
\begin{align*}
B^*\beta_n=c(d\varrho_n)c(\chi)\beta+\inner{\chi}{d\varrho_n}\beta +\varrho_n B^*\beta.
\end{align*}
Arguing as before we see that  $\varrho_n B^*\beta\longrightarrow B^*\beta$ as $n\longrightarrow\infty$. On the other hand,
\begin{align*}
\norm{(d\varrho_n)c(\chi)\beta}^2_{L^2(\wedge T^*M)}&\leq\frac{4}{n^2}\norm{\beta}^2_{L^2(\wedge T^*M)}\longrightarrow 0,\\
\norm{\inner{\chi}{d\varrho_n}\beta}^2_{L^2(\wedge T^*M)}&\leq\frac{4}{n^2}\norm{\beta}^2_{L^2(\wedge T^*M)}\longrightarrow 0,
\end{align*}
as $n\longrightarrow \infty$. Altogether, $B^*\beta_n\longrightarrow B^*\beta$ in $L^2$ and therefore  $\beta_n\longrightarrow \beta$ in the graph norm of $B^*$. This shows that it is enough to consider $\beta\in\dom(B^*)$ with compact support. For this case we can now use Lemma \ref{LemmaGL02} to conclude $\beta\in \dom(B)$. 
\end{proof}

\subsection{The basic signature operator}

The final section of this chapter is devoted to comparing our methods with the ones of Habib and Richardson on modified differentials in the context of Riemannian foliations \cite{HR13}. In their work they defined the basic signature operator as a Dirac-type operator on the space of basic forms. This operator has the important property that it anti-commutes with the chirality operator $\bar{\star}$ and therefore the basic signature can be defined as its index.\\

We will continue in the setting in which $S^1$ acts effectively and semi-freely by orientation preserving isometries on a closed $4k+1$ dimensional Riemannian manifold $M$. To begin let us define the space of $L^2$-basic forms, denoted by $L^2_\text{bas}(M_0)$, as the Hilbert space completion of $\Omega_\text{{bas},c}(M_0)$ with respect to the inner product
\begin{align*}
(\omega_0,\omega_1)_\text{bas}\coloneqq\int_M \chi\wedge\omega_0\wedge \bar{*}\omega_1,
\end{align*}
where $\bar{*}$ is the basic Hodge star operator of Definition \ref{Def:Bar*}. Note that 
\begin{align*}
(\omega_0,\omega_1)_\text{bas}\coloneqq \int_M \chi\wedge\omega_0\wedge \bar{*}\omega_1=\int_M \omega_0\wedge \bar{*}\omega_1\wedge\chi=\int_M \omega_0\wedge* \omega_1, 
\end{align*}
so $L^2_\text{bas}(M_0)$ is nothing else but the closed subspace of $L^2(\wedge T^*M_0)$ spanned by $\Omega_\text{{bas},c}(M_0)$. As we have seen before, the exterior derivative maps basic forms to basic forms, so it is natural to ask which is the associated   formal adjoint with respect to the basic $L^2$-inner product described above. Observe from Remark \ref{Rmk:AdjointNotPresBas} that $d^\dagger $ is not such an adjoint because it does not preserve the space of basic forms. Nevertheless, one can correct this problem by composing it with the projection $P_\text{bas}:L^2(\wedge T^*M)\longrightarrow L^2_\text{bas}(M)$ described in \cite{PR96}. That is, the desired basic adjoint of $d$ is simply
\begin{align*}
\delta_b\coloneqq P_\text{bas} d^\dagger:\Omega^{r}_\text{bas}(M)\longrightarrow \Omega^{r-1}_\text{bas}(M). 
\end{align*}
Indeed, a simple computation shows that $\delta_b$ satisfies the required condition,
\begin{align*}
(\omega_0,\delta_b \omega_1)_\text{bas}=(\omega_0,P_\text{bas}d^{\dagger}\omega_1)_\text{bas}=(P_\text{bas}\omega_0,d^{\dagger}\omega_1)_\text{bas}=(\omega_0,d^{\dagger}\omega_1)_\text{bas}=(d\omega_0,\omega_1)_\text{bas}.
\end{align*}
Form Theorem \ref{Thm:OpInv} we see that this operator is given explicitly by (\cite[Theorem 7.10]{T97})
\begin{align}\label{Eqn:Deltab}
\delta_b=-\bar{\star}d\bar{\star}+\iota_H,
\end{align}
where $H$ is the mean curvature vector field. One would be tempted to define the basic Dirac operator as $d+\delta_b$, but this turns out to be problematic since  it is clear that  $d+\delta_b$ does not anti-commute with $\bar{\star}$. To overcome this issue, Habib and Richardson introduced in  \cite{HR13} the twisted differential 
\begin{align*}
\widetilde{d}\coloneqq d-\frac{1}{2}\kappa\wedge,
\end{align*}
where $\kappa$ is the mean curvature form. The first thing to check is that $\widetilde{d}$ is really a differential, i.e. that it satisfies $(\widetilde{d})^2=0$. This follows by Corollary \ref{Coro:kappa} since
\begin{align*}
(\widetilde{d})^2=-d \circ \left(\frac{1}{2}\wedge \kappa \right)-\frac{1}{2}\kappa\wedge d=-\frac{1}{2}d\kappa=0.
\end{align*}
Let us denote by  $\widetilde{H}^{*}_{\textnormal{bas},c}(M-M^{S^1})$ the cohomology with compact support of the twisted differential $\widetilde{d}$. Then we have the following result \footnote{I would like to thank Ken Richardson for pointing me out this fact.}.

\begin{proposition}
The symmetric pairing
\begin{align*}
\xymatrixcolsep{2cm}\xymatrixrowsep{0.01cm}\xymatrix{
\widetilde{H}^{2k}_{\textnormal{bas},c}(M-M^{S^1})\times \widetilde{H}^{2k}_{\textnormal{bas},c}(M-M^{S^1}) \ar[r] & \mathbb{R}\\
(\omega,\omega') \ar@{|->}[r] & \displaystyle{\int_M\chi\wedge \omega\wedge\omega'},
}
\end{align*}
is well defined.
\end{proposition}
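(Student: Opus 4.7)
The plan is to verify two things: (i) the pairing is symmetric, and (ii) the integral depends only on the $\widetilde{d}$-cohomology classes of $\omega$ and $\omega'$. Symmetry is immediate since $\omega,\omega'$ both have even degree $2k$, so $\omega\wedge\omega'=\omega'\wedge\omega$. The real content is (ii), and by the usual argument it suffices to show that whenever $\eta\in\Omega^{2k-1}_{\textnormal{bas},c}(M-M^{S^1})$ and $\omega'\in\Omega^{2k}_{\textnormal{bas},c}(M-M^{S^1})$ with $\widetilde{d}\omega'=0$, the integral
\begin{align*}
\int_M \chi\wedge(\widetilde{d}\eta)\wedge\omega'
\end{align*}
vanishes.

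The main computational step is to integrate by parts with the twist correctly. Since $M$ is closed, Stokes' theorem applied to $\chi\wedge\eta\wedge\omega'$ (a form of degrees $1+(2k-1)+2k=4k$) gives
\begin{align*}
0=\int_M d(\chi\wedge\eta\wedge\omega')=\int_M d\chi\wedge\eta\wedge\omega'-\int_M\chi\wedge d\eta\wedge\omega'+\int_M\chi\wedge\eta\wedge d\omega'.
\end{align*}
I would then replace $d\chi$ by Rummler's formula $d\chi=\varphi_0-\kappa\wedge\chi$ (equation \eqref{Eqn:dchi}) and use that $\widetilde{d}\omega'=0$ means $d\omega'=\tfrac{1}{2}\kappa\wedge\omega'$. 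The key observation that makes the whole thing collapse is that $\varphi_0\wedge\eta\wedge\omega'$ is a basic form of degree $4k+1$ on $M_0$; since basic forms correspond via $\pi_{S^1}^*$ to forms on $M_0/S^1$, which has dimension $4k$, any basic form of degree $>4k$ must vanish. Hence the $\varphi_0$-term drops out and one is left with the identity
\begin{align*}
\int_M\chi\wedge d\eta\wedge\omega'=\tfrac{1}{2}\int_M\chi\wedge\kappa\wedge\eta\wedge\omega',
\end{align*}
after keeping careful track of signs when commuting the $1$-forms $\chi$ and $\kappa$ past $\eta$.

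Feeding this back into the original integral,
\begin{align*}
\int_M\chi\wedge(\widetilde{d}\eta)\wedge\omega'=\int_M\chi\wedge d\eta\wedge\omega'-\tfrac{1}{2}\int_M\chi\wedge\kappa\wedge\eta\wedge\omega'=0,
\end{align*}
and the symmetric argument (swapping the roles of $\omega$ and $\omega'$, using that the pairing is symmetric) shows independence of the representative in the second slot as well.

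The hard part, and the place where I would double-check everything, is the sign bookkeeping in the Stokes/Rummler step; in particular the precise cancellation requires exactly the coefficient $\tfrac{1}{2}$ in the twist $\widetilde{d}=d-\tfrac{1}{2}\kappa\wedge$, mirroring the role of $\tfrac{1}{2}c(\bar\kappa)\varepsilon$ in $\mathscr{D}$. Everything else—vanishing of the $\varphi_0$-term by basic degree counting, and the symmetry in the two even-degree slots—is automatic once the local algebra is set up.
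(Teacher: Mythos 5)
Your argument is correct and is essentially the paper's own proof: both rest on Stokes' theorem applied to $\chi\wedge\eta\wedge\omega'$, Rummler's formula $d\chi=\varphi_0-\kappa\wedge\chi$, the vanishing of the top-degree horizontal form $\varphi_0\wedge\eta\wedge\omega'$, and the exact cancellation of the two $\tfrac{1}{2}\kappa$-terms; your sign bookkeeping checks out. The only difference is cosmetic — the paper expands $d(\chi\wedge\beta\wedge\omega)$ first and integrates at the end, while you integrate first and substitute afterwards.
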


\begin{proof}
Let $\beta\in\Omega^{2k-1}_{\text{bas},c}(M-M^{S^1})$ and $\omega\in\Omega_{\text{bas},c}^{2k}(M-M^{S^1})$ such that
\begin{align*}
d\omega-\frac{1}{2}\kappa\wedge\omega=&0.
\end{align*}
We need to show that 
\begin{align*}
\int_{M}\chi \wedge\left(d\beta-\frac{1}{2}\kappa\wedge\beta\right)\wedge \omega=0.
\end{align*}
Using \eqref{Eqn:dchi} we compute the exterior derivative
\begin{align*}
d(\chi\wedge\beta\wedge \omega)=&(d\chi)\wedge\beta\wedge\omega-\chi\wedge d(\beta\wedge\omega)\\
=&(d\chi)\wedge\beta\wedge\omega-\chi\wedge d\beta\wedge\omega+\chi\wedge \beta\wedge d\omega\\
=&\varphi_0\wedge\beta\wedge \omega-\kappa\wedge\chi\wedge\beta\wedge\omega-\chi\wedge d\beta\wedge\omega+\chi\wedge \beta\wedge d\omega\\
=&\varphi_0\wedge\beta\wedge \omega-\chi\wedge\beta\wedge\kappa\wedge\omega-\chi\wedge d\beta\wedge\omega+\chi\wedge \beta\wedge d\omega\\
=&\varphi_0\wedge\beta\wedge \omega-\frac{1}{2}\chi\wedge\beta\wedge\kappa\wedge\omega-\chi\wedge d\beta\wedge\omega\\
=&\varphi_0\wedge\beta\wedge \omega+\frac{1}{2}\chi\wedge\kappa\wedge\beta\wedge\omega-\chi\wedge d\beta\wedge\omega\\
=&\varphi_0\wedge\beta\wedge \omega-\chi \wedge\left(d\beta-\frac{1}{2}\kappa\wedge\beta\right)\wedge \omega.
\end{align*}
Now, on the one hand 
\begin{align*}
\int_M d(\chi\wedge\beta\wedge \omega)=0,
\end{align*}
by Stokes' theorem, and on the other hand
\begin{align*}
\int_M \varphi_0\wedge\beta\wedge\omega=0,
\end{align*}
since $\varphi_0\wedge\beta\wedge\omega$ is a top-degree basic form, which therefore should be zero. 
\end{proof}

The adjoint operator of the twisted differential $\widetilde{d}$ with respect to the $L^2$-inner product on basic forms is 
\begin{align*}
\widetilde{\delta}\coloneqq{(\widetilde{d})}^\dagger=\left(d-\frac{1}{2}\kappa\wedge\right)^\dagger=\delta_b-\frac{1}{2}\iota_H.
\end{align*}
It is natural to study the operator $\widetilde{D}:=\widetilde{d}+\widetilde{d}^\dagger$ acting on smooth basic forms, which by construction is symmetric with respect to $(\cdot, \cdot)_\text{bas}$. 

\begin{lemma}[{{\cite[Proposition 5.1]{HR13}}}]\label{Lemma:AntiCommBasDirOpHR}
The operator $\widetilde{D}$ satisfies $\bar{\star}\widetilde{D}+\widetilde{D}\bar{\star}=0$.
\end{lemma}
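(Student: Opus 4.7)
The plan is to compute the conjugation $\bar\star\widetilde D\bar\star$ directly and show it equals $-\widetilde D$; since $\bar\star^{2}=1$ on basic forms (Proposition~\ref{Prop:BarChirl}(1)), this is equivalent to the desired anti-commutation $\bar\star\widetilde D+\widetilde D\bar\star=0$.

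First I would assemble the four algebraic identities that make the conjugation transparent. Since $\dim(M_0/S^1)=n=4k$ is even, Proposition~\ref{Prop:BarChirl}(3) applied to the basic $1$-form $\kappa$ gives
\begin{equation*}
\bar\star(\kappa\wedge)\bar\star=(-1)^{n}\iota_{H}=\iota_{H},
\qquad
\bar\star\,\iota_{H}\,\bar\star=\kappa\wedge,
\end{equation*}
where the second identity follows from the first by conjugating with $\bar\star$ (using $\bar\star^{2}=1$). Next, the explicit formula \eqref{Eqn:Deltab} for $\delta_{b}$ rearranges to
\begin{equation*}
\bar\star\, d\,\bar\star=\iota_{H}-\delta_{b},
\end{equation*}
and conjugating this once more by $\bar\star$ yields
\begin{equation*}
\bar\star\,\delta_{b}\,\bar\star=-d+\kappa\wedge.
\end{equation*}

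With these four relations in hand, the computation is mechanical. Write
\begin{equation*}
\widetilde D=d-\tfrac{1}{2}\kappa\wedge+\delta_{b}-\tfrac{1}{2}\iota_{H},
\end{equation*}
and compute term by term:
\begin{align*}
\bar\star\,\widetilde D\,\bar\star
&=\bar\star d\bar\star-\tfrac{1}{2}\bar\star(\kappa\wedge)\bar\star
  +\bar\star \delta_{b}\bar\star-\tfrac{1}{2}\bar\star\,\iota_{H}\,\bar\star\\
&=(\iota_{H}-\delta_{b})-\tfrac{1}{2}\iota_{H}+(-d+\kappa\wedge)-\tfrac{1}{2}\kappa\wedge\\
&=-\bigl(d-\tfrac{1}{2}\kappa\wedge+\delta_{b}-\tfrac{1}{2}\iota_{H}\bigr)
=-\widetilde D.
\end{align*}
Multiplying on the right by $\bar\star$ and using $\bar\star^{2}=1$ gives $\bar\star\widetilde D=-\widetilde D\bar\star$, which is the claim.

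There is no genuine obstacle here; the only thing to watch is the sign $(-1)^{n}$ in Proposition~\ref{Prop:BarChirl}(3), which is $+1$ precisely because $n=4k$ is even — this is what causes the $\kappa\wedge$ and $\iota_{H}$ terms of $\widetilde D$ to exchange under conjugation by $\bar\star$ with the correct signs so that the $-\tfrac{1}{2}$ coefficients survive. In odd dimension the same calculation would produce a commutation relation instead, which explains why the $-\tfrac{1}{2}\kappa\wedge$ correction of Habib--Richardson is tailored to the even-dimensional (signature-operator) setting.
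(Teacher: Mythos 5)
Your proof is correct and is essentially the computation the paper performs: both arguments hinge on $\delta_b=-\bar\star d\bar\star+\iota_H$ together with $\bar\star(\kappa\wedge)\bar\star=(-1)^n\iota_H$ for $n=4k$ even, the paper merely packaging this as the known anti-commutation of $D_{M_0/S^1}-\tfrac12 c(\bar\kappa)$ with the chirality operator on the quotient and transporting it back via $\pi_{S^1}^*$. Your direct term-by-term conjugation on basic forms is a perfectly equivalent presentation.
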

\begin{proof}
First observe from \eqref{Eqn:Deltab} that
\begin{align*}
\widetilde{D}=\widetilde{d}+\widetilde{d}^\dagger=d-\frac{1}{2}\kappa\wedge+\delta_b-\frac{1}{2}\iota_H=d-\bar{\star}d^\dagger\bar{\star}-\frac{1}{2}c(\kappa).
\end{align*}
Then, since
\begin{align*}
\bar{\star}\left(D_{M_0/S^1}-\frac{1}{2}c(\bar{\kappa})\right) +\left(D_{M_0/S^1}-\frac{1}{2}c(\bar{\kappa})\right)\bar{\star}=0,
\end{align*}
the result follows from the commutative diagram
\begin{align*}
\xymatrixrowsep{2.5cm}\xymatrixcolsep{2.5cm}\xymatrix{
\Omega_{\textnormal{bas},c}(M_0)  \ar[r]^-{\widetilde{D}} & \Omega_{\textnormal{bas},c} (M_0)\\
\Omega_c(M_0/S^1) \ar[u]^-{\pi_{S^1}^*} \ar[r]^-{D_{M_0/S^1}-\frac{1}{2}c(\bar{\kappa})}& \Omega_c(M_0/S^1). \ar[u]_-{\pi_{S^1}^*}
}
\end{align*}
\end{proof}
From this lemma we see that we can decompose $\widetilde{D}$  with respect to $\bar{\star}$ as
\begin{align*}
\widetilde{D}=
\left(
\begin{array}{cc}
0 & \widetilde{D}^-\\
\widetilde{D}^+ & 0
\end{array}
\right).
\end{align*}
\begin{definition}
The {\em basic signature operator} is defined as the component
$$\widetilde{D}^+:\Omega^+_{\text{bas},c}(M_0)\longrightarrow\Omega^-_{\text{bas},c}(M_0).$$
\end{definition}
In view of the proof of Lemma \ref{Lemma:AntiCommBasDirOpHR} we would like to study the operator $\widetilde{D}$ as an operator on $\Omega_c(M_0/S^1)$ via the pullback of the orbit map and compare it with $\mathscr{D}'$. First of all note that the operator
$$D_{M_0/S^1}-\frac{1}{2}c(\bar{\kappa})$$
is not symmetric on $\Omega_c(M_0/S^1)$ with respect to the quotient metric since $c(\bar{\kappa})^\dagger=-c(\bar{\kappa})$. As we did in the sections above we need to implement the transformation \eqref{Eqn:U}. Recall from the proof of Theorem \ref{Thm:THat} the formula
\begin{align*}
h^{1/2}D_{M_0/S^1}h^{-1/2}=D_{M_0/S^1}+\frac{1}{2}c(\bar{\kappa}). 
\end{align*}
Hence, if we set $\psi\coloneqq h^{-1/2}\circ \pi^*_{S^1}$, then we obtain the following commutative diagram 
\begin{align*}
\xymatrixrowsep{2cm}\xymatrixcolsep{2cm}\xymatrix{
\Omega_{\textnormal{bas},c}(M_0)  \ar[r]^-{\widetilde{D}} & \Omega_{\textnormal{bas},c} (M_0)\\
\Omega_c(M_0/S^1) \ar[u]^-{\psi} \ar[r]^-{D_{M_0/S^1}}& \Omega_c(M_0/S^1). \ar[u]_-{\psi}
}
\end{align*}
This means that the basic signature operator $\widetilde{D}^+$, regarded as a symmetric operator on the quotient space through $\psi$, is precisely the signature operator $D^+_{M_0/S^1}$ on the quotient space. 

\section{Examples}\label{Sect:Examples}

In this chapter we provide some examples of the theory presented so far, with particular focus on the construction of the operator $\mathscr{D}'$. The main objective is to illustrate how explicit computation are carried out in practice. We begin with some low dimensional examples where all the geometric quantities of Chapter \ref{Section:InducedOpConstruction}  can be easily visualized. In the second half of this chapter we present concrete examples of semi-free circle actions on the $5$-sphere and verify Theorem \ref{Thm:S1SignatureThm} for such concrete cases. In particular, we see the different nature of the terms involved in this formula.\\

To motivate the techniques, we consider the following generic case. In view of the proof of Theorem \ref{Thm:S1SignatureThm}, it can be seen that locally one can always study the induced linear action as $k$ copies of the standard $S^1$ action on $\mathbb{C}$ plus the trivial representation. Consequently, this linear model is a rich source to construct such examples. Concretely, consider $M\coloneqq S^{2n-1} \subset \mathbb{C}^n$ with the $S^1$-action given, for $\lambda\in S^1\subset \mathbb{C}$, by 
\begin{align*}
\lambda\cdot(z_1, z_2,\dots,z_{k-1},z_k, z_{k+1}, \dots z_n)\coloneqq (\lambda z_1, \lambda z_2,\dots,\lambda z_{k-1},\lambda z_k, z_{k+1}, \dots z_n). 
\end{align*}
It is straightforward to see that the fixed point set is 
\begin{align*}
M^{S^1}=\{(0, 0,\dots,0,0, z_{k+1}, \dots z_n)\in S^{2n+1}\}=S^{2(n-k)-1}. 
\end{align*}
Locally, the quotient space can therefore modeled as a mapping cone of a $\mathbb{C}^{k-1}$-Riemannian fibration. In particular,  $S^{2n-1}/S^1$ is satisfies the Witt condition if, and only if, $k$ is even. \\
Some examples that we present in this section are specific instances of this general construction. 

\subsection{Low dimensional examples}

\subsubsection{$2$-Torus}
In this first example we will study the embedded $2$-torus $\BB{T}^2\subset\BB{R}^3$ regarded as a surface of revolution obtained by rotating the circle $(x-R)^2+z^2=r^2$ around the $z$-axis for $0<r<R$. This surface can be locally parametrized by the equations
\begin{align*}
x&=(R+r\cos u)\cos v,\\
y&=(R+r\cos u)\sin v,\\
z&=r\sin u,
\end{align*}
where $0<u,v<2\pi$. The metric induced on $\BB{T}^2$ by the Euclidean metric of $\BB{R}^3$ can be written in these coordinates as
\begin{equation}\label{Eqn:MetricTorus}
g^{T\mathbb{T}^2}=r^2du^2+(R+r\cos u)^2dv^2.
\end{equation}
If we choose the orientation with respect to the outward normal vector field on $\mathbb{T}^2$ then the associated volume element is $\vol_{\mathbb{T}^2}=r(R+r\cos u)du\wedge dv$. \\

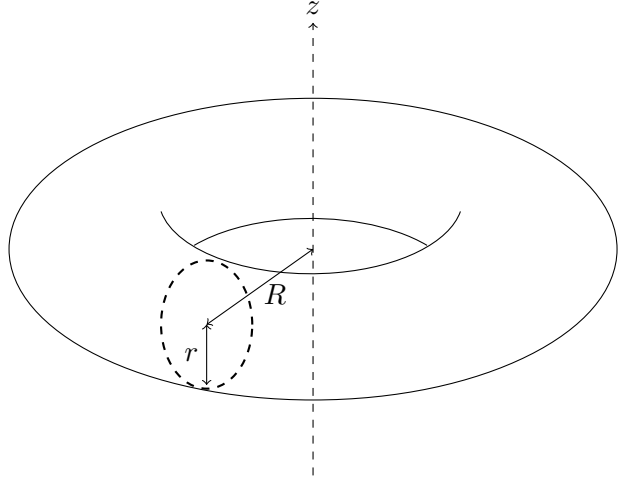
\begin{figure}[h]
\begin{center}
\begin{tikzpicture}
\draw (0,0) ellipse (4cm and 2cm);
\draw (-2,0.5) ellipsearc (190:350:2cm and 1cm);
\draw (1.5,0.05) ellipsearc (40:140:2cm and 1cm);
\draw [dashed,thick] (-1.4,-1) ellipse (0.6cm and 0.85cm);
\draw [<->](-1.4,-1)--(-1.4,-1.8);
\node at (-1.6,-1.4) {$r$};
\draw [<->](0,0)--(-1.4,-1);
\node at (-0.5,-0.6) {$R$};
\draw [dashed, ->](0,-3)--(0,3);
\node at (0,3.2) {$z$};
\end{tikzpicture}
\caption{The $2$-torus $\BB{T}\subset \mathbb{R}^3$ obtained by rotating the circle defined by the equation $(x-R)^2+z^2=r^2$, where $0<r<R$, around the $z$-axis.}
\end{center}
\end{figure}

We now consider the action of $S^1$ on $\mathbb{T}^2$ by rotations around the $z$-axis. Explicitly, with respect to the $(u,v)$-coordinates, we define $e^{it}(u,v)\coloneqq (u,v+t)$ for $e^{it}\in S^1$. This action is orientation and metric preserving. In spite of the fact that this action is free, it is still worth considering it in order to illustrate some results discussed in the previous chapters. Let us compute first the generating vector field $V$. For a smooth function $f\in C^\infty(\mathbb{T}^2)$ the action of $V$ on $f$ is by definition
\begin{align*}
V(f)(u,v)\coloneqq \frac{d}{dt}f(e^{it}(u,v))\bigg{|}_{t=0}=\frac{d}{dt}f(u,v+t)\bigg{|}_{t=0}=\partial_v f(u,v),
\end{align*}
therefore $V=\partial_v$. From \eqref{Eqn:MetricTorus} we see that the corresponding unit vector field is 
$$X=\frac{\partial_v}{R+r\cos u},$$
and the associated characteristic form is $\chi=X^\flat=(R+r\cos u)dv$. The mean curvature vector field is by definition
\begin{align*}
H=\nabla_X X=\frac{1}{(R+r\cos u)^2}\nabla_{\partial_v}\partial_v=\frac{\sin u \partial_u}{r(R+r\cos u)},
\end{align*}
where we have used $\nabla_{\partial_v}\partial_v=r^{-1}(R+r\cos u)\sin u\partial_u$ for the Levi-Civita connection, which can be easily derived from \eqref{Eqn:MetricTorus}. The mean curvature $1$-from is then
\begin{align*}
\kappa=\frac{r\sin u du}{R+r\cos u}.
\end{align*}
Note that $\kappa$ can be also computed from Proposition \ref{Prop:NormV}(2),
\begin{align}\label{Eqn:KappaT2}
\kappa=-d\log(\norm{V})=-d\log(R+r\cos u)=\frac{r\sin u du}{R+r\cos u}. 
\end{align}
Next we calculate the $2$-form $\varphi_0$. Observe from \eqref{Eqn:dchi},
\begin{align*}
d\chi=-r\sin u du\wedge dv=-\frac{r\sin u du}{R+r\cos u}\wedge (R+r\cos u)dv=-\kappa\wedge\chi.
\end{align*}
This implies that $\varphi_0=0$. This is of course not surprising since any top degree basic form associated to an effective action must vanish. \\

Let us now describe the volume of the orbit function $h$.  The volume of the orbit passing trough the point parametrized by the coordinates $(u,v)$ is the perimeter of a circle of radius $R+r\cos u$, i.e. $h(u,v)=2\pi(R+r\cos u)$. Note in particular, 
\begin{align*}
d(h^{-1/2})
=\frac{1}{2}(2\pi)^{-1/2}(R+r\cos u)^{-3/2}r\sin u du
=-\frac{1}{2}h^{-1/2}\kappa,
\end{align*}
which shows the idea behind the proof of  Lemma \ref{Lemmadh}.\\

The quotient space $\mathbb{T}^2/S^1$ of this free action can be identified with a circle $S^1(r)$ of radius $r>0$. If we equip this circle with the metric $g^{TS^1(r)}=r^2 du^2$  we easily see that the orbit map $\pi_{S_1}:\mathbb{T}^2\longrightarrow \mathbb{T}^2/S^1$ becomes a Riemannian submersion. \\

Let us illustrate Theorem \ref{Thm:OpInv} for the invariant form $\omega= f(u)du$. On the one hand 
\begin{align*}
(d_{\mathbb{T}^2}+d^\dagger_{\mathbb{T}^2})(f(u)du)=&-*d*f(u)du\\
=&-*d\left(\frac{(R+r\cos u)}{r}f(u)dv\right)\\
=&-*\left(\left(-\sin u f(u)+\frac{(R+r\cos u)}{r}f'(u)\right)du\wedge dv\right)\\
=&\frac{\sin u}{r(R+r\cos u)}f(u)-\frac{1}{r^2}f'(u),
\end{align*}
and on the other, if we regard $f(u)du$ as a $1$-form on $\mathbb{T}^2/S^1= S^1(r)$ we get (using the notation $\bar{*}\coloneqq *_{S^1(r)}$)
\begin{align*}
(d_{S^1(r)}+d^\dagger_{S^1(r)})(f(u)du)=&-\bar{*}d\bar{*}f(u)du
=-\bar{*}d\left(\frac{1}{r}f(u)\right)
=-\frac{1}{r^2}f'(u).
\end{align*}
Thus, in view of \eqref{Eqn:KappaT2}, we see how Theorem \ref{Thm:OpInv} is verified for $\omega$. \\

Finally, the operator $\mathscr{D}$ of Theorem \ref{Thm:InducedDiracOp} in this case is just
\begin{equation*}
\mathscr{D}'=\mathscr{D}=D_{S^1(r)}+\frac{1}{2}\left(\frac{r\sin u}{R+r\cos u}\right)c(du)\varepsilon, 
\end{equation*}
where $D_{S^1(r)}$ denotes the Hodge-de Rham operator of $S^1(r)$ with respect to the metric $d\bar{s}^2=r^2 du^2$. As the zero order term of $\mathscr{D}$ is bounded then, by Remark \ref{Rmk:GBInv} and the Gau\ss-Bonnet theorem, we obtain the index formula
\begin{align*}
\ind(\mathscr{D}^{\text{ev}})=\ind(D^{\text{ev}}_{S^1(r)})=\chi(S^1(r))=0.
\end{align*}

\subsubsection{Euclidean plane} 
Let us consider now the real plane $M=\mathbb{R}^2$ equipped with the usual orientation and with the Euclidean metric, which can be written in polar coordinates as $g^{T\mathbb{R}^2}= dr^2+r^2 d\theta^2$ for $r\geq 0$, $\theta\in [0,2\pi]$. Even tough this manifold is not compact, the Hodge-de Rham operator is essentially self-adjoint since the metric is complete.   We  study the action of $S^1$ on $\mathbb{R}^2$ by counter-clockwise rotations around the origin. This action is semi-free and the fixed point set consists only of the origin. Consequently, the principal orbit is $M_0=\mathbb{R}^2-\{(0,0)\}$. It is clear that $M_0/S^1$ can be identified with the open interval $\mathbb{R}_>\coloneqq (0,\infty)$ and the quotient metric is simply $g^{T\mathbb{R}_>}=dr^2$. In particular, note that $(\mathbb{R}_>, g^{T\mathbb{R}_>})$ is not complete.\\

We start by computing the main geometric quantities. The generating vector field of the action is $V=\partial_{\theta}$. Its associated unit vector field is $X=r^{-1}\partial_\theta$ and the corresponding characteristic form is $\chi=rd\theta$. The mean  curvature vector field is computed from the Levi-Civita connection as
\begin{align*}
H=\nabla_{X}X=\frac{1}{r^2}\nabla_{\partial_\theta}\partial_\theta=-\frac{1}{r}\partial_r,
\end{align*}
and the its associated mean curvature $1$-form is $\kappa=X^\flat=-dr/r$. In this case we have, as in the example above, 
\begin{align*}
\varphi_0=d\chi+\kappa\wedge\chi=dr\wedge d\theta+\left(-\frac{dr}{r}\right)\wedge rd\theta=0.
\end{align*}

These computations show that the operator $\mathscr{D}'$ of Theorem \ref{Thm:InducedDiracOp} is 
\begin{align*}
\mathscr{D}'=\mathscr{D}=D_{\mathbb{R}_>}-\frac{1}{2r}c(dr)\varepsilon,
\end{align*}
where $D_{\mathbb{R}_>}=d_{\mathbb{R}_>}+d^\dagger_{\mathbb{R}_>}$ is the Hodge-de Rham operator of $(\mathbb{R}_>, g^{T\mathbb{R}_>})$.
\begin{remark}[$D_{\mathbb{R}_>}$ is not essentially self-adjoint]
With respect to the decomposition of forms by degree ($0$-forms and $1$-forms) we can express
\begin{align*}
D_{\mathbb{R}_>}=
\left(
\begin{array}{cc}
0 & -\partial_r\\
\partial_r & 0
\end{array}
\right).
\end{align*}
Observe now the following facts:
\begin{itemize}
\item $e^{-r}\in L^2(\mathbb{R}_>)$.
\item The vector
\begin{align*}
\left(\begin{array}{c}
e^{-r}\\
i e^{-r}
\end{array}
\right)\in\Dom((D_{\mathbb{R}_>})_\text{max})
\end{align*}
\item The operator $D_{\mathbb{R}_>,\text{max}}$ has $i=\sqrt{-1}$ as an eigenvalue, i.e. 
\begin{align*}
\left(\begin{array}{cc}
 0&-\partial_r\\
\partial_r&0
\end{array}
\right)
\left(\begin{array}{c}
e^{-r}\\
i e^{-r}
\end{array}
\right)
=i\left(\begin{array}{c}
e^{-r}\\
i e^{-r}
\end{array}
\right).
\end{align*}
\end{itemize}
This shows that the deficiency indices are not zero and therefore $D_{\mathbb{R}_>}$ is not essentially self-adjoint when defined on 
the core $\Omega_c(\mathbb{R}_>)$. 
\end{remark}

With respect to degree decomposition, as in the remark above, we can express 
\begin{align}\label{Eqn:OpDPlane}
\mathscr{D}=
\left(\begin{array}{cc}
 0&-\partial_r-\frac{1}{2r}\\
\partial_r- \frac{1}{2r}&0
\end{array}
\right).
\end{align}
If we define the $2\times 2$ matrix
$$\gamma\coloneqq 
\left(
\begin{array}{cc}
0 & -1\\
1 & 0
\end{array}
\right),
$$
then we can write this operator as
\begin{align*}
\mathscr{D}=\gamma\left(\frac{\partial}{\partial r}+
\left(
\begin{array}{cc}
1 & 0\\
0 & -1
\end{array}
\right)
\otimes \left(-\frac{1}{2r}\right)\right).
\end{align*}
Here we explicitly see that we can write $\mathscr{D}$ as a regular singular first order differential operator in the sense of Br\"uning and Seeley (see Appendix \ref{App:RSO}). In particular, since the cone coefficient satisfies
\begin{align*}
\spec\left\{-\frac{1}{2}\left(
\begin{array}{cc}
1 & 0\\
0 & -1
\end{array}
\right)\right\}=\{\pm 1/2\}\notin (-1/2,1/2),
\end{align*}
we verify that $\mathscr{D}$ is indeed essentially self-adjoint by Theorem \ref{BS88Thm3.2} (we will go deeper into this argument in the next chapters). This result of course follows directly from \eqref{Diag:B} and Corollary \ref{Coro:BComplete}.
\begin{remark}[Deficiency indices]
For $\lambda\in\mathbb{C}$ and $\bar{\omega}=f_0(r)+f_1(r)dr$ consider the eigenvalue problem for the operator $\mathscr{D}$,
\begin{align*}
-f'_1-\frac{1}{2r}f_1&=\lambda f_0,\\
f'_0-\frac{1}{2r}f_0&=\lambda f_1.
\end{align*}
We can uncouple this system to obtain two independent second order differential equations
\begin{align}\label{Eqn:RD0}
f''_1+\left(\lambda^2-\frac{3}{4r^2}\right)f_1=0\quad \text{and}\quad f''_0+\left(\lambda^2+\frac{1}{4r^2}\right)f_0=0.
\end{align} 
In order to compute the deficiency indices we need to study these equations for $\lambda=\pm i$. From \cite[Lemma 4.2]{L93} it follows that in this case there exist no solutions for \eqref{Eqn:RD0}. Hence, the corresponding deficiency indices for $\mathscr{D}$ are zero. This also shows that the operator $\mathscr{D}$ is essentially self-adjoint by \cite[Proposition 3.7]{S12}.
\end{remark}

\begin{remark}
It is easy to verify that the functions $f_{\pm}(r)=r^{\pm 1/2}$ satisfy the relation 
$$f'_{\pm}(r)=\pm\frac{1}{2r} f_{\pm}(r),$$
and therefore $\mathscr{D}(f_{+}+f_{-}dr)=0$. However, this does not show that the kernel of $\mathscr{D}$ is zero since $f_{\pm}\notin L^2(\mathbb{R}_>)$. 
\end{remark}

To study the kernel of $\mathscr{D}$ we begin recalling a fundamental inequality.
\begin{lemma}[{Hardy's inequality, {\cite{H20}}}]\label{Lemma:Hardy}
Suppose $1<p<\infty$, $f\in L^p(\mathbb{R}_>)$ and 
\begin{align*}
F(x)=\frac{1}{x}\int_{0}^{x}f(r)dr. 
\end{align*}
Then,
\begin{align*}
\norm{F}_{L^p(\mathbb{R}_>)}\leq \left(\frac{p}{p-1}\right)\norm{f}_{L^p(\mathbb{R}_>)}. 
\end{align*}
Moreover, the equality holds if, and only if, $f=0$ almost everywhere. 
\end{lemma}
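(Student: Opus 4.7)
The plan is to establish the inequality first under simplifying hypotheses and then extend by approximation, handling the equality case separately.

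\textbf{Step 1 (reduction to nonnegative $f$).} Since $|F(x)|\le \frac{1}{x}\int_0^x|f(r)|\,dr$ pointwise, it suffices to prove the inequality for $f\ge 0$; the equality statement will force $f$ to keep a constant sign, after which one can use the reduction again.

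\textbf{Step 2 (integration by parts for nice $f$).} Assume temporarily that $f\in C_c((0,\infty))$ is nonnegative. Then $xF(x)=\int_0^x f(r)\,dr$ is smooth, vanishes near $0$, is bounded by $\|f\|_1$ at infinity, and satisfies $(xF)'=f$, equivalently $xF'=f-F$. For any $A>0$, integrating by parts with $u=F^p$ and $v=x$ gives
\begin{equation*}
\int_0^A F(x)^p\,dx=\bigl[xF(x)^p\bigr]_0^A-p\int_0^A xF(x)^{p-1}F'(x)\,dx.
\end{equation*}
Substituting $xF'=f-F$ and letting $A\to\infty$ (using $xF(x)^p\le x^{1-p}\|f\|_1^{p}\to 0$ at infinity and $xF(x)^p\to 0$ at $0$ by the compact support of $f$) yields
\begin{equation*}
(p-1)\int_0^\infty F^p\,dx=p\int_0^\infty F^{p-1}f\,dx.
\end{equation*}

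\textbf{Step 3 (Hölder).} Applying Hölder's inequality with exponents $p/(p-1)$ and $p$,
\begin{equation*}
(p-1)\|F\|_{L^p}^{p}\le p\Bigl(\int_0^\infty F^p\Bigr)^{(p-1)/p}\Bigl(\int_0^\infty f^p\Bigr)^{1/p}=p\,\|F\|_{L^p}^{p-1}\|f\|_{L^p},
\end{equation*}
and since $\|F\|_{L^p}<\infty$ in this case we can divide to obtain $\|F\|_{L^p}\le\tfrac{p}{p-1}\|f\|_{L^p}$.

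\textbf{Step 4 (density).} For a general nonnegative $f\in L^p(\mathbb{R}_>)$, approximate by an increasing sequence $f_n\in C_c((0,\infty))$ with $f_n\nearrow f$ pointwise a.e. Then $F_n\nearrow F$ pointwise, and the monotone convergence theorem applied to both sides of the inequality for $f_n$ yields the inequality for $f$.

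\textbf{Step 5 (equality case).} If equality holds and $f\ne 0$, then by Step 1 we may assume $f\ge 0$ and the Hölder step must be an equality, which forces $F^p=\lambda f^p$ a.e.\ for some $\lambda>0$, i.e.\ $F=cf$ with $c=\lambda^{1/p}>0$. Combined with the identity $(xF)'=f$ from Step 2 (valid distributionally), this gives the ODE $c(xf)'=f$, whose solutions on $\mathbb{R}_>$ are $f(x)=Dx^{\alpha}$ with $\alpha=(1-c)/c$. No such nontrivial power function lies in $L^p(\mathbb{R}_>)$, so $D=0$, a contradiction.

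The main obstacle I anticipate is the bookkeeping in Step 2: justifying the vanishing of the boundary term $[xF^p]_0^\infty$ for a rough $f$, and keeping $\|F\|_{L^p}$ finite so that the division step is legitimate. The two-step strategy (prove it first for $f\in C_c$, then extend by monotone convergence) is specifically designed to sidestep this difficulty.
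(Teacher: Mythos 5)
The paper offers no proof of this lemma to compare against: it is Hardy's classical inequality, quoted directly from the 1920 reference [H20]. Your proposal is the standard textbook argument and the core strategy (integrate by parts using $xF'=f-F$, apply H\"older, divide by $\norm{F}_{L^p}^{p-1}$) is sound, but two steps contain genuine gaps as written. In Step 4, a general nonnegative $f\in L^p(\mathbb{R}_>)$ need not admit an increasing sequence $f_n\in C_c((0,\infty))$ with $f_n\nearrow f$ a.e.: if $f$ is the indicator of a fat Cantor set, any continuous $g\le f$ is $\le 0$ on the dense open complement and hence everywhere. You should instead truncate, $f_n=\min(f,n)\mathbf{1}_{[1/n,n]}$, and note that the integration by parts of Step 2 still goes through for such bounded compactly supported measurable $f_n$ because $xF_n(x)$ is locally Lipschitz; alternatively, take any $f_n\to f$ in $L^p$, use the pointwise bound $|F_n(x)-F(x)|\le x^{-1/p}\norm{f_n-f}_{L^p}$, and conclude with Fatou.

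The second gap is in Step 5. The assertion ``the H\"older step must be an equality'' presupposes that the chain $(p-1)\int F^p\le p\int F^{p-1}f\le p\norm{F}_{L^p}^{p-1}\norm{f}_{L^p}$ holds for the given $f$, but you established the first link only for $f\in C_c$, and equality is not tracked through the limiting procedure of Step 4. You must rerun the integration by parts directly on the general extremal $f$, which means controlling the boundary terms: at the origin, $xF(x)^p\le\norm{f}_{L^p(0,x)}^p\to 0$ by H\"older, while at infinity $\liminf_{A\to\infty}AF(A)^p=0$ since $F^p$ is integrable (and this term enters with a favourable sign in any case). With that inequality in hand for general $f$, equality overall forces equality in H\"older, hence $F=cf$, and your ODE argument correctly shows no nontrivial power function lies in $L^p(\mathbb{R}_>)$. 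Both repairs are routine, but without them Steps 4 and 5 do not stand as stated.
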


\begin{lemma}
The operator $\mathscr{D}^{\textnormal{ev}}$ satisfies $\ker(\mathscr{D}^{\textnormal{ev}})=\{0\}$. 
\end{lemma}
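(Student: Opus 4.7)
Because $\mathscr{D}$ is essentially self-adjoint on the core $\Omega_c(\mathbb{R}_>)$, the graded closure $\overline{\mathscr{D}^{\textnormal{ev}}}$ coincides with the maximal extension $(\mathscr{D}^{\textnormal{odd}})^{*}$, so $\ker(\mathscr{D}^{\textnormal{ev}})$ consists precisely of those $f \in L^{2}(\mathbb{R}_>)$ satisfying $f'(r) - \tfrac{1}{2r}\,f(r) = 0$ in the distributional sense on $\mathbb{R}_>$. The plan is therefore to (i) promote this distributional identity to a classical ODE, (ii) integrate it, and (iii) rule out the resulting one-parameter family by an $L^{2}$ obstruction at infinity.

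For step (i), the coefficient $\tfrac{1}{2r}$ is smooth on $\mathbb{R}_>$, so $f \in L^{2}_{\textnormal{loc}}(\mathbb{R}_>)$ forces $\tfrac{f}{2r} \in L^{2}_{\textnormal{loc}}(\mathbb{R}_>)$, whence $f' \in L^{2}_{\textnormal{loc}}$ and $f \in H^{1}_{\textnormal{loc}}$; a standard one-dimensional bootstrap then gives $f \in C^{\infty}(\mathbb{R}_>)$, so $f$ is a classical solution. For step (ii), separation of variables in $f'(r) = \tfrac{1}{2r}\,f(r)$ yields the one-parameter family $f(r) = C\, r^{1/2}$ with $C \in \mathbb{C}$. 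For step (iii),
\begin{equation*}
\int_{0}^{\infty} |C\, r^{1/2}|^{2}\, dr = |C|^{2} \int_{0}^{\infty} r\, dr = +\infty
\end{equation*}
unless $C = 0$, so $f \equiv 0$ on $\mathbb{R}_>$.

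I expect no serious obstacle in this baby example: the essential self-adjointness of $\mathscr{D}$, already extracted from the regular-singular form \eqref{Eqn:OpDPlane}, is precisely what converts ``$\mathscr{D}^{\textnormal{ev}} f = 0$ in the $L^{2}$ sense'' into a genuine pointwise ODE, after which the non-integrability of $r^{1/2}$ at infinity kills the kernel. A complementary presentation is available through Hardy's inequality (Lemma \ref{Lemma:Hardy}): one integration by parts on $C^{\infty}_{c}(\mathbb{R}_>)$ gives the variational identity $\|\mathscr{D}^{\textnormal{ev}} u\|_{L^{2}}^{2} = \|u'\|_{L^{2}}^{2} - \tfrac{1}{4}\|u/r\|_{L^{2}}^{2}$, and Hardy's inequality yields the non-negativity of the right-hand side together with strict positivity on nonzero $u$; however, since equality in Hardy's inequality is only approached (not attained) by profiles sharpening towards $r^{1/2}$, the positivity on the smooth core does not by itself upgrade to triviality of the closed kernel, so the $L^{2}$-at-infinity step described above remains essential.
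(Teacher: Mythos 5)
Your argument is correct, and it takes a genuinely different route from the one in the paper. The paper's proof stays on the core: for $f\in\ker(\mathscr{D}^{\textnormal{ev}})$ it picks $(f_n)_n\subset C^\infty_c(\mathbb{R}_>)$ with $f_n\to f$ and $\mathscr{D}^{\textnormal{ev}}f_n\to 0$, integrates by parts to obtain $\|\mathscr{D}^{\textnormal{ev}}f_n\|^2=\|f_n'\|^2-\tfrac14\|f_n/r\|^2$, and then invokes Hardy's inequality to conclude $f_n\to 0$. You instead use the (already established) essential self-adjointness to identify $\overline{\mathscr{D}^{\textnormal{ev}}}$ with the maximal extension $(\mathscr{D}^{\textnormal{odd}})^{*}$, upgrade the distributional equation to a classical ODE, and eliminate the explicit solution $Cr^{1/2}$ by non-square-integrability at infinity. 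Your route is the more airtight one here: as you correctly observe, Hardy's inequality only yields non-negativity of the form $\|u'\|^2-\tfrac14\|u/r\|^2$, not coercivity in $L^2$ (dilates $\phi(\cdot/R)$ drive the form to $0$ while their $L^2$ norms blow up, precisely because the Hardy constant is saturated along profiles approaching $r^{1/2}$), so the step ``Hardy implies $f_n\to 0$'' needs more than is written, whereas the explicit ODE plus the divergence of $\int_0^\infty r\,dr$ closes the kernel completely. What the quadratic-form presentation buys is uniformity with the vanishing arguments used later in the index computation, which all have the shape ``integrate by parts, discard a Hardy-nonnegative piece, keep a coercive piece''; in this one-dimensional example your computation is cleaner. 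A small additional remark: you do not even need the full strength of essential self-adjointness, since $\ker$ of any closed extension of $\mathscr{D}^{\textnormal{ev}}$ is contained in $\ker(\mathscr{D}^{\textnormal{ev}}_{\textnormal{max}})$, so your ODE argument disposes of the kernel of every closed extension at once.
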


\begin{proof}
Let $f\in \ker(\mathscr{D}^{\textnormal{ev}})$. Since $C^\infty_c(\mathbb{R}_>)$ is a core for $\mathscr{D}^{\textnormal{ev}}$ then there exists a sequence $(f_n)_n\subset C^\infty_c(\mathbb{R}_>)$ such that $f_n\longrightarrow f$ and $\mathscr{D}^{\textnormal{ev}}f_n\longrightarrow 0$ in  $L^2(\mathbb{R}_>)$. Our aim is to estimate the $L^2$-norm of $\mathscr{D}^{\textnormal{ev}}f_n$. Observe that 
\begin{align*}
\left(f_n'-\frac{1}{2r}f_n\right)^2=& (f'_n)^2+\frac{1}{4r^2}f_n^2-\frac{1}{r}f_nf_n'.
\end{align*}
Using the relation 
\begin{align*}
\frac{d}{dr}\left(\frac{1}{2r}f_n^2\right)=-\frac{1}{2r^2}f_n^2+\frac{1}{r}f_nf_n',
\end{align*}
and using that $f_n$ has compact support we arrive to the expression
\begin{align*}
\norm{\mathscr{D}^{\textnormal{ev}}f_n}^2_{L^2(\mathbb{R}_>)}=&\int_0^\infty\left(f_n'-\frac{1}{2r}f_n\right)^2 dr=\int_0^\infty\left((f_n')^2-\frac{1}{4r^2}f_n^2\right)dr. 
\end{align*}
Finally, using Lemma \ref{Lemma:Hardy}, we conclude that $f_n\longrightarrow 0$. 
\end{proof}

\begin{coro}
We have $\ind(\mathscr{D}^\textnormal{ev})=0$. 
\end{coro}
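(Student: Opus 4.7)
The plan is to parallel the argument of the previous lemma for the adjoint component. Since the full operator $\mathscr{D}$ on $\mathbb{R}_>$ is essentially self-adjoint (by Theorem \ref{Thm:InducedDiracOp}) and anti-commutes with the Gau\ss-Bonnet involution $\varepsilon$, the off-diagonal blocks in \eqref{Eqn:OpDPlane} are mutually adjoint, so
\begin{align*}
\ind(\mathscr{D}^{\mathrm{ev}})=\dim\ker(\mathscr{D}^{\mathrm{ev}})-\dim\ker(\mathscr{D}^{\mathrm{odd}}).
\end{align*}
The first kernel was just shown to vanish, so it remains to prove $\ker(\mathscr{D}^{\mathrm{odd}})=\{0\}$, where $\mathscr{D}^{\mathrm{odd}}=-\partial_r-\tfrac{1}{2r}$.

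Given $g\in\ker(\mathscr{D}^{\mathrm{odd}})$, pick $(g_n)_n\subset C_c^{\infty}(\mathbb{R}_>)$ with $g_n\to g$ and $\mathscr{D}^{\mathrm{odd}}g_n\to 0$ in $L^2(\mathbb{R}_>)$. Expanding the square and using the identity
\begin{align*}
\frac{d}{dr}\!\left(\frac{g_n^2}{2r}\right)=-\frac{g_n^2}{2r^2}+\frac{g_n g_n'}{r},
\end{align*}
together with the vanishing of the boundary term (compact support), one obtains, in contrast to the even case,
\begin{align*}
\|\mathscr{D}^{\mathrm{odd}}g_n\|_{L^2(\mathbb{R}_>)}^2=\int_0^{\infty}(g_n')^2\,dr+\frac{3}{4}\int_0^{\infty}\frac{g_n^2}{r^2}\,dr.
\end{align*}
Both terms on the right are non-negative, so $\mathscr{D}^{\mathrm{odd}}g_n\to 0$ forces $\int_0^{\infty}g_n^2/r^2\,dr\to 0$. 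For any $R>0$ the bound $1/r^2\geq 1/R^2$ on $(0,R)$ yields $\int_0^R g_n^2\,dr\to 0$, i.e. $g_n\to 0$ in $L^2_{\mathrm{loc}}(\mathbb{R}_>)$. Combined with $g_n\to g$ in $L^2(\mathbb{R}_>)$ this gives $g|_{(0,R)}=0$ for every $R>0$, hence $g=0$.

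The step I would regard as the main (mild) subtlety is the sign change in the cross term compared with the even case: for $\mathscr{D}^{\mathrm{ev}}$ one gets a difference of two positive quantities that are comparable via Hardy's inequality, while for $\mathscr{D}^{\mathrm{odd}}$ the cross term is arranged so that everything adds up, producing a manifestly coercive bound and avoiding any appeal to Lemma \ref{Lemma:Hardy}. With $\ker(\mathscr{D}^{\mathrm{ev}})=\ker(\mathscr{D}^{\mathrm{odd}})=\{0\}$, the displayed index identity yields $\ind(\mathscr{D}^{\mathrm{ev}})=0$, as claimed.
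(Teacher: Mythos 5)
Your argument is correct and essentially the paper's: the same integration-by-parts identity yields $\norm{\mathscr{D}^{\mathrm{odd}}g_n}^2_{L^2(\mathbb{R}_>)}=\int_0^\infty (g_n')^2\,dr+\tfrac{3}{4}\int_0^\infty g_n^2/r^2\,dr$ (the paper merely regroups this as a Hardy-nonnegative piece plus $\int g_n^2/r^2\,dr$), and your $L^2_{\mathrm{loc}}$ conclusion that $g=0$ is, if anything, cleaner than the paper's terse appeal back to Lemma \ref{Lemma:Hardy}. The only step you leave implicit is that $\mathscr{D}^{\mathrm{ev}}$ is Fredholm — needed for $\ind(\mathscr{D}^{\mathrm{ev}})=\dim\ker(\mathscr{D}^{\mathrm{ev}})-\dim\ker(\mathscr{D}^{\mathrm{odd}})$ to actually compute the index — which the paper supplies by citing Theorem \ref{BS88Thm3.1} on regular singular operators.
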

\begin{proof}
First observe that $\mathscr{D}^{\textnormal{odd}}$ is Fredholm by Theorem \ref{BS88Thm3.1}. Hence, it suffices to show that $\ker(\mathscr{D}^{\textnormal{odd}})=0$. Using the notation of the previous proof and using Lemma \ref{Lemma:Hardy}, we can estimate for $f_n\in C^\infty(\mathbb{R}_>)$
\begin{align*}
\norm{\mathscr{D}^{\textnormal{odd}}f_n}^2_{L^2(\mathbb{R}_>)}=&\int_0^\infty\left(f_n'+\frac{1}{2r}f_n\right)^2 dr\\
=&\int_0^\infty\left((f_n')^2-\frac{1}{4r^2}f_n^2\right)dr+\int_{0}^\infty \frac{1}{r^2}f^2_n dr.
\end{align*}
If we assume that $\mathscr{D}^{\textnormal{odd}}f_n\longrightarrow 0$, then these two integrals should also converge to zero (as they are both positive). In particular, from the convergence to zero of the first integral we see, again from Lemma \ref{Lemma:Hardy}, that $f_n\longrightarrow 0$. 
\end{proof}

On the other hand, observe that the Euler characteristic  $\chi(\mathbb{R}_>)=0$ because $\mathbb{R_>}$ is contractible. Hence, we see that 
$\ind(\mathscr{D}^{\text{ev}})=\chi(\mathbb{R}_>)=0$. \\

We conclude this example by verifying Lemma \ref{Lemma:SquareD}. First, a straightforward computation shows that 
\begin{align*}
\mathscr{D}^2=
\left(
\begin{array}{cc}
-\partial^2_r -\frac{1}{4r^2} & 0\\
0 &-\partial^2_r +\frac{3}{4r^2} 
\end{array}
\right).
\end{align*}
On the other hand we compute, 
\begin{align*}
\nabla^{\mathbb{R}_>}_{\bar{\kappa}^\sharp}=&-\frac{1}{r}\nabla^{\mathbb{R}_>}_{\partial_r},\\
d^{\dagger}_{\mathbb{R}_>}(\bar{\kappa})=&-\frac{1}{r^2},\\
\norm{\bar{\kappa}^\sharp}^2=&\frac{1}{r^2}.
\end{align*}

Hence, the right hand side of Lemma \ref{Lemma:SquareD} applied to a smooth function $f_0$ is
\begin{align*}
&\left(\Delta_{\mathbb{R}_>}-\nabla_{\bar{\kappa}^\sharp}^{\mathbb{R}_>}\varepsilon-c(\bar{\kappa})D_{\mathbb{R}_>}\varepsilon+\frac{1}{2}d_{\mathbb{R}_>}^\dagger(\bar{\kappa})\varepsilon+\frac{1}{4}\norm{\bar{\kappa}^\sharp}^2\right)f_0\\
&=-f''_0+\frac{1}{r}f'_0-\frac{1}{r}\iota_{\partial_r}(f'_0 dr)-\frac{1}{2r^2}f_0+\frac{1}{4r^2}f_0\\
&=-f''_0-\frac{1}{4r^2}f_0,
\end{align*}
and similarly for a $1$-form $f_1dr$ we have
\begin{align*}
&\left(\Delta_{\mathbb{R}_>}-\nabla_{\bar{\kappa}^\sharp}^{\mathbb{R}_>}\varepsilon-c(\bar{\kappa})D_{\mathbb{R}_>}\varepsilon+\frac{1}{2}d_{\mathbb{R}_>}^\dagger(\bar{\kappa})\varepsilon+\frac{1}{4}\norm{\bar{\kappa}^\sharp}^2\right)f_1 dr\\
&=-f''_1dr-\frac{1}{r}f'_1dr+\frac{1}{r}dr\wedge(f'_1 )+\frac{1}{2r^2}f_1dr+\frac{1}{4r^2}f_1 dr\\
&=\left(-f''_1+\frac{3}{4r^2}f_1\right)dr.
\end{align*}
Thus, the desired statement is verified. In  particular observe that we can express
\begin{align*}
\mathscr{D}^2=\Delta_{\mathbb{R}_>}+\frac{1}{2r^2}\left(\frac{1}{2}-\varepsilon\right), 
\end{align*}
where $\Delta_{\mathbb{R}_>}=-\partial^2_r$ is the Laplacian. 

\subsubsection{$2$-sphere}
The aim of this example is to illustrate the whole procedure to obtain the operators of Theorem \ref{Thm:THat} and Theorem \ref{Thm:InducedDiracOp}. This is intended to get a better understatement of the theory. We are going to consider the semi-free circle action on the unit $2$-sphere $M=S^2\subset\mathbb{R}^3$ by rotations along the $z$-axis. The fixed point set is $M^{S^1}=\{\mathcal{N},\mathcal{S}\}$, where $\mathcal{N}$ and $\mathcal{S}$ denote the north and south pole respectively. On its complement $M_0=S^2-\{\mathcal{N},\mathcal{S}\}$ the action is free. We equip $S^2$ with the induced metric coming from the Euclidean inner product of $\mathbb{R}^3$. As this metric is rotational invariant we see that $S^1$ acts on $S^2$ by isometries. Let us consider the local parametrization of $S^2$ by spherical coordinates 
\begin{align*}
x&=\sin\theta\cos\phi,\\
y&=\sin\theta\sin\phi,\\
z&=\cos\theta,
\end{align*}
where $0<\theta<\pi$ and the  $0<\phi<2\pi$.  With respect to this parametrization the metric on $S^2$ takes the form  
\begin{equation}\label{Eq:metricS2}
g^{TS^2}=d\theta^2+\sin^2\theta d\phi^2.
\end{equation}
From this it follows that the induced inner products on $1$-forms are
\begin{align*}\label{Eqn:Inner1fS2}
\langle d\theta, d\theta\rangle =1, \quad\quad \langle d\theta, d\phi \rangle =0, \quad\quad \langle d\phi, d\phi \rangle =(\sin\theta)^{-2}.
\end{align*}
If we choose the orientation on $S^2$ using the outward normal vector then the associated Riemannian volume element is $\vol_{S^2}=\sin\theta d\theta\wedge d\phi$.\\

Now let us describe the $S^1$-action concretely. An element $e^{i\varphi}\in S^1$ acts on a point represented by the pair $(\theta,\phi)$ by $e^{i\varphi}(\theta,\phi)\longmapsto(\theta,\phi+\varphi). $ In particular we see, in view of \eqref{Eq:metricS2}, that $S^1$ acts on $S^2$ effectively by orientation preserving isometries. The quotient manifold $M_0/S^1$ can be identified with the open interval $I\coloneqq (0,\pi)$, which we equip with the flat metric $g^{TI}=d\theta^2$ so that the orbit map $\pi_{S^1}:M_0\longrightarrow M_0/S^1$ becomes a Riemannian submersion. \\

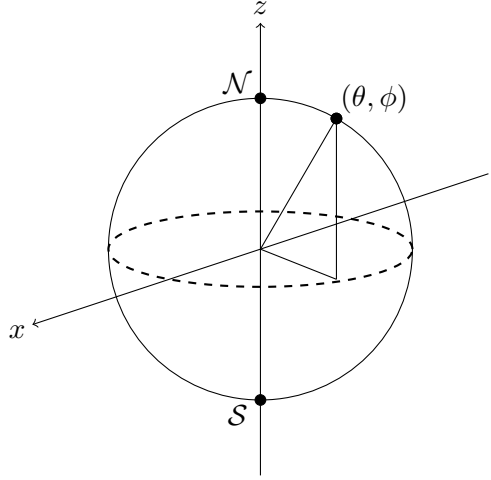
\begin{figure}[h]
\begin{center}
\begin{tikzpicture}
\draw (0,0) circle (2cm);
\draw[thick,dashed] (0,0) ellipse (2cm and 0.5cm);
\draw [<-](0,3)--(0,-3);
\draw[<-] (-3,-1)--(3,1);
\draw (0,0)--(1,1.732);
\draw (1,1.732)--(1,-0.4);
\draw (0,0)--(1,-0.4);
\node  at (1.5,2) {$(\theta,\phi)$};
\node at (1,1.732) {\pgfuseplotmark{*}};
\node at (-0.3,2.2) {$\mathcal{N}$};
\node at (0,2) {\pgfuseplotmark{*}};
\node at (-0.3,-2.2) {$\mathcal{S}$};
\node at (0,-2) {\pgfuseplotmark{*}};
\node at (1,1.732) {\pgfuseplotmark{*}};
\node at (-3.2,-1.1) {$x$};
\node at (0,3.2) {$z$};
\end{tikzpicture}
\caption{Local chart of $S^2$ defined by the polar angle $0<\theta<\pi$ and the azimuthal angle $0<\phi<2\pi$.}
\end{center}
\end{figure}

The generating vector field of the action is clearly $V=\partial_\phi$ and its associated unit vector field is 
\begin{equation*}\label{Eqn:GenVF}
X=\frac{1}{\sin\theta}\partial_\phi.
\end{equation*}
From \eqref{Eq:metricS2} we see that the corresponding characteristic form is $\chi=\sin\theta d\phi$. The mean curvature vector field is by definition
\begin{align*}
H=\frac{1}{\sin^2\theta}\nabla_{\partial_\phi}\partial_\phi=\frac{1}{\sin^2\theta}(-\sin\theta\cos\theta\partial_\theta)=-\cot\theta\partial_\theta,
\end{align*}
and its associated dual $1$-form is then $\kappa=-\cot\theta d\theta$. This $1$-form can be also computed using Proposition \ref{Prop:NormV}(2)
\begin{align*}
\kappa=-d\log(\norm{V})=-d\log(\sin\theta)=-\frac{\cos\theta d\theta}{\sin\theta}=-\cot\theta d\theta.
\end{align*}
Note in particular that it satisfies $d\kappa=0$ as expected. Again using \eqref{Eqn:dchi} we find that $\varphi_0=0$ since
\begin{align*}
d\chi+\kappa\wedge\chi=d(\sin\theta d\phi)-\cot\theta d\theta \wedge \sin\theta d\phi=0.
\end{align*}

Having computed the relevant geometric quantities we are going to start by verifying Theorem \ref{Thm:OpInv}. From Proposition \ref{Prop:SESBasic} we know that any $S^1$-invariant differential form on $M_0$ can be written as
\begin{equation}\label{Eqn:DecompInvS2}
\omega=\underbrace{(f_0(\theta)+f_1(\theta)d\theta)}_{\eqqcolon\omega_0}+\underbrace{(f_2(\theta)+f_3(\theta)d\theta))}_{
\eqqcolon\omega_1}\wedge\chi,
\end{equation}
where $\omega_0,\omega_1\in\Omega_\text{bas}(M_0)$.
First we compute the exterior derivative of such an $S^1$-invariant form
\begin{align*}
d(\omega_0+\omega_1\wedge\chi)=f'_0(\theta) d\theta +df_2(\theta)\wedge\chi+f_2(\theta)d\chi=f'_0(\theta)d\theta+(f'_2(\theta)+\cot\theta f_2(\theta))d\theta\wedge\chi.
\end{align*}
Similarly we calculate
\begin{align*}
d^\dagger(\omega_0+\omega_1\wedge\chi)=&-*d*(\omega_0+\omega_1\wedge\chi)\\
=&-*d(f_1(\theta)\chi-f_2(\theta)d\theta+f_3(\theta))\\
=&-*[(f'_1(\theta)+\cot\theta f_1(\theta))d\theta\wedge\chi+f'_3(\theta)d\theta]\\
=&-f'_1(\theta)-\cot\theta f_1(\theta)-f'_3(\theta)\chi.
\end{align*}
On the other hand the action of $D_I\coloneqq d_I+d_I^\dagger$, the Hodge-de Rham operator associated to $(I,d\theta^2)$, on a form $\bar{\omega}=g_0(\theta)+g_1(\theta)d\theta\in\Omega(I)$ is 
\begin{align*}
D_I(g_0(\theta)+g_1(\theta)d\theta)=-g'_1(\theta)+g'_0(\theta)d\theta.
\end{align*}
Here we have used $\bar{\star}(g_0+g_1d\theta)= i(-g_1+g_0 d\theta)$. Hence, from the decomposition \eqref{Eqn:DecompInvS2} we explicitly see that 
\begin{equation*}
S(D_{S^2})\coloneqq D_{S^2}\bigg{|}_{\Omega(M_0)^{S^1}}
=\left(
\begin{array}{cc}
d+\bar{\star}d\bar{\star}-\cot\theta\iota_{\partial_\theta} & 0\\
0 & d+\bar{\star}d\bar{\star}+\cot\theta d\theta\wedge
\end{array}
\right).
\end{equation*}
This illustrates the statement of Theorem \ref{Thm:OpInv} with $n=1$, $\kappa=-\cot\theta d\theta$ and $\varphi_0=0$. The induced operator $T(D_{S^2})$  on $\Omega_c(I)\otimes \mathbb{C}^2$ of Theorem \ref{Theorem:OpT} is then
\begin{equation*}
T(D_{S^2})\coloneqq
\left(
\begin{array}{cc}
D_I-\cot\theta\iota_{\partial_\theta} & 0\\
0 & D_I+\cot\theta d\theta\wedge
\end{array}
\right).
\end{equation*}
Now we will explicitly show that this operator is symmetric in $L^2(F,h)$. As we saw in Section \ref{Section:ConstrF}, the vector bundle $F\longrightarrow I$ is given by $F=\wedge_\mathbb{C}T^*I\oplus \wedge_\mathbb{C}T^*I$ and $\dom(T(D_{S^2}))=\Omega_c(I)\otimes\mathbb{C}^2$.  The volume of the orbit containing the point $(\theta,\phi)$ can be computed using  \eqref{Eqn:h}, 
\begin{align*}
h(\theta)=\int_{\pi^{-1}_{S^1}(\theta)} \chi =\int_0^{2\pi} \sin\theta d\phi=2\pi\sin\theta.
\end{align*}
This formula was expected since the orbits are circles of radius $\sin\theta$. As a consequence,  the $L^2(F,h)$-norm of a pair of differential forms 
with compact support $f_0(\theta)+f_1(\theta)d\theta$ and $f_2(\theta)+f_3(\theta)d\theta)$ is 
\begin{align*}
\norm{(f_0+f_1d\theta,f_2+f_3d\theta)}^2_{L^2(F,h)}=\int_0^\pi (|f_0|^2+|f_1|^2+|f_2|^2+|f_3|^2)2\pi\sin\theta d\theta.
\end{align*} 
Observe that
\begin{align*}
\frac{d}{d\theta}(f_0(\theta)f_1(\theta)\sin\theta )=f'_0(\theta)f_1(\theta)\sin\theta+f_0(\theta)f_1(\theta)\sin\theta+f_0(\theta)f_1(\theta)\cot\theta\sin\theta,
\end{align*}
so since $f_0,f_1$ have compact support in $I$ we obtain 
\begin{equation*}
\int_0^\pi f'(\theta)g(\theta)\sin\theta d\theta=-\int_0^\pi f(\theta)g'(\theta)\sin\theta d\theta-\int_0^\pi f(\theta)g(\theta)\cot\theta\sin\theta d\theta.
\end{equation*}
Using this relation we calculate the first component of the $L^2(F,h)$-inner product
\begin{align*}
((D_I-\cot\theta\iota_{\partial_\theta})(f_0+f_1 d\theta), & g_0+g_1 d\theta)_{L^{2}(\wedge T^*I,h)}\\
&=(-f'_1 -f_1\cot\theta+f'_0 d\theta,g_0+g_1 d\theta)_{L^2(\wedge T^*I,h)}\\
&=-2\pi \int_0^\pi( f'_1+f_1\cot\theta)g_0\sin\theta d\theta+2\pi \int_0^\pi f'_0g_1\sin\theta d\theta\\
&=2\pi \int_0^\pi f_1 g'_0\sin\theta d\theta-2\pi \int_0^\pi f_0( g'_1+g_1\cot\theta)\sin\theta d\theta\\
&=(f_0+f_1 d\theta,(D_I-\cot\theta\iota_{\partial_\theta})(g_0+g_1 d\theta))_{L^{2}(\wedge T^*I,h)}. 
\end{align*}
Similarly for the other component of $T(S^2)$,
\begin{align*}
((D_I+\cot\theta d\theta\wedge)(f_0+f_1 d\theta),&g_0+g_1 d\theta)_{L^{2}(\wedge T^*I,h)}\\
&=(-f'_1 +(f_0\cot\theta+f'_0 )d\theta,g_0+g_1 d\theta)_{L^2(\wedge T^*I,h)}\\
&=-2\pi \int_0^\pi  f'_1g_0\sin\theta d\theta+2\pi \int_0^\pi (f_0\cot\theta +f'_0)g_1\sin\theta d\theta\\
&=2\pi \int_0^\pi f_1(g_0\cot\theta+ g'_0)\sin\theta d\theta-2\pi \int_0^\pi f_0g'_1\sin\theta d\theta\\
&=(f_0+f_1 d\theta,(D_I+\cot\theta d\theta\wedge)(g_0+g_1 d\theta))_{L^{2}(\wedge T^*I,h)}.
\end{align*}
These relations illustrate that the operator $T(S^2)$ is indeed symmetric with respect to the $L^2(F,h)$-metric. \\

Now we want to conjugate this operator  with the unitary transformation \eqref{Eqn:U}, which in this example just given by
\begin{align*}
U:
\xymatrixcolsep{2cm}\xymatrixrowsep{0.01cm}\xymatrix{
\Omega_c(I)\ar[r] & \Omega_c(I)\\
\bar{\omega} \ar@{|->}[r]  & (2\pi\sin\theta)^{-1/2}\bar{\omega}.
}
\end{align*}
First we verify Lemma \ref{Lemmadh},
\begin{equation*}
\frac{d}{d\theta}(h^{-1/2})=\frac{d}{d\theta}(2\pi \sin \theta)^{-1/2}=-\frac{1}{2}(2\pi \sin \theta)^{-3/2}2\pi\cos\theta=-\frac{1}{2}h^{-1/2}\cot\theta.
\end{equation*}
Then we compute
\begin{align*}
(D_I-\cot\theta&\iota_{\partial_\theta})U(f_0+f_1 d\theta)\\
&=-(h^{-1/2}f_1)'-\cot\theta(h^{-1/2}f_1)+(h^{-1/2}f_0)'d\theta\\
&=\frac{1}{2}h^{-1/2}\cot\theta f_1 -h^{-1/2}f'_1-h^{-1/2}\cot\theta f_1-\frac{1}{2}h^{-1/2}f_0\cot\theta d\theta+h^{-1/2}f'_0 d\theta\\
&=h^{-1/2}\left((-f'_1+f'_0 d\theta)-\frac{1}{2}\cot\theta(f_1+f_0 d\theta)\right)\\
&=h^{-1/2}\left(D_I-\frac{1}{2}\cot\theta \widehat{c}(d\theta)\right)(f_0+f_1 d\theta). 
\end{align*}
A similar computation for $(D_I+\cot\theta d\theta)U(f_0+f_1 d\theta)$ shows that the operator from Theorem \ref{Thm:THat} takes the form
\begin{equation*}
\widehat{T}(D_{S^2})
\coloneqq
U^{-1}T(D_{S^2})U=
\left(
\begin{array}{cc}
D_I -\frac{1}{2}\cot\theta \widehat{c}(d\theta)& 0\\
 0&  D_I +\frac{1}{2}\cot\theta \widehat{c}(d\theta)
\end{array}
\right).
\end{equation*}
Finally note that the operator $\mathscr{D}'$ of Theorem \ref{Thm:InducedDiracOp}
is 
\begin{align*}
\mathscr{D}'=\mathscr{D}=D_I-\frac{1}{2}\cot\theta c(d\theta)\varepsilon.
\end{align*}
If we write a form $\omega_0=f_0(\theta)+f_1(\theta)d\theta\in\Omega_c(I)$ in column-vector notation as
\begin{align*}
\omega_0=
\left(
\begin{array}{c}
f_0\\
f_1
\end{array}
\right)
\end{align*}
then when can express $\mathscr{D}$ as
\begin{equation}\label{Eqn:D0S2}
\mathscr{D}
=
\left(
\begin{array}{cc}
0 & -\partial_\theta-\frac{1}{2}\cot\theta\\
\partial_\theta-\frac{1}{2}\cot\theta &0
\end{array}
\right).
\end{equation}
The zero order part of the operator $\mathscr{D}$ is proportional to $\pm \frac{1}{2}\cot\theta$. This potential blows up when $\theta\longrightarrow 0$ and $\theta\longrightarrow \pi$, i.e. when approaching the singular stratum. When $\theta\longrightarrow 0$ the Taylor expansion of $\cot\theta$ is 
\begin{align*}
\cot\theta\sim\frac{1}{\theta}-\frac{\theta}{3}+o(\theta),
\end{align*}
and so
\begin{align*}
\mathscr{D}\sim 
\left(
\begin{array}{cc}
0 & -\partial_\theta-\frac{1}{2\theta}+O(\theta)\\
\partial_\theta-\frac{1}{2\theta}+O(\theta)&0
\end{array}
\right). 
\end{align*} 
Note that, up to $O(\theta)$-terms, this operator coincides with \eqref{Eqn:OpDPlane} in the example discussed above (up to a sign this also holds for  $\theta\longrightarrow \pi$). Hence, in view of Theorem \ref{BS88Thm3.2}, we verify that the operator $\mathscr{D}$ with core $\Omega_c(I)$ is essentially self-adjoint. \\

We want to end this example by verifying Lemma \ref{Lemma:SquareD}. The purpose of this is to give a feeling of how concrete computations can be done. It is easy to see that
\begin{align*}
\nabla^I_{\bar{H}}(f_0+f_1d\theta)=&-f'_0\cot\theta-f'_1\cot\theta d\theta,\\
d_I^\dagger(-\cot\theta d\theta)=&-\csc^2\theta,\\
\norm{\bar{\kappa}^\sharp}^2=&\cot^2\theta.
\end{align*}
Hence, for a smooth function $f_0$ we have
\begin{align*}
 &\left(\Delta_{I}-\nabla_{\bar{\kappa}^\sharp}^{I}\varepsilon-c(\bar{\kappa})D_{I}\varepsilon +\frac{1}{2}d_{I}^\dagger(\bar{\kappa})\varepsilon+\frac{1}{4}\norm{\bar{\kappa}^\sharp}^2\right)f_0\\
&=-f''_0+f'_0\cot\theta-f'_0\cot\theta-\frac{1}{2}\csc^2\theta f_0+\frac{1}{4}f_0\cot^2{\theta}\\
&=-f''_0+\frac{1}{2}\left(\frac{1}{2}\cot^2{\theta}-\csc^2\theta\right)f_0,
\end{align*}
and for a $1$-form $f_1d\theta$ we compute similarly
\begin{align*}
&\left(\Delta_{I}-\nabla_{\bar{\kappa}^\sharp}^{I}\varepsilon-c(\bar{\kappa})D_{I}\varepsilon+\frac{1}{2}d_{I}^\dagger(\bar{\kappa})\varepsilon+\frac{1}{4}\norm{\bar{\kappa}^\sharp}^2\right)f_1d\theta\\
&=-f''_1d\theta-f'_1\cot\theta d\theta+\cot\theta d\theta\wedge (f'_1)+\frac{1}{2}\csc^2\theta f_1d\theta+\frac{1}{4}f_1\cot^2{\theta}d\theta\\
&=\left(-f''_1+\frac{1}{2}\left(\frac{1}{2}\cot^2{\theta}+\csc^2\theta\right)f_1\right) dr. 
\end{align*}

Therefore, with respect to the degree decomposition, we can write 
\begin{align*}
\mathscr{D}^2
=
\left(
\begin{array}{cc}
-\partial^2_\theta+\frac{1}{2}\left(\frac{1}{2}\cot^2{\theta}-\csc^2\theta\right) &0\\
0 & -\partial^2_\theta+\frac{1}{2}\left(\frac{1}{2}\cot^2{\theta}+\csc^2\theta\right) 
\end{array}
\right).
\end{align*}
Using the relation for the Laplacian $\Delta_I=-\partial^2_\theta$, we obtain
\begin{align*}
\mathscr{D}^2=\Delta_I+\frac{1}{2}\left(\frac{1}{2}\cot^2{\theta}-\csc^2\theta\varepsilon\right).
\end{align*}

\begin{remark}
The examples of the $2$-torus and the  $2$-sphere can be easily generalized to general surfaces of revolution. 
\end{remark}

\subsubsection{The Hopf fibration}\label{Sect:Hopf}

In this example we again consider a free action. However, the interesting feature is that the corresponding orbit map is not trivial so it is worth studying it. Moreover, it will serve as a preparation for a later example of a semi-free $S^1$-action on a $5$-dimensional closed manifold. Let us consider an action of $S^1=\{\lambda\in\mathbb{C}\: :\:|\lambda|=1\}\subset \mathbb{C}$ on 
$S^3=\{(z_0,z_1)\in\mathbb{C}^2\: : \: |z_0|^2+|z_1|^2=1\}\subset\mathbb{C}^2$,
 equipped with the induced metric, defined as follows: For $\lambda\in S^1$ and $(z_0,z_1)\in S^3$ set $\lambda (z_0,z_1)\coloneqq (\lambda z_0,\lambda z_1).$ It is clear that this action is orientation and metric preserving. In addition, we see that this action is free so the quotient space $S^3/S^1$ is a smooth manifold of dimension two. Actually we will see that we can identify $S^3/S^1\cong S^2$.  To begin with, we claim that the orbit map is given explicitly by the {\em Hopf map} (\cite[Section III.17]{BT82})
\begin{equation}\label{Def:HopfMap}
\pi_{S^1}:
\xymatrixcolsep{2cm}\xymatrixrowsep{0.01cm}\xymatrix{
S^3 \ar[r] & S^2\\
(z_0,z_1) \ar@{|->}[r] & (a,b):=(2z_0\bar{z}_1,|z_0|^2-|z_1|^2).
}
\end{equation}
First of all note that the condition $|z_0|^2+|z_1|^2=1$ implies
$$|a|^2+|b|^2=4|z_0|^2|z_1|^2+(|z_0|^2-|z_1|^2)^2=(|z_0|^2+|z_1|^2)^2=1,$$
which shows that so the map $\pi$ is well-defined. Now let us prove the claim. For en element $\lambda\in S^1$ we obviously have $\pi(\lambda z_0, \lambda z_1)=\pi(z_0,z_1)$. On the other hand, let us assume that $\pi_{S^1}(z_0,z_1)=\pi_{S^1}(w_0,w_1)$, we want to show that $(z_0,z_1)$ and $(w_0,w_1)$ belong to the same orbit, i.e. there exists $\lambda\in S^1$ such that $(z_0,z_1)=(\lambda w_0,\lambda w_1)$. Let us write each point in polar coordinates as $z_k=r_k e^{i\varphi_k}$ and $w_k=s_ke^{i\psi_k}$ with $r_k,s_k\geq 0$ for $k=0,1$. Then the following conditions must hold:
\begin{align*}
e^{i(\varphi_0-\varphi_1)}r_0r_1&=e^{i(\psi_0-\psi_1)}s_0s_1,\\
r^2_0-r^2_1&=s^2_0-s^2_1,\\
r^2_0+r^2_1&=s^2_0+s^2_1=1.
\end{align*}
The last two equations imply that $r_0=s_0$ and $r_1=s_1$ and thus, by the first equation, the claim follows.\\

Let us consider the now the local parametrization of $S^3\subset \BB{C}^2$ by 
\begin{align*}
z_0&=e^{i\xi_1}\cos\eta,\\
z_1&=e^{i\xi_2}\sin\eta,
\end{align*}
where $0<\xi_1,\xi_2<2\pi$ and $0<\eta<\pi/2$. With respect to these coordinates the induced metric from $\BB{C}^2$ on $S^3$
is
\begin{equation}\label{Eq:MetricHopf}
g^{TS^3}=\cos^2\eta d\xi^2_1+\sin^2\eta d\xi^2_2+d\eta^2.
\end{equation} 
The associated volume form, with respect to the orientation induced by outer normal vector field, is 
\begin{align}\label{VolFormS3}
\vol_{S^3}=-\cos\eta\sin\eta d\xi_1\wedge d\xi_2\wedge d\eta.
\end{align}

 It is straightforward to see that the Hopf map can be written in terms of these coordinates as 
\begin{equation}\label{Eqn:HopfMapHopfCoord}
\pi_{S^1}(e^{i\xi_1}\cos\eta,e^{i\xi_2}\sin\eta)=(e^{i(\xi_1-\xi_2)}\sin(2\eta), \cos(2\eta)),
\end{equation}
which shows that we can identify topologically $S^3/S^1$ with $S^2$. Observe that in these coordinates the action of an element  $e^{it}\in S^1$ is 
\begin{align*}
\xi_1&\longmapsto \xi_1+t,\\
\xi_2&\longmapsto \xi_2+t,\\
\eta &\longmapsto \eta. 
\end{align*}

As in previews examples, we start by computing $V$, $\chi$, $\kappa$ and $\varphi_0$. To find the generating vector field $V$ we compute its action on $f\in C^\infty(S^3)$, 
\begin{align*}
\frac{d}{dt}f(e^{it}(\xi_1,\xi_2,\eta))\bigg{|}_{t=0}=\frac{d}{dt}f(\xi_1+t,\xi_2+t,\eta)\bigg{|}_{t=0}=\partial_{\xi_1}f(\xi_1,\xi_2,\eta)+\partial_{\xi_2}f(\xi_1,\xi_2,\eta).
\end{align*}
Thus $X=V=\partial_{\xi_1}+\partial_{\xi_2}$, since $\norm{V}=1$. In particular Proposition \ref{Prop:NormV}(2) shows that $\kappa=0$, i.e. the $S^1$-fibers are totally geodesic. The corresponding characteristic form of the action is $\chi=\alpha=(\partial_{\xi_1}+\partial_{\xi_2})^\flat=\cos^2\eta d\xi_1+\sin^2\eta d\xi_2$. Finally using 
\begin{equation*}
d\chi=-2\sin\eta\cos\eta d\eta\wedge(d\xi_1-d\xi_2),
\end{equation*}
and  \eqref{Eqn:dchi} we find that
\begin{align}\label{Eqn:VarphiHopf}
\varphi_0=-2\sin\eta\cos\eta d\eta\wedge(d\xi_1-d\xi_2).
\end{align}
In particular $d\varphi_0=0$, which illustrates Proposition \ref{Prop:varphi0}(2) since $\kappa=0$. \\

Our next aim is to describe the quotient metric on  $S^2$ so that the Hopf map becomes a Riemannian submersion. With respect to the coordinates described above and motivated by \eqref{Eqn:HopfMapHopfCoord} we define the functions 
\begin{align*}
\theta(\xi_1,\xi_2,\eta)\coloneqq &2\eta,\\
\phi(\xi_1,\xi_2,\eta)\coloneqq &\xi_1-\xi_2,
\end{align*}
which parametrize the image of the Hopf map, i.e. $\pi(\xi_1,\xi_2,\eta)=(e^{i\phi}\sin\theta,\cos\theta)$. With respect to the local basis $\{\partial_{\xi_1},\partial_{\xi_2},\eta\}$ and $\{\partial_\theta,\partial_\phi\}$ the derivative of the Hopf map is 
\begin{align*}
d\pi_{S^1}(\xi_1,\xi_2,\eta)=\left(
\begin{array}{ccc}
0 & 0 & 2\\
1 & -1 & 0
\end{array}
\right).
\end{align*}
In particular, 
\begin{align*}
d\pi_{S^1}(\partial_{\xi_1}+\partial_{\xi_2})&=0,\\
d\pi_{S^1}(\partial_{\xi_1}-\partial_{\xi_2})&=2\partial_\phi,\\
d\pi_{S^1}(\partial_\eta)&=2\partial_\theta.
\end{align*}
The first of these relations is not surprising since $X=\partial_{\xi_1}+\partial_{\xi_2}$ is the generating vector field of the action. The  condition that characterizes the quotient metric $\inner{\cdot}{\cdot}_{S^2}$ on $S^2$ is that $\langle Y,Y\rangle_{S^3}=\langle d\pi_{S^1}(Y),d\pi_{S^1}(Y)\rangle_{S^2}$ for all vector fields $Y$ orthogonal to $\partial_{\xi_1}+\partial_{\xi_2}$, i.e. horizontal vector fields. For example, two linearly independent horizontal vector fields on $S^3$ are
\begin{align*}
\underline{e}_1\coloneqq \frac{\sin^2\eta\partial_{\xi_1}-\cos^2\eta\partial_{\xi_2}}{\sin\eta\cos\eta}\quad \text{and}\quad
 \underline{e}_2\coloneqq\partial_\eta. 
\end{align*}
Indeed, using \eqref{Eq:MetricHopf} is easy to verify the conditions 
\begin{itemize}
\item $\norm{\underline{e}_1}_{S^3}=\norm{\underline{e}_2}_{S^3}=1$.
\item $\inner{\underline{e}_1}{\underline{e}_2}_{S^3}=0$.
\item $\inner{\partial_{\xi_1}+\partial_{\xi_2}}{\underline{e}_1}_{S^3}=\inner{\partial_{\xi_1}+\partial_{\xi_2}}{\underline{e}_2}_{S^3}=0$.
\end{itemize}
The image of these vector fields under $d\pi_{S^1}$ is 
\begin{align*}
d\pi_{S^1}(\underline{e}_1)=\frac{2}{\sin\theta}{\partial_\phi}\quad \text{and}\quad d\pi_{S^1}(\underline{e}_1)=2\partial_\theta.
\end{align*}
Thus, if we want the Hopf map $\pi_{S^1}$ to be a Riemannian submersion we need to equip $S^2$ with the Riemannian metric 
\begin{equation}\label{Eqn:MetricFSS2}
g^{TS^2(1/2)}\coloneqq\frac{1}{4}d\theta^2+\frac{1}{4}\sin^2\theta d\phi^2.
\end{equation}
Hence, geometrically, we see that the image of the Hopf map is a $2$-sphere of radius $1/2$, denoted by $S^2(1/2)$, with the usual round metric (see \eqref{Eq:metricS2}). As a matter  of fact, the metric \eqref{Eqn:MetricFSS2} is precisely the Fubini-Study metric
under the identification $S^2\cong\mathbb{C}P^1$. 

\begin{remark}\label{Rmk:BasicFormsS3}
The dual forms of $\underline{e}_1$ and $\underline{e}_2$ are
\begin{align*}
\underline{e}^1=\sin\eta\cos\eta(d\xi_1-d\xi_2)\quad \text{and}\quad \underline{e}^2=d\eta,
\end{align*}
which are basic $1$-forms. It is easy to check that 
\begin{align*}
\underline{e}^1=\pi^*_{S^1}\left(\frac{1}{2}\sin\theta d\phi \right)\quad\text{and}\quad  \underline{e}^2=\pi^*_{S^1}\left(\frac{1}{2}d\theta\right).
\end{align*}
Observe in particular that the $1$-forms
\begin{align*}
e^\phi\coloneqq \frac{1}{2}\sin\theta d\phi \quad\text{and}\quad e^\theta\coloneqq \frac{1}{2}d\theta
\end{align*}
form a local orthonormal basis for $T^*S^2(1/2)$.
\end{remark}

\begin{remark}
Let us see how obtain $\varphi_0$ directly by computing it as a curvature form (Proposition \ref{Prop:varphi0}(1)). First we compute the commutator of the two horizontal vector fields $\sin^2\eta\partial_{\xi_1}-\cos^2\eta\partial_{\xi_2}$ and $\partial_\eta$,
\begin{equation*}
[\partial_\eta,\sin^2\eta\partial_{\xi_1}-\cos^2\eta\partial_{\xi_2}]=2\sin\eta\cos\eta(\partial_{\xi_1}+\partial_{\xi_2}).
\end{equation*}
Next we apply the characteristic form,
\begin{align*}
\chi([\partial_\eta,\sin^2\eta\partial_{\xi_1}-\cos^2\eta\partial_{\xi_2}])=&\inner{\partial_{\xi_1}+\partial_{\xi_2}}{[\partial_\eta,\sin^2\eta\partial_{\xi_1}-\cos^2\eta\partial_{\xi_2}]}\\
=&\inner{\partial_{\xi_1}+\partial_{\xi_2}}{2\sin\eta\cos\eta(\partial_{\xi_1}+\partial_{\xi_2})}\\
=&2\sin\eta\cos\eta.
\end{align*}
Finally observe
\begin{align*}
2\sin\eta\cos\eta d\eta\wedge(d\xi_1-d\xi_2)(\partial_\eta,\sin^2\eta\partial_{\xi_1}-\cos^2\eta\partial_{\xi_2})=2\sin\eta\cos\eta.
\end{align*}
This shows that $\varphi_0=-2\sin\eta\cos\eta d\eta\wedge(d\xi_1-d\xi_2)$ as expected.
\end{remark}

In order to obtain the operator $\mathscr{D}'$ of Theorem \ref{Thm:InducedDiracOp} we need to compute the form $\bar{\varphi}_0\in\Omega(S^2)$ satisfying the condition $\varphi_0=\pi_{S^1}^*(\bar{\varphi}_0)$. In view of Remark \ref{Rmk:BasicFormsS3}, we easily see
\begin{align}\label{Eqn:PullbackVarphiHopf}
\pi_{S^1}^*\left(-\frac{1}{2}\sin\theta d\theta\wedge d\phi\right)=-\frac{1}{2}\sin 2\eta(2d\eta)\wedge(d\xi_1-d\xi_2)=\varphi_0,
\end{align}
thus $\bar{\varphi}_0=-(1/2)\sin\theta d\theta\wedge d\phi$. Note however that, by the dimensional constraint, in this case the action of the operator $\widehat{c}(\bar{\varphi}_0)$ on $1$-forms is zero so $\mathscr{D}'=D_{S^2}$.

\begin{remark}[Euler class]
If we integrate the $1$-form $\alpha=\cos^2\eta d\xi_1+\sin^2d\xi_2$ over each orbit we get
\begin{align*}
\int_{S^1x}\alpha =\int_0^{2\pi}\int_0^{2\pi}\cos^2\eta d\xi_1+\sin^2d\xi_2 =2\pi,
\end{align*}
for all $x\in S^3$. Hence, we can use $\alpha$ to construct the Euler class of the Hopf map following the procedure of \cite[Section 6.2(d)]{M00}. Recall from the discussion above that 
\begin{align*}
d\alpha=d\chi=\varphi_0=\pi^*_{S^1}(\bar{\varphi_0}),
\end{align*}
where $\bar{\varphi}_0=-(1/2)\sin\theta d\theta\wedge d\phi$. The Euler class of $\pi_{S^1}$ is then (\cite[Definition 6.27]{M00}), 
\begin{align*}
e(\pi_{S^1})=-\left[\frac{\bar{\varphi}_0}{2\pi}\right]\in H^2(S^2). 
\end{align*}
Finally we want to compute the Euler number integrating this class over the $S^2$ with respect to the induced orientation. Form \eqref{Eqn:TransHosgeStar1} we first compute
\begin{align*}
\bar{*}\varphi_0=&*(\varphi_0\wedge \chi)\\
=&-*(2\sin\eta\cos\eta d\eta\wedge(d\xi_1-d\xi_2)\wedge(\cos^2\eta d\xi_1+\sin^2d\xi_2))\\
=&-2*(\sin\eta\cos\eta d\xi_1\wedge d\xi_2\wedge d\xi_3)\\
=&2,
\end{align*}
where we have used \eqref{VolFormS3}. This means, in view of Lemma \ref{Lemma:CommStar}, that the induced orientation on $S^2$ is such that 
\begin{align*}
\int_{S^2(1/2)}\left(-\frac{1}{4}\sin\theta d \theta\wedge d\phi\right)=\vol(S^2(1/2))=4\pi\left(\frac{1}{2}\right)^2=\pi.
\end{align*}
Here we have used the fact that the quotient $2$-sphere has radius $1/2$.  In particular, this shows that the Euler number of the Hopf map is 
\begin{align*}
\int_{S^2(1/2)}e(\pi_{S^1})=\int_{S^2(1/2)}\frac{1}{4\pi}\sin\theta d\theta\wedge d\phi=-1,
\end{align*}
which agrees with \cite[Example 6.29]{M00}.
\end{remark}

\begin{remark}\label{Rmk:SpectrumLaplFSS2}
Form \cite[Theorem 4.2]{IT78} and Remark \ref{Rmk:ScaleHodgeStar} we see that the eigenvalues of the Laplacian corresponding to the metric \eqref{Eqn:MetricFSS2} are of the form $4k(k+1)$ for $k\in \mathbb{N}_0$. 
\end{remark}

\subsection{Example: The 5-sphere}\label{Sect:S5}

Motivated by Section \ref{Sect:Hopf} we study in this section two semi-free $S^1$-actions on the $5$-sphere. We describe explicitly the geometric quantities discussed in Section \ref{Sect:MeanCurvFrom} in order to compute the zero order terms of the operator $\mathscr{D}'$ of Theorem  \ref{Thm:InducedDiracOp}. In addition, we verify Theorem \ref{Thm:S1SignatureThm} and Theorem \ref{Thm:S1EC}. For the first theorem we compute the $L$-polynomial of the quotient metric explicitly. These computations illustrate the ideas behind the proof discussed above in Section \ref{Sect:PfoofSignatureFormula}. We then compute  $\sigma_{S^1}(M)$ as the signature of the induced pairing in intersection homology (Corollary \ref{Coro:IH}). For the second theorem we also compute the Euler form explicitly from the expression of the curvature and then we verify that its integral coincides with the $\chi_{S^1}(M)$, computed as the relative Euler characteristic of the quotient space.  \\

Let us begin with a geometric description of the $5$-sphere.  As a submanifold of $\mathbb{C}^3$ it is described by the condition
$$M= S^5=\{(z_0,z_1,z_2)\in \mathbb{C}^3\: : \:|z_0|^2+|z_1|^2+|z_2|^2=1\}\subset\mathbb{C}^3.$$
We equip $S^5$ with the induced Riemannian metric from $\mathbb{C}^3$ and with the orientation induced by the outer normal vector field. Consider the following local parametrization,
\begin{align}\label{Eqn:CoordS5}
z_0&=e^{i\xi_1}\cos\eta\cos\beta, \notag\\\
z_1&=e^{i\xi_2}\sin\eta\cos\beta,\\
z_2&=e^{i\xi_3}\sin\beta,\notag
\end{align}
where $0<\xi_1,\xi_2, \xi_3<2\pi$ and $0<\eta,\beta<\pi/2$. With respect to these coordinates the induced metric is 
\begin{equation}\label{Eqn:MetricS5}
g^{TS^5}=\cos^2\beta(\cos^2\eta d\xi_1+\sin^2\eta d\xi_2+d\eta^2)+\sin^2\beta d\xi_3+d\beta^2.
\end{equation}

\subsubsection*{Codim $M^{S^1}$=4}
The first semi-free $S^1$-action that we are going to study is defined as
$$\lambda (z_0,z_1,z_2)\coloneqq (\lambda z_0,\lambda z_1, z_2)\quad\text{for} \: \lambda\in S^1.$$
It is easy to see that this action preserves the metric and the orientation. The fixed point set of the action is 
$$M^{S^1}=\{(0,0,z)\in\mathbb{C}^3\: : \:|z|=1\}\cong S^1,$$
and the principal orbit is
$$M_0=\{(z_0,z_1,z_2)\in S^5 \: : |z_2|<1\}.$$
Inspired in the description of the Hopf map \eqref{Eqn:HopfMapHopfCoord} one verifies that the orbit map is
\begin{equation*}
\pi_{S^1}:
\xymatrixcolsep{2cm}\xymatrixrowsep{0.01cm}\xymatrix{
M_0\subset \mathbb{C}^3 \ar[r] & M_0/S^1\subset \mathbb{R}\times \mathbb{C}^2\\
(z_0,z_1,z_2) \ar@{|->}[r] & (t,w,z)\coloneqq (|z_0|^2-|z_1|^2,2z_0\bar{z}_1,z_2).
}
\end{equation*}
With respect to the coordinates \eqref{Eqn:CoordS5} this map takes the form 
\begin{equation}\label{Eqn:QuotMapHopfCoordS5}
\pi_{S^1}(\xi_1,\xi_2,\xi_3,\eta, \beta)=(\cos^2\beta \cos(2\eta),\cos^2\beta e^{i(\xi_1-\xi_2)}\sin(2\eta),e^{i\xi_3}\sin\beta).
\end{equation}

Similarly, as we did before, we can define two auxiliary functions $\theta=\theta(\xi_1,\xi_2,\xi_3,\eta, \beta)$ and $\phi=\phi(\xi_1,\xi_2,\xi_3,\eta, \beta)$  by the relations
\begin{align*}
\theta \coloneqq &2\eta,\\
\phi \coloneqq &\xi_1-\xi_1,
\end{align*}
so that
$$\pi_{S^1}(\xi_1,\xi_2,\xi_3,\eta, \beta)=(t(\theta,\phi, \xi_3,\beta),w(\theta,\phi, \xi_3,\beta),z(\theta,\phi, \xi_3,\beta)),$$
where 
\begin{align*}
t\coloneqq &\cos^2\beta \cos\theta,\\
w\coloneqq &\cos^2\beta e^{i\phi}\sin\theta,\\
z\coloneqq &e^{i\xi_3}\sin\beta.
\end{align*}

We begin by providing a concrete description of the singular space $M/S^1$. The key observation is to realize that the components of the orbit map satisfy the relation 
\begin{align*}
z^2+\sqrt{t^2+w^2}=\sin^2\beta+\sqrt{\cos^4\beta\cos^2\theta+\cos^4\beta\sin^2\theta}=1.
\end{align*}
Hence, we see that we can explicitly describe the quotient space $M/S^1$ as the zero locus
\begin{align*}
M/S^1=\{(t,w,z)\in\mathbb{R}\times\mathbb{C}^2\:|\:z^2+\sqrt{t^2+w^2}-1=0\}.
\end{align*}
In Figure \ref{Fig:S51} we show a plot of the points satisfying the equation $z^2+\sqrt{t^2+w^2}-1=0$, where we take $t,w,z\in\mathbb{R}$ for the sake of visualization. From this model we see that there are two singular points, which actually represent the singular stratum $M^{S^1}=S^1$ (the two points in the plot are just the $0$-sphere). 
\begin{figure}[h]
\begin{center}
\includegraphics[scale=0.4]{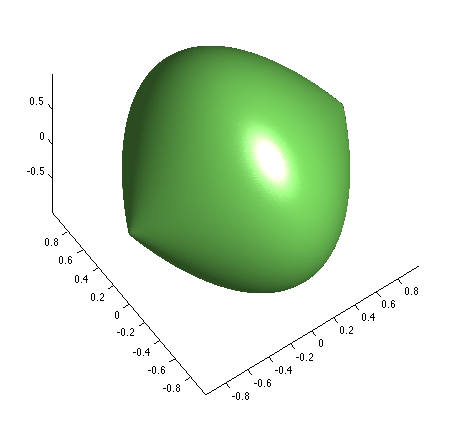} 
\caption{Plot of $z^2+\sqrt{t^2+w^2}-1=0$ for $t,z,w\in \mathbb{R}$.}\label{Fig:S51}
\end{center}
\end{figure}

Let us decompose the complex variables $w$ and $z$ into their real and imaginary parts
\begin{align*}
w=&w_1+iw_2,\\
z=&z_3+iz_4.
\end{align*}
Consider now the function $f:\mathbb{R}^5\longrightarrow \mathbb{R}$ defined by 
\begin{align*}
f(t,w_1,w_2,z_3,z_4)\coloneqq z_3^2+z_4^2+\sqrt{t^2+w_1^2+w_2^2}-1,
\end{align*}
so that $M/S^1=f^{-1}(0)$. If we compute the gradient of this function we get 
\begin{align*}
\nabla f=2z_3\partial_{z_3}+2 z_4 \partial_{z_4}+\frac{t\partial_t +w_1\partial_{w_1}+w_3\partial_{w_3}}{\sqrt{t^2+w_1^2+w_2^2}},
\end{align*}
and immediately see that it is not well defined in the limit $t+w_1+w_2\longrightarrow 0$, i.e. when approaching the fixed point set.  
\begin{remark}
Observe that a point $(t,w_1,w_2,z_3,z_4)\in f^{-1}(0)=M/S^1$ satisfies
\begin{align*}
t^2+w_1^2+w_2^2=(1-(z_3^2+z_4^2))^2.
\end{align*}
This shows that $M/S^1$ is a zero locus of a real polynomial of degree $4$.
\end{remark}

Our next aim is to compute the induced quotient metric on $M_0/S^1$ following the same strategy as for the Hopf fibration. Using the variables $\{\theta,\phi,\xi_3,\beta\}$ we can write the orbit map as
$$\pi_{S^1}(\xi_1,\xi_2,\xi_3,\eta, \beta)=(2\eta, \xi_1-\xi_2,\xi_3,\beta).$$ 
The derivative of $\pi_{S^1}$ with respect to the local basis $\{\partial_{\xi_1},\partial_{\xi_2},\partial_{\xi_2},\partial_{\eta},\partial_{\beta}\}$ and $\{\partial_{\theta},\partial_{\phi},\partial_{\xi_3},\partial_{\beta}\}$ is therefore
\begin{align*}
d\pi_{S^1}=\left(
\begin{array}{ccccc}
0 & 0 & 0 & 2 & 0\\
1 & -1 & 0 & 0 & 0\\
 0 & 0 & 1  & 0 & 0\\
 0 & 0 &  0 & 0 & 1
\end{array}
\right).
\end{align*}
In particular we obtain the relations
\begin{align}\label{Eqn:HorVFS51}
d\pi_{S^1}(\partial_{\xi_1}+\partial_{\xi_2})=&0,\notag\\
d\pi_{S^1}(\partial_{\xi_1}-\partial_{\xi_2})=&2\partial_{\phi},\notag\\
d\pi_{S^1}(\partial_{\beta})=&\partial_{\beta},\\
d\pi_{S^1}(\partial_{\xi_3})=&\partial_{\xi_3}, \notag \\
d\pi_{S^1}(\partial_{\eta})=&2\partial_{\theta}.\notag
\end{align}
Here we have identified the variables $\xi_3$ and $\beta$ both in $M_0$ and in $M_0/S^1$. Note from \eqref{Eqn:MetricS5} and \eqref{Eqn:HorVFS51} that the vector fields
\begin{align}\label{Eqns:ONBS5}
\widehat{\underline{e}}_1\coloneqq &\frac{\sin^2\eta\partial_{\xi_1}-\cos^2\eta\partial_{\xi_2}}{\cos\beta\sin\eta\cos\eta},\notag\\
\widehat{\underline{e}}_2\coloneqq &\frac{\partial_{\xi_3}}{\sin\beta},\\
\widehat{\underline{e}}_3\coloneqq &\frac{\partial_\eta}{\cos\beta},\notag\\
\widehat{\underline{e}}_4 \coloneqq &\partial_\beta\notag .
\end{align}
satisfy the relations $\inner{\widehat{\underline{e}}_i}{\widehat{\underline{e}}_j}_{S^5}=\delta_{ij}$ and $\inner{\partial_{\xi_1}+\partial_{\xi_2}}{\widehat{\underline{e}}_i}_{S^5}=0$ for $i,j=1,2,3,4$. Observe from \eqref{Eqn:CoordS5} that $V\coloneqq \partial_{\xi_1}+\partial_{\xi_2}$ is the generating vector field of the action, hence $\{\widehat{\underline{e}}_1,\widehat{\underline{e}}_2,\widehat{\underline{e}}_3,\widehat{\underline{e}}_4\}$ are mutually orthogonal unit horizontal vector fields on $S^5$.
\begin{align*}
d\pi_{S^1}(\widehat{\underline{e}}_1)=&\frac{2\partial_{\phi}}{\cos\beta\sin\theta},\\
d\pi_{S^1}(\widehat{\underline{e}}_2)=&\frac{\partial_{\xi_3}}{\sin\beta},\\
d\pi_{S^1}(\widehat{\underline{e}}_3)=&\frac{2\partial_\theta}{\cos\beta},\\
 d\pi_{S^1} (\widehat{\underline{e}}_4)=&\partial_\beta.
\end{align*}
From these relations we deduce the quotient metric on $M_0/S^1$ must be (for $\xi\coloneqq \xi_3$),
\begin{align}\label{Eqn:MetricSigma}
g^{T(M_0/S^1)}=\cos^2\beta\left(\frac{1}{4}d\theta^2+\frac{1}{4}\sin^2\theta d\phi^2\right)+\sin^2\beta d\xi^2+d\beta^2.
\end{align}

\begin{figure}[h]
\begin{center}
\includegraphics[scale=0.5]{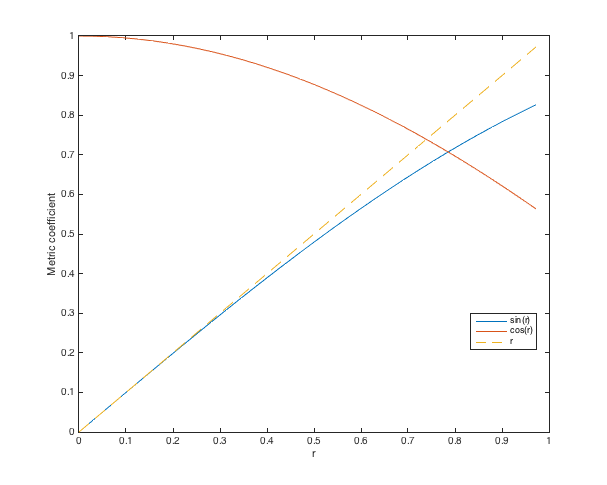} 
\caption{Plot of the functions $\sin(r), \cos(r)$ and $r$ for $0<r<1$.}\label{Fig:ApproxMetric}
\end{center}
\end{figure}

Note that we approach to the fixed point set as the coordinate $\beta\longrightarrow \pi/2$ or equivalently as $r\coloneqq \pi/2-\beta\longrightarrow 0^+$. Since $\sin \beta=\cos r$ and $\cos\beta=\sin r$, then the metric \eqref{Eqn:MetricSigma} close to the fixed point set $M^{S^1}$ can be approximated by the wedge-metric
\begin{align}\label{Eqn:CloseFixPtS5}
r^2\left(\frac{1}{4}d\theta^2+\frac{1}{4}\sin^2\theta d\phi^2\right)+ d\xi_3^2+d r^2,
\end{align}
for $0<r\ll 1$ (see Figure \ref{Fig:ApproxMetric}). This shows that close to $M^{S^1}$ the orbit space is isometric to a cone bundle over $M^{S^1}$ with link $S^2$ (see Figure \ref{Fig:S5CloseFI}).

\begin{remark}\label{Rmk:S5/S1=S4}
Topologically, a cone over a sphere is homeomorphic to a disc, so we see that $M/S^1$ is in fact homeomorphic to a $4$-dimensional closed manifold. 
\end{remark}

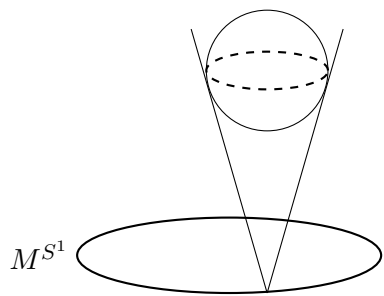
\begin{figure}[h]
\begin{center}
\begin{tikzpicture}
\draw[thick] (0,0) ellipse (2cm and 0.5cm);
\draw (0.5,-0.5)--(-0.5,3);
\draw (0.5,-0.5)--(1.5,3);
\draw (0.5,2.45) circle (0.8cm);
\draw[thick, dashed] (0.5,2.45) ellipse (0.8cm and 0.25cm);
\node at (-2.5,0) {$M^{S^1}$};
\end{tikzpicture}
\caption{Description of $M/S^1$ around the fixed point set $M^{S^1}$.}\label{Fig:S5CloseFI}
\end{center}
\end{figure}

With respect to this local description of the fixed point set we find that $M/S^1$ is a Witt space since $S^2\cong \mathbb{C}P^1$ (see Definition \ref{Def:Witt}). 

\begin{remark}
To visualize better the behavior close to the singular stratum we plot in Figure \ref{Fig:S52} the level surface  $z_3^2+ z_4^2+t-1=0$ in $\mathbb{R}^3$. In this figure we can see the fixed point set $M^{S^1}=S^1$ since we have fixed the condition $w^2_1+w^2_2=0$ which collapses the link to a point. 
\begin{figure}[h]
\begin{center}
\includegraphics[scale=0.5]{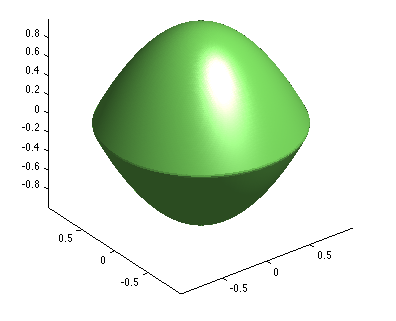} 
\caption{Plot of $z_3^2+ z_4^2+t-1=0$ for $t,z_3,z_4\in \mathbb{R}$.}\label{Fig:S52}
\end{center}
\end{figure}
\end{remark}

Now that we have a better picture of the quotient space $M/S^1$ we continue towards the description of the operator $\mathscr{D}'$ from Theorem \ref{Thm:InducedDiracOp}.  Recall that $V\coloneqq \partial_{\xi_1}+\partial_{\xi_2}$ is the generating vector field of the action. The corresponding unit vector field and characteristic $1$-form are 
$$X=\frac{1}{\cos\beta}(\partial_{\xi_1}+\partial_{\xi_2})\quad\text{and}\quad\chi=\cos\beta(\cos^2\eta d\xi_1+\sin^2\eta d\xi_2),$$
respectively. As in the previous examples, we can calculate the mean curvature $\kappa$  on $S^5$ from the norm of the vector field $V$ (Proposition \ref{Prop:NormV}(2)),
\begin{align*}
\kappa=-d\log(\norm{V})=-d\log(\cos\beta)=\tan\beta d\beta.
\end{align*}
Using the musical isomorphism we obtain the corresponding mean curvature vector field $H=\kappa^\sharp =\tan\beta \partial_\beta$.

\begin{remark}
Let us compute the volume of the orbit containing a point parametrized by the coordinates $(\xi_1,\xi_2,x_3\eta,\beta)$ using \eqref{Eqn:h},
\begin{align*}
h(\xi_1,\xi_2,x_3\eta,\beta)=\int_0^{2\pi}\int_0^{2\pi}\cos\beta(\cos^2\eta d\xi_1+\sin^2\eta d\xi_2)=2\pi\cos\beta. 
\end{align*}
Then $dh=-2\pi\sin\beta=-h\tan\beta=-h\kappa$, which verifies Lemma \ref{Lemma:dh}. 
\end{remark}

We now compute the form $\varphi_0$ using \eqref{Eqn:dchi}, 
\begin{align*}
d\chi=& d(\cos\beta)\wedge(\cos^2\eta d\xi_2+\sin^2d\xi_2)+ \cos\beta d(\cos^2\eta d\xi_2+\sin^2d\xi_2)\\
=&-\tan\beta d\beta\wedge \cos\beta(\cos^2\eta d\xi_2+\sin^2d\xi_2)-2\cos\beta\sin\eta\cos\eta d\eta\wedge(d\xi_1-d\xi_2)\\
=&-\kappa\wedge\chi-\cos\beta \sin(2\eta)d\eta\wedge(d\xi_1-d\xi_2),
\end{align*}
thus, $\varphi_0=-\cos\beta \sin(2\eta)d\eta\wedge(d\xi_1-d\xi_2)$. In view of the analogous expression for  $\varphi_0$ in the Hopf fibration case (see \eqref{Eqn:VarphiHopf}) and \eqref{Eqn:PullbackVarphiHopf}, we verify
\begin{align*}
\pi^*_{S^1}\left(-\frac{1}{2}\cos\beta\sin\theta d\theta\wedge d\phi\right)=-\frac{1}{2}\cos\beta\sin(2\eta)d(2\eta)\wedge d(\xi_1-\xi_2)=\varphi_0.
\end{align*}
Hence, the operator $\mathscr{D}'$ is given explicitly by
\begin{align*}
\mathscr{D}'=D_{M_0/S^1}+\frac{1}{2}\tan\beta c(d\beta)\varepsilon+\frac{1}{2}\:\widehat{c}(\cos\beta\sin\theta d\theta\wedge d\phi)\left(\frac{1-\varepsilon}{2}\right). 
\end{align*}
Here $D_{M_0/S^1}$ is the Hodge-de Rham operator  with respect to the metric \eqref{Eqn:MetricSigma}. \\

Recall from \eqref{Eqn:CloseFixPtS5} that as $r=\pi/2-\beta\longrightarrow 0^+$ the zero order part of $\mathscr{D}'$ becomes
\begin{align}\label{Eqn:PotS5}
-\frac{1}{2r}c(dr)\varepsilon-r\:\widehat{c}\left(-\frac{1}{2}\sin\theta d\theta\wedge d\phi\right)\left(\frac{1-\varepsilon}{2}\right).
\end{align}
Observe in particular from \eqref{Eqn:PotS5} that the term containing $\widehat{c}(\bar{\varphi}_0)$ remains bounded. Thus, using the Kato-Rellich Theorem (\cite[Theorem V.4.3]{KATO}), we see concretely that the operator $\mathscr{D}$ of Remark \ref{Rmk:OpD}, 
\begin{align*}
\mathscr{D}=D_{M_0/S^1}+\frac{1}{2}\tan\beta c(d\beta)\varepsilon
\end{align*}
is also essentially self-adjoint.\\

Out final goal is to verify Theorem \ref{Thm:S1SignatureThm}. The purpose is to present explicit computations, which are often not easy to find in the literature. To begin with, we need the Christoffel symbols associated to the metric \eqref{Eqn:MetricSigma}. Straightforward computations shows that they are given by (\cite[Chapter 2.3]{dC92})
\begin{align*}
\cris{\phi}{\phi}{\theta}=&-\cos\theta\sin\theta, & \cris{\xi}{\beta}{\xi}=&\cot\beta,\\
\cris{\theta}{\beta}{\theta}=&-\tan\beta, & \cris{\theta}{\theta}{\beta}=&\frac{1}{4}\cos\beta\sin\beta,\\
\cris{\theta}{\phi}{\phi}=&\cot\theta, & \cris{\phi}{\phi}{\beta}=&\frac{1}{4}\cos\beta\sin\beta\sin^2\theta,\\
\cris{\phi}{\beta}{\phi}=&-\tan\beta, & \cris{\xi}{\xi}{\beta}=&-\cos\beta\sin\beta.
\end{align*}

For example,
\begin{align*}
\cris{\theta}{\beta}{\theta}=&\frac{1}{2}g^{\theta \theta}(\partial_\beta g_{\theta\theta}+\partial_\beta g_{\theta\beta}-\partial_\theta g_{\theta\beta})\\
=&\frac{1}{2}\left(\frac{4}{\cos^2\beta}\right)\left(-\frac{2\cos\beta\sin\beta}{4}\right)\\
=&-\tan\beta.
\end{align*} 

Next we are going to compute the components of the connection $1$-form of the Levi-Civita connection with respect to a local orthonormal basis for $T^*(M_0/S^1)$ induced from $T^*M$.  First we construct such a basis. The dual $1$-forms of vector fields \eqref{Eqns:ONBS5} are
\begin{align*}
\widehat{\underline{e}}^1\coloneqq &\cos\beta\sin\eta\cos\eta(d\xi_1-d\xi_2),\\
\widehat{\underline{e}}^2\coloneqq &\sin\beta d\xi_3,\\
\widehat{\underline{e}}^3\coloneqq &\cos\beta d\eta,\\
\widehat{\underline{e}}^4\coloneqq &d\beta.
\end{align*}
Note that these forms are basic and satisfy the relations
\begin{align*}
\widehat{\underline{e}}^1\coloneqq &\pi^*_{S^1}\left(\frac{1}{2}\cos\beta \sin\theta d\phi\right),\\
\widehat{\underline{e}}^2\coloneqq &\pi^*_{S^1}(\sin\beta d\xi_3),\\
\widehat{\underline{e}}^3\coloneqq &\pi^*_{S^1}\left(\frac{1}{2}\cos\beta d\theta\right),\\
\widehat{\underline{e}}^4\coloneqq &\pi^*_{S^1} d\beta.
\end{align*}
This motivates the choice of basis 
\begin{align*}
\widehat{e}^\theta\coloneqq &\frac{1}{2}\cos\beta d\theta \eqqcolon \cos\beta e^\theta,\\
\widehat{e}^\phi\coloneqq &\frac{1}{2}\cos\beta \sin\theta d\phi\eqqcolon\cos\beta e^\phi,\\
\widehat{e}^\xi\coloneqq &\sin\beta d\xi\eqqcolon\sin\beta e^\xi,\\
\widehat{e}^\beta\coloneqq &d\beta. 
\end{align*}

Observe that the forms $\{e^\theta, e^\phi\}$ and $e^\xi$ form a local orthonormal basis for the vertical an horizontal tangent bundle respectively in the sense of Section \ref{Sect:PfoofSignatureFormula}.\\

Using the expressions for the Christoffel symbols above one can calculate the non-zero components of the connection $1$-form $\widehat{\omega}^I_J$ with respect to this basis,
\begin{align*}
\widehat{\omega}^\theta_\phi=&-2\sec\beta\cot\theta\widehat{e}^\phi, & \widehat{\omega}^\phi_\beta=&-\tan\beta\widehat{e}^{\phi},\\
\widehat{\omega}^\theta_\beta=&-\tan\beta\widehat{e}^\theta, & \widehat{\omega}^\xi_\beta=&\cot\beta\widehat{e}^{\xi}.
\end{align*}
One can also compute these components using \eqref{Eqns:StructureEquations}. For example,
\begin{align*}
d\widehat{e}^\theta=&-\frac{1}{2}\sin\beta  d\beta\wedge d\theta=\tan\beta \widehat{e}^{\theta}\wedge\widehat{e}^\beta,\\
d\widehat{e}^\phi=&-\frac{1}{2}\sin\beta \sin\theta d\beta\wedge d\phi+\frac{1}{2}\cos\beta\cos\theta d\theta\wedge d\phi=\tan\beta \widehat{e}^{\phi}\wedge\widehat{e}^\beta+2\sec\beta\cot\theta\widehat{e}^\theta\wedge\widehat{e}^\phi,\\
d\widehat{e}^\xi=&\cos\beta d\beta\wedge d\xi=-\cot\beta \widehat{e}^{\xi}\wedge\widehat{e}^\beta.
\end{align*}

We now want to calculate the components of the curvature form $\widehat{\Omega}$ using the structure equations \eqref{Eqns:ComponentsCurvatureForm}. We begin by computing the exterior derivative of the components of the connection $1$-form. For example for $\widehat{\omega}^\theta_\phi$ we have
\begin{align*}
d\widehat{\omega}^\theta_\phi=&d(-2\sec\beta\cot\theta\widehat{e}^\phi )\\
=&-2\sec\beta\tan\beta\cot\theta d\beta\wedge\widehat{e}^\phi+2\sec\beta\csc^2\theta d\theta\wedge\widehat{e}^\phi-2\sec\beta\cot\theta d\widehat{e}^\phi\\
=&2\sec\beta\tan\beta\cot\theta\widehat{e}^\phi \wedge\widehat{e}^\beta+4\sec^2\beta\csc^2\theta \widehat{e}^\theta\wedge\widehat{e}^\phi\\
&-2\sec\beta\cot\theta (\tan\beta \widehat{e}^{\phi}\wedge\widehat{e}^\beta+2\sec\beta\cot\theta\widehat{e}^\theta\wedge\widehat{e}^\phi)\\
=&4\sec^2\beta\widehat{e}^\theta\wedge\widehat{e}^\phi.
\end{align*}
Analogously for $\widehat{\omega}^\phi_\beta$,
\begin{align*}
d\widehat{\omega}^\phi_\beta=&d(-\tan\beta\widehat{e}^\phi )\\
=&-\sec^2\beta d\beta\wedge\widehat{e}^\theta-\tan\beta d\widehat{e}^\phi\\
=&\sec^2\beta\widehat{e}^\phi\wedge\widehat{e}^\beta-\tan\beta(\tan\beta \widehat{e}^{\phi}\wedge\widehat{e}^\beta+2\sec\beta\cot\theta\widehat{e}^\theta\wedge\widehat{e}^\phi)\\
=&\widehat{e}^\phi\wedge\widehat{e}^\beta-2\sec\beta\tan\beta\cot\theta\widehat{e}^\theta\wedge\widehat{e}^\phi. 
\end{align*}
Similar calculations show that these differentials are
\begin{align*}
d\widehat{\omega}^\theta_\phi=&4\sec^2\beta\widehat{e}^\theta\wedge\widehat{e}^\phi,\\
d\widehat{\omega}^\phi_\beta=&\widehat{e}^\phi\wedge\widehat{e}^\beta-2\sec\beta\tan\beta\cot\theta\widehat{e}^\theta\wedge\widehat{e}^\phi,\\
d\widehat{\omega}^\theta_\beta=&\widehat{e}^\theta\wedge\widehat{e}^\beta,\\
d\widehat{\omega}^\xi_\beta=&\widehat{e}^\xi\wedge\widehat{e}^\beta.
\end{align*}
Altogether, the components of the curvature form are computed using the structure equations,
\begin{align*}
\widehat{\Omega}^{\theta}_\phi=&d\widehat{\omega}^\theta_\phi+\widehat{\omega}^\theta_\beta\wedge\widehat{\omega}^\beta_\phi=4\sec^2\beta\widehat{e}^\theta\wedge\widehat{e}^\phi-\tan^2\beta\widehat{e}^\theta\wedge\widehat{e}^\phi
=(3\sec^2\beta+1)\widehat{e}^\theta\wedge\widehat{e}^\phi,\\
\widehat{\Omega}^{\theta}_\xi=&d\widehat{\omega}^\theta_\xi+\widehat{\omega}^\theta_\beta\wedge\widehat{\omega}^\beta_\xi=(-\tan\beta\widehat{e}^\theta)\wedge(-\cot\beta\widehat{e}^\xi)=\widehat{e}^\theta\wedge\widehat{e}^\xi,\\
\widehat{\Omega}^{\theta}_\beta=&d\widehat{\omega}^\theta_\beta+\widehat{\omega}^\theta_\phi\wedge\widehat{\omega}^\phi_\beta=\widehat{e}^\theta\wedge\widehat{e}^\beta,&\\
\widehat{\Omega}^{\phi}_\xi=&d\widehat{\omega}^\phi_\xi+\widehat{\omega}^\phi_\beta\wedge\widehat{\omega}^\beta_\xi
=(-\tan\beta\widehat{e}^\phi)\wedge(-\cot\beta\widehat{e}^\xi)=\widehat{e}^\phi\wedge\widehat{e}^\xi,\\
\widehat{\Omega}^{\phi}_\beta=&d\widehat{\omega}^\phi_\beta+\widehat{\omega}^\phi_\theta\wedge\widehat{\omega}^\theta_\beta=\widehat{e}^\phi\wedge\widehat{e}^\beta-2\sec\beta\tan\beta\cot\theta\widehat{e}^\theta\wedge\widehat{e}^\phi
+(2\sec\beta\cot\theta\widehat{e}^\phi)\wedge(-\tan\beta\widehat{e}^\theta),\\
\widehat{\Omega}^\xi_\beta=&d\widehat{\omega}^\xi_\beta=\widehat{e}^\xi\wedge\widehat{e}^\beta.
\end{align*}
We can write these components in matrix form,
\begin{align*}
\widehat{\Omega}=&\left(
\begin{array}{cccc}
0 & -(3\sec^2\beta+1)\widehat{e}^\theta\wedge\widehat{e}^\phi & -\widehat{e}^\theta\wedge\widehat{e}^\xi & -\widehat{e}^\theta\wedge\widehat{e}^\beta \\
(3\sec^2\beta+1)\widehat{e}^\theta\wedge\widehat{e}^\phi & 0 & - \widehat{e}^\phi\wedge\widehat{e}^\xi &  -\widehat{e}^\phi\wedge\widehat{e}^\beta\\
\widehat{e}^\theta\wedge\widehat{e}^\xi & \widehat{e}^\phi\wedge\widehat{e}^\xi & 0 & -\widehat{e}^\xi\wedge\widehat{e}^\beta \\
\widehat{e}^\theta\wedge\widehat{e}^\beta &  \widehat{e}^\phi\wedge\widehat{e}^\beta & \widehat{e}^\xi\wedge\widehat{e}^\beta & 0
\end{array}\right).
\end{align*}

In order to study the explicit dependence on $\beta$ we write this curvature matrix in terms of the $1$-forms $\{e^\theta,e^\phi, e^\xi, d\beta\}$,
\begin{align*}
\widehat{\Omega}(\beta)\coloneqq
&\left(
\begin{array}{cccc}
0 & - (3+\cos^2\beta){e}^\theta\wedge {e}^\phi & -\sin\beta\cos\beta{e}^\theta\wedge{e}^\xi & -\cos\beta{e}^\theta\wedge d\beta \\
(3+\cos^2\beta){e}^\theta\wedge {e}^\phi & 0 & - \sin\beta\cos\beta{e}^\phi\wedge {e}^\xi &  -\cos\beta{e}^\phi\wedge d\beta\\
\sin\beta\cos\beta {e}^\theta\wedge {e}^\xi & \sin\beta\cos\beta{e}^\phi\wedge{e}^\xi & 0 & -\sin\beta {e}^\xi\wedge d\beta \\
\cos\beta {e}^\theta\wedge d\beta & \cos\beta {e}^\phi\wedge d\beta& \sin\beta{e}^\xi\wedge d\beta & 0
\end{array}\right).
\end{align*}
In particular, the limit close to the fixed point set 
\begin{align*}
\lim_{\beta\rightarrow \pi/2}\widehat{\Omega}(\beta)
=\left(
\begin{array}{cccc}
0 & - 3 {e}^\theta\wedge {e}^\phi & 0 & 0 \\
3 {e}^\theta\wedge {e}^\phi & 0 &  0 & 0\\
0 &0 & 0 & -{e}^\xi\wedge{e}^\beta \\
0 & 0& {e}^\xi\wedge{e}^\beta & 0
\end{array}\right),
\end{align*}
is well defined. This shows explicitly that (compare with \eqref{Eqns:Curv})
\begin{align*}
\lim_{\beta\rightarrow \pi/2}\int_{M_0/S^1}L(\widehat{\Omega}(\beta))< \infty.
\end{align*}
Moreover, since $\widehat{\Omega}\wedge\widehat{\Omega}=0$ we actually have
\begin{align*}
\int_{M_0/S^1} L(\widehat{\Omega})=0.
\end{align*}

\begin{remark}[Euler class]
Using the explicit form of the curvature $\widehat{\Omega}(\beta)$ we want to compute the Euler class of $T(M_0/S^1)$, defined by (\cite[Section 5.6]{M00})
\begin{align*}
e(\widehat{\Omega}(\beta))\coloneqq\left[\frac{\Pf(\widehat{\Omega}(\beta))}{(2\pi)^2}\right]\in H^4(M_0/S^1),
\end{align*}
where $\Pf(\widehat{\Omega}(\beta))$ denotes the Pfaffian of $\widehat{\Omega}(\beta)$, which is characterized by the relation $\Pf(A)^2=\det(A)$ for any skew-symmetric matrix $A$. It is not hard to verify for a $4\times 4$ skew-symmetric matrix the Pfaffian formula
\begin{align*}
\Pf
\left[
\left(
\begin{array}{cccc}
0 & a & b & c \\
-a & 0 & d & e \\
-b & -d & 0 & f \\
-c & -e & -f & 0 
\end{array}
\right)
\right]=af-be+dc.
\end{align*}
As a consequence, using the explicit form of $\widehat{\Omega}(\beta)$ we obtain
\begin{align*}
\text{Pf}(\widehat{\Omega}(\beta))=& 3(1+\cos^2\beta)\sin\beta{e}^\theta\wedge {e}^\phi \wedge{e}^\xi\wedge d\beta.
\end{align*}
Now we integrate this form over the open manifold $M_0/S^1$,
\begin{align*}
\int_{M_0/S^1}\frac{\text{Pf}(\widehat{\Omega}(\beta))}{(2\pi)^2}
=&\int_{M_0/S^1}\frac{3}{(2\pi)^2}(1+\cos^2\beta)\sin\beta \left(\frac{1}{2}d\theta\right)\wedge \left(\frac{1}{2}\sin\theta d\phi\right)  \wedge d\xi\wedge d\beta\\
=&\int_0^{\pi/2}\int_0^{2\pi}\int_0^{2\pi}\int_0^{\pi}\frac{3}{(2\pi)^2}\left(\frac{1}{4}\right)(1+\cos^2\beta)\sin\beta  d\theta (\sin\theta d\phi) d\xi d\beta\\
=&\frac{3}{(2\pi)^2}\left(\frac{1}{4}\right)(4\pi)(2\pi)\frac{4}{3} \\
=&2.
\end{align*}
Hence, 
\begin{align}\label{Eqn:IntECExapmpleS5}
\int_{M/S^1}e(T(M_0/S^1))=\int_{M_0/S^1}\frac{\text{Pf}(\widehat{\Omega}(\beta))}{(2\pi)^2}=2. 
\end{align}
It is important to remark that this integral is indeed finite. 
\end{remark}

Now we deal with the eta invariant. As mentioned above, $M/S^1$ is a Witt space since the link is topologically a $2$-sphere and $H^1(S^2)$ vanishes.  It follows from Lemma \ref{Lemma:VanishingEtaWitt} that $\eta(M^{S^1})=0$. This can be explicitly seen as follows: the even part of the tangential signature operator \eqref{Eqn:DefOddSignOp} of the fixed point set $M^{S^1}=S^1$, which acts on smooth functions, is 
\begin{align*}
A^\text{ev}=\star_{S^1}(d_{S^1}+d^\dagger_{S^1})\left(\frac{1+\varepsilon_{S^1}}{2}\right) =-i\frac{d}{d\xi}.
\end{align*}
Observe that the spectrum of this operator is symmetric with respect to $0$ (Section \ref{Section:VanishEta}). Indeed, if $i\partial_\xi f_\lambda=\lambda f_\lambda$ for some $\lambda\in\mathbb{R}$ then 
$$i\partial_\xi \bar{f}_\lambda=\overline{-i\partial_\xi f_\lambda}=\overline{-\lambda f_\lambda}=-\lambda \bar{f}_\lambda.$$
This implies that $\eta_{A^{\text{ev}}}(0)=0$. Hence, both terms of the right hand side of Theorem \ref{Thm:S1SignatureThm} vanish. \\

Next we are going to compute the $S^1$-equivariant signature using purely topological methods.  In view of the isomorphisms \eqref{IsomsBasic} we will first compute the relative cohomology groups $H^*(M/S^1,M^{S^1})$. Then we will use this to compute $\sigma_{S^1}(M)$ from the intersection pairing in intersection homology (Corollary \ref{Coro:IH}). The main ingredient for these computations is an appropriate decomposition of $M/S^1$. We claim this space can be obtained from the following pushout diagram, 
\begin{align}\label{Diag:Pushout}
\xymatrixcolsep{1.5cm}\xymatrixrowsep{1.5cm}\xymatrix{
S^1\times S^2 \ar[d]_-{p_1} \ar[r]^-{j} & D^2\times S^2 \ar[d]\\
S^1 \ar[r] & M/S^1.
}
\end{align}
Here $D^2$ denotes the $2$-disk, $p_1$ is the projection onto the first component and $j$ is the natural inclusion, i.e. $S^1$ is included as the boundary of $D^2$ and on $S^2$ is just the identity. The claim can be seen as follows: From  the expression \eqref{Eqn:QuotMapHopfCoordS5} of the quotient map it is easy to see that $M_0/S^1$ is homeomorphic to $S^2\times\mathring{D}^2$, where $\mathring{D}^2\coloneqq D^2-S^1$ is the open $2$-disk. More precisely, for $\beta<\pi/2$ fixed, the first two components in \eqref{Eqn:QuotMapHopfCoordS5} describe a $2$-sphere of radius $\cos^2\beta$  and the third component describes a circle of radius $\sin\beta$. When $\beta\longrightarrow \pi/2$ we see that the radius of the $2$-sphere collapses to zero. Hence, the space $M/S^1$ can be obtained by collapsing $S^1\times S^{2}\subset D^2\times S^2$ to a circle $S^1$. This is precisely what the pushout diagram \eqref{Diag:Pushout} represents.\\

The advantage of this description  is the existence of an associated Mayer-Vietoris sequence (\cite[Proposition 6.2.6, Remark E.4(3)]{A11}), which enable us to compute the desired cohomology groups\footnote{I would like to thank Peter Patzt for discussions and references around this topic.}. Concretely, \eqref{Diag:Pushout} has an associated long exact sequence
\begin{center}
\begin{tikzpicture}[descr/.style={fill=white,inner sep=1.5pt}]
        \matrix (m) [
            matrix of math nodes,
            row sep=1em,
            column sep=2.5em,
            text height=1.5ex, text depth=0.25ex
        ]
        { 0 & H^0(M/S^1;\mathbb{R}) & H^0(A;\mathbb{R})\oplus  H^0(B;\mathbb{R}) & H^0(C;\mathbb{R}) \\
            & H^1(M/S^1;\mathbb{R}) & H^1(A;\mathbb{R})\oplus  H^1(B;\mathbb{R}) & H^1(C;\mathbb{R}) \\
            & H^2(M/S^1;\mathbb{R}) & H^2(A;\mathbb{R})\oplus  H^2(B;\mathbb{R}) & H^2(C;\mathbb{R}) \\
            & H^3(M/S^1;\mathbb{R}) & H^3(A;\mathbb{R})\oplus  H^3(B;\mathbb{R}) & H^3(C;\mathbb{R}) \\
            &\cdots \mbox{}         &                 & \mbox{}         \\
        };

        \path[overlay,->, font=\scriptsize,>=latex]
        (m-1-1) edge (m-1-2) 
        (m-1-2) edge (m-1-3)
        (m-1-3) edge (m-1-4)
        (m-1-4) edge[out=355,in=175] node {} (m-2-2)
        (m-2-2) edge (m-2-3)
        (m-2-3) edge (m-2-4)
        (m-2-4) edge[out=355,in=175] node {} (m-3-2)
        (m-3-2) edge (m-3-3)
        (m-3-3) edge (m-3-4)
        (m-3-4) edge[out=355,in=175] node {} (m-4-2)
        (m-4-2) edge (m-4-3)
        (m-4-3) edge (m-4-4)
        (m-4-4) edge[out=355,in=175] node {} (m-5-2);
\end{tikzpicture}
\end{center}
where $A=S^1$, $B=D^2\times S^2$ and $C=S^1\times S^2$. As $H^j(S^k;\mathbb{R})=\mathbb{R}$ whenever $j\in\{0,k\}$ and zero otherwise, 

\begin{center}
\begin{tikzpicture}[descr/.style={fill=white,inner sep=1.5pt}]
        \matrix (m) [
            matrix of math nodes,
            row sep=1em,
            column sep=2.5em,
            text height=1.5ex, text depth=0.25ex
        ]
        { 0 & H^0(M/S^1;\mathbb{R}) & \mathbb{R}\oplus  \mathbb{R}& \mathbb{R} \\
            & H^1(M/S^1;\mathbb{R}) & \mathbb{R}\oplus  \{0\} & \mathbb{R} \\
            & H^2(M/S^1;\mathbb{R}) & \{0\}\oplus \mathbb{R} & \mathbb{R} \\
            & H^3(M/S^1;\mathbb{R}) & 0 &  \mathbb{R}\\
            & H^4(M/S^1;\mathbb{R})        &     0.            & \mbox{}         \\
        };

        \path[overlay,->, font=\scriptsize,>=latex]
        (m-1-1) edge (m-1-2) 
        (m-1-2) edge (m-1-3)
        (m-1-3) edge (m-1-4)
        (m-1-4) edge[out=355,in=175] node {} (m-2-2)
        (m-2-2) edge (m-2-3)
        (m-2-3) edge (m-2-4)
        (m-2-4) edge[out=355,in=175] node {} (m-3-2)
        (m-3-2) edge (m-3-3)
        (m-3-3) edge (m-3-4)
        (m-3-4) edge[out=355,in=175] node {} (m-4-2)
        (m-4-2) edge (m-4-3)
        (m-4-3) edge (m-4-4)
         (m-4-4) edge[out=355,in=175] node {} (m-5-2)
         (m-5-2) edge (m-5-3);
\end{tikzpicture}
\end{center}
Form the construction of the Mayer-Vietoris sequence we know that the map in cohomology $H^i(A;\mathbb{R})\oplus H^i(B;\mathbb{R})\longrightarrow H^i(C;\mathbb{R})$ is $p^*_1-j^*$, so one can deduce
\begin{align}\label{Eqn:CohoQuotS5}
H^i(M/S^1;\mathbb{R})=
\begin{cases}
\mathbb{R}, & \text{if }i=0,4 \\
0, & \text{if }i\neq 0.
\end{cases}
\end{align}

Similar computations in homology allow us to conclude that 
\begin{align*}
H_i(M/S^1;\mathbb{R})=
\begin{cases}
\mathbb{R}, & \text{if }i=0,4 \\
0, & \text{if }i\neq 0.
\end{cases}
\end{align*}
This is of course consistent with Remark \ref{Rmk:S5/S1=S4}.\\

Now we use the long exact sequence in cohomology of the pair $(M/S^1,M^{S^1})$,
\begin{center}
\begin{tikzpicture}[descr/.style={fill=white,inner sep=1.5pt}]
        \matrix (m) [
            matrix of math nodes,
            row sep=1em,
            column sep=2.5em,
            text height=1.5ex, text depth=0.25ex
        ]
        { 0 & H^0(M/S^1,M^{S^1};\mathbb{R}) & H^0(M/S^1;\mathbb{R}) & H^0(M^{S^1};\mathbb{R}) \\
            & H^1(M/S^1,M^{S^1};\mathbb{R})& H^1(M/S^1;\mathbb{R}) & H^1(M^{S^1};\mathbb{R}) \\
            & H^2(M/S^1,M^{S^1};\mathbb{R}) & H^2(M/S^1;\mathbb{R}) & H^2(M^{S^1};\mathbb{R}) \\
            & H^3(M/S^1,M^{S^1};\mathbb{R}) & H^3(M/S^1;\mathbb{R}) & H^3(M^{S^1};\mathbb{R}) \\
            &\cdots \mbox{}         &                 & \mbox{}         \\
        };

        \path[overlay,->, font=\scriptsize,>=latex]
        (m-1-1) edge (m-1-2) 
        (m-1-2) edge (m-1-3)
        (m-1-3) edge (m-1-4)
        (m-1-4) edge[out=355,in=175] node {} (m-2-2)
        (m-2-2) edge (m-2-3)
        (m-2-3) edge (m-2-4)
        (m-2-4) edge[out=355,in=175] node {} (m-3-2)
        (m-3-2) edge (m-3-3)
        (m-3-3) edge (m-3-4)
        (m-3-4) edge[out=355,in=175] node {} (m-4-2)
        (m-4-2) edge (m-4-3)
        (m-4-3) edge (m-4-4)
        (m-4-4) edge[out=355,in=175] node {} (m-5-2);
\end{tikzpicture}
\end{center}
Using \eqref{Eqn:CohoQuotS5} and the fact that $M^{S^1}=S^1$, the above sequence reduces to
\begin{center}
\begin{tikzpicture}[descr/.style={fill=white,inner sep=1.5pt}]
        \matrix (m) [
            matrix of math nodes,
            row sep=1em,
            column sep=2.5em,
            text height=1.5ex, text depth=0.25ex
        ]
        { 0 & H^0(M/S^1,M^{S^1};\mathbb{R}) & \mathbb{R} & \mathbb{R} \\
            & H^1(M/S^1,M^{S^1};\mathbb{R})& 0 & \mathbb{R} \\
            & H^2(M/S^1,M^{S^1};\mathbb{R}) & 0 & 0 \\
            & H^3(M/S^1,M^{S^1};\mathbb{R}) & 0 & 0 \\
            & H^4(M/S^1,M^{S^1};\mathbb{R}) & \mathbb{R} & 0, \\
        };

        \path[overlay,->, font=\scriptsize,>=latex]
        (m-1-1) edge (m-1-2) 
        (m-1-2) edge (m-1-3)
        (m-1-3) edge (m-1-4)
        (m-1-4) edge[out=355,in=175] node {} (m-2-2)
        (m-2-2) edge (m-2-3)
        (m-2-3) edge (m-2-4)
        (m-2-4) edge[out=355,in=175] node {} (m-3-2)
        (m-3-2) edge (m-3-3)
        (m-3-3) edge (m-3-4)
         (m-3-4) edge[out=355,in=175] node {} (m-4-2)
        (m-4-2) edge (m-4-3)
        (m-4-3) edge (m-4-4)
         (m-4-4) edge[out=355,in=175] node {} (m-5-2)
        (m-5-2) edge (m-5-3)
        (m-5-3) edge (m-5-4);
\end{tikzpicture}
\end{center}
from where it follows that
\begin{align*}
H^i(M/S^1,M^{S^1};\mathbb{R})=
\begin{cases}
\mathbb{R}, & \text{if }i=2,4 \\
0, & \text{if }i\quad \text{otherwise}.
\end{cases}
\end{align*}
As a consequence, we see that in this example $\chi_{S^1}(M)=2$ which, in view of \eqref{Eqn:IntECExapmpleS5}, verifies Theorem \ref{Thm:S1EC}. \\

Finally, let us study the intersection paring to compute $\sigma_{S^1}(M)$. Let $C_*(M/S^1)$ denote the singular chain complex of $M/S^1$ and $\partial_i: C_i(M/S^1)\longrightarrow  C_{i-1}(M/S^1)$ be the associated boundary operator so that 
$H^i(M/S^1)\coloneqq \ker \partial_i/\text{ran} \:\partial_{i+1}$.  Intersection homology, introduced by Goresky and McPhearson in \cite{GMcP80} as a theory to recover Poincar\'e duality for singular spaces, can also be defined though a certain chain complex. In fact, there are various intersection homology groups labeled by certain {\em perversity}, which controls how transversally we allow the admissible chains to intersect. For our example it is enough to compute the intersection homology group $IH^{\bar{m}}_2(M/S^1)$, where $\bar{m}$ is the lower middle perversity, which is given explicitly by 
\begin{center}
\begin{tabular}{c | c | c | c | c | c | c | c | c  c}
  $i$ & $2$ & $3$ & $4$ & $5$ & $6$ & $7$ & $8$ & $\cdots$ \\ \hline
 $\bar{m}(i)$ & $0$ & $0$ & $1$ & $1$ & $2$ & $2$ & $3$ & $\cdots$
\end{tabular}
\end{center}
We are going to compute this group using the stratification $\emptyset \subset  M^{S^1} \subset M/S^1$. Following \cite[Definition 4.1.9]{B07}, the complex of {\em admissible intersection chains} is 
\begin{align*}
IC^{\bar{m}}_i(M/S^1)\coloneqq
\{ \upsilon\in C_i(X)\:|\: &\dim( |\upsilon|\cap M^{S^1})\leq i-3+\bar{m}(3),\\
&\dim( |\partial \upsilon|\cap M^{S^1})\leq i-4+\bar{m}(3)\},
\end{align*}
where the notation $|\upsilon |$ refers to the support of $\upsilon$ in $M/S^1$. We start the computation of $IH^{\bar{m}}_2(M/S^1)$ from its definition,
\begin{align*}
&IH^{\bar{m}}_2(M/S^1)\coloneqq \frac{\ker(\partial_{2}: IC^{\bar{m}}_{2}(M/S^1)\longrightarrow IC^{\bar{m}}_{1}(M/S^1))}
{\text{ran}(\partial_{3}: IC^{\bar{m}}_{3}(M/S^1)\longrightarrow IC^{\bar{m}}_{2}(M/S^1))}\\
&=\frac{\{ \upsilon\in C_2(M/S^1)\:|\: \partial\upsilon=0,\dim( |\upsilon|\cap M^{S^1})\leq 2-3+\bar{m}(3)\}}
{\partial {\{ \upsilon\in C_3(M/S^1)\:|\: \dim(\upsilon\cap M^{S^1})\leq 3-3+\bar{m}(3),
\dim(|\partial \upsilon| \cap M^{S^1})\leq 3-4+\bar{m}(3)\}}}\\
&=\frac{\{ \upsilon\in C_2(M/S^1)\:|\: \partial\upsilon=0, |\upsilon|\cap M^{S^1}=\emptyset\}}
{\partial {\{ \upsilon\in C_3(M/S^1)\:|\: \dim (|\upsilon|\cap M^{S^1})\leq 0}, |\partial \upsilon|\cap M^{S^1}=\emptyset\}}.
\end{align*}
Note that 
\begin{align*}
\{ \upsilon\in C_2(M/S^1)\:|\: \partial\upsilon=0, |\upsilon|\cap M^{S^1}=\emptyset\}=
\ker(\partial_2:C_2(M_0/S^1)\longrightarrow C_1(M_0/S^1)).
\end{align*}
On the other hand, chains in $\{ \upsilon\in C_3(M/S^1)\:|\: \dim (|\upsilon|\cap M^{S^1})\leq 0, |\partial \upsilon|\cap M^{S^1}=\emptyset\}$
are either in $C_3(M_0/S^1)$ or their interior intersect the fixed point in at most one point (see Figure \ref{Fig:S5CloseFI}).  Hence, in view of \eqref{Eqn:CohoQuotS5},  one can verify that $IH^{\bar{m}}_2(M/S^1)=0$. As a consequence of Corollary \ref{Coro:IH} we see then that $\sigma_{S^1}(M)=0$. This completes the verification of Theorem \ref{Thm:S1SignatureThm} in this concrete example. 

\subsubsection*{Codim $M^{S^1}$=2}
Now we want to study another semi-free $S^1$-action on $S^5$ for which the fixed point set has dimension $3$. The action is defined by 
$$\lambda (z_0,z_1,z_2):=(z_0,z_1,\lambda z_2)\quad\text{for} \: \lambda\in S^1.$$
The fixed point set of the action is 
$M^{S^1}=\{(z_0,z_1,0)\in S^5\}\cong S^3$,
and the principal orbit is therefore 
$$M_0=\{(z_0,z_1,z_2)\in S^5 \: : |z_2|>0\}.$$
It is straightforward to see that the orbit map on the principal orbit is
\begin{equation*}
\pi_{S^1}:
\xymatrixcolsep{2cm}\xymatrixrowsep{0.01cm}\xymatrix{
M_0\subset \mathbb{C}^3 \ar[r] & S^4_{+}\subset\mathbb{C}^2\times\mathbb{R}\\
(z_0,z_1,z_2) \ar@{|->}[r] & (z_0,z_1,t):=(z_0,z_1,|z_2|).
}
\end{equation*}
Here $S^4_{+}\coloneqq\{(z_0,z_1,t)\in S^4\: : \: t>0\}$ denotes upper hemisphere of the $4$ dimensional sphere $S^4\subset \mathbb{C}^2\times\mathbb{R}$. 
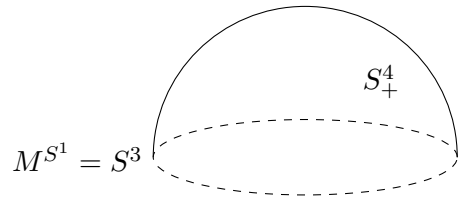
\begin{figure}[h]\label{Fig S4plus}
\begin{center}
\begin{tikzpicture}
\draw[dashed] (0,0) ellipse (2cm and 0.5cm);
\draw (2,0) arc (0:180:2 and 2);
\node at (-3,0) {$M^{S^1}=S^3$};
\node at (1,1) {$S^4_+$};
\end{tikzpicture}
\caption{Description of the orbit space $S^4_+\cup S^3$.}
\end{center}
\end{figure}
The metric that we need to equip $S^4_+$ with so that $\pi_{S^1}$ becomes a Riemannian submersion is
\begin{equation}\label{Eqn:MetricS4plus}
g^{T{S^4_+}}=\cos^2\beta(\cos^2\eta d\xi_1+\sin^2\eta d\xi_2+d\eta^2)+d\beta^2.
\end{equation}
We are going to describe now the operator $\mathscr{D}'$. The generating vector field of the action is $V=\partial_{\xi_3}$ and has a norm $\norm{V}=\sin\beta$. Computations as in the previous examples allow us to obtain
\begin{align*}
\chi=&\sin\beta d\xi_3,\notag\\
\kappa=&-\cot\beta d\beta,\\
d\chi=&\cot\beta d\beta\wedge \sin\beta d\xi_3,\\
 \varphi_0=&0.
\end{align*}
Form these relations and Theorem \ref{Thm:InducedDiracOp} we get
\begin{equation*}
\mathscr{D}'=\mathscr{D}=D_{S^4_+}-\frac{1}{2}\cot\beta c(d\beta)\varepsilon, 
\end{equation*}
where $D_{S^4_+}=d_{S^4_+}+d^\dagger_{S^4_+}$ is the Hodge-de Rham operator with respect to the metric \eqref{Eqn:MetricS4plus}.\\

For this example we approach the fixed point set as $\beta\longrightarrow 0^+$. In this limit the the metric \eqref{Eqn:MetricS4plus} takes the form
\begin{align*}
(\cos^2\eta d\xi_1+\sin^2\eta d\xi_2+d\eta^2)+dr^2,
\end{align*}
for $0<r<t$ with $t$ sufficiently small. Hence, its is approximated by a product metric on $S^3\times (0,t)$. \\

In view of the description of the quotient space, we see from Example \ref{Ex:SpiningMwB} that the equivariant $S^1$-signature is  $\sigma_{S^1}(S^5)=\sigma(S^4_+)$. The topological signature of this manifold with boundary can be calculated, for example,  using Proposition \ref{Prop:PropSignatureClosed}(2) and Novikov's additivity formula (Proposition \ref{Prop:Novikov}),
\begin{align*}
0=\sigma(S^4)=2\sigma(S^4_+).
\end{align*}
This also shows, by \eqref{Eqn:SignTheoClosed} applied to to $S^4$, that the integral of the $L$-polynomial over $S^4_+$ vanishes. Thus, $\eta_{A^{\text{ev}}(S^3)}(0)=0$ by the APS signature  Theorem \ref{Thm:SignThmMBound}.

\section{Local description}\label{Sect:LocalDesc}

This chapter is intended to describe the Dirac-Schr\"odinger operator constructed in Section \ref{Sect:ConstDiracSchrOp},
\begin{equation*}
\mathscr{D}'=D_{M_0/S^1}+\frac{1}{2}c(\bar{\kappa})\varepsilon+\varepsilon\widehat{c}(\bar{\varphi}_0)\left(\frac{1-\varepsilon}{2}\right),
\end{equation*}
near to the fixed point set $M^{S^1}$. We use the geometric description of a neighborhood of a connected component $F\subset M^{S^1}$ discussed in Section \ref{Sect:PfoofSignatureFormula} to compute $\bar{\kappa}$ and $\bar{\varphi}_0$. Recall that we decomposed the quotient space as $M_0/S^1=Z_t\cup U_t$ where $Z_t\coloneqq M_0/S^1-N_t(F)$ is a compact manifold with boundary and $U_t\coloneqq N_t(F)$ is the $t$-neighborhood of $F$ in $M/S^1,$ as schematically visualized in Figure \ref{Fig:DecompQuotSpace}. Moreover, following the proof presented in \cite{L00}, we model $U_t$ as the mapping cylinder of a Riemannian fibration. Under this setting, following  \cite[Section 2 and 3]{B09}, we compute the complete operator  $\mathscr{D}'$ in this local model. In particular, we takeover the claim of Remark \ref{Rmk:OpD} and show that we can restrict ourselves to $\mathscr{D}$ without losing essential self-adjointness. Finally we take care of the spectral decomposition of the associated cone coefficient. This last step, in combination with the parametrix construction of next chapter, will explicitly show how the potential of the operator $\mathscr{D}$ shifts the spectrum of the cone coefficient of $D_{M_0/S^1}$ ensuring the essential self-adjointness regardless of the Witt/non-Witt condition. 

\begin{figure}[H]
\begin{center}
\begin{tikzpicture}
\draw[rounded corners=32pt](7,-1)--(4,-1)--(2,-2)--(0,0) -- (2,2)--(4,1)--(7,1);
\draw (1.5,0.2) arc (175:315:1cm and 0.5cm);
\draw (3,-0.28) arc (-30:180:0.7cm and 0.3cm);
\draw (7.5,0) arc (0:360:0.5cm and 1cm);
\node (a) at (20:2.5) {$Z_t$};
\node (a) at (7,-1.5) {$\partial Z_t=\mathcal{F}_{t}$};
\draw (11,-0.6)--(11,0.6);
\draw (7,1)--(11,0.6);
\draw (7,-1)--(11,-0.6);
\node (a) at (9,0) {$U_t$};
\node (a) at (11.5,0) {$M^{S^1}$};
\end{tikzpicture}
\caption{Decomposition of $M/S^1$. }\label{Fig:DecompQuotSpace}
\end{center}
\end{figure}
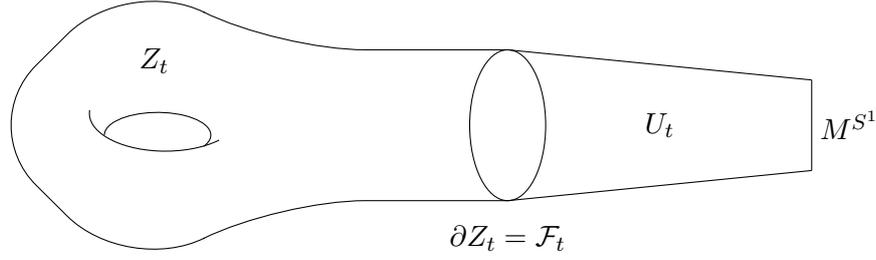

\subsection{Mean curvature $1$-form on the normal bundle}

Recall from the proof of Theorem \ref{Thm:S1SignatureThm} presented in  Section \ref{Sect:PfoofSignatureFormula}, that we considered a connected component $F\subset M^{S^1}$ of the fixed point set of dimension $4k-2N-1$. We studied the induced $S^1$-action on the normal bundle $NF\longrightarrow F$ and saw that the quotient space of the  corresponding sphere bundle $\mathcal{S}/S^1$ could be identified with the total space of a Riemannian fibration $\pi_\mathcal{F}:\mathcal{F}\longrightarrow F$ with fiber $\mathbb{C}P^N$. This allowed us to model $M/S^1$ close to $F$ as the mapping cylinder  $C(\mathcal{F})$ of the projection $\pi_\mathcal{F}$. 
Let $\pi_\mathcal{S}:\mathcal{S}\longrightarrow F$  be the projection of the sphere bundle in $NF$ and consider the decomposition of the tangent bundle $T\mathcal{S}=T_V\mathcal{S}\oplus T_H\mathcal{S}$ on which the metric decomposes as $g^{T\mathcal{S}}=g^{T_V\mathcal{S}}\oplus g ^{T_H\mathcal{S}}$, where we identify $T_H\mathcal{S}\cong TF$ via $\pi_\mathcal{S}$. Using the conventions of Section \ref{Sect:RiemFibr} we choose a local oriented orthonormal basis for $T\mathcal{S}$ of the form 
\begin{align}\label{Eqn:ONBTSN}
\{{\underline{e}}_i\}_{i=0}^{v}\cup\{{\underline{f}}_\alpha\}_{\alpha=1}^{h},
\end{align} 
where  $v\coloneqq 2N $ and $h\coloneqq 4k-2N-1$.  Here $\underline{e}_i$ and $\underline{f}_\alpha$ are vertical and horizontal vector fields respectively. Let $\{{\underline{e}}^i\}_{i=0}^{v}\cup\{{\underline{f}}^\alpha\}_{\alpha=1}^{h}$
denote the associated dual basis. In view of Proposition \ref{Prop:S1actionNF} we can assume with out loss of generality that the generating vector field of the free $S^1$-action on $\mathcal{S}$ is $V_\mathcal{S}\coloneqq{\underline{e}}_0\in C^\infty(\mathcal{S}, T_V \mathcal{S})$. This implies that the corresponding mean curvature form $\kappa_\mathcal{S}$ vanishes by Proposition \ref{Prop:NormV}(2) since $\norm{{\underline{e}}_0}=1$.

\begin{remark}\label{Rmk:BasisBasicForms}
We can assume that the differential $1$-forms $\{{\underline{e}}^i\}_{i=1}^{2N}\cup\{{\underline{f}}^\alpha\}_{\alpha=1}^{4k-2N-1}$ are basic (see Remark \ref{Rmk:BasicFormsS3}).
\end{remark}

Let $\widehat{\underline{\omega}}$ be the connection $1$-form of the Levi-Civita connection of the metric $g^{T\mathcal{S}}$ associated to the orthonormal frame above. Recall that its components satisfy the structure equations (see \eqref{Eqns:StructureEquations})
\begin{align*}
d{\underline{e}}^i+{\underline{\omega}}^i_j\wedge {\underline{e}}^j+{\underline{\omega}}^i_\alpha\wedge {\underline{f}}^\alpha=0,\\
d{\underline{f}}^\alpha+{\underline{\omega}}^\alpha_j\wedge {\underline{e}}^j+{\underline{\omega}}^\alpha_\beta\wedge {\underline{f}}^\beta=0.
\end{align*}
Since the characteristic $1$-form is $\chi_{\mathcal{S}}={\underline{e}}^0$ and $\kappa_\mathcal{S}=0$ we can use these structure equations to compute the $2$-form $\varphi_{0,\mathcal{S}}$  from \eqref{Eqn:dchi}, namely
\begin{equation}\label{Eqn:Varphi0S}
\varphi_{0,\mathcal{S}}=d\chi_{\mathcal{S}}=d\underline{e}^0=-{\underline{\omega}}^0_i\wedge {\underline{e}}^i-{\underline{\omega}}^0_\alpha\wedge {\underline{f}}^\alpha.
\end{equation}
In particular observe that
\begin{align*}
\iota_{V_{\mathcal{S}}} \varphi_{0,\mathcal{S}}=-\iota_{\underline{e}_0}(\underline{\omega}^0_i\wedge \underline{e}^i+\underline{\omega}^0_\alpha\wedge \underline{f}^\alpha)
=-(\underline{\omega}^0_i(e_0) e^i+\underline{\omega}^0_\alpha(\underline{e}_0) \underline{f}^\alpha)
=-(\nabla_{\underline{e}_0}\underline{e}_0)^\flat=-\kappa_{\mathcal{S}}=0,
\end{align*}
as we expected.\\

Let $\mathring{\mathcal{D}}\coloneqq \mathcal{D}-F$ denote the disk bundle of the normal bundle without the zero section, where the induced $S^1$-action is still free. Now we calculate the corresponding mean curvature $1$-form $\kappa_{\mathring{\mathcal{D}}}$ and the $2$-form $\varphi_{0,\mathring{\mathcal{D}}}$ for the metric  
\begin{equation}\label{Eqn:metricTD}
g^{T\mathring{\mathcal{D}}}=dr^2 \oplus r^2 g^{T_V\mathcal{S}}\oplus g^{T_V\mathcal{S}},
\end{equation} 
where $r>0$ denotes the radial direction. From the orthonormal basis of $T^*\mathcal{S}$ described above we can construct an orthonormal basis for $T^*\mathring{\mathcal{D}}$ as
\begin{align}\label{Def:BasisTD}
\{dr\}\cup\{\widehat{\underline{e}}^i\}_{i=0}^{v}\cup\{\widehat{\underline{f}}^\alpha\}_{\alpha=1}^{h}, 
\end{align} 
setting $\widehat{\underline{e}}^i\coloneqq r\underline{e}^i$ and $\widehat{\underline{f}}^\alpha \coloneqq \underline{f}^\alpha$. Here we regard $\underline{e}^i$ and $\underline{f}^\alpha$ as $1$-forms on $\mathring{\mathcal{D}}$ by pulling them back from $\mathcal{S}$ along the projection $\mathring{\mathcal{D}}\longrightarrow \mathcal{S}$. The generating vector field of the $S^1$-action on $\mathring{\mathcal{D}}$ is still $V_{\mathring{\mathcal{D}}}=\underline{e}_0=r\widehat{\underline{e}}_0$ and therefore, by Proposition \ref{Prop:NormV}(2), we get
 \begin{align*}
\kappa_{\mathring{\mathcal{D}}}=-d\log(\norm{V_{\mathring{\mathcal{D}}}})=-d(\log r)=-\frac{dr}{r}.
\end{align*}

Now we want to compute the $2$-form $\varphi_{0,\mathring{\mathcal{D}}}$. First note that the associated characteristic $1$-form is $\chi_{\mathring{\mathcal{D}}}=\widehat{\underline{e}}^0=r\underline{e}^0$, thus we need to calculate $d\widehat{\underline{e}}^0$ in order to use \eqref{Eqn:dchi}.  Let $\widehat{\underline{\omega}}$ be the connection $1$-form corresponding to the Levi-Civita connection of the metric \eqref{Eqn:metricTD} associated with the basis \eqref{Def:BasisTD}. Using the structure equations 
\begin{align*}
d\widehat{\underline{e}}^i+\widehat{\underline{\omega}}^i_j\wedge \widehat{\underline{e}}^j+\widehat{\underline{\omega}}^i_\alpha\wedge \widehat{\underline{f}}^\alpha+\widehat{\underline{\omega}}^i_r\wedge dr=0,\\
d\widehat{\underline{f}}^\alpha+\widehat{\underline{\omega}}^\alpha_j\wedge \widehat{\underline{e}}^j+\widehat{\underline{\omega}}^\alpha_\beta\wedge \widehat{\underline{f}}^\beta+\widehat{\underline{\omega}}^\alpha_r\wedge dr=0,\\
\widehat{\underline{\omega}}^r_j\wedge\widehat{\underline{e}}^j+\widehat{\underline{\omega}}^r_\alpha\wedge\widehat{\underline{f}}^\alpha=0,
\end{align*}
 we can proceed as in Section \ref{Sect:PfoofSignatureFormula} to obtain the components of $\widehat{\omega}$ (see \eqref{Eqns:Conn1From}), 
\begin{align*}
\widehat{\underline{\omega}}^i_j=&\underline{\omega}^i_j,\\
\widehat{\underline{\omega}}^i_r=&\underline{e}^i,\\
\widehat{\underline{\omega}}^i_\alpha=& r\underline{\omega}^i_\alpha,\\
\widehat{\underline{\omega}}^\alpha_\beta=&r^2\underline{\omega}^\alpha_{\beta i}\underline{e}^i+\underline{\omega}^\alpha_{\beta\gamma}\underline{f}^\gamma,\\
\widehat{\underline{\omega}}^\alpha_r=&0.
\end{align*}
From these equations we find
\begin{align*}
d\chi_{\mathring{\mathcal{D}}}=& d\widehat{\underline{e}}^0\\
=&-\widehat{\underline{\omega}}^0_i\wedge\widehat{\underline{e}}^i-\widehat{\underline{\omega}}^0_\alpha\wedge\widehat{\underline{f}}^\alpha-\widehat{\underline{\omega}}^0_r\wedge dr\\
=&-\underline{\omega}^0_i\wedge(r \underline{e}^i)-(r\underline{\omega}^0_\alpha)\wedge \underline{f}^\alpha-\underline{e}^0\wedge dr\\
=&r\left(-\underline{\omega}^0_i\wedge\underline{e}^i-\underline{\omega}^0_\alpha\wedge \underline{f}^\alpha\right)-\left(-\frac{dr}{r}\right)\wedge (r\underline{e}^0)\\
=&r\varphi_{0,\mathcal{S}}-\kappa_{\mathring{\mathcal{D}}}\wedge \chi_{\mathring{\mathcal{D}}}. 
\end{align*}
As a result, we conclude from \eqref{Eqn:dchi} that $\varphi_{0,\mathring{\mathcal{D}}}=r\varphi_{0,\mathcal{S}}$. We summarize these results in the following proposition. 
\begin{proposition}\label{Prop:KappaPhiDCirc}
Let $F$ be a connected component of the fixed point set of an effective semi-free $S^1$-action on $M$. Then, for the induced action on $\mathring{\mathcal{D}}$, the mean curvature $1$-form is 
\begin{align*}
\kappa_{\mathring{\mathcal{D}}}=-\frac{dr}{r},
\end{align*}
and the $2$-form $\varphi_{0,\mathring{\mathcal{D}}}$ defined by \eqref{Eqn:dchi} is $\varphi_{0,\mathring{\mathcal{D}}}=r\varphi_{0,\mathcal{S}}$. 
\end{proposition}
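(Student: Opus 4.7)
The plan is to mirror the local-model computation from Step~3 of the proof of Theorem~\ref{Thm:S1SignatureThm}, specialised to the sphere bundle $\mathcal{S}$ and its associated disc bundle $\mathring{\mathcal{D}}$. First I would fix an adapted oriented orthonormal frame on $\mathcal{S}$ of the form \eqref{Eqn:ONBTSN}, with the distinguished vertical vector field $\underline{e}_0$ chosen to be the generating vector field $V_{\mathcal{S}}$ of the $S^1$-action, as permitted by Proposition~\ref{Prop:S1actionNF}. Since $V_{\mathcal{S}}=\underline{e}_0$ has unit norm, Proposition~\ref{Prop:NormV}(2) immediately gives $\kappa_{\mathcal{S}}=0$, and Rummler's formula \eqref{Eqn:dchi} reduces to $\varphi_{0,\mathcal{S}}=d\chi_{\mathcal{S}}=d\underline{e}^{0}=-\underline{\omega}^{0}_{i}\wedge \underline{e}^{i}-\underline{\omega}^{0}_{\alpha}\wedge \underline{f}^{\alpha}$, so the ``boundary data'' on $\mathcal{S}$ is fully determined by the components of the connection 1-form $\underline{\omega}$.

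Next I would build the rescaled orthonormal frame \eqref{Def:BasisTD} on $\mathring{\mathcal{D}}$ by setting $\widehat{\underline{e}}^{i}\coloneqq r\underline{e}^{i}$ and $\widehat{\underline{f}}^{\alpha}\coloneqq \underline{f}^{\alpha}$. The generating vector field on $\mathring{\mathcal{D}}$ is still $V_{\mathring{\mathcal{D}}}=\underline{e}_{0}=r\widehat{\underline{e}}_{0}$, so $\norm{V_{\mathring{\mathcal{D}}}}=r$, and Proposition~\ref{Prop:NormV}(2) yields the mean curvature formula
\begin{equation*}
\kappa_{\mathring{\mathcal{D}}}=-d\log(\norm{V_{\mathring{\mathcal{D}}}})=-d(\log r)=-\frac{dr}{r}.
\end{equation*}
This is the first assertion of the proposition.

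For the second assertion I would apply \eqref{Eqn:dchi} on $\mathring{\mathcal{D}}$: the characteristic 1-form is $\chi_{\mathring{\mathcal{D}}}=\widehat{\underline{e}}^{0}=r\underline{e}^{0}$, so $\varphi_{0,\mathring{\mathcal{D}}}=d\chi_{\mathring{\mathcal{D}}}+\kappa_{\mathring{\mathcal{D}}}\wedge\chi_{\mathring{\mathcal{D}}}$. The key step is to expand $d\widehat{\underline{e}}^{0}$ using the structure equations on $\mathring{\mathcal{D}}$ together with the explicit components of the Levi-Civita connection form $\widehat{\underline{\omega}}$ already derived in \eqref{Eqns:Conn1From} of Section~\ref{Sect:PfoofSignatureFormula}: $\widehat{\underline{\omega}}^{0}_{i}=\underline{\omega}^{0}_{i}$, $\widehat{\underline{\omega}}^{0}_{\alpha}=r\underline{\omega}^{0}_{\alpha}$, $\widehat{\underline{\omega}}^{0}_{r}=\underline{e}^{0}$. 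Substituting gives
\begin{equation*}
d\widehat{\underline{e}}^{0}=-\underline{\omega}^{0}_{i}\wedge(r\underline{e}^{i})-(r\underline{\omega}^{0}_{\alpha})\wedge\underline{f}^{\alpha}-\underline{e}^{0}\wedge dr=r\,\varphi_{0,\mathcal{S}}-\kappa_{\mathring{\mathcal{D}}}\wedge\chi_{\mathring{\mathcal{D}}},
\end{equation*}
and therefore $\varphi_{0,\mathring{\mathcal{D}}}=r\,\varphi_{0,\mathcal{S}}$, as claimed.

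The computation is essentially bookkeeping and poses no real difficulty; the only subtle point is the consistent choice of frame so that $V_{\mathcal{S}}=\underline{e}_{0}$ and $\chi_{\mathcal{S}}=\underline{e}^{0}$, together with the availability of basic dual 1-forms (Remark~\ref{Rmk:BasisBasicForms}), which makes the pullback of $\underline{e}^{i},\underline{f}^{\alpha}$ from $\mathcal{S}$ to $\mathring{\mathcal{D}}$ unambiguous. Once these choices are in place, the radial rescaling absorbs all dependence on $r$ into explicit prefactors, and the Rummler identity isolates the horizontal/vertical $r$-dependence exactly as the proposition records.
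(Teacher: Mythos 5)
Your proposal is correct and follows essentially the same route as the paper: compute $\kappa_{\mathcal{S}}=0$ and $\varphi_{0,\mathcal{S}}=d\underline{e}^0$ on the sphere bundle, read off $\kappa_{\mathring{\mathcal{D}}}=-dr/r$ from $\norm{V_{\mathring{\mathcal{D}}}}=r$ via Proposition~\ref{Prop:NormV}(2), and then expand $d\widehat{\underline{e}}^{0}$ with the rescaled connection components from \eqref{Eqns:Conn1From} to isolate $r\varphi_{0,\mathcal{S}}-\kappa_{\mathring{\mathcal{D}}}\wedge\chi_{\mathring{\mathcal{D}}}$. No gaps.
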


Now we proceed to study the $S^1$-quotient. Recall from above that we have the following commutative diagram 
\begin{align*}
\xymatrixrowsep{2cm}\xymatrixcolsep{2cm}\xymatrix{
\mathcal{S} \ar[rr]^-{\pi_{S^1}} \ar[dr]_-{\pi_\mathcal{S}}  && \mathcal{F} \ar[dl]^-{\pi_\mathcal{F}} \\
& F & 
}
\end{align*}
where $\pi_{S^1}:\mathcal{S}\longrightarrow\mathcal{F}$ is the orbit map and $\pi_\mathcal{S}$, $\pi_\mathcal{F}$ are corresponding projections.  The decomposition $T\mathcal{S}=T_V\mathcal{S}\oplus T_H\mathcal{S}$ induces a decomposition  $T\mathcal{F}=T_V\mathcal{F}\oplus T_H\mathcal{F}$ via the orbit map $\pi_{S^1}$. Consequently, as in Section \ref{Sect:RiemFibr},  there is an induced splitting of the exterior algebra,
\begin{align}\label{DecompBundleE}
\wedge T^*\mathcal{F}=\wedge T_H^*\mathcal{F}\otimes\wedge T^*_V\mathcal{F} =\bigoplus_{p+q}\wedge^p T^*_H\mathcal{F}\otimes \wedge^q T^*_V\mathcal{F}
\eqqcolon
\bigoplus_{p,q}\wedge^{p,q}T^*\mathcal{F}. 
\end{align} 
We denote the space of sections by $\Omega^{p,q}(\mathcal{F})\coloneqq C^{\infty}(\mathcal{F},\wedge^{p,q}T^*\mathcal{F})$ and the degree operators by $\text{hd}|{\wedge^{p,q} T^*\mathcal{F}}\coloneqq p$ and  $\text{vd}|{\wedge^{p,q} T^*\mathcal{F}}\coloneqq q$ as in Section \ref{Sect:RiemFibr}.\\

As mentioned in Remark \ref{Rmk:BasisBasicForms}, the forms $\{\underline{e}^i\}_{i=1}^{v}\cup\{\underline{f}^\alpha\}_{\alpha=1}^{h}$ are basic, and therefore there exist $1$-forms $\{{e}^i\}_{i=1}^{v}\cup\{{f}^\alpha\}_{\alpha=1}^{h}$, where ${e}^i\in T^*_V\mathcal{F}$ and ${f}^\alpha\in T^*_H\mathcal{F}$ such that $\pi^*_{S^1}{e}^i=\underline{e}^i$ and $\pi^*_{S^1}{f}^\alpha=\underline{f}^\alpha$. The set $\{{e}^i\}_{i=1}^{v}\cup\{{f}^\alpha\}_{\alpha=1}^{h}$ forms a local orthonormal basis for $T^*\mathcal{F}$ and can be regarded as the basis considered in Section \ref{Sect:PfoofSignatureFormula} (Step 3) . We choose an orientation of $\mathcal{F}$ so that  $\{{f_1},\cdots, {f}_h,{e_1},\cdots,{e}_v\}$ is an oriented orthonormal basis. The following result is a direct consequence of Proposition \ref{Prop:KappaPhiDCirc}. 

\begin{proposition}
 Close to a connected component $F\subset M^{S^1}$ of the fixed point set:
 \begin{enumerate}
\item The $1$-form $\bar{\kappa}$ of diagram \ref{Diag:KappaAction} is given by
\begin{align*}
\bar{\kappa}=-\frac{dr}{r}.
\end{align*}
\item There exists $\varphi_{0,\mathcal{F}}\in\Omega^2(\mathcal{F})$ such that the $2$-form $\bar{\varphi}_0$ defined by diagram \eqref{Diag:Varphi0Action} can be expressed as $\bar{\varphi}_{0}= r\varphi_{0,\mathcal{F}}$. In particular, the operator $\widehat{c}(\bar{\varphi}_0)$
of Proposition \ref{Prop:CliffMultVarphi} is bounded.
\end{enumerate}
 \end{proposition}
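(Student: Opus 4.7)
The plan is to derive both assertions from Proposition \ref{Prop:KappaPhiDCirc} by descending the formulas for $\kappa_{\mathring{\mathcal{D}}}$ and $\varphi_{0,\mathring{\mathcal{D}}}$ along the orbit map $\pi_{S^1}:\mathring{\mathcal{D}}\longrightarrow C(\mathcal{F})=\mathring{\mathcal{D}}/S^1$. First I would note that, by Corollary \ref{Coro:kappa} and Proposition \ref{Prop:varphi0}(3), both $\kappa_{\mathring{\mathcal{D}}}$ and $\varphi_{0,\mathring{\mathcal{D}}}$ are basic on $\mathring{\mathcal{D}}$; since the $S^1$-action on $\mathring{\mathcal{D}}$ is free, Proposition \ref{Prop:BasicPullback} yields unique forms on $C(\mathcal{F})$ pulling back to them. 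These unique forms coincide with $\bar{\kappa}$ and $\bar{\varphi}_0$ via diagrams \eqref{Diag:KappaAction} and \eqref{Diag:Varphi0Action} applied in this local model, identifying $M_0/S^1$ near $F$ with $C(\mathcal{F})$.

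For (1), I would observe that the radial coordinate $r$ on $\mathring{\mathcal{D}}$ is $S^1$-invariant (the action preserves the sphere bundle fibers of $\mathring{\mathcal{D}}\to F$), so $r$ descends to a function on $C(\mathcal{F})$ and $dr = \pi_{S^1}^{*}(dr)$. Applying $\pi_{S^1}^{*}$-descent to the identity $\kappa_{\mathring{\mathcal{D}}} = -dr/r$ of Proposition \ref{Prop:KappaPhiDCirc} then gives $\bar{\kappa} = -dr/r$.

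For (2), I would use that $\varphi_{0,\mathcal{S}}$ on $\mathcal{S}$ is basic with respect to the free $S^1$-action there: horizontality was already verified in the computation preceding \eqref{Eqn:Varphi0S}, and $L_{V_{\mathcal{S}}}\varphi_{0,\mathcal{S}} = L_{V_{\mathcal{S}}}d\chi_{\mathcal{S}} = d L_{V_{\mathcal{S}}}\chi_{\mathcal{S}} = 0$ by Corollary \ref{Coro:ChiInv}. Proposition \ref{Prop:BasicPullback} applied to the free $S^1$-bundle $\pi_{S^1}:\mathcal{S}\longrightarrow\mathcal{F}$ produces a unique $\varphi_{0,\mathcal{F}}\in\Omega^2(\mathcal{F})$ with $\pi_{S^1}^{*}\varphi_{0,\mathcal{F}} = \varphi_{0,\mathcal{S}}$. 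Pulling this form back to $C(\mathcal{F})$ via the obvious projection and combining with the identity $\varphi_{0,\mathring{\mathcal{D}}} = r\,\varphi_{0,\mathcal{S}}$ of Proposition \ref{Prop:KappaPhiDCirc} gives $\bar{\varphi}_0 = r\,\varphi_{0,\mathcal{F}}$ after descent.

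The main obstacle, and the point requiring real care, is the boundedness of $\widehat{c}(\bar{\varphi}_0)$. The quotient metric on $C(\mathcal{F})$ close to $F$ is the wedge-type metric $dr^2\oplus r^2 g^{T_V\mathcal{F}}\oplus g^{T_H\mathcal{F}}$, so vertical covectors in the chosen orthonormal frame carry norm of order $r^{-1}$. Writing $\varphi_{0,\mathcal{F}}$ in the decomposition \eqref{DecompBundleE} and inspecting the explicit components of $\varphi_{0,\mathcal{S}} = -\underline{\omega}^0_i\wedge\underline{e}^i - \underline{\omega}^0_\alpha\wedge \underline{f}^\alpha$, the contributions are all of type $(1,1)$ or $(0,2)$ plus at worst $(2,0)$ pieces controlled by the curvature of the principal bundle $\mathcal{S}\to\mathcal{F}$, which remain bounded in the rescaled $r^2 g^{T_V\mathcal{F}}$ metric up to the extra factor of $r$ in $\bar{\varphi}_0 = r\,\varphi_{0,\mathcal{F}}$. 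A direct pointwise norm estimate then shows $|\bar{\varphi}_0|$ is uniformly bounded as $r\to 0^+$, and since Clifford multiplication satisfies $\|\widehat{c}(\omega)\|\leq C|\omega|$ pointwise, $\widehat{c}(\bar{\varphi}_0)$ is a bounded operator on $L^2$.
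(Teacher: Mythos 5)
Your derivation of the two formulas in (1) and (2) is correct and is exactly the paper's route: the paper presents this proposition as a direct consequence of Proposition \ref{Prop:KappaPhiDCirc}, with the descent to the quotient handled just as you do it, using that $\kappa_{\mathring{\mathcal{D}}}$, $\varphi_{0,\mathring{\mathcal{D}}}$ and $\varphi_{0,\mathcal{S}}$ are basic, that the radial function $r$ is $S^1$-invariant, and that the frame forms $\underline{e}^i,\underline{f}^\alpha$ descend to $\mathcal{F}$.

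The boundedness of $\widehat{c}(\bar{\varphi}_0)$ is where your argument has a genuine gap, and it sits precisely in the sentence claiming the various pieces ``remain bounded \ldots\ up to the extra factor of $r$.'' Decompose $\varphi_{0,\mathcal{F}}=\varphi^{(2,0)}+\varphi^{(1,1)}+\varphi^{(0,2)}$ as in \eqref{DecompBundleE} and measure pointwise norms in the wedge metric $dr^2\oplus g^{T_H\mathcal{F}}\oplus r^2g^{T_V\mathcal{F}}$: each vertical covector contributes a factor $r^{-1}$, so $|r\varphi^{(2,0)}|=O(r)$ and $|r\varphi^{(1,1)}|=O(1)$, but $|r\varphi^{(0,2)}|=O(r^{-1})$. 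The purely vertical component does not vanish in general: $\varphi_{0,\mathcal{S}}$ is the curvature of the principal $S^1$-bundle $\mathcal{S}\to\mathcal{F}$, and its restriction to a fiber $Y=\mathbb{C}P^N$ of $\mathcal{F}\to F$ is the curvature of the Hopf bundle $S^{2N+1}\to\mathbb{C}P^N$, a nonzero multiple of the Fubini--Study form whenever $N\geq 1$. Indeed, in the codimension-four $S^5$ example of Chapter \ref{Sect:Examples} one has $\bar{\varphi}_0=-\tfrac12\cos\beta\,\sin\theta\,d\theta\wedge d\phi$, which is purely of type $(0,2)$ and whose pointwise norm in the quotient metric is $2/\cos\beta\sim 2/r$. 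So the ``direct pointwise norm estimate'' you invoke yields $O(r^{-1})$, not $O(1)$, and does not establish uniform boundedness of $(\bar{\varphi}_0\wedge)$ on $L^2$ of the quotient metric.

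To close this you would have to do one of two things: either show that $\varphi^{(0,2)}_{0,\mathcal{F}}=0$ in the situation at hand (which the example above rules out), or replace ``bounded'' by the weaker property that the Kato--Rellich step actually requires, namely relative boundedness of $\widehat{c}(\bar{\varphi}_0)\left(\tfrac{1-\varepsilon}{2}\right)$ with respect to $\mathscr{D}$ with relative bound less than one, exploiting the fact that the cone coefficient of $\mathscr{D}$ supplies a compensating $r^{-1}$ weight (compare the estimate \eqref{Eqn:ResultKatoEstimate} used for the analogous Kato perturbation in Chapter \ref{Sect:Index}). Be aware that the paper itself gives no argument for this point beyond the assertion, so you cannot defer to it; as written, your estimate does not prove the stated conclusion.
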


Combining this proposition with Theorem \ref{Thm:InducedDiracOp} and the Kato-Rellich Theorem (\cite[Theorem V.4.3]{KATO}) we can prove the claim of Remark \ref{Rmk:OpD}.

\begin{coro}
The operator $\mathscr{D}:\Omega_c(M_0/S^1)\longrightarrow\Omega_c(M_0/S^1)$ defined by 
\begin{align*}
\mathscr{D}\coloneqq D_{M_0/S^1}+\frac{1}{2}c(\bar{\kappa})\varepsilon,
\end{align*}
is essentially self-adjoint. 
\end{coro}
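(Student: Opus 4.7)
The plan is to invoke the Kato--Rellich theorem, using Theorem~\ref{Thm:InducedDiracOp} as the base case. Write
\[
\mathscr{D} \;=\; \mathscr{D}' \;+\; V, \qquad V \;\coloneqq\; \tfrac{1}{2}\,\widehat{c}(\bar{\varphi}_0)(1-\varepsilon),
\]
so that the statement reduces to showing that $V$ is a bounded symmetric perturbation of $\mathscr{D}'$ on $L^2(\wedge_{\mathbb{C}}T^*(M_0/S^1))$. Since $\mathscr{D}'$ is already known to be essentially self-adjoint on $\Omega_c(M_0/S^1)$, Kato--Rellich (\cite[Theorem~V.4.3]{KATO}) will then immediately yield the same conclusion for $\mathscr{D}$.

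Symmetry of $V$ is a direct consequence of Proposition~\ref{Prop:CliffMultVarphi}: the operator $\widehat{c}(\bar{\varphi}_0)$ is symmetric and commutes with $\varepsilon$, so $V$ is self-adjoint as a pointwise Clifford-type endomorphism of $\wedge_{\mathbb{C}}T^*(M_0/S^1)$.

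The main point is the global boundedness of $V$. This is the place where the local description developed in this chapter enters decisively. Since $M$ is compact, its fixed point set $M^{S^1}$ has only finitely many connected components $F_1,\dots,F_m$; we may choose pairwise disjoint tubular neighborhoods of them and a corresponding decomposition $M_0/S^1 = Z_t \cup U_t^{(1)}\cup\cdots\cup U_t^{(m)}$ as in Figure~\ref{Fig:DecompQuotSpace}. On the compact piece $Z_t$, the smooth $2$-form $\bar{\varphi}_0$ is automatically bounded, hence so is the pointwise norm of $\widehat{c}(\bar{\varphi}_0)$. On each $U_t^{(j)}$, the proposition proven just above gives $\bar{\varphi}_0 = r\,\varphi_{0,\mathcal{F}_j}$, with $\varphi_{0,\mathcal{F}_j}$ a smooth $2$-form on the compact total space $\mathcal{F}_j$ pulled back to the cylinder. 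Consequently $\|\bar{\varphi}_0\|_x \leq C_j\,r$ uniformly on $U_t^{(j)}$, so the pointwise operator norm of $\widehat{c}(\bar{\varphi}_0)$ tends to zero as $r\to 0^+$ and in particular stays bounded on $U_t^{(j)}$. Taking the maximum over the finitely many pieces produces a uniform bound $\|V\|_{L^2\to L^2} \leq \tfrac{1}{2}\,\esssup_{x\in M_0/S^1}\|\widehat{c}(\bar{\varphi}_0)(1-\varepsilon)\|_x < \infty$.

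Having established that $V$ is bounded and symmetric, Kato--Rellich with relative bound $0$ applies: $\mathscr{D} = \mathscr{D}' - V$ is essentially self-adjoint on the common core $\Omega_c(M_0/S^1)$, and in fact its unique self-adjoint extension coincides with $\overline{\mathscr{D}'} - V$. The only subtle point is guaranteeing the uniform estimate $\bar{\varphi}_0 = O(r)$ near every stratum simultaneously, which is handled by the finiteness of the singular strata and the explicit local model computed in Proposition~\ref{Prop:KappaPhiDCirc}; everything else is a routine application of the perturbation theorem.
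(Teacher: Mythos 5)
Your proposal reproduces the paper's argument exactly: the paper's proof of this corollary is precisely ``write $\mathscr{D}=\mathscr{D}'+\tfrac12\widehat{c}(\bar{\varphi}_0)(1-\varepsilon)$, note that $\mathscr{D}'$ is essentially self-adjoint by Theorem \ref{Thm:InducedDiracOp}, use the local description $\bar{\varphi}_0=r\varphi_{0,\mathcal{F}}$ to conclude the perturbation is bounded, and apply Kato--Rellich.'' Your symmetry argument via Proposition \ref{Prop:CliffMultVarphi} and the decomposition into a compact piece plus finitely many collars are exactly the intended reading of the paper's one-line proof.

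There is, however, one step you state explicitly that does not follow as written, namely the pointwise bound $\norm{\bar{\varphi}_0}_x\leq C_j\,r$ on $U_t^{(j)}$. The justification ``$\varphi_{0,\mathcal{F}_j}$ is a smooth $2$-form on the compact space $\mathcal{F}_j$, hence bounded'' refers to a fixed reference metric on $\mathcal{F}_j$, but the norm relevant for the $L^2$-operator norm of $(\bar{\varphi}_0\wedge)$ is the one induced by the quotient metric $dr^2\oplus g^{T_H\mathcal{F}}\oplus r^2g^{T_V\mathcal{F}}$, whose vertical part degenerates as $r\to 0$. A purely vertical $(0,2)$-component of $\varphi_{0,\mathcal{F}_j}$ therefore has pointwise norm of order $r^{-2}$ in the metric at radius $r$, so $\norm{r\,\varphi_{0,\mathcal{F}_j}^{(0,2)}}_x$ is of order $r^{-1}$, not $r$. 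This is not hypothetical: in the paper's own $S^5$ example one has $\bar{\varphi}_0=-\tfrac12\cos\beta\sin\theta\,d\theta\wedge d\phi=-\tfrac{2}{\cos\beta}\,\widehat{e}^{\theta}\wedge\widehat{e}^{\phi}$, so $\norm{\bar{\varphi}_0}_x=2/\cos\beta\sim 2/r$ near the fixed point set. To close the argument one must either show that the dangerous $(0,2)$-part does not obstruct boundedness of $\widehat{c}(\bar{\varphi}_0)(1-\varepsilon)$ on the relevant degrees, or replace ``bounded perturbation'' by a genuine relatively bounded (Kato) estimate against $\mathscr{D}'$ with relative bound less than one. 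To be fair, the paper's proof is equally terse on this point, so the gap is inherited rather than introduced; but since your write-up makes the false intermediate estimate explicit, it should be repaired before the Kato--Rellich step can be invoked.
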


\subsection{Local description of the operators}\label{Section:DecConeOp}
Now we analyze how the complete operator $\mathscr{D}$ can be written near the fixed point set. For the Hodge-de Rham operator this was done by Br\"uning in \cite[Section 2]{B09}. We follow closely his treatment and provide some omitted calculations for the sake of completeness. The main ingredient is the decomposition of the exterior derivative of Section \ref{Sect:RiemFibr}.

\subsubsection{Local description: Hodge-de Rham operator} \label{Section:LocalDescrHdROp}
Let us define, for $t>0$, the model space $U_t\coloneqq \mathcal{F}\times (0,t)$ and let $\pi: U_t\longrightarrow \mathcal{F}$ be the  projection onto the first factor. Equip $U_t$ with the metric
\begin{align}\label{Eqn:MetricUt}
g^{TU_t}\coloneqq dr^2\oplus  g^{T_H\mathcal{F}}\oplus r^2 g^{T_V\mathcal{F}},
\end{align}
for $0<r<t$. We choose the orientation on $U_t$ defined by the oriented orthonormal basis (see Remark \ref{Rmk:OrBound})
\begin{equation}\label{Eqn:OrioentationUt}
\{-\partial_r, \widehat{f_1},\cdots, \widehat{f}_h,\widehat{e_1},\cdots,\widehat{e}_v\}.
\end{equation}

Consider the unitary transformation introduced in \cite[Equation (2.12)]{B09},
\begin{equation}\label{Eqn:TransfPsi1}
\Psi_1:
\xymatrixcolsep{3pc}\xymatrixrowsep{0.5pc}\xymatrix{
L^2((0,t),\Omega(\mathcal{F})\otimes\mathbb{C}^2) \ar[r] & L^2(U_t)\\
(\sigma_1,\sigma_2) \ar@{|->}[r] & \pi^* r^{\nu}\sigma_1(r)+ dr\wedge\pi^* r^\nu \sigma_2(r),
}
\end{equation}
where $\nu$ is the operator defined by
$$\nu:=\text{vd}-\frac{v}{2}=\text{vd}-N.$$
For example $\nu e_i=(1-N)e_i$ and $\nu f_\alpha=-Nf_\alpha$. To see that $\Psi_1$ is indeed a unitary transformation we verify, using Remark \ref{Rmk:ScaleHodgeStar}, that the volume forms are related by the equation $\vol_{U_t}=r^{2\nu} \vol_\mathcal{F}\wedge dr$. Hence,
\begin{align*}
\norm{\Psi_1(\sigma_1,\sigma_2)}^2_{L^2(U_t)}=\int_0^\infty\left(\norm{\sigma_1(r)}^2_{L^2(\mathcal{F})}+\norm{\sigma_2(r)}^2_{L^2(\mathcal{F})}\right) dr.
\end{align*}

Now we want to understand how the Hodge-de Rahm operator $D_{U_t}=d_{U_t}+d_{U_t}^\dagger$ on $U_t$, associated to the metric \eqref{Eqn:MetricUt}, transforms under $\Psi_1$. First we use the decomposition \eqref{Eqn:d} of the exterior derivative to compute its action on $\Psi_1(\sigma_1,\sigma_2)$. For the first component we have
\begin{align*}
d_{U_t}(\pi^{*}r^{\nu}\sigma_1(r))=&(d^{(1)}_H+d^{(2)}_H+d_V+dr\wedge \partial_r)(\pi^{*}r^{\nu}\sigma_1(r))\\
=&\pi^{*}r^{\nu}d_H^{(1)}\sigma_1(r)+\pi^{*}r^{\nu}r d_H^{(2)}\sigma_1(r)+\pi^{*}r^{\nu}r^{-1}d_V\sigma_1(r)+dr\wedge\partial_r\pi^{*}r^{\nu}\sigma_1(r)\\
=&\pi^*r^\nu ((d^{(1)}_H+rd^{(2)}_H+r^{-1}d_V)\sigma_1(r))+dr\wedge \pi^*r^\nu(r^{-1}\nu+\partial_r)\sigma_1(r).
\end{align*}
For the second one we can use the result above,
\begin{align*}
d_{U_t}(dr\wedge \pi^{*}r^{\nu}\sigma_2(r))=&-dr\wedge d_{U_t}(\pi^*r^{\nu}\sigma_2(r))
=-dr\wedge \pi^*r^\nu ((d^{(1)}_H+rd^{(2)}_H+r^{-1}d_V)\sigma_2(r)).
\end{align*}
These computations show that  $\Psi^{-1}_1d_{U_t}\Psi_1$ can be written in matrix form as 
\begin{align*}
\Psi^{-1}_1d_{U_t}\Psi_1=
\left(
\begin{array}{cc}
d^{(1)}_H+rd^{(2)}_H+r^{-1}d_V & 0\\
\partial_r+r^{-1}\nu &- d^{(1)}_H-rd^{(2)}_H-r^{-1}d_V
\end{array}
\right). 
\end{align*}
Observe that, since $\norm{dr}=1$, we can compute $\Psi^{-1}_1d^\dagger_{U_t}\Psi_1$ by just taking the formal adjoint, 
\begin{align*}
\Psi^{-1}_1d^\dagger_{U_t}\Psi_1=
\left(
\begin{array}{cc}
(d^{(1)}_H+rd^{(2)}_H+r^{-1}d_V)^\dagger & -\partial_r+r^{-1}\nu\\
0 &- (d^{(1)}_H-rd^{(2)}_H-r^{-1}d_V)^\dagger
\end{array}
\right). 
\end{align*}
As a result we get the following description of the of the operator $\Psi^{-1} D_{U_t}\Psi_1$.
\begin{proposition}[{\cite[Corollary 2.3]{B09}}]\label{Prop:TransDPsi1}
The Hodge-de Rham operator $D_{U_t}$ transforms under $\Psi_1$ as
\begin{align*}
\tilde{D}_{U_t}\coloneqq
\Psi_1^{-1}D_{U_t}\Psi_1=
&\gamma\left(\frac{\partial}{\partial r}+
\left(
\begin{array}{cc}
0 & -\tilde{A}_H(r)\\
-\tilde{A}_H(r) & 0
\end{array}
\right)
+\frac{1}{r}
\left(
\begin{array}{cc}
\nu & -\tilde{A}_{0V}\\
-\tilde{A}_{0V} & -\nu 
\end{array}
\right)
\right)\\
=&
\gamma\left(\frac{\partial}{\partial r}+
\left(
\begin{array}{cc}
0 & -\tilde{A}_0(r)\\
-\tilde{A}_0(r) & 0
\end{array}
\right)
+\frac{1}{r}
\left(
\begin{array}{cc}
\nu & 0\\
0 & -\nu 
\end{array}
\right)
\right)
\end{align*}
where 
\begin{align*}
\gamma \coloneqq 
\left(
\begin{array}{cc}
0 & -I\\
I & 0
\end{array}
\right).
\end{align*}
and
\begin{align*}
\tilde{A}_H(r)\coloneqq &(d^{(1)}_H+rd^{(2)}_H)+(d^{(1)}_H+rd^{(2)}_H)^\dagger,\\
\tilde{A}_{0V}\coloneqq &d_V+d^\dagger_V,\\
\tilde{A}_0(r)\coloneqq &\tilde{A}_H(r)+r^{-1}\tilde{A}_{0V}.
\end{align*}
\end{proposition}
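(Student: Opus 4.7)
The proof assembles the two matrix formulas for $\Psi_1^{-1}d_{U_t}\Psi_1$ and $\Psi_1^{-1}d_{U_t}^{\dagger}\Psi_1$ that have already been derived in the computation just preceding the statement, and then factors out $\gamma$. The plan is as follows. First, abbreviating $B(r) := d_H^{(1)} + r\,d_H^{(2)} + r^{-1}d_V$, the preceding paragraph has established
\begin{equation*}
\Psi_1^{-1}d_{U_t}\Psi_1 = \begin{pmatrix} B(r) & 0 \\ \partial_r + r^{-1}\nu & -B(r) \end{pmatrix}.
\end{equation*}
Taking the formal adjoint entry-wise, using $\partial_r^\dagger = -\partial_r$ on $L^2((0,t),\cdot)$ for compactly supported sections and the self-adjointness of $\nu = \mathrm{vd} - N$ (since $\mathrm{vd}$ is a degree-counting operator), one obtains
\begin{equation*}
\Psi_1^{-1}d_{U_t}^\dagger\Psi_1 = \begin{pmatrix} B(r)^\dagger & -\partial_r + r^{-1}\nu \\ 0 & -B(r)^\dagger \end{pmatrix}.
\end{equation*}

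Next, I add these two matrix operators. Since $B(r) + B(r)^\dagger = \tilde{A}_H(r) + r^{-1}\tilde{A}_{0V} = \tilde{A}_0(r)$ by definition, the sum is
\begin{equation*}
\Psi_1^{-1}D_{U_t}\Psi_1 = \begin{pmatrix} \tilde{A}_0(r) & -\partial_r + r^{-1}\nu \\ \partial_r + r^{-1}\nu & -\tilde{A}_0(r) \end{pmatrix}.
\end{equation*}

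Finally, I factor $\gamma$ on the left. Since $\gamma^2 = -I$ and
\begin{equation*}
\gamma \cdot \partial_r = \begin{pmatrix} 0 & -\partial_r \\ \partial_r & 0 \end{pmatrix},
\end{equation*}
the radial derivative piece is absorbed into $\gamma \partial_r$. A direct check gives
\begin{equation*}
\gamma \begin{pmatrix} r^{-1}\nu & -\tilde{A}_0(r) \\ -\tilde{A}_0(r) & -r^{-1}\nu \end{pmatrix} = \begin{pmatrix} \tilde{A}_0(r) & r^{-1}\nu \\ r^{-1}\nu & -\tilde{A}_0(r) \end{pmatrix},
\end{equation*}
which, combined with the $\gamma \partial_r$ contribution, reproduces the sum above. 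Splitting $\tilde{A}_0(r) = \tilde{A}_H(r) + r^{-1}\tilde{A}_{0V}$ in the resulting matrix then yields the two equivalent presentations stated in the proposition. The only subtle point is the careful sign bookkeeping when transposing the $\partial_r$ entries between the off-diagonal slots; beyond that, the argument is a purely algebraic reshuffle of the already-established formulas.
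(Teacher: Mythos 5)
Your proposal is correct and follows essentially the same route as the paper: the paper likewise derives the matrix forms of $\Psi_1^{-1}d_{U_t}\Psi_1$ and $\Psi_1^{-1}d^\dagger_{U_t}\Psi_1$ in the preceding computation, adds them, and factors out $\gamma$. Your sign bookkeeping in the adjoint and in the $\gamma$-factorization checks out (and in fact silently corrects a sign typo in the paper's displayed adjoint matrix), so nothing further is needed.
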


Next we want to calculate how the chirality operator $\bar{\star}$ transforms under \eqref{Eqn:TransfPsi1}. With respect to the orientation \eqref{Eqn:OrioentationUt} we have by definition
\begin{align*}
\bar{\star}=&i^{2k}c(-dr)c({f}^1)\cdots c({f}^h)c(r{e}^1)\cdots c(r{e}^v)\\
=&i^{2k}(-1)^{h+v+1}c({f}^1)\cdots c({f}^h)c(r{e}^1)\cdots c(r{e}^v)c(dr)\\
=&i^{2k}c({f}^1)\cdots c({f}^h)c({e}^1)\cdots c({e}^v)r^{-2\nu}c(dr)\\
=&\bar{\star}_H\bar{\star}_Vr^{-2\nu}c(dr),
\end{align*}
where 
\begin{align*}
\bar{\star}_H\coloneqq &i^{[(h+1)/2]}c({f}^1)\cdots c({f}^h),\\
\bar{\star}_V\coloneqq &i^{[(v+1)/2]}c({e}^1)\cdots c({e}^v),
\end{align*}
are the horizontal and vertical chirality operators respectively.
\begin{remark}
The factor $r^{-2\nu}$  in the expression for $\bar{\star}$ arises from the relation
\begin{align*}
c(r{e}^{j})e^{k}=(re^j\wedge-\iota_{e_j/r})e^k=re^j\wedge e^k-\frac{1}{r}e^k(e_j),
\end{align*}
which shows that the total power of the $r$ factor is  $(v-\text{vd})-\text{vd}=-2\nu$.
\end{remark}

Using the relation $\bar{\star}_V r^{-\nu}=r^\nu\bar{\star}_V$ we compute,
\begin{align*}
\bar{\star}\Psi_1(\sigma_1,\sigma_2)
=&\bar{\star}_H\bar{\star}_Vr^{-2\nu}c(dr)(\pi^* r^{\nu}\sigma_1(r)+ dr\wedge\pi^*r^\nu \sigma_2(r))\\
=&\bar{\star}_H\bar{\star}_Vr^{-2\nu}(dr \wedge\pi^* r^{\nu}\sigma_1(r)-\pi^* r^\nu \sigma_2(r))\\
=&(-1)^{h+v}dr \wedge \bar{\star}_H\bar{\star}_Vr^{-2\nu}\pi^*r^{\nu}\sigma_1(r)- \bar{\star}_H\bar{\star}_V r^{-2\nu}\pi^*r^\nu \sigma_2(r)\\
=&-dr \wedge \pi^*r^{\nu} (\bar{\star}_H\bar{\star}_V \sigma_1(r))- \pi^* r^\nu (\bar{\star}_H\bar{\star}_V\sigma_2(r)).
\end{align*}
In matrix notation we can write the transformed chirality operator as (\cite[Lemma 2.4]{B09})
\begin{align}\label{TransTauPsi1}
\widehat{\star}\coloneqq 
\Psi_1^{-1}\bar{\star}\Psi_1
=\left(\begin{array}{cc}
0 & -\bar{\star}_H\bar{\star}_V \\
-\bar{\star}_H\bar{\star}_V  & 0
\end{array}
\right)
=\left(\begin{array}{cc}
0 & I\\
I & 0
\end{array}
\right)\otimes(-\alpha),
\end{align}
where $\alpha \coloneqq \varepsilon_H^v \bar{\star}_H\otimes\bar{\star}_V=\bar{\star}_H\bar{\star}_V $.

\begin{remark}\label{Rmk:CommEpsStarH}
 Note that the operators $\varepsilon_H^v$ and $\bar{\star}_H$ commute. This follows from the relation
 $\varepsilon_H\bar{\star}_H=(-1)^h\bar{\star}_H\varepsilon_H$. Concretely, $\varepsilon_H^v\bar{\star}_H=(-1)^{vh}\bar{\star}_H\varepsilon_H^{v}=\bar{\star}\varepsilon_H^{v},$
where the sign $(-1)^{vh}$  disappears since $vh=v(4k-1-v)=4kv-v(v+1)$ is always even.
 \end{remark}

Now that we have described the Hodge-de Rham operator and the involution $\bar{\star}$ we can use Proposition  \ref{Prop:TransDPsi1} and \eqref{TransTauPsi1} to compute explicitly the signature operator
\begin{equation}\label{Eqn:tildeDTrans}
\tilde{D}_{U_t}^{+}\coloneqq \frac{1}{2}(1-\widehat{\star})\tilde{D}_{U_t}\frac{1}{2}(1+\widehat{\star}).
\end{equation}
To do so we need the commutation relations between $\tilde{A}_0(r)$, $\nu$ and $\alpha$. These can be derived from the equation  $\tilde{D}_{U_t}\widehat{\star}+\widehat{\star}\tilde{D}_{U_t}=0$. More precisely, since
\begin{align*}
\tilde{D}_{U_t}\widehat{\star}=\Psi_1^{-1}D_{U_t}\bar{\star}\Psi_1 
=&\left(\begin{array}{cc}
\partial_r\alpha-r^{-1}\nu\alpha &-\tilde{A}_0(r)\alpha   \\
 \tilde{A}_0(r)\alpha &-\partial_r\alpha-r^{-1}\nu\alpha 
\end{array}
\right),\\
\widehat{\star}\tilde{D}_{U_t}=\Psi_1^{-1}\bar{\star} D_{U_t}\Psi_1 
=&\left(\begin{array}{cc}
-\alpha\partial_r-\alpha r^{-1}\nu &\alpha \tilde{A}_0(r)  \\
-\alpha  \tilde{A}_0(r)& \alpha\partial_r-\alpha r^{-1}\nu
\end{array}
\right),
\end{align*}
we conclude that
\begin{align}
\alpha \tilde{A}_0(r)=&\tilde{A}_0(r)\alpha\label{Eqn:Aalpha},\\
\nu\alpha=&-\alpha\nu.\label{Eqn:alphanu}
\end{align}
\begin{remark}
Observe that the last relation can be derived explicitly from the definition of $\alpha$ since $\bar{\star}_V\nu=-\nu\bar{\star}_V$ and 
$\bar{\star}_H\nu=\nu\bar{\star}_H$ .
\end{remark}
Instead of computing \eqref{Eqn:tildeDTrans} directly we are going to implement a further unitary transformation which makes the involution $\widehat{\star}$ diagonal. Let us consider the projection $P^+(\widehat{\star})$ onto the $+1$-eigenspace of $\widehat{\star}$,
\begin{align*}
P^+(\widehat{\star})\coloneqq 
\frac{1}{2}(1+\widehat{\star})=\frac{1}{2}
\left(\begin{array}{cc}
I & -\alpha\\
-\alpha & I
\end{array}
\right).
\end{align*}
We can diagonalize this projection using the unitary transformation
\begin{align*}
\mathcal{U}\coloneqq 
\frac{1}{\sqrt{2}}
\left(\begin{array}{cc}
I & \alpha\\
-\alpha & I
\end{array}
\right). 
\end{align*}
Indeed, a straightforward computation shows 
\begin{align*}
P\coloneqq 
\mathcal{U}^{-1}P^+(\widehat{\star})\mathcal{U}
=\left(\begin{array}{cc}
I & 0\\
0 & 0
\end{array}
\right),
\end{align*}
from where we deduce
\begin{align*}
\mathcal{U}^{-1}\:\widehat{\star}\:\mathcal{U}=
\left(\begin{array}{cc}
I & 0\\
0& -I
\end{array}
\right).
\end{align*}
As we want to diagonalize the operator of Proposition \ref{Prop:TransDPsi1} using $\mathcal{U}$, we see that we need to take into account the similarity relations, both derived using \eqref{Eqn:Aalpha} and \eqref{Eqn:alphanu}, 
\begin{align*}
\mathcal{U}^{-1}\left(\begin{array}{cc}
\nu & 0\\
0 &-\nu
\end{array}\right) \mathcal{U}=
\left(\begin{array}{cc}
\nu & 0\\
0 &-\nu
\end{array}\right)
\end{align*}
and 
\begin{align*}
\mathcal{U}^{-1}\left(\begin{array}{cc}
0 & -\tilde{A}_0(r) \\
-\tilde{A}_0(r) & 0
\end{array}\right) \mathcal{U}=
\left(\begin{array}{cc}
\tilde{A}_0(r)\alpha & 0\\
0 &-\tilde{A}_0(r)\alpha
\end{array}\right).
\end{align*}
It is therefore convenient to define
\begin{align*}
A_H(r)\coloneqq &\tilde{A}_H(r)\alpha,\\
A_{0V}\coloneqq &\tilde{A}_{0V}\alpha,\\
A_{V}\coloneqq &A_{0V}+\nu,\\
A_0(r)\coloneqq &A_H(r)+r^{-1}A_{0V}\\
A(r)\coloneqq &A_H(r)+r^{-1}A_V.
\end{align*}
Using the relation $\mathcal{U}\gamma=\gamma \mathcal{U}$ we obtain from Proposition \ref{Prop:TransDPsi1} and the computations above the following result. 

\begin{theorem}[{\cite[Theorem 2.5]{B09}}]\label{Thm:TransfDiracOps}
Under the unitary transformation  $\Psi\coloneqq \Psi_1 \mathcal{U}$ the Hodge-de Rham operator $D_{U_t}$ is transformed as
\begin{equation*}
\Psi^{-1}D_{U_t}\Psi=\gamma\left(\frac{\partial}{\partial r}+
\left(\begin{array}{cc}
I& 0\\
0 &-I
\end{array}\right)\otimes A(r)
\right). 
\end{equation*}
In particular the  transformed signature operator is
\begin{equation*}
\Psi^{-1}D^{\textnormal{+}}_{U_t}\Psi=\frac{\partial}{\partial r}+A(r). 
\end{equation*}
\end{theorem}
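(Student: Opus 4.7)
The proof is really a matter of assembling the ingredients that have already been prepared in the preceding subsection. Concretely, I would proceed as follows.

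The starting point is Proposition~\ref{Prop:TransDPsi1}, which already provides the explicit matrix form of $\tilde D_{U_t} = \Psi_1^{-1} D_{U_t}\Psi_1$:
\begin{equation*}
\tilde D_{U_t}=\gamma\left(\partial_r
+\begin{pmatrix} 0 & -\tilde A_0(r) \\ -\tilde A_0(r) & 0 \end{pmatrix}
+\frac{1}{r}\begin{pmatrix} \nu & 0 \\ 0 & -\nu \end{pmatrix}\right).
\end{equation*}
Since $\Psi = \Psi_1 \mathcal{U}$, the task reduces to computing $\mathcal{U}^{-1}\tilde D_{U_t}\mathcal{U}$. My first step would be to verify the harmless block identity $\mathcal{U}\gamma = \gamma\mathcal{U}$ by a direct $2\times 2$ multiplication, so that the overall $\gamma$ factor passes through untouched. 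After that, the only task is to conjugate the two bracketed matrices by $\mathcal{U}$.

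For this, I would invoke verbatim the two similarity relations stated just before the theorem: using $\alpha\tilde A_0(r) = \tilde A_0(r)\alpha$ (equation \eqref{Eqn:Aalpha}) one obtains
\begin{equation*}
\mathcal{U}^{-1}\begin{pmatrix} 0 & -\tilde A_0(r) \\ -\tilde A_0(r) & 0 \end{pmatrix}\mathcal{U}
=\begin{pmatrix} \tilde A_0(r)\alpha & 0 \\ 0 & -\tilde A_0(r)\alpha \end{pmatrix},
\end{equation*}
while using $\nu\alpha = -\alpha\nu$ (equation \eqref{Eqn:alphanu}) one obtains
\begin{equation*}
\mathcal{U}^{-1}\begin{pmatrix} \nu & 0 \\ 0 & -\nu \end{pmatrix}\mathcal{U}
=\begin{pmatrix} \nu & 0 \\ 0 & -\nu \end{pmatrix}.
\end{equation*}
Assembling these two pieces and recognizing that
\begin{equation*}
\tilde A_0(r)\alpha = \tilde A_H(r)\alpha + r^{-1}\tilde A_{0V}\alpha = A_H(r) + r^{-1}A_{0V},
\end{equation*}
so that the diagonal $(1,1)$-entry of the bracket becomes $A_H(r) + r^{-1}(A_{0V}+\nu) = A(r)$, while the $(2,2)$-entry becomes $-A(r)$, yields the asserted formula for $\Psi^{-1}D_{U_t}\Psi$.

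For the signature component, I would use the fact that after the $\mathcal{U}$-conjugation the involution $\widehat{\star}$ takes the diagonal form $\mathrm{diag}(I,-I)$ (this was computed as an intermediate step in the construction of $\mathcal{U}$). Hence the chiral projections $\tfrac{1}{2}(1\pm \widehat{\star})$ project onto the first and second $\mathbb{C}^2$-components respectively, and the chiral component $\tilde D^+_{U_t}$ is read off by applying the diagonal form of $\Psi^{-1}D_{U_t}\Psi$ to a vector $(\sigma,0)^{T}$ and noting that $\gamma$ sends it to $(0,\partial_r\sigma + A(r)\sigma)^{T}$, which under the natural identification of the two eigenspaces is precisely $\partial_r + A(r)$.

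There is no real conceptual obstacle here: the entire proof is an algebraic manipulation whose non-trivial content, namely the commutation identities \eqref{Eqn:Aalpha} and \eqref{Eqn:alphanu}, has been extracted in advance from $\tilde D_{U_t}\widehat{\star} + \widehat{\star}\tilde D_{U_t}=0$. The main point to be careful about is bookkeeping of signs and of the horizontal/vertical degrees inside $\alpha = \bar\star_H\bar\star_V$, in particular the cancellation of the sign $(-1)^{vh}$ that was used implicitly in Remark~\ref{Rmk:CommEpsStarH} when commuting $\bar\star_H$ past $\bar\star_V$; this is the only place where a slip could occur.
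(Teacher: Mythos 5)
Your proposal is correct and is exactly the argument the paper intends: the theorem is presented there as an immediate assembly of Proposition~\ref{Prop:TransDPsi1}, the two similarity relations derived from \eqref{Eqn:Aalpha} and \eqref{Eqn:alphanu}, the identity $\mathcal{U}\gamma=\gamma\mathcal{U}$, and the diagonalization $\mathcal{U}^{-1}\widehat{\star}\,\mathcal{U}=\mathrm{diag}(I,-I)$ for reading off the chiral part. The only (harmless) slip is in your closing caveat: Remark~\ref{Rmk:CommEpsStarH} concerns commuting $\varepsilon_H^v$ past $\bar\star_H$, not $\bar\star_H$ past $\bar\star_V$, but this does not affect the proof.
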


We now discuss some consequences of this theorem. First, let us consider the slice $\mathcal{F}_r\coloneqq \mathcal{F}\times\{r\}\subset U_t$ for $0<r<t$ with metric  $g^{T\mathcal{F}_r}\coloneqq g^{T_H \mathcal{F}_r}\oplus r ^2 g^{T_V \mathcal{F}_r}$. If we denote by  $D_{\mathcal{F}_r}$ and $\bar{\star}_r$ the corresponding Hodge-de Rham  and chirality operator on $\mathcal{F}_r$, then from the discussion above we can obtain the transformation under $\Psi$, restricted to the slice, of the odd signature operator $\bar{\star}_r D_{\mathcal{F}_r}$ (see\eqref{Eqn:DefOddSignOp}) of $\mathcal{F}_r$. More precisely, consider the unitary transformation

\begin{align*}
\Psi_{1,r}:
\xymatrixcolsep{2cm}\xymatrixrowsep{0.01cm}\xymatrix{
L^2((0,t),\Omega(\mathcal{F}))  \ar[r] & L^2(\mathcal{F}_r)\\
\sigma \ar@{|->}[r] & \pi_r\circ \Psi_1(\sigma,0),
}
\end{align*}
where $\pi_r: L^2(U_t)\longrightarrow L^2(\mathcal{F}_r)$ is the projection.
\begin{coro}[{\cite[Equation (2.37)]{B09}}]\label{Coro:BoundaryOp}
For the slice $\mathcal{F}_r$ we have
\begin{align*}
\Psi_{1,r}^{-1}D_{\mathcal{F}_r}\bar{\star}_r\Psi_{1,r}= A_0(r).
\end{align*}
\end{coro}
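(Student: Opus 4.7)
The plan is to compute $\Psi_{1,r}^{-1}D_{\mathcal{F}_r}\Psi_{1,r}$ and $\Psi_{1,r}^{-1}\bar{\star}_r\Psi_{1,r}$ separately, and then to combine them using the commutation identities \eqref{Eqn:Aalpha}--\eqref{Eqn:alphanu} together with the definitions $A_H(r)=\tilde{A}_H(r)\alpha$, $A_{0V}=\tilde{A}_{0V}\alpha$, and $A_0(r)=A_H(r)+r^{-1}A_{0V}$.

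First I would use the decomposition \eqref{Eqn:d} to write $d_{\mathcal{F}_r}=d_H^{(1)}+d_H^{(2)}+d_V$ on $\mathcal{F}_r$; these operators are metric-independent. The only object which changes with $r$ is the formal adjoint, so I would compute for a form of bidegree $(p,q)$ that the $L^2$-inner product on $\mathcal{F}_r$ differs from the one at $r=1$ by a factor $r^{v-2q}$. This yields
\[
d_{\mathcal{F}_r}^{\,\dagger_r}=(d_H^{(1)})^\dagger+r^{2}(d_H^{(2)})^\dagger+r^{-2}d_V^\dagger.
\]
Then conjugation by $\Psi_{1,r}$ (multiplication by $r^\nu$, $\nu=\operatorname{vd}-N$) shifts each term according to the change it produces in vertical degree, giving
\[
\Psi_{1,r}^{-1}d_H^{(1)}\Psi_{1,r}=d_H^{(1)},\qquad \Psi_{1,r}^{-1}d_H^{(2)}\Psi_{1,r}=r\,d_H^{(2)},\qquad \Psi_{1,r}^{-1}d_V\Psi_{1,r}=r^{-1}d_V,
\]
and analogous relations for the adjoints (with opposite signs in the exponents). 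Adding up the pieces gives exactly
$\Psi_{1,r}^{-1}D_{\mathcal{F}_r}\Psi_{1,r}=\tilde{A}_H(r)+r^{-1}\tilde{A}_{0V}=\tilde{A}_0(r).$

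Next I would analyse $\bar{\star}_r$. Since the rescaling $g^{T_V}\mapsto r^2 g^{T_V}$ changes the pointwise inner product on $\Omega^{p,q}(\mathcal{F})$ by $r^{-2q}$ and the volume by $r^v$, the Hodge star on $\mathcal{F}_r$ satisfies $*_{\mathcal{F}_r}=r^{-2\nu}*_{\mathcal{F}}$ when restricted to $\Omega^{p,q}$; multiplying by the appropriate power of $i$ gives $\bar{\star}_r=r^{-2\nu}\,\alpha$ (with $\alpha=\bar{\star}_H\bar{\star}_V$ as in \eqref{TransTauPsi1}). Conjugation by $\Psi_{1,r}$ leaves the scalar operator $r^{-2\nu}$ invariant, while the anticommutation $\nu\alpha=-\alpha\nu$ of \eqref{Eqn:alphanu} yields $\Psi_{1,r}^{-1}\alpha\,\Psi_{1,r}=\alpha\,r^{-2\nu}$, so the two powers of $r^{-2\nu}$ combine with $r^{2\nu}$ (obtained by moving $r^{-2\nu}$ past $\alpha$) and cancel:
\[
\Psi_{1,r}^{-1}\bar{\star}_r\,\Psi_{1,r}=r^{-2\nu}\cdot\alpha\,r^{-2\nu}=\alpha\,r^{2\nu}\cdot r^{-2\nu}=\alpha.
\]

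Finally, since $\dim\mathcal{F}_r=4k-1$ is odd, the chirality commutes with $D_{\mathcal{F}_r}$ by Proposition~\ref{Prop:Chirl}, and \eqref{Eqn:Aalpha} gives $\tilde{A}_0(r)\alpha=\alpha\tilde{A}_0(r)=A_H(r)+r^{-1}A_{0V}=A_0(r)$. Multiplying the two pieces yields the claim. The main obstacle is the careful bookkeeping of the $r$-dependent powers: one has to track how the rescaling of the vertical metric, the conjugation by $r^\nu$, and the anticommutation $\nu\alpha=-\alpha\nu$ conspire so that all scalar factors of $r^{\pm 2\nu}$ cancel, leaving the purely algebraic identity $\tilde{A}_0(r)\alpha=A_0(r)$.
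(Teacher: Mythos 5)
Your argument is correct and follows essentially the same route as the paper: both reduce the claim to the two identities $\Psi_{1,r}^{-1}D_{\mathcal{F}_r}\Psi_{1,r}=\tilde{A}_0(r)$ and $\Psi_{1,r}^{-1}\bar{\star}_r\Psi_{1,r}=\alpha$, which the paper simply reads off from the computations already carried out for $D_{U_t}$ and $\bar{\star}$ on $U_t$ (Proposition~\ref{Prop:TransDPsi1} and \eqref{TransTauPsi1}), and then multiplies them using $\tilde{A}_0(r)\alpha=A_0(r)$. The only blemish is the intermediate claim $\Psi_{1,r}^{-1}\alpha\Psi_{1,r}=\alpha\, r^{-2\nu}$: since $\nu\alpha=-\alpha\nu$ one has $r^{-\nu}\alpha r^{\nu}=\alpha r^{2\nu}=r^{-2\nu}\alpha$, but this sign slip in the exponent does not affect your final cancellation, which is correct.
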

\begin{proof}
From the discussion above we easily see that 
\begin{align*}
\Psi^{-1}_1D_{\mathcal{F}_r}\Psi_1=& \tilde{A}_0(r),\\
\Psi^{-1}_1\bar{\star}_r \Psi_1=&\alpha,
\end{align*}
from where the result follows.
\end{proof}

\begin{definition}\label{Def:ConeOpD}
The {\em cone operator} associated to $D_{M_0/S^1}$ is defined by 
\begin{equation*}
D_\textnormal{cone}\coloneqq 
\gamma\left(\frac{\partial}{\partial r}+\frac{1}{r}
\left(\begin{array}{cc}
I& 0\\
0 &-I
\end{array}\right)\otimes A_V
\right),
\end{equation*}
and $A_V$ is called {\em cone coefficient}. 
\end{definition}

\begin{remark}\label{Rmk:ConeCoef}
Using the definition of $d_V$ and $\alpha$ we can derive a more explicit expression of the cone coefficient which separates the horizontal and vertical contributions,
\begin{align*}
A_{V}=&(d_V+d_V^\dagger )\alpha+\nu\\
=&(\varepsilon_H\otimes(d_Y+d_Y^\dagger))(\varepsilon_H^v\bar{\star}_H\otimes \bar{\star}_V)+\nu\\
=&\varepsilon_H^{v+1}\bar{\star}_H\otimes (d_Y\bar{\star}_V +d_Y^\dagger\bar{\star}_V )+\nu\\
=&\varepsilon_H^{v+1}\bar{\star}_H\otimes (d_Y\bar{\star}_V +(-1)^{v+1}\bar{\star}_V d_Y)+\nu.\\
\end{align*}
\end{remark}
Based on the analysis of \cite[Section 1]{B09}, we can deduce the following result, which is of fundamental importance for later purposes. We present an explicit proof for completeness. 
\begin{lemma}[{\cite[Theorem 2.5]{B09}}]\label{Lemma:VertOp1}
The operator $A_{HV}\coloneqq A_H(0)A_V+A_VA_H(0)$ is a first order vertical operator, i.e. it only differentiates with respect to the vertical coordinates. If $A_V$ is invertible then, for $r$ small enough, there exists a constant $C>0$ such that $A(r)^2\geq Cr^{-2}A_V^2$, in particular $A(r)$ is also invertible.
\end{lemma}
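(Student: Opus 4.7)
The plan is to verify the two assertions separately, exploiting both the anticommutation structure of the Clifford action implicit in the operators $\tilde A_H$, $\tilde A_{0V}$ and the algebraic relations \eqref{Eqn:Aalpha}, \eqref{Eqn:alphanu} together with the integrability identities recorded after \eqref{Eqn:d}. I begin by unpacking $A_{HV}$. Using $A_H(0)=\tilde A_H(0)\alpha=(d^{(1)}_H+(d^{(1)}_H)^\dagger)\alpha$ and $A_V=(d_V+d_V^\dagger)\alpha+\nu$, and using that $\alpha$ commutes with $\tilde A_H(r)$ and with $\tilde A_{0V}$ while anticommuting with $\nu$, I would expand
\begin{align*}
A_H(0)A_V+A_VA_H(0)
&=\{\tilde A_H(0),\tilde A_{0V}\}\,\alpha^2+\bigl(\tilde A_H(0)\nu\alpha+\nu\tilde A_H(0)\alpha\bigr).
\end{align*}
The second-order piece is carried by $\{\tilde A_H(0),\tilde A_{0V}\}=\{d^{(1)}_H+(d^{(1)}_H)^\dagger,\;d_V+d_V^\dagger\}$, and the integrability identity $d_Vd^{(1)}_H+d^{(1)}_Hd_V=0$ (together with its adjoint) kills the cross second-order terms $\{d^{(1)}_H,d_V\}$ and $\{(d^{(1)}_H)^\dagger,d_V^\dagger\}$. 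The remaining pieces $\{d^{(1)}_H,d_V^\dagger\}$ and $\{(d^{(1)}_H)^\dagger,d_V\}$ are algebraic consequences of the submersion structure and produce only zero-order horizontal endomorphisms contracted into one vertical derivative, hence a vertical first-order operator. The piece $\tilde A_H(0)\nu\alpha+\nu\tilde A_H(0)\alpha$ is handled by noting that $\nu$ is purely algebraic and commutes with horizontal covariant derivatives modulo a bounded endomorphism: because $d^{(1)}_H$ increases horizontal degree by one and preserves vertical degree, one has $[\nu, d^{(1)}_H]=0$ and likewise for the adjoint, so this term is in fact identically zero. Combining these observations gives that $A_{HV}$ contains no second-order terms and no horizontal first-order derivatives, which is the first assertion.

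For the second claim I would square $A(r)=A_H(r)+r^{-1}A_V$ to obtain
\begin{equation*}
A(r)^2 = A_H(r)^2+r^{-2}A_V^2+r^{-1}\bigl(A_H(r)A_V+A_VA_H(r)\bigr),
\end{equation*}
and then rewrite the cross term using $A_H(r)-A_H(0)=r\bigl(d^{(2)}_H+(d^{(2)}_H)^\dagger\bigr)\alpha=:rB$, with $B$ a zero-order operator; thus
\begin{equation*}
r^{-1}\bigl(A_H(r)A_V+A_VA_H(r)\bigr)=r^{-1}A_{HV}+(BA_V+A_VB).
\end{equation*}
The last summand is bounded, and $A_H(r)^2$ is non-negative. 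Since $A_V$ is invertible and acts between fibre-cohomology-like spaces, $|A_V|\geq c_0>0$. Because $A_{HV}$ is a first-order \emph{vertical} operator by the first part, a standard relative-bound argument (Kato-type, using $\|A_{HV}\sigma\|\leq c_1\|A_V\sigma\|+c_2\|\sigma\|$) together with Young's inequality $2r^{-1}|\langle A_{HV}\sigma,\sigma\rangle|\leq \varepsilon r^{-2}\|A_V\sigma\|^2+C_\varepsilon(\|\sigma\|^2)$ lets me absorb the $r^{-1}A_{HV}$ term into a fraction of $r^{-2}A_V^2$ at the cost of a bounded remainder. Choosing $\varepsilon<1$ and then $r>0$ sufficiently small so that the remainder is dominated by $\tfrac12 c_0^2 r^{-2}$, I conclude $A(r)^2\geq C r^{-2}A_V^2$ for a suitable $C>0$.

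The main obstacle I anticipate is the first assertion: verifying that after expansion every horizontal first-order contribution actually cancels, not merely the second-order ones. This requires using both structure identities following \eqref{Eqn:d} \emph{and} the fact that the degree counting operator $\nu$ commutes with $d_H^{(1)}$, and some care is needed with the sign conventions induced by $\alpha=\bar\star_H\bar\star_V$ and by $\varepsilon_H$ appearing in $d_V$. Once that is in place, the quadratic estimate is routine via relative boundedness and the invertibility of $A_V$.
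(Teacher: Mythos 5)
Your argument is essentially the paper's: the same algebraic cancellation (commutation of $\alpha$ with $\tilde A_H$, $\tilde A_{0V}$, anticommutation with $\nu$, and $[\nu,d_H^{(1)}]=0$) reduces $A_{HV}$ to $\tilde A_H(0)\tilde A_{0V}+\tilde A_{0V}\tilde A_H(0)$, and the same quadratic-form estimate (relative $A_V$-boundedness of the cross term plus Young's inequality) gives $A(r)^2\geq Cr^{-2}A_V^2$; the paper simply cites \cite[Theorem 1.2]{B09} for the first-order verticality of the anticommutator where you sketch the cancellations directly. Two small slips: your displayed $\nu$-piece should read $\tilde A_H(0)\alpha\nu+\nu\tilde A_H(0)\alpha$ (equivalently $-\tilde A_H(0)\nu\alpha+\nu\tilde A_H(0)\alpha$), since with both signs positive and $[\nu,\tilde A_H(0)]=0$ it would equal $2\nu\tilde A_H(0)\alpha\neq 0$; the cancellation is correct once the anticommutation of $\alpha$ with $\nu$ is applied consistently. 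Also, $BA_V+A_VB$ with $B$ zero-order and $A_V$ first-order is not bounded in general, only relatively $A_V$-bounded; this is harmless because that term carries no $r^{-1}$ factor and can still be absorbed for small $r$, and it matches the paper's actual claim that $A_{HV}(r)$ is first-order vertical uniformly in $r$.
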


\begin{proof}
From \eqref{Eqn:alphanu} it follows that $\tilde{A}_H(0)\alpha\nu=-\nu\tilde{ A}_H(0)\alpha$. In addition, using \eqref{Eqn:Aalpha} we then compute
\begin{align*}
A_{HV}=&A_H(0)A_V+A_VA_H(0)\\
=&\tilde{A}_H(0)\alpha(\tilde{A}_{0V}\alpha+\nu)+(\tilde{A}_{0V}\alpha+\nu)\tilde{A}_H(0)\alpha\\
=&\tilde{A}_H(0)\tilde{A}_{0V}+\tilde{A}_{0V}\tilde{A}_H(0),
\end{align*}
which we know is first order vertical from \cite[Theorem 1.2]{B09}. For the second claim recall that $A(r)=A_H(r)+r^{-1}A_V$, and so for a section $\sigma$, 
\begin{align*}
(A(r)^2\sigma,\sigma)=(A_H(r)^2\sigma,\sigma)+r^{-2}(A_V^2\sigma,\sigma)+r^{-1}(A_{HV}(r)\sigma,\sigma), 
\end{align*}
where $(\cdot,\cdot)$ denotes the $L^2$-inner product in $L^2(\wedge T^* \mathcal{F})$. From the comments in the proof of  \cite[Theorem 2.5(2)]{B09} and the first statement of this lemma one can verify that $A_{HV}(r)\coloneqq A_H(t)A_V+A_VA_{H}(r)$ is also a vertical first order differential operator. Hence there exists a constant $C_1\geq 0$ such that 
\begin{align*}
\norm{A_{HV}(r)A_V^{-1}}\leq C_1\quad\text{for $r\in(0,t_0]$}.
\end{align*}
Thus, if we set $T\coloneqq A_{HV}(r)A_V^{-1}$, then for an appropriate $\tilde{C}_1>0$,
\begin{align}\label{Eqn:proofLemma:VertOp1}
|(A_{HV}(r)\sigma,\sigma)|=|(TA_V\sigma,\sigma)|=|(A_V\sigma,T^*\sigma)|\leq  \tilde{C}_1\norm{A_V \sigma}\norm{\sigma}
\end{align}
On the other hand, since $A_H(r)$ is self-adjoint, then $(A_H(r)^2\sigma,\sigma)=\norm{A_H(r)\sigma}^2\geq 0$, so 
\begin{align*}
(A(r)^2\sigma,\sigma)\geq \frac{1}{r^2}(A_V^2\sigma,\sigma)+\frac{1}{r}(A_{HV}(r)\sigma,\sigma). 
\end{align*}
Hence, from the estimate \ref{Eqn:proofLemma:VertOp1} we conclude that, for  $r$ sufficiently small, $A(r)^2\geq Cr^{-2}A_V^2$
for any constant $0<C<1.$
\end{proof}

\subsubsection{Local description of the Operator $\mathscr{D}$} 
Now we study how the zero order part of the operator $\mathscr{D}_{U_t}$ transforms under $\Psi=\Psi_1\mathcal{U}$. First we calculate the action of  $c(dr)\varepsilon$ on the image of $\Psi_1$,
\begin{align*}
c(dr)\varepsilon\Psi_1(\sigma_1,\sigma_2)=&c(dr)\varepsilon (\pi^*r^{\nu}\sigma_1(r)+ dr\wedge\pi^*r^\nu \sigma_2(r))\\
=&c(dr )(\pi^*r^{\nu}\varepsilon\sigma_1(r)- dr\wedge\pi^*r^\nu \varepsilon\sigma_2(r))\\
=& dr\wedge\pi^*r^{\nu}\varepsilon\sigma_1(r)+\pi^*r^\nu \varepsilon\sigma_2(r).
\end{align*}
Therefore, we obtain
\begin{equation}\label{Trans1TermPsi1}
\Psi^{-1}_1\left(-\frac{1}{2r}c(dr)\varepsilon\right)\Psi_1=
-\frac{1}{2r}\left(
\begin{array}{cc}
 0& \varepsilon \\
\varepsilon & 0
\end{array}
\right)=
\gamma\left(
\frac{1}{2r}\left(
\begin{array}{cc}
 -\varepsilon & 0\\
0&\varepsilon 
\end{array}
\right)\right).
\end{equation}
Now we want to include the transformation $\mathcal{U}$. To begin with, we need the commutation relation between $\varepsilon=\varepsilon_H\otimes\varepsilon_V$ and $\alpha$. To do so we exploit the relation $\bar{\star}\varepsilon=\varepsilon\bar{\star}$ on $U_t$.  Using the transformation
\begin{align*}
\Psi_1^{-1}\varepsilon\Psi_1=\left(
\begin{array}{cc}
\varepsilon & 0 \\
0 &-\varepsilon 
\end{array}
\right)
\end{align*}
 and \eqref{TransTauPsi1} we compute
\begin{align*}
\Psi_1^{-1}\varepsilon\bar{\star}\Psi_1=&\left(
\begin{array}{cc}
\varepsilon & 0 \\
0 &-\varepsilon 
\end{array}
\right)\left(
\begin{array}{cc}
 0& -\alpha\\
-\alpha & 0
\end{array}
\right)
=
\left(
\begin{array}{cc}
 0& -\varepsilon\alpha \\
\varepsilon\alpha& 0
\end{array}
\right),\\
\Psi_1^{-1}\bar{\star}\varepsilon\Psi_1=&
\left(
\begin{array}{cc}
 0& -\alpha\\
-\alpha & 0
\end{array}
\right)\left(
\begin{array}{cc}
\varepsilon & 0 \\
0 &-\varepsilon 
\end{array}
\right)
=
\left(
\begin{array}{cc}
 0& \alpha\varepsilon \\
-\alpha\varepsilon& 0
\end{array}
\right).
\end{align*}
Hence, we must have
\begin{equation}\label{Eqn:EpsilonAlpha}
\alpha\varepsilon=-\varepsilon\alpha. 
\end{equation}

\begin{remark}
This can also be derived from the concrete expression of $\alpha$, namely
\begin{align*}
\varepsilon\alpha=&(\varepsilon_H\otimes\varepsilon_V)(\varepsilon_H^v \bar{\star}_H\otimes\bar{\star}_V)
=\varepsilon_H^{v+1} \bar{\star}_H\otimes \varepsilon_V\bar{\star}_V=\varepsilon^v_H(-1)^h\bar{\star}_H\varepsilon_H\otimes(-1)^v\bar{\star}\varepsilon_V=(-1)^{h+v}\alpha\varepsilon.
\end{align*}
The claim then follows since $h+v=4k-1$.
\end{remark}

\begin{remark}\label{Rmk:AEpsilon}
Observe that $\varepsilon$ commutes with $A(r)$. Indeed, 
\begin{align*}
\varepsilon A(r)=&\varepsilon \tilde{A}_H(r)\alpha +\varepsilon \tilde{A}_{0V}\alpha+\frac{1}{r}\varepsilon\nu\\
=&-\tilde{A}_H(r)\varepsilon \alpha -\tilde{A}_{0V}\varepsilon \alpha+\frac{1}{r}\nu\varepsilon\\
=&\tilde{A}_H(r) \alpha\varepsilon +\tilde{A}_{0V} \alpha\varepsilon+\frac{1}{r}\nu\varepsilon \\
=&A(r)\varepsilon. 
\end{align*}
\end{remark}

Finally we can implement the transformation $\Psi$ using $\gamma\mathcal{U}=\mathcal{U}\gamma$, \eqref{Trans1TermPsi1} and  \eqref{Eqn:EpsilonAlpha}, 
\begin{align*}
\Psi^{-1}\left(-\frac{1}{2r}c(dr)\varepsilon\right)\Psi
=&\gamma\left(\frac{1}{4r}
\left(
\begin{array}{cc}
 I& -\alpha\\
\alpha & I
\end{array}
\right)
\left(
\begin{array}{cc}
 -\varepsilon& 0\\
0 & \varepsilon
\end{array}
\right)
\left(
\begin{array}{cc}
 I& \alpha\\
-\alpha & I
\end{array}
\right)
\right)\\
=&\gamma\left(\frac{1}{4r}
\left(
\begin{array}{cc}
 I& -\alpha\\
\alpha & I
\end{array}
\right)
\left(
\begin{array}{cc}
 -\varepsilon& -\varepsilon\alpha\\
-\varepsilon\alpha & \varepsilon
\end{array}
\right)
\right)\\
=&\gamma\left(\frac{1}{4r}
\left(
\begin{array}{cc}
 I& -\alpha\\
\alpha & I
\end{array}
\right)
\left(
\begin{array}{cc}
 -\varepsilon& \alpha\varepsilon\\
\alpha\varepsilon & \varepsilon
\end{array}
\right)
\right)\\
=&\gamma\left(\frac{1}{2r}
\left(
\begin{array}{cc}
 -\varepsilon& 0\\
0 & \varepsilon
\end{array}
\right)
\right).
\end{align*}

Combining this result with Theorem \ref{Thm:TransfDiracOps} we get the following representation of $\mathscr{D}_{U_t}$. 
\begin{theorem}\label{Thm:LocalDesD1}
Under the unitary transformation $\Psi$ the operator $\mathscr{D}_{U_t}$ transforms as 
\begin{equation*}
\Psi^{-1}\mathscr{D}_{U_t}\Psi=\gamma\left(\frac{\partial}{\partial r}+
\left(\begin{array}{cc}
I& 0\\
0 &-I
\end{array}\right)\otimes \mathscr{A}(r)
\right),
\end{equation*}
where 
\begin{align*}
\mathscr{A}(r)\coloneqq A(r)-\frac{1}{2r}\varepsilon=A(r)-\frac{1}{2r}\varepsilon_H\otimes\varepsilon_V.
\end{align*}
In particular,
\begin{equation*}
\mathscr{D}_{U_t}^+=\frac{\partial}{\partial r}+\mathscr{A}(r). 
\end{equation*}
\end{theorem}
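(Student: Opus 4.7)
The plan is to add the zero-order term to the result already obtained in Theorem \ref{Thm:TransfDiracOps} for the Hodge--de Rham operator, so that the heart of the argument is a careful bookkeeping of how $c(\bar{\kappa})\varepsilon$ transforms under $\Psi=\Psi_1\mathcal{U}$. First, I would invoke Proposition \ref{Prop:KappaPhiDCirc} to conclude that on $U_t$ one has $\bar{\kappa}=-dr/r$, hence
\begin{align*}
\mathscr{D}_{U_t}=D_{U_t}+\frac{1}{2}c(\bar{\kappa})\varepsilon=D_{U_t}-\frac{1}{2r}c(dr)\varepsilon.
\end{align*}
The first summand is handled directly by Theorem \ref{Thm:TransfDiracOps}, which gives $\Psi^{-1}D_{U_t}\Psi=\gamma(\partial_r+\mathrm{diag}(I,-I)\otimes A(r))$. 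It therefore remains to show that $\Psi^{-1}\bigl(-\tfrac{1}{2r}c(dr)\varepsilon\bigr)\Psi$ equals $\gamma\bigl(\tfrac{1}{2r}\mathrm{diag}(-\varepsilon,\varepsilon)\bigr)$, which together with the previous formula gives the $\mathrm{diag}(I,-I)\otimes\mathscr{A}(r)$ structure with $\mathscr{A}(r)=A(r)-\varepsilon/(2r)$.

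Next I would carry out this zero-order transformation in two steps, exactly as done for $D_{U_t}$. For the first step, direct computation on $\Psi_1(\sigma_1,\sigma_2)=\pi^* r^\nu\sigma_1+dr\wedge \pi^* r^\nu\sigma_2$ using $c(dr)=dr\wedge{-}\iota_{\partial_r}$ gives that $c(dr)\varepsilon$ swaps the two components up to the sign produced by $\varepsilon$ (which acts diagonally with a sign flip on the $dr\wedge$-component). This yields
\begin{align*}
\Psi_1^{-1}\!\left(-\tfrac{1}{2r}c(dr)\varepsilon\right)\Psi_1=\gamma\left(\tfrac{1}{2r}\mathrm{diag}(-\varepsilon,\varepsilon)\right).
\end{align*}
For the second step I would conjugate by $\mathcal{U}$. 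The only nontrivial ingredient here is the commutation relation $\alpha\varepsilon=-\varepsilon\alpha$, which I would derive either directly from the definition $\alpha=\varepsilon_H^v\bar{\star}_H\otimes\bar{\star}_V$ (using $\dim M=4k+1$ odd, so $h+v=4k-1$ is odd), or abstractly from $\bar{\star}\varepsilon=\varepsilon\bar{\star}$ via the computation already performed in Remark following \eqref{Trans1TermPsi1}. With this anticommutation, the off-diagonal cross-terms in the conjugation $\mathcal{U}^{-1}\mathrm{diag}(-\varepsilon,\varepsilon)\mathcal{U}$ cancel and one recovers $\mathrm{diag}(-\varepsilon,\varepsilon)$, so the transformation survives $\mathcal{U}$ unchanged (modulo $\gamma\mathcal{U}=\mathcal{U}\gamma$).

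Assembling the two pieces gives exactly
\begin{align*}
\Psi^{-1}\mathscr{D}_{U_t}\Psi=\gamma\left(\partial_r+\mathrm{diag}(I,-I)\otimes\Bigl(A(r)-\tfrac{\varepsilon}{2r}\Bigr)\right),
\end{align*}
which is the first formula of the theorem. For the second formula, one uses Remark \ref{Rmk:AEpsilon} to note that $\varepsilon$ commutes with $A(r)$, hence with $\mathscr{A}(r)$, and then splits with respect to the involution $\widehat{\star}$ of \eqref{TransTauPsi1} exactly as done in \eqref{Eqn:tildeDTrans}: on the $+1$-eigenspace the $\gamma\,\mathrm{diag}(I,-I)$ prefactor reduces to the identity, yielding $\mathscr{D}_{U_t}^+=\partial_r+\mathscr{A}(r)$.

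The main obstacle is not conceptual but notational: keeping track of the interplay between the four involutions $\varepsilon=\varepsilon_H\otimes\varepsilon_V$, $\alpha=\varepsilon_H^v\bar{\star}_H\otimes\bar{\star}_V$, $\gamma$ and the chirality $\bar{\star}$, and verifying that the sign produced by $\alpha\varepsilon=-\varepsilon\alpha$ conspires with the conjugation by $\mathcal{U}$ to leave $\mathrm{diag}(-\varepsilon,\varepsilon)$ invariant. Once this commutation is established, the rest is a routine matrix computation parallel to the one already carried out in the proof of Theorem \ref{Thm:TransfDiracOps}.
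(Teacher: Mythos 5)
Your proof is correct and follows essentially the same route as the paper: split off the potential using $\bar{\kappa}=-dr/r$, transform the $D_{U_t}$ part by Theorem \ref{Thm:TransfDiracOps}, compute the $\Psi_1$-transformation of $-\tfrac{1}{2r}c(dr)\varepsilon$ directly, and conjugate by $\mathcal{U}$ using the anticommutation $\alpha\varepsilon=-\varepsilon\alpha$ (which the paper also derives both abstractly from $\bar{\star}\varepsilon=\varepsilon\bar{\star}$ on $U_t$ and concretely from $\alpha=\varepsilon_H^v\bar{\star}_H\otimes\bar{\star}_V$ with $h+v=4k-1$ odd). Your reading of $\mathscr{D}^+_{U_t}$ as the lower-left block of the $2\times2$ representation with respect to $\mathcal{U}^{-1}\widehat{\star}\,\mathcal{U}=\mathrm{diag}(I,-I)$ is also what the paper implicitly uses for the final formula.
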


\begin{definition}\label{Def:DVPot}
We define the {\em cone operator} associated to $\mathscr{D}$ by 
\begin{align*}
\mathscr{D}_\textnormal{cone}
\coloneqq\gamma\left(\frac{\partial}{\partial r}+
\frac{1}{r}\left(\begin{array}{cc}
I& 0\\
0 &-I
\end{array}\right)\otimes \mathscr{A}_V\right),
\end{align*}
and we call 
\begin{align}
\mathscr{A}_V\coloneqq A_V-\frac{1}{2}\varepsilon.
\end{align}
its {\em cone coefficient}. 
\end{definition}

\begin{remark}\label{Rmk:VertCondOP}
In contrast with Lemma \ref{Lemma:VertOp1}, the operator $\mathscr{A}_{HV}\coloneqq A_H(0)\mathscr{A}_V+\mathscr{A}_VA_H(0)$ is not a first order vertical. Indeed, if we compute using Remark \ref{Rmk:AEpsilon}, 
\begin{align*}
\mathscr{A}_{HV}=&A_H(0)\mathscr{A}_V+\mathscr{A}_VA_H(0)\\
=&A_H(0)\left(A_V-\frac{\varepsilon}{2}\right)+\left(A_V-\frac{\varepsilon}{2}\right)A_H(0)\\
= &A_{HV}-\frac{1}{2}(\varepsilon A_H(0)+A_H(0)\varepsilon)\\
=&A_{HV}-\varepsilon A_H(0),
\end{align*}
we see that the first order horizontal operator $\varepsilon A_H(0)$ appears in the formula. 
\end{remark}

\subsection{Spectral decomposition of cone coefficient}\label{Section:SpectralDecomp}

Our next objective is to describe the spectrum of the cone coefficient $\mathscr{A}_V$. To begin we recall how to calculate the spectrum for the cone coefficient $A_V$ following \cite[Section 3]{B09}. We do this with two purposes: to fill out some details in the computations and because the methods used can be adapted to find an analogous result for the operator $\mathscr{A}_V$. 

\subsubsection{Spectral decomposition of $A_V$}
For a fixed point $x\in F$ the corresponding fiber $Y_x$ of the fibration $\pi_{\mathcal{F}}:\mathcal{F}\longrightarrow F$ is a closed oriented Riemannian manifold with metric $g^{T_V\mathcal{F}}$ (more precisely, its restriction to $TY_x$). From Hodge theory we know that the fiber Laplacian (the dependence of $x$ is suppressed from the notation) 
$\Delta_Y\coloneqq d_Yd_Y^\dagger+d_Y^\dagger d_Y,$
induces a decomposition $\Omega^j(Y)=\mathcal{H}^j(Y)\oplus\Omega^j_\text{cl}(Y)\oplus \Omega^j_\text{ccl}(Y)$ where 
\begin{align*}
\mathcal{H}^j(Y)\coloneqq &\ker(\Delta^j_Y)\coloneqq \ker\left(\Delta|_{\Omega^j(Y)}\right),\\
\Omega^j_\text{cl}(Y)\coloneqq &\{\beta\in\Omega^j(Y)\:|\:d_Y\beta=0\},\\
\Omega^j_\text{ccl}(Y)\coloneqq &\{\beta\in\Omega^j(Y)\:|\:d^\dagger_Y\beta=0\}.
\end{align*}
Let us denote by $\Delta^j_{Y,\text{cl/ccl}}\coloneqq \Delta^j_Y|_{\Omega^j_\text{cl/ccl}(Y)}$ the restriction of $\Delta_Y$ to closed and co-closed $j$-forms respectively and by $E^j_\text{cl/ccl}(\lambda)$ the eigenspace of $\Delta^j_{Y,\text{cl/ccl}}$ for the eigenvalue $\lambda>0$. Recall from Remark \ref{Rmk:ConeCoef} that the cone coefficient $A_V$ is explicitly given by 
\begin{align*}
A_V=\varepsilon_H^{v+1}\bar{\star}_H\otimes (d_Y\bar{\star}_V +(-1)^{v+1}\bar{\star}_V d_Y)+\nu.
\end{align*}
In particular note that $(d_Y\bar{\star}_V +(-1)^{v+1}\bar{\star}_V d_Y)^2=\left((d_Y+d^\dagger_Y)\bar{\star}_V\right)^2=\Delta_Y$.\\

We now introduce the following spaces which, from the expression above, are invariant under the action of the operator $A_V$,  
\begin{align*}
\widetilde{\Omega}^j_\text{h}\coloneqq &\mathcal{H}^j(Y)\oplus\mathcal{H}^{v-j}(Y),\\
\widetilde{\Omega}^j_{\text{cl}}(Y)\coloneqq &\Omega^j_{\text{cl}}(Y)\oplus \Omega^{v+1-j}_{\text{cl}}(Y),\\
\widetilde{\Omega}^j_{\text{ccl}}\coloneqq &\Omega^{j-1}_{\text{ccl}}(Y)\oplus \Omega^{v-j}_{\text{ccl}}(Y),\\
F^j_\text{cl}(\lambda)\coloneqq &E^j_\text{cl}(\lambda)\oplus E^{v+1-j}_\text{cl}(\lambda),\\
F^j_\text{ccl}(\lambda)\coloneqq &E^{j-1}_\text{ccl}(\lambda)\oplus E^{v-j}_\text{ccl}(\lambda).
\end{align*}
This allow us to consider the restriction operators:
\begin{itemize}
\item On harmonic $j$-(vertical) forms:
\begin{align*}
A_{V,\text{h}}^j\coloneqq A_V\bigg{|}_{\widetilde{\Omega}^j_\text{h}}
=\left(
\begin{array}{cc}
j-\frac{v}{2} & 0\\
0 & -\left(j-\frac{v}{2}\right).
\end{array}
\right)
\end{align*}
\item On closed (vertical) forms:
\begin{align*}
A_{V,\text{cl}}-\frac{1}{2}\coloneqq \left(A_V-\frac{1}{2}\right)\bigg{|}_{\widetilde{\Omega}^j_\text{cl}}
=\left(
\begin{array}{cc}
j-\frac{v+1}{2} & \varepsilon_H^{v+1}\bar{\star}_H\otimes d_Y \bar{\star}_V \\
 \varepsilon_H^{v+1}\bar{\star}_H\otimes d_Y\bar{\star}_V & -\left(j-\frac{v+1}{2}\right)
\end{array}
\right),
\end{align*}
where we have used
\begin{align*}
(v+1-j)-\frac{v}{2}-\frac{1}{2}=-\left(j-\frac{v+1}{2}\right).
\end{align*}
\item On co-closed (vertical) forms:
\begin{align*}
A_{V,\text{ccl}}+\frac{1}{2}\coloneqq \left(A_V+\frac{1}{2}\right)\bigg{|}_{\widetilde{\Omega}^j_\text{ccl}}
=\left(
\begin{array}{cc}
j-\frac{v+1}{2} & (-1)^{v+1}\varepsilon_H^{v+1}\bar{\star}_H\otimes \bar{\star}_V d_Y \\
(-1)^{v+1}\varepsilon_H^{v+1}\bar{\star}_H\otimes \bar{\star}_V d_Y& -\left(j-\frac{v+1}{2}\right)
\end{array}
\right).
\end{align*}
\end{itemize}
It is important to emphasize  the fact that if we take the square of these restriction operators we will obtain, in this matrix form representation, diagonal operators. This is because diagonal entries have opposite signs and the off-diagonal terms are equal. In order to compute the diagonal operators obtained in the squaring process it is necessary to calculate $( \varepsilon_H^{v+1}\bar{\star}_H\otimes d_Y \bar{\star}_Y)^2$ and $( \varepsilon_H^{v+1}\bar{\star}_H\otimes\bar{\star}_Y d_Y )^2$. 
For the first one we expand
\begin{align*}
( \varepsilon_H^{v+1}\bar{\star}_H\otimes d_Y \bar{\star}_Y)^2= \varepsilon_H^{v+1}\bar{\star}_H\varepsilon_H^{v+1}\bar{\star}_H\otimes d_Y \bar{\star}_V d_Y\bar{\star}_V.
\end{align*}
Let us study the horizontal and the vertical contributions separately. For the horizontal part we use Remark  \ref{Rmk:CommEpsStarH} and $\varepsilon_H\bar{\star}_H=(-1)^h\bar{\star}_H \varepsilon_H$ to calculate
\begin{align*}
\varepsilon_H^{v+1}\bar{\star}_H\varepsilon_H^{v+1}\bar{\star}_H=\varepsilon_H^{v+1}\bar{\star}_H\varepsilon_H \varepsilon_H^{v}\bar{\star}_H
=\varepsilon^{v+1}_H\bar{\star}_H\varepsilon_H\bar{\star}_H\varepsilon^{v}_H=(-1)^h.
\end{align*}
On the other hand $d_Y \bar{\star}_V d_Y\bar{\star}_V=(-1)^{v+1}d_Yd_Y^\dagger$, and so
\begin{align*}
( \varepsilon_H^{v+1}\bar{\star}_H\otimes d_Y \bar{\star}_Y)^2=(-1)^{h+v+1}d_Yd_Y^\dagger=d_Yd_Y^\dagger.
\end{align*}
An analogous computation shows that 
\begin{align*}
( \varepsilon_H^{v+1}\bar{\star}_H\otimes\bar{\star}_Y d_Y )^2=d_Y^\dagger d_Y.
\end{align*}
Hence, we obtain on closed forms
\begin{align}\label{Enq:Acl}
\left(A^j_{V,\text{cl}}-\frac{1}{2}\right)^2
=\left(
\begin{array}{cc}
\Delta^j_{Y,\text{cl}}+\left(j-\frac{v+1}{2}\right)^2 & 0\\
0 & \Delta^{v+1-j}_{Y,\text{cl}}+\left(j-\frac{v+1}{2}\right)^2
\end{array}\right).
\end{align}
Similarly for co-closed forms,
\begin{align}\label{Enq:Accl}
\left(A^j_{V,\text{ccl}}+\frac{1}{2}\right)^2
=\left(
\begin{array}{cc}
\Delta^{j-1}_{Y,\text{ccl}}+\left(j-\frac{v+1}{2}\right)^2 & 0\\
0 & \Delta^{v-j}_{Y,\text{ccl}}+\left(j-\frac{v+1}{2}\right)^2
\end{array}\right).
\end{align}
Finally on harmonic forms we obviously have
\begin{align}\label{Enq:Ah}
\left(A_{V,\text{h}}^j\right)^2
=\left(
\begin{array}{cc}
\left(j-\frac{v}{2}\right)^2 & 0\\
0 & \left(j-\frac{v}{2}\right)^2
\end{array}\right).
\end{align}
Using these expressions one can give a complete description of the spectrum of $A_V$. 
\begin{theorem}[{\cite[Theorem 3.1]{B09}}]\label{Thm:SpectralDecomp}
The spectral resolution of $A_V$ is given by:
\begin{enumerate}
\item $A^j_{V,\textnormal{h}}$ has eigenspaces $\mathcal{H}^j(Y)$ and $\mathcal{H}^{v-j}(Y)$ with eigenvalues
\begin{align*}
\pm\left(j-\frac{v}{2}\right).
\end{align*}
\item For each $\lambda\in\spec(\Delta^j_{\textnormal{cl}})\backslash\{0\}$ the operator $A^j_{V,\textnormal{cl}}$ has two eigenspaces in $F^j_\textnormal{cl}(\lambda)$ with eigenvalues
\begin{align*}
{\mu}^j_{\textnormal{cl},\pm}(\lambda)\coloneqq &\frac{1}{2}\pm\sqrt{\lambda+\left(j-\frac{v+1}{2}\right)^2},
\end{align*}
and multiplicities $m^j_{\textnormal{cl},\pm}(\lambda)$.
\item For each $\lambda\in\spec(\Delta^j_{\textnormal{ccl}})\backslash\{0\}$ the operator $A^j_{V,\textnormal{ccl}}$ has two eigenspaces in $F^j_\textnormal{ccl}(\lambda)$ with eigenvalues
\begin{align*}
{\mu}^j_{\textnormal{ccl},\pm}(\lambda)\coloneqq &-\frac{1}{2}\pm\sqrt{\lambda+\left(j-\frac{v+1}{2}\right)^2},
\end{align*}
and multiplicities $m^j_{\textnormal{ccl},\pm}(\lambda)$.
\item For $\lambda>0$, the four eigenvalues of $A_V$ in $F^j_\textnormal{cl}(\lambda)\oplus F^j_\textnormal{ccl}(\lambda)$ have the common multiplicity $2\:\textnormal{dim} (E^j_{\textnormal{cl}}(\lambda))$.
\end{enumerate}
\end{theorem}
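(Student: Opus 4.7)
The plan is to exploit the fiberwise Hodge decomposition together with the explicit description of $A_V$ given in Remark \ref{Rmk:ConeCoef}. First I would fix a point $x\in F$ and work on the fiber $Y=Y_x\cong \mathbb{C}P^N$, decomposing $\Omega^j(Y)=\mathcal{H}^j(Y)\oplus d_Y\Omega^{j-1}(Y)\oplus d_Y^\dagger\Omega^{j+1}(Y)$ and refining this into eigenspaces of $\Delta_Y$, namely $E_\textnormal{cl}^j(\lambda)$ on closed forms and $E_\textnormal{ccl}^j(\lambda)$ on co-closed forms. Since $A_V=\varepsilon_H^{v+1}\bar\star_H\otimes(d_Y\bar\star_V+(-1)^{v+1}\bar\star_V d_Y)+\nu$, the operator visibly preserves the three types of subspaces $\widetilde\Omega^j_\textnormal{h}$, $\widetilde\Omega^j_\textnormal{cl}$, $\widetilde\Omega^j_\textnormal{ccl}$ introduced before the theorem. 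It also commutes with the $\Delta_Y$-eigenspace projections, which reduces the spectral problem to one on each finite-dimensional block $F_\textnormal{cl}^j(\lambda)$, $F_\textnormal{ccl}^j(\lambda)$, and $\widetilde\Omega^j_\textnormal{h}$.

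Next I would square the restrictions, using the two identities $(\varepsilon_H^{v+1}\bar\star_H\otimes d_Y\bar\star_V)^2=d_Yd_Y^\dagger$ and $(\varepsilon_H^{v+1}\bar\star_H\otimes\bar\star_V d_Y)^2=d_Y^\dagger d_Y$, which follow from $\varepsilon_H\bar\star_H=(-1)^h\bar\star_H\varepsilon_H$ together with $h+v+1$ being even. The cross-terms in the squared $2\times 2$ block matrices vanish because the diagonal entries are $\pm(j-\tfrac{v+1}{2})$, giving exactly the diagonal expressions \eqref{Enq:Acl}, \eqref{Enq:Accl}, \eqref{Enq:Ah}. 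On an eigenspace of $\Delta_Y$ for $\lambda$, each squared block acts as the scalar $\lambda+(j-\tfrac{v+1}{2})^2$ (respectively $(j-\tfrac{v}{2})^2$), and extracting the square root then gives the eigenvalues stated in (1)-(3): the factor $\pm\tfrac12$ coming from the shift of $A_V$ by $\pm\tfrac12$ on closed, respectively co-closed, forms in the decomposition $A_V=A_{V,\textnormal{h}}\oplus(A_{V,\textnormal{cl}}-\tfrac12)\oplus(A_{V,\textnormal{ccl}}+\tfrac12)$.

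For claim (4), the main input is the standard Hodge-theoretic observation that, for $\lambda>0$, the maps
\[
d_Y:E^{j-1}_\textnormal{ccl}(\lambda)\longrightarrow E^j_\textnormal{cl}(\lambda)\quad\text{and}\quad \bar\star_V:E^j_\textnormal{cl}(\lambda)\longrightarrow E^{v-j}_\textnormal{ccl}(\lambda)
\]
are isomorphisms, with inverses $\lambda^{-1}d_Y^\dagger$ and $\pm\bar\star_V$ respectively. Chaining these identifications shows that the four summands making up $F^j_\textnormal{cl}(\lambda)\oplus F^j_\textnormal{ccl}(\lambda)$ all have the same dimension as $E^j_\textnormal{cl}(\lambda)$, and a block-determinant computation on the explicit $2\times 2$ presentation of $A_{V,\textnormal{cl}}^j-\tfrac12$ and $A_{V,\textnormal{ccl}}^j+\tfrac12$ then yields the claimed common multiplicity. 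The main technical obstacle I anticipate is bookkeeping the signs and the power $(-1)^{v+1}$ so that the squared operators really come out diagonal with $\Delta_{Y,\textnormal{cl}}$ and $\Delta_{Y,\textnormal{ccl}}$ on the diagonal entries; once these parity computations are organised, the rest is linear algebra.
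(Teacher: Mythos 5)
Your proposal is correct and follows essentially the same route as the paper: the paper likewise restricts $A_V$ to $\widetilde\Omega^j_{\textnormal{h}}$, $\widetilde\Omega^j_{\textnormal{cl}}$, $\widetilde\Omega^j_{\textnormal{ccl}}$, squares the resulting $2\times2$ blocks using the identities $(\varepsilon_H^{v+1}\bar{\star}_H\otimes d_Y\bar{\star}_V)^2=d_Yd_Y^\dagger$ and $(\varepsilon_H^{v+1}\bar{\star}_H\otimes\bar{\star}_Vd_Y)^2=d_Y^\dagger d_Y$ to obtain \eqref{Enq:Acl}--\eqref{Enq:Ah}, and handles the multiplicities by the same intertwining maps built from $d_Y\bar{\star}_V$, $\bar{\star}_Vd_Y$ and $\bar{\star}_V$ that identify the eigenspaces of the various fiber Laplacians. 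The only cosmetic difference is that you phrase the final multiplicity step as a block-determinant argument where the paper exhibits explicit anticommuting bijections swapping the $\pm$ eigenspaces; both amount to the same linear algebra.
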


\begin{remark}[Vertical harmonic eigenvalues]\label{Rmk:ZeroEigenValueAV}
Note that since in our particular case of study the fiber is $Y=\mathbb{C}P^N$  and 
\begin{align*}
H^i(\mathbb{C}P^N;\mathbb{R})=
\begin{cases}
0,\quad \text{for $i$ odd},\\
\mathbb{R},  \quad \text{for $i=2j$ with $0\leq j\leq N$},
\end{cases}
\end{align*}
the eigenvalues of $A_V$ on vertical harmonic forms are $2j-N$. If $N$ is odd (Witt case) we see that the zero eigenvalue can never occur, whereas for $N$ even (non-Witt case) the zero eigenvalue appears on vertical harmonic forms of degree $2j=N$.
\end{remark}

\begin{remark}[{\cite[Theorem 3.1(4)]{B09}}]
In \cite{B09}, the fibration can have arbitrary vertical dimension $v$. If this dimension is  odd then there are two more eigenspaces for the operator 
\begin{align*}
\left(A^{(v+1)/2}_{V,\textnormal{cl}}-\frac{1}{2}\right)\oplus \left(A^{(v+1)/2}_{V,\textnormal{ccl}}+\frac{1}{2}\right):
\Omega^{\frac{v+1}{2}}_{\text{cl}}(Y)\oplus \Omega^{\frac{v-1}{2}}_{\text{ccl}}(Y)
\longrightarrow
\Omega^{\frac{v+1}{2}}_{\text{cl}}(Y)\oplus \Omega^{\frac{v-1}{2}}_{\text{ccl}}(Y),
\end{align*}
in $E^{(v+1)/2}_{\textnormal{cl}}\oplus E^{(v-1)/2}_{\textnormal{ccl}}$ with eigenvalues $\pm\sqrt{\lambda}$, as can be easily seen from \eqref{Enq:Acl} and \eqref{Enq:Acl}. 
\end{remark}

\subsubsection{Spectral decomposition of $\mathscr{A}_V$}
Now we want to compute in a similar manner the eigenvalues of the cone coefficient $\mathscr{A}_V$. We define the restriction operators analogously 
\begin{align*}
\mathscr{A}^j_{V,\text{h}}\coloneqq &\mathscr{A}_V|_{\widetilde{\Omega}_{\text{h}}^j},\\
\mathscr{A}^j_{V,\text{cl}}-\frac{1}{2}\coloneqq &\left(\mathscr{A}_V-\frac{1}{2}\right)\bigg{|}_{\widetilde{\Omega}_{\text{cl}}^j},\\
\mathscr{A}^j_{V,\text{ccl}}+\frac{1}{2}\coloneqq &\left(\mathscr{A}_V+\frac{1}{2}\right)\bigg{|}_{\widetilde{\Omega}_{\text{ccl}}^j}.
\end{align*}
The additional terms of these operators, with respect to the $A_V$, is the potential
\begin{align*}
-\frac{1}{2}\varepsilon=-\frac{1}{2}\varepsilon_H\otimes \varepsilon_V. 
\end{align*}
The corresponding restrictions are
\begin{align*}
-\frac{1}{2}{\varepsilon}\bigg{|}_{\widetilde{\Omega}^j_\text{h}}=&
\left(\begin{array}{cc}
-\frac{1}{2}\varepsilon_H\otimes (-1)^j & 0\\
0 &-\frac{1}{2}\varepsilon_H\otimes (-1)^{v-j}
\end{array}\right),\\
-\frac{1}{2}{\varepsilon}\bigg{|}_{\widetilde{\Omega}^j_\text{cl}}=&
\left(\begin{array}{cc}
-\frac{1}{2}\varepsilon_H\otimes (-1)^j & 0\\
0 &-\frac{1}{2}\varepsilon_H\otimes (-1)^{v+1-j}
\end{array}\right),\\
-\frac{1}{2}{\varepsilon}\bigg{|}_{\widetilde{\Omega}^j_\text{ccl}}=&
\left(\begin{array}{cc}
-\frac{1}{2}\varepsilon_H\otimes (-1)^{j-1} & 0\\
0 &-\frac{1}{2}\varepsilon_H\otimes (-1)^{v-j}
\end{array}\right).
\end{align*}
Using \eqref{Enq:Acl}, \eqref{Enq:Accl}, \eqref{Enq:Ah} and the fact that for our case of interest $v=2N$, we obtain the following expressions for restriction operators
\begin{align*}
\mathscr{A}^j_{V,\text{h}}
\coloneqq  A_V\bigg{|}_{\widetilde{\Omega}^j_\text{h}}&=
\left(
\begin{array}{cc}
j-N-\frac{1}{2}\varepsilon_H\otimes(-1)^j & 0\\
0 & -\left(j-N+\frac{1}{2}\varepsilon_H\otimes(-1)^j\right)
\end{array}
\right),\\
\mathscr{A}^j_{V,\text{cl}}-\frac{1}{2}\coloneqq 
\left(\mathscr{A}_V-\frac{1}{2}\right)\bigg{|}_{\widetilde{\Omega}^j_\text{cl}}
&=\left(
\begin{array}{cc}
j-\frac{2N+1}{2}-\frac{1}{2}\varepsilon_H\otimes(-1)^j & \varepsilon_H\bar{\star}_H\otimes d_Y \bar{\star}_V \\
 \varepsilon_H\bar{\star}_H\otimes d_Y \bar{\star}_V & -\left(j-\frac{2N+1}{2}-\frac{1}{2}\varepsilon_H\otimes(-1)^j\right)
\end{array}
\right),\\
\mathscr{A}^j_{V,\text{ccl}}+\frac{1}{2}\coloneqq 
\left(\mathscr{A}_V+\frac{1}{2}\right)\bigg{|}_{\widetilde{\Omega}^j_\text{ccl}}
&=\left(
\begin{array}{cc}
j-\frac{2N+1}{2}-\frac{1}{2}\varepsilon_H\otimes(-1)^{j-1} & -\varepsilon_H\bar{\star}_H\otimes \bar{\star}_V d_Y \\
-\varepsilon_H\bar{\star}_H\otimes  \bar{\star}_V d_Y & -\left(j-\frac{2N+1}{2}-\frac{1}{2}\varepsilon_H\otimes(-1)^{j-1}\right)
\end{array}
\right).
\end{align*}
As before, from the structure of these formulas, we can easily calculate their squares,
\begin{align*}
\left(\mathscr{A}^j_{V,\text{h}}\right)^2
&=\left(
\begin{array}{cc}
\left(j-N-\frac{1}{2}\varepsilon_H\otimes(-1)^j \right)^2& 0\\
0 & \left(j-N+\frac{1}{2}\varepsilon_H\otimes(-1)^j\right)^2
\end{array}
\right),\\
\left(\mathscr{A}^j_{V,\text{cl}}-\frac{1}{2}\right)^2
&=\left(
\begin{array}{cc}
\Delta^j_\text{cl}+\left(j-\frac{2N+1}{2}-\frac{1}{2}\varepsilon_H\otimes (-1)^j\right)^2 & 0\\
0 & \Delta^{2N+1-j}_\text{cl}+\left(j-\frac{2N+1}{2}-\frac{1}{2}\varepsilon_H\otimes (-1)^j\right)^2
\end{array}\right),\\
\left(\mathscr{A}^j_{V,\text{ccl}}+\frac{1}{2}\right)^2
&=\left(
\begin{array}{cc}
\Delta^{j-1}_\text{ccl}+\left(j-\frac{2N+1}{2}-\frac{1}{2}\varepsilon_H\otimes (-1)^{j-1}\right)^2 & 0\\
0 & \Delta^{2N-j}_\text{ccl}+\left(j-\frac{2N+1}{2}-\frac{1}{2}\varepsilon_H\otimes (-1)^{j-1}\right)^2
\end{array}\right).
\end{align*}

\begin{theorem}\label{Thm:SpecDec}
The operator $\mathscr{A}_V$ preserves the eigenspaces of $A_V$ and it has the eigenvalues are described as follows:   
\begin{enumerate}
\item For harmonic $2j$-(vertical) forms it has eigenvalues
\begin{align*}
2j-N\pm\frac{1}{2}.
\end{align*}
\item For each $\lambda\in\spec(\Delta^j_{\textnormal{cl}})\backslash\{0\}$, $\mathscr{A}^j_{V,\textnormal{cl}}$ has four eigenspaces in $F^j_{\textnormal{cl}}(\lambda)$, with eigenvalues 
\begin{align*}
\tilde{\mu}^j_{\textnormal{cl},\pm,\pm}(\lambda)\coloneqq &\frac{1}{2}\pm\sqrt{\lambda+\left(j-N-\frac{1\pm 1}{2}\right)^2}.
\end{align*}
\item For each $\lambda\in\spec(\Delta^j_{\textnormal{ccl}})\backslash\{0\}$, $\mathscr{A}^j_{V,\textnormal{ccl}}$ has four eigenspaces in $F^j_{\textnormal{ccl}}(\lambda)$, with eigenvalues 
\begin{align*}
\tilde{\mu}^j_{\textnormal{ccl},\pm,\pm}(\lambda)\coloneqq &-\frac{1}{2}\pm\sqrt{\lambda+\left(j-N-\frac{1\pm 1}{2}\right)^2}.
\end{align*}
\end{enumerate}
\end{theorem}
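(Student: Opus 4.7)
The plan is to deduce the spectral decomposition of $\mathscr{A}_V$ from the explicit squared operators $(\mathscr{A}^{2j}_{V,\text{h}})^2$, $(\mathscr{A}^j_{V,\text{cl}}-\tfrac{1}{2})^2$ and $(\mathscr{A}^j_{V,\text{ccl}}+\tfrac{1}{2})^2$ displayed just above the theorem statement. First I would justify the preservation of the $A_V$-eigenspaces: this reduces to checking $[\varepsilon,A_V]=0$. The anti-commutation $\varepsilon\alpha=-\alpha\varepsilon$ is recorded in \eqref{Eqn:EpsilonAlpha}; the vertical Dirac operator $\widetilde{A}_{0V}=d_V+d_V^\dagger$ anti-commutes with the Gauß-Bonnet grading $\varepsilon$ as a first-order operator that shifts total degree by one; and $\nu$ clearly commutes with $\varepsilon$. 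Combining these via $A_V=\widetilde{A}_{0V}\alpha+\nu$ gives $\varepsilon A_V=-\widetilde{A}_{0V}\varepsilon\alpha+\nu\varepsilon=\widetilde{A}_{0V}\alpha\varepsilon+\nu\varepsilon=A_V\varepsilon$, so $\mathscr{A}_V$ and $A_V$ can be simultaneously diagonalized and in particular $\mathscr{A}_V$ respects the decomposition of Theorem~\ref{Thm:SpectralDecomp}.

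For part (1), on $\mathcal{H}^{2j}(Y)$ one has $\varepsilon_V=(-1)^{2j}=1$, so $-\tfrac{1}{2}\varepsilon$ restricts to $-\tfrac{1}{2}\varepsilon_H$, an involution with eigenvalues $\pm\tfrac{1}{2}$. Combining this with the scalar $A_V|_{\mathcal{H}^{2j}(Y)}=2j-N$ from Theorem~\ref{Thm:SpectralDecomp}(1) immediately gives the eigenvalues $2j-N\pm\tfrac{1}{2}$.

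For parts (2) and (3), the plan is to restrict the squared operators to the $\pm 1$ eigenspace of $\varepsilon_H$, on which the zero-order term $\tfrac{1}{2}\varepsilon_H(-1)^j$ acts as the scalar $\pm\tfrac{1}{2}$ and hence
\[
\left(j-\tfrac{2N+1}{2}-\tfrac{1}{2}\varepsilon_H(-1)^j\right)^2=\left(j-N-\tfrac{1\pm 1}{2}\right)^2.
\]
On the Laplace eigenspace for $\lambda$ this gives
\begin{equation*}
\left(\mathscr{A}^j_{V,\text{cl}}-\tfrac{1}{2}\right)^2=\lambda+\left(j-N-\tfrac{1\pm 1}{2}\right)^2,
\end{equation*}
and taking positive square roots and shifting by $\tfrac{1}{2}$ produces the four values $\tilde{\mu}^j_{\text{cl},\pm,\pm}(\lambda)$. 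The coclosed case is identical, using $(\mathscr{A}^j_{V,\text{ccl}}+\tfrac{1}{2})^2$ and an overall shift of $-\tfrac{1}{2}$.

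The main obstacle is verifying that the squared operators are genuinely diagonal on each relevant subspace. Since $\bar{\star}_H$ anti-commutes with $\varepsilon_H$ (because $h=4k-2N-1$ is odd), the zero-order piece $\tfrac{1}{2}\varepsilon_H(-1)^j$ does not commute with the off-diagonal entry $\varepsilon_H\bar{\star}_H\otimes d_Y\bar{\star}_V$ of $\mathscr{A}^j_{V,\text{cl}}-\tfrac{1}{2}$, and a naive expansion produces a cross-term proportional to $\bar{\star}_H\otimes d_Y\bar{\star}_V$. The cleanest way to handle this will be to decompose the horizontal exterior bundle into the irreducible two-dimensional modules for the Clifford algebra generated by $\varepsilon_H$ and $\bar{\star}_H$, reducing the computation on each such block to a $2\times 2$ matrix diagonalization that absorbs the apparent cross-term into the $\varepsilon_H$-eigenvalue dependence of the inner square and hence recovers the eigenvalues stated.
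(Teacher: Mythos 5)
Your opening argument is correct, and it is the cleanest route to the first assertion: $[\varepsilon,A_V]=0$ (recorded in Remark \ref{Rmk:AEpsilon}) immediately gives the preservation of the $A_V$-eigenspaces, and part (1) follows exactly as you say. You are also right to distrust the displayed squares preceding the theorem: since $h=4k-2N-1$ is odd, $\varepsilon_H$ anti-commutes with $\varepsilon_H\bar{\star}_H$, so squaring $\mathscr{A}^j_{V,\mathrm{cl}}-\tfrac12$ produces a nonzero off-diagonal term proportional to $\bar{\star}_H\otimes d_Y\bar{\star}_V$; the matrices written in the paper are not the true squares.

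The gap is your final sentence: the $2\times2$ block diagonalization does \emph{not} recover the eigenvalues stated, and your own first step already shows that it cannot. Because $A_V$ and $\varepsilon$ commute and $\varepsilon^2=1$, on each $\mu$-eigenspace of $A_V$ one has $\spec(\mathscr{A}_V)\subseteq\{\mu-\tfrac12,\mu+\tfrac12\}$; combined with Theorem \ref{Thm:SpectralDecomp}, every eigenvalue of $\mathscr{A}^j_{V,\mathrm{cl}}$ on $F^j_{\mathrm{cl}}(\lambda)$ must lie in $\{\tfrac12\pm\tfrac12\pm\sqrt{\lambda+(j-N-\tfrac12)^2}\}$, whereas $\tfrac12\pm\sqrt{\lambda+(j-N-\frac{1\pm1}{2})^2}$ is not of this form (the $\lambda$-dependence is different). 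Carrying out your block reduction confirms this: set $s=j-N-\tfrac12$, $c=-\tfrac{(-1)^j}{2}$, and choose $\varepsilon_H$-eigenvectors $h_\pm$ with $\varepsilon_H\bar{\star}_Hh_\pm=\pm h_\mp$; on a four-dimensional block (two-dimensional horizontal module tensored with a $\lambda$-eigenvector pair) the operator $\mathscr{A}^j_{V,\mathrm{cl}}-\tfrac12$ splits as
\begin{align*}
\begin{pmatrix} s+c & -\sqrt{\lambda}\\ -\sqrt{\lambda} & -s+c\end{pmatrix}\oplus\begin{pmatrix} s-c & \sqrt{\lambda}\\ \sqrt{\lambda} & -s-c\end{pmatrix},
\end{align*}
with eigenvalues $\pm c\pm\sqrt{\lambda+s^2}$. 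Hence the spectrum of $\mathscr{A}^j_{V,\mathrm{cl}}$ is $\tfrac12\pm\tfrac12\pm\sqrt{\lambda+(j-N-\tfrac12)^2}$, and analogously $-\tfrac12\pm\tfrac12\pm\sqrt{\lambda+(j-N-\tfrac12)^2}$ in the co-closed case, not the formulas in parts (2) and (3). The obstruction you identified changes the answer rather than being absorbable, so the statement itself must be corrected before a proof can be completed; note also that with the corrected spectrum the eigenvalue $1-\sqrt{\lambda+\tfrac14}$ (for $j\in\{N,N+1\}$) lies in $(-\tfrac12,\tfrac12)$ unless $\lambda\ge 2$, so the rescaling threshold $\spec(\Delta_Y)\setminus\{0\}\subset(1,\infty)$ invoked after the theorem for \eqref{CondSpecOpPot} needs to be strengthened accordingly.
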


Observe that for each $j=0,...,2N$ the quantity 
\begin{align*}
n_{j,\pm}\coloneqq \left(j-N-\frac{1\pm 1}{2}\right)^2
\end{align*}
is always an integer. Consequently, if $n_{j,\pm}\neq 0$, then $\sqrt{\lambda+n_{j,\pm}^2}>1$ for all $\lambda>0$ and therefore 
\begin{align*}
\pm \frac{1}{2}\pm n_{j,\pm}\not\in\left(-\frac{1}{2},\frac{1}{2}\right).
\end{align*}
On the other and, observe from Theorem \ref{Thm:SpecDec}(1),
\begin{align*}
2j-N\pm\frac{1}{2}\notin \left(-\frac{1}{2},\frac{1}{2}\right).
\end{align*}
Thus, if $\spec(\Delta_Y)\backslash \{0\} \subset (1,\infty)$, then $|\mathscr{A}_V|\geq 1/2$. This condition can always be achieved, in view of Remark \ref{Rmk:ScaleHodgeStar}, by rescaling the vertical metric.

\begin{example}[Spectrum of the link]
Let us illustrate these results in the first example treated in Section \ref{Sect:S5}. Recall that we studied a semi-free circle action on $S^5$ whose fixed point set was $S^1$. Close to the singular stratum we saw that $M/S^1$ is isometric to a mapping cylinder of a Riemannian fribration with fiber $S^2\cong \mathbb{C}P^1$. In view of \eqref{Eqn:MetricFSS2} and Remark \ref{Rmk:SpectrumLaplFSS2} we see that the spectrum of the induced vertical Laplacian $\Delta_Y$ on the link is of the form $4k(k+1)$ for $k\in\mathbb{N}_0$ (\cite[Proposition III.C.1]{BGM}). In particular, the condition $\spec(\Delta_Y)\backslash \{0\} \subset (1,\infty)$ is satisfied.  
\end{example}

\section{The parametrix construction}\label{Sect:Param}

In this chapter we describe how to construct a parametrix for the operator $\mathscr{D}$ by adapting the analogous construction for the Hodge-de Rham operator $D$ treated in detail in \cite[Section 4]{B09}. In this work Br\"uning  implements a variant of the parametrix construction developed in the sequence of seminal papers \cite{BS85}, \cite{BS87}, \cite{BS88} and \cite{BS91}. We show that the additional potential entering in the operator $\mathscr{D}$ adapts well to this construction because of its particular form. Concretely, in the first part of this chapter we revise the main ideas concerning the parametrix construction as a Neumann series described in \cite[Section 4]{B09}. A motivating example for this procedure can be found in \cite[Section 4]{BS85}. As in that case, the parametrix allows us to construct a self-adjoint extension of the operator $D_\text{min}$. In order to obtain the desired parametrix one need to deal with the large and small eigenvalues of the cone coefficient $A_V$ separately. Finally we show that the same procedure can be applied to the operator $\mathscr{D}$ since the corresponding cone coefficient $\mathscr{A}_V$ processes the same (anti)-commutation relations as $A_V$ which are required in the construction.
Of course, we do not intend to give all the details of the treatment in \cite[Section 4]{B09}, but rather explain the strategy and really understand why these methods apply also to the operator $\mathscr{D}$. 

\subsection{A self-adjoint extension for the signature operator}

Let $D_\text{min}$ denote the minimal extension of the Hodge-de Rham operator $D=D_{M_0/S^1}$ defined on $\Omega_c(M_0/S^1)$ and let $D_\text{max}=(D_\text{min})^*$ denote its maximal extension. In general $D_\text{min} \neq D_\text{max}$, that is, $D$ is in general not essentially self-adjoint. Nevertheless, we will describe how to construct a self-adjoint extension of $D_\text{min}$ by means of an operator family $\mathcal{G}(\mu,D)$ defined for $\mu\in\mathbb{R}$ with $|\mu|$ large enough. More precisely assume the operator family $\mathcal{G}(\mu,D)$ satisfies the  conditions:
\begin{align}\label{CondExt}
&\bullet\text{The range of the operators $\mathcal{G}(\mu,D)$ is contained in $\dom(D_\text{max})$.}\notag \\
&\bullet\text{For all $\sigma\in L^2(\wedge T^*(M_0/S^1))$ we have $(D-i\mu)\mathcal{G}(\mu,D)\sigma=\sigma$.}\\
&\bullet\text{All the operators map into a common domain on which $D$ is symmetric.}\notag
\end{align}
This domain defines a self-adjoint extension of $D$  with resolvent $G(\mu,D)$ by \cite[Problem V.3.17]{KATO} and \cite[Proposition 3.11]{S12}. We denote such domain by $\dom({D_\delta})$. Our aim is therefore to understand how construct such an operator family. It turns out that this family is constructed as a as a pseudo-differential operator on the base $F$ with operator valued symbol.

\subsubsection{General description}
To begin we need to slightly modify our point of view on the geometry near to the fixed point set $F$. It is convenient to regard differential forms on $M_0/S^1$ close to $F$ as sections of an infinite dimensional Hilbert bundle $\mathcal{E}$ over $F$ (see \cite[Section 1]{B09}). For each $x\in F$, the fiber $\mathcal{E}_x$ is the Hilbert space $\mathcal{E}_x\coloneqq L^2(Y_x,(\wedge T^*\mathcal{F})|_x)$.
This Hilbert bundle is locally trivial, i.e for a fixed $x_0\in F$ there exist a small open neighborhood $W_{x_0}\subset F$ and a trivialization map 
$\mathcal{E}{|}_{W_{x_0}}\longrightarrow B^{\mathbb{R}^h}_\delta(0)\times \mathcal{E}_{x_0}$,
where $B^{\mathbb{R}^h}_\delta(0)$ denotes the open ball of radius $\delta>0$ around $0\in\mathbb{R}^h$. Under this trivialization we can identify $C_c(W_{x_0},\mathcal{E}{|}_{W_{x_0}})$ with $C_c(B^{\mathbb{R}^h}_\delta(0), \mathcal{E}_{x_0})$. In view of the construction of a pseudo-differential operator from a symbol (see \cite[Chapter III.3]{LM89}), we define a {\em local parametrix} in terms of an operator symbol $G(\mu,x,\beta)$, still to be defined, as
\begin{align}\label{Eqn:G1}
\mathcal{G}_1(\mu,D,x_0)\coloneqq
\int_{\mathbb{R}^h}e^{i\inner{x}{\beta}}G(\mu,x,\beta)\frac{d\beta}{(2\pi)^h},
\end{align}
where $(x,\beta)\in T^*W_{x_0}$. The requirements to be satisfied by the symbols $G(\mu,x,\beta)$ are:
\begin{itemize}
\item One can patch these local parametrices together using a well-adapted  partition of unity to obtain a global parametrix $\mathcal{G}_1(\mu,D)$.
\item The operator norm of $\mathcal{R}\coloneqq I-(D-i\mu)\mathcal{G}_1$ decays like $|\mu|^{-1}$. 
\end{itemize}
The operators $G(\mu,x,\beta)$ will be constructed as resolvent kernels of an operator $\widehat{D}$ which differs from $D$ by zero order horizontal terms, as described below. This means that we will seek for the defining  relation $(\widehat{D}-i\mu)\mathcal{G}_1(\mu,D)=I$.
If we achieve obtaining such  symbols then, by the decaying property above, we would obtain the desired operator family as a Neumann series 
\begin{align}\label{Eqn:NeumSer}
\mathcal{G}(\mu,D)\coloneqq \mathcal{G}_1\sum_{j=0}^{\infty}\mathcal{R}^j.
\end{align}
Indeed, 
$
(D-i\mu)\mathcal{G}(\mu,D)=(D-i\mu)\mathcal{G}_1\sum_{j=0}^{\infty}\mathcal{R}^j=(I-\mathcal{R})\sum_{j=0}^{\infty}\mathcal{R}^j=I.
$

\subsubsection{Procedure to construct $G(\mu,x,\beta)$}
Motivated by Definition \ref{Def:ConeOpD} we consider, for $(x,\beta)\in T^*W_{x_0}$ fixed , the operator 
\begin{align}\label{Eqn:DefDV}
{D}_V(x)\coloneqq &\gamma\left(\frac{d}{dr}+\frac{1}{r}\widetilde{{A}}(x)\right)\\
\coloneqq &\varepsilon_H\otimes
\left(
\begin{array}{cc}
0 & - I\\
I & 0
\end{array}
\right)
\left(
\frac{d}{dr} +\frac{1}{r}
\left(
\begin{array}{cc}
D_{Y_x}\alpha_{Y_x}+\nu & 0\\
0 & -(D_{Y_x}\alpha_{Y_x}+\nu)
\end{array}
\right)
\right)\notag,
\end{align}
where $D_{Y_x}$ and $\alpha_{Y_x} $ are the Hodge-de Rham and chirality operators of the fiber $Y_x$, discussed in Section \ref{Section:LocalDescrHdROp}. This operator is initially defined on
$
C^1_c((0,\infty), H)\subset L^2((0,\infty),H),
$
where $H\coloneqq H_{x_0}\coloneqq \wedge T^*_{x_0}F\otimes\mathbb{C}^2\otimes L^2(\wedge T^* Y_{x_0})$. Observe that this operator is a regular singular operator in the sense of Br\"uning and Seeley (see Appendix \ref{App:RSO}). For this kind of operators there is a complete characterization of their closed extensions of in terms of the spectrum of the corresponding cone coefficient. 

\begin{lemma}[Lemma \ref{BS88Lemma3.2}, {\cite[Lemma 4.1(3)]{B09}}]\label{Lemma:DVmin}
For the operator ${D}_V(x)$ the following conditions hold:
\begin{enumerate}
\item Any $\sigma\in \dom({D}_{V,\textnormal{max}}(x))$ has a representation of the form
\begin{align*}
\sigma(r)=\sum_{\substack{\lambda\in\spec(\widetilde{A}(x)),\\|\lambda|<1/2}} r^{-\lambda}c_\lambda(\sigma)+O_\sigma(r^{1/2}|\log r|),
\end{align*} 
as $r\longrightarrow 0$ for certain linear functionals $c_\lambda$.
\item Each closed extension of ${D}_{V,\textnormal{min}}(x)$ is determined by a linear relations between the coefficients $c_\lambda$ for $|\lambda|<1/2$.
\item An element $\sigma\in \dom({D}_{V,\textnormal{min}}(x))$ if, and only if, $\norm{\sigma(r)}_H=O_\sigma(r^{1/2}|\log r|)$ as $r\longrightarrow 0$.
\end{enumerate}
\end{lemma}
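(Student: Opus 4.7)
The plan is to reduce the statement to the general theory of regular singular operators due to Br\"uning--Seeley (see Appendix \ref{App:RSO}). The operator $D_V(x)$ is already presented in the canonical form
\[
D_V(x)=\gamma\Bigl(\frac{d}{dr}+\frac{1}{r}\widetilde{A}(x)\Bigr),
\]
with $\gamma$ a unitary involution up to sign and $\widetilde{A}(x)$ a self-adjoint operator on the Hilbert space $H$ with compact resolvent. The latter holds because, upon removing the overall block sign, $\widetilde{A}(x)$ is built from $D_{Y_x}\alpha_{Y_x}+\nu$, i.e.\ the tangential signature operator on the closed fibre $Y_x$ (shifted by the bounded operator $\nu$); this is an elliptic self-adjoint first order differential operator on a closed manifold, hence discrete with eigenvalues $\{\lambda\}\subset\mathbb{R}$ accumulating only at $\pm\infty$. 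Thus the hypotheses of Theorem \ref{BS88Thm3.2} and Lemma \ref{BS88Lemma3.2} are met.

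First, to obtain (1), I would expand an arbitrary $\sigma\in \dom(D_{V,\text{max}}(x))$ along the orthonormal basis of eigensections of $\widetilde{A}(x)$; on each eigenspace with eigenvalue $\lambda$ the equation $D_V(x)\sigma=\tau$ reduces to the scalar Euler-type ODE $(\partial_r+\lambda/r)\sigma_\lambda=\pm\gamma^{-1}\tau_\lambda$, whose fundamental solutions are $r^{-\lambda}$ and $r^{-\lambda}\int r^{\lambda-1}(\cdots)dr$. A model solution analysis (carried out in detail in \cite[Sections 2 \& 3]{BS88}) shows that the only contributions which are $L^{2}$ near $r=0$ but not already in the minimal domain come from the eigenvalues $|\lambda|<1/2$, and the next-order remainder is of size $O(r^{1/2}|\log r|)$, the logarithm appearing only when $\lambda=\pm 1/2$ occurs.

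Next, for (2), I would appeal to the abstract von Neumann description of closed extensions sandwiched between $D_{V,\text{min}}(x)$ and $D_{V,\text{max}}(x)$. The quotient $\dom(D_{V,\text{max}}(x))/\dom(D_{V,\text{min}}(x))$ is finite dimensional and is, by (1), naturally identified with $\bigoplus_{|\lambda|<1/2} E_\lambda\otimes\mathbb{C}^{2}$ via the map $\sigma\mapsto(c_\lambda(\sigma))_\lambda$. Closed extensions correspond to isotropic subspaces of this quotient with respect to the symplectic form induced by $(D\sigma,\tau)-(\sigma,D\tau)$, and this form is expressible (again by the Br\"uning--Seeley boundary computation) as a pairing between the $c_\lambda$'s, giving precisely the claimed linear-relations parametrization. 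Finally, (3) is immediate from (1): the condition $c_\lambda(\sigma)=0$ for all $|\lambda|<1/2$ is equivalent both to $\sigma\in\dom(D_{V,\text{min}}(x))$ (by definition of minimal domain as the closure of $C_c^\infty$) and to the decay $\|\sigma(r)\|_H=O_\sigma(r^{1/2}|\log r|)$.

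The main obstacle is not really a conceptual one but a bookkeeping one: one must verify that our concrete $\widetilde{A}(x)$ satisfies the precise spectral hypotheses of \cite[Theorems 3.1--3.2]{BS88} (in particular that $\spec\widetilde{A}(x)$ is bounded away from $\pm 1/2$ in a controlled way so that the log correction appears only in borderline cases, and that the zero modes near $r=0$ are genuinely $L^2$). Once the translation to the Br\"uning--Seeley framework is in place, the three statements are direct applications of their theorems.
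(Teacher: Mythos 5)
Your proposal is correct and follows exactly the route the paper takes: the paper gives no independent argument but simply observes that $D_V(x)$ is a regular singular operator in the Br\"uning--Seeley sense (with constant, discrete cone coefficient $\widetilde{A}(x)$, so the perturbation $S_1$ vanishes and the hypotheses of Appendix \ref{App:RSO} are trivially met) and cites Lemma \ref{BS88Lemma3.2} and Theorem \ref{BS88Thm3.2}. Your additional remarks on the eigenspace ODE analysis and the symplectic parametrization of closed extensions are consistent with that framework; the only minor imprecision is that the $|\log r|$ factor in the remainder is generic in the Br\"uning--Seeley estimate rather than tied solely to the borderline eigenvalues $\lambda=\pm 1/2$.
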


\begin{remark}
By Theorem \ref{BS88Thm3.2} we know that $\dim\left(\dom(D_{V,\text{max}})(x)/\dom(D_{V,\text{min}})(x)\right)$ is finite dimensional for each fixed $x$ . However, for non-isolated singularities the dimension of $\dim\left(\dom(D_{\text{max}})/\dom(D_{\text{min}})\right)$ can be infinitely dimensional (\cite{AG16}).
\end{remark}

An important algebraic property of ${D}_V(x)$ is given by the following lemma.
\begin{lemma}[{\cite[Lemma 1.1(2)]{B09}}]\label{Lemma 1.1(2)}
For each $(x,\beta)\in T^* W_{x_0}$, the operator $D_V(x)$ satisfies
\begin{align*}
{D}_V(x) c(\beta)+c(\beta){D}_V(x)=0.
\end{align*}
\end{lemma}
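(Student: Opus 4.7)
The plan is to exploit the tensor decomposition $H_{x_0} = \wedge T^*_{x_0}F \otimes \mathbb{C}^2 \otimes L^2(\wedge T^*Y_{x_0})$ implicit in the definition of $D_V(x)$, and to track how $c(\beta)$ interacts with each of the three factors. Since $\beta \in T^*_x F$ is purely horizontal, the Clifford multiplication $c(\beta) = \beta\wedge - \iota_{\beta^\sharp}$ acts nontrivially only on the first tensor factor $\wedge T^*_{x_0} F$; in particular, it commutes with both the matrix part acting on $\mathbb{C}^2$ and with every operator acting only on the vertical factor $L^2(\wedge T^* Y_{x_0})$.

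The first step is the anticommutation with $\gamma$. By definition $\gamma = \varepsilon_H \otimes \left(\begin{smallmatrix} 0 & -I \\ I & 0 \end{smallmatrix}\right)$, and the standard identity $\varepsilon_H\,c(\beta) = -c(\beta)\,\varepsilon_H$ (Clifford multiplication by a $1$-form is odd with respect to the Gau{\ss}--Bonnet grading, cf. \eqref{PropertiesEpsilonCliff}) combined with the fact that $c(\beta)$ acts trivially on the $\mathbb{C}^2$-factor yields $\gamma\,c(\beta) = -c(\beta)\,\gamma$.

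The second step is to verify that $c(\beta)$ commutes with the operator inside the parentheses in \eqref{Eqn:DefDV}. Since $c(\beta)$ is independent of $r$ it commutes with $d/dr$. The cone coefficient $\widetilde{A}(x)$ is a diagonal $\mathbb{C}^2$-matrix whose entries are $\pm(D_{Y_x}\alpha_{Y_x} + \nu)$, and all three operators $D_{Y_x}$, $\alpha_{Y_x}$ and $\nu$ act only on the vertical factor $L^2(\wedge T^* Y_x)$, hence commute with $c(\beta)$. Consequently, $(d/dr + r^{-1}\widetilde{A}(x))$ commutes with $c(\beta)$, and combining this with the first step yields
\[
D_V(x)\,c(\beta) = \gamma\bigl(\tfrac{d}{dr} + \tfrac{1}{r}\widetilde{A}(x)\bigr)\,c(\beta) = \gamma\,c(\beta)\bigl(\tfrac{d}{dr} + \tfrac{1}{r}\widetilde{A}(x)\bigr) = -c(\beta)\,D_V(x),
\]
as desired. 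The computation is essentially formal once the tensor structure is made explicit; the only subtle point—and the only place requiring care—is the sign from moving $c(\beta)$ past $\varepsilon_H$, which is precisely what makes $\gamma$ responsible for the anticommutation.
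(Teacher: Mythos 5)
Your proof is correct. The paper itself states this lemma only with a citation to \cite[Lemma 1.1(2)]{B09} and gives no proof, but your argument is precisely the one implicit there and matches the algebraic pattern the paper uses for the neighbouring Lemma \ref{Lemma:RelZeta} (namely $c(\beta)\varepsilon_H=-\varepsilon_H c(\beta)$, hence $c(\beta)$ anti-commutes with $\gamma$, while it commutes with $d/dr$ and with the purely vertical cone coefficient $\widetilde{A}(x)$); note that, as your Case-analysis of $\gamma$ requires, $c(\beta)$ here acts as $c_H(\beta)\otimes I$ on the $\mathbb{C}^2$-factor, which is exactly the convention the paper uses when it writes $(c(\beta)\otimes I)$ in the proof of Lemma \ref{Lemma:RelZeta}.
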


\begin{coro}[{\cite[Equation (4.4)]{B09}}]\label{Coro:GraphNormDV}
For all $\sigma(x)\in\dom(D_{V,\delta}(x))$,
\begin{align*}
\norm{({D}_V(x)+ic(\beta)-i\mu)\sigma(x)}^2_{{D}_V}=&\norm{{D}_V(x)\sigma(x)}^2_{{D}_V} +(|\mu|^2+\norm{\beta}_x^2)\norm{\sigma(x)}^2_{{D}_V},
\end{align*}
where $\norm{\beta}^2_x= g^{T^*F}(x)(\beta,\beta)$.
\end{coro}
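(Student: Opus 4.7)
The plan is to expand the left-hand side as an inner product, take the formal adjoint of the operator $T(x,\beta,\mu)\coloneqq D_V(x)+ic(\beta)-i\mu$, and show that $T^\dagger T$ simplifies to $D_V(x)^2+\norm{\beta}_x^2+|\mu|^2$; pairing with $\sigma(x)$ then yields the claimed identity. The three key algebraic inputs are: (i) self-adjointness of $D_V(x)$ on the chosen extension $D_{V,\delta}(x)$; (ii) the skew-adjointness of $c(\beta)$, which makes $ic(\beta)$ self-adjoint; (iii) the anti-commutation relation from Lemma \ref{Lemma 1.1(2)}, together with the Clifford identity $c(\beta)^2=-\norm{\beta}_x^2$.

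Concretely, I would first write
\begin{align*}
\norm{T\sigma(x)}_{D_V}^2=\inner{\sigma(x)}{T^\dagger T\sigma(x)}_{D_V},
\end{align*}
and compute $T^\dagger =D_V(x)+ic(\beta)+i\mu$ using that $D_V(x)^\dagger =D_V(x)$ on $\dom(D_{V,\delta}(x))$, $(ic(\beta))^\dagger =ic(\beta)$ (from $c(\beta)^\dagger =-c(\beta)$), and $(-i\mu)^\dagger =i\mu$ for $\mu\in\mathbb{R}$. Expanding the product
\begin{align*}
T^\dagger T=(D_V(x)+ic(\beta))^2+\mu^2,
\end{align*}
I observe that the $\pm i\mu$ cross terms cancel since $\mu$ is a scalar commuting with $D_V(x)$ and $c(\beta)$. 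Next, expanding $(D_V(x)+ic(\beta))^2=D_V(x)^2+i\bigl(D_V(x)c(\beta)+c(\beta)D_V(x)\bigr)+(ic(\beta))^2$, the middle term vanishes by Lemma \ref{Lemma 1.1(2)}, and $(ic(\beta))^2=-c(\beta)^2=\norm{\beta}_x^2$ by the Clifford relation. Hence $T^\dagger T=D_V(x)^2+\norm{\beta}_x^2+|\mu|^2$, and pairing with $\sigma(x)$ gives the identity.

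The main (mild) obstacle is justifying the formal manipulations on $\dom(D_{V,\delta}(x))$ rather than only on the core $C^1_c((0,\infty),H)$: we need $\sigma(x)$ to lie in the domain of $D_V(x)^2$ for the expression $\inner{D_V(x)\sigma(x)}{D_V(x)\sigma(x)}_{D_V}$ to appear via integration by parts. One way around this is to first establish the identity on the core and then extend by continuity using the fact that $D_{V,\delta}(x)$ is self-adjoint, so its graph is the closure of its restriction to a dense subspace of $\dom(D_{V,\delta}(x))$ in the graph norm. Alternatively, since $c(\beta)$ is bounded and $i\mu$ is a scalar, the operator $T$ has the same domain as $D_{V,\delta}(x)$ and standard functional-calculus arguments give the identity directly for all $\sigma(x)\in\dom(D_{V,\delta}(x))$.
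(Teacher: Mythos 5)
Your proof is correct and uses exactly the same ingredients as the paper: symmetry of $D_V(x)$ on the chosen extension, self-adjointness of $ic(\beta)$, the anti-commutation of Lemma \ref{Lemma 1.1(2)}, and the Clifford relation $c(\beta)^2=-\norm{\beta}_x^2$. The only cosmetic difference is that the paper expands $\norm{T\sigma}^2$ directly into three inner products and shows the cross term $\inner{D_V\sigma}{(ic(\beta)-i\mu)\sigma}$ is purely imaginary, which never applies $D_V$ twice and so sidesteps the $\dom(D_V^2)$ issue you (correctly) patch with a core-and-density argument.
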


\begin{proof}
The claim follows from \cite[Lemma1.1(2)]{B09} and from the relation 
\begin{align*}
\inner{{D}_V(x)\sigma(x)}{(ic(\beta)-i\mu)\sigma(x)}=&\inner{(ic(\beta)+i\mu){D}_V(x)\sigma(x)}{\sigma(x)}\\
=&-\inner{{D}_V(x)(ic(\beta)-i\mu)\sigma(x)}{\sigma}\\
=&-\inner{(ic(\beta)-i\mu)\sigma(x)}{{D}_V(x)\sigma(x)}.
\end{align*}
\end{proof}

The operator $\widehat{D}$ mentioned above is construed using the operator symbol
\begin{align*}
D_{V}(b)+ic(\beta)-i\mu.
\end{align*}
In view of Lemma \ref{Lemma:DVmin}, if the spectrum of $\widetilde{A}(x)$ satisfies the relation 
\begin{align}\label{Eqn:CondSpectTildeAx}
\spec(\widetilde{A}(x))\cap \left(-\frac{1}{2},\frac{1}{2}\right)=\emptyset,
\end{align}
then $D_V(x)$ is essentially self-adjoint and  $G(\mu, x,\beta)$ can be defined just as its resolvent. Observe that this spectral condition can vary as the point $x\in F$ varies. If \eqref{Eqn:CondSpectTildeAx} is not satisfied then one needs to work harder: The operator $G(\mu,x,\beta)$ is constructed as the resolvent of $D_{V,\text{max}}(x)+ic(\beta)$ restricted to certain domain. Concretely, we want $\ran(G(\mu,x,\beta))\subset \dom(D_{V,\text{max}}(x))$ and  
\begin{align}\label{Cond:ExtDV}
(D_{V,\text{max}}(x)+ic(\beta)-i\mu)G(\mu,x,\beta)\sigma=\sigma, \quad \text{for all $\sigma\in L^2((0,\infty),H)$}.
\end{align}
As before, these relations define a self-adjoint extension of $D_V(x)$. We denote the domain of this extension by $\dom(D_{V,\delta}(x))$.\\

From this discussion we see that it is necessary to split the problem into large and small eigenvalues. The following splitting lemma makes this approach feasible. 
 
\begin{lemma}[{Splitting Lemma}, {\cite[Lemma 1.1]{BS91}}]\label{Lemma:Spliting}
For a sufficiently small neighborhood $W_{x_0}$ of $x\in F$, the spectral projection $Q_> \coloneqq Q_{|\lambda|\geq \Lambda}(\widetilde{{A}}(x))$, does not depend on $x\in W_{x_0}$ for some $\Lambda\geq 1$ with the property that $\Lambda\notin\spec(\widetilde{{A}}(x))$ for all $x\in W_{x_0}$.
\end{lemma}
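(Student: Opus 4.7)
The plan is to combine three ingredients: (i) discreteness of the spectrum of $\widetilde{A}(x)$, (ii) norm-resolvent continuity of the family $x \mapsto \widetilde{A}(x)$, and (iii) Kato's stability theorem for finite-dimensional spectral subspaces under continuous perturbation.

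First I would observe that for each fixed $x \in F$, the operator $\widetilde{A}(x) = D_{Y_x}\alpha_{Y_x} + \nu$ is essentially self-adjoint on $H$ with discrete spectrum accumulating only at $\pm\infty$; this is immediate from Theorem \ref{Thm:SpectralDecomp} together with the fact that the fiber $Y_x = \mathbb{C}P^N$ is a closed manifold, so $D_{Y_x}\alpha_{Y_x}$ is elliptic of first order on a compact base. Using the local triviality of $\pi_\mathcal{F}:\mathcal{F}\longrightarrow F$ near $x_0$, I can fix a small open neighborhood $W'_{x_0} \ni x_0$ and a bundle trivialization identifying $Y_x$ with $Y_{x_0}$ for all $x \in W'_{x_0}$. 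Under this identification the family $x\mapsto \widetilde{A}(x)$ becomes a smooth (in particular norm-resolvent continuous) family of self-adjoint operators on the fixed Hilbert space $H_{x_0}$.

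Next I would pick $\Lambda \geq 1$ with $\pm\Lambda \notin \spec(\widetilde{A}(x_0))$, which is possible since this spectrum is discrete, and choose a compact contour $\gamma \subset \mathbb{C}$ enclosing exactly the finitely many eigenvalues of $\widetilde{A}(x_0)$ in $(-\Lambda,\Lambda)$ while staying a positive distance $\epsilon > 0$ from $\spec(\widetilde{A}(x_0))$. By norm-resolvent continuity of the family, after shrinking to a smaller neighborhood $W_{x_0} \subseteq W'_{x_0}$, the contour $\gamma$ remains in the resolvent set of $\widetilde{A}(x)$ for every $x \in W_{x_0}$. In particular $\Lambda \notin \spec(\widetilde{A}(x))$, and the complementary spectral projection admits the Cauchy-integral representation
\begin{equation*}
Q_<(x) \;=\; I - Q_>(x) \;=\; \frac{1}{2\pi i}\oint_\gamma \bigl(z - \widetilde{A}(x)\bigr)^{-1}\, dz,
\end{equation*}
which exhibits $x\mapsto Q_<(x)$ as a norm-continuous family of projections of constant finite rank.

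Finally, to upgrade "continuously varying" to "literally independent of $x$", I would apply Kato's intertwining construction for continuous families of projections of constant rank: after possibly shrinking $W_{x_0}$ so that it is contractible, there exists a norm-continuous family of unitaries $U(x)$ on $H_{x_0}$ with $U(x_0) = I$ such that $U(x)Q_<(x_0)U(x)^{-1} = Q_<(x)$. Absorbing $U(x)$ into the original local trivialization of $\mathcal{E}$ yields a new trivialization in which the projection $Q_>$ becomes literally constant on $W_{x_0}$. The main obstacle is precisely this last step, which crucially relies on the finiteness of $\operatorname{rank} Q_<(x)$; the construction must also be compatible with the pseudo-differential calculus in which the lemma is to be used, so some care is needed to ensure that the resulting trivialization is smooth in $x$, which is the main technical point where one must invoke the smoothness (not merely continuity) of $x \mapsto \widetilde{A}(x)$.
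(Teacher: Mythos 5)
Your argument is correct, and it is worth noting at the outset that the paper itself offers no proof of this lemma: it is quoted verbatim as \cite[Lemma 1.1]{BS91}, so there is nothing internal to compare against except the way the lemma is subsequently used. Your reconstruction is the standard proof and, importantly, it correctly identifies the one point that the bare statement glosses over: on a fixed local trivialization of the Hilbert bundle $\mathcal{E}$ the projections $Q_{<}(x)$ do vary with $x$ (the fiber metrics, hence $D_{Y_x}$, genuinely depend on $x$), and the assertion ``does not depend on $x$'' only becomes literally true after the Sz.-Nagy/Kato intertwining unitaries $U(x)$ are absorbed into the trivialization. That is exactly how the lemma must be read for the symbol calculus of Section 7 to make sense, since $H_<$ has to be a \emph{fixed} finite-dimensional space. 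Two small corrections. First, to conclude $\pm\Lambda\notin\spec(\widetilde{A}(x))$ from the contour argument you should choose $\gamma$ to pass through the points $\pm\Lambda$ (e.g.\ the boundary of a thin rectangle with vertical sides on $\mathrm{Re}\,z=\pm\Lambda$); otherwise the resolvent bound along $\gamma$ says nothing about the point $\Lambda$ itself. Second, the Kato construction neither requires contractibility of $W_{x_0}$ nor finiteness of $\mathrm{rank}\,Q_<(x)$: the explicit formula
\begin{equation*}
U(x)=\bigl(I-(Q_<(x)-Q_<(x_0))^2\bigr)^{-1/2}\bigl(Q_<(x)Q_<(x_0)+(I-Q_<(x))(I-Q_<(x_0))\bigr)
\end{equation*}
produces the intertwiner whenever $\norm{Q_<(x)-Q_<(x_0)}<1$, and it is automatically as smooth in $x$ as the resolvent is, so the smoothness concern in your last paragraph resolves itself. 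The finite rank of $Q_<$ is needed not for the unitary but for the later reduction of the small-eigenvalue part to a matrix-valued model operator.
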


\begin{remark}
Let us comment on the notation used above. Given a self-adjoint operator $A:\dom(A)\subset H\longrightarrow H$ on a separable Hilbert space $H$ we can consider, for each Borel subset $J\subseteq\mathbb{R}$, the corresponding spectral projection  $Q_J\coloneqq Q_J(A)$, see for instance to \cite[Section VI.5.1]{KATO}, \cite[Section 4]{S12}. For example $Q_{\{0\}}$ is precisely the projection onto the kernel of $A$. These spectral projections will be discussed again in the subsequent chapters. 
\end{remark}

\subsubsection{Large eigenvalues}
First we handle the large eigenvalues. In view of Lemma \ref{Lemma:DVmin} we define the operator 
\begin{align*}
{D}_V(x)_>\coloneqq &\gamma\left(\frac{d}{dr}+\frac{1}{r}\widetilde{{A}}(x) _>\right),
\end{align*}
where $\widetilde{{A}}(x) _>\coloneqq \widetilde{{A}}(x) Q_>$. Since $|\widetilde{{A}}(x) _>|\geq 1$ then, as discussed above, ${D}_V(x)_>$ is essentially selj-adjoint on compactly supported forms because condition \eqref{Eqn:CondSpectTildeAx} holds. We will still denote this self-ajoint extension by ${D}_V(x)_>\coloneqq {D}_{V,\text{min}}(x)_>$. In this case we can just define
$G(\mu,x,\beta)_>\coloneqq ({D}_V(x)_>+ic(\beta)-i\mu)^{-1}.$
Using Corollary \ref{Coro:GraphNormDV} and Lemma \ref{Lemma:DVmin} it is possible to prove the following estimates. 
\begin{proposition}[{\cite[Equations (4.9), (4.10)]{B09}}]\label{Prop:G>}
The operator $G(\mu,x,\beta)_>$ satisfies
\begin{align*}
\bnorm{\frac{\partial^j}{\partial \mu ^j}\frac{\partial ^{|p|}}{\partial x^p}\frac{\partial ^{|q|}}{\partial \beta^q}G(\mu,x,\beta)_>}_\mathcal{L(\mathcal{E})}\leq C_{j,p,q}|\mu|^{-1-j}
\end{align*}
and $G(\mu,x,\beta)_>\sigma(r)=O_\sigma(r^{1/2}|\log r|)$, as $r\longrightarrow 0$.
\end{proposition}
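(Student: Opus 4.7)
The base case $j=|p|=|q|=0$ follows immediately from Corollary~\ref{Coro:GraphNormDV}. Indeed, applying that identity to $\sigma = G(\mu,x,\beta)_>\tau$ for $\tau\in L^2((0,\infty),H)$ gives
\begin{align*}
\norm{\tau}^2_{D_V} \geq (|\mu|^2+\norm{\beta}^2_x)\norm{G(\mu,x,\beta)_>\tau}^2_{D_V},
\end{align*}
which yields $\norm{G(\mu,x,\beta)_>}_{\mathcal{L}(\mathcal{E})}\leq (|\mu|^2+\norm{\beta}^2_x)^{-1/2}\leq|\mu|^{-1}$. The asymptotic bound $G(\mu,x,\beta)_>\sigma(r)=O_\sigma(r^{1/2}|\log r|)$ then follows directly from Lemma~\ref{Lemma:DVmin}(3): the range of $G(\mu,x,\beta)_>$ lies in $\dom(D_{V,\textnormal{max}}(x)_>)$, but on the $Q_>$-spectral subspace condition \eqref{Eqn:CondSpectTildeAx} holds with $|\lambda|\geq 1$, so no singular coefficients $c_\lambda$ with $|\lambda|<1/2$ appear in the asymptotic expansion.

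For the derivatives in $\mu$, the plan is to differentiate the resolvent identity $G_>(\mu)(D_V(x)_>+ic(\beta)-i\mu)=I$ iteratively, yielding
\begin{align*}
\frac{\partial^j}{\partial\mu^j}G(\mu,x,\beta)_> = (i)^j j!\, G(\mu,x,\beta)_>^{\,j+1}.
\end{align*}
Submultiplicativity of the operator norm combined with the base case immediately gives the factor $|\mu|^{-1-j}$. For the derivatives in $\beta$, one differentiates the same identity using that only the bounded zero-order term $ic(\beta)$ depends on $\beta$. A straightforward induction gives an expression of the form $\partial^{|q|}_\beta G_> = \sum G_> (i c^{(q_1)})G_> (ic^{(q_2)})\cdots G_>$ with bounded factors $c^{(q_k)}=\partial^{q_k}_\beta c(\beta)$; combined with the base bound this again yields the desired $|\mu|^{-1-1-\cdots}$ decay (in fact only $|\mu|^{-1}$ is needed on these terms since $\beta$-differentiation contributes no extra power of $|\mu|$). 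The $\mu$ and $\beta$ differentiations commute, so one can interleave them.

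The real subtlety is in the $x$-derivatives, because $D_V(x)_>$ depends on $x$ through the fiber metric, the fiber Hodge--de~Rham operator $D_{Y_x}$, the chirality $\alpha_{Y_x}$ and also through the spectral projection $Q_>$. The plan is to use Lemma~\ref{Lemma:Spliting}, the Splitting Lemma, which ensures that on a sufficiently small $W_{x_0}$ the projection $Q_>$ can be chosen independent of $x$. With this pointwise-constant splitting, differentiating the resolvent identity gives the Duhamel-type formula
\begin{align*}
\frac{\partial}{\partial x^k} G(\mu,x,\beta)_> = -G(\mu,x,\beta)_> \bigl(\partial_{x^k}D_V(x)_>\bigr)G(\mu,x,\beta)_>,
\end{align*}
and the essential point is that $\partial_{x^k}D_V(x)_>$ has lower order than $D_V(x)_>$ relative to its graph norm, so it is bounded by the graph norm of $D_V(x)_>$ which in turn is controlled by $G(\mu,x,\beta)_>$ via the stronger form of Corollary~\ref{Coro:GraphNormDV}. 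Iterating this and combining with the $\mu$- and $\beta$-derivatives produces products of finitely many factors of $G(\mu,x,\beta)_>$ with bounded zero-order insertions, and the bound $|\mu|^{-1-j}$ follows from the base estimate applied $j+1$ times. The main obstacle is the book-keeping required to show that the $x$-derivatives of $D_V(x)_>$ (including the singular $r^{-1}$ term) are controlled uniformly by the graph norm of $D_V(x)_>$ on $\dom(D_{V,\textnormal{min}}(x)_>)$; this is where the condition $|\widetilde A(x)_>|\geq 1$ (hence essential self-adjointness with no boundary terms at $r=0$) and the uniformity provided by Lemma~\ref{Lemma:Spliting} are indispensable.
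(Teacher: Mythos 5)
Your proposal follows essentially the same route as the paper, which itself gives no proof but states that the estimates follow from Corollary~\ref{Coro:GraphNormDV} and Lemma~\ref{Lemma:DVmin} (deferring the details to \cite[Equations (4.9), (4.10)]{B09}): your base case and the $O_\sigma(r^{1/2}|\log r|)$ asymptotics use exactly those two ingredients in the intended way, and the iterated resolvent differentiation for the $\mu$-, $\beta$- and $x$-derivatives is the standard argument carried out in Br\"uning's paper. You correctly identify the only genuinely delicate point — the relative boundedness of $\partial_x D_V(x)_>$ with respect to the graph norm, uniformly via the Splitting Lemma — which is precisely where the cited source does its work.
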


The following fundamental result shows that the estimates of Proposition \ref{Prop:G>} serve as a bound of the operator associated with the symbol $G(\mu,x,\beta)>$. In particular we see why it is valuable to have an estimates for all derivatives. 
\begin{theorem}[Calder\'on-Vaillancourt,{ \cite{CV71}}]\label{Thm:CV}
There exists a constant $N_{CV}>0$ such that 
\begin{align*}
\norm{\mathcal{G}_{1}(\mu, D, x_0)}\leq \sup_{|p|,|q|\leq N_{CV}}
\bnorm{\frac{\partial ^{|p|}}{\partial x^p}\frac{\partial ^{|q|}}{\partial \beta^q}G(\mu,x,\beta)_>}_\mathcal{L(\mathcal{E})}.
\end{align*}
\end{theorem}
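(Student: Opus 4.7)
The plan is to adapt the classical Calder\'on-Vaillancourt proof to operator-valued symbols, using the Cotlar-Stein almost-orthogonality lemma. The reason one needs control on all derivatives of order $\leq N_{CV}$ is that $G(\mu,x,\beta)_>$ need not decay in $\beta$, so boundedness cannot come from absolute integrability of the kernel. I would first realize $\mathcal{G}_1(\mu,D,x_0)$ as an integral operator on $L^2(W_{x_0}, \mathcal{E}|_{W_{x_0}}) \cong L^2(B_\delta^{\mathbb{R}^h}(0), \mathcal{E}_{x_0})$ with operator-valued Schwartz kernel
\begin{align*}
K(x,y) = \int_{\mathbb{R}^h} e^{i\langle x-y,\beta\rangle}\, G(\mu,x,\beta)_> \,\frac{d\beta}{(2\pi)^h}.
\end{align*}

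The core of the argument is a phase-space decomposition. Pick a non-negative $\chi \in C_c^\infty(\mathbb{R}^h)$ with $\sum_{k\in\mathbb{Z}^h}\chi(\cdot - k) \equiv 1$, and split
\begin{align*}
G(\mu,x,\beta)_> = \sum_{j,k\in\mathbb{Z}^h} G_{j,k}(x,\beta), \qquad G_{j,k}(x,\beta) := \chi(x-k)\chi(\beta-j)\, G(\mu,x,\beta)_>.
\end{align*}
Writing $\mathcal{G}_{j,k}$ for the operator with symbol $G_{j,k}$, an elementary computation (each factor is compactly supported in $\beta$) shows that $\|\mathcal{G}_{j,k}\|_{\mathcal{L}(L^2)}$ is bounded by a fixed multiple of $\sup\|G_>\|_{\mathcal{L}(\mathcal{E})}$, uniformly in $j,k$.

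Next comes the almost-orthogonality estimate: for any $M\in\mathbb{N}$, there exists $C_M$, depending only on $\sup_{|p|,|q|\leq 2M}\|\partial_x^p\partial_\beta^q G_>\|_{\mathcal{L}(\mathcal{E})}$, such that
\begin{align*}
\|\mathcal{G}_{j,k}^*\mathcal{G}_{j',k'}\|_{\mathcal{L}(L^2)},\ \|\mathcal{G}_{j,k}\mathcal{G}_{j',k'}^*\|_{\mathcal{L}(L^2)} \leq C_M (1+|j-j'|+|k-k'|)^{-2M}.
\end{align*}
This is proved by writing out the composition kernels and integrating by parts: each derivative in $\beta$ produces a factor of $i(x-y)$ which is controlled by $|k-k'|^{-1}$ on the supports, and each derivative in $x$ produces a factor of $i(\beta-\beta')$ controlled by $|j-j'|^{-1}$. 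Applying Cotlar-Stein then yields
\begin{align*}
\|\mathcal{G}_1(\mu,D,x_0)\|_{\mathcal{L}(L^2)} \leq \Bigl(\sum_{j,k\in\mathbb{Z}^h}\sqrt{C_M}\,(1+|j|+|k|)^{-M}\Bigr) \sup_{|p|,|q|\leq N_{CV}}\|\partial_x^p\partial_\beta^q G(\mu,x,\beta)_>\|_{\mathcal{L}(\mathcal{E})},
\end{align*}
where the series converges once $M > h$, giving the desired estimate with $N_{CV}$ a fixed multiple of $h$.

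The main obstacle will be managing the operator-valued nature of the symbol throughout the Cotlar-Stein argument. The scalar Calder\'on-Vaillancourt theorem is usually stated for complex-valued symbols, but in the present setting all manipulations take place in $\mathcal{L}(\mathcal{E}_{x_0})$. The integration-by-parts steps and the almost-orthogonality bounds must therefore be rewritten with absolute values replaced by operator norms, using the submultiplicativity $\|AB\|\leq \|A\|\|B\|$ at each composition. Since $\mathcal{E}_{x_0}$ is a fixed Hilbert space along the fiber, and Proposition \ref{Prop:G>} guarantees uniform bounds on all derivatives of $G_>$ in $\mathcal{L}(\mathcal{E}_{x_0})$, the classical arguments carry over verbatim; alternatively, one can invoke a ready-made operator-valued version of Calder\'on-Vaillancourt from the semiclassical analysis literature.
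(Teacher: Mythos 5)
The paper does not prove this statement at all: it is imported verbatim from Calder\'on--Vaillancourt \cite{CV71} as a black box, so there is no internal proof to compare against. Your sketch is the standard (and essentially the original) route to the theorem: a phase-space partition of unity, uniform $L^2$-bounds on the localized pieces, almost-orthogonality estimates obtained by integrating by parts in $x$ and $\beta$, and the Cotlar--Stein lemma to resum. The outline is sound, and your closing remark is the right one: since the pieces $\mathcal{G}_{j,k}$ act on the Hilbert space $L^2(B^{\mathbb{R}^h}_\delta(0),\mathcal{E}_{x_0})$ and all symbol estimates are phrased in $\mathcal{L}(\mathcal{E}_{x_0})$-norm, Cotlar--Stein and the integration-by-parts steps only use submultiplicativity and the Hilbert-space structure, so the scalar argument transfers with no change. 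Two small points to tighten if you were to write this out: (i) in the final display the constant $\sqrt{C_M}$ already absorbs one copy of $\sup\norm{\partial_x^p\partial_\beta^q G_>}$ (since $C_M$ is quadratic in the symbol seminorms), so the inequality as written slightly double-counts --- the clean statement is $\norm{\mathcal{G}_1}\leq C_h\sup_{|p|,|q|\leq 2M}\norm{\partial_x^p\partial_\beta^q G_>}$ with $M>h$, which is what the theorem asserts with $N_{CV}=2M$; (ii) the almost-orthogonality bound should be stated so that for $|k-k'|$ large the composition $\mathcal{G}_{j,k}^*\mathcal{G}_{j',k'}$ actually vanishes by disjointness of the spatial supports, while the decay in $|j-j'|$ comes from the frequency localization --- this is what makes the Schur-type sum converge. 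Neither point is a gap in the idea, only in the bookkeeping.
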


\subsubsection{Small eigenvalues}

Let $Q_<\coloneqq I-Q_>$ be the complementary spectral projection of $Q_>$, specifically set $Q_< \coloneqq Q_{|\lambda|< \Lambda}(\widetilde{{A}}(x))$. Analogously as before, consider the corresponding reduced operator 
\begin{align*}
{D}_V(x)_<\coloneqq &\gamma\left(\frac{d}{dr}+\frac{1}{r}\widetilde{{A}}(x) _<\right),
\end{align*}
where $\widetilde{{A}}(x) _<:=\widetilde{{A}}(x) Q_<$. Note that condition \eqref{Eqn:CondSpectTildeAx} might not be satisfied, which implies  that ${D}_V(x)_<$ is not necessarily  essentially self-adjoint when defined on forms with compact support. Thus, a choice of ``boundary conditions" is needed. We are going to define this self-adjoint extension by constructing the parametrix $G(\mu, x,\beta)$ explicitly and then defining the domain by \eqref{Cond:ExtDV}, which in this case it reads as
\begin{align}
&(D_{V,\text{max}}(x)_<+ic(\beta)-i\mu)G(\mu,x,\beta)_<\sigma_<=\sigma_<, \quad \text{for all $\sigma_<\in L^2((0,\infty),H_<)$},\label{Eqn:(4.12)}\\
 &\text{$D_{V,\text{max}}(x)_<$ is symmetric on  $\ran(G(\mu,x,\beta)_<)$},\label{Eqn:(4.13)}
\end{align}
where $H_>\coloneqq \ran (Q_>)$. The construction of $G(\mu, x,\beta)$ will involve Bessel functions which will allow us to obtain estimates of the desired form 
\begin{align}\label{Eqn:(4.14)}
\bnorm{\frac{\partial^j}{\partial \mu ^j}\frac{\partial ^{|p|}}{\partial x^p}\frac{\partial ^{|q|}}{\partial \beta^q}G(\mu,x,\beta)_<}_\mathcal{L(\mathcal{E})}\leq C_{j,p,q}|\mu|^{-1-j},
\end{align}
and enable us to make sense of the Neumann series \eqref{Eqn:NeumSer}. This strategy to handle the small eigenvalues can be implemented essentially because the space $H_<$ is finite dimensional, as we will see in the following subsection.  
\subsubsection{A model operator}
Let $V$ be a finite dimensional complex Hilbert space,  $A\in\mathcal{L}(V)$ be a Hermitian operator and $\alpha_1,\alpha_2$ be two self-adjoint involutions  such that the following relations hold 
\begin{align*}
\alpha_1\alpha_2+\alpha_2\alpha_1=&0,\\
\alpha_1 A-A\alpha_1=&0,\\
\alpha_2 A+A\alpha_2=&0.
\end{align*}
We want to study the equation
\begin{equation}\label{Eqn(4.18)}
L(A)\sigma(r)\coloneqq \left(\frac{d}{dr}+\frac{1}{r}A+\mu\alpha_2\right)\sigma(r)=\tau(r), \:r>0,
\end{equation}
in $L^2(\mathbb{R}_+,H)$ for $\mu\in\mathbb{R}-\{0\}$. We can use the involution $\alpha_1$ to split $V$ as $V=V^+\oplus V^-$ so that the operator $A$, commuting with $\alpha_1$,  decomposes accordingly as
\begin{align*}
\left(
\begin{array}{cc}
A^+ & 0\\
0 &A^-
\end{array}
\right).
\end{align*}
Under the isomorphism 
\begin{align*}
\xymatrixrowsep{0.2pc}\xymatrix{
\mathbb{C}^2\otimes V^+ \ar[r] & V=V^+\oplus V^-\\
x_+\otimes y_+ \ar@{|->}[r] & x_+ + \alpha_2 y_+,
}
\end{align*} 
we see that the action of $A$ is given by
\begin{align*}
A(x_+ + \alpha_2 y_+)=A^+ x_+ + A^-\alpha_2 y_+=A^+ x_+ -\alpha_2 A^+ y_+.
\end{align*}
On the other hand, $\mu\alpha_2(x_+ + \alpha_2 y_+)=\mu\alpha_2x_+ +\mu y_+=\mu( y_+ +\alpha_2  x_+)$. Hence, we can transform \eqref{Eqn(4.18)} under this isomorphism to obtain the equation
\begin{align*}
\left(\frac{d}{dr}+\frac{1}{r}
\left(
\begin{array}{cc}
A^+ & 0\\
0 &-A^+
\end{array}
\right)
+\mu
\left(
\begin{array}{cc}
0 & I\\
I & 0
\end{array}
\right)
\right)
\left(
\begin{array}{c}
\sigma_+\\
\sigma_-
\end{array}
\right)(r)
=\left(
\begin{array}{c}
\tau_+\\
\tau_-
\end{array}
\right)(r).
\end{align*}
The following theorem, regarding the solution operator of \eqref{Eqn(4.18)}, is one of the most fundamental results of \cite{B09}.

\begin{theorem}[{\cite[Theorem 4.2]{B09}}]\label{Thm:B09MainSec4}
For $\mu>0$, the equation \eqref{Eqn(4.18)} admits the solution
\begin{align*}
&G(\mu,A)\left(
\begin{array}{c}
\tau_+\\
\tau_-
\end{array}
\right)(r)\\
&=\int_0^r\mu(rs)^{1/2}
\left(
\begin{array}{cc}
K_{A+1/2}(\mu r)I_{A-1/2}(\mu s) & K_{A+1/2}(\mu r)I_{A+1/2}(\mu s)\\
K_{A-1/2}(\mu r)I_{A-1/2}(\mu s) & K_{A-1/2}(\mu r)I_{A+1/2}(\mu s)
\end{array}
\right)
\left(
\begin{array}{c}
\tau_+\\
\tau_-
\end{array}
\right)(s)ds\\
&\quad -\int_r^\infty\mu(rs)^{1/2}
\left(
\begin{array}{cc}
I_{A+1/2}(\mu r)K_{A-1/2}(\mu s) & -I_{A+1/2}(\mu r)K_{A+1/2}(\mu s)\\
-I_{A-1/2}(\mu r)K_{A-1/2}(\mu s) & I_{A-1/2}(\mu r)K_{A+1/2}(\mu s)
\end{array}
\right)
\left(
\begin{array}{c}
\tau_+\\
\tau_-
\end{array}
\right)(s)ds\\
&\eqqcolon G_0(\mu,A)\tau(r)+G_\infty(\mu,A)\tau(r). 
\end{align*}
The operators $G_{0/\infty}(\mu,A)\tau(r)$ are bounded in $L^2(\mathbb{R}_+,H)$ and are smooth functions of the variables $\mu\in[1,\infty)$ and $A\in\mathcal{L}_s(V)$, the space of Hermitian matrices on $H$, such that for $p,q\in\mathbb{Z}_+$,
\begin{align}\label{Eqn:(4.21)}
\bnorm{D_A^p\left(\frac{\partial}{\partial \mu}\right)^qG_{0/\infty}(\mu,A)}_{L^2(\mathbb{R}_+,H)}\leq C_{p,q,A}|\mu|^{-1}. 
\end{align}
Moreover, for $\sigma\in \ran(G(\mu,A))$ and $r$ is sufficiently small we have the estimates 
\begin{align}
\norm{\sigma_+(r)}_H &\leq C_\epsilon r^{1/2-\epsilon}\norm{\tau}_{L^2(\mathbb{R}_+,H)} \textnormal{ for every $\epsilon>0$},\label{Eqn:(4.22)}\\
\norm{\sigma_-(r)}_H &\leq C r^{-1/2+\delta}\norm{\tau}_{L^2(\mathbb{R}_+,H)} \textnormal{ for some $\delta>0$}. \label{Eqn:(4.23)}
\end{align}
If $|A|\geq 1/2$, then we have the better estimate 
\begin{align}\label{Eqn:(4.24)}
\norm{\sigma(r)}_H\leq C r^{1/2}\norm{\tau}_{L^2(\mathbb{R}_+,H)}. 
\end{align}
\end{theorem}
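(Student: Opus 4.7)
The plan is to first reduce the coupled first-order system \eqref{Eqn(4.18)} to a diagonal system of decoupled second-order Bessel-type equations, then write down an explicit Green's function using the standard pair of modified Bessel functions, and finally extract the estimates from their well-known asymptotics. After the $\alpha_1$-splitting performed just before the statement, the unknown has the form $(\sigma_+,\sigma_-)$ with $A^+$ acting diagonally. Squaring $L(A)$ on this decomposition yields, on each eigenspace of $A^+$ with eigenvalue $a$, two scalar equations of the type $(-\partial_r^2 + a(a\pm 1)/r^2 + \mu^2)u = \text{source}$, whose homogeneous solutions are $\sqrt{r}\,I_{a\pm 1/2}(\mu r)$ and $\sqrt{r}\,K_{a\pm 1/2}(\mu r)$. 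This explains the particular orders $a\pm 1/2$ appearing in the theorem.

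Next I would construct $G(\mu,A)$ by variation of parameters on this diagonal scalar problem, using the Wronskian identity $I_\nu(z)K_\nu'(z)-I_\nu'(z)K_\nu(z)=-1/z$ together with the differentiation formulas $I_\nu'(z)=I_{\nu-1}(z)-(\nu/z)I_\nu(z)$ and $K_\nu'(z)=-K_{\nu-1}(z)-(\nu/z)K_\nu(z)$. These are precisely the identities that realign the ``shifted'' orders $a\pm 1/2$ across the off-diagonal $2\times 2$ matrix entries and make the first-order system close up; a direct substitution of the ansatz into $L(A)\sigma = \tau$ then reduces, entrywise, to a Wronskian computation. Functional calculus in $A$ allows one to promote this scalar Green's function to an operator-valued kernel on $V$, since the whole construction is diagonal in the $A$-eigenbasis.

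For the $L^2$-bounds I would run a Schur test on the two kernels $G_0(\mu,A)$ and $G_\infty(\mu,A)$, splitting the integration regions according to $\mu r,\mu s\lessgtr 1$ and inserting the classical asymptotics $I_\nu(z)\sim (z/2)^\nu/\Gamma(\nu+1)$, $K_\nu(z)\sim \tfrac12\Gamma(\nu)(z/2)^{-\nu}$ as $z\to 0^+$, and $I_\nu(z),K_\nu(z)\sim e^{\pm z}/\sqrt{2\pi z}$ as $z\to\infty$. After the substitution $r\mapsto r/\mu$, $s\mapsto s/\mu$ the operator $G(\mu,A)$ becomes $\mu^{-1}G(1,A)$ on $L^2(\mathbb R_+,H)$, which gives the $|\mu|^{-1}$ factor in \eqref{Eqn:(4.21)} at once; the derivative estimates in $\mu$ and $A$ follow by differentiating under the integral, since $I_\nu,K_\nu$ depend smoothly on the order $\nu$ away from $\nu\in\mathbb Z_{\leq 0}$, and the smooth dependence is preserved by the rescaling. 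The pointwise bounds \eqref{Eqn:(4.22)}--\eqref{Eqn:(4.23)} on $\sigma_\pm(r)$ come from Cauchy--Schwarz applied in the $s$-variable against the asymptotic expansions of the kernels, the exponent being dictated by the behavior of $I_{a\pm 1/2}(\mu r)\sqrt{r}$ as $r\to 0^+$, which is $O(r^{a+1})$ for the $+$ component and $O(r^{-a})$ for the $-$ component.

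The main obstacle is to obtain estimates uniform in $A$ when the spectrum of $A$ is allowed to touch $(-1/2,1/2)$: there the order $a-1/2$ of the Bessel function $K_{a-1/2}$ can become arbitrarily small, so the $\Gamma(a-1/2)$ factor in its small-$z$ asymptotics blows up and the $r^{-a}$ behavior in \eqref{Eqn:(4.23)} is the sharpest one can hope for, forcing the loss of a power $r^{-1/2+\delta}$ compared with the harmless bound \eqref{Eqn:(4.24)} available when $|A|\geq 1/2$. Concretely, under $|A|\geq 1/2$ both orders $a\pm 1/2$ stay bounded away from zero, the $K_\nu$-singularity is absorbable, and one recovers the uniform $r^{1/2}$ control. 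Managing this borderline spectral region carefully, and checking that the resulting $\sigma$ still lies in $\dom(D_{V,\max})$ so that \eqref{Cond:ExtDV} defines a self-adjoint extension rather than an ad hoc inverse, is the delicate point of the argument.
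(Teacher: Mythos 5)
First, a structural remark: the paper does not prove this statement. It is imported verbatim from Br\"uning's paper (\cite[Theorem 4.2]{B09}) and used as a black box in the construction of the symbol $G(\mu,x,\beta)_<$, so there is no in-paper proof to compare against. Judged on its own terms, your strategy is the standard and correct one for results of this type: the $\alpha_1$-splitting reduces \eqref{Eqn(4.18)} on each eigenspace of $A^+$ to the coupled scalar system $\sigma_+'+\tfrac{a}{r}\sigma_++\mu\sigma_-=\tau_+$, $\sigma_-'-\tfrac{a}{r}\sigma_-+\mu\sigma_+=\tau_-$, whose homogeneous solution pairs are exactly $(\sqrt r\,I_{a+1/2}(\mu r),-\sqrt r\,I_{a-1/2}(\mu r))$ and $(\sqrt r\,K_{a+1/2}(\mu r),\sqrt r\,K_{a-1/2}(\mu r))$; the recurrences you quote make the first-order system close up, the Wronskian identity $I_\nu K_{\nu-1}+I_{\nu-1}K_\nu=1/z$ produces the factor $\mu$ in the kernel, and the unitary rescaling $r\mapsto r/\mu$ indeed exhibits $G(\mu,A)$ as unitarily equivalent to $\mu^{-1}G(1,A)$, giving the $|\mu|^{-1}$ in \eqref{Eqn:(4.21)}. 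The identification of the borderline spectral region $|a|<1/2$ as the source of the weaker exponents in \eqref{Eqn:(4.22)}--\eqref{Eqn:(4.23)} versus \eqref{Eqn:(4.24)} is also correct.

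Two points deserve attention. The more substantive one concerns the $D_A^p$ derivatives in \eqref{Eqn:(4.21)}: you propose to ``promote the scalar Green's function to an operator-valued kernel by functional calculus, since the construction is diagonal in the $A$-eigenbasis,'' but diagonalization does not yield smoothness in $A\in\mathcal{L}_s(V)$, because eigenvalues and eigenprojections of a Hermitian matrix are not smooth at eigenvalue crossings. What saves the argument is that $\nu\mapsto I_\nu(z)$ and $\nu\mapsto K_\nu(z)$ extend to entire functions of the order $\nu$ (your caveat about $\nu\in\mathbb{Z}_{\leq 0}$ is unnecessary: the apparent singularities of $K_\nu$ there are removable), so that $I_{A\pm 1/2}(z)$ and $K_{A\pm 1/2}(z)$ are defined by holomorphic functional calculus and depend real-analytically on $A$; this is exactly what is needed later, when the symbol must be differentiated in the base variable $x$ through $\widetilde{\mathscr A}_<(x)$. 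The minor point is an exponent slip: $\sqrt r\,I_{a-1/2}(\mu r)\sim r^{a}$ as $r\to 0^+$, not $r^{-a}$; for $|a|<1/2$ both expressions happen to be of the admissible form $r^{-1/2+\delta}$, so the conclusion survives, but the bookkeeping matters when one matches $\delta$ and $\epsilon$ to the distance of $\operatorname{spec}(A)$ from $\pm 1/2$. Beyond these, the proposal is a strategy outline: the direct verification that the stated kernel solves the equation, the Schur-test estimates, and the check that $\operatorname{ran}(G(\mu,A))$ defines a symmetric (hence self-adjoint) restriction of the maximal operator are all deferred, and the last of these is precisely the content that the surrounding text of the paper extracts from \eqref{Eqn:(4.22)}--\eqref{Eqn:(4.23)} via the integration-by-parts boundary-term computation.
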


\subsubsection{Construction of the operator symbol $G(\mu,x,\beta)_<$}

Now we show how to apply Theorem \ref{Thm:B09MainSec4} to our concrete case of interest. First of all, the finite dimensional complex Hilbert space is $V\coloneqq H_<=\wedge T^*_{x_0}F\otimes\mathbb{C}^2\otimes Q_< (L^2(\wedge T^* Y_{x_0}). $
In addition, in view of \eqref{Eqn:DefDV}, the role of the operator $A$ in Theorem \ref{Thm:B09MainSec4} is 
\begin{align*}
\widetilde{{A}}(x)_{<}=
\left(
\begin{array}{cc}
{A}(x)_{<} & 0\\
0& - {A}(x)_{<} 
\end{array}
\right),
\end{align*}
where ${A}(x)_{<} \coloneqq Q_{<\Lambda} \left(D_{Y_x}\alpha_{Y_x}+\nu\right)$. It remains to define the involutions. As before set
\begin{align*}
\gamma\coloneqq \varepsilon_H\otimes
\left(
\begin{array}{cc}
0 & - I\\
I & 0
\end{array}
\right),
\end{align*}
$\widetilde{\gamma}\coloneqq i\gamma$ and $\zeta\coloneqq \zeta(\mu,\beta)\coloneqq \mu\widetilde{\gamma}-\widetilde{\gamma}c(\beta)$, for $\beta\in T^*F$. 
\begin{lemma}\label{Lemma:RelZeta}
The following relations hold true
\begin{enumerate}
\item $\zeta^\dagger=\zeta$.
\item $\zeta ^2=(|\mu|^2+|\beta|^2_x)\eqqcolon\tilde{\mu}(x,\beta)I$.
\item $\zeta\widetilde{A}(x)_<+\widetilde{A}(x)_<\zeta=0$.
\end{enumerate}
\end{lemma}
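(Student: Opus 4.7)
The whole statement reduces to a small table of (anti)commutation and adjointness relations among the three building blocks $\widetilde{\gamma}$, $c(\beta)$, and $\widetilde{A}(x)_<$, which I would establish first and then combine mechanically. Specifically, I would record at the outset:
\begin{itemize}
\item $\gamma^2 = -I$ (hence $\widetilde{\gamma}^2 = I$) and $\gamma^\dagger = -\gamma$ (hence $\widetilde{\gamma}^\dagger = \widetilde{\gamma}$), both read off directly from $\gamma = \varepsilon_H\otimes \left(\begin{smallmatrix}0 & -I\\ I & 0\end{smallmatrix}\right)$ using $\varepsilon_H^\dagger = \varepsilon_H$ and $\varepsilon_H^2 = I$;
\item $c(\beta)^\dagger = -c(\beta)$ and $c(\beta)^2 = -\lVert\beta\rVert_x^2$, already recorded in Chapter \ref{Sect:Lott};
\item $\widetilde{\gamma}\, c(\beta) + c(\beta)\,\widetilde{\gamma} = 0$, which follows from \eqref{PropertiesEpsilonCliff} (i.e.\ $c(\beta)\varepsilon_H = -\varepsilon_H c(\beta)$) together with the fact that the $\mathbb{C}^2$-factor of $\widetilde{\gamma}$ commutes with $c(\beta)$ since $c(\beta)$ acts only on the horizontal factor;
\item $\widetilde{\gamma}\,\widetilde{A}(x)_< + \widetilde{A}(x)_<\widetilde{\gamma} = 0$, which I would obtain by a direct $2\times 2$ block computation using the diagonal form $\widetilde{A}(x)_< = \mathrm{diag}(A(x)_<,-A(x)_<)$ against the off-diagonal matrix factor of $\gamma$;
\item $c(\beta)\,\widetilde{A}(x)_< = \widetilde{A}(x)_<\, c(\beta)$, which holds because $c(\beta)$ lives on the horizontal tensor factor while $\widetilde{A}(x)_<$ acts on the vertical factor and on $\mathbb{C}^2$ (alternatively this commutation is a direct consequence of Lemma \ref{Lemma 1.1(2)} combined with the anticommutation of $\gamma$ with $c(\beta)$).
\end{itemize}

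With these relations in hand, part (1) is immediate: $\zeta^\dagger = \mu\widetilde{\gamma}^\dagger - c(\beta)^\dagger\widetilde{\gamma}^\dagger = \mu\widetilde{\gamma} + c(\beta)\widetilde{\gamma} = \mu\widetilde{\gamma} - \widetilde{\gamma} c(\beta) = \zeta$, where the third equality uses the anticommutation of $\widetilde{\gamma}$ with $c(\beta)$. For (2) I would expand $\zeta^2$ into four terms; the two cross terms combine to $-\mu\widetilde{\gamma}^2 c(\beta) - \mu\widetilde{\gamma} c(\beta)\widetilde{\gamma} = -\mu c(\beta) + \mu c(\beta)\widetilde{\gamma}^2 = 0$, while the pure quadratic terms give $\mu^2\widetilde{\gamma}^2 + \widetilde{\gamma} c(\beta)\widetilde{\gamma} c(\beta) = \mu^2 - \widetilde{\gamma}^2 c(\beta)^2 = \mu^2 + \lVert\beta\rVert_x^2 = \tilde{\mu}(x,\beta)$. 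Finally, for (3), expanding $\zeta\,\widetilde{A}(x)_< + \widetilde{A}(x)_<\,\zeta$ splits into an $\mu$-term which vanishes by the anticommutation of $\widetilde{\gamma}$ with $\widetilde{A}(x)_<$, and a $c(\beta)$-term of the form $-\widetilde{\gamma} c(\beta)\widetilde{A}(x)_< - \widetilde{A}(x)_<\widetilde{\gamma} c(\beta)$; in the latter I first move $\widetilde{A}(x)_<$ through $c(\beta)$ (they commute) and then through $\widetilde{\gamma}$ (they anticommute, producing a sign flip), so the two pieces cancel.

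The only genuinely delicate point — and the one I would verify most carefully — is the anticommutation $\widetilde{\gamma}\,\widetilde{A}(x)_< + \widetilde{A}(x)_<\,\widetilde{\gamma} = 0$. Conceptually it is the algebraic reason for the whole lemma, but one must be attentive to the tensor-product conventions: $\widetilde{A}(x)_<$ is a $2\times 2$ diagonal block on the $\mathbb{C}^2$ factor with entries $\pm A(x)_<$ acting on the vertical $L^2$-factor, whereas $\widetilde{\gamma}$ is the tensor product of $\varepsilon_H$ with an off-diagonal $\mathbb{C}^2$-matrix; the sign flip between the diagonal entries of $\widetilde{A}(x)_<$ is exactly what makes its product with the off-diagonal matrix of $\widetilde{\gamma}$ symmetric rather than skew, yielding the anticommutation. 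Once this algebraic check is confirmed, the three assertions of the lemma reduce to the manipulations outlined above.
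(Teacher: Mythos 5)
Your proposal is correct and follows essentially the same route as the paper: the paper likewise reduces the lemma to $\widetilde{\gamma}^\dagger=\widetilde{\gamma}$, $\widetilde{\gamma}^2=I$, the anticommutation $c(\beta)\widetilde{\gamma}=-\widetilde{\gamma}c(\beta)$ coming from $c(\beta)\varepsilon_H=-\varepsilon_H c(\beta)$, the $2\times2$ block computation showing $\widetilde{\gamma}\widetilde{A}(x)_<=-\widetilde{A}(x)_<\widetilde{\gamma}$ (using that $\varepsilon_H$ and $c(\beta)$ act on the horizontal factor and hence commute with the vertical operator), and then combines these mechanically for (1)–(3). The only cosmetic difference is that you tabulate all the elementary relations up front, which if anything makes the argument cleaner.
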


\begin{proof}
The proof relies on a careful control of signs. We include the details for completeness. 
\begin{enumerate}
\item First observe that 
$c(\beta)\varepsilon_H=-\varepsilon_Hc(\beta)$ 
which implies that $c(\beta)\widetilde{\gamma}=-\widetilde{\gamma}c(\beta)$
and shows $\zeta^\dagger=(\mu\widetilde{\gamma}-\widetilde{\gamma}c(\beta))^\dagger=\mu\widetilde{\gamma}+c(\beta)\widetilde{\gamma}=\mu\widetilde{\gamma}-\widetilde{\gamma}c(\beta)=\zeta$.
\item Similarly we compute, using that $\widetilde{\gamma}^2=I$,
\begin{align*}
\zeta^2=(\mu\widetilde{\gamma}-\widetilde{\gamma}c(\beta))^2=|\mu|^2+\widetilde{\gamma}c(\beta)\widetilde{\gamma}c(\beta)-\mu c(\beta)-\widetilde{\gamma}c(\beta)\mu\widetilde{\gamma}=(|\mu|^2+|\beta|^2_x).
\end{align*}
\item To begin, note the relation
\begin{align*}
(\varepsilon_H\otimes I)\left(I\otimes\left(D_{Y_x}\alpha_{Y_x}+\nu\right)\right)=\left(I\otimes \left(D_{Y_x}\alpha_{Y_x}+\nu\right)\right)(\varepsilon_H\otimes I).
\end{align*}
We now calculate
\begin{align*}
\widetilde{\gamma}\widetilde{{A}}(x)_<=&
\varepsilon_H\left(
\begin{array}{cc}
0 & -I\\
I & 0
\end{array}
\right)
\left(
\begin{array}{cc}
{A}(x)_{<} & 0\\
0& - {A}(x)_{<} 
\end{array}
\right)\\
=&\left(
\begin{array}{cc}
0 & \varepsilon_H{A}(x)_{<} \\
\varepsilon _H{A}(x)_{<} & 0
\end{array}
\right)\\
=&\left(
\begin{array}{cc}
0 &{A}(x)_{<}\varepsilon_H \\
{A}(x)_{<}\varepsilon _H & 0
\end{array}
\right)\\
=&
-
\left(
\begin{array}{cc}
{A}(x)_{<} & 0\\
0& - {A}(x)_{<} 
\end{array}
\right)
\left(
\begin{array}{cc}
0 & -\varepsilon_H\\
\varepsilon_H & 0
\end{array}
\right)\\
=&-\widetilde{A}(x)_<\widetilde{\gamma}. 
\end{align*}
This takes care of the first term of $\zeta$. For the one second we argue similarly using 
\begin{align*}
(c(\beta)\otimes I)\left(I\otimes\left(D_{Y_x}\alpha_{Y_x}+\nu\right)\right)=\left(I\otimes \left(D_{Y_x}\alpha_{Y_x}+\nu\right)\right)(c(\beta)\otimes I).
\end{align*}
\end{enumerate}
\end{proof}
We now define the involutions $\alpha_1$ and $\alpha_2$ by the relations
\begin{align*}
\alpha_1\coloneqq I\otimes
\left(
\begin{array}{cc}
I& 0\\
0 & -I
\end{array}
\right)
\quad\text{and}\quad
\widetilde{\mu}\alpha_2\coloneqq\zeta.
\end{align*}
The following proposition, which follows immediately from Lemma \ref{Lemma:RelZeta}, shows how to use Theorem \ref{Thm:B09MainSec4} in order to construct the operator symbol $G(\mu,x,\beta)_<$. 

\begin{proposition}[{\cite[Lemma 4.4]{B09}}]\label{Prop:Lemma4.4}
The operator ${D}_V(x)_{<}+ic(\beta)-i\mu$ can be expressed, with respect to the notation above, as
\begin{align*}
{D}_V(x)_{<}+ic(\beta)-i\mu=\gamma\left(\frac{d}{dr}+\frac{1}{r}\widetilde{{A}}_<(x)+\tilde{\mu}\alpha_2\right),
\end{align*} 
and the following relations hold:
\begin{align*}
\alpha_1\alpha_2+\alpha_2\alpha_1=&0,\\
\alpha_1 \widetilde{{A}}_<(x)-\widetilde{{A}}_<(x)\alpha_1=&0,\\
\alpha_2 \widetilde{{A}}_<(x)+\widetilde{{A}}_<(x)\alpha_2=&0.
\end{align*}
\end{proposition}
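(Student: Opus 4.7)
The statement has two essentially independent parts: a formal rewriting of the operator, and the verification of the three (anti)commutation relations. The key observation is that $\zeta=\mu\widetilde{\gamma}-\widetilde{\gamma}c(\beta)$ was designed precisely so that $\tilde{\mu}\alpha_{2}$ should absorb the perturbation $ic(\beta)-i\mu$ after factoring out $\gamma$; hence the proposition is largely a translation of Lemma \ref{Lemma:RelZeta} into the algebraic framework required by Theorem \ref{Thm:B09MainSec4}.

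For the first identity, I would start by computing $\gamma^{2}$ directly from its matrix form: since $\varepsilon_{H}^{2}=I$ and $\bigl(\begin{smallmatrix}0 & -I\\ I & 0\end{smallmatrix}\bigr)^{2}=-I$, one obtains $\gamma^{2}=-I$, and therefore $\gamma\widetilde{\gamma}=i\gamma^{2}=-iI$. Applying $\gamma$ to $\zeta$ on the left and using this identity gives
\begin{equation*}
\gamma\zeta=\mu(\gamma\widetilde{\gamma})-(\gamma\widetilde{\gamma})c(\beta)=-i\mu+ic(\beta).
\end{equation*}
Since by definition $\zeta=\tilde{\mu}\alpha_{2}$, combining this with the expression $D_{V}(x)_{<}=\gamma\bigl(\tfrac{d}{dr}+\tfrac{1}{r}\widetilde{A}(x)_{<}\bigr)$ and factoring $\gamma$ yields the claimed formula for $D_{V}(x)_{<}+ic(\beta)-i\mu$.

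For the commutation relations, the plan is to reduce each one to an observation already contained in (or immediately derived from) Lemma \ref{Lemma:RelZeta}. The anticommutation $\alpha_{1}\alpha_{2}+\alpha_{2}\alpha_{1}=0$ is verified directly: $\alpha_{1}=I\otimes\bigl(\begin{smallmatrix}I & 0\\ 0 & -I\end{smallmatrix}\bigr)$ acts only on the $\mathbb{C}^{2}$ factor and commutes with $c(\beta)$ (which acts on $\wedge T^{*}_{x_{0}}F$); meanwhile a short matrix check shows $\alpha_{1}\widetilde{\gamma}+\widetilde{\gamma}\alpha_{1}=0$, and since $\zeta=\mu\widetilde{\gamma}-\widetilde{\gamma}c(\beta)$ is a sum of terms each anticommuting with $\alpha_{1}$, it follows that $\alpha_{1}\zeta+\zeta\alpha_{1}=0$, which is the claim after dividing by $\tilde{\mu}$. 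The commutation $\alpha_{1}\widetilde{A}(x)_{<}=\widetilde{A}(x)_{<}\alpha_{1}$ is immediate from the block-diagonal form of $\widetilde{A}(x)_{<}$, whose diagonal blocks $\pm A(x)_{<}$ are scalar under the $\mathbb{C}^{2}$-action of $\alpha_{1}$. Finally, $\alpha_{2}\widetilde{A}(x)_{<}+\widetilde{A}(x)_{<}\alpha_{2}=0$ is exactly Lemma \ref{Lemma:RelZeta}(3) rewritten with $\zeta=\tilde{\mu}\alpha_{2}$.

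There is no substantive obstacle; the delicate part is purely bookkeeping with the tensor-product factors and signs (in particular keeping track of which factor $\varepsilon_{H}$, $c(\beta)$ and the $\mathbb{C}^{2}$-matrices act on). Once that is done, the proposition is an almost formal consequence of the earlier algebraic setup, and its role is to put $D_{V}(x)_{<}+ic(\beta)-i\mu$ in exactly the shape of the model operator $L(A)$ in \eqref{Eqn(4.18)}, so that Theorem \ref{Thm:B09MainSec4} can be applied to produce $G(\mu,x,\beta)_{<}$.
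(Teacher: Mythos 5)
Your proposal is correct and follows exactly the route the paper intends: the paper gives no written proof, asserting only that the proposition ``follows immediately from Lemma \ref{Lemma:RelZeta}'', and your computation of $\gamma\zeta=ic(\beta)-i\mu$ via $\gamma\widetilde{\gamma}=-iI$ together with the factor-by-factor (anti)commutation checks is precisely the bookkeeping being left implicit. Nothing is missing.
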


\subsubsection{Properties of the self-adjoint extension}

In this section we are going to synthesize the construction and results discussed in this chapter so far. To give a visual explanation of the general strategy we provide a diagram which describes the construction of the self-adjoint extension $D_\delta$ of the operator $D$ defined by the conditions \eqref{CondExt}. 

\begin{align*}
\xymatrixcolsep{1cm}\xymatrixrowsep{1cm}\xymatrix{
\text{Large eingevalues: $G(\mu,x,\beta)_>$}\ar[dr]^-{\text{ Lemma \ref{Lemma:Spliting}}} &  & \ar[dl]_-{\text{ Lemma \ref{Lemma:Spliting}}} \text{Small eingevalues: $G(\mu,x,\beta)_>$}\\
&G(\mu,x,\beta) \ar[dr]^-{\eqref{Cond:ExtDV}} \ar[dl]_-{\eqref{Eqn:G1}}& \\
 \mathcal{G}_1(\mu,D,x_0) \ar[d]_-{\text{partition of unity}}  & & \widehat{D}_V(x) \ar[d]^-{\int_{\mathbb{R}^h}e^{i\inner{x}{\beta}}\cdot\frac{d\beta}{(2\pi)^h}}  \\
  \mathcal{G}_1(\mu,D) \ar[dr]_{\eqref{Eqn:NeumSer}} \ar@{<->}[rr]^-{\mathcal{G}_1(\mu,D)\widehat{D}=I}  &&   \widehat{D} \ar[dl]^{\eqref{Eqn:NeumSer}}\\
 &\mathcal{G}(\mu,D) \ar[d]_{\eqref{CondExt}}  & \\
& D_\delta & \\
}
\end{align*}
In view of Proposition \ref{Prop:G>}, Theorem \ref{Thm:B09MainSec4} and Proposition \ref{Prop:Lemma4.4} one can explicitly write the  domain of such extension (\cite[Equation (0.16)]{B09}), 
\begin{align*}
\dom(D_\delta)\coloneqq
\left\{
\begin{array}{ll}
\sigma\in \dom(D_\text{max})\::\: &\norm{\sigma^+}=O(r^{1/2-\epsilon})\:\text{for every $\epsilon>0$},\\
&\norm{\sigma^-}=O(r^{-1/2+\delta}) \:\text{for some $\delta>0$, as $r\longrightarrow 0$}
\end{array}
\right\}.
\end{align*}

\begin{theorem}[{\cite[Theorem 0.1(1)]{B09}}]
The operator $D_\delta$ is self-adjoint, discrete and anti-commutes with the cirality operator $\bar{\star}$. 
\end{theorem}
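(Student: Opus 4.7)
My plan is to read off each of the three conclusions directly from the parametrix construction of $\mathcal{G}(\mu,D)$ that has just been completed, treating self-adjointness, discreteness, and anti-commutation in turn.

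For self-adjointness, the Neumann series $\mathcal{G}(\mu,D)=\mathcal{G}_1\sum_{j\geq 0}\mathcal{R}^j$ is built precisely so that the three bullets of \eqref{CondExt} hold: the range condition follows because $G(\mu,x,\beta)_>$ lands in $\dom(D_{V,\min}(x)_>)$ while $G(\mu,x,\beta)_<$ is specified by \eqref{Eqn:(4.12)}, the identity $(D-i\mu)\mathcal{G}(\mu,D)=I$ is the telescoping of the Neumann series, and the symmetry of $D$ on the common range follows from the asymptotic estimates \eqref{Eqn:(4.22)}--\eqref{Eqn:(4.23)}, which force the boundary contributions in the integration-by-parts identity at $r\to 0^+$ to vanish. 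Applying \cite[Problem V.3.17]{KATO} then produces $D_\delta$ as a self-adjoint extension of $D_{\min}$ with resolvent $(D_\delta-i\mu)^{-1}=\mathcal{G}(\mu,D)$.

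For discreteness it suffices to prove the resolvent $\mathcal{G}(\mu,D)$ is compact. I would split via Lemma \ref{Lemma:Spliting}. On the large-eigenvalue part, the symbol bounds of Proposition \ref{Prop:G>} combined with the Calder\'on--Vaillancourt theorem (Theorem \ref{Thm:CV}) yield a classical pseudodifferential operator of order $-1$ on the compact base $F$, which is compact by Rellich. On the small-eigenvalue part, $H_<$ is finite-dimensional by Lemma \ref{Lemma:Spliting}, so the kernel of $G(\mu,x,\beta)_<$ supplied by Theorem \ref{Thm:B09MainSec4} is a finite sum of Bessel-type kernels whose Hilbert--Schmidt norms are controlled uniformly in $(x,\beta)$ by the estimates \eqref{Eqn:(4.21)}--\eqref{Eqn:(4.24)}; compactness of $\mathcal{G}_1(\mu,D)$ is then preserved under the Neumann correction since $\mathcal{R}$ is bounded. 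This is where I expect the real work to lie, because compactness must be obtained uniformly up to the singular stratum, and the pointwise asymptotics $O(r^{1/2-\epsilon})$ and $O(r^{-1/2+\delta})$ in \eqref{Eqn:(4.22)}--\eqref{Eqn:(4.23)} are exactly what rule out a loss of compactness from the conical end.

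For the anti-commutation with $\bar{\star}$, since $\dim(M_0/S^1)=4k$ is even, Proposition \ref{Prop:Chirl}(4) gives $\bar{\star}D+D\bar{\star}=0$ on $\Omega_c(M_0/S^1)$, and this identity extends distributionally to all of $\dom(D_{\max})$ because $\bar{\star}$ is a bounded unitary involution. The remaining point is that $\bar{\star}$ preserves the domain $\dom(D_\delta)$. Under the unitary transformation $\Psi=\Psi_1\mathcal{U}$, the calculation leading to \eqref{TransTauPsi1} together with the diagonalization by $\mathcal{U}$ shows that $\Psi^{-1}\bar{\star}\Psi=\operatorname{diag}(I,-I)$, which is precisely the involution $\alpha_1$ of Theorem \ref{Thm:B09MainSec4} defining the decomposition $\sigma=\sigma^++\sigma^-$ appearing in the description of $\dom(D_\delta)$. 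Consequently $\bar{\star}$ sends $\sigma^+\mapsto\sigma^+$ and $\sigma^-\mapsto-\sigma^-$, so both decay rates $O(r^{1/2-\epsilon})$ and $O(r^{-1/2+\delta})$ are individually preserved; hence $\bar{\star}\dom(D_\delta)\subseteq\dom(D_\delta)$, and combining with the anti-commutation on the dense core yields $\bar{\star}D_\delta+D_\delta\bar{\star}=0$.
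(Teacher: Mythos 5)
Your self-adjointness and anti-commutation arguments follow the paper's proof essentially verbatim: symmetry on the common range is obtained from the integration-by-parts identity together with the boundary estimates \eqref{Eqn:(4.22)}--\eqref{Eqn:(4.23)} (choosing $\epsilon<\delta$ so the cross term is $O(r^{\delta-\epsilon})$), and the anti-commutation reduces to the observation that $\bar{\star}$ coincides with the involution $\alpha_1$ of Proposition \ref{Prop:Lemma4.4}, which preserves the $\pm$-decomposition and hence the two decay conditions defining $\dom(D_\delta)$.

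The discreteness argument, however, contains a genuine gap. You split spectrally (large versus small eigenvalues of the cone coefficient); the paper splits spatially. Your large-eigenvalue step fails as stated: the bounds of Proposition \ref{Prop:G>} give decay of the operator symbol in the resolvent parameter $\mu$, \emph{not} in the cotangent variable $\beta$, so Calder\'on--Vaillancourt (Theorem \ref{Thm:CV}) only produces a bounded operator of norm $O(|\mu|^{-1})$ — it does not exhibit $\mathcal{G}_1(\mu,D)$ as a pseudodifferential operator of negative order in $\beta$, and decay in $\mu$ for fixed $\mu$ buys no compactness. Moreover, even a genuine order $-1$ operator over the compact base $F$ would not be compact by Rellich alone, because the fibers of the Hilbert bundle $\mathcal{E}$ are the infinite-dimensional spaces $L^2(Y_x,(\wedge T^*\mathcal{F})|_x)$ (tensored with $L^2$ of the half-line); one also needs compactness in the normal and fiber directions, which comes from the ellipticity of the full operator $D$, not from the symbolic calculus on $F$. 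The paper's route is: pick a cutoff $\psi_r$ equal to $1$ away from the singular stratum; then $\psi_r\mathcal{G}_1(\mu,D)$ is compact by interior elliptic regularity (\cite[Theorem 2.1]{BS87}), while the boundary estimates \eqref{Eqn:(4.22)}--\eqref{Eqn:(4.24)} give $\norm{(1-\psi_r)\mathcal{G}_1(\mu,D)}^2\leq C\int_0^r t^{-1+2\delta}\,dt=Cr^{2\delta}\longrightarrow 0$, so $\mathcal{G}_1(\mu,D)$ is a norm limit of compact operators, hence compact, and compactness passes to the Neumann series by \cite[Theorem III.4.7]{KATO}. Your Hilbert--Schmidt claim for the small-eigenvalue piece is also unsubstantiated — Theorem \ref{Thm:B09MainSec4} supplies operator-norm bounds \eqref{Eqn:(4.21)}, not Hilbert--Schmidt bounds — though that part is not where the real difficulty lies.
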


\begin{proof}
We just describe a sketch of the proof. We first need to verify that the operator symbol
\begin{align*}
G(\mu,x,\beta)_<\coloneqq\left(\gamma\left(\frac{d}{dt}+\frac{1}{t}\widetilde{{A}}_<(x)+\tilde{\mu}\alpha_2\right)\right)^{-1}
\end{align*}
obtained using Proposition \ref{Prop:Lemma4.4} satisfies the conditions \eqref{Eqn:(4.12)}, \eqref{Eqn:(4.13)} and \eqref{Eqn:(4.14)}. It is important to note that Theorem \ref{Thm:B09MainSec4} does not apply directly in this situation because $\tilde{\mu}$ and $\alpha_2$ depend both on $x$ and $\beta$. One can overcome this subtlety by means of the  Splitting Lemma (Lemma \ref{Lemma:Spliting}). Next one proceeds to verify the required conditions: \eqref{Eqn:(4.12)}  follows by construction and \eqref{Eqn:(4.14)} follows from \eqref{Eqn:(4.21)}. In order to prove \eqref{Eqn:(4.13)} we compute  for $\sigma_1,\sigma_2\in\ran(G(\mu,x,\beta)_<)$ using integration by parts,
\begin{align*}
(D_{V,\text{max}}(x)\sigma_1,\sigma_2)-(\sigma_1,D_{V,\text{max}}(x)\sigma_2)=&\lim_{r\longrightarrow \infty}\left(\inner{\sigma^+_1(r)}{\sigma^-_2(r)}-\inner{\sigma^-_1(r)}{\sigma^+_2(r)}\right)\\
&-\lim_{r\longrightarrow 0}\left(\inner{\sigma^+_1(r)}{\sigma^-_2(r)}-\inner{\sigma^-_1(r)}{\sigma^+_2(r)}\right). 
\end{align*}
The first limit is zero because of the $L^2$ condition. For the second one we use the boundary conditions \eqref{Eqn:(4.22)} and  \eqref{Eqn:(4.23)}, 
\begin{align*}
|\inner{\sigma^-_1(r)}{\sigma^+_2(r)}|\leq \norm{{\sigma^-_1(r)}}\norm{\sigma^+_2(r)}\leq \tilde{C}_{\epsilon} r^{\delta-\epsilon}\longrightarrow 0, \quad \text{as $r\longrightarrow 0$},
\end{align*}
since we can always choose $\epsilon<\delta$. Therefore, 
$(D_{V,\text{max}}(x)\sigma_1,\sigma_2)=(\sigma_1,D_{V,\text{max}}(x)\sigma_2)$. This shows that the desired conditions for $G(\mu,x,\beta)_<$ are satisfied. Hence, in combination with Proposition \ref{Prop:G>} and Theorem \ref{Thm:CV} we conclude that the induced operator $\mathcal{G}_1(\mu,D)$ satisfies the desired properties. As a result, the Neumann series \eqref{Eqn:NeumSer} is well-defined and it satisfies \eqref{Cond:ExtDV}. \\

Now we prove that $D_\delta$ is discrete, which is equivalent to the compactness of its resolvent $\mathcal{G}(\mu,D)$. Note that since
\begin{align*}
\mathcal{G}(\mu, D)=\lim_{N\rightarrow \infty} \mathcal{G}_1(\mu,D)\sum_{j=0}^N\mathcal{R}^j,
\end{align*} 
it is enough to show that $\mathcal{G}_1(\mu,D)$ is compact by \cite[Theorem III.4.7]{KATO} .  Let  us choose a cut-off function $\psi_r\in C_c(M_0/S^1)$ with $\psi_r=1$ on $M_0/S^1-N_r(F)$ (see Section \ref{Sect:PfoofSignatureFormula}) and which vanishes close to $F\subset M^{S^1}$. By interior regularity  the operator  $\psi_r\mathcal{G}_1(\mu,D)$ is compact (\cite[Theorem 2.1]{BS87}). Now we want to show that the norm of the remainder $(1-\psi_r)\mathcal{G}_1(\mu,D)$ goes to zero as $r\longrightarrow 0$. Using \eqref{Eqn:(4.22)} and \eqref{Eqn:(4.23)} for small eigenvalues and \eqref{Eqn:(4.24)} for large eigenvalues (Proposition \ref{Prop:G>}), we estimate
\begin{align*}
\norm{(1-\psi_r)\mathcal{G}_1}^2\leq C\int_{0}^r t^{-1+2\delta} dt =C t^{2\delta}\bigg{|}_0^r= C r^{2\delta}\longrightarrow 0, 
\end{align*}
as $r\longrightarrow 0$ since $\delta>0$. Thus, 
\begin{align*}
\mathcal{G}_1(\mu,D)=\lim _{N\longrightarrow \infty} \psi_{1/N}\mathcal{G}_1(\mu,D), 
\end{align*}
which shows that $\mathcal{G}_1(\mu,D)$ is compact again by \cite[Theorem III.4.7]{KATO}. \\
Finally observe that $\bar{\star}$ coincides with $\alpha_1$ in Proposition \ref{Prop:Lemma4.4}, so it is clear that $\bar{\star}$ preserves the domain of $D_\delta$ and anti-commutes with it. 
\end{proof}

\begin{coro}[{\cite[Theorem 0.1(2)]{B09}}]\label{Coro:DESA}
If $|A_V|\geq 1/2$, then $D_\textnormal{min}$ is essentially self-adjoint.
\end{coro}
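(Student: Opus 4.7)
The strategy is to show that under the hypothesis $|A_V|\geq 1/2$, the parametrix construction already produces the resolvent of $D_{\text{min}}$, forcing $D_{\text{min}} = D_\delta = D_{\text{max}}$. The key mechanism is that the boundary conditions defining $D_\delta$ become vacuous when there are no small eigenvalues of the cone coefficient.

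First, I would reduce the global statement to the local one at the cone. Since $D_{\text{min}}\subseteq D_\delta$ automatically (because $\Omega_c(M_0/S^1)\subset \dom(D_\delta)$ and $D_\delta$ is closed), essential self-adjointness of $D_{\text{min}}$ is equivalent to the inclusion $\dom(D_\delta)\subseteq \dom(D_{\text{min}})$, or equivalently to showing that the resolvent $\mathcal{G}(\mu,D)$ maps $L^2$ into $\dom(D_{\text{min}})$. By Lemma \ref{Lemma:DVmin}(3), this amounts to the local decay condition $\|\sigma(r)\|_H = O(r^{1/2}|\log r|)$ as $r\to 0^+$ on the cone coordinate.

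Next I would exploit the hypothesis to trivialise the small-eigenvalue part of the construction. Because $|A_V|\geq 1/2$ and $\widetilde{A}(x)$ is built from $A_V$ and $\nu$ diagonally, one has $\spec(\widetilde{A}(x))\cap(-1/2,1/2)=\emptyset$ uniformly in $x\in F$. Hence the Splitting Lemma (Lemma \ref{Lemma:Spliting}) can be applied with $\Lambda = 1/2$ and $Q_<=0$, so the small-eigenvalue symbol $G(\mu,x,\beta)_<$ does not appear, and $G(\mu,x,\beta) = G(\mu,x,\beta)_>$ is the honest resolvent of the essentially self-adjoint operator $D_V(x)_>$. Moreover, Lemma \ref{Lemma:DVmin}(1) shows that when no eigenvalues lie in $(-1/2,1/2)$ the sum in the asymptotic expansion of elements of $\dom(D_{V,\text{max}})$ is empty, so every element already satisfies $\|\sigma(r)\|_H=O(r^{1/2}|\log r|)$; thus $D_{V,\text{min}}(x) = D_{V,\text{max}}(x)$ pointwise in $x$. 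The second estimate in Proposition \ref{Prop:G>} then guarantees $G(\mu,x,\beta)_>\tau(r) = O_\tau(r^{1/2}|\log r|)$.

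Finally I would transport this local decay through the pseudodifferential quantisation and the Neumann series. The local parametrix $\mathcal{G}_1(\mu,D,x_0)$ given by \eqref{Eqn:G1} preserves the decay rate because integration against $e^{i\inner{x}{\beta}}/(2\pi)^h$ is taken only in the base variables $\beta\in T^*_xF$, which do not touch the radial coordinate; hence its image lies in sections with $\|\cdot\|_H = O(r^{1/2}|\log r|)$. Patching via a partition of unity on $F$ preserves this property. The remainder $\mathcal{R} = I-(D-i\mu)\mathcal{G}_1$ has operator norm $O(|\mu|^{-1})$, so the Neumann series $\mathcal{G}(\mu,D)=\mathcal{G}_1\sum_j \mathcal{R}^j$ converges in $\mathcal{L}(L^2)$ and, because each application of $\mathcal{G}_1$ produces the same radial decay, the image of $\mathcal{G}(\mu,D)$ still satisfies $O(r^{1/2}|\log r|)$. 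By Lemma \ref{Lemma:DVmin}(3) this places the range inside $\dom(D_{\text{min}})$, which together with $(D-i\mu)\mathcal{G}(\mu,D)=I$ shows that $D_{\text{min}}-i\mu$ is surjective and hence $D_{\text{min}}$ is self-adjoint.

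The main obstacle is the last paragraph: verifying that the $O(r^{1/2}|\log r|)$ boundary decay is genuinely preserved by the oscillatory integral defining $\mathcal{G}_1$ and, more delicately, by the Neumann summation (where one must control the radial behaviour of $\mathcal{R}^j\sigma$ uniformly in $j$). The $\beta$-derivative estimates of Proposition \ref{Prop:G>}, together with the Calder\'on--Vaillancourt theorem used with derivatives up to $N_{CV}$, are precisely what is needed to propagate the pointwise radial bound through these two operations, but spelling this out rigorously near $r=0$ is the only non-formal step.
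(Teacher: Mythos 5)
Your overall reduction is sound up to one decisive step, and that step contains the gap. You reduce the claim to showing that the range of the resolvent $\mathcal{G}(\mu,D)$ consists of sections with radial decay $\norm{\sigma(r)}_H=O(r^{1/2}|\log r|)$, and you then invoke Lemma \ref{Lemma:DVmin}(3) to conclude that such sections lie in $\dom(D_{\textnormal{min}})$. But Lemma \ref{Lemma:DVmin}(3) characterizes the minimal domain of the \emph{model} operator $D_V(x)=\gamma(d/dr+\widetilde{A}(x)/r)$ for a fixed $x\in F$ --- a purely radial operator with no horizontal derivatives. Membership in the minimal domain of the global operator $D$ on $M_0/S^1$ requires approximation in the graph norm of the full operator, including its horizontal part $A_H(r)$, and the regular singular framework of Appendix \ref{App:RSO} does not apply directly to the full operator (the relative bound $\norm{A_H(r)(|A_V|+1)^{-1}}$ is not finite, since $A_H$ differentiates in directions that $A_V$ does not control). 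So the implication ``$\sigma\in\dom(D_{\textnormal{max}})$ with $\norm{\sigma(r)}_H=O(r^{1/2})$ $\Rightarrow$ $\sigma\in\dom(D_{\textnormal{min}})$'' is not a quotation of an existing lemma; it is the actual content of the corollary, and your proposal assumes it.

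The paper closes exactly this gap with a cut-off argument: for $\sigma\in\dom(D_\delta)$ one sets $\sigma_n\coloneqq\widetilde{\psi}_n\sigma$ with the logarithmic cut-offs \eqref{Eqn:TildePsin}, writes $D(\widetilde{\psi}_n\sigma)=c(d\widetilde{\psi}_n)\sigma+\widetilde{\psi}_nD\sigma$, and shows $\norm{d\widetilde{\psi}_n}\,\sigma\to 0$ in $L^2$ using the Hardy-type estimates of Proposition \ref{Prop:AppSeqMain}; it is here, and only here, that the hypothesis $|A_V|\geq 1/2$ enters, through the improved boundary estimate \eqref{Eqn:(4.24)} guaranteeing $\norm{\sigma(r)}_H=O(r^{1/2})$ (the weaker $O(r^{-1/2+\delta})$ bound available without the hypothesis does not suffice for the Hardy estimate). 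By contrast, the obstacle you flag as the ``only non-formal step'' --- propagating the decay through the Neumann series --- is actually harmless, since $\mathcal{G}=\mathcal{G}_1\sum_j\mathcal{R}^j$ gives $\ran(\mathcal{G})\subseteq\ran(\mathcal{G}_1)$ directly. A secondary inaccuracy: the Splitting Lemma \ref{Lemma:Spliting} requires $\Lambda\geq 1$, so even under $|A_V|\geq 1/2$ the small-eigenvalue block $G(\mu,x,\beta)_<$ does not disappear; rather, on that block the estimate \eqref{Eqn:(4.24)} applies because $|\widetilde{A}(x)|\geq 1/2$ there.
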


\begin{proof}
Let $\sigma\in \dom(D_\delta)$, we want to show that there exists a sequence $(\sigma_n)_n$ of compactly supported forms such that $\sigma_n\longrightarrow \sigma$ in the graph norm of $D$. Actually is enough to show that $(\sigma_n)_n$ is a Cauchy sequence. The desired sequence is constructed using the cut-off functions \eqref{Eqn:TildePsin} of Appendix \ref{App:Seq}. For $n\in\mathbb{N}$ and $n\geq 2$ define
$\sigma_n\coloneqq \widetilde{\psi}_n\sigma$. By the construction of $\widetilde{\psi}_n$ we see that each $\sigma_n$ has compact support. Moreover, since $\widetilde{\psi}_n$ is uniformly bounded and converges to pointwise to $1$ it is easy to verify, as in the proof of Lemma \ref{Lemma:PropPsin}, that $\sigma_n\longrightarrow \sigma$ in $L^2$. Therefore, it remains to show that $D \sigma_n\longrightarrow  D\sigma$ in $L^2$. Observe from Proposition \ref{Prop:CommDirOpFunct} that $D(\widetilde{\psi}_n\sigma)=c(d\widetilde{\psi}_n)\sigma+\widetilde{\psi}n D\sigma$. Thus, it is enough to show that $c(d\widetilde{\psi}_n)\longrightarrow 0$ in $L^2$. Moreover, by the properties of the Clifford multiplication, this is equivalent to showing $\norm{d\widetilde{\psi}_n}^2\sigma\longrightarrow 0$ in $L^2$. This last statement can be proven using \eqref{Eqn:EstimDTildePsi}, Lemma \ref{LemmaB2} and Proposition \ref{Prop:AppSeqMain}. 
\end{proof}

Let us now assume the Witt condition (Definition \ref{Def:Witt}). For each $a>0$ define a metric on $M_0/S^1$ such that
\begin{align}\label{Eqn:ScaledMetr}
g^{T(M_0/S^1)}(a)\coloneqq 
\begin{cases}
dr^2\oplus g^{T_H\mathcal{F}}\oplus a^2 r^2g^{T_H\mathcal{F}},\quad\text{on $N_{t/2}(F)$}\\
g^{T(M_0/S^1)},\quad\text{on $M_0/S^1- N_t(F)$.}\\
\end{cases}
\end{align}
The factor $a$ just rescales the metric in the vertical direction. Denote the by $D_\delta(a)$ its corresponding self-adjoint operator. Observe from Remark \ref{Rmk:ScaleHodgeStar} and the spectral decomposition of Theorem \ref{Thm:SpectralDecomp} that in the Witt case we can always find $a_0>0$ such that $|A_V|\geq 1/2$. Indeed, in the complement of the space of vertical harmonic forms the spectral gap can be controlled entirely with the parameter $a$. The rescaling however, has no effect over the eigenvalues on the space of vertical harmonic forms. Here is where the Witt condition comes in as, by Remark \ref{Rmk:ZeroEigenValueAV}, these eigenvalues are of the form $2j-N$, for $j=0,\cdots, N$. In particular $|2j-N|>1$ if $N$ odd.  Consequently the operator $D_\delta(a_0)$ is essentially self-adjoint by Corollary \ref{Coro:DESA}.

\begin{coro}[{\cite[Theorem 0.1(2)]{B09}}]\label{Coro:WittimpL2}
In the Witt case, the $L^2$-Stokes theorem holds. 
\end{coro}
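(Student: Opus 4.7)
The plan is to combine essential self-adjointness of the Hodge--de Rham operator (for a suitable choice of vertical scale) with the quasi-isometry invariance of the $L^2$-Stokes property recorded in Remark~\ref{L2StokesQI}.

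First I would fix the rescaled family of metrics $g^{T(M_0/S^1)}(a)$ from \eqref{Eqn:ScaledMetr}. By Theorem~\ref{Thm:SpectralDecomp} together with the discussion of Remark~\ref{Rmk:ScaleHodgeStar}, the nonzero part of the spectrum of the cone coefficient $A_V$ is controlled by the factor $a$: away from the space of vertical harmonic forms, the relevant eigenvalues of $A_V$ can be pushed arbitrarily far from the interval $(-1/2, 1/2)$ by choosing $a$ large enough. The only obstruction to the spectral gap $|A_V| \geq 1/2$ comes from the essential eigenvalues $2j-N$ on the space of vertical harmonic forms, as those are invariant under the vertical rescaling (cf.\ Remark~\ref{Rmk:ZeroEigenValueAV}). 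Under the Witt assumption $N$ is odd, so $|2j-N|\geq 1>1/2$ for every $j=0,\dots,N$; therefore a choice $a_0 > 0$ exists for which $|A_V| \geq 1/2$ holds.

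With $|A_V|\geq 1/2$ in hand for the metric $g^{T(M_0/S^1)}(a_0)$, Corollary~\ref{Coro:DESA} yields that the Hodge--de Rham operator $D_{M_0/S^1}(a_0) = d + d^\dagger_{(a_0)}$, defined on $\Omega_c(M_0/S^1)$, is essentially self-adjoint. Applying Proposition~\ref{Prop:Lemma2.3BL} then gives the $L^2$-Stokes theorem for the metric $g^{T(M_0/S^1)}(a_0)$, i.e.\ $d_{r,\text{min}} = d_{r,\text{max}}$ for all $r$ with respect to this scaled $L^2$-inner product.

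To transfer this conclusion back to the original metric $g^{T(M_0/S^1)}$, I would invoke Remark~\ref{L2StokesQI}: the $L^2$-Stokes theorem depends only on the quasi-isometry class of the Riemannian metric. Since $g^{T(M_0/S^1)}(a_0)$ differs from $g^{T(M_0/S^1)}$ only by a bounded rescaling (by the factor $a_0^2$) of the vertical component in the tubular neighborhood $N_{t/2}(F)$, and coincides with it on the complement $M_0/S^1 - N_t(F)$, the two metrics are globally quasi-isometric with constant $C = \max(a_0, a_0^{-1})$. Hence the $L^2$-Stokes theorem holds for $g^{T(M_0/S^1)}$ as well. The main conceptual step is the first one: recognizing that the Witt condition is precisely what guarantees that the rescaling argument can push the cone coefficient outside the critical interval $(-1/2, 1/2)$; everything else is bookkeeping that combines results already established.
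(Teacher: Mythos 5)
Your proposal is correct and follows essentially the same route as the paper: rescale the vertical metric so that the Witt condition forces $|A_V|\geq 1/2$, apply Corollary~\ref{Coro:DESA} and Proposition~\ref{Prop:Lemma2.3BL} for that metric, and transfer back via the quasi-isometry invariance of Remark~\ref{L2StokesQI}. The only cosmetic difference is the order in which the quasi-isometry argument is invoked.
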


\begin{proof}
As all the metrics $g^{T(M_0/S^1)}(a)$ are mutually quasi-isometric, it follows that the $L^2$-Stokes theorem is independent of the choice of $a>0$ by Remark \ref{L2StokesQI}. In particular, we can choose $a_0>0$ as above so that $D_\delta(a_0)$ is essentially self-adjoint.   The claim follows now from Proposition \ref{Prop:Lemma2.3BL}. 
\end{proof}
This corollary makes it possible to prove that the signature theorem considered in Section \ref{Sec:L2} is defined if the Witt condition is satisfied (and the space $M/S^1$ is compact).
\begin{proposition}[{\cite[Theorem 0.1(4)]{B09}}]
In the Witt case, the operator $D^+_{\textnormal{max}}$ (see  \eqref{Def:Dmax+}) is well-defined and it coincides with $D^+_\delta$. 
\end{proposition}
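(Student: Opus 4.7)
The plan is to deduce both assertions from the essential self-adjointness result of Corollary~\ref{Coro:DESA} combined with the $L^2$-Stokes theorem of Corollary~\ref{Coro:WittimpL2}.

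First I would establish that $D^+_{\max}$ is well-defined. By Corollary~\ref{Coro:WittimpL2}, the Witt condition guarantees that the $L^2$-Stokes theorem holds for $M_0/S^1$, i.e.\ $d_{\min}=d_{\max}$ on every degree. Consequently, as noted in Section~\ref{Sec:L2}, the operator $D_{\max}=d_{\max}+(d_{\max})^*$ with domain $\dom(d_{\max})\cap\dom((d_{\max})^*)$ is self-adjoint, and moreover the Hodge star satisfies $*\dom(d_{r,\mathrm{max/min}})\subseteq\dom(d^\dagger_{m-r,\mathrm{max/min}})$, which in our graded setting implies $\bar\star\,\dom(D_{\max})\subseteq\dom(D_{\max})$. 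Hence the formula \eqref{Def:Dmax+} makes sense and $D^+_{\max}$ is a well-defined closed operator.

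Next I would identify $D_\delta$ with $D_{\max}$. Recall that after rescaling the vertical part of the submersion metric (see \eqref{Eqn:ScaledMetr}), the spectral decomposition of Theorem~\ref{Thm:SpectralDecomp}, together with Remark~\ref{Rmk:ZeroEigenValueAV}, shows that in the Witt case (where $N$ is odd, so vertical-harmonic eigenvalues $2j-N$ satisfy $|2j-N|\ge 1$) one can achieve $|A_V|\ge 1/2$. Corollary~\ref{Coro:DESA} then asserts that $D_{\min}$ is essentially self-adjoint, i.e.\ $\overline{D_{\min}}=D_{\max}$. On the other hand, by construction $D_\delta$ is a self-adjoint extension of $D_{\min}$, so it must satisfy $\overline{D_{\min}}\subseteq D_\delta\subseteq D_{\max}$. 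The essential self-adjointness forces $\overline{D_{\min}}=D_{\max}$, so the chain collapses and $D_\delta=D_{\max}$. Finally, since $\bar\star$ preserves both $\dom(D_\delta)$ (as already observed in the construction of the parametrix, where $\bar\star$ coincides with $\alpha_1$) and $\dom(D_{\max})$, the chiral components agree as well: $D^+_\delta=D^+_{\max}$.

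For a general Witt metric (i.e.\ without having performed the rescaling), I would argue via quasi-isometry invariance: the different rescalings of $g^{T(M_0/S^1)}$ are mutually quasi-isometric, so by Remark~\ref{L2StokesQI} the $L^2$-Stokes theorem and the identification of $L^2$-cohomology groups are unchanged, and hence $D^+_{\max}$ is well-defined and canonically identified with its counterpart for the rescaled metric; the operator $D^+_\delta$ is defined through the same invariant asymptotic conditions. The main obstacle would actually be a purely bookkeeping one: checking that the two self-adjoint extensions of the graded operator—one obtained from the Hilbert-complex machinery of Section~\ref{Sec:L2} and the other from the parametrix of this chapter—give literally the same domain on $\Omega^+$. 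This reduces, via the splitting $D=\begin{pmatrix}0&D^-\\D^+&0\end{pmatrix}$ with respect to $\bar\star$ and the fact that $\bar\star$ commutes with the spectral projections used in the splitting lemma~\ref{Lemma:Spliting}, to the coincidence of $D_\delta$ and $D_{\max}$ established above.
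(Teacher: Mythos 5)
The paper itself states this proposition without proof (it only cites \cite[Theorem 0.1(4)]{B09}), so your argument can only be measured against the ingredients the text assembles around it. Your first two paragraphs use exactly those ingredients and are sound: Corollary \ref{Coro:WittimpL2} gives the $L^2$-Stokes theorem, hence $\bar{\star}$ preserves $\dom(D_{\textnormal{max}})$ and \eqref{Def:Dmax+} makes sense; and once $|A_V|\geq 1/2$ is arranged, Corollary \ref{Coro:DESA} gives essential self-adjointness, so any two self-adjoint extensions of $D|_{\Omega_c(M_0/S^1)}$ coincide with its closure. One point you should make explicit there: the $D_{\textnormal{max}}$ of \eqref{Def:Dmax+} is the Hilbert-complex operator $d_{\textnormal{max}}+(d_{\textnormal{max}})^*$ on $\dom(d_{\textnormal{max}})\cap\dom((d_{\textnormal{max}})^*)$, which is a priori smaller than the maximal extension $(D|_{\Omega_c})^*$ of the differential operator; your sandwich should be read as "both $D_\delta$ and $d_{\textnormal{max}}+(d_{\textnormal{max}})^*$ are self-adjoint extensions of an essentially self-adjoint operator, hence both equal its closure." With that reading the step closes.

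The genuine gap is your last paragraph. Quasi-isometry invariance (Remark \ref{L2StokesQI}) preserves the $L^2$-cohomology groups and the validity of the $L^2$-Stokes theorem, but it does not identify operators across metrics: $d^\dagger$, $\bar{\star}$, $D_{\textnormal{max}}$ and $D_\delta$ all change under the vertical rescaling, and there is no ``canonical identification of $D^+_{\textnormal{max}}$ with its counterpart for the rescaled metric'' at the operator level — only the indices (the $L^2$-signature) transfer. For the unrescaled Witt metric the uniqueness argument is moreover genuinely unavailable: by Theorem \ref{Thm:SpectralDecomp}(2)--(3), a nonzero fibre eigenvalue $0<\lambda<3/4$ produces the eigenvalue $\tfrac{1}{2}-\sqrt{\lambda+\tfrac{1}{4}}\in(-\tfrac{1}{2},0)$ of $A_V$, so $D|_{\Omega_c}$ need not be essentially self-adjoint and $D_{\textnormal{min}}\neq D_{\textnormal{max}}$. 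In that case one must compare the two domains directly, e.g.\ show that every $\sigma\in\dom(d_{\textnormal{max}})\cap\dom((d_{\textnormal{max}})^*)$ obeys the asymptotics $\norm{\sigma^+}=O(r^{1/2-\epsilon})$, $\norm{\sigma^-}=O(r^{-1/2+\delta})$ characterizing $\dom(D_\delta)$, via the expansion of Lemma \ref{Lemma:DVmin} — this is the part of \cite{B09} your proposal does not reproduce. If you intend the proposition only for a metric already rescaled so that $|A_V|\geq 1/2$, say so and delete the last paragraph; but note that Corollary \ref{Coro:IndexInvScal}, which the paper extracts ``from the proof of the last proposition,'' then requires the separate observation that $\ind(D^+_{\textnormal{max}}(\alpha))=\sigma_{(2)}$ for every $\alpha$ by Theorem \ref{Prop:L2singnatureIndex} together with quasi-isometry invariance of the $L^2$-signature.
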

Moreover, form the proof of the last proposition one obtains the following remarkable result. 
\begin{coro}[{\cite[Theorem 0.1(4)]{B09}}]\label{Coro:IndexInvScal}
The index of $\widetilde{D}(\alpha)^+$ is independent of $\alpha$. 
\end{coro}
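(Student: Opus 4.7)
The plan is to leverage the previous proposition, which identifies $D^+_\delta$ with $D^+_{\text{max}}$ in the Witt case, together with Theorem~\ref{Prop:L2singnatureIndex} identifying the latter's index with the $L^2$-signature, and then to observe that the $L^2$-signature is invariant under quasi-isometric changes of metric. Concretely, I would first fix two parameters $a_0, a_1 > 0$ and consider the corresponding rescaled metrics $g^{T(M_0/S^1)}(a_0)$ and $g^{T(M_0/S^1)}(a_1)$ as defined in \eqref{Eqn:ScaledMetr}. Outside $N_{t/2}(F)$ these coincide with the original metric, and on $N_{t/2}(F)$ they differ only in the vertical block by the scaling factor $(a_1/a_0)^2$. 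Since $(a_1/a_0)^2$ is a positive constant, the two metrics are globally quasi-isometric in the sense of Remark~\ref{L2StokesQI}.

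Next, I would use that by the preceding proposition (which relies on Corollary~\ref{Coro:WittimpL2}, i.e.\ the $L^2$-Stokes theorem in the Witt case) one has $D^+_{\delta}(a) = D^+_{\text{max}}(a)$ for every $a>0$. By Theorem~\ref{Prop:L2singnatureIndex} this gives
\[
\ind(D^+_\delta(a)) \;=\; \sigma_{(2)}\bigl(M_0/S^1,\, g^{T(M_0/S^1)}(a)\bigr).
\]
It therefore suffices to show that the right-hand side is independent of $a$. Quasi-isometric metrics yield the same underlying $L^2$-space of forms as a vector space (with merely equivalent norms), hence identical $L^2$-de Rham complexes \eqref{Diag:MaximalExtdeRham} and identical cohomology groups $H^*_{(2)}(M_0/S^1)$. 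Moreover, the intersection pairing
\[
(\omega,\omega') \longmapsto \int_{M_0/S^1} \omega \wedge \omega'
\]
does not involve the metric at all, only the orientation, which is preserved under the scaling. Consequently the signature of this pairing, and hence $\sigma_{(2)}$, is literally the same for $g(a_0)$ and $g(a_1)$.

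The main obstacle, and the point requiring care, is justifying the identification in the previous proposition uniformly in $a$: one must verify that the two-sided identification $D^+_\delta(a) = D^+_{\text{max}}(a)$ holds for each admissible $a>0$, not merely for the distinguished value $a_0$ chosen so that $|A_V|\geq 1/2$. The argument sketched after Corollary~\ref{Coro:WittimpL2} applies verbatim once the $L^2$-Stokes theorem is known, and the latter is quasi-isometry-invariant by Remark~\ref{L2StokesQI}; so this obstacle is largely bookkeeping once one notices that the Witt condition is purely topological and survives the rescaling. Putting these pieces together yields the claim.
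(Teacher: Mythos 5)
Your proof is correct and takes the approach the paper intends when it says the corollary follows ``from the proof of the last proposition'': use the Witt-case identification $D^+_\delta(a) = D^+_{\textnormal{max}}(a)$ and Theorem~\ref{Prop:L2singnatureIndex} to express the index as the $L^2$-signature, then observe that the latter only depends on the quasi-isometry class of the metric, which the vertical rescaling preserves. The one hypothesis you pass over silently is the finite-dimensionality of the $L^2$-cohomology of $M_0/S^1$, which Theorem~\ref{Prop:L2singnatureIndex} also requires; this follows from the discreteness of $D_\delta(a_0)$ together with the quasi-isometry invariance of $H^*_{(2)}$ noted in Remark~\ref{L2StokesQI}, and it is worth stating explicitly alongside your invocation of the $L^2$-Stokes theorem.
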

\begin{remark}
As pointed out in \cite{B09}, for the special case of isolated conical singularities ($h=0$) the results above are due to Cheeger (\cite{Ch79}, \cite{Ch83}). For general horizontal dimension $h$, Corollary \ref{Coro:DESA} and Corollary \ref{Coro:WittimpL2} and can also be deduced from \cite{Ch83}.
\end{remark}

\subsection{A parametrix for $\mathscr{D}$}

In this section we are going to describe how to adapt the construction described above for the operator $\mathscr{D}$. In view of Definition \ref{Def:DVPot} and \eqref{Eqn:DefDV} we define
\begin{align*}
\mathscr{D}_V(x)\coloneqq
&\gamma\left(\frac{d}{dr}+\frac{1}{r}\widetilde{\mathscr{A}}(x)\right)\\
\coloneqq &\varepsilon_H\otimes
\left(
\begin{array}{cc}
0 & - I\\
I & 0
\end{array}
\right)
\left(
\frac{d}{dr} +\frac{1}{r}
\left(
\begin{array}{cc}
D_{Y_x}\alpha_{Y_x}+\nu-\frac{\varepsilon_V}{2} & 0\\
0 & -(D_{Y_x}\alpha_{Y_x}+\nu-\frac{\varepsilon_V}{2})
\end{array}
\right)
\right).
\end{align*}

\begin{lemma}
For each $(x,\beta)\in T^* W_{x_0}$, the operator $D_V(x)$ satisfies
\begin{align*}
\mathscr{D}_V(x) c(\beta)+c(\beta)\mathscr{D}_V(x)=0
\end{align*}
\end{lemma}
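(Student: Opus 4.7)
The plan is to reduce the claim to the analogous statement for $D_V(x)$, which has already been established as Lemma \ref{Lemma 1.1(2)}. Since the anticommutator with $c(\beta)$ is linear in its first argument, writing $R(x)\coloneqq \mathscr{D}_V(x)-D_V(x)$, it suffices to prove that $R(x)c(\beta)+c(\beta)R(x)=0$.

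Comparing the definitions of $\widetilde{\mathscr{A}}(x)$ and $\widetilde{A}(x)$, one reads off that
\begin{align*}
R(x)=\frac{1}{r}\,\gamma\left(\begin{array}{cc} -\varepsilon_V/2 & 0 \\ 0 & \varepsilon_V/2 \end{array}\right).
\end{align*}
Inserting the explicit form $\gamma=\varepsilon_H\otimes\left(\begin{array}{cc} 0 & -I \\ I & 0 \end{array}\right)$ and multiplying out the $2\times2$ matrix part gives
\begin{align*}
R(x)=-\frac{1}{2r}\,(\varepsilon_H\otimes\varepsilon_V)\otimes\left(\begin{array}{cc} 0 & I \\ I & 0 \end{array}\right),
\end{align*}
so the $\mathbb{C}^{2}$-factor decouples completely from the action of $R(x)$ on differential forms.

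Since $\beta\in T^{*}_{x}W_{x_0}$ is horizontal, the Clifford multiplication $c(\beta)$ acts only on the horizontal exterior algebra and leaves both the vertical factor and the $\mathbb{C}^{2}$-factor untouched. The standard relations (cf. \eqref{PropertiesEpsilonCliff}) give $c(\beta)\varepsilon_{H}=-\varepsilon_{H}c(\beta)$ and $c(\beta)\varepsilon_{V}=\varepsilon_{V}c(\beta)$, hence $c(\beta)$ anticommutes with $\varepsilon_{H}\otimes\varepsilon_{V}$, and therefore with $R(x)$. Combined with Lemma \ref{Lemma 1.1(2)}, this yields the desired identity.

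I expect the only subtlety to be careful sign-tracking, using crucially that $c(\beta)$ is built from a horizontal $1$-form so that its nontrivial interaction is with $\varepsilon_{H}$ and not $\varepsilon_{V}$. The structural reason the argument goes through is that the zero-order potential $-\varepsilon/(2r)$ introduced in the construction of $\mathscr{D}$ was designed precisely to anticommute with the Clifford action of horizontal cotangent vectors, mirroring the behavior of the first-order cone coefficient in $D_V(x)$.
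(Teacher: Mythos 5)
Your proposal is correct and follows essentially the same route as the paper: the paper's proof likewise reduces to Lemma \ref{Lemma 1.1(2)} for $D_V(x)$ and then observes that the remaining zero-order term, being proportional to $\varepsilon_H\otimes\varepsilon_V$, anticommutes with $c(\beta)$ because the horizontal Clifford action anticommutes with $\varepsilon_H$ and commutes with $\varepsilon_V$. Your version merely makes the decomposition $R(x)=\mathscr{D}_V(x)-D_V(x)$ and the sign-tracking explicit, which the paper leaves to the reader.
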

\begin{proof}

This follows from Lemma \ref{Lemma 1.1(2)} and the relation
\begin{align*}
(I\otimes c(\beta))(\varepsilon_V\otimes\varepsilon_H)+(\varepsilon_V\otimes\varepsilon_H)(I\otimes c(\beta))=0.
\end{align*}
\end{proof}
This lemma shows that we have an analogous result of Corollary \ref{Coro:GraphNormDV}.
\begin{coro}
For all $\sigma(x)\in\dom(\mathscr{D}_{V}(x))$,
\begin{align*}
\norm{(\mathscr{D}_V(x)+ic(\beta)-i\mu)\sigma(x)}^2_{\mathscr{D}_V}=&\norm{\mathscr{D}_V(x)\sigma(x)}^2_{\mathscr{D}_V} +(|\mu|^2+|\beta|_x^2)\norm{\sigma(x)}^2_{\mathscr{D}_V}.
\end{align*}
\end{coro}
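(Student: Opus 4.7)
The plan is to run the argument of Corollary \ref{Coro:GraphNormDV} verbatim, with the only required input being the new anticommutation lemma $\mathscr{D}_V(x)c(\beta)+c(\beta)\mathscr{D}_V(x)=0$ that was just proved, together with self-adjointness of $\mathscr{D}_V(x)$ and the algebraic identities $c(\beta)^\dagger=-c(\beta)$, $c(\beta)^2=-|\beta|_x^2$. First I would expand
\begin{align*}
\norm{(\mathscr{D}_V(x)+ic(\beta)-i\mu)\sigma(x)}^2 = \norm{\mathscr{D}_V(x)\sigma(x)}^2 + \norm{(ic(\beta)-i\mu)\sigma(x)}^2 + 2\Real\inner{\mathscr{D}_V(x)\sigma(x)}{(ic(\beta)-i\mu)\sigma(x)}.
\end{align*}

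The second piece is straightforward: since $(ic(\beta))^\dagger=ic(\beta)$ and this operator anticommutes with $-i\mu$ trivially (as $\mu\in\mathbb{R}$ is a scalar), one has $(ic(\beta)-i\mu)^\dagger(ic(\beta)-i\mu) = -c(\beta)^2+\mu^2 = |\beta|_x^2+|\mu|^2$, giving the factor $\tilde\mu(x,\beta)=|\mu|^2+|\beta|_x^2$ multiplying $\norm{\sigma}^2$. For the cross term, I would use $(ic(\beta)-i\mu)^\dagger = ic(\beta)+i\mu$ to move it across the inner product, then apply the anticommutation to rewrite
\begin{align*}
(ic(\beta)+i\mu)\mathscr{D}_V(x) = -\mathscr{D}_V(x)(ic(\beta)-i\mu),
\end{align*}
and finally use self-adjointness of $\mathscr{D}_V(x)$ to conclude
\begin{align*}
\inner{\mathscr{D}_V(x)\sigma}{(ic(\beta)-i\mu)\sigma} = -\inner{(ic(\beta)-i\mu)\sigma}{\mathscr{D}_V(x)\sigma} = -\overline{\inner{\mathscr{D}_V(x)\sigma}{(ic(\beta)-i\mu)\sigma}},
\end{align*}
so that the real part vanishes.

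There is no real obstacle: every algebraic ingredient is already available. The only subtlety worth flagging is the domain question, namely that one should work with $\sigma(x)$ in the domain of a chosen self-adjoint extension of $\mathscr{D}_V(x)$ (so that taking the adjoint is legitimate), paralleling how Corollary \ref{Coro:GraphNormDV} is stated in terms of $\dom(D_{V,\delta}(x))$. Once this is fixed the identity holds by the two-line computation above, so the proof is essentially a one-paragraph transcription of the earlier argument with $D_V$ replaced by $\mathscr{D}_V$.
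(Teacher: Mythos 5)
Your proposal is correct and matches the paper's argument: the paper proves this corollary exactly as it proves Corollary \ref{Coro:GraphNormDV}, namely by expanding the square, using the anticommutation lemma together with self-adjointness to kill the real part of the cross term, and computing $(ic(\beta)-i\mu)^\dagger(ic(\beta)-i\mu)=|\mu|^2+|\beta|_x^2$. Your remark about working in the domain of the chosen self-adjoint extension is also consistent with how the paper states the $D_V$ version.
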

Using the Splitting Lemma \ref{Lemma:Spliting} we can define, similarly as above, 
\begin{align*}
\widetilde{\mathscr{A}}(x)_{<}=
\left(
\begin{array}{cc}
\mathscr{A}(x)_{<} & 0\\
0& - \mathscr{A}(x)_{<} 
\end{array}
\right),
\end{align*}
where
\begin{align*}
\mathscr{A}(x)_{<} \coloneqq Q_{<\Lambda} \left(D_{Y_x}\alpha_{Y_x}+\nu-\frac{\varepsilon_V}{2}\right).
\end{align*}
Then we have the following extension of Proposition \ref{Prop:Lemma4.4}. 
\begin{proposition}
The operator $\mathscr{D}_V(x)_{<}+ic(\beta)-i\mu$ can be expressed as
\begin{align*}
\mathscr{D}_V(x)_{<}+ic(\beta)-i\mu=\gamma\left(\frac{d}{dr}+\frac{1}{r}\widetilde{\mathscr{A}}_<(x)+\tilde{\mu}\alpha_2\right),
\end{align*} 
and the following relations hold:
\begin{align*}
\alpha_1\alpha_2+\alpha_2\alpha_1=&0,\\
\alpha_1 \widetilde{\mathscr{A}}_<(x)-\widetilde{\mathscr{A}}_<(x)\alpha_1=&0,\\
\alpha_2 \widetilde{\mathscr{A}}_<(x)+\widetilde{\mathscr{A}}_<(x)\alpha_2=&0.
\end{align*}
\end{proposition}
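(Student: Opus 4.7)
The plan is to mimic the proof of Proposition \ref{Prop:Lemma4.4} (combined with Lemma \ref{Lemma:RelZeta}) essentially line by line, tracking how the extra zero-order potential $-\varepsilon_V/2$ propagates through each step. The whole statement reduces to algebra on the finite-dimensional factor $\wedge T^*_{x_0}F\otimes\mathbb{C}^2$ together with a single vertical-factor check, so there should be no analytic obstacle.

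The factored identity is direct. From $\widetilde{\gamma}=i\gamma$ and $\gamma^2=-I$ one gets $\gamma\widetilde{\gamma}=-i$; hence
\[
\gamma\zeta \;=\; \gamma(\mu\widetilde{\gamma}-\widetilde{\gamma}c(\beta))\;=\;-i\mu+ic(\beta)\;=\;ic(\beta)-i\mu,
\]
and since $\tilde{\mu}\alpha_2=\zeta$ by definition, $\gamma\,\tilde{\mu}\alpha_2=ic(\beta)-i\mu$. Substituting into the explicit expression for $\mathscr{D}_V(x)_<$ and factoring $\gamma$ out of the sum yields the claimed representation.

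For the three commutation relations, I would proceed as follows. The first, $\alpha_1\alpha_2+\alpha_2\alpha_1=0$, is identical to Lemma \ref{Lemma:RelZeta}(3)/Proposition \ref{Prop:Lemma4.4}, since neither $\alpha_1$ nor $\alpha_2$ has changed (the definitions depend only on $\mu$, $\beta$, $\gamma$ and the $\mathbb{C}^2$ grading). For the remaining two, the key observation is that
\[
\widetilde{\mathscr{A}}_<(x)\;=\;\mathscr{A}(x)_<\cdot\alpha_1,
\]
where $\mathscr{A}(x)_<=Q_{<\Lambda}\bigl(D_{Y_x}\alpha_{Y_x}+\nu-\tfrac{1}{2}\varepsilon_V\bigr)$ acts only on the vertical factor $L^2(\wedge T^* Y_{x_0})$. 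Since $\alpha_1$ is diagonal in the $\mathbb{C}^2$ factor and trivial elsewhere, it commutes with $\mathscr{A}(x)_<$, giving $[\alpha_1,\widetilde{\mathscr{A}}_<(x)]=0$. For the third relation I would compute
\[
\alpha_2\widetilde{\mathscr{A}}_<(x) \;=\; \alpha_2\,\mathscr{A}(x)_<\,\alpha_1 \;=\; \mathscr{A}(x)_<\,\alpha_2\alpha_1 \;=\; -\mathscr{A}(x)_<\,\alpha_1\alpha_2 \;=\; -\widetilde{\mathscr{A}}_<(x)\,\alpha_2,
\]
where the middle step is the only point where the new potential needs to be checked.

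The one genuine verification, and what I regard as the ``hard'' (though still routine) step, is that $\alpha_2$ commutes with $\mathscr{A}(x)_<$. This is precisely the reason the potential in $\mathscr{D}_V(x)$ involves $\varepsilon_V$ rather than the full Gau\ss-Bonnet grading $\varepsilon=\varepsilon_H\otimes\varepsilon_V$: the constituents $\widetilde{\gamma}$ and $c(\beta)$ of $\zeta=\tilde{\mu}\alpha_2$ act on the horizontal-$\otimes$-$\mathbb{C}^2$ factors only, so anything acting purely on the vertical factor commutes with $\alpha_2$. Thus $\varepsilon_V$ commutes with $\alpha_2$ trivially, and $D_{Y_x}\alpha_{Y_x}+\nu$ commutes with $\alpha_2$ for the same reason. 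To close the loop I would also check that the spectral projection $Q_{<\Lambda}$ respects this: since $\varepsilon_V$ anticommutes with both $D_{Y_x}$ and $\alpha_{Y_x}$ on the even-dimensional fiber $Y$ (so commutes with their product), and commutes with $\nu$, it commutes with the whole operator $\widetilde{A}(x)$ whose spectral projection defines $Q_<$; hence $Q_<$ preserves the decomposition induced by $\varepsilon_V$ and in particular commutes with it. Putting this together completes the anticommutation relation and the proposition.
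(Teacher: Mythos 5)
Your argument is essentially the paper's: everything reduces to checking how the constituents of $\zeta=\tilde{\mu}\alpha_2$ interact with the new zero-order potential, and your factorization $\widetilde{\mathscr{A}}_<(x)=\mathscr{A}(x)_<\,\alpha_1$ together with $[\alpha_2,\mathscr{A}(x)_<]=0$ (operators living on disjoint tensor factors) packages a little more cleanly the paper's explicit $2\times 2$ computation, which instead verifies directly that $\widetilde{\gamma}$ anticommutes and $c(\beta)$ commutes with the extra term $\mathrm{diag}(-\varepsilon_V,\varepsilon_V)$ using $\varepsilon_H\varepsilon_V=\varepsilon_V\varepsilon_H$. The factored identity and the first two relations are correct exactly as you state them.

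Your final ``closing the loop'' step, however, contains a false claim: on the even-dimensional fiber $Y$ the vertical involution $\varepsilon_V$ \emph{commutes} with the fiber chirality $\alpha_{Y_x}$ (by \eqref{PropertiesEpsilonStar} with $m=v=2N$ even), it does not anticommute with it; since $\varepsilon_V$ does anticommute with $D_{Y_x}$, it \emph{anti}commutes with the product $D_{Y_x}\alpha_{Y_x}$ and hence does not commute with $\widetilde{A}(x)$. (Compare Theorem \ref{Thm:SpecDec}: $\mathscr{A}_V$ preserves the eigenspaces of $A_V$ because both commute with $\Delta_Y$, not because $\varepsilon$ or $\varepsilon_V$ commutes with $A_V$.) Fortunately this check is not needed for the proposition: to get $[\alpha_2,\mathscr{A}(x)_<]=0$ you only need that $\mathscr{A}(x)_<$ --- including the projection $Q_{<\Lambda}$, which is a spectral projection of a purely vertical operator and hence itself purely vertical --- acts on the factor $L^2(\wedge T^*Y_{x_0})$ alone, while $\alpha_2$ acts only on $\wedge T^*_{x_0}F\otimes\mathbb{C}^2$. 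Delete the erroneous sentence and the proof stands.
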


\begin{proof}
This proposition follows just as  Proposition \ref{Prop:Lemma4.4} follows from Lemma \ref{Lemma:RelZeta}. We need to study the commutation relations with the potential. Observe first 
\begin{align*}
(\varepsilon_H\otimes I)(I\otimes\varepsilon_V)=(I\otimes\varepsilon_V)(\varepsilon_H\otimes I)=\varepsilon_H\otimes\varepsilon_V.
\end{align*}
Hence, for the first term of $\zeta$ we calculate 
\begin{align*}
\left(
\begin{array}{cc}
0 & -\varepsilon_H\\
\varepsilon_H & 0 
\end{array}
\right)
\left(
\begin{array}{cc}
-\varepsilon_V & 0\\
0 & \varepsilon_V
\end{array}
\right)
=&
\left(
\begin{array}{cc}
0 & -\varepsilon_H\varepsilon_V\\
-\varepsilon_H\varepsilon_V & 0 
\end{array}
\right)\\
=&
\left(
\begin{array}{cc}
0 & -\varepsilon_V\varepsilon_H\\
-\varepsilon_V\varepsilon_H & 0 
\end{array}
\right)\\
=&
-\left(
\begin{array}{cc}
-\varepsilon_V & 0\\
0 & \varepsilon_V
\end{array}
\right)
\left(
\begin{array}{cc}
0 & -\varepsilon_H\\
\varepsilon_H & 0 
\end{array}
\right).
\end{align*}
For the second term of $\zeta$ we can argue similarly using the relation
\begin{align*}
(c(\beta)\otimes I)(I\otimes \varepsilon_V)= (I\otimes \varepsilon_V)(c(\beta)\otimes I).
\end{align*}
\end{proof}

Following the same ideas as of last section and in view of Theorem \ref{Thm:SpecDec}, we can prove the following theorem.
\begin{theorem}
The operator $\mathscr{D}$ is discrete, and the index of $\mathscr{D}^+$ is invariant under the vertical rescaling of the vertical metric. 
\end{theorem}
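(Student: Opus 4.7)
The plan is to adapt, step by step, the parametrix construction of \cite[Section 4]{B09} that was recalled in the first half of this chapter for the Hodge-de Rham operator $D$. The two structural ingredients of that construction were (i) the anti-commutation of the vertical model $D_V(x)$ with Clifford multiplication by horizontal covectors, and (ii) the relations of Proposition \ref{Prop:Lemma4.4} that allow Theorem \ref{Thm:B09MainSec4} to be applied. Both ingredients have just been verified for $\mathscr{D}_V(x)$ and $\widetilde{\mathscr{A}}_<(x)$ in the two statements preceding the theorem, so essentially all that remains is to put the pieces together.

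First I would establish discreteness. Using the Splitting Lemma \ref{Lemma:Spliting} applied to $\widetilde{\mathscr{A}}(x)$, I would decompose the problem into a large-eigenvalue part $\widetilde{\mathscr{A}}(x)_>$ and a small-eigenvalue part $\widetilde{\mathscr{A}}(x)_<$. For the large eigenvalues, since $|\widetilde{\mathscr{A}}(x)_>|\geq \Lambda\geq 1$, the reduced operator $\mathscr{D}_V(x)_>$ is essentially self-adjoint and we define $G(\mu,x,\beta)_>\coloneqq (\mathscr{D}_V(x)_>+ic(\beta)-i\mu)^{-1}$; the analogue of Proposition \ref{Prop:G>} gives the required symbol estimates. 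For the small eigenvalues, the preceding proposition together with Theorem \ref{Thm:B09MainSec4} (applied with $V=H_<^{\mathscr{D}}$, $A=\widetilde{\mathscr{A}}(x)_<$, $\alpha_1=I\otimes\operatorname{diag}(I,-I)$ and $\tilde{\mu}\alpha_2=\zeta$) produces an operator symbol $G(\mu,x,\beta)_<$ satisfying the analogues of \eqref{Eqn:(4.12)}--\eqref{Eqn:(4.14)}. Patching with a partition of unity on $F$, applying Calder\'on--Vaillancourt (Theorem \ref{Thm:CV}), and assembling the Neumann series yields a parametrix $\mathcal{G}(\mu,\mathscr{D})$ and hence a self-adjoint extension $\mathscr{D}_\delta$. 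Compactness of $\mathcal{G}(\mu,\mathscr{D})$ then follows, exactly as for $D$, by cutting off with $\psi_r\in C_c^\infty(M_0/S^1)$: the interior piece $\psi_r\mathcal{G}_1(\mu,\mathscr{D})$ is compact by interior elliptic regularity, and the tail piece $(1-\psi_r)\mathcal{G}_1(\mu,\mathscr{D})$ is controlled in norm by estimates \eqref{Eqn:(4.22)}--\eqref{Eqn:(4.24)} applied to $\mathscr{A}_V$, giving a bound of the order $r^{2\delta}\to 0$. Because $\mathscr{D}$ is already known to be essentially self-adjoint on $\Omega_c(M_0/S^1)$ (Theorem \ref{Thm:InducedDiracOp}, combined with \eqref{CondSpecOpPot} and Theorem \ref{Thm:SpecDec}), the closed extension produced by the parametrix must agree with $\overline{\mathscr{D}}$, which is therefore discrete.

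Second, for invariance of $\ind(\mathscr{D}^+)$ under vertical rescaling, I would mimic Corollary \ref{Coro:IndexInvScal}. Equip $M_0/S^1$ with the family of metrics $g^{T(M_0/S^1)}(a)$ from \eqref{Eqn:ScaledMetr} for $a>0$, and denote by $\mathscr{D}(a)$ the corresponding induced Dirac--Schr\"odinger operator. By Theorem \ref{Thm:SpecDec} together with Remark \ref{Rmk:ScaleHodgeStar}, the essential eigenvalues $2j-N\pm 1/2$ of $\mathscr{A}_V$ on vertical harmonic forms are already bounded away from $(-1/2,1/2)$, while the non-harmonic spectrum is pushed out by choosing $a$; in particular the spectral condition \eqref{CondSpecOpPot} holds for every $a>0$, so each $\mathscr{D}(a)$ is essentially self-adjoint and Fredholm by the first part of the theorem. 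All these metrics are mutually quasi-isometric, so the graded domains fit into a norm-continuous family of self-adjoint Fredholm operators with a common chirality splitting $\bar{\star}$, and the stability of the Fredholm index along such a family yields $\ind(\mathscr{D}(a)^+)=\ind(\mathscr{D}(1)^+)$ for all $a>0$.

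The main obstacle I expect is controlling the small-eigenvalue part of the parametrix as $a$ varies: the potential $-\varepsilon_V/2$ in $\mathscr{A}_V$ does not commute with the averaging and scaling procedure in the same transparent way as the pure cone coefficient $A_V$, so one has to verify carefully that the bounds \eqref{Eqn:(4.22)}--\eqref{Eqn:(4.24)} and the symbol estimate \eqref{Eqn:(4.14)} remain uniform in $a$ (or at least locally uniform), using the additional off-diagonal horizontal contribution identified in Remark \ref{Rmk:VertCondOP}. Once that uniformity is in hand, the continuity of the resolvent family and the standard perturbation theory of self-adjoint Fredholm operators close the argument.
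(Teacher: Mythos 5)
Your proposal is correct and follows essentially the same route as the paper: the paper's own argument is precisely to rerun the parametrix construction of \cite[Section 4]{B09} verbatim once the anti-commutation of $\mathscr{D}_V(x)$ with $c(\beta)$ and the algebraic relations for $\widetilde{\mathscr{A}}_<(x)$ are in place, and to deduce the rescaling invariance of the index from the spectral decomposition of $\mathscr{A}_V$ in Theorem \ref{Thm:SpecDec} exactly as in Corollary \ref{Coro:IndexInvScal}. Your closing caveat about uniformity in the scaling parameter is sensible but is resolved the same way as for $D$: the rescaling only moves the non-harmonic spectrum, the harmonic eigenvalues $2j-N\pm 1/2$ are already integers plus $\pm 1/2$, and the quasi-isometry of the metrics $g^{T(M_0/S^1)}(a)$ does the rest.
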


\section{An index theorem}\label{Sect:Index}

In this chapter we compute the index of the operator $\mathscr{D}^+$, in the Witt case, following the techniques used in \cite[Section 5]{B09}. The analytic tools required to treat the index computation are based on the work \cite{BBC08} of Ballmann, Br\"uning and Carron on {\em Dirac-Schr\"odinger Systems}. In this chapter we plan to collect the main results of their work in order to apply them to our concrete case. We omit the proofs of these theorems but nevertheless we present an example on which we illustrate the theory: we derive the Novikov additivity formula of signature (Proposition \ref{Prop:Novikov}) as a gluing index theorem. After this brief summary of the required analytic tools we first concentrate on the index computation for the signature operator $D^+\coloneqq D^+_{\text{max}}$ defined by \eqref{Def:Dmax+} in the Witt case, following closely the strategy of \cite[Section 5]{B09}. This will serve as a model to study the index of operator $\mathscr{D}^+$. 

\subsection{Perturbation of regular projections}\label{Sec:Perturbations}

The first section of this chapter describes some notions and results on regular projections which, as we will see later, are important prototypes for elliptic boundary conditions. This section is based in \cite[Sections 1.2, 1.6]{BBC08} and \cite[Section 5]{B09}.\\

Let $(H, \inner{\cdot}{\cdot})$ be a separable complex Hilbert space and let $A:\dom(A)\subset H\longrightarrow H$ be a discrete self-adjoint operator. For each Borel subset $J\subseteq\mathbb{R}$ we denote by $Q_J\coloneqq Q_J(A)$ the associated spectral projection of A in $H$. For $\Lambda\in\mathbb{R}$ we will use the notation $Q_{<\Lambda}\coloneqq Q_{(-\infty,\Lambda)}$, $Q_{\geq\Lambda }\coloneqq Q_{[\Lambda,\infty)}$, etc. For $\Lambda=0$ simply write $Q_{>}\coloneqq Q_{>0}$, etc. In particular $Q_0$ denotes the projection onto the kernel of $A$. 

\begin{definition}[Sobolev chain]
For $s\geq 0$ we define the space $H^s\coloneqq H^s(A)$ to be the completion of $\dom(A)\subset H$ with respect to the inner product 
\begin{align*}
\inner{\sigma_1}{\sigma_2}_s\coloneqq \inner{(I+A^2)^{s/2}\sigma_1}{(I+A^2)^{s/2}\sigma_2}. 
\end{align*}
Note for example that $H^0=H$ and $H^1=\dom(A)$. For $s<0$ we define $H^s$  to be the strong dual space of $H^{-s}$.
\end{definition}
For a Borel subbset $J\subseteq\mathbb{R}$ we denote by $H^s_J\coloneqq Q_J(H^s)$ the image of the Sobolev space $H^s$  under the projection $Q_J$. In particular we will use the notation
$H^s_<\coloneqq Q_{<0}(H^s)$, etc.

\begin{remark}
By the discreteness of $A$, if $s<t$ then the embedding $H^t\longrightarrow H^s$ is compact. 
\end{remark}

\begin{remark}
For each $s\in\mathbb{R}$ the pairing
\begin{align}\label{PairingB}
B_s:
\xymatrixcolsep{2cm}\xymatrixrowsep{0.01cm}\xymatrix{H^s\times H^{-s}\ar[r] & \mathbb{C}\\
(\sigma_1,\sigma_2)\ar@{|->}[r] & \inner{(I+A^2)^{s/2}\sigma_1}{(I+A^2)^{-s/2}\sigma_2}
}
\end{align}
is non-degenerate so it can be used to identify $H^s$ with its dual space $H^{-s}$.
\end{remark}

\begin{definition}
A bounded operator $S\in\mathcal{L}(H)$ is called {\em $1/2$-smooth} if it restricts to an operator $\hat{S}:H^{1/2}\longrightarrow H^{1/2}$ and extends to an operator $\tilde{S}:H^{-1/2}\longrightarrow H^{-1/2}$. The operator $S$ will be called {\em $(1/2)$-smoothing}, or simply {\em smoothing}, if $\ran(\tilde{S})\subset H^{1/2}$.
\end{definition}

In addition to the operator $A$ let us assume that we are given  $\gamma\in\mathcal{L}(H)$ such that 
\begin{enumerate}
\item $\gamma^*=\gamma^{-1}=-\gamma$.
\item $\gamma A+A\gamma=0$.
\end{enumerate}
Note in particular, if $A\sigma=\lambda \sigma$ then $A(\gamma \sigma)=-\gamma A\sigma=-\lambda(\gamma \sigma)$, thus  $Q_{>}\gamma=\gamma Q_{<}$.
\begin{definition}
A $1/2$-smooth orthogonal projection $P$ in $H$ is called {\em regular} (with respect to A) if for some, or equivalently, for any $\Lambda\in\mathbb{R}$ we have 
\begin{align*}
\sigma\in H^{-1/2}, \tilde{P}\sigma=0, Q_{\leq \Lambda}(A)\sigma\in H^{1/2}\:\Rightarrow\: \sigma\in H^{1/2}.
\end{align*}
A regular projection $P$ is called {\em elliptic} (with respect to $A$) if $P_\gamma\coloneqq \gamma^*(1-P)\gamma$ is also a regular projection. 
\end{definition}

\begin{example}[Spectral projections]\label{Example:SpectralProjections}
Let $\Lambda\in\mathbb{R}$, then the spectral projection $Q_{>\Lambda}$ is an elliptic projection. To see this first note from the definition of $H^s$ and from the discreteness of $A$ that  $Q_{>\Lambda}(H^s)\subset H^s $ for any $s\in\mathbb{R}$, in particular  this shows that $Q_{>\Lambda}$ is $1/2$-smooth. Moreover since $Q^*_{>\Lambda}=Q_{>\Lambda}$ then $\tilde{Q}_{>\Lambda}=\hat{Q}_{>\Lambda}$. Let us now see why $Q_{>\Lambda}$ is regular: assume that $\sigma\in H^{-1/2}, \tilde{Q}_{>\Lambda}\sigma=0$ and $Q_{\leq \Lambda}\sigma\in H^{1/2}$ then  we decompose $\sigma$ as $\sigma= Q_{>\Lambda} \sigma+ Q_{\leq\Lambda}\sigma=Q_{\leq\Lambda}\sigma\in H^{1/2}$, which is what we wanted to prove.  Finally note that 
$$(Q_{>\Lambda})_\gamma=\gamma^*(I-Q_{>\Lambda})\gamma=I-\gamma^*Q_{>\Lambda}\gamma=I-Q_{<\Lambda}=Q_{\geq\Lambda},$$
which is easy to see that is also regular. Thus, we see indeed that $Q_{>\Lambda}$ an elliptic projection. 
\end{example}

Let us now recall the notion of a Fredholm pair. In \cite[Chapter IV]{K58} there is a detailed an extensive study on the subject. 
\begin{definition}[Fredholm pair]
A {\em Fredholm pair} $(X,Y)$ in the Hilbert space $H$ consists of two closed linear subspaces $X,Y\subseteq H$ such that
\begin{align*}
\text{null}(X,Y)\coloneqq &\dim(X\cap Y)<\infty,\\
\text{def}(X,Y)\coloneqq &\text{codim}(X+ Y)<\infty.
\end{align*}
For such a pair its {\em Kato index} is defined by $\ind(X,Y)\coloneqq\text{null}(X,Y)-\text{def}(X,Y)$.
\end{definition}

\begin{remark}
There is a weaker notion of the definition above. A pair of closed subspaces $(X,Y)$ in $H$ is called a {\em left/right-Fredholm pair} if the sum $X+Y$ is closed in $H$ and $\text{null}(X,Y)\coloneqq \dim(X\cap Y)<\infty$ or $\text{def}(X,Y)\coloneqq \text{codim}(X+ Y)<\infty$ respectively.
\end{remark}

\begin{example}\label{Ex:H>FP}
The pair $(H_{\leq}, H_{\geq})$ is a Fredholm pair and its index is 
\begin{align*}
\ind(H_{\leq}, H_{\geq})=\dim(H_{\leq}\cap H_{\geq})-\text{codim}(H_{\leq}+ H_{\geq})=\dim(\ker A).
\end{align*}
Moreover, $(H_{<}, H_{\geq})$ is also a Fredholm pair and $\ind(H_{<}, H_{\geq})=0$.
\end{example}

\begin{example}
If $T:H\longrightarrow\ H$ is a closed Fredholm operator, then its Fredholm index can be regarded as a Kato index using the relation 
\begin{align*}
\ind(T)=\ind(\text{Graph}(T),H\times \{0\}),
\end{align*}
where $\text{Graph}(T)\coloneqq \{(x,Tx)\:|\: x\in\dom(T)\subseteq H \}\subset H\times H$.
\end{example}

We will be strongly interested in how the ellipticity condition behaves under perturbations. The following lemma gives a first answer in this direction.  
\begin{lemma}[{\cite[Lemma 5.7]{B09}}]\label{BLemma5.7}
Let $P$ be a $1/2$-smooth orthogonal projection in $H$ such that $P=Q_{>}+R_1+R_2$, where $R_1$ is smoothing and $\textnormal{max}\{\norm{\hat{R}_2},\norm{\tilde{R}_2}\}<1$, then $P$ is elliptic with respect to $A$ and $(\textnormal{\ran}(I-P),\textnormal{\ran}( Q_{>}))$ is a Fredholm pair in $H$.
\end{lemma}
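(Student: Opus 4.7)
\bigskip
\noindent\textbf{Proof plan.} The idea is to treat $P$ as a ``small-plus-compact'' perturbation of the reference elliptic projection $Q_{>}$ (cf. Example \ref{Example:SpectralProjections}). Two general principles will be used repeatedly: (i) since $\|\hat R_2\|,\|\tilde R_2\|<1$, any operator of the form $I+Q R_2 Q$ on a $Q$-invariant subspace of $H^{s}$ ($s=\pm 1/2$) is boundedly invertible there; (ii) $R_1$, being smoothing, is a compact operator on $H$, because it factors through the compact embedding $H^{1/2}\hookrightarrow H$ (which is compact by discreteness of $A$).

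First I would verify that $P$ itself is regular. Take $\sigma\in H^{-1/2}$ with $\tilde P\sigma=0$ and $Q_{\leq\Lambda}\sigma\in H^{1/2}$ (taking $\Lambda=0$ without loss of generality). Applying $\tilde Q_{>}$ to the identity $\tilde Q_{>}\sigma+\tilde R_1\sigma+\tilde R_2\sigma=0$ and splitting $\sigma=Q_{>}\sigma+Q_{\leq}\sigma$, one gets
\[
(I+\tilde Q_{>}\tilde R_2\tilde Q_{>})(\tilde Q_{>}\sigma)=-\tilde Q_{>}\tilde R_1\sigma-\tilde Q_{>}\tilde R_2 Q_{\leq}\sigma\in H^{1/2},
\]
because the first term is smoothing applied to an $H^{-1/2}$-element and the second is the $1/2$-smooth operator $R_2$ applied to an $H^{1/2}$-element. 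Now $\|\hat R_2\|<1$ makes $I+Q_{>}R_2 Q_{>}$ invertible on the closed subspace $H^{1/2}_{>}$, so the equation can be solved in $H^{1/2}_{>}$; by uniqueness at the $H^{-1/2}$-level, $\tilde Q_{>}\sigma\in H^{1/2}$, hence $\sigma\in H^{1/2}$.

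Next I would address $P_\gamma=I-\gamma^{*}P\gamma$. Conjugation by $\gamma$ is an isometry on every $H^{s}$ (because $(I+A^{2})$ commutes with $\gamma$), and the relation $\gamma A+A\gamma=0$ gives $\gamma^{*}Q_{>}\gamma=Q_{<}$. Therefore
\[
P_\gamma=I-Q_{<}-\gamma^{*}R_1\gamma-\gamma^{*}R_2\gamma=Q_{>}+\bigl(Q_0-\gamma^{*}R_1\gamma\bigr)+\bigl(-\gamma^{*}R_2\gamma\bigr),
\]
where $Q_0$ is finite rank (hence smoothing) by discreteness of $A$, the middle term is smoothing, and the last one still has operator norm $<1$ on $H^{\pm 1/2}$. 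So the previous step applies verbatim and gives regularity of $P_\gamma$; thus $P$ is elliptic.

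For the Fredholm-pair statement I would prove that $\ker P\cap H_{>}$ and $\ran P\cap H_{\leq}=(\ker P+H_{>})^{\perp}$ are both finite-dimensional, and simultaneously that $\ker P+H_{>}$ is closed. For $\sigma\in\ker P\cap H_{>}$, applying $Q_{>}$ to $P\sigma=0$ yields
\[
(I+Q_{>}R_2 Q_{>})\sigma=-Q_{>}R_1 Q_{>}\sigma \quad\text{on } H_{>};
\]
the left-hand side is invertible by (i) and the right-hand side is compact by (ii), so $\sigma$ lies in the kernel of a Fredholm operator of index $0$ on $H_{>}$, which is finite-dimensional. The same argument with $Q_{\leq}$, $P\sigma=\sigma$, and the parameters $z\in H_{\leq}$ produces an affine Fredholm equation
\[
(I+K Q_{>}R_1 Q_{>})(Q_{>}x)=-K Q_{>}(R_1+R_2)z,\qquad K=(I+Q_{>}R_2 Q_{>})^{-1},
\]
whose solvability (up to a finite-codimensional obstruction in $H_{\leq}$) shows that $Q_{\leq}\colon\ker P\to H_{\leq}$ is a Fredholm operator. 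By a standard criterion this is equivalent to $(\ker P,H_{>})$ being a Fredholm pair in $H$.

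The technically delicate step, and the one I expect to be the main obstacle, is the last: showing that $\ker P+H_{>}$ is closed and has finite codimension. Finite-dimensionality of the two intersections is cheap, but closedness requires the actual reconstruction of an element of $\ker P$ from a prescribed $Q_{\leq}$-part via the Fredholm equation above, and care must be taken that the compactness of $Q_{>}R_1 Q_{>}$ and the small-norm inversion of $I+Q_{>}R_2 Q_{>}$ fit together correctly on the restricted domain~$H_{>}$.
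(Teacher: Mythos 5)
Your ellipticity argument is sound: projecting $\tilde P\sigma=0$ onto $H^{-1/2}_{>}$, inverting $I+\tilde Q_{>}\tilde R_{2}\tilde Q_{>}$ by a Neumann series, and identifying the $H^{-1/2}$- and $H^{1/2}$-solutions by uniqueness is exactly the right mechanism, and the reduction of $P_\gamma$ to the same normal form $Q_{>}+(Q_0-\gamma^{*}R_1\gamma)+(-\gamma^{*}R_2\gamma)$ is correct. (The paper only cites \cite[Lemma 5.7]{B09} and gives no proof, so there is nothing to compare against.) The Fredholm-pair half, however, has a genuine gap --- the one you yourself flag at the end. Given $z\in H_{\leq}$, solving your displayed equation for $y=Q_{>}x$ only enforces $Q_{>}P(y+z)=0$; the $H_{\leq}$-component of $P(y+z)$ is left completely unconstrained, so the resulting $x=y+z$ need not lie in $\ker P$, and ``solvability up to a finite-codimensional obstruction'' does not show that $Q_{\leq}\colon\ker P\to H_{\leq}$ has closed range of finite codimension. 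The repair is to parametrize $\ker P$ as $(I-P)(H)$ rather than by its $Q_{\leq}$-part: for $z\in H_{\leq}$ one has $(I-P)z\in\ker P$ and $Q_{\leq}(I-P)z=\bigl(I-Q_{\leq}(R_1+R_2)Q_{\leq}\bigr)z$, and $I-Q_{\leq}R_2Q_{\leq}-Q_{\leq}R_1Q_{\leq}$ is invertible-plus-compact, hence Fredholm, on $H_{\leq}$. This gives closedness and finite codimension of $\ran(Q_{\leq}|_{\ker P})$ in one stroke; combined with your (correct) finite-dimensionality of $\ker P\cap H_{>}$ and the criterion of Lemma \ref{Prop:A.13}, it finishes the proof.

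A second, smaller issue: your principle (i) is stated only for $s=\pm 1/2$, but the entire Fredholm-pair argument runs in $H=H^{0}$, where the hypotheses give no direct bound on $R_2$. You need $\norm{R_2}_{\mathcal{L}(H)}<1$, which follows from $\max\{\norm{\hat R_2},\norm{\tilde R_2}\}<1$ by complex interpolation on the scale $H^{s}=\dom((I+A^{2})^{s/2})$, giving $\norm{R_2}_{\mathcal{L}(H)}\leq\norm{\hat R_2}^{1/2}\norm{\tilde R_2}^{1/2}$. (For the kernel part alone one can avoid interpolation: the regularity of $P$ you just proved forces $\ker P\cap H_{>}\subset H^{1/2}$, so that piece can be run entirely in $H^{1/2}_{>}$; but the codimension part genuinely needs the $H^{0}$ bound.) Either way, this step must be made explicit.
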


The following result is concerned with compact perturbations.  
\begin{lemma}[{\cite[Proposition A.13]{BBC08}}]\label{Prop:A.13}
Let $P,Q$ be projections in $H$ such that $P-Q$ is compact. Then $(\ran P,\ker Q)$ is a Fredholm pair. If $E\subset H$ is a closed subset of $H$ then $(E,\ran P)$ is a Fredholm pair if and only if $(E,\ran Q)$ is a Fredholm pair, and then 
\begin{align*}
\ind(E,\ran P)=\ind(E,\ran Q)+\ind(\ker Q, \ran P).
\end{align*}
Moreover, we also have $\textnormal{ind}(E,\ran P)=\textnormal{\ind}((I-P):E\longrightarrow \ker P)$.
\end{lemma}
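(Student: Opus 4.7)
The plan is to dispatch the four assertions in order, each reducing to a statement about the Fredholm-ness of a suitable bounded operator, so that the hypothesis that $P-Q$ is compact enters cleanly.

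First I would establish that $(\ran P,\ker Q)$ is Fredholm. The basic reformulation I would use is that a pair $(X,Y)$ of closed subspaces of $H$ is Fredholm if and only if the composition $X\hookrightarrow H\twoheadrightarrow H/Y$ is a Fredholm operator between Banach spaces, and that the Kato index equals the Fredholm index of that composition. Identifying $H/\ker Q$ with $\ran Q$ via the (closed-range) projection $Q$, the composition becomes $T:=Q|_{\ran P}:\ran P\to\ran Q$. I would show that $T$ is Fredholm by taking $S:=P|_{\ran Q}:\ran Q\to\ran P$ as an approximate two-sided inverse. Indeed, for $x\in\ran P$ one has $STx=PQx=P^2x-P(P-Q)x=x-K_1x$ with $K_1:=P(P-Q)|_{\ran P}$ compact, and similarly $TS=I_{\ran Q}-K_2$ with $K_2$ compact; hence $T$ is Fredholm.

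For the equivalence $(E,\ran P)\text{ Fredholm}\Leftrightarrow(E,\ran Q)\text{ Fredholm}$ I would again work on the level of the quotient maps $E\hookrightarrow H\twoheadrightarrow H/\ran P$ and $E\hookrightarrow H\twoheadrightarrow H/\ran Q$. Using the identifications $H/\ran P\cong\ker P$ and $H/\ran Q\cong\ker Q$ via $I-P$ and $I-Q$ respectively, the two maps differ on $E$ by the restriction of the compact operator $P-Q$ (modulo the compact discrepancy between $\ker P$ and $\ker Q$, which is also controlled by $P-Q$). Fredholmness is stable under compact perturbation, giving the equivalence.

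For the index identity $\ind(E,\ran P)=\ind(E,\ran Q)+\ind(\ker Q,\ran P)$, which I expect to be the genuine obstacle, I would assemble a "triangle" of Fredholm pairs whose indices I can compare. One natural route is to introduce the bounded Fredholm operator $\Phi:E\oplus\ker Q\to H$ given by $(e,k)\mapsto e+k$, and relate its index to both $\ind(E,\ran P)$ and the sum of indices on the right. A cleaner variant is to argue in purely operator-theoretic terms: regard $\ind(E,Y)=\ind((I-P_Y)|_E:E\to\ker P_Y)$ (the final assertion of the lemma, which I prove independently below) and note that the two operators $(I-P_{\ran P})|_E$ and $(I-P_{\ran Q})|_E$ differ, after identifying targets, by a compact operator, so their Fredholm indices differ by a correction term measured on $\ker Q\cap\ran P$ versus $\ker Q+\ran P$; this correction is exactly $\ind(\ker Q,\ran P)$ by the already-established Fredholmness of that pair.

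The final assertion is the easiest. Since $P$ is a projection we have the algebraic splitting $H=\ran P\oplus\ker P$, so the map $I-P:H\to\ker P$ is surjective with kernel $\ran P$. Restricting to $E$, the kernel of $(I-P)|_E$ is exactly $E\cap\ran P$, while $(I-P)(E)$ is the image of $E$ in $\ker P\cong H/\ran P$; hence the quotient $\ker P/(I-P)(E)$ is isomorphic to $H/(E+\ran P)$. Comparing the two pairs of finite-dimensional quantities yields
\[
\ind\bigl((I-P)|_E:E\to\ker P\bigr)=\dim(E\cap\ran P)-\mathrm{codim}(E+\ran P)=\ind(E,\ran P),
\]
completing the proof.
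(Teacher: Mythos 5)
The paper does not actually prove this lemma---it is quoted verbatim from \cite[Proposition A.13]{BBC08} with no proof supplied---so your argument can only be judged on its own terms, and on those terms it is essentially correct. The reduction of a Fredholm pair $(X,Y)$ to the Fredholm property of the composition $X\hookrightarrow H\twoheadrightarrow H/Y$, the parametrix $P|_{\ran Q}$ for $Q|_{\ran P}$, and the identification $\ind(E,\ran P)=\ind((I-P)|_E:E\to\ker P)$ are all sound, and the last of these is exactly the right lever for the whole lemma.

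The one step you must tighten is the index identity. As phrased, ``the two operators differ, after identifying targets, by a compact operator, so their indices differ by a correction term'' does not parse: a compact perturbation leaves the Fredholm index \emph{unchanged}, so it cannot by itself produce the correction $\ind(\ker Q,\ran P)$. The correction is carried entirely by the target identification, which is itself a Fredholm operator of nonzero index. Concretely, set $\Phi:=(I-P)|_{\ker Q}:\ker Q\to\ker P$. Since $(I-P)-(I-P)(I-Q)=(I-P)Q=(I-P)(Q-P)$ is compact, one has $(I-P)|_E=\Phi\circ(I-Q)|_E+K$ with $K$ compact, whence
\[
\ind\bigl((I-P)|_E\bigr)=\ind(\Phi)+\ind\bigl((I-Q)|_E\bigr),
\]
by compact-perturbation invariance and additivity of the index under composition; the final assertion of the lemma applied with $E=\ker Q$ identifies $\ind(\Phi)$ with $\ind(\ker Q,\ran P)$. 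This also delivers the ``if and only if'' cleanly, since $\Phi$ is Fredholm and hence $\Phi\circ(I-Q)|_E$ is Fredholm precisely when $(I-Q)|_E$ is. I would delete the first ``triangle'' route via $E\oplus\ker Q\to H$, which you do not carry out, and present only this corrected version.
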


\begin{example}\label{Example:KatoIndexComputation}
Let us consider the case where $P=Q_\geq$ and $Q=Q_>$, so that $P-Q=Q_0$ is the orthogonal projection onto $\ker A$. By the discreteness of $A$ we know that $Q_0$ is a finite-rank operator and hence compact. Let $B=H_<$, then from Example \ref{Ex:H>FP} it is easy to verify the relations
\begin{align*}
\ind(B,\ran P)=&\ind(H_<, H_\geq)=0,\\
\ind(B,\ran Q)=&\ind(H_<, H_>)=-\dim\ker A,\\
\ind(\ker Q, \ran P)=&\ind(H_\leq, H_\geq)=\dim \ker A,
\end{align*}
which verify the first index formula of Lemma \ref{Prop:A.13}. Additionally, we can compute the $\ind(H_<, H_\geq)$ using the second index formula, 
\begin{align*}
\ind(H_<, H_\geq)=&\ind(B,\ran P)\\
=&\textnormal{\ind}((I-P):B\longrightarrow \ker P)\\
=&\textnormal{\ind}((I-Q_\geq):H_<\longrightarrow \ker (Q_\geq))\\
=&\textnormal{\ind}(Q_<:H_<\longrightarrow  H_<)\\
=&0,
\end{align*}
where the last equality follows because is just the index of the identity map.  
\end{example}

In view of Lemma \ref{BLemma5.7} we want to consider perturbations of Kato-type.
\begin{definition}
Let $B$ be a symmetric operator in $H$ defined on $\dom(A)$ such that 
$\norm{B\sigma}\leq a\norm{\sigma}+b\norm{A\sigma}$ for all $\sigma\in\dom(A)$ and $a,b\in\mathbb{R}_+$ with $b<1$. Then we call the operator $A+B$, defined on $\dom(A)$, a {\em Kato perturbation of $A$}. 
\end{definition}

The Kato-Rellich Theorem states that the operator $A+B$ is again self-adjoint and discrete (\cite[Theorem V.4.3]{KATO}).  Thus, we can consider the corresponding Sobolev chain $H^s(A+B)$. 

\begin{remark}\label{Rmk:SobolevKatoPert}
Observe that, for a Kato perturbation $A+B$, we have the norm estimate
\begin{align*}
\norm{(A+B)\sigma }\leq \norm{A\sigma}+\norm{B\sigma}\leq (1+b)\norm{A\sigma}+b\norm{\sigma}. 
\end{align*}
Similarly, $\norm{A\sigma}\leq\norm{(A+B)\sigma}+\norm{B\sigma}\leq\norm{(A+B)\sigma}+a\norm{\sigma}+b\norm{A\sigma}$, and therefore
\begin{align*}
\norm{A\sigma}\leq\left(\frac{1}{1-b}\right)\norm{(A+B)\sigma}+\left(\frac{a}{1-b}\right)\norm{\sigma}. 
\end{align*}
These inequalities show that the graph norms of $A$ and $A+B$ are equivalent. Consequently we can identify as Hilbert spaces $H^s(A)\cong H^s(A+B)$ for all $s\in \mathbb{R}$.
\end{remark}

\begin{remark}\label{Rmk:DiffKatoProj}
 Let $A+B$ be a Kato perturbation of $A$. If $A$ and $B$ are invertible then using \cite[Lemma VII.5.6]{KATO}  one can express (see \cite[Equation (5.6)]{B09}),
\begin{align*}
{Q_>(A+B)-Q_>(A)}=\frac{1}{2\pi i}\int_{\text{Re}\: z=0}(A+B-z)^{-1}B(A-z)^{-1}dz. 
\end{align*}
\end{remark}

The following result describes how the ellipticity condition of a spectral projection behaves under a Kato perturbation. 
\begin{theorem}[{\cite[Theorem 5.9]{B09}}]\label{BThm5.9}
Assume that $A+B$ is a Kato perturbation of $A$ with $b<2/3$. Then $Q_{>}(A+B)$ is an elliptic projection with respect to $A$ and the subspaces $Q_{\leq }(A)(H)\coloneqq \ran(Q_\leq (A))$ and $Q_{> }(A+B)(H)\coloneqq \ran(Q_>(A+B))$ from a Fredholm pair.
\end{theorem}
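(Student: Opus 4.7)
The strategy is to apply Lemma \ref{BLemma5.7}, which yields both the ellipticity of $Q_>(A+B)$ with respect to $A$ and a Fredholm pair property in one stroke. The Fredholm pair supplied by Lemma \ref{BLemma5.7} is $(\operatorname{ran}(I-P),\operatorname{ran} Q_>(A))=(Q_\leq(A+B)(H), Q_>(A)(H))$; passing to orthogonal complements preserves Fredholmness (and negates the Kato index), giving the desired pair $(Q_\leq(A)(H), Q_>(A+B)(H))$. Thus the real task is to produce a decomposition $Q_>(A+B) = Q_>(A) + R_1 + R_2$ where $R_1$ is smoothing and $\max\{\|\hat R_2\|_{H^{1/2}},\|\tilde R_2\|_{H^{-1/2}}\}<1$; the numerical constraint $b<2/3$ is precisely what will force the $R_2$-bound below $1$.

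First I would reduce to the invertible case so that Remark \ref{Rmk:DiffKatoProj} applies. By Kato--Rellich (cf.\ Remark \ref{Rmk:SobolevKatoPert}), $A+B$ is self-adjoint and discrete, and its Sobolev chain coincides with that of $A$ up to equivalence of norms. Choose $s\in\mathbb{R}$ of arbitrarily small modulus with $s\notin \spec(A)\cup\spec(A+B)$ and replace $A,A+B$ by $A-s,A+B-s$. The differences $Q_>(A)-Q_{>s}(A)$ and $Q_>(A+B)-Q_{>s}(A+B)$ are finite-rank (projections onto eigenspaces in a bounded interval around $0$), hence smoothing, and will be absorbed into $R_1$; the Kato inequality persists under the shift with the same $b$.

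With $A$ and $A+B$ invertible, Remark \ref{Rmk:DiffKatoProj} yields
\[
Q_>(A+B)-Q_>(A)=\frac{1}{2\pi i}\int_{\operatorname{Re} z=0}(A+B-z)^{-1}\,B\,(A-z)^{-1}\,dz.
\]
The integrand factors through the Sobolev chain: the resolvents act boundedly $H^s\to H^{s+1}$ for $s\in\{-1/2,0\}$ with norms integrable along $\operatorname{Re}z=0$ after a careful interpolation, while $B:H^1\to H^0$, extended and interpolated, gives a bounded $B:H^{1/2}\to H^{-1/2}$ whose norm is controlled by the Kato constants. To separate smoothing from small, I would introduce a spectral cutoff $B_N:=Q_{[-N,N]}(A)\,B\,Q_{[-N,N]}(A)$, which is finite-rank, and write $B=B_N+(B-B_N)$. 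The $B_N$-contribution to the contour integral is smoothing (compact with values in $H^{1/2}$), defining $R_1$. The $(B-B_N)$-contribution defines $R_2$, and its operator norms on both $H^{1/2}$ and $H^{-1/2}$ are bounded by a quantity approaching a universal multiple of $b$ as $N\to\infty$.

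The principal obstacle is the quantitative estimate: one must control $R_2$ simultaneously in the $H^{1/2}$- and $H^{-1/2}$-operator norms, with a constant strictly less than $1$. This rests on (i) an interpolated Kato bound $\|B\sigma\|_{H^{-1/2}}\lesssim a'\|\sigma\|_{H^{-1/2}}+b\,\|\sigma\|_{H^{1/2}}$, (ii) the resolvent estimates for $A$ and for $A+B$ on the intermediate spaces, which pick up the factor $(1-b)^{-1}$ from Remark \ref{Rmk:SobolevKatoPert} when transferred between the two Sobolev chains, and (iii) the combined bound yielding something like $b(1+(1-b)^{-1})$, which is $<1$ exactly in the regime $b<2/3$. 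Once the decomposition is in place, Lemma \ref{BLemma5.7} closes the argument, and the complement argument above supplies the Fredholm pair asserted in the theorem.
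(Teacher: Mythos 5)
The paper itself does not prove this theorem; it is quoted verbatim from \cite[Theorem 5.9]{B09} and used as a black box in Theorem \ref{Thm:5.3} and Lemma \ref{Lemma:SplitKatoInd}, so there is no in-paper proof against which to compare. Your overall strategy --- invoke Lemma \ref{BLemma5.7} with $P=Q_>(A+B)$, obtain the decomposition $R_1+R_2=Q_>(A+B)-Q_>(A)$ from the contour integral of Remark \ref{Rmk:DiffKatoProj}, and split $B$ by a spectral cutoff so that the truncated part gives a smoothing $R_1$ and the tail gives a small $R_2$ --- is a reasonable reconstruction, and your reduction of the asserted Fredholm pair to the one supplied by Lemma \ref{BLemma5.7} via orthogonal complements is correct (for closed subspaces $X,Y$ one has $\ind(X^\perp,Y^\perp)=-\ind(X,Y)$).

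The concrete gap is in the closing quantitative claim. You assert that the combined bound is ``something like $b(1+(1-b)^{-1})$, which is $<1$ exactly in the regime $b<2/3$.'' This arithmetic fails: $b\left(1+(1-b)^{-1}\right)=b(2-b)/(1-b)$, and setting this equal to $1$ gives $b^2-3b+1=0$, i.e.\ $b=(3-\sqrt{5})/2\approx 0.38$, well below $2/3$; at $b=2/3$ the expression equals $8/3$. The threshold $b<2/3$ is instead consistent with a bound of the form $b/(1-b)<2$, and it is worth noting that $b/(1-b)$ is precisely the Kato constant in the reverse direction, viewing $A$ as a perturbation of $A+B$ (cf.\ Remark \ref{Rmk:SobolevKatoPert}); moreover $b/(1-b)<2\Leftrightarrow b<2/3$. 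So the two resolvents $(A-z)^{-1}$ and $(A+B-z)^{-1}$ must contribute asymmetrically, with only one absorbing a $(1-b)^{-1}$ factor, and your proposed combination double-counts that factor. Until this bookkeeping is redone carefully --- uniformly along $\textnormal{Re}\,z=0$ and simultaneously in the $H^{1/2}$- and $H^{-1/2}$-operator norms --- the proposal does not establish $\max\{\norm{\hat R_2},\norm{\tilde R_2}\}<1$, which is the crux of the hypothesis of Lemma \ref{BLemma5.7}.
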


The following consequence will be of particular interests for later purposes. We provide a detailed proof since we need to have control on the constants involved. 
\begin{coro}\label{Coro:VanishingKatoIndexPert}
 If $B$ is bounded and $|A|\geq \mu$ where $\mu>\sqrt{2}\norm{B}$, then 
$$\textnormal{\ind}(Q_{\leq }(A)(H),Q_{>}(A+B)(H))=0.$$
\end{coro}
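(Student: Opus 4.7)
Since $B$ is bounded and $\mu > \sqrt{2}\|B\| > \|B\|$, one has $\|(A+B)\sigma\|\geq (\mu-\|B\|)\|\sigma\|$, so both $A$ and $A+B$ are invertible with trivial kernel; in particular $Q_{\leq}(A) = Q_{<}(A)$ and $Q_{>}(A+B) = Q_{\geq}(A+B)$. Moreover $\|B\sigma\|\leq (\|B\|/\mu)\|A\sigma\|$ makes $B$ a Kato perturbation of $A$ with $b = \|B\|/\mu < 1/\sqrt{2} < 2/3$, so Theorem \ref{BThm5.9} guarantees that $(H_<(A),Q_>(A+B)(H))$ is a Fredholm pair and the Kato index is well defined.

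The heart of the argument is to show that $S := Q_>(A+B) - Q_>(A)$ satisfies $\|S\|<1$. Using the resolvent representation from Remark \ref{Rmk:DiffKatoProj}, the bounds $\|(A-iy)^{-1}\|\leq(\mu^2+y^2)^{-1/2}$ and $\|(A+B-iy)^{-1}\|\leq((\mu-\|B\|)^2+y^2)^{-1/2}$ on the imaginary axis, together with the elementary identity
\[
(\mu^2+y^2)\bigl((\mu-\|B\|)^2+y^2\bigr) - \bigl(\mu(\mu-\|B\|)+y^2\bigr)^2 = y^2\|B\|^2\geq 0,
\]
give
\[
\|S\|\;\leq\;\frac{\|B\|}{2\pi}\int_{-\infty}^{\infty}\frac{dy}{\mu(\mu-\|B\|)+y^2}\;=\;\frac{\|B\|}{2\sqrt{\mu(\mu-\|B\|)}}.
\]
The hypothesis $\mu > \sqrt{2}\|B\|$ is equivalent to $4\mu^2-4\mu\|B\|-\|B\|^2>0$, i.e.\ to $\|S\|<1$.

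Next apply the second identity of Lemma \ref{Prop:A.13} with $E=H_<(A)$ and $P=Q_>(A+B)$:
\[
\ind\bigl(H_<(A),Q_>(A+B)(H)\bigr) \;=\; \ind(F),\qquad F:=Q_<(A+B)\big|_{H_<(A)}\colon H_<(A)\to H_<(A+B).
\]
A short calculation (using $Q_<(A)Q_>(A)=0=Q_<(A+B)Q_>(A+B)$) shows that the companion map $F':=Q_<(A)|_{H_<(A+B)}\colon H_<(A+B)\to H_<(A)$ satisfies
\[
F'F \;=\; I - Q_<(A)\,S\big|_{H_<(A)},\qquad FF' \;=\; I + Q_<(A+B)\,S\big|_{H_<(A+B)}.
\]
Since $\|S\|<1$, both right-hand sides are invertible by Neumann series, forcing $F$ to be bijective and hence $\ind(F)=0$.

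The main obstacle is to sharpen the analytic estimate for $\|S\|$ enough to match the hypothesis $\mu>\sqrt{2}\|B\|$: a cruder bound such as replacing one factor by $(\mu-\|B\|)$ renders the $y$-integral logarithmically divergent. The Cauchy--Schwarz-type inequality above is therefore essential, and the $\sqrt{2}$ threshold in the hypothesis is precisely what converts the estimate $\|B\|/(2\sqrt{\mu(\mu-\|B\|)})$ into $\|S\|<1$; all other steps are formal consequences of Theorem \ref{BThm5.9} and Lemma \ref{Prop:A.13}.
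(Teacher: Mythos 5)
Your proof is correct and follows essentially the same route as the paper: both reduce the statement to the norm estimate $\|Q_>(A)-Q_>(A+B)\|\leq \norm{B}/(2\sqrt{\mu(\mu-\norm{B})})<1$ via the resolvent integral of Remark \ref{Rmk:DiffKatoProj} (the paper cites \cite[Lemma A.1]{BB01} for the bound you derive by hand) and then conclude via Lemma \ref{Prop:A.13} that the restricted projection is an isomorphism. One small correction: $\mu>\sqrt{2}\norm{B}$ is not \emph{equivalent} to $4\mu^2-4\mu\norm{B}-\norm{B}^2>0$ but only implies it — the sharp threshold is $\mu>\tfrac{1}{2}(\sqrt{2}+1)\norm{B}$, as the paper itself notes.
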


\begin{proof}
First of all observe that since $B$ is bounded then the condition of Theorem \ref{BThm5.9} is trivially satisfied with $b=0$.  The strategy of the proof is to use \cite[Lemma A.1]{BB01} to show the norm estimate
\begin{align}\label{Eqn:EstimateQAAB}
\norm{Q_>(A)-Q_>(A+B)}< 1.
\end{align}
This condition implies $I-(Q_>(A)-Q_>(A+B))=-Q_>(A)+Q_\leq(A+B)$ is an invertible operator and so we could use Lemma \ref{Prop:A.13} to compute the Kato index,
\begin{align*}
\ind(Q_{\leq }(A)(H), & Q_{>}(A+B)(H))\\
=&\ind((I-Q_{>}(A+B)):Q_{\leq }(A)(H)\longrightarrow  \ker(Q_{>}(A+B)))\\
=&\ind(Q_{\leq}(A+B):Q_{\leq }(A)(H)\longrightarrow  Q_{\leq}(A+B)(H))\\
=&\ind(-Q_>(A)+Q_{\leq}(A+B):Q_{\leq }(A)(H)\longrightarrow  Q_{\leq}(A+B)(H))\\
=& 0.
\end{align*}
Hence, it just remains to show  the estimate \eqref{Eqn:EstimateQAAB}. In view of Remark \ref{Rmk:DiffKatoProj}, we can apply \cite[Lemma A.1]{BB01} with $A_1=A+B$, $A_2=A$, $B(z)=B$ and $\alpha_1=\alpha_2=0$ 
to obtain the estimate
\begin{align*}
\norm{Q_>(A)-Q_>(A+B)}\leq \frac{\norm{B}}{2}\beta(A,0)^{-1/2}\beta(A+B,0)^{-1/2}\leq \frac{\norm{B}}{2}\mu^{-1/2}|\mu-\norm{B}|^{-1/2},
\end{align*}
where we have used \cite[Theorem V.4.10, pg. 291]{KATO} for the last inequality. Therefore, in order to satisfy \eqref{Eqn:EstimateQAAB} we must require
\begin{align*}
\norm{B}^2<4\mu|\mu-\norm{B}|.
\end{align*}
Let us assume $\mu\geq \norm{B}$ first.  The condition above is then 
$$0<-\norm{B}^2-4\mu\norm{B}+4\mu^2.$$
To fulfil this we need to find the values $\xi(=\norm{B})$ for which the parabola defined by $f(\xi)=-\xi^2-4\mu\xi+4\mu^2$ is strictly positive. 
\begin{center}
\begin{tikzpicture}[domain=0:2]
\draw[->] (-1.5,0) -- (6.5,0)
node[below right] {$\xi$};
\draw[->] (0,-3) -- (0,4)
node[left] {$f(\xi)$};
\draw (2,3) parabola (-1,-2);
\draw (2,3) parabola (5,-2);
\draw (-0.3,0) node {$\bullet$};
\draw (4.3,0) node {$\bullet$};
\draw (-0.8,-0.3) node {$\xi_-$};
\draw (4.2,-0.3) node {$\xi_+$};
\end{tikzpicture} 
\end{center}
The roots of $f(\xi)$ are computed as
\begin{align*}
\xi_{\pm}\coloneqq \frac{4\mu\pm \sqrt{16\mu^2-4(-1)(4\mu^2)}}{-2}=(-2\mp 2\sqrt{2})\mu. 
\end{align*}
Thus, we require $(-2-2\sqrt{2})\mu<\xi<(-2+ 2\sqrt{2})\mu$, i.e. 
\begin{align*}
\norm{B}\left(\frac{1}{2}(\sqrt{2}+1)\right)<\mu. 
\end{align*}
Observe that 
\begin{align*}
\left(\frac{1}{2}(\sqrt{2}+1)\right)\simeq 1.207<\sqrt{2}\simeq 1.414. 
\end{align*}
Finally note that if $\mu<\norm{B}$ , then the associated parabola $\tilde{f}(\xi)=-\xi^2+4\mu\xi-4\mu^2$ would just have one root
\begin{align*}
\tilde{\xi}_{\pm}\coloneqq \frac{-4\mu\pm \sqrt{16\mu^2-4(-1)(-4\mu^2)}}{-2}=2\mu,
\end{align*}
so the required condition would never hold true. 
\end{proof}

\subsection{Dirac systems}\label{App:DiracSystems}\label{Section:DiracSystems}

In this section we provide a brief summary of the main results on index theory for Dirac systems developed in \cite{BBC08}. Other important and extensive references around boundary value problems for Dirac-type operators are, for example, \cite{BB12}, \cite{BB13}  and \cite{BW}.\\

As before let $H$ be a separable complex Hilbert space with inner product denoted by $\inner{\cdot}{\cdot}$. For $r\in\mathbb{R}_+$ let $\inner{\cdot}{\cdot}_r$ be a family of scalar products on $H$ compatible with the Hilbert space structure and such that $\inner{\cdot}{\cdot}_0=\inner{\cdot}{\cdot}$. Let us denote by $H_r$ the Hilbert space associated with the inner product $\inner{\cdot}{\cdot}_r$ and by $\mathcal{H}\coloneqq (H_r)_{r\ge 0}$ the family of Hilbert spaces $H_r$. In this context, we define Dirac systems axiomatically following closely  \cite[Section 2]{BBC08}:
\begin{axiom}{}
For all $T\in \mathbb{R}_+\coloneqq [0,\infty)$ there exists a constant $C_T$ such that 
\begin{align*}
|\inner{\sigma_1}{\sigma_2}_s-\inner{\sigma_1}{\sigma_2}_r| \leq \norm{\sigma_1}_r\norm{\sigma_2}_r|r-s|, 
\end{align*}
for all $s,t\in[0,T]$ and $\sigma_1,\sigma_2\in H$.
\end{axiom}
Let $G_r\in\mathcal{L}(H)$ be the bounded operator defined by the relation $\inner{G_r\sigma_1}{\sigma_2}_0=\inner{\sigma_1}{\sigma_2}_r$ for all $\sigma_1,\sigma_2\in H$. By Axiom 1. the map $G:\mathbb{R}_+\longrightarrow \mathcal{L}(H)$ defined by $G(r)\coloneqq G_r$ is in $\text{Lip}_{\text{loc}}(\mathbb{R}_+,\mathcal{L}(H))$, the space of sections which are locally Lipschitz. This implies that it has a weak derivative  $G'\in L_{\text{loc}}^\infty (\mathbb{R}_+,\mathcal{L}(H))$, which is symmetric on $H_0$ for almost all $r\in\mathbb{R}_+$ (\cite[Lemma 2.2]{BBC08}). We can therefore consider the operator
\begin{align*}
\Gamma\coloneqq\frac{1}{2}G^{-1}G'\in L^\infty_\text{loc}(\mathbb{R}_+,\mathcal{L}(H)).
\end{align*}
We now define a {\em continuous metric connection} by
\begin{align*}
\partial\coloneqq\left(\partial_r+\Gamma\right):\text{Lip}_{\text{loc}}(\mathbb{R}_+,H)\longrightarrow L^\infty_\text{loc}(\mathbb{R}_+,H).
\end{align*}
The metric condition means that for $\sigma_1,\sigma_2\in \text{Lip}_{\text{loc}}(\mathbb{R}_+,H)$ the function defined by 
$\inner{\sigma_1}{\sigma_2}(r)\coloneqq \inner{\sigma_1(r)}{\sigma_2(r)}_r$, for $\sigma_1,\sigma_2\in \text{Lip}_{\text{loc}}(\mathbb{R}_+,H)$, satisfies the relation 
\begin{equation}\label{Eqn:MetricConnDS}
\partial_r\inner{\sigma_1}{\sigma_2}=\inner{\partial\sigma_1}{\sigma_2}+\inner{\sigma_1}{\partial \sigma_2}.
\end{equation}
Indeed, it is isntructive to verify this property, 
\begin{align*}
\partial_r\inner{\sigma_1}{\sigma_2}(r)=&\partial_r\inner{\sigma_1(r)}{\sigma_2(r)}_r\\
=&\partial_r\inner{ G_r\sigma_1(r)}{\sigma_2(r)}_0\\
=&\inner{ G'_r\sigma_1(r)}{\sigma_2(r)}_0+\inner{ G_r\sigma'_1(r)}{\sigma_2(r)}_0+\inner{ G_r\sigma_1(r)}{\sigma'_2(r)}_0\\
=&\inner{ 2G_r\Gamma_r \sigma_1(r)}{\sigma_2(r)}_0+\inner{ G_r\sigma'_1(r)}{\sigma_2(r)}_0+\inner{ G_r\sigma_1(r)}{\sigma'_2(r)}_0\\
=&2\inner{\Gamma_r \sigma_1(r)}{\sigma_2(r)}_r+\inner{\sigma'_1(r)}{\sigma_2(r)}_r+\inner{\sigma_1(r)}{\sigma'_2(r)}_r\\
=&\inner{\partial\sigma_1}{\sigma_2}(r)+\inner{\sigma_1}{\partial \sigma_2}(r).
\end{align*}
\begin{axiom}{}
There is a family $\mathcal{A}\coloneqq (A_r)_{r\geq 0}$ of self-adjoint operators $A_r$ on $H_r$ for $r\in\mathbb{R}_+$ with common domain $\dom(A)$ such that 
\begin{enumerate}
\item If $\norm{\cdot}_{A_r}$  denotes the graph norm of $A_r$, then the embedding 
$$(\dom(A),\norm{\cdot}_{A_0})\longrightarrow (H,\norm{\cdot}),$$ is compact. 
\item For all $T\in\mathbb{R}_+$, there is a constant $C_T$ such that 
\begin{align*}
|\inner{A_s \sigma_1}{\sigma_2}_s-\inner{A_r\sigma_1}{\sigma_2}_r|\leq C_T\norm{\sigma_1}_{A_r}\norm{\sigma_2}_r|r-s|
\end{align*}
for all $\sigma_1\in\dom(A)$, $\sigma_2\in H$ and $s,r\in[0,T]$.
\end{enumerate}
\end{axiom}

\begin{axiom}{}
There is a section $\gamma\in \text{Lip}_{\text{loc}}(\mathbb{R}_+,\mathcal{L}(H))\cap L^\infty_\text{loc}(\mathbb{R}_+,\mathcal{L}(\dom(A)))$ such that 
\begin{enumerate}
\item $-\gamma_r=\gamma_r^*=\gamma^{-1}_r$ on $H_r$.
\item $A_r \gamma_r+\gamma_rA_r=0$ on $\dom(A)$.
\item $\partial \gamma=\gamma\partial$ on $ \text{Lip}_{\text{loc}}(\mathbb{R}_+,H)$.
\end{enumerate}
Here $\mathcal{L}(\dom(A))$ denotes the space of all bounded operators on $\dom(A)$ (equipped with the graph norm).
\end{axiom}

\begin{definition}
The triple $\mathcal{D}\coloneqq (\mathcal{H},\mathcal{A},\gamma)$ as above satisfying Axioms 1-3, is called a {\em Dirac System} over $\mathbb{R}_+$.  
\end{definition}
For a Dirac system $\mathcal{D}=(\mathcal{H},\mathcal{A},\gamma)$ define the spaces
\begin{align*}
\mathcal{L}_\text{loc}(\mathcal{D})\coloneqq &\text{Lip}_{\text{loc}}(\mathbb{R}_+,H)\cap L^\infty_\text{loc}(\mathbb{R}_+,\dom(A)),\\
\mathcal{L}_c(\mathcal{D})\coloneqq&\{\sigma\in \mathcal{L}_\text{loc}(\mathcal{D})\:|\:\supp(\sigma)\:\:\text{is compact in $\mathbb{R}_+$}\},\\
\mathcal{L}_{0,c}\coloneqq &\{\sigma\in\mathcal{L}_c(\mathcal{D})\:|\:\sigma(0)=0\},\\
\mathcal{L}_{cc}(\mathcal{D})\coloneqq &\{\sigma\in \mathcal{L}_\text{loc}(\mathcal{D})\:|\:\supp(\sigma)\:\:\text{is compact in $(0,\infty)$}\}.
\end{align*}
On $\mathcal{L}_c(\mathcal{D})$ one defines the inner product
\begin{align}\label{Eqn:L2DiracSyst}
(\sigma_1,\sigma_2)\coloneqq\int_0^\infty \inner{\sigma_1}{\sigma_2}\coloneqq\int_0^\infty \inner{\sigma_1(r)}{\sigma_2(r))}_r dr.
\end{align}
Let $L^2(\mathcal{D})$ denote the Hilbert space completion of $\mathcal{L}_c(\mathcal{D})$ with respect to it. 
\begin{definition}
The {\em Dirac operator} $D$ associated to this Dirac system $\mathcal{D}=(\mathcal{H}, \mathcal{A},\gamma)$ is defined by 
\begin{align*}
D\coloneqq \gamma(\partial+A):\mathcal{L}_{\text{loc}}(\mathcal{D})\longrightarrow L^\infty_\text{loc}(\mathbb{R}_+,H).
\end{align*}
\end{definition}
\begin{remark}
Of course this definition is motivated by the form of the operator \eqref{Eqn:ModelOpCylinder} and Theorem \ref{Thm:TransfDiracOps}, but this theory of Dirac systems can be applied to numerous geometric settings (see \cite{BBC12}).
\end{remark}

It is easy to verify using \eqref{Eqn:MetricConnDS} and Axiom 3. the relation for sections $\sigma_1,\sigma_2\in\mathcal{L}_\text{loc}(\mathcal{D})$,
\begin{align}\label{Eqn:DefectSymD}
\int_0^\infty \inner{D\sigma_1}{\sigma_2}-\int_0^\infty\inner{\sigma_1}{D\sigma_2}=\inner{\sigma_1}{\gamma\sigma_2}\bigg{|}_0^\infty.
\end{align}
Observe that formally $D^\dagger=(-\partial+A)(-\gamma))=D$, so in order to compute indices we want to introduce a self-adjoint involution that splits this operator.
\begin{definition}
A {\em super-symmetry} for a Dirac system $\mathcal{D}$ is defined by an involution $\alpha\in \text{Lip}_\text{loc}(\mathbb{R}_+,\mathcal{L}(H))\cap L^\infty_\text{loc}(\mathbb{R}_+,\mathcal{L}(\dom(A)))$ satisfying
\begin{enumerate}
\item $\alpha_r=\alpha_{r}^* =\alpha_{r}^{-1}$ on $H_r$.
\item $\alpha_{r}\gamma_r+\gamma_r\alpha_{r}=0$ on $\dom(A)$.
\item $\partial\alpha=\alpha\partial$ on $\text{Lip}_\text{loc}(\mathbb{R}_+,\mathcal{L}(H))$.
\item $A_r\alpha_{r}=\alpha_{r}A_r$ on $\dom(A)$.
\end{enumerate}
\end{definition}
In the presence  of such a super-symmetry we define $H_r^{\pm}$ as the $\pm 1$-eigenspaces of $\alpha_{r}$ in $H_r$ so that $H_r=H^+_r\oplus H_r^-$. In a similar manner we can decompose 
$$\dom(A)=\dom(A_r)^{+}\oplus\dom(A_r)^{-},$$ 
where $\dom(A_r)^\pm\coloneqq \dom(A)\cap H^{\pm}_r$. With respect to this decomposition we can express the operator $A_r$ as
\begin{align*}
A_r=\left(
\begin{array}{cc}
A^+_r & 0\\
0 & A^-_r
\end{array}
\right).
\end{align*}
Here $A_r^\pm:\dom(A_r)^\pm\subset H_r^\pm\longrightarrow H_r^\pm$. Moreover, we also obtain the decomposition of 
\begin{align*}
\partial=\left(
\begin{array}{cc}
\partial^+ & 0\\
0 & \partial^-
\end{array}
\right)
\quad\quad\text{so that}\quad\quad
D=\left(
\begin{array}{cc}
0 &D^- \\
 D^+ & 0
\end{array}
\right).
\end{align*}
A super-symmetry also induces a decomposition $L^2(\mathcal{D})=L^2(\mathcal{D})^+\oplus L^2(\mathcal{D})^-$ where 
\begin{align*}
L^2(\mathcal{D})^{\pm}\coloneqq \{\sigma\in L^2(\mathcal{D})\:|\:\ran(\sigma)\in H^\pm\}. 
\end{align*}

We now want to discuss some closed extensions of the operator $D$. Observe from \eqref{Eqn:DefectSymD} that the restriction ${D}_{0,c}$ of $D$ to $\mathcal{L}_{0,c}$ is a symmetric operator, thus we can always consider (see Section \ref{Section:DiffOp}),
\begin{itemize}
\item The {\em minimal} extension defined by $D_\text{min}\coloneqq \overline{D_{0,c}}$.
\item The {\em maximal} extension defined by $D_\text{max}\coloneqq (D_{0,c})^*$.
\end{itemize}

In view of the APS index formula \eqref{Eqn:IndexDBoundary}, we know that relevant geometric closed extensions for Dirac-type operators on manifolds with boundary are obtained by imposing appropriate boundary conditions. In \cite{BW}, for example, elliptic boundary conditions for such operators are defined by requiring some compatibility condition with the principal symbol of the Calder\'on projector (\cite[Definition 18.1]{BW}). In the setting of Dirac systems this point of view is somehow modified and the boundary conditions are determined by closed subspaces of certain hybrid Sobolev space. Let us describe this more concretely. Consider the Sobolev chain $H^s=H^s(A_0)$ associated to the Hilbert space $H_0$ and the self-adjoint operator $A_0$ as in Section \ref{Sec:Perturbations}. One can define the space
\begin{align*}
\check{H}\coloneqq\check{H}(A_0)\coloneqq H^{-1/2}_{>}\oplus Q_0(H)\oplus H^{1/2}_<, 
\end{align*}
where $Q_0(H)$ denotes the kernel of $A$ in $H$. 

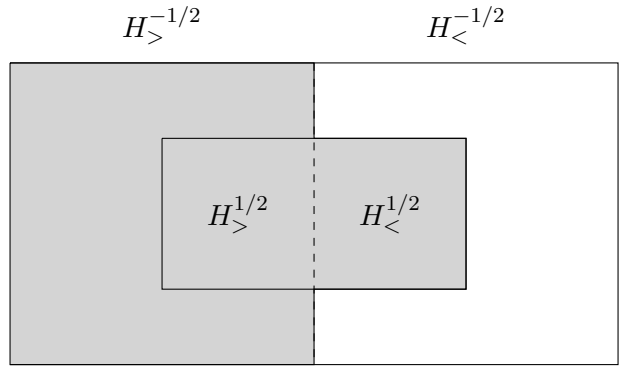
\begin{figure}[h]
\begin{center}
\begin{tikzpicture}
\draw [fill={rgb:black,1;white,5}] (-4,2)--(0,2)--(0,1)--(2,1)--(2,-1)--(0,-1)--(0,-2)--(-4,-2)--(-4,2);
\draw (-4,2)--(4,2)--(4,-2)--(-4,-2)--(-4,-2);
\draw[dashed] (0,2)--(0,-2);
\node at (-2,2.5) {$H^{-1/2}_>$};
\node at (2,2.5) {$H^{-1/2}_<$};
\draw (-2,1)--(2,1)--(2,-1)--(-2,-1)--(-2,-1)--(-2,1);
\node at (-1,0) {$H^{1/2}_>$};
\node at (1,0) {$H^{1/2}_<$};
\end{tikzpicture}
\caption{The shadowed area represents schematically the space $\check{H}$. It is important to point out that the norm in $H^{1/2}_<$ is $\norm{\cdot}_{1/2}$ and not the induced $\norm{\cdot}_{-1/2}$ from $H^{-1/2}_<$.}
\end{center}
\end{figure}

\begin{definition}
A closed subspace $B\subset \check{H}$ is called  a (linear) {\em boundary condition} for the Dirac system $\mathcal{D}$. For any such $B$ we define the domain of the corresponding operator $D_{B,\text{max}}$ by $\dom(D_{B,\text{max}})\coloneqq \{\sigma\in\dom(D_\text{max})\:|\:\sigma(0)\in B\}$. 
\end{definition}
The following result unravels the definition above. 
\begin{proposition}[{\cite[Proposition 1.50]{BBC08}}]\label{Prop:BBC08Prop1.50}
Closed extensions of $D_{0,c}$ between the minimal extension $D_\textnormal{min}$ and the maximal extension $D_\textnormal{max}$ correspond to closed linear subspaces $B$ of $\check{H}$.
\end{proposition}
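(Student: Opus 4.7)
The plan is to construct a continuous boundary trace map
\[
\mathcal{T}:\dom(D_{\text{max}})\longrightarrow \check{H},\qquad \sigma\longmapsto \sigma(0),
\]
and show that it is surjective with kernel precisely $\dom(D_{\text{min}})$. Once this is done, the correspondence is standard: any closed extension $\tilde D$ with $D_{\text{min}}\subset \tilde D \subset D_{\text{max}}$ corresponds to $B:=\mathcal{T}(\dom(\tilde D))$, which is a closed subspace of $\check{H}$ since $\mathcal{T}$ is a continuous open map modulo its kernel; conversely, given a closed $B\subset\check{H}$, the preimage $\mathcal{T}^{-1}(B)$ is a closed subspace of $\dom(D_{\text{max}})$ (in the graph norm of $D_{\text{max}}$), on which $D_{\text{max}}$ restricts to a closed operator extending $D_{\text{min}}$.

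First I would establish that the evaluation $\sigma\mapsto\sigma(0)$ makes sense on $\dom(D_{\text{max}})$ with image in $\check{H}$. The strategy is to freeze coefficients at $r=0$, producing a model operator $D_0 = \gamma_0(\partial_r + A_0)$, and to treat the actual $D$ as a Kato perturbation of $D_0$ using Axioms 1--3 (which control the Lipschitz dependence of $G_r,A_r,\gamma_r$ on $r$). Decomposing $\sigma\in\dom(D_{\text{max}})$ spectrally with respect to $A_0$ as $\sigma=\sigma_>+\sigma_0+\sigma_<$ and integrating the model equation $\gamma_0(\partial_r+A_0)\sigma = \tau \in L^2$ against itself, the anti-commutation $\gamma_0 A_0 + A_0\gamma_0 = 0$ produces, for each spectral interval, an ordinary differential inequality of Hardy type. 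This shows:
\begin{itemize}
\item $\sigma_<$ extends continuously to $r=0$ as an $H^{1/2}_<$-valued trace (the ``good'' part, where $e^{-rA_0}$ is a contraction going outward);
\item $\sigma_0$ extends as an element of $Q_0(H)$;
\item $\sigma_>$ may blow up, but only within $H^{-1/2}_>$ (the singularity is controlled by duality against the smoothing $H^{1/2}_>$ part of test sections, using the pairing $B_{1/2}$ in \eqref{PairingB}).
\end{itemize}
The Lipschitz perturbation from $D$ to $D_0$ is absorbed by standard Grönwall-type arguments on compact intervals $[0,T]$.

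The second step is surjectivity of $\mathcal{T}$. Given $\sigma_0=\sigma_>^{(0)}+\sigma_0^{(0)}+\sigma_<^{(0)}\in\check{H}$, I would exhibit an explicit lift in $\dom(D_{\text{max}})$ by multiplying a cut-off $\chi\in C_c^\infty([0,\infty))$ with $\chi(0)=1$ by the standard semigroup extensions: $r\mapsto \chi(r)\,e^{-rA_0}Q_>\sigma_>^{(0)}$ for the singular part (this lies in $L^2$ but not in $\dom(A)$ pointwise near $0$, yet $D_0$ applied to it is in $L^2$ by the $H^{-1/2}$ regularity), $r\mapsto \chi(r)\sigma_0^{(0)}$ for the kernel part, and $r\mapsto \chi(r)\,e^{rA_0}Q_<\sigma_<^{(0)}$ for the regular part. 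Solvability is then transferred back to $D$ by perturbation.

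The third and most delicate step is the identification $\ker\mathcal{T}=\dom(D_{\text{min}})$. The inclusion $\dom(D_{\text{min}})\subset \ker\mathcal{T}$ is routine from the definition of $\mathcal{L}_{0,c}$ together with the continuity of the trace. The reverse inclusion is the main obstacle: given $\sigma\in\dom(D_{\text{max}})$ with $\sigma(0)=0$ in $\check{H}$, one has to approximate $\sigma$ in the graph norm of $D_{\text{max}}$ by sections in $\mathcal{L}_{0,c}$. I would use a two-step approximation — first truncate in $r$ using a cut-off supported in $[0,R]$ with $R\to\infty$ (handling the behavior at infinity), then regularize near $r=0$ by scaling, $\sigma_\varepsilon(r):=\phi(r/\varepsilon)\sigma(r)$ with $\phi$ vanishing near $0$. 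The convergence $D_{\text{max}}\sigma_\varepsilon \to D_{\text{max}}\sigma$ in $L^2$ reduces to showing that the commutator $[\phi(\cdot/\varepsilon),D_{\text{max}}]\sigma$ vanishes in $L^2$ as $\varepsilon\to 0$, which is exactly where the precise regularity $(\sigma_<(0),\sigma_0(0),\sigma_>(0))=0$ in the respective spaces $H^{1/2}_<\oplus Q_0(H)\oplus H^{-1/2}_>$ is needed — the $H^{-1/2}_>$-smallness of $\sigma_>$ near $0$ is the minimal hypothesis which makes the commutator estimate close. This is the technical heart of the proof and is where the particular choice of Sobolev exponents in the definition of $\check{H}$ becomes essential.
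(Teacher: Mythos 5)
Your outline is correct and is essentially the argument of the cited reference: the paper itself does not reproduce a proof of this proposition but quotes it from \cite[Proposition 1.50]{BBC08}, where the result is obtained exactly as you describe, via a continuous surjective trace map $\dom(D_{\textnormal{max}})\longrightarrow\check{H}$ with kernel $\dom(D_{\textnormal{min}})$, the spectral splitting into an $H^{-1/2}_{>}$-part, a $Q_0(H)$-part and an $H^{1/2}_{<}$-part with the same assignment of Sobolev exponents you use, semigroup lifts for surjectivity, and a cut-off approximation for the kernel identification. Your sketch correctly identifies the delicate step (that a vanishing trace in $\check{H}$ suffices for membership in $\dom(D_{\textnormal{min}})$) and the right tools for it, so no gap to report.
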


Given a boundary condition $B$ we can associate to it yet another closed operator.
\begin{definition}
For a boundary condition $B$ we define the operator $D_B$ to be the restriction of $D_{B,\text{max}}$ to the domain
$\dom(D_B)\coloneqq\{\sigma\in \dom(D_{\text{max}})\:|\: \sigma(0)\in B\cap H^{1/2}\}$.
\end{definition}
Now we describe a special type of boundary conditions on which $D_{B,\text{max}}$ and $D_{B}$ agree. 
\begin{definition}
A boundary condition is called {\em regular} if $B\subseteq H^{1/2}\subset \check{H}$.
\end{definition}
In order to define adjoint boundary conditions we need to consider, in view of \eqref{Eqn:DefectSymD}, the the non-degenerate, continuous, skew-symmetric pairing $\omega:\check{H}\times\check{H}\longrightarrow\mathbb{C}$ defined by 
\begin{align*}
\omega(\sigma_1,\sigma_2)\coloneqq B_{1/2}(Q_{\leq }\sigma_1,\gamma Q_{\geq }\sigma_2)+B_{-1/2}(Q_{> }\sigma_1,\gamma Q_{<}\sigma_2),
\end{align*}
where $B_{\pm 1/2}$ is given by \eqref{PairingB}. Indeed, the right hand side of \eqref{Eqn:DefectSymD} is precisely $\omega(\sigma_1(0),\sigma_2(0))$. In view of this fact and by Proposition \ref{Prop:BBC08Prop1.50} the following definition arises naturally. 
\begin{definition}
For a boundary condition $B\subseteq \check{H}$ we define its {\em annihilator}
\begin{align*}
B^a\coloneqq \{\sigma_1\in \check{H}\:|\:\omega(\sigma_1,\sigma_2)=0,\:\forall \sigma_2\in B\}.
\end{align*}
We say that the boundary condition $B\subseteq \check{H}$ is {\em elliptic} if $B$ and $B^a$ are both regular. We say that 
$B$ is {\em self-adjoint} if $B=B^a$.
\end{definition}

\begin{remark}[{\cite[Lemma 1.46]{BBC08}}]\label{Rmk:SimplifyOmega}
If $\sigma_1,\sigma_2\in \check{H}$  with $\sigma_1\in H^{1/2}$ then $Q_>\sigma_1\in H^{1/2}$ and therefore 
\begin{align*}
B_{-1/2}(Q_{> }\sigma_1,\gamma Q_{<}\sigma_2)=&({(I+A^2)^{-1/4}Q_>\sigma_1},{(I+A^2)^{1/4}\gamma Q_<\sigma_2})\\
=&({(I+A^2)^{-1/4}Q_>\sigma_1},{(I+A^2)^{1/4}\gamma Q_<\sigma_2})\\
=&((I+A^2)^{-1/2}{(I+A^2)^{1/4}Q_>\sigma_1},{(I+A^2)^{1/4}\gamma Q_<\sigma_2})\\
=&({(I+A^2)^{1/4}Q_>\sigma_1},{(I+A^2)^{-1/4}\gamma Q_<\sigma_2})\\
=&B_{1/2}(Q_{> }\sigma_1,\gamma Q_{<}\sigma_2).
\end{align*} 
So in this case $\omega(\sigma_1,\sigma_2)$ simplifies to 
\begin{align*}
\omega(\sigma_1,\sigma_2)=&B_{1/2}(Q_{\leq }\sigma_1,\gamma Q_{\geq }\sigma_2)+B_{-1/2}(Q_{> }\sigma_1,\gamma Q_{<}\sigma_2)\\
=&B_{1/2}(Q_{\leq }\sigma_1,\gamma Q_{\geq }\sigma_2)+B_{1/2}(Q_{> }\sigma_1,\gamma Q_{<}\sigma_2)\\
=& B_{1/2}(\sigma_1,\gamma\sigma_2).
\end{align*}
\end{remark}
\begin{example}[APS boundary conditions]\label{Example:APSBoundCond}
The boundary condition introduced by Atiyah, Patodi and Singer in \cite{APSI} required to obtain Theorem \ref{Thm:SignThmMBound} (see Section \ref{Section:SigThmMB}) can be described in this setting by the closed subspace $B_{APS}\coloneqq \check{H}_<={H}^{1/2}_{<}$. It is immediate from the definition that  $B_{APS}$ is a regular boundary condition. Let us compute its associated adjoint boundary condition. For $\sigma_1\in B_{APS}$ and $\sigma_2\in\check{H}$ we have, by the last remark, $\omega(\sigma_1,\sigma_2)= B_{1/2}(\sigma_1,\gamma\sigma_2)$
so $\sigma_2\in B^a_{APS} \Leftrightarrow Q_< \gamma\sigma_2 =0 \Leftrightarrow Q_>\sigma_2 =0 $.
This shows that $B_{APS}^a\coloneqq H^{1/2}_{\leq }  $, which is regular as well. Hence, $B_{APS}=H^{1/2}_{<}$ is an elliptic boundary condition.
\end{example}

\begin{example}[{\cite[Example 1.85]{BBC08}}]\label{Ex:1.85}
Let $\beta:H\longrightarrow H$ be a $1/2$-smooth map, with restriction  $\hat{\beta}:H^{1/2}\longrightarrow H^{1/2}$ and extension $\tilde{\beta}:H^{-1/2}\longrightarrow H^{-1/2}$, 
such that 
\begin{enumerate}
\item $\beta^*=\beta^{-1}=\beta$,
\item $\gamma\beta+\beta\gamma=0$,
\item $A\beta+\beta A=0$.
\end{enumerate}
 Then 
$B\coloneqq \{\sigma\in H^{1/2}\:|\:\hat{\beta}\sigma=\sigma\}\subset \check{H}$ is a regular self-adjoint boundary condition, and therefore also elliptic. Indeed it is clear that $B$ is regular since $B\subset H^{1/2}$. On the other hand, if $\sigma_1\in B$ and $\sigma_2\in\check{H}$ then by Remark \ref{Rmk:SimplifyOmega} we have $\omega(\sigma_1,\sigma_2)=B_{1/2}(\sigma_1,\gamma\sigma_2)$. Moreover, from the commutation relations of $\gamma$ and $A$ with $\beta$ we see that 
\begin{align*}
B_{1/2}(\sigma_1,\gamma\sigma_2)=B_{1/2}(\hat{\beta}\sigma_1,\gamma\sigma_2)=B_{1/2}(\sigma_1,\tilde{\beta}\gamma\sigma_2)=-B_{1/2}(\sigma_1,\gamma\tilde{\beta}\sigma_2),
\end{align*}
which shows that $\tilde{\beta}(B^a)= B^a$ and $B_{1/2}(\sigma_1,\gamma(\sigma_2+\tilde{\beta}\sigma_2))=0$.
In particular we get $B\subset B^a$. To prove the opposite inclusion let us assume that $\sigma_2\in B^a\cap H_>^{-1/2}$, then 
$\tilde{\beta}\sigma_2=\tilde{\beta}Q_>\sigma_2=Q_<\tilde{\beta}\sigma_2\in B^a$.
This shows $\tilde{\beta}\sigma_2\in H^{1/2}_<\cap B^a$ which implies, by applying $\tilde{\beta}$, that $\sigma_2\in B^a\cap H_>^{1/2}$. As a result, since in this case  $0=B_{1/2}(\sigma_1,\gamma\sigma_2)=\inner{\sigma_1}{\gamma\sigma_2}$, we see that $\gamma \sigma_2$ is in the  orthogonal complement of the $+1$-eigenspace of $\beta$. Consequently, by property (2), $\sigma_2$ lies in the $+1$-eigenspace of $\beta$, which shows that $B^a\subset B$. 
\end{example}

In the presence of a super-symmetry $\alpha$ we are interested in boundary conditions $B$ which are $\alpha$-invariant, i.e. such that  $\alpha_0(B)= B$ in $\check{H}$. We call this class {\em super-symmetric boundary conditions}.  For such a boundary condition we have a decomposition $B=B^+\oplus B^-$ induced by $\alpha_0$.  A super-symmetric boundary condition $B$ is regular/elliptic if and only if $B^\pm$ are regular/elliptic in $\check{H}^{\pm}$.\\

We want to describe  boundary conditions induced by the regular projections discussed in the last section. Let $P$ be a $1/2$-smooth projection in $H$. Then $B_P\coloneqq \ker\tilde{P}\cap \check{H}$ is a closed subspace of $\check{H}$ and therefore it defines a boundary condition. 
\begin{proposition}[{\cite[Proposition 1.99]{BBC08}}]\label{Prop:BBCProp1.99}
For a $1/2$-smooth orthogonal projection $P$ in $H$ the following are equivalent:
\begin{enumerate}
\item $P$ is a regular orthogonal projection.
\item $B_P=\ker\hat{P}$.
\item $B_P$ is a regular boundary condition.
\end{enumerate}
\end{proposition}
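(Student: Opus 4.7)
The plan is to establish the chain $(1)\Rightarrow(2)\Rightarrow(3)\Rightarrow(2)\Rightarrow(1)$, where the last step is the only one requiring the regularity hypothesis on $P$. Two preliminary remarks will be used repeatedly. First, since on the kernel of $A$ all Sobolev norms are equivalent, one has the inclusion $H^{1/2}=H^{1/2}_{>}\oplus Q_0(H)\oplus H^{1/2}_{<}\subseteq \check H$; in particular $\tilde P|_{H^{1/2}}=\hat P$, so $\ker\hat P=\ker\tilde P\cap H^{1/2}\subseteq B_P$. Second, by discreteness of $A$, for every $\Lambda\in\mathbb R$ the spectral projection $Q_{(0,\Lambda]}$ has finite rank, hence lands in $H^s$ for every $s$; choosing $\Lambda>0$ smaller than the smallest positive eigenvalue of $A$ one has $Q_{\leq\Lambda}=Q_{\leq 0}=Q_{<}+Q_0$.

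For the easy equivalence $(2)\Leftrightarrow(3)$, note that if $B_P=\ker\hat P$ then $B_P\subseteq H^{1/2}$, so $B_P$ is regular. Conversely, if $B_P$ is regular, then any $\sigma\in B_P$ lies in $H^{1/2}$, and the equality $\tilde P\sigma=\hat P\sigma$ on $H^{1/2}$ together with $\tilde P\sigma=0$ gives $B_P\subseteq\ker\hat P$; the opposite inclusion is the first preliminary remark.

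For $(1)\Rightarrow(2)$, take $\sigma\in B_P$, so $\sigma\in\check H\subseteq H^{-1/2}$ and $\tilde P\sigma=0$. Writing $\sigma=Q_{>}\sigma+Q_0\sigma+Q_{<}\sigma$ with $Q_0\sigma\in Q_0(H)$ and $Q_{<}\sigma\in H^{1/2}_{<}$, one sees that $Q_{\leq\Lambda}\sigma=Q_{<}\sigma+Q_0\sigma\in H^{1/2}$ for $\Lambda$ chosen as above. Regularity of $P$ then forces $\sigma\in H^{1/2}$, whence $\hat P\sigma=\tilde P\sigma=0$ and $B_P\subseteq\ker\hat P$; the reverse inclusion is, again, the first remark.

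For the remaining implication $(2)\Rightarrow(1)$, suppose $B_P=\ker\hat P$ and take $\sigma\in H^{-1/2}$ with $\tilde P\sigma=0$ and $Q_{\leq\Lambda}\sigma\in H^{1/2}$ for some $\Lambda\in\mathbb R$. Applying $Q_{<}$ and $Q_0$ yields $Q_{<}\sigma\in H^{1/2}_{<}$ and $Q_0\sigma\in Q_0(H)$, while $Q_{>}\sigma\in H^{-1/2}_{>}$ holds automatically; consequently $\sigma\in\check H$. Combined with $\tilde P\sigma=0$ this means $\sigma\in B_P=\ker\hat P\subseteq H^{1/2}$, establishing regularity of $P$. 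The only subtle point is keeping track of where the finite-rank spectral summand $Q_{(0,\Lambda]}\sigma$ lives, which is harmless thanks to the discreteness of $A$ built into the framework; everything else is bookkeeping with the splitting $\check H=H^{-1/2}_{>}\oplus Q_0(H)\oplus H^{1/2}_{<}$ and the identity $\tilde P|_{H^{1/2}}=\hat P$.
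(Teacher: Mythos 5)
The paper does not give its own proof of this statement; it simply cites \cite[Proposition 1.99]{BBC08}, so there is no argument in the paper to compare against. Your proof is correct: it is the standard definition-chase using the splitting $\check H=H^{-1/2}_{>}\oplus Q_0(H)\oplus H^{1/2}_{<}$, the compatibility $\tilde P|_{H^{1/2}}=\hat P$, and the finite rank (by discreteness of $A$) of the spectral projectors $Q_0$ and $Q_{(0,\Lambda]}$.

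Two small presentational points. First, the opening sentence ``the last step is the only one requiring the regularity hypothesis on $P$'' is backwards: it is $(1)\Rightarrow(2)$ that uses regularity of $P$ as a hypothesis, while $(2)\Rightarrow(1)$ is where regularity of $P$ is established. Second, in $(2)\Rightarrow(1)$ it is cleaner to fix $\Lambda$ once and for all (e.g.\ $0<\Lambda<$ smallest positive eigenvalue, as in your second remark, so that $Q_{\le\Lambda}=Q_{<}+Q_0$) rather than quantify ``for some $\Lambda$'' at the start; with that choice the step ``applying $Q_{<}$ and $Q_0$'' literally gives $Q_{<}\sigma\in H^{1/2}_{<}$ and $Q_0\sigma\in Q_0(H)$ with no further bookkeeping, and the ``for some, equivalently any'' clause in the definition of regularity justifies proving the implication for just this one $\Lambda$. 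Neither point affects the correctness of the argument.
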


\begin{example}[APS projection]\label{Example:APSproj}
The projection associated to the APS boundary condition from Example \ref{Example:APSBoundCond} is $P_{APS}\coloneqq Q_{\geq}$ since $\ker Q_{\geq}\cap\check{H}=\check{H}_<$. Moreover, observe that  Example \ref{Example:SpectralProjections} and Example \ref{Example:APSBoundCond} verify the statement of Proposition \ref{Prop:BBCProp1.99}.
\end{example}

We continue our short excursion describing some index theorems for Dirac operators associated with super-symmetric Dirac systems. To be able to state these results  we need to introduce some additional notion. Indeed,  even for an elliptic boundary condition $B$, the corresponding operator $D_{B,\text{max}}$ is in general not Fredholm. It turns out that, in order to obtain a Fredholm operator, we just need to impose a further condition: non-parabolicity (see \cite{C01a},\cite{C01}). To describe it we define for $\sigma\in\dom(D_\text{max})$ the norm
\begin{align*}
\norm{\sigma}^2_W\coloneqq \norm{\sigma(0)}^2_{\check{H}}+\norm{D_\text{max}\sigma}^2_{L^2(\mathcal{D})}.
\end{align*} 
Here $\norm{\cdot}_{\check{H}}$ denotes the norm of the hybrid space $\check{H}$ and $\norm{\cdot}^2_{L^2(\mathcal{D})}$ is induced by \eqref{Eqn:L2DiracSyst}.
\begin{definition}\label{Def:NonPara}
We say that $\mathcal{D}$ is {\em non-parabolic} if for each $T>0$ exists a constant $C_T>0$ such that 
\begin{align*}
\norm{\sigma}_{L^2([0,T],\mathcal{H})}\leq C_T\norm{\sigma}_{W}\quad \forall\sigma\in\mathcal{L}_{c}(\mathcal{D}),
\end{align*} 
where $\norm{\sigma}_{L^2([0,T],\mathcal{H})}^2\coloneqq \norm{1_{[0,T]}\sigma }^2_{L^2(\mathcal{D})}$ and $1_{[0,T]}$ is the characteristic of $[0,T]$. 
\end{definition}

\begin{remark}\label{Rmk:NonPar}
It is actually enough check the inequality of Definition \ref{Def:NonPara} for sections $\sigma\in\mathcal{L}_{0,c}(\mathcal{D})$ by \cite[Lemma 2.38]{BBC08}.
\end{remark}

For a non-parabolic Dirac system $\mathcal{D}$ define $W\subset L^2_\text{loc}(\mathcal{D})$ to be the completion of $\mathcal{L}_c(\mathcal{D})$ with respect to the norm $\norm{\cdot}_W$ defined above. In this case one can extend $D_\text{max}$ to a bounded operator
$D_\text{ext}:W\longrightarrow L^2(\mathcal{D})$. Additionally, if we are given a boundary condition $B\subseteq\check{H}$, we associate to it the closed operator $D_{B,\text{ext}}$ with domain 
$$W_B\coloneqq \dom(D_{B,\text{ext}})\coloneqq \{\sigma\in W\:|\:\sigma(0)\in B\}.$$

\begin{remark}[{\cite[Section 2.c]{C01}}]
The notion of non-parabolicity is motivated by the results of \cite{APSI}. Here the index of a Dirac-type operator on a compact manifold with boundary is interpreted as the $L^2$-index of an associated {\em elongated manifold} with cylindrical ends. 
\end{remark}

Now we state one on the most important result of Dirac systems. 

\begin{theorem}[{\cite[Theorem 2.43]{BBC08}}]\label{Thm:BBC2.43}
Let $\mathcal{D}$ be a non-parabolic Dirac system and let $B$ be a regular boundary condition. Then $D_{B,\textnormal{ext}}:W_B\longrightarrow L^2(\mathcal{D})$ is a left-Fredholm operator with $(\ran \: D_{B,\textnormal{ext}} )^\perp=\ker D_{B,\textnormal{max}}$. The extended index of $D_{B,\textnormal{ext}}$ is defined by 
\begin{align*}
\textnormal{\ind}\:D_{B,\textnormal{ext}}\coloneqq \dim\ker D_{B,\textnormal{ext}}-\dim\ker D_{B^a,\textnormal{max}}.
\end{align*}
\end{theorem}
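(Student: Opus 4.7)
The statement combines three claims: (i) $D_{B,\textnormal{ext}}$ has finite-dimensional kernel, (ii) it has closed range, and (iii) the orthogonal complement of its range is exactly $\ker D_{B^a,\textnormal{max}}$. My plan is to treat them in that order, exploiting non-parabolicity as the substitute for the global $L^2$-control that is automatic on compact manifolds, and the compact embedding $\dom(A)\hookrightarrow H$ from Axiom 2 as the source of all finite-dimensionality.

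The first step is to upgrade non-parabolicity into a continuous trace $W\to\check H$, $\sigma\mapsto\sigma(0)$. By definition of $\norm{\cdot}_W$ the evaluation at $0$ is bounded; combining this with Remark \ref{Rmk:NonPar} and integrating $\partial_r\inner{\sigma}{\sigma}_r$ using the metric connection identity \eqref{Eqn:MetricConnDS} gives, for every $T>0$, an estimate
\begin{equation*}
\norm{\sigma}_{L^2([0,T],\mathcal H)}\;\leq\;C_T\bigl(\norm{\sigma(0)}_{\check H}+\norm{D_{\textnormal{max}}\sigma}_{L^2(\mathcal D)}\bigr).
\end{equation*}
Conversely, an integration-by-parts argument of the form used in Remark \ref{Rmk:AdjointBoundCond} controls $\norm{\sigma(0)}_{\check H}$ by the $L^2$-norm on a small collar $[0,T]$ plus $\norm{D_{\textnormal{max}}\sigma}_{L^2(\mathcal D)}$. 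Together these yield the fundamental semi-Fredholm a priori estimate
\begin{equation*}
\norm{\sigma}_W\;\leq\;C\bigl(\norm{D_{B,\textnormal{ext}}\sigma}_{L^2(\mathcal D)}+\norm{\sigma}_{L^2([0,T],\mathcal H)}\bigr),\qquad \sigma\in W_B.
\end{equation*}

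For claim (i), restrict to $\ker D_{B,\textnormal{ext}}$: the estimate above says the $W$-norm is controlled by the local $L^2$-norm. The embedding $W_B\hookrightarrow L^2([0,T],\mathcal H)$ is compact because sections of $W_B$ have, via the equation $\partial\sigma=-A\sigma+\gamma^{-1}D\sigma\cdot(\text{stuff})$, one derivative in $r$ and one derivative in $A$ controlled, combined with the compact embedding $\dom(A)\hookrightarrow H$ from Axiom 2. Hence the closed unit ball of $\ker D_{B,\textnormal{ext}}$ is compact, forcing finite dimensionality. For claim (ii), the same a priori inequality implies the classical semi-Fredholm criterion (Peetre's lemma): modding out by the kernel, $D_{B,\textnormal{ext}}$ satisfies $\norm{\sigma}_W\le C\norm{D_{B,\textnormal{ext}}\sigma}_{L^2}$ on a complement, which gives closed range. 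Here the regularity hypothesis $B\subset H^{1/2}$ is crucial because it guarantees that the evaluation $W_B\to\check H$ lands in the smooth part of the hybrid space, making the boundary contribution compatible with the $A$-regularity one needs in the collar estimate.

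For claim (iii), take $\eta\in L^2(\mathcal D)$ with $(\eta,D_{B,\textnormal{ext}}\tau)=0$ for every $\tau\in W_B$. Testing first against $\tau\in\mathcal L_{0,c}(\mathcal D)$ shows $\eta\in\dom(D_{\textnormal{max}})$ and $D_{\textnormal{max}}\eta=0$, so $\eta\in W$ (trivially, since its trace is then well-defined in $\check H$). Testing now against general $\tau\in W_B$ and invoking the boundary identity \eqref{Eqn:DefectSymD} reduces the vanishing condition to $\omega(\tau(0),\eta(0))=0$ for all $\tau(0)\in B$, which by the very definition of the annihilator is equivalent to $\eta(0)\in B^a$. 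Thus $(\ran D_{B,\textnormal{ext}})^\perp=\ker D_{B^a,\textnormal{max}}$. Combined with (ii) this also justifies naming the difference of dimensions an index.

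The main obstacle I anticipate is justifying the collar trace estimate cleanly: one must make sense of $\sigma(0)\in\check H$ (not just in $H^{-1/2}$) and quantify how much of the boundary data is genuinely $H^{1/2}$ versus only $H^{-1/2}$. This is exactly the point where the split $\check H=H^{-1/2}_>\oplus Q_0(H)\oplus H^{1/2}_<$ is engineered so that kernel elements satisfying $\partial\sigma+A\sigma=0$ decay on the $H_>$-part and behave nicely on the $H_<$-part, and non-parabolicity is precisely the quantitative version of this dichotomy. Once this trace theory is in place, the rest is standard Fredholm machinery.
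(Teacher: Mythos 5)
First, a point of reference: the paper does not prove this statement. It is quoted verbatim from \cite[Theorem 2.43]{BBC08}, and the surrounding section explicitly says that the proofs of these results are omitted. So there is no in-paper argument to compare yours against; I can only assess your proposal on its own terms. Its overall shape — an a priori trace/collar estimate, a compactness ingredient to get finite-dimensional kernel and closed range via Peetre's lemma, and the Green's formula \eqref{Eqn:DefectSymD} to identify $(\ran D_{B,\textnormal{ext}})^\perp$ with $\ker D_{B^a,\textnormal{max}}$ — is the standard and correct strategy, and your step (iii) is essentially right (modulo a density argument extending \eqref{Eqn:DefectSymD} from $\mathcal{L}_{\textnormal{loc}}$ to $W_B\times\dom(D_{\textnormal{max}})$ and surjectivity of the trace $W_B\to B$). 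One small correction there: orthogonality to the range only places $\eta$ in $\dom(D_{\textnormal{max}})$ with $\eta(0)\in B^a$; it does \emph{not} put $\eta$ in $W$, and this is precisely why the cokernel is $\ker D_{B^a,\textnormal{max}}$ rather than $\ker D_{B^a,\textnormal{ext}}$. Your parenthetical ``$\eta\in W$ trivially'' is false but also unnecessary.

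The genuine gap is the compactness step. You assert that $W_B\hookrightarrow L^2([0,T],\mathcal{H})$ is compact because elements of $W_B$ have ``one derivative in $r$ and one derivative in $A$ controlled.'' They do not: membership in $W$ only controls the combination $\gamma(\partial+A)\sigma$ in $L^2$, never $\partial\sigma$ and $A\sigma$ separately, so there is no Rellich-type argument available from Axiom 2(1) applied fiberwise. Indeed, on the subspace of $W$ cut out by $D\sigma=0$ the $W$-norm equals $\norm{\sigma(0)}_{\check H}$, which in the constant-coefficient model is \emph{equivalent} to $\norm{\sigma}_{L^2([0,T],\mathcal H)}$, so the restriction map is an isomorphism onto its image there and cannot be compact on all of $W$. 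The finite-dimensionality of $\ker D_{B,\textnormal{ext}}$ has a different source, and it is exactly where the regularity hypothesis $B\subseteq H^{1/2}$ enters: traces of $L^2$-solutions of $D\sigma=0$ are naturally measured in the $\check H$-norm (essentially the $H^{-1/2}$-norm on the positive spectral part), while the boundary condition forces these traces into $B$, a subspace closed in $H^{1/2}$; the compact inclusion $H^{1/2}\hookrightarrow H^{-1/2}$ coming from the discreteness of $A_0$ (Axiom 2(1)) then makes the unit ball of $\{\sigma(0):\sigma\in\ker D_{B,\textnormal{ext}}\}$ precompact, forcing finite dimension, and the same mechanism yields the semi-Fredholm estimate needed for closed range. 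Without replacing your compactness claim by this trace-level argument (or an equivalent one), claims (i) and (ii) are not established.
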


\begin{coro}[{\cite[Corollary 2.44]{BBC08}}]
Let $\mathcal{D}$ be a non-parabolic Dirac system and $B$ an elliptic boundary condition. Then $D_{B}$ and $D_{B^a}$ have finite dimensional kernels and we can define the $L^2$-index of $D_{B}$ as
\begin{align*}
L^2-\textnormal{\ind}\:D_{B}\coloneqq \dim\ker D_{B}-\dim\ker D_{B^a}.
\end{align*}
\end{coro}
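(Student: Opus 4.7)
The plan is to deduce the corollary directly from Theorem \ref{Thm:BBC2.43}, applied in turn to the boundary conditions $B$ and $B^a$. Since $B$ is elliptic by assumption, both $B$ and $B^a$ are regular, so Theorem \ref{Thm:BBC2.43} is available for each. The first step is to identify the operators $D_B$ and $D_{B^a}$ with $D_{B,\textnormal{max}}$ and $D_{B^a,\textnormal{max}}$ respectively: regularity of $B$ means $B \subseteq H^{1/2}$, so $B \cap H^{1/2} = B$ and the conditions defining $\dom(D_B)$ and $\dom(D_{B,\textnormal{max}})$ coincide. The analogous identification holds for $B^a$, so it suffices to prove that $\ker D_{B,\textnormal{max}}$ and $\ker D_{B^a,\textnormal{max}}$ are finite-dimensional.

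The key step is to establish the inclusion $\ker D_{B,\textnormal{max}} \subseteq \ker D_{B,\textnormal{ext}}$ (and likewise for $B^a$). Given $\sigma \in \ker D_{B,\textnormal{max}}$, one has $\sigma \in L^2(\mathcal{D}) \subset L^2_{\textnormal{loc}}(\mathcal{D})$, $D_{\textnormal{max}}\sigma = 0$, and the boundary trace $\sigma(0)$ lies in $B \subseteq H^{1/2} \subseteq \check{H}$. Consequently
\begin{equation*}
\norm{\sigma}_W^2 = \norm{\sigma(0)}_{\check{H}}^2 + \norm{D_{\textnormal{max}}\sigma}_{L^2(\mathcal{D})}^2 < \infty.
\end{equation*}
I would then produce an approximating sequence $(\sigma_n)_n \subset \mathcal{L}_c(\mathcal{D})$ converging to $\sigma$ in $\norm{\cdot}_W$, which realizes $\sigma$ as an element of $W_B$ and hence of $\ker D_{B,\textnormal{ext}}$. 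Theorem \ref{Thm:BBC2.43} applied to $B$ then gives $\dim\ker D_B \leq \dim\ker D_{B,\textnormal{ext}} < \infty$, and the identical argument applied to $B^a$ yields $\dim\ker D_{B^a} < \infty$. With both kernels finite-dimensional, the definition
\begin{equation*}
L^2\textnormal{-}\ind D_B \coloneqq \dim\ker D_B - \dim\ker D_{B^a}
\end{equation*}
is well-posed as an integer.

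The main obstacle will be the construction of the approximating sequence in $\mathcal{L}_c(\mathcal{D})$, i.e.\ showing that finiteness of $\norm{\sigma}_W$ actually places $\sigma$ in the completion $W$, rather than just in a larger abstract space. I would attack this by combining a cut-off in the $r$-direction to produce compactly supported representatives (in the spirit of the cut-off sequence used in the proof of Corollary \ref{Coro:DESA}) with a short mollification in $r$; the crucial point is that the regularity of $B$ ensures continuity of the trace map $\dom D_{\textnormal{max}} \to \check{H}$, so that the cut-off preserves the condition $\sigma(0) \in B$ in the limit and produces convergence in the $\check{H}$-norm at the boundary, not only in $L^2(\mathcal{D})$ away from it. Once this approximation lemma is in place, the rest of the proof consists of merely assembling the two applications of Theorem \ref{Thm:BBC2.43}.
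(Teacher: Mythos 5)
Your argument is correct and is essentially the paper's proof: finiteness of $\ker D_{B,\textnormal{ext}}$ from Theorem \ref{Thm:BBC2.43} (left-Fredholm), the identification $\ker D_B=\ker D_{B,\textnormal{max}}$ from regularity of $B$ (and of $B^a$), and the inclusion of these kernels into $\ker D_{B,\textnormal{ext}}$ resp.\ $\ker D_{B^a,\textnormal{ext}}$. The approximation step you single out as the main obstacle is already built into the framework, since $D_{\textnormal{ext}}$ is by construction an extension of $D_{\textnormal{max}}$, i.e.\ $\dom(D_{\textnormal{max}})\subseteq W$, so no separate cut-off/mollification lemma is needed.
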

\begin{proof}
By Theorem \ref {Thm:BBC2.43} we know that $\dim\ker D_{B,\textnormal{ext}}$ is finite. In addition, since $B$ is elliptic then $\ker D_{B,\textnormal{max}}=\ker D_{B}$, so we see that the kernel of $D_B$ has finite dimension. We can argue similarly for $D_{B^a}$. 
\end{proof}

The above notions and results can be generalized to Dirac systems with potentials.
\begin{definition}[{\cite[Section 2.2]{BBC08}}]
A {\em Dirac-Schr\"odinger system} consists of a Dirac system $\mathcal{D}=(\mathcal{H},\mathcal{A},\gamma)$ and a {\em potential} $V\in L_\text{loc}^\infty(\mathbb{R}_+,\mathcal{L}({H}))$ with $V=V^*$. The associated  Dirac-Schr\"odinger operator is given by $D+V:\mathcal{L}_\text{loc}(\mathcal{D})\longrightarrow  L^\infty_\text{loc}(\mathbb{R}_+,H)$. In addition, a {\em super-symmetric Dirac-Schr\"odinger system} can be defined with an involution $\alpha$ as above with the additional condition that $\alpha_{r} V_t+V_t\alpha_{r}=0$ on $H_r$. 
\end{definition}

The next result describes how to compute the extended index in presence of a super-symmetry. 

\begin{proposition}[{\cite[Proposition 4.9]{BBC08}}]
Let $(\mathcal{D},V,\alpha)$ be a non-parabolic super-symmetric Dirac-Schr\"odinger system and $B$ be an $\alpha_0$-invariant elliptic boundary condition. Then
\begin{align*}
\textnormal{\ind}\:D_{B,\textnormal{ext}}=\textnormal{\ind}\:D^+_{B^+,\textnormal{ext}}+\textnormal{\ind}\:D^-_{B^-,\textnormal{ext}},
\end{align*}
where $D^\pm_{B^\pm,\textnormal{ext}}:W_B^\pm\longrightarrow L^2(\mathcal{D})^{\mp}$.
\end{proposition}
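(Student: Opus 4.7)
\medskip

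\noindent\textbf{Proof proposal.} The plan is to use the super-symmetry $\alpha$ to split every object appearing in the statement into its $(\pm 1)$-eigenspaces, verify that $D+V$ is block off-diagonal in this splitting, and then apply Theorem~\ref{Thm:BBC2.43} to each block.

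First I would check that $\alpha$ yields compatible decompositions of all function spaces involved. Since $\alpha_r$ is a unitary involution commuting with $A_r$ and with the metric connection $\partial$, it acts isometrically on $L^2(\mathcal{D})$, on the graph-norm completion $W$, and (since $\alpha_0$ commutes with $(I+A_0^2)^s$) on each Sobolev space $H^s$ and hence on $\check{H}$. Thus $L^2(\mathcal{D})=L^2(\mathcal{D})^+\oplus L^2(\mathcal{D})^-$, $W=W^+\oplus W^-$, $\check{H}=\check{H}^+\oplus\check{H}^-$. By hypothesis $B$ is $\alpha_0$-invariant, so $B=B^+\oplus B^-$ with $B^\pm\subset\check{H}^\pm$, and correspondingly $W_B=W_B^+\oplus W_B^-$ where $W_B^\pm=\{\sigma\in W^\pm:\sigma(0)\in B^\pm\}$. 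Regularity of $B^\pm$ in $\check{H}^\pm$ follows from regularity of $B$ in $\check{H}$.

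Next I would verify that $D+V$ anti-commutes with $\alpha$: from $\alpha\gamma+\gamma\alpha=0$, $\alpha A=A\alpha$, $\partial\alpha=\alpha\partial$, and $\alpha V+V\alpha=0$ one immediately gets $(D+V)\alpha=-\alpha(D+V)$. Therefore $D+V$ maps $L^2(\mathcal{D})^\pm$ into $L^2(\mathcal{D})^\mp$, which in particular yields the block decomposition
\begin{equation*}
(D+V)_{B,\textnormal{ext}}=(D+V)^+_{B^+,\textnormal{ext}}\oplus (D+V)^-_{B^-,\textnormal{ext}},
\end{equation*}
and consequently $\ker (D+V)_{B,\textnormal{ext}}=\ker(D+V)^+_{B^+,\textnormal{ext}}\oplus\ker(D+V)^-_{B^-,\textnormal{ext}}$.

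The main point (and probably the subtlest step) is to carry out the same decomposition for the annihilator $B^a$ and to identify its pieces with the adjoint boundary conditions of $B^\pm$ for the restricted operators $D^\pm$. Using that $\alpha_0$ commutes with $A_0$ (hence with the spectral projections $Q_{\leq},Q_{>},Q_0$) and anti-commutes with $\gamma_0$, one computes that for $\sigma_i\in\check{H}$
\begin{equation*}
\omega(\alpha_0\sigma_1,\alpha_0\sigma_2)=-\omega(\sigma_1,\sigma_2).
\end{equation*}
In particular $\alpha_0$-invariance of $B$ forces $\alpha_0$-invariance of $B^a$, so $B^a=(B^a)^+\oplus(B^a)^-$. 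Moreover, since $B_{\pm 1/2}$ is block-diagonal in the $\alpha_0$-splitting while $\gamma_0$ exchanges $\check{H}^+$ and $\check{H}^-$, the pairing $\omega$ vanishes on $\check{H}^\pm\times\check{H}^\pm$ and restricts to a non-degenerate pairing $\check{H}^+\times\check{H}^-\to\mathbb{C}$. A direct calculation then shows $(B^a)^-=\{\tau^-\in\check{H}^-:\omega(\tau^-,\sigma^+)=0\ \forall\sigma^+\in B^+\}$, which is precisely the adjoint boundary condition for the block $D^+_{B^+,\textnormal{ext}}:W_B^+\to L^2(\mathcal{D})^-$, and symmetrically for $(B^a)^+$. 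Regularity of $(B^a)^\pm$ in $\check{H}^\pm$ follows from regularity of $B^a$ in $\check{H}$, so $B^\pm$ are elliptic.

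Finally I would assemble the pieces. Applying Theorem~\ref{Thm:BBC2.43} to $(D+V)_{B,\textnormal{ext}}$ and to each of the two blocks, one has
\begin{align*}
\textnormal{ind}\,(D+V)_{B,\textnormal{ext}}
&=\dim\ker(D+V)_{B,\textnormal{ext}}-\dim\ker(D+V)_{B^a,\textnormal{max}},\\
\textnormal{ind}\,D^+_{B^+,\textnormal{ext}}
&=\dim\ker(D+V)^+_{B^+,\textnormal{ext}}-\dim\ker(D+V)^-_{(B^a)^-,\textnormal{max}},\\
\textnormal{ind}\,D^-_{B^-,\textnormal{ext}}
&=\dim\ker(D+V)^-_{B^-,\textnormal{ext}}-\dim\ker(D+V)^+_{(B^a)^+,\textnormal{max}}.
\end{align*}
Summing the last two lines and using the block decompositions of $\ker(D+V)_{B,\textnormal{ext}}$ and of $\ker(D+V)_{B^a,\textnormal{max}}=\ker(D+V)^+_{(B^a)^+,\textnormal{max}}\oplus\ker(D+V)^-_{(B^a)^-,\textnormal{max}}$ gives the claimed identity. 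The principal obstacle is the bookkeeping in the third step—one has to carefully track how $\gamma$'s anti-commutation with $\alpha$ interacts with the defining formula for $\omega$ in order to match $(B^a)^\mp$ with the adjoint of $B^\pm$; everything else is a direct consequence of the decompositions and Theorem~\ref{Thm:BBC2.43}.
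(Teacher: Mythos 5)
Your argument is correct and is the standard one: the paper itself only cites \cite[Proposition 4.9]{BBC08} without reproducing a proof, and your proposal matches the argument in that reference — anti-commutation of $D+V$ with $\alpha$ gives the block off-diagonal form, the identity $\omega(\alpha_0\sigma_1,\alpha_0\sigma_2)=-\omega(\sigma_1,\sigma_2)$ gives $\alpha_0$-invariance of $B^a$ and the identification $(B^\pm)^a=(B^a)^{\mp}$, and Theorem \ref{Thm:BBC2.43} applied blockwise yields the additivity of the extended index. The only cosmetic remark is that rather than invoking a graded version of Theorem \ref{Thm:BBC2.43} for each block, one can read off $(\ran D^{+}_{B^{+},\textnormal{ext}})^{\perp}=\ker D_{B^a,\textnormal{max}}\cap L^2(\mathcal{D})^{-}=\ker D^{-}_{(B^a)^{-},\textnormal{max}}$ directly from the ungraded statement together with your decompositions, but this does not affect correctness.
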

The following theorem describes how the index changes when the given boundary condition is changed to an APS boundary condition. These kind of formulas are known as Agranovi\v{c}-Dynin type (see \cite[Section 21]{BW}).

\begin{theorem}[{\cite[Theorem 4.14]{BBC08}}]\label{BBCThm4.14}
Let $(\mathcal{D},V,\alpha)$ be a non-parabolic super-symmetric Dirac-Schr\"odinger system and let $B$ be an $\alpha_0$-invariant elliptic boundary condition. Then 
\begin{align*}
\textnormal{\ind}\:D^+_{B^+,\textnormal{ext}}=\textnormal{\ind}\:D^+_{{H^+_{\leq}},\textnormal{ext}}+\textnormal{\ind}(\bar{B}^+,H^+_{>}),
\end{align*}
where $\bar{B}$ denotes the closure of $B$ in $H$ and  $\textnormal{\ind}(\bar{B}^+,H^+_{>})$ is the Kato index of the Fredholm pair $(\bar{B}^+,H^+_{>})$.
\end{theorem}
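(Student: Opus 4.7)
\textbf{Proof proposal for Theorem \ref{BBCThm4.14}.} The plan is to reduce both indices to data on the boundary by using the Cauchy trace of extended $L^2$ solutions, and then to compare boundary conditions through the additivity of the Kato index (Lemma \ref{Prop:A.13}). First I would invoke Theorem \ref{Thm:BBC2.43}, applied to the graded pieces $D^{\pm}$, to write
\[
\ind D^+_{B^+,\mathrm{ext}} = \dim\ker D^+_{B^+,\mathrm{ext}} - \dim\ker D^-_{(B^+)^a,\mathrm{max}},
\]
and the analogous identity for the APS boundary condition $H^+_{\leq}$; here the orthogonal complement identification $(\ran D_{B,\mathrm{ext}})^\perp = \ker D_{B^a,\mathrm{max}}$ is what turns a left-Fredholm index into a genuine difference of dimensions.

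Second, I would introduce the Cauchy data space $\mathcal{C}^+ \subseteq \check{H}^+$ consisting of the boundary traces $\sigma(0)$ of sections $\sigma \in W^+$ with $(D^++V)\sigma = 0$, and the dual space $\mathcal{C}^-\subseteq \check{H}^-$ for $D^-$ on $\dom(D_{\max})$. Regularity of $B^+$ (so that $B^+ \subset H^{1/2}$, cf.\ Proposition \ref{Prop:BBCProp1.99}) ensures that the trace map is continuous, and the non-parabolicity hypothesis guarantees that the induced maps
\[
\ker D^+_{B^+,\mathrm{ext}} \longrightarrow B^+ \cap \mathcal{C}^+, \qquad \ker D^-_{(B^+)^a,\mathrm{max}} \longrightarrow (B^+)^a \cap \mathcal{C}^-
\]
are isomorphisms. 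This reduces each extended index to a difference of intersection dimensions, i.e.\ to the Kato index of a Fredholm pair inside $\check{H}$.

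Third, I would analyze the structure of $\mathcal{C}^+$ near $\check{H}^+_>$. Using the super-symmetry $\alpha$ and the anticommutation $\gamma\alpha+\alpha\gamma=0$, the spaces $H^\pm$ are interchanged by $\gamma$, and the skew-symmetric pairing $\omega$ restricts to a perfect pairing between $\check{H}^+$ and $\check{H}^-$ via $\gamma$. This identifies $\mathcal{C}^-$ with the $\omega$-annihilator of $\mathcal{C}^+$. Specializing the Fredholm-pair computation to $B^+ = H^+_{\leq}$, the difference between $\mathcal{C}^+$ and $H^+_{>}$ is a finite-dimensional correction controlled by $\ker D^+_{H^+_{\leq},\mathrm{ext}}$ and $\ker D^-_{H^+_{\geq},\mathrm{max}}$; this step is essentially the content of Example \ref{Example:APSBoundCond}--\ref{Example:APSproj} combined with non-parabolicity, and gives the ``baseline'' term $\ind D^+_{H^+_{\leq},\mathrm{ext}}$ of the claimed formula.

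Finally, I would combine the two computations via Lemma \ref{Prop:A.13}. The passage from the pair $(B^+, \mathcal{C}^+)$ to $(H^+_{\leq}, \mathcal{C}^+)$ produces the correction $\ind(B^+, H^+_{\leq})$, and a direct dimension count (using $\check{H}^+ = H^{-1/2,+}_> \oplus Q_0(H^+) \oplus H^{1/2,+}_<$ and the regularity of $B^+$) shows
\[
\ind(B^+, H^+_{\leq}) = \ind(\bar{B}^+, H^+_{>}),
\]
where the closure $\bar{B}^+$ is taken in $H$ because only the $H^{1/2}$-component of $B^+$ survives once one quotients out the part lying in $H^{+}_{\leq}$. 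Summing the two contributions yields the stated formula. The main obstacle, as in every Agranovi\v{c}--Dynin-style argument, is the fourth step: identifying $\mathcal{C}^+$ precisely enough that the Kato index $\ind(B^+,\mathcal{C}^+) - \ind(H^+_{\leq},\mathcal{C}^+)$ collapses to the stated index $\ind(\bar{B}^+,H^+_{>})$ in the base Hilbert space $H$, rather than in the hybrid space $\check{H}$; this requires carefully tracking how the two Sobolev scales interact under $\gamma$ and the spectral splitting of $A_0$.
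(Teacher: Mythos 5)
A preliminary remark: the paper does not prove this statement. It is quoted from \cite[Theorem 4.14]{BBC08}, and the surrounding text explicitly says that the proofs of the results collected in that section are omitted. So your proposal cannot be checked against an in-paper argument; it has to stand on its own.

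Your route — reduce both extended indices to Kato indices of the boundary condition against a Cauchy data space $\mathcal{C}^{+}$ of traces of extended solutions, then compare boundary conditions via Lemma \ref{Prop:A.13} — is the classical Calder\'on-projector strategy for Agranovi\v{c}--Dynin formulas, and the first two steps are sound: non-parabolicity does give injectivity of the trace map on $\ker D^{+}_{B^{+},\textnormal{ext}}$, since $D\sigma=0$ and $\sigma(0)=0$ force $\norm{\sigma}_W=0$. But there are two genuine gaps. The first is that the central identity, namely that $(B^{+},\mathcal{C}^{+})$ is a Fredholm pair whose nullity and deficiency compute $\dim\ker D^{+}_{B^{+},\textnormal{ext}}$ and $\dim\ker D^{-}_{(B^{+})^{a},\textnormal{max}}$, is asserted rather than proved. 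Surjectivity of the Poisson-type solution operator, the identification of $\mathcal{C}^{-}$ with the $\omega$-annihilator of $\mathcal{C}^{+}$ (a Lagrangian-type property), and the fact that $\mathcal{C}^{+}$ is the range of a $1/2$-smooth projection comparable to $Q_{>}$ are precisely the analytic content of the theorem in the non-parabolic setting; none of this is handed to you by the axioms, and your step 3 essentially restates the conclusion for the reference condition $H^{+}_{\leq}$ instead of deriving it. The second gap is in the final bookkeeping: the pair $(B^{+},H^{+}_{\leq})$ is not Fredholm in general — already for the APS condition of Example \ref{Example:APSBoundCond} the intersection $B^{+}\cap H^{+}_{\leq}$ is infinite-dimensional — so the identity $\textnormal{ind}(B^{+},H^{+}_{\leq})=\textnormal{ind}(\bar{B}^{+},H^{+}_{>})$ you invoke is not meaningful. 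The correct application of Lemma \ref{Prop:A.13}, with $E=\mathcal{C}^{+}$, $\textnormal{ran}\,P=\bar{B}^{+}$ and $\textnormal{ran}\,Q=H^{+}_{\leq}$ (so $\ker Q=H^{+}_{>}$), produces the term $\textnormal{ind}(H^{+}_{>},\bar{B}^{+})=\textnormal{ind}(\bar{B}^{+},H^{+}_{>})$ directly — but only under the hypothesis that the orthogonal projections onto $\bar{B}^{+}$ and $H^{+}_{\leq}$ differ by a compact operator, which is an additional assumption you have not verified for a general elliptic boundary condition (compare Lemma \ref{BLemma5.7}, which gives such a criterion only for specific perturbations of $Q_{>}$). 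Until those two points are filled in, the argument does not close.
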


To end this section we describe how to glue two Dirac systems consistently and how to obtain a gluing formula for the index (\cite[Section 3.2]{BBC08}). We will illustrate this construction in the next section through an example. 
\begin{definition}\label{Def:CompDiracSys}
Let $(\mathcal{D}_1,V_1,\alpha_1)$ and $(\mathcal{D}_2,V_2,\alpha_2)$ be super-symmetric Dirac-Schr\"odinger systems. We say that they are {\em compatible} if
\begin{enumerate}
\item The initial Hilbert spaces $H_{1,0}$ and $H_{2,0}$ coincide. 
\item The initial self-adjoint operators satisfy $A_{1,0}=-A_{2,0}=:A$.
\item We have $\gamma_{1,0}=-\gamma_{2,0}=:\gamma$.
\item The involutions $\alpha_1$ and $\alpha_2$ coincide at $r=0.$
\end{enumerate}
\end{definition}
Under these assumptions we can define a new super-symmetric Dirac-Schr\"odinger system by $(\mathcal{D},V,\alpha)=(\mathcal{D}_1\oplus \mathcal{D}_2,V_1\oplus V_2,\alpha_1\oplus \alpha_2)$. In addition, for such a ``glued'' Dirac system we can always consider the elliptic and self-adjoint boundary condition $B\coloneqq \{(\sigma,\sigma)\:|\:\sigma\in H^{1/2}\}$. This  follows automatically from Example \ref{Ex:1.85} by setting  the map $\beta$ to be $\beta(\sigma_1,\sigma_2)=(\sigma_2,\sigma_1)$ in $(\mathcal{D},V,\alpha)$. For example,

\begin{align*}
\beta\gamma=
\left(
\begin{array}{cc}
0 & I\\
I & 0
\end{array}
\right)
\left(
\begin{array}{cc}
\gamma & 0\\
0 & -\gamma
\end{array}
\right)
=
\left(
\begin{array}{cc}
 0& -\gamma\\
\gamma & 0  
\end{array}
\right),\\
\gamma\beta=
\left(
\begin{array}{cc}
\gamma & 0\\
0 & -\gamma
\end{array}
\right)
\left(
\begin{array}{cc}
0 & I\\
I & 0
\end{array}
\right)
=
\left(
\begin{array}{cc}
 0& \gamma\\
-\gamma & 0  
\end{array}
\right).
\end{align*}
Similarly one verifies that $A\beta+\beta A=0$. We call $B$  the {\em transmission boundary condition}. Moreover, one directly sees by condition (4), that this boundary conditions is $\alpha$-invariant and $B=B^+\oplus B^{-}$  for  $B^{\pm}\coloneqq \{(\sigma,\sigma)\:|\:\sigma\in H^{\pm}\cap H^{1/2}\}$, where $H^{\pm}$ is induced by $\alpha_{1,0}=\alpha_{2,0}$. If $(\mathcal{D}_1,V_1,\alpha_1)$ and $(\mathcal{D}_2,V_2,\alpha_2)$  are both non-parabolic then $(\mathcal{D},V,\alpha)$ is also non-parabolic. 
\begin{remark}\label{Rmk:HybridSpDouble}
Observe that 
\begin{align*}
\check{H}(-A)=& H^{-1/2}(-A)_{>}\oplus Q_0(-A)(H)\oplus H^{1/2}(-A)_<\\
=& H^{-1/2}(A)_{<}\oplus Q_0(A)(H)\oplus H^{1/2}(A)_>.
\end{align*}
Thus, the hybrid Sobolev space of the sum of two compatible Dirac system is
\begin{align*}
\check{H}(A\oplus -A)= 
{
\begin{array}{c}
 H^{-1/2}(A)_{>}\oplus Q_0(A)(H)\oplus H^{1/2}(A)_<\\
\oplus\\
H^{-1/2}(A)_{<}\oplus Q_0(A)(H)\oplus H^{1/2}(A)_>.
\end{array}
}
=H^{-1/2}(A)\oplus H^{1/2}(A)
\end{align*}
\end{remark}

\begin{theorem}[{\cite[Theorem 4.17]{BBC08}}]\label{BBCThm4.17}
Let $(\mathcal{D}_1,V_1,\alpha_1)$ and $(\mathcal{D}_2,V_2,\alpha_2)$ be two compatible non-parabolic super-symmetric Dirac-Schr\"odinger systems as above. If $B_1$ is any $\alpha_1$-invariant elliptic boundary condition for $(\mathcal{D}_1,V_1,\alpha_1)$ and  $B_2$ is any $\alpha_2$-invariant elliptic boundary condition for $(\mathcal{D}_2,V_2,\alpha_2)$ then 
\begin{align*}
\textnormal{\ind}\:D^+_{B^+,\textnormal{ext}}=\textnormal{\ind}\:D^+_{1,B_{1}^+,\textnormal{ext}}+\textnormal{\ind}\:D^+_{2,B_{2}^+,\textnormal{ext}}-\ind(H^+_>,\bar{B}^+_1)-\ind(H^+_\leq,\bar{B}^+_2).
\end{align*}
In particular, if such $B_1$ is given and we set $B_2\coloneqq B^\perp_1\cap H^{1/2}$, then 
\begin{align*}
\textnormal{\ind}\:D^+_{B^+,\textnormal{ext}}=\textnormal{\ind}\:D^+_{1,B_{1}^+,\textnormal{ext}}+\textnormal{\ind}\:D^+_{2,B_{2}^+,\textnormal{ext}}.
\end{align*}
\end{theorem}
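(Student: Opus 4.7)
My plan is to reduce the gluing formula to the Agranovi\v{c}-Dynin type statement of Theorem \ref{BBCThm4.14} applied three times: once to the glued system with the transmission boundary condition $B$, and once to each of the pieces $(\mathcal{D}_i, V_i, \alpha_i)$ with its given elliptic boundary condition $B_i$. The bridge between these three applications will be the observation that the APS problem on the glued system decouples into the two APS problems on the pieces, so that a comparison of the three formulas expresses $\ind D^+_{B^+,\text{ext}}$ in terms of $\ind D^+_{i,B_i^+,\text{ext}}$ plus four Kato-index correction terms which I then have to identify with the two appearing in the statement.

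\textbf{Step 1 (APS reduction on the glued system).} Using Remark \ref{Rmk:HybridSpDouble}, the hybrid Sobolev space for $A_0 = A \oplus -A$ on $H \oplus H$ is $\check H(A\oplus -A) = H^{-1/2}(A) \oplus H^{1/2}(A)$; relative to this, the spectral decomposition $(A\oplus -A)_{\leq}$ is $H_{\leq}(A) \oplus H_{\geq}(A)$, which is $\alpha$-invariant. Applying Theorem \ref{BBCThm4.14} to $(\mathcal{D},V,\alpha)$ with the transmission condition gives
\begin{equation*}
\ind D^+_{B^+,\text{ext}} \;=\; \ind D^+_{H^+_\leq \oplus H^+_\geq,\text{ext}} + \ind\bigl(\bar B^+,\, H^+_> \oplus H^+_<\bigr).
\end{equation*}
The first index decouples because the boundary condition is a direct sum and the Dirac systems are independent, yielding $\ind D^+_{1,H^+_{1,\leq},\text{ext}} + \ind D^+_{2,H^+_{2,\leq},\text{ext}}$ (the second summand uses $A_{2,0}=-A$, so $H^+_{2,\leq}$ equals $H^+_\geq$ computed from $A$). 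A similar application of Theorem \ref{BBCThm4.14} to each piece $(\mathcal{D}_i,V_i,\alpha_i)$ with $B_i$ gives
\begin{equation*}
\ind D^+_{i,B_i^+,\text{ext}} \;=\; \ind D^+_{i,H^+_{i,\leq},\text{ext}} + \ind(\bar B_i^+, H^+_{i,>}),
\end{equation*}
and substituting into the first equation reduces the theorem to the purely algebraic identity
\begin{equation*}
\ind\bigl(\bar B^+,\, H^+_> \oplus H^+_<\bigr) \;=\; \ind(\bar B_1^+, H^+_>) + \ind(\bar B_2^+, H^+_{2,>}) - \ind(H^+_>, \bar B_1^+) - \ind(H^+_\leq, \bar B_2^+).
\end{equation*}

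\textbf{Step 2 (identification of Kato indices).} Using $\ind(X,Y) = -\ind(Y,X)$ whenever the pair is Fredholm, the right-hand side becomes $2\ind(\bar B_1^+, H^+_>) + \ind(\bar B_2^+, H^+_{2,>}) - \ind(H^+_\leq, \bar B_2^+)$, and $H^+_{2,>}$ on $H \oplus H$ (computed via $-A$) is $H^+_<$ on the second copy. The remaining task is to compute the Kato index $\ind(\bar B^+, H^+_> \oplus H^+_<)$ directly from the geometry of the diagonal subspace $\bar B^+ = \{(\sigma,\sigma) : \sigma \in \overline{H^+}\}$ inside $H^+ \oplus H^+$, and compare it to the sum above. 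Here I would use the description of $\bar B^+$ via the projection $\beta^+ = \tfrac12(I+\tilde\beta)$ onto the diagonal, together with Lemma \ref{Prop:A.13} to rewrite each pair against this projection.

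\textbf{Step 3 (second assertion).} For the choice $B_2 = B_1^\perp \cap H^{1/2}$ the closures satisfy $\bar B_2 = \bar B_1^\perp$, so the two correction terms in the general formula are negatives of each other (one is $\ind(H^+_>, \bar B_1^+)$, the other is $\ind(H^+_\leq, (\bar B_1^+)^\perp)$, which by orthogonal complementation and Lemma \ref{Prop:A.13} cancel). This yields the clean gluing formula.

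\textbf{Main obstacle.} The real work is Step 2: keeping track of the various sign-flips induced by $A_{2,0}=-A$ and $\gamma_{2,0}=-\gamma$, and identifying the Kato index of the diagonal embedding with the stated combination. The bookkeeping is subtle because the spectral decomposition of the glued operator mixes the positive and negative spectral subspaces of the two pieces, and the transmission condition is not itself adapted to this spectral splitting. I expect to need a small linear-algebra lemma — along the lines of Example \ref{Example:KatoIndexComputation} — expressing $\ind(\text{diagonal}, H_> \oplus H_<)$ as a signed sum of the pair indices against $\bar B_1$, to close the calculation.
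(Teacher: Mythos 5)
The paper gives no proof of this statement (it is quoted from \cite[Theorem 4.17]{BBC08}), so your proposal has to stand on its own. Your architecture — three applications of Theorem \ref{BBCThm4.14}, decoupling of the glued APS problem, and a direct computation of the Kato index of the transmission condition against the glued APS subspace — is the right one and does close. But Step 2 as written contains a genuine error and leaves the decisive computation undone.

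The error: the Kato index defined in this paper is \emph{symmetric}, $\ind(X,Y)=\ind(Y,X)$, because both $\dim(X\cap Y)$ and $\textnormal{codim}(X+Y)$ are symmetric in their arguments. The antisymmetry you invoke is the orthocomplementation rule $\ind(X,Y)=-\ind(X^{\perp},Y^{\perp})$ (Kato, Corollary IV.4.13, as used in the proof of Theorem \ref{Thm:5.3}), not an argument swap; your ``$2\ind(\bar{B}_1^+,H^+_>)$'' is therefore wrong and would prevent the ledger from balancing. With the correct symmetry, the two $\bar{B}_1^+$ terms in your reduction identity cancel and the theorem reduces to
\begin{align*}
\ind(\bar{B}^+,H^+_>\oplus H^+_<)\;=\;\ind(\bar{B}_2^+,H^+_<)-\ind(\bar{B}_2^+,H^+_{\leq}),
\end{align*}
and the ``small linear-algebra lemma'' you defer is exactly the evaluation of the left-hand side: $\bar{B}^+$ is the diagonal in $H^+\oplus H^+$, it meets $H^+_>\oplus H^+_<$ only in $0$, and $\bar{B}^+ +(H^+_>\oplus H^+_<)=\{(x,y)\,:\,Q_0(x-y)=0\}$ has codimension $\dim\ker(A^+)$, whence $\ind(\bar{B}^+,H^+_>\oplus H^+_<)=-\dim\ker(A^+)$. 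Since Lemma \ref{Prop:A.13} (with $P=Q_{\leq}$, $Q=Q_<$, $E=\bar{B}_2^+$) gives $\ind(\bar{B}_2^+,H^+_{\leq})=\ind(\bar{B}_2^+,H^+_<)+\dim\ker(A^+)$, the reduced identity holds and the first formula follows. Your Step 3 is correct in outline, but the cancellation of the two correction terms again rests on $\ind(X,Y)=-\ind(X^{\perp},Y^{\perp})$ rather than on Lemma \ref{Prop:A.13}; once $\bar{B}_2=\bar{B}_1^{\perp}$ is justified, $\ind(H^+_{\leq},\bar{B}_2^+)=-\ind(H^+_>,\bar{B}_1^+)$ and the clean formula drops out.
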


\subsubsection{Example: The Novikov additivity of signature}\label{Section:ExNovikov}

In order to illustrate the theory of Dirac systems discussed above we work out an explicit example: we derive the Novikov additivity  formula of the signature (Proposition \ref{Prop:Novikov}) using the APS index formula \eqref{Eqn:IndexDBoundary}. In addition, we verify Theorem \ref{BBCThm4.17} in this context using Theorem \ref{BBCThm4.14}.\\

 Let $X_1$ and $X_2$ be two $4k$-dimensional compact oriented smooth manifolds with boundary such that $\partial X_1=\partial X_2=Y$ as oriented closed manifolds (see Remark \ref{Rmk:OrBound}). Assume in addition that $X_1$ and $X_2$ carry Riemannian metrics such that close to their boundary they are product metrics. This is not a strong restriction since otherwise we can always glue a small cylinder and deform the metric as described in \cite[Chapter 9]{BW}.
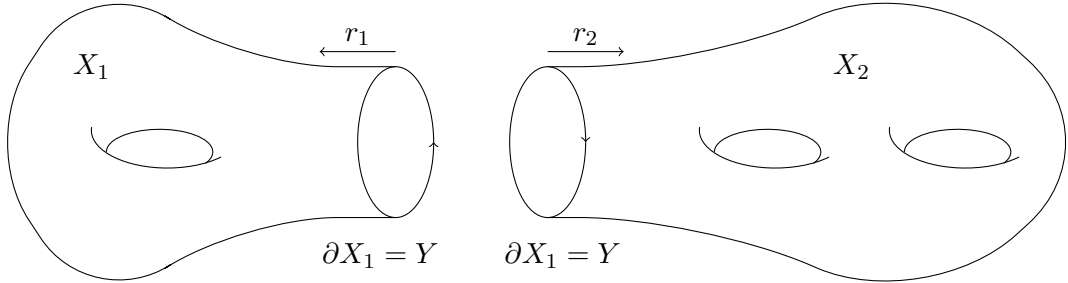
\begin{figure}[h]
\begin{center}
\begin{tikzpicture}
\draw[rounded corners=35pt](6,-1)--(4,-1)--(2,-2.3)--(0.5,0)--(2,2.3)--(4,1)--(6,1);
\draw (2,0.2) arc (175:315:1cm and 0.5cm);
\draw (3.5,-0.28) arc (-30:180:0.7cm and 0.3cm);
\draw[->](6.5,0) arc (0:360:0.5cm and 1cm);
\node at (5.8,-1.5) {$\partial X_1=Y$};
\node at (2,1) {$X_1$};
\draw[rounded corners=45pt](8,-1)--(10,-1)--(13,-2.3)--(15.5,0)--(13,2.3)--(10,1)--(8,1);
\draw[<-] (8.5,0) arc (0:360:0.5cm and 1cm);
\draw (12.5,0.2) arc (175:315:1cm and 0.5cm);
\draw (14,-0.28) arc (-30:180:0.7cm and 0.3cm);
\draw (10,0.2) arc (175:315:1cm and 0.5cm);
\draw (11.5,-0.28) arc (-30:180:0.7cm and 0.3cm);
\node at (8.2,-1.5) {$\partial X_1=Y$};
\node at (12,1) {${X_2}$};
\draw [<-] (5,1.2)--(6,1.2);
\draw [->] (8,1.2)--(9,1.2);
\node at (5.5,1.4) {$r_1$};
\node at (8.5,1.4) {$r_2$};
\end{tikzpicture}
\caption{Two smooth manifolds $X_1,X_2$ having the same oriented boundary $\partial X_1=\partial X_2=Y$.}\label{Fig:Add}
\end{center}
\end{figure}
Recall that if we denote by $-X_2$ the manifold $X_2$ with reversed orientation then we can glue $X_1$ and $-X_2$ along their common oriented boundary to obtain a closed oriented $4k$-dimensional Riemannian manifold $X\coloneqq X_1\cup (-X_2)$. The signature additivity formula states that
\begin{equation}\label{Eqn:Additivity}
\sigma(X)=\sigma(X_1)+\sigma(-X_2),
\end{equation}
where $\sigma(-X_2)=-\sigma(X_2)$ by Proposition \ref{Prop:PropSignatureClosed}. The main idea to derive \eqref{Eqn:Additivity} as a gluing index theorem is to construct two compatible Dirac and then compute their corresponding indices. \\

From Theorem \ref{Thm:TransfDiracOps} we know that the Hodge-de Rham operator on $X_1$ is unitary equivalent to the operator 
\begin{align*}
D_1=\gamma\left(\frac{\partial}{\partial r_1}+A_1\right),
\end{align*}
where
\begin{align*}
A_1=\left(
\begin{array}{cc}
I & 0\\
0 & -I
\end{array}
\right)\otimes
A_Y\quad\text{and}\quad
\gamma=\left(
\begin{array}{cc}
0& -I\\
I & 0
\end{array}
\right).
\end{align*} 
Here $A_Y$ denotes the odd signature operator on $Y$ and $r_1$ denotes the inward normal coordinate close to the boundary in $X_1$ (see Figure \ref{Fig:Add}). Note that $A_1\gamma_1+\gamma_1 A_1=0$, as required. 
Under the transformation $\Psi$ of Section \ref{Section:LocalDescrHdROp} the chirality operator becomes
\begin{align*}
\star_{X_1}=\left(
\begin{array}{cc}
I& 0\\
0 & -I
\end{array}
\right)=\gamma
\left(
\begin{array}{cc}
0& -I\\
-I & 0
\end{array}
\right),
\end{align*}
so the associated positive graded operator is 
\begin{align*}
D^+_1=&\frac{\partial}{\partial r_1}+ A_Y. 
\end{align*}
\begin{remark}\label{Rmk:NonParabNovikov}
Observe that this Dirac system is non-parabolic since $X_1$ is compact, as the non-parabolicity condition only involves the behavior of sections at infinity. More concretely, for  $T>0$ we compute for a compactly supported section $\sigma$ in $X_1$ vanishing at the boundary (see Remark \ref{Rmk:NonPar}),
\begin{align*}
\int_{0}^T \norm{D^+_1\sigma(r)}^{2}_{L^2(Y)} dr=& \int_{0}^T \norm{\sigma'(r)}^{2}_{L^2(Y)} dr+\int_{0}^T \norm{A_Y\sigma(r)}^{2}_{Y}
dr\\
&+\int_{0}^T 2\text{Re}(\sigma'(r),A_Y\sigma(r))_{L^2(Y)} dr.
\end{align*}
Note that 
\begin{align*}
\frac{d}{dr}(\sigma(r),A_Y\sigma(r))_{L^2(Y)}=&2\text{Re}(\sigma'(r),A_Y\sigma(r))_{L^2(Y)},
\end{align*}
and therefore
\begin{align*}
\norm{D^+_1\sigma}_{L^2(X_1)}^2=\norm{\sigma'}_{L^2(X_1)}^2+\norm{A_Y\sigma}_{L^2(X_1)}^2.
\end{align*}
Following the proof of \cite[Proposition 2.5]{C01} we estimate using Cauchy-Schwarz inequality
\begin{align*}
\norm{\sigma(r)}_{L^2(Y)}=\bnorm{\int_0^r\sigma'(s)ds}_{L^2(Y)}\leq \sqrt{r}\norm{\sigma'}_{L^2(X_1)},
\end{align*}
and conclude that 
\begin{align*}
\int_0^T \norm{\sigma(r)}_{L^2(Y)}^2 dr\leq \norm{\sigma'}^2_{L^2(X_1)} \int_0^T rdr = \frac{T^2}{2}\norm{\sigma'}_{L^2(X_1)}^2 \leq \frac{T^2}{2}\norm{D^+_1\sigma}_{L^2(X_1)}^2.
\end{align*}
This verifies that this Dirac system is non-parabolic. 
\end{remark}
In an analogous manner, the Hodge-de Rham operator on $X_2$ can be written as 
\begin{align*}
D_2=\gamma\left(\frac{\partial}{\partial r_2}+A_2\right),
\end{align*}
where $A_2=A_1$ since the orientation of $Y$ in both manifolds coincide. As before, the corresponding positive graded operator of $X_2$ is 
\begin{align*}
D^+_2=&\frac{\partial}{\partial r_2}+A_Y.
\end{align*}
Observe however that $r_2=-r_1\eqqcolon r$, thus we can write
\begin{align}\label{Eqn:D2+}
D_2=-\gamma\left(\frac{\partial}{\partial r}-A_1\right).
\end{align}
Hence, the Dirac systems $(D_1,\star_{X_1})$ and $(D_2,\star_{-X_2})$ are compatible in the sense of Definition \ref{Def:CompDiracSys}. It is important to see that $\star_{X_1}=\star_{-X_2}=-\star_{X_2}$. 
Th next step is to glue $D_1$ and $D_2$ to construct a global operator on $X$. Observe that since we have reversed the orientation of $X_2$ in $X$, then we actually need to glue $D^+_1$ and $D^-_2$. From \eqref{Eqn:D2+} we see that 
\begin{align*}
D^-_2=\frac{\partial}{\partial r}+A_Y,
\end{align*}
so indeed $D^+\coloneqq D^+_1\cup D^-_2$ defines a global operator on $X$.\\

Now we want to compute the index of $D^+$ in order to compute the signature of $X$. First we compute the index on each separate piece. From \eqref{Eqn:IndexDBoundary} we have
\begin{align*}
\ind(D^+_{1,Q_<(A_Y)(H)})=\int_{X_1}\alpha_1(x)dx-\frac{1}{2}(\dim \ker(A_Y)+\eta_{A_Y}),\\
\ind(D^+_{2,Q_<(A_Y)(H)})=\int_{X_2}\alpha_2(x)dx-\frac{1}{2}(\dim \ker(A_Y)+\eta_{A_Y}), 
\end{align*}
where $\alpha_1(x),\alpha_2(x)$ denote the local index densities on $X_1$ and $X_2$ respectively. Here the initial Hilbert space 
 is $H=L^2(Y)\coloneqq L^2(\wedge T^* Y)$ and $Q_<(A_Y)(H)$ is the APS boundary condition of Example \ref{Example:APSBoundCond}. Observe that in order to compute the index $D^-_2$ we can also use Equation  $\eqref{Eqn:IndexDBoundary}$ with $-A_Y$ instead of $A_Y$. As discussed in \cite[Section 3]{APSI} we would obtain
\begin{align*}
\ind(D^-_{2,Q_<(-A_Y)(H)})=&-\int_{X_2}\alpha_2(x)dx-\frac{1}{2}(\dim \ker(-A_Y)+\eta_{-A_Y})\\
=&\int_{-X_2}\alpha_2(x)dx-\frac{1}{2}(\dim \ker(A_Y)-\eta_{A_Y}).
\end{align*}
On the other hand, since  $\alpha_1(x),\alpha_2(x)$ are local terms, 
\begin{align*}
\ind(D^+)=\int_{X_1}\alpha_1(x)dx+\int_{-X_2}\alpha_2(x)dx. 
\end{align*}
Hence we see that (see \cite[Proposition 23.2]{BW})
\begin{align}\label{Eqn:GluingIndexSign}
\ind(D^+)=\ind(D^+_{1,Q_<(A_Y)(H)})+\ind(D^-_{2,Q_>(A_Y)(H)})+\dim\ker(A_Y),
\end{align}
where we have used the relation $Q_<(-A_Y)(H)=Q_>(A_Y)(H)$.\\

Now we use \eqref{Eqn:GluingIndexSign} to derive \eqref{Eqn:Additivity}. Recall fromt \eqref{Eqn:IndexVSSignatureAPS} that 
\begin{align*}
\ind(D^+_{1,Q_<(A_Y)(H)})=&\sigma(X_1)-\frac{1}{2}\dim\ker(A_Y),\\
\ind(D^+_{2,Q_<(A_Y)(H)})=&\sigma(X_2)-\frac{1}{2}\dim\ker(A_Y). 
\end{align*}
From the last relation we can deduce that 
\begin{align*}
\ind(D^-_{2,Q_>(A_Y)(H)})=&-\sigma(X_2)-\frac{1}{2}\dim\ker(A_Y). 
\end{align*}
Indeed, using the results above we compute 
\begin{align*}
\ind(D^-_{2,Q_>(A_Y)(H)})=&-\int_{X_2}\alpha_2(x)dx-\frac{1}{2}(\dim \ker(-A_Y)+\eta_{-A_Y})\\
=& - \ind(D^+_{2,Q_<(A_Y)(H)})-\dim \ker(A_Y)\\
=&-\sigma(X_2)-\frac{1}{2}\dim\ker(A_Y).
\end{align*}

Finally, from \eqref{Eqn:GluingIndexSign} and the index identity  $\ind(D^+)=\sigma(X_1\cup(-X_2))$ on the closed manifold $X$, we get
\begin{align*}
\sigma(X_1\cup(-X_2))=&\ind(D^+)\\
=&\ind(D^+_{1,Q_<(A_Y)(H)})+\ind(D^-_{2,Q_>(A_Y)(H)})+\dim\ker(A_Y)\\
=&\sigma(X_1)+\sigma(-X_2).
\end{align*}
This concludes the proof of \eqref{Eqn:Additivity}.\\

We end this example by illustrating Theorem \ref{BBCThm4.17}. First we need to change the boundary conditions since $Q_<(A_Y)(H)$ and $Q_>(A_Y)(H)$ are not complementary (unless $\ker A_Y=\{0\}$, in which case Theorem \ref{BBCThm4.17} is just given by \eqref{Eqn:GluingIndexSign}). To do this we use Example \ref{Example:KatoIndexComputation} and Theorem \ref{BBCThm4.14},
\begin{align*}
\ind(D^+_{1,Q_<(A_Y)(H)})=&\ind(D^+_{1,Q_\leq(A_Y)(H)})+\ind(Q_<(A_Y)(H),Q_>(A_Y)(H))\\
=&\ind(D^+_{1,Q_\leq(A_Y)(H)})-\dim\ker(A_Y). 
\end{align*}
Thus, combining this with \eqref{Eqn:GluingIndexSign} we get
\begin{align*}
\ind(D^+)=\ind(D^+_{1,Q_\leq(A_Y)(H)})+\ind(D^-_{2,Q_>(A_Y)(H)}),
\end{align*}
which is precisely the content of Theorem  \ref{BBCThm4.17}. 
\begin{remark}
The fact that in this last formula $D^-_2$ appears instead of $D^+_2$, in contrast with Theorem \ref{BBCThm4.17}, is due the fact that $\star_{X_1}=\star_{-X_2}$.
\end{remark}

\subsection{The index formula for the signature operator in the Witt case}\label{Section:IndexSigOpWitt}

The objective of this section is to study the techniques used in \cite[Section 5]{B09} to compute the index of the signature operator ${D}^+$ on $M_0/S^1$ in the Witt case, using the machinery of  Sections \ref{Sec:Perturbations} and \ref{App:DiracSystems}. In the subsequent section we will then adapt these same techniques to compute the index of the operator $\mathscr{D}^+$ also in the Witt case.\\

Let consider the local description close to a connected component $F\subset M^{S^1}$ discussed in Chapter \ref{Sect:LocalDesc}. Recall that  $M/S^1=(M_0/S^1)\cup M^{S^1}$ where $M_0$ is  principal orbit, on which the action is free. Following Section \ref{Sect:PfoofSignatureFormula}, consider the decomposition $M_0/S^1=Z_{t_0}\cup U_{t_0}$, where $t_0>0$, $Z_{t_0}$ is a compact manifold with boundary and $U_{t_0}$ is diffeomorphic  to the mapping cylinder $C(\mathcal{F})$ of a Riemannian fibration $\pi_\mathcal{F}:\mathcal{F}\longrightarrow F$ with typical fiber $\mathbb{C}P^N$, where $N$ is determined by the dimension $\dim F=4k-2N-1$.\\

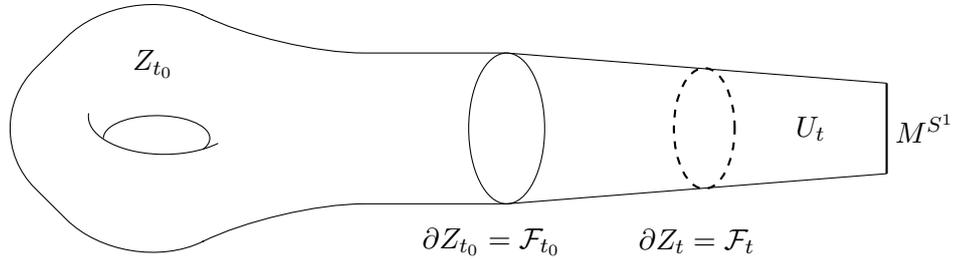
\begin{figure}[h]
\begin{center}
\begin{tikzpicture}
\draw[rounded corners=32pt](7,-1)--(4,-1)--(2,-2)--(0,0) -- (2,2)--(4,1)--(7,1);
\draw (1.5,0.2) arc (175:315:1cm and 0.5cm);
\draw (3,-0.28) arc (-30:180:0.7cm and 0.3cm);
\draw (7.5,0) arc (0:360:0.5cm and 1cm);
\draw[dashed,thick] (10,0) arc (0:360:0.4cm and 0.8cm);
\node (a) at (20:2.5) {$Z_{t_0}$};
\node (a) at (6.8,-1.5) {$\partial Z_{t_0}=\mathcal{F}_{t_0}$};
\node (a) at (9.5,-1.5) {$\partial Z_{t}=\mathcal{F}_{t}$};
\draw [thick](12,-0.6)--(12,0.6);
\draw (7,1)--(12,0.6);
\draw (7,-1)--(12,-0.6);
\node (a) at (11,0) {$U_t$};
\node (a) at (12.5,0) {$M^{S^1}$};
\end{tikzpicture}
\caption{The quotient space $M/S^1$ decomposed as $M/S^1=Z_{t_0}\cup U_{t_0}$ and $M/S^1=Z_{t}\cup U_{t}$ for $t<t_0$. }\label{Fig:Quot}
\end{center}
\end{figure}

Recall also that the Witt condition reads in this case $H^N(Y)=0$, which is equivalent to saying that $N$ is odd since $\mathbb{C}P^N$ has only non-vanishing cohomology groups in even degrees. In view Remark \ref{Rmk:ZeroEigenValueAV} and  Corollary \ref{Coro:IndexInvScal} we can assume, for the index computation,  that
\begin{align}\label{SpecAvWittCase}
|A_V|\geq 1
\end{align}
by rescaling the vertical metric. Indeed, from spectral decomposition of Theorem \ref{Thm:SpectralDecomp} we know that this scaling procedure will shift (arbitrarily) the spectrum of the operator $A_V$ in both, the space of closed and co-closed forms. In addition,  in the space of vertical harmonic forms the spectrum is given by $\pm(2j-N)$ for $0\leq j \leq N$. Consequently, in the Witt case we have $|j-N|\geq 1$, which verifies that \eqref{SpecAvWittCase} can always be achieved. \\
On the other hand, this condition combined with Corollary \ref{Coro:DESA} shows that, for the purpose of computing the index, we can assume that the operator $D$ is essentially self-adjoint.\\ 

Let us describe now the strategy for the index calculation. The main idea is to use the geometric decomposition of $M/S^1$ described above in order to apply Theorem \ref{BBCThm4.17}.  We compute the index of the signature operator ${D}^+$ by adding up the index contributions of $Z_t$ and $U_t$ (see Figure \ref{Fig:Quot}).  In order to do so, we need to construct  compatible super-symmetric Dirac systems on $Z_{t}$ and $U_{t}$, as in Definition \ref{Def:CompDiracSys}. These Dirac systems will be defined in a similar manner as in Section \ref{Section:ExNovikov}. Once we obtain these compatible Dirac systems, we just glue them and apply  Theorem \ref{BBCThm4.17}. The strategy is then, to compute each index contribution separately, at least when $t\longrightarrow 0$. \\

In view of Theorem \ref{Thm:TransfDiracOps} we define, for $0<t<t_0/2$ fixed, the operator
\begin{align*}
{D}_1\coloneqq \gamma\left(\frac{\partial}{\partial r}+\bigstar\otimes {A}(t+r)\right),
\end{align*}
where $0<r<t$. Here as before, $A$ is the operator of Theorem \ref{Thm:TransfDiracOps}, 
\begin{align*}
\gamma=
\left(
\begin{array}{cc}
0 & -I \\
I & 0
\end{array}
\right)
\quad\text{and}\quad
\bigstar=
\left(
\begin{array}{cc}
I & 0 \\
0 & -I
\end{array}
\right).
\end{align*}
From the relation $\gamma\bigstar +\bigstar\gamma =0$ we see that $({D}_1,\bigstar)$  defines a super-symmetric Dirac system on $Z_t$. The corresponding graded operator is 
\begin{align*}
{D}^+_1=\frac{\partial}{\partial r}+{A}(t+r). 
\end{align*}
\begin{remark}
The Hilbert space on which ${A}(t)$,the operator corresponding to $r=0$,  is defined is $H \coloneqq L^2(\wedge T^*\mathcal{F}_t)$ where the metric on $\mathcal{F}_t$ is $g^{T\mathcal{F}_t}=g^{T_H\mathcal{F}}\oplus t^2 g^{T_V\mathcal{F}}$. For $s\geq 0$ the associated Sobolev chain is $H^{s}\coloneqq \dom(|{A}(t)|^s)$. 
\end{remark}
Similarly, we define on $U_t$,
\begin{align*}
{D}_2\coloneqq -\gamma\left(\frac{\partial}{\partial r}-\bigstar\otimes {A}(t-r)\right),
\end{align*}
and we also see that $(D_2,\bigstar)$ is a super-symmetric Dirac system with associated graded operator 
\begin{align*}
{D}^+_2=-\left(\frac{\partial}{\partial_r}-{A}(t-r)\right).
\end{align*}

It is straightforward to verify that the super-symmetric Dirac systems ${D}_1=({D}_1,\bigstar)$ and ${D}_2=({D}_2,\bigstar)$ are compatible in the sense of Definition \ref{Def:CompDiracSys}. We should think of ${D}_1$ and ${D}_2$ as the restrictions of the operator ${D}$ to $Z_t$ and $U_t$ respectively. 
For the operator ${D}_1$ defined on the manifold with boundary $Z_t$, in view of Theorem \ref{Thm:SignThmMBound} and the proof of Theorem \ref{Thm:S1SignatureThm}, we impose the APS-type boundary condition (Example \ref{Example:APSBoundCond})
\begin{align*}
B_1\coloneqq &\bigstar Q_{<}({A}(t))(\dom(|{A}(t)|^{1/2})),
\end{align*}
which we know is elliptic. 
Obviously this boundary is invariant under $\bigstar$, so we can split it as $B_1=B^+_1\oplus B^-_1$ where
$B^+_1\coloneqq  Q_{<}({A}(t))(\dom(|{A}(t)|^{1/2}))$. In view of Theorem \ref{BBCThm4.17}, we choose for the operator ${D}^+_2$ the complementary boundary condition (see Remark \ref{Rmk:HybridSpDouble}) $B^+_2\coloneqq  Q_{\geq}({A}(t))(\dom(|{A}(t)|^{1/2}))$. 

\begin{remark}[Non-parabolicity]
Observe that both Dirac systems ${D}_1$ and ${D}_2$ are non-parabolic. For $D_1$ it is clear since $Z_t$ is compact (see Remark \ref{Rmk:NonParabNovikov}). To prove it for $D_2$ we would need to derive a bound, for $0<r<t$ fixed, of the form
\begin{align*}
\int_{t-r}^{t}\norm{\sigma(\tau)}_H^2 d\tau\leq C(r)\norm{D_2\sigma}^2_{L^2((0,t],H)}, 
\end{align*}
where $\sigma$ is any compactly supported section such that $\sigma(t)=0$ and $C(r)>0$ is a constant depending just on $r$. This can be achieved in using \cite[Proposition 2.5]{C01a}  and the computations in the proof of Lemma \ref{Lemma:5.12} below. The key observation is that the coefficients of $D_2$ are smooth an bounded in the compact interval $[t-r,t]$. 
\end{remark}
If we denote both operators with their compatible elliptic boundary conditions by
\begin{align*}
{D}^{+}_{Z_t,Q_{<}({A}(t))(H)}\coloneqq &{D}^+_{1,{B^+_1}},\\
{D}^{+}_{U_t,Q_{\geq}({A}(t))(H)}\coloneqq &{D}^+_{1,{B^+_2}},
\end{align*}
then from Theorem \ref{BBCThm4.17} we obtain the following decomposition result for the index.

\begin{theorem}[{\cite[Theorem 5.1]{B09}}]\label{Thm:5.1}
The operators ${D}^+_{U_t,Q_{\geq}({A}(t))(H)}$ and ${D}^+_{Z_t,Q_{<}({A}(t))(H)}$ are Fredholm, and we have the index identity
\begin{align*}
\textnormal{ind}({D}^+)=\textnormal{ind}\left({D}^{+}_{Z_t,Q_{<}({A}(t))(H)}\right)+\textnormal{ind}\left({D}^{+}_{U_t,Q_{\geq }({A}(t))(H)}\right).
\end{align*}
\end{theorem}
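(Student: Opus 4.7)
The plan is to apply the gluing index theorem for Dirac-Schr\"odinger systems, Theorem \ref{BBCThm4.17}, to the compatible pair $({D}_1,\bigstar)$ on $Z_t$ and $({D}_2,\bigstar)$ on $U_t$ constructed above. First I would check in detail the hypotheses of Definition \ref{Def:CompDiracSys}: the two Dirac systems share the boundary slice $\mathcal{F}_t$, the tangential operators agree up to a sign ($A_1=A(t)=-A_2$ because the outward normal on $U_t$ is $-\partial_r$), the Clifford actions by the normal satisfy $\gamma_{1,0}=-\gamma_{2,0}$, and the chirality involutions $\bigstar$ coincide on $\mathcal{F}_t$. This is essentially the same calculation as in the Novikov example of Section \ref{Section:ExNovikov}, so it is routine.

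The next step is to verify non-parabolicity and ellipticity of the chosen boundary conditions. For $({D}_1,\bigstar)$ non-parabolicity is immediate because $Z_t$ is compact, exactly as in Remark \ref{Rmk:NonParabNovikov}. The APS-type boundary condition $B_1^+=Q_<(A(t))(\dom(|A(t)|^{1/2}))$ is elliptic since it is induced by a spectral projection (Example \ref{Example:APSproj} and Proposition \ref{Prop:BBCProp1.99}). For $({D}_2,\bigstar)$ the complementary condition $B_2^+=Q_{\geq}(A(t))(\dom(|A(t)|^{1/2}))$ is elliptic by the same argument. Fredholmness of the two graded operators then follows from Theorem \ref{Thm:BBC2.43} together with the $L^2$-index corollary that follows it.

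The main technical point, and the one I expect to require the most care, is the non-parabolicity of $({D}_2,\bigstar)$ on the open collar $U_t$, which comes up against the singularity at $r=t$. Here is where the Witt assumption and the rescaling of the vertical metric ensuring $|A_V|\geq 1$ (which, by Corollary \ref{Coro:DESA}, already forces essential self-adjointness of $D$ and therefore the $L^2$-Stokes theorem of Corollary \ref{Coro:WittimpL2}) are crucial: near the singularity, Theorem \ref{Thm:TransfDiracOps} together with Lemma \ref{Lemma:VertOp1} gives $A(r)^2\geq Cr^{-2}A_V^2\geq Cr^{-2}$, so that the argument of \cite[Proposition 2.5]{C01a} used in Remark \ref{Rmk:NonParabNovikov} combined with a Hardy-type estimate on $(0,t]$ yields the required bound $\int_{t-r}^t\|\sigma(\tau)\|_H^2\,d\tau\leq C(r)\|D_2\sigma\|_{L^2}^2$ for $\sigma$ compactly supported with $\sigma(t)=0$.

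Once non-parabolicity is established, the final step is a direct application of Theorem \ref{BBCThm4.17}: because $B_2^+$ is chosen as the orthogonal complement of $B_1^+$ inside $\dom(|A(t)|^{1/2})$, the Kato-index correction terms vanish and we obtain
\[
\ind(D^+_{1,B_1^+,\textnormal{ext}})+\ind(D^+_{2,B_2^+,\textnormal{ext}})=\ind(D^+_{B^+,\textnormal{ext}}),
\]
where $B^+$ is the transmission boundary condition on $\mathcal{F}_t$. To close the argument I must identify the glued operator $D_{B,\text{ext}}^+$ with $D^+$ on all of $M_0/S^1$: since under the Witt assumption $D$ is essentially self-adjoint by Corollary \ref{Coro:DESA}, a section satisfying the transmission condition at $r=t$ together with the singular growth conditions at $r=0$ inherited from $\dom(D_{\min})=\dom(D_{\max})$ defines an element of $\dom(D)$, and the extended index on $W_B$ equals the ordinary Fredholm index of $D^+$. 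This yields the stated identity.
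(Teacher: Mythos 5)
Your high-level plan is the same as the paper's: set up the two compatible super-symmetric Dirac systems $({D}_1,\bigstar)$ on $Z_t$ and $({D}_2,\bigstar)$ on $U_t$, check compatibility, non-parabolicity and ellipticity of the APS-type boundary conditions, and invoke Theorem \ref{BBCThm4.17} with $B_2^+$ taken complementary to $B_1^+$. The compatibility check, the non-parabolicity of $D_1$, and the ellipticity of the spectral boundary conditions are all handled as the paper does.

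The place where you go off track is the non-parabolicity of $({D}_2,\bigstar)$. You claim this ``comes up against the singularity at $r=t$'' and that the Witt assumption together with Lemma \ref{Lemma:VertOp1} and a Hardy estimate near the apex are crucial there. But look again at Definition \ref{Def:NonPara}: the required estimate is
$\norm{\sigma}_{L^2([0,T],\mathcal{H})}\leq C_T\norm{\sigma}_W$
for each \emph{fixed} $T>0$ and for compactly supported $\sigma$ with $\sigma(0)=0$, where $r=0$ is the boundary slice $\mathcal{F}_t$. In the $U_t$ coordinate this is the bound $\int_{t-r}^{t}\norm{\sigma(\tau)}^2_H\,d\tau\leq C(r)\norm{D_2\sigma}^2_{L^2}$ stated in the remark right before the theorem, and the interval $[t-r,t]$ is \emph{bounded away from} the conical apex at $\tau=0$. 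On $[t-r,t]$ the coefficients of $D_2$ are smooth and uniformly bounded, so the estimate is the same routine Poincar\'e-type inequality as in Remark \ref{Rmk:NonParabNovikov}. No Hardy inequality, no $|A_V|\geq 1$, and no Witt condition enter at this step. Those ingredients are indeed essential in this section, but for different reasons: $|A_V|\geq 1$ (hence the Witt condition plus rescaling) is what gives essential self-adjointness of $D$ (Corollary \ref{Coro:DESA}), so that $\ind(D^+)$ on the left-hand side is well defined without a choice of ideal boundary condition; it also guarantees that $\mathcal{C}(D^+_t)$ is a core for $D^+_t$ and enters Lemma \ref{Lemma:5.12} and the vanishing Theorem \ref{Thm:5.2}. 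If you locate the Witt hypothesis inside the non-parabolicity argument, you will wrongly conclude that the decomposition formula of Theorem \ref{Thm:5.1} itself breaks down in the non-Witt case; in fact it is the interpretation of the left-hand side (and the later vanishing theorem) that becomes problematic, not the non-parabolicity of the Dirac system on the collar.
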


We will first study the index contribution on $U_t$. Set ${D}^+_t\coloneqq  {D}_{U_t,Q_{\geq}({A}(t))(H)}$ to lighten the notation. Note that as a consequence of condition \eqref{SpecAvWittCase},  we know that
\begin{align*}
\mathcal{C}({D}^+_t)\coloneqq \{\sigma\in C^{1}_c((0,t],H^1))\:|\:Q_<({A}(t))\sigma(t)=0\}
\end{align*}
is a core for ${D}^+_t$. One of the main ingredients of the index computation in \cite{B09} is the following remarkable vanishing result.
\begin{theorem}[{\cite[Theorem 5.2]{B09}}]\label{Thm:5.2}
For $t\in(0,t_0]$ sufficiently small we have 
\begin{align*}
\textnormal{ind}\left({D}^{+}_{U_t,Q_{\geq}({A}(t))(H)}\right)=0. 
\end{align*}
\end{theorem}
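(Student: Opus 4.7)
The strategy, following \cite[Theorem~5.2]{B09}, is a deformation argument that reduces the computation to a model cone problem whose index vanishes by spectral symmetry of $A_V$. After the unitary transformation $\Psi$ of Theorem~\ref{Thm:TransfDiracOps}, $D^+_t$ takes the form $\partial_r + A(r) = \partial_r + A_H(r) + r^{-1}A_V$ on $(0,t]$, with the APS boundary condition $Q_{\geq}(A(t))\sigma(t) = 0$ at $r = t$; the behaviour at $r = 0$ is fixed by the essential self-adjointness of $D$, which in the Witt case holds after rescaling the vertical metric so that $|A_V| \geq 1$, a rescaling that leaves the index invariant by Corollary~\ref{Coro:IndexInvScal}.

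The first step is to simplify the $t$-dependent boundary projection. Writing $A(t) = A_H(t) + t^{-1} A_V$, the estimate of Lemma~\ref{Lemma:VertOp1} combined with $|A_V|\geq 1$ shows that, for $t$ sufficiently small, $A_H(t)$ is a Kato perturbation of $t^{-1}A_V$ with arbitrarily small relative bound. Corollary~\ref{Coro:VanishingKatoIndexPert} then gives $\ind\bigl(Q_{<}(A_V)(H), Q_{\geq}(A(t))(H)\bigr) = 0$, and the Agranovi\v{c}--Dynin formula of Theorem~\ref{BBCThm4.14} reduces the problem to computing $\ind\bigl(D^+_{U_t, Q_{\geq}(A_V)(H)}\bigr)$, where the boundary projection now commutes with the fibrewise spectral decomposition of $A_V$.

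The second step is to split the reduced index according to the Splitting Lemma (Lemma~\ref{Lemma:Spliting}). On the large-eigenvalue piece $H_{>\Lambda}$, the parametrix of Chapter~\ref{Sect:Param}, adapted to the boundary-value problem on $U_t$ via the key lower bound $A(r)^2 \geq C r^{-2} A_V^2$ of Lemma~\ref{Lemma:VertOp1}, yields an inverse modulo a smoothing remainder and thus no index contribution. On the finite-dimensional small-eigenvalue piece $H_{\leq \Lambda}$, the operator decouples up to compact corrections into scalar cone equations $(\partial_r + \mu/r)f = 0$, one for each eigenvalue $\mu$ of $A_V$ with $|\mu| < \Lambda$, with explicit solutions $f(r) = c r^{-\mu}$ and (for the formal adjoint $-\partial_r + \mu/r$) $g(r) = c r^{\mu}$. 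Combining the $L^2$-integrability condition at $r = 0$ with the $Q_{\geq}(A_V)$ boundary condition at $r = t$ shows that $\dim\ker$ counts eigenvalues $\mu \leq -1$ while $\dim\coker$ counts eigenvalues $\mu \geq 1$, both with multiplicity. By Theorem~\ref{Thm:SpectralDecomp}, the spectrum of $A_V$ is symmetric about the origin: the harmonic-form eigenvalues come in pairs $\pm(2j - N)$, and on the non-harmonic part the four eigenvalues $\mu^j_{\text{cl},\pm}$ and $\mu^j_{\text{ccl},\pm}$ for each $\lambda \in \spec(\Delta_Y)\setminus\{0\}$ pair up to sum to zero with common multiplicity. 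Hence the two dimensions coincide and the index is zero.

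The main obstacle is to carry out this splitting rigorously while preserving the Fredholm property throughout. A naive deformation of the operator itself into the cone model $\partial_r + r^{-1}A_V$ would destroy horizontal ellipticity, which is why the reduction is performed at the level of the boundary condition in the first step rather than on the interior operator, and why the parametrix of Chapter~\ref{Sect:Param} is essential for isolating the finite-dimensional piece where the decoupling is genuine. On $H_{\leq\Lambda}$ the horizontal operator $A_H$ still mixes eigenspaces of $A_V$, so the scalar ODE analysis must be justified only modulo compact errors that do not affect the index. A further subtlety is the possibly $x$-dependent spectral splitting of $A_V$ over the base $F$, which is controlled by the local constancy of $Q_{>\Lambda}$ from the Splitting Lemma combined with a partition-of-unity argument analogous to the one in the parametrix construction.
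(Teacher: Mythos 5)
Your proposal does not follow the paper's route, and as written it has two genuine gaps. The paper proves the vanishing by a direct a priori estimate (Lemma~\ref{Lemma:5.12}): for $\sigma$ in the core of $D^+_t$ one expands $\|D^+_t\sigma(r)\|_H^2$, absorbs the cross term $2\,\mathrm{Re}\langle\sigma',A\sigma\rangle$ into a total $r$-derivative whose integral is nonnegative thanks to the APS condition at $r=t$, controls $\|\sigma'\|^2-\tfrac{1}{4r^2}\|\sigma\|^2$ by Hardy's inequality, and uses the verticality of $A_{HV}$ together with the invertibility of $A_V+\tfrac12$ (available in the Witt case since $\pm\tfrac12\notin\spec(A_V)$ after rescaling) to conclude $\|D^+_t\sigma\|\geq\tfrac{1}{2t}\|(A_V+\tfrac12)\sigma\|$ for small $t$. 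This forces $\ker D^+_t=\{0\}$, and the mirror estimate with $A_V-\tfrac12$ for the adjoint (Remark~\ref{Lemma:5.12R}) forces $\ker(D^+_t)^*=\{0\}$. Note that this is strictly stronger than what you are aiming for: kernel and cokernel are both trivial, not merely of equal dimension.

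The first gap in your argument is the appeal to Corollary~\ref{Coro:VanishingKatoIndexPert} to trade the boundary projection $Q_{\geq}(A(t))$ for $Q_{\geq}(A_V)$: that corollary requires the perturbation $B$ to be \emph{bounded} with $|A|\geq\mu>\sqrt{2}\|B\|$, whereas $A_H(t)$ is an unbounded first-order horizontal operator; Theorem~\ref{BThm5.9} only gives you Fredholmness of the pair, not vanishing of its Kato index. The second and more serious gap is the claim that on $H_{\leq\Lambda}$ the operator ``decouples up to compact corrections into scalar cone equations.'' The restriction of $A_H(r)$ to the small-eigenvalue piece is still a first-order elliptic operator on the base $F$ (essentially a twisted odd signature operator), not a compact correction, and deforming it away destroys horizontal ellipticity, so the homotopy does not stay within Fredholm operators. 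Your own kernel count exposes the problem: the scalar model $\partial_r+\mu/r$ with $\mu\leq -1$ has a one-dimensional $L^2$-kernel $cr^{|\mu|}$ compatible with the boundary condition, whereas Lemma~\ref{Lemma:5.12} shows the actual kernel is trivial --- the horizontal part genuinely changes the kernel, so it cannot be discarded as an index-preserving perturbation without a separate argument. You flag this obstacle in your final paragraph but do not resolve it; the paper's estimate is precisely the device that makes such a reduction unnecessary.
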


The key point of the proof is the fact that, by the rescaling argument and since in our particular case the dimension of the fibers is even,  we can always assume that 
\begin{align}\label{Eqn:1/2notinSpecAV}
\pm\frac{1}{2}\notin\spec(A_V),
\end{align}
in view of condition \eqref{SpecAvWittCase}.  This observation allow us to obtain an estimate which shows that $\ker(D^+_t)=\{0\}$.  Then, arguing analogously for the adjoint operator one verifies the vanishing of the index. The concrete form of the claimed estimate is discussed in the following lemma. We revise its proof because it will inspire techniques to derive a similar vanishing result for the operator $\mathscr{D}^+$. 

\begin{lemma}\label{Lemma:5.12}
For $t$ small enough and $\sigma\in\mathcal{C}({D}^+_t)$ we have the estimate
\begin{align*}
\norm{{D}^+_{t}\sigma}_{L^{2}((0,t],H)}\geq \frac{1}{2t}\bnorm{\left({A}_V+\frac{1}{2}\right)\sigma}_{L^{2}((0,t],H)}.
\end{align*}
\end{lemma}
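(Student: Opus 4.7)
The strategy is to expand $\norm{D^+_t\sigma}^2_{L^2}$ by integration by parts in $r$, exploit the APS-type boundary condition at $r=t$ and the compact support at $r=0$ to control boundary terms, and then combine the pointwise estimate $A(r)^2\geq Cr^{-2}A_V^2$ of Lemma \ref{Lemma:VertOp1} with Hardy's inequality in order to recover the squared cone coefficient $(A_V+1/2)^2$. A crucial ingredient is the Witt assumption together with the vertical rescaling, which gives $|A_V|\geq 1$ and in particular $|A_V+1/2|\geq 1/2$.

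First, using that $A(r)$ is self-adjoint and the identity $\frac{d}{dr}(\sigma,A(r)\sigma)=2\Re(\sigma',A(r)\sigma)+(\sigma,A'(r)\sigma)$, I would expand
\[
\norm{D^+_t\sigma}^2 = \int_0^t\!\bigl(\norm{\sigma'}^2+\norm{A(r)\sigma}^2\bigr)dr + (\sigma(t),A(t)\sigma(t)) - \int_0^t(\sigma,A'(r)\sigma)\,dr,
\]
where the boundary term at $0$ vanishes because $\sigma$ is compactly supported in $(0,t]$, and the boundary term at $t$ is non-negative by the APS condition $\sigma(t)\in Q_{\geq}(A(t))(H)$. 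Since $A'(r)=A_H'(r)-A_V/r^2$ and $A_H'(r)$ is bounded (the $r$-dependent part of $A_H(r)$ comes from $r\,d_H^{(2)}$, and $d_H^{(2)}$ is bounded by Section \ref{Sect:RiemFibr}), one is left with a term $\int_0^t r^{-2}(\sigma,A_V\sigma)\,dr$ modulo an $O(\norm{\sigma}^2_{L^2})$ remainder.

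Next I would apply Lemma \ref{Lemma:VertOp1}, namely $\norm{A(r)\sigma}^2 \geq C r^{-2}\norm{A_V\sigma}^2$ for $r$ small and any prescribed $C<1$, together with the Hardy inequality $\norm{\sigma'}^2_{L^2}\geq \tfrac{1}{4}\int_0^t r^{-2}\norm{\sigma}^2\,dr$ (valid because $\sigma(0)=0$). Combining these yields
\[
\norm{D^+_t\sigma}^2 \geq \int_0^t r^{-2}\bigl(\sigma,\bigl[\tfrac{1}{4}+CA_V^2+A_V\bigr]\sigma\bigr)\,dr - O(\norm{\sigma}^2_{L^2}).
\]
The clean identity $\tfrac{1}{4}+A_V^2+A_V=(A_V+1/2)^2$ holds for $C=1$; for $C$ slightly smaller one writes $\tfrac{1}{4}+CA_V^2+A_V=(A_V+1/2)^2-(1-C)A_V^2$ and then uses the spectral comparison $A_V^2\leq 4(A_V+1/2)^2$ (which is immediate from $|A_V|\geq 1$, worst case at $A_V=-1$) to conclude that the bracket dominates $(4C-3)(A_V+1/2)^2$, a genuinely positive multiple of $(A_V+1/2)^2$ as soon as $C>3/4$.

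Finally, the weight $r^{-2}\geq t^{-2}$ pulls a factor $t^{-2}$ out of the main term, and the bound $\norm{\sigma}^2\leq 4\norm{(A_V+1/2)\sigma}^2$ (again from $|A_V+1/2|\geq 1/2$) absorbs the $O(\norm{\sigma}^2_{L^2})$ remainder into the same operator; for $t$ sufficiently small the leading coefficient is at least $1/(4t^2)$, and the claimed inequality follows upon taking square roots. The main obstacle is really the algebraic balancing in the third step: one must take $C$ in Lemma \ref{Lemma:VertOp1} genuinely close to $1$ (which forces $t$ to be small), and one must invoke exactly the Witt-plus-rescaling hypothesis so that the worst-case eigenvalue is $A_V=-1$, rather than something near zero, where $(A_V+1/2)$ would nearly vanish and the entire scheme would collapse.
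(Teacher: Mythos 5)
Your argument is correct and follows the same skeleton as the paper's proof (expand the square, integrate the cross term by parts using the boundary condition at $r=t$ and the compact support at $r=0$, apply Hardy's inequality, and recombine $\tfrac14+A_V^2+A_V$ into $(A_V+\tfrac12)^2$), but it handles the one genuinely delicate term --- the mixed horizontal--vertical contribution --- by a different mechanism. The paper keeps the decomposition $\norm{A(r)\sigma}^2=\norm{A_H(r)\sigma}^2+r^{-2}\norm{A_V\sigma}^2+r^{-1}\langle A_{HV}(r)\sigma,\sigma\rangle$ explicit, writes the cross term as $r^{-1}\langle (A_V+\tfrac12)\sigma, L^*\sigma\rangle$ with $L=A_{HV}(r)(A_V+\tfrac12)^{-1}$ bounded, completes the square exactly (i.e.\ with $C=1$), and then absorbs the $O(r^{-1})$ term into the $O(r^{-2})$ main term for small $t$; this only requires $-\tfrac12\notin\spec(A_V)$. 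You instead black-box the cross term into Lemma \ref{Lemma:VertOp1} (whose proof indeed permits any $C<1$ for $r$ small), accept the deficit $(1-C)A_V^2$, and repay it with the spectral comparison $A_V^2\leq 4(A_V+\tfrac12)^2$; your algebra here is right, including the identification of $\lambda=-1$ as the worst case. The price is that your route genuinely needs the quantitative gap $|A_V|\geq 1$ of \eqref{SpecAvWittCase} --- if $\spec(A_V)$ merely avoided $-\tfrac12$ without being bounded away from it, your comparison constant would degenerate --- whereas the paper's version survives under the weaker hypothesis and is therefore reusable in the deformation argument of Remark \ref{Rmk:VanishingBaseEven} and in the later Kato-perturbation estimates for $\mathscr{D}^+$. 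Within the stated hypotheses of the lemma your proof is complete; the only cosmetic points are that you silently discard the nonnegative term $\norm{A_H(r)\sigma}^2$ (harmless) and that the quoted statement of Lemma \ref{Lemma:VertOp1} only asserts \emph{some} $C>0$, so you should cite its proof, not its statement, for the freedom to take $C>3/4$.
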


\begin{proof}
Let $\sigma\in\mathcal{C}({D}^+_{t})$. From  the decomposition of $A$ into its horizontal and vertical component, we have
\begin{align*}
{D}^+_{t}\sigma(r)=\sigma'(r)+{A}_H(r)\sigma(r)+\frac{1}{r}{A}_V
\sigma(r).
\end{align*}
First, for $0<r<t$ fixed, we compute its norm in $H$,
\begin{align}\label{Eqn:Lemma5.12Norm}
\norm{{D}^+_{t}\sigma(r)}_{H}^2=&\norm{\sigma'(r)}^2_{H}+\norm{A_H\sigma(r)}^2_{H}+r^{-2}\norm{{A}_V\sigma(r)}^2_{H}\\
&+r^{-1}\inner{{A}_{HV}(r)\sigma(r)}{\sigma(r)}_{H}+2\text{Re}\inner{\sigma'(r)}{{A}\sigma(r)}_H,\notag
\end{align}
where $A_{HV}(r)\coloneqq A_H(r)A_V+A_VA_H(r)$. Next, differentiating the operator $A(r)$ with respect to $r$ we get 
\begin{align*}
\frac{d}{dr}{A}(r)=A'_H(0)-r^{-2}{A}_V,
\end{align*}
where we have used that $A'_H(r)=A_H'(0)$. This implies, 
\begin{align*}
\frac{d}{dr}\inner{\sigma(r)}{{A}(r)\sigma(r)}_H&=\inner{\sigma'(r)}{{A}(r)\sigma(r)}_H+\inner{\sigma(r)}{{A}'(r)\sigma(r)}_H+\inner{\sigma(r)}{{A}(t)\sigma'(r)}_H\\
&=2\text{Re}\inner{\sigma'(r)}{{A}(r)\sigma(t)}_H+\inner{\sigma(r)}{(A'_H(0)-t^{-2}{A}_V)\sigma(r)}_H.
\end{align*}
We can replace this expression in \eqref{Eqn:Lemma5.12Norm} to obtain 
\begin{align*}
\norm{{D}^+_{t}\sigma(r)}_{H}^2=&\norm{\sigma'(r)}^2_{H}+\norm{A_H\sigma(r)}^2_{H}+r^{-2}\norm{{A}_V\sigma(r)}^2_{H}+r^{-1}\inner{{A}_{HV}\sigma(r)}{\sigma(r)}_{H}\\
&-\frac{d}{dr}\inner{\sigma(r)}{{A}(r)\sigma(r)}_H+r^{-2}\inner{\sigma(r)}{{A}_V\sigma(r)}-\inner{\sigma(r)}{A'_H(0)\sigma(r)}_H. 
\end{align*}

From the proof of Lemma \ref{Lemma:VertOp1} we see that ${A}_{HV}(r)$ is a first order vertical operator, thus ${L}\coloneqq {A}_{HV}(r)({A}_V+1/2)^{-1}$ is a zero order operator and  therefore there exists a constant $C_1>0 $ such that,
\begin{align*}
\bnorm{{A}_{HV}(r)\left({A}_V+\frac{1}{2}\right)^{-1}}_H\leq C_1, \:\text{for $r\in(0,t]$}.
\end{align*}
Here we have used \eqref{Eqn:1/2notinSpecAV} to ensure the invertibility of  ${A}_V+1/2$. 
Using the relation
\begin{align*}
\bnorm{\left({A}_V+\frac{1}{2}\right)\sigma(r)}^2_{H}=
\norm{{A}_V\sigma(r)}^2_{H}+\frac{1}{4}\norm{\sigma(r)}^2_H+\inner{{A}_V\sigma(r)}{\sigma(r)},
\end{align*}
we  can express the norm $\norm{{D}^+_{t}\sigma(r)}_{H}^2$ as
\begin{align}\label{Eqn:AuxLemma5.12}
\norm{{D}^+_{t}\sigma(r) }_{H}^2=&\left(\norm{\sigma'(r)}^2_H-\frac{1}{4r^2}\norm{\sigma(r)}^2_H\right)+\frac{d}{dr}\inner{\sigma(r)}{{A}(r)\sigma(r)}_H\\
&+r^{-2}\bnorm{\left({A}_V+\frac{1}{2}\right)\sigma(r)}^2_H+r^{-1}\left\langle\left({A}_V+\frac{1}{2}\right)\sigma(r),{L}^*\sigma(r)\right\rangle_H\notag\\
&-\inner{\sigma(r)}{A'_H(0)\sigma(r)}_H+\norm{A_H(r)\sigma(r)}^2.\notag
\end{align}
Now we integrate \eqref{Eqn:AuxLemma5.12} between $0$ and $t$ in order to compute the $L^2$-norm. After integration, the first term in the right hand side of this equation is positive by Hardy's inequality,
\begin{align}\label{Eqn:Hardy}
\int_0^t\left(\frac{1}{r}\norm{\sigma(r)}_H\right)^2 dr\leq 4 \int_0^t \norm{\sigma'(r)}^2_H dr.
\end{align}
The second term in the right hand side of \eqref{Eqn:AuxLemma5.12} is also positive after integration as a result of the boundary condition at $t$, 
\begin{align*}
\int_0^t  \left(\frac{d}{dr}\inner{\sigma(r)}{{A}(r)\sigma(r)}_H\right)dr=\inner{\sigma(t)}{{A}(t)\sigma(t)}_H-\inner{\sigma(0)}{{A}(0)\sigma(0)}_H\geq 0.
\end{align*}
Thus, we arrive to the estimate 
\begin{align*}
\norm{{D}^+_{t}\sigma}^2_{L^2((0,t],H)}
\geq & \int_0^t \left(r^{-2}\bnorm{\left({A}_V+\frac{1}{2}\right)\sigma(r)}^2_H+r^{-1}\left\langle\left({A}_V+\frac{1}{2}\right)\sigma(r),{L}^*\sigma(r)\right\rangle_H \right. \\ 
&\left. +\norm{A_H(r)\sigma(r)}^2_H-\inner{\sigma(r)}{A'_H(0)\sigma(r)}_H \right)dt.
\end{align*}
Finally, choose $t$ small enough such that 
\begin{align*}
\norm{{D}^+_{t}\sigma}^2_{L^2((0,t],H)} \geq \frac{1}{4}\int_0^t r^{-2}\bnorm{\left({A}_V+\frac{1}{2}\right)\sigma(r)}^2_H dr\geq \frac{1}{4t^2}\bnorm{\left(A_V+\frac{1}{2}\right)\sigma}^2_{L^2((0,t],H)}.
\end{align*}
\end{proof}

\begin{remark}
In the proof of \cite[Lemma 5.12]{B09} the term $\norm{A_H(r)\sigma(r)}_H^2$ is missing, but this does not cause any harm. 
\end{remark}

\begin{remark}\label{Lemma:5.12R}
Observe that the adjoint operator (Remark \ref{Rmk:AdjointBoundCond})
\begin{align*}
({D}^+_{t})^*=-\frac{\partial}{\partial r}+{A}(r),
\end{align*}
has  a core $\mathcal{C}(({D}^+_{t})^*)\coloneqq \{\sigma\in C^1_c((0,t],H^1)\:|\:Q_{\geq}({A}(t))\sigma(t)=0\}$.
Hence,  we can compute similarly for $\sigma\in \mathcal{C}(({D}^+_{t})^*)$,
\begin{align*}
\norm{({D}^+_{t})^*\sigma(r)}_{H}^2=&\norm{\sigma'(r)}^2_{H}+\norm{A_H\sigma(r)}^2_{H}+r^{-2}\norm{{A}_V\sigma(r)}^2_{H}\\
&+r^{-1}\inner{{A}_{HV}\sigma(r)}{\sigma(r)}_{H}-2\text{Re}\inner{\sigma'(r)}{{A}\sigma(r)}_H. 
\end{align*}
Again, since we can assume that $1/2 \notin \spec({A}_V)$, then we can proceed as before and express this norm as
\begin{align*}
\norm{({D}^+_{t})^*\sigma(r)}_{H}^2=&\left(\norm{\sigma'(r)}^2_H-\frac{1}{4r^2}\norm{\sigma(r)}^2_H\right)-\frac{d}{dr}\inner{\sigma(r)}{{A}(r)\sigma(r)}_H\\
&+r^{-2}\bnorm{\left({A}_V-\frac{1}{2}\right)\sigma(r)}^2_H+r^{-1}\left\langle\left({A}_V-\frac{1}{2}\right)\sigma(r),\tilde{{L}}^*\sigma(r)\right\rangle_H\notag\\
&+\inner{\sigma(r)}{A'_H(0)\sigma(r)}_H+\norm{A_H(r)\sigma(r)}_H^2,\notag
\end{align*}
where $\tilde{{L}}\coloneqq {A}_{HV}({A}_V-1/2)$. All together, we get the analogous estimate
\begin{align*}
\norm{({D}^+_{t})^*\sigma}_{L^{2}((0,t],H)}\geq \frac{1}{2t}\bnorm{\left({A}_V-\frac{1}{2}\right)\sigma}_{L^{2}((0,t],H)}.
\end{align*}
\end{remark}

\begin{proof}[Proof of Theorem \ref{Thm:5.2}]
Let $\sigma\in\dom(D^+_t)$ such that $D^+_t\sigma=0$, we want to prove that $\sigma=0$. Since $\mathcal{C}({D}^+_t)$ is a core of $D^+_t$, then there exists a sequence $(\sigma_n)_n\subset \mathcal{C}({D}^+_t)$ such that $\sigma_n\longrightarrow\sigma$ and ${D}^+_t\sigma_n\longrightarrow {D}^+_t\sigma=0$.  From Lemma \ref{Lemma:5.12} we can therefore deduce that $\sigma_n\longrightarrow 0$ , which shows that $\sigma=0$. Hence,  $\ker(D^+_t)=\{0\}$. In a similar manner we can use Remark \ref{Lemma:5.12R} to show that $\ker((D^+_t)^*)=\{0\}$. These two conditions imply that $\textnormal{ind}\left({D}^{+}_t\right)=0$.
\end{proof}

\begin{remark}\label{Rmk:VanishingBaseEven}
When the dimension of the fiber is odd then the Witt condition is automatically satisfied. However, the condition $\pm1/2\notin\spec(A_V)$ need not be necessarily true. Nevertheless, one can still prove Theorem \ref{Thm:5.2} using Lemma \ref{Lemma:5.12} by a deformation argument (\cite[pg. 39]{B09}). We will actually describe this deformation procedure in the next section since it will be adapted to prove the similar vanishing result for the operator $\mathscr{D}^+$. 
\end{remark}

From Theorem \ref{Thm:5.2} we conclude that for $t>0$ small enough
\begin{align}\label{Eqn:Indexwoc}
\textnormal{ind}({D}^+)=\textnormal{ind}\left({D}^{+}_{Z_t,Q_{<}({A}(t))(H)}\right),
\end{align}
which is just the index of the signature operator on the manifold with boundary $Z_t$ with an APS-type boundary condition. 
In order to compute this index we would like to use \eqref{Eqn:IndexDBoundary}. Moreover, note that the left hand side of \eqref{Eqn:Indexwoc} does not depend on $t$, thus we can study the behavior of the right hand side in the limit $t\longrightarrow 0^+$. This is of course motivated  by the proof of Theorem \ref{Thm:S1SignatureThm} presented in Section \ref{Sect:PfoofSignatureFormula}.  Nevertheless, we need to be cautious when applying \eqref{Eqn:IndexDBoundary} because of the following observations:

\begin{enumerate}
\item The metric close to the boundary is not a product. However, as a result of Corollary \ref{Coro:IndexInvScal},  we can modify the metric  so that it becomes a product near  $r=t$ without changing the index.   To do so let $\psi:(0,\infty)\longrightarrow (0,\infty)$ be a smooth function such that
\begin{align*}
 \psi(r)=
\begin{cases} 
 r & \text{, if }r\in (0,1]\cup [4,\infty),\\
 1& \text{, if } r\in[2,3].
  \end{cases}
\end{align*}

Then, for $0<t< t_0/4$ we set
\begin{align}\label{Eqn:ModifMetric}
g_t^{TU_{t_0}}\coloneqq &dr^2\oplus g^{T_H \mathcal{F}}\oplus t^{2}\psi(r/ t)^2g^{T_V\mathcal{F}}, \notag\\
g_t^{T(M_0/S^1)}\big{|}_{Z_{t_0}}\coloneqq &g^{T(M_0/S^1)}\big{|}_{Z_{t_0}},\\
g_t^{T(M_0/S^1)}\big{|}_{U_{t_0}}\coloneqq &g_t^{TU_{t_0}}.\notag
\end{align}
In particular, close to $r=t$ the metric takes the form $dr^2\oplus g^{T_H\mathcal{F}}\oplus t^{2} g^{T_V\mathcal{F}}$
which is a product metric since $t$ is fixed.

\begin{figure}[h]
\begin{tikzpicture}[domain=0:2]
\draw[->] (-1,0) -- (6.5,0)
node[below right] {$r$};
\draw[->] (0,-1) -- (0,6)
node[left] {$y$};
\draw (1,-0.3) node {1};
\draw (2,-0.3) node {2};
\draw (3,-0.3) node {3};
\draw (4,-0.3) node {4};
\draw (5,-0.3) node {5};
\draw [rounded corners=15pt] (0,0) -- (1,1)--(3.5,1)--(3.8,3.8)--(5.5,5.5);
\draw[dashed] (0,0)--(6,6);
\draw (2.3,3) node {$y=r$};
\draw (4.5,2) node {$y=\psi(r)$};
\draw (1,0) node {$\bullet$};
\draw (2,0) node {$\bullet$};
\draw (3,0) node {$\bullet$};
\draw (4,0) node {$\bullet$};
\draw (5,0) node {$\bullet$};
\end{tikzpicture} 
\end{figure}

\item  The second observation is the boundary condition. Note that $Q_\geq({A}(t))$ is not the correct boundary condition (associated projection) that we need to impose in order to be able to apply Theorem \ref{Thm:SignThmMBound} since ${A}(t)$ does not correspond to the tangential signature operator on $\partial Z_t$ (see \eqref{Eqn:DefOddSignOp}). As a matter of fact, from Corollary \ref{Coro:BoundaryOp} we see that the right operator to consider is  $A_0(t)$.  Hence, the index formula  of Theorem \ref{Thm:SignThmMBound} only applies to the operator
$D^{\text{+}}_{Z_t,Q_{<}(A_0(t))(H)}$. Recall that the difference between these operators is 
\begin{align}\label{Eqn:DifferenceAA0}
{A}(t)-A_0(t)=\frac{\nu}{t}. 
\end{align}
Following the proof of Proposition \ref{Prop:TransDPsi1} it is easy to see that this difference term does not appear, close to $r=t$, if we consider the deformation metric \eqref{Eqn:ModifMetric}. Hence, we have the index relation
\begin{align}\label{Eqn:Thm5.3(I)}
\textnormal{ind}\left({D}^+_{Z_t,Q_{<}({A}_0(t))(H)}\right)=\textnormal{ind}\left(D^{+}_{(Z_t,g^{TZ_t}_t),Q_{<}({A}_0(t))(H)}\right).
\end{align}
\end{enumerate}

In principle, from these two observations we can compute the right hand side of \eqref{Eqn:Thm5.3(I)} using  \eqref{Eqn:IndexDBoundary}. Before doing that we want to relate this index with the $\sigma_{S^1}(M)$. To do so, observe from \eqref{Eqn:IndexVSSignatureAPS} and \eqref{Eqn:Thm5.3(I)} the relation
\begin{equation*}
\textnormal{ind}\left({D}^+_{Z_t,Q_{<}({A}_0(t))(H)}\right)=\sigma(Z_t)-\frac{1}{2}\dim\ker(A_0(t)),
\end{equation*}
where $\sigma(Z_t)$ denotes the topological signature of the manifold with boundary $Z_t$. Recall from Section \ref{Sect:PfoofSignatureFormula} the crucial observation that, for $t$ sufficiently small, $\sigma(Z_t)=\sigma_{S^1}(M)$. Thus, for $t$ small enough we have 
\begin{align}\label{IndS1SignKern}
\sigma_{S^1}(M)=\textnormal{ind}\left({D}^+_{Z_t,Q_{<}({A}_0(t))(H)}\right)+\frac{1}{2}\dim\ker(A_0(t)). 
\end{align}

In view of this formula and \eqref{Eqn:Indexwoc}, we see that  it is necessary to take care of the change of boundary condition between the spectral projections of $A(t)$ and $A_0(t)$. 

\begin{theorem}[{\cite[Theorem 5.3]{B09}}]\label{Thm:5.3}
For $t$ sufficiently small $(Q_{<}({A}(t))(H),Q_{\geq }({A}_0(t))(H))$ is a Fredholm pair in $H$ and 
\begin{align*}
\textnormal{ind}\left( {D}^+_{Z_t,Q_{<}({A}(t))(H)}\right)=\: \textnormal{ind}\left( {D}^+_{
Z_t,Q_{<}(A_0(t))(H)}\right)
+\ind(Q_{<}({A}(t))(H),Q_{\geq }({A}_0(t))(H)).
\end{align*}
\end{theorem}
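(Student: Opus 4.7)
The plan is to combine the Agranovi\v{c}--Dynin identity of Theorem \ref{BBCThm4.14} with the Kato--index manipulations provided by Lemma \ref{Prop:A.13}. The key structural input is that $A(t)-A_0(t)=\nu/t$ is a \emph{bounded} operator on $H$ for each fixed $t>0$, since $\nu=\textnormal{vd}-N$ takes only finitely many integer values on the vertical exterior algebra. Consequently, by Remark \ref{Rmk:SobolevKatoPert} the Sobolev chains of $A(t)$ and $A_0(t)$ coincide, so $Q_{<}(A_0(t))(H^{1/2})$ is a well-defined regular boundary condition for the Dirac system on $Z_t$ (whose tangential operator at the boundary is $A(t)$), and Theorem \ref{BThm5.9} applied to the bounded Kato perturbation $\nu/t$ ensures its ellipticity.

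For the Fredholm-pair assertion I would first show that $Q_{\geq}(A_0(t))-Q_{\geq}(A(t))$ is a compact operator on $H$. Using the resolvent integral of Remark \ref{Rmk:DiffKatoProj} for the bounded perturbation $B=\nu/t$, this difference is a contour integral with integrand of the form $(A_0(t)-z)^{-1}B\,(A(t)-z)^{-1}$; compactness follows since $A(t)$ has compact resolvent by the discreteness established in Chapter \ref{Sect:Param}, and a finite--rank correction handles the transition between $Q_{\geq}$ and $Q_{>}$. Lemma \ref{Prop:A.13} then immediately gives the Fredholm property of $(Q_{<}(A(t))(H),Q_{\geq}(A_0(t))(H))$.

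For the index identity I would apply Theorem \ref{BBCThm4.14} to the two $\alpha$--invariant elliptic boundary conditions $B^+_1=Q_{<}(A(t))(H^{1/2})$ and $B^+_2=Q_{<}(A_0(t))(H^{1/2})$ with common reference $Q_{\leq}(A(t))(H^{1/2})$, obtaining
\begin{align*}
\textnormal{ind}\,D^+_{Z_t,Q_{<}(A(t))} &= \textnormal{ind}\,D^+_{Z_t,Q_{\leq}(A(t))}-\dim\ker A(t),\\
\textnormal{ind}\,D^+_{Z_t,Q_{<}(A_0(t))} &= \textnormal{ind}\,D^+_{Z_t,Q_{\leq}(A(t))}+\textnormal{ind}(Q_{<}(A_0(t))(H),Q_{>}(A(t))(H)).
\end{align*}
Subtracting, the Dirac contributions at the common reference cancel and one is left with a purely algebraic identity involving Kato indices.

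To finish, it remains to identify this difference with $\textnormal{ind}(Q_{<}(A(t))(H),Q_{\geq}(A_0(t))(H))$. I would do this in two algebraic steps: first apply Lemma \ref{Prop:A.13} with $E=Q_{\leq}(A(t))(H)$, $P=Q_{\geq}(A_0(t))$, $Q=Q_{\geq}(A(t))$ (whose difference is compact by the first step) to peel off the kernel term,
\[
\textnormal{ind}(Q_{<}(A(t))(H),Q_{\geq}(A_0(t))(H))
=\textnormal{ind}(Q_{\leq}(A(t))(H),Q_{\geq}(A_0(t))(H))-\dim\ker A(t);
\]
second, invoke the standard orthogonal duality $\textnormal{ind}(E_1,E_2)=-\textnormal{ind}(E_1^{\perp},E_2^{\perp})$ for Fredholm pairs of closed subspaces in a Hilbert space (a consequence of $(E_1\cap E_2)^{\perp}=\overline{E_1^{\perp}+E_2^{\perp}}$ and $(E_1+E_2)^{\perp}=E_1^{\perp}\cap E_2^{\perp}$) applied to the pair $(Q_{\leq}(A(t))(H),Q_{\geq}(A_0(t))(H))$, whose orthogonal complements are $(Q_{>}(A(t))(H),Q_{<}(A_0(t))(H))$. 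Chaining these two identities with the subtracted Agranovi\v{c}--Dynin relation produces the claimed formula. The main obstacle is bookkeeping: keeping track of which spectral projection is taken with respect to $A(t)$ and which with respect to $A_0(t)$, and verifying that each time Lemma \ref{Prop:A.13} or the duality is invoked the relevant pair is genuinely Fredholm and the projection differences are actually compact.
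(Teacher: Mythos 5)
Your proof is correct and follows essentially the same route as the paper: both arguments rest on the Agranovi\v{c}--Dynin formula of Theorem \ref{BBCThm4.14}, the compact-perturbation Lemma \ref{Prop:A.13}, and the orthogonal-complement duality for Fredholm pairs (Kato, Corollary IV.4.13), differing only in bookkeeping. The paper feeds $Q_{\leq}(A_0(t))(H)$ into Agranovi\v{c}--Dynin once and then uses the Witt-case fact $\ker A(t)=\{0\}$ (via Lemma \ref{Lemma:VertOp1}) to pass between strict and non-strict spectral projections, whereas you apply Agranovi\v{c}--Dynin to both $Q_{<}$-conditions directly and cancel the $\dim\ker A(t)$ terms algebraically, which is equally valid.
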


\begin{proof}
First observe from \eqref{Eqn:DifferenceAA0} that for fixed $t>0$, $A_0(t)$ is a Kato perturbation of ${A}(t)$ since $\nu$ is a bounded operator. Thus, by Theorem \ref{BThm5.9} we see that  $Q_{>}({A}_0(t))$ is elliptic with respect to ${A}(t)$ and 
$(Q_{\leq }({A}(t))(H),Q_{>}(A_0(t))(H))$ is a Fredholm pair. Note that the operators
\begin{align*}
Q_{\leq }({A}(t))-Q_{< }({A}(t))=&Q_{0}({A}(t)),\\
Q_{\geq }({A}_0(t))-Q_{> }({A}_0(t))=&Q_{0}({A}_0(t)),
\end{align*}
are both finite rank, and therefore compact. It follows then from Proposition \ref{Prop:A.13} that $(Q_{<}({A}(t))(H),Q_{\geq}(A_0(t))(H))$ is also a Fredholm pair.\\
We now verify the index formula.  By the argument above we know that $Q_>(A_0(t))$ is an ellipic projection with respect to $A(t)$, so its kernel $Q_{\leq}(A_0(t))(H)$ defines an elliptic boundary condition (Proposition \ref{Prop:BBCProp1.99}). Now we use the  Agranovi\v{c}-Dynin type formula from Theorem \ref{BBCThm4.14} applied to $B^+\coloneqq Q_{\leq}(A_0(t))(H)$,
\begin{align*}
\textnormal{ind}\left( {D}^+_{Z_t,Q_{\leq }({A}_0(t))(H)}\right)=\:
\textnormal{ind}\left( {D}^+_{
Z_t,Q_{\leq }(A(t))(H)}\right)
+\ind(Q_\leq (A_0(t))(H),Q_{>}({A}(t))(H)). 
\end{align*}
Similarly, again by Theorem \ref{BBCThm4.14},  we obtain the index discontinuity formulas
\begin{align*}
\textnormal{ind}\left( {D}^+_{Z_t,Q_{\leq }({A}_0(t))(H)}\right)=&\textnormal{ind}\left( {D}^+_{Z_t,Q_{< }({A}_0(t))(H)}\right)+\dim\ker({A}_0(t)),\\
\textnormal{ind}\left( {D}^+_{Z_t,Q_{\leq }({A}(t))(H)}\right)=&\textnormal{ind}\left( {D}^+_{Z_t,Q_{< }({A}(t))(H)}\right)+\dim\ker({A}(t)).
\end{align*}
As we are in the Witt case we know that $A_V$ is invertible (see \eqref{SpecAvWittCase}) and therefore, by Lemma \ref{Lemma:VertOp1},  we know that $\ker(A(t))=\{0\}$ for $t$ sufficiently small. In particular, $Q_{>}({A}(t))(H)=Q_{\geq}({A}(t))(H)$. Altogether we obtain
\begin{align*}
\textnormal{ind}\left( {D}^+_{Z_t,Q_{< }({A}(t))(H)}\right)=&
\textnormal{ind}\left( {D}^+_{Z_t,Q_{< }({A}_0(t))(H)}\right)
-\ind(Q_<(A_0(t))(H),Q_{\geq}({A}(t))(H)),
\end{align*}
where we have used the relation 
\begin{align*}
\ind(Q_\leq(A_0(t))(H),Q_{>}({A}(t))(H))=\ind(Q_<(A_0(t))(H),Q_{>}({A}(t))(H))+\dim\ker({A}_0(t)),
\end{align*}
which is easily derived from Lemma \ref{Prop:A.13}. Finally note from \cite[Corollary IV.4.13]{KATO} that 
\begin{align*}
\ind(Q_<(A_0(t))(H),Q_{\geq}({A}(t))(H))=-\ind(Q_<(A(t))(H),Q_{\geq}({A}_0(t))(H)).
\end{align*}
This completes the proof of the desired formula. 
\end{proof}

This theorem, combined with \eqref{Eqn:Indexwoc} and \eqref{IndS1SignKern}, implies the following result. 
\begin{proposition}\label{Prop:IndexFormulaCasi}
For $t>0$ sufficiently small we have
\begin{align*}
\ind ({D}^+)=\sigma_{S^1}(M)-\frac{1}{2}\dim\ker(A_0(t))+\ind(Q_{<}({A}(t))(H),Q_{\geq }({A}_0(t))(H)).
\end{align*}
\end{proposition}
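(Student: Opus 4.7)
The plan is to assemble the proposition by chaining together three identities already established in the excerpt, all valid for $t>0$ sufficiently small. First I would invoke Theorem \ref{Thm:5.1} together with the vanishing result Theorem \ref{Thm:5.2} to collapse the decomposition of the index into the single contribution from $Z_t$:
\begin{align*}
\ind(D^+) = \ind\left(D^+_{Z_t,Q_{<}(A(t))(H)}\right),
\end{align*}
which is the content of \eqref{Eqn:Indexwoc}.

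Next I would apply Theorem \ref{Thm:5.3} to exchange the boundary projection $Q_{<}(A(t))$ for $Q_{<}(A_0(t))$, the spectral projection of the true tangential signature operator on the slice. The cost of this exchange is the Kato index of the resulting Fredholm pair:
\begin{align*}
\ind\left(D^+_{Z_t,Q_{<}(A(t))(H)}\right) = \ind\left(D^+_{Z_t,Q_{<}(A_0(t))(H)}\right) + \ind\bigl(Q_{<}(A(t))(H),\,Q_{\geq}(A_0(t))(H)\bigr).
\end{align*}
The Fredholm pair property in the Witt case follows from the invertibility of $A_V$ (and hence of $A(t)$ for small $t$, by Lemma \ref{Lemma:VertOp1}) combined with the fact that $A(t)-A_0(t)=\nu/t$ is bounded, so Theorem \ref{BThm5.9} applies.

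Finally, the right-hand side connects to Lott's $S^1$-signature through identity \eqref{IndS1SignKern}, which itself combines the classical APS identity \eqref{Eqn:IndexVSSignatureAPS} on the manifold with boundary $Z_t$ (applied after the metric deformation \eqref{Eqn:ModifMetric}, which makes the metric a product near $\partial Z_t$ without changing the index, by Corollary \ref{Coro:IndexInvScal}) with the crucial topological observation from Section \ref{Sect:PfoofSignatureFormula} that $\sigma(Z_t)=\sigma_{S^1}(M)$ for small $t$. Substituting
\begin{align*}
\ind\left(D^+_{Z_t,Q_{<}(A_0(t))(H)}\right) = \sigma_{S^1}(M) - \tfrac{1}{2}\dim\ker(A_0(t))
\end{align*}
into the previous display yields the claimed formula.

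No step involves a genuinely new calculation, so there is no real obstacle; the proof is a three-line assembly. The only point requiring a moment of care is verifying that all three ingredients are simultaneously valid for the same range of $t$: Theorem \ref{Thm:5.2} requires $t$ small enough so that the vanishing estimate of Lemma \ref{Lemma:5.12} kicks in; Theorem \ref{Thm:5.3} requires $t$ small enough so that $\ker A(t)=\{0\}$ and the Kato perturbation argument applies; and the identification $\sigma(Z_t)=\sigma_{S^1}(M)$ requires $t$ small enough that basic forms with compact support in $M_0$ descend to forms supported in $Z_t$. Taking the minimum of these three thresholds produces the required $t$.
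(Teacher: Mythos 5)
Your proposal is correct and is precisely the argument the paper uses: the proposition is stated there as an immediate consequence of \eqref{Eqn:Indexwoc} (itself Theorem \ref{Thm:5.1} plus the vanishing result Theorem \ref{Thm:5.2}), Theorem \ref{Thm:5.3}, and the identity \eqref{IndS1SignKern}. Your remark about taking the minimum of the three smallness thresholds for $t$ is a sensible point of care but introduces nothing beyond the paper's reasoning.
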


Next we describe how to compute the Kato index of the proposition above. The main ingredient is the {\em generalized Thom space} $T_\pi$ of the fibration  $\pi_\mathcal{F}:\mathcal{F}\longrightarrow F$, which is a stratified space constructed as follows: 
Topologically it is defined as $ T_{\pi}\coloneqq (0,2)\times \mathcal{F}$ and it is oriented with respect its product orientation (which is the opposite orientation chosen in \cite[Theorem 1.1]{CD09}). For each fixed $t\in(0,1/2)$ we consider the metric on $T_\pi$ defined as 
\begin{align}\label{Eqn:MetricGTS}
g^{TT_\pi}_t\coloneqq dr^2\oplus g^{T\mathcal{F}}_t(r),
\end{align}
where
\begin{align*}
g^{T\mathcal{F}}_t(r)=
\begin{cases}
g^{T_H\mathcal{F}}\oplus r^2 g^{T_V\mathcal{F}}, &\quad\text{if $0<r\leq 1/2,$}\\
(2-r)^2(t^{-2}g^{T_H\mathcal{F}}\oplus  g^{T_V\mathcal{F}}), &\quad\text{if $3/2< r <2.$}
\end{cases}
\end{align*}
Observe in particular the relation
\begin{align*}
g^{T\mathcal{F}}_t(t)=g^{T\mathcal{F}}_t(2-t)=g^{T_H\mathcal{F}}\oplus t^2 g^{T_V\mathcal{F}}.
\end{align*}

 \begin{figure}[h]
\begin{center}
\begin{tikzpicture}
\draw (-2,5)--(2,5);
\draw (-3,3)--(1,3);
\draw (-2,5)--(-3,3);
\draw (2,5)--(1,3);
\draw (-2.5,1)--(1.5,1);
\draw (-3,3)--(-2.5,1);
\draw (2,5)--(1.5,1);
\draw (1,3)--(1.5,1);
\draw [dashed](-2,5)--(-2.25,3);
\draw (-2.5,1)--(-2.25,3);
\draw (-1,3)--(-0.5,1);
\draw [dashed](0,5)--(-0.25,3);
\draw (-0.5,1)--(-0.25,3);
\draw (0,5)--(-1,3);
\node (b) at (0,0.7) {$F$};
\node (c) at (-0.5,7) {$\bullet$};
\draw (-2,5)--(-0.5,7);
\draw (-3,3)--(-0.5,7);
\draw (2,5)--(-0.5,7);
\draw (1,3)--(-0.5,7);
\end{tikzpicture}
\caption{Generalized Thom space $ T_{\pi}$ associated with the fibration $\pi_{\mathcal{F}}:\mathcal{F}\longrightarrow F.$\label{Fig:ThomSpace}}\label{Fig:GTP}
\end{center}
\end{figure}
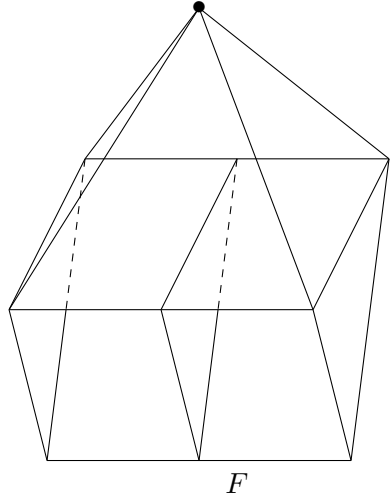

For each $t$, the space $T_\pi$ (more precisely, its compactification) becomes a compact Witt stratified space, see Figure \ref{Fig:ThomSpace}.  The family of metrics $(g^{TT_\pi}_t)_{t\in(0,1/2)}$ defined by \eqref{Eqn:MetricGTS} are all mutually quasi-isometric  and therefore they all compute the same $L^2$-signature. Consequently, from Theorem \ref{Prop:L2singnatureIndex} we get 
$$\sigma_{(2)}(T_\pi)=\ind \left(D^+_{T_\pi,g^{TT_\pi}_t}\right). $$
Using this relation, Theorem \ref{Thm:5.2} and Remark \ref{Rmk:VanishingBaseEven} Br\"uning proved the following result. 

\begin{theorem}[{\cite[Theorem 5.4]{B09}}]\label{Thm:5.4}
For $t>0$ sufficiently small, we have
\begin{align*}
\ind(Q_{<}({A}(t))(H),Q_{\geq }(A_0(t))(H))=\sigma_{(2)}(T_\pi)+\frac{1}{2}\dim\ker(A_0(t)).
\end{align*}
\end{theorem}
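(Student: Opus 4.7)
The plan is to compute $\sigma_{(2)}(T_\pi)$ by applying the gluing index theorem (Theorem \ref{BBCThm4.17}) to the generalized Thom space $T_\pi=(0,2)\times\mathcal{F}$ and then matching the result against the Kato index via the Agranovi\v{c}--Dynin formula (Theorem \ref{BBCThm4.14}). I would first note that, by Theorem \ref{Prop:L2singnatureIndex}, $\sigma_{(2)}(T_\pi)=\ind(D^+_{T_\pi,g^{TT_\pi}_t})$. Then I would cut $T_\pi$ at $r=t$ into the collapsing piece $U_t^{T}=(0,t]\times\mathcal{F}$ (where the vertical metric shrinks like $r^2$, so $U_t^{T}$ is locally isometric to the piece $U_t$ of $M_0/S^1$ near $F$) and the complementary piece $V_t^{T}=[t,2)\times\mathcal{F}$ (which carries an isolated conical singularity at $r=2$ with link the closed odd-dimensional manifold $\mathcal{F}$). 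These two compatible super-symmetric Dirac-Schr\"odinger systems share the common boundary operator at $r=t$.

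Next, I would apply Theorem \ref{BBCThm4.17} with the complementary boundary conditions $Q_\geq(A(t))(H)$ on $U_t^T$ and $Q_<(A(t))(H)$ on $V_t^T$, yielding
\[
\sigma_{(2)}(T_\pi)=\ind\bigl(D^+_{U_t^T,Q_\geq(A(t))(H)}\bigr)+\ind\bigl(D^+_{V_t^T,Q_<(A(t))(H)}\bigr).
\]
Because the local geometry of $U_t^T$ near $r=0$ is identical to that of $U_t$ in Theorem \ref{Thm:5.2}, the vanishing result applies verbatim for $t>0$ sufficiently small, forcing the first summand to be zero. To switch to the geometrically natural APS condition associated with the tangential signature operator $A_0(t)$ on $V_t^T$, I would invoke Theorem \ref{BBCThm4.14} (compare the proof of Theorem \ref{Thm:5.3}) to obtain
\[
\ind\bigl(D^+_{V_t^T,Q_<(A(t))(H)}\bigr)=\ind\bigl(D^+_{V_t^T,Q_<(A_0(t))(H)}\bigr)+\ind\bigl(Q_<(A(t))(H),Q_\geq(A_0(t))(H)\bigr).
\]
Combining the two displays gives
\[
\ind\bigl(Q_<(A(t))(H),Q_\geq(A_0(t))(H)\bigr)=\sigma_{(2)}(T_\pi)-\ind\bigl(D^+_{V_t^T,Q_<(A_0(t))(H)}\bigr).
\]

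It then remains to show that $\ind(D^+_{V_t^T,Q_<(A_0(t))(H)})=-\tfrac{1}{2}\dim\ker(A_0(t))$, which is the main obstacle. For this I would argue as follows: away from $r=2$ the metric on $V_t^T$ has been deformed so as to be a product near the boundary $\mathcal{F}_t$, and $A_0(t)$ is precisely the tangential odd signature operator there. Near $r=2$ the link $\mathcal{F}$ is odd-dimensional, so the Witt condition is automatically satisfied and the $L^2$-version of the APS signature theorem (Theorem \ref{Thm:SignThmMBound}) applies, giving $\ind(D^+_{V_t^T,Q_<(A_0(t))(H)})=\sigma_{(2)}(V_t^T)-\tfrac{1}{2}\dim\ker(A_0(t))$. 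The hard part is identifying $\sigma_{(2)}(V_t^T)$: since $V_t^T$ retracts onto the cone $c\mathcal{F}$ on an odd-dimensional closed manifold, it is contractible up to the boundary $\mathcal{F}_t$, and the intersection form on $H^{2k}_{(2)}(V_t^T)$ is trivial, so $\sigma_{(2)}(V_t^T)=0$. Here one can either invoke the vanishing theorem (Theorem \ref{Thm:5.2}, applied near the cone point by a further decomposition at $r=2-t$, using the uniform rescaling of the cone and an adapted Hardy-type estimate as in Lemma \ref{Lemma:5.12}) or appeal directly to the Cheeger--Dai identification \cite{CD09} of the $L^2$-signature of generalized Thom spaces with the $\tau$ invariant of the fibration. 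Either way, plugging $\sigma_{(2)}(V_t^T)=0$ into the previous display yields the claimed identity.
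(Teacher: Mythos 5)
Your overall architecture is the right one and matches the intended proof: cut $T_\pi$ at $r=t$, use Theorem \ref{BBCThm4.17} with complementary APS conditions, kill the collapsing piece by Theorem \ref{Thm:5.2}, and convert the boundary condition on the remaining piece via Theorem \ref{BBCThm4.14}, so that everything reduces to showing $\ind(D^+_{V_t^T,Q_<(A_0(t))(H)})=-\tfrac{1}{2}\dim\ker(A_0(t))$. The problem is that this last identity — which is where all the remaining content of the theorem lives — is asserted rather than proved, and both justifications you offer fail. First, there is no ``$L^2$-version of the APS signature theorem'' available for $V_t^T$: Theorem \ref{Thm:SignThmMBound} and \eqref{Eqn:IndexVSSignatureAPS} are for compact manifolds with boundary, whereas $V_t^T$ carries a conical singularity at $r=2$. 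To make this step honest you must decompose again at $r=2-t$, prove a second vanishing theorem for the cone piece (this is exactly Remark \ref{Rmk:VanishingBaseEven}: the link $\mathcal{F}$ is odd-dimensional, so $\pm\tfrac12$ may lie in the spectrum of the cone coefficient and Lemma \ref{Lemma:5.12} does not apply verbatim — one needs the deformation argument splitting off the harmonic part), perform a \emph{second} Agranovi\v{c}--Dynin correction at $r=2-t$ (the spectral projection inherited from the gluing is that of the cone tangential operator, not of $A_0(2-t)$, so another Kato index appears which you never account for), and only then apply the genuine APS theorem to the compact cylinder $[t,2-t]\times\mathcal{F}$. The bookkeeping of the $\tfrac12\dim\ker(A_0(t))$ term depends delicately on these choices (two boundary components each contribute $\tfrac12\dim\ker$, and switching between $Q_<$ and $Q_\geq$ at either end shifts the index by $\dim\ker$), so it cannot be read off without doing the computation.

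Second, your argument that $\sigma_{(2)}(V_t^T)=0$ is based on a false principle: $L^2$-cohomology is \emph{not} a homotopy invariant, only a quasi-isometry invariant of the metric (Remark \ref{L2StokesQI}); a metric cone over $\mathcal{F}$ is contractible yet has $H^j_{(2)}\cong H^j(\mathcal{F})$ in all degrees $j<2k$ by Cheeger's cone formula. The conclusion you want is true, but for a different reason — the cone formula kills precisely the middle degree $j=2k$ for a cone over a $(4k-1)$-dimensional link — and that is what must be invoked. Finally, the fallback appeal to Cheeger--Dai is not available here: \cite[Theorem 1.1]{CD09} computes $\sigma_{(2)}$ of the generalized Thom space of the fibration (i.e.\ of $T_\pi$ itself, which is how \eqref{Eqn:CheegerDai} is used \emph{after} Theorem \ref{Thm:5.4} to obtain Corollary \ref{Coro:TauWitt}), not of the single truncated cone $V_t^T$; using it inside the proof of Theorem \ref{Thm:5.4} is either inapplicable or circular.
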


On the other hand, a remarkable theorem of Cheeger and Dai (\cite[Theorem 1.1]{CD09}) shows, still restricted to the Witt case, that this $L^2$-signature is equal to Dai's $\tau$ invariant of the fibration $\pi_\mathcal{F}:\mathcal{F}\longrightarrow F$ (see Section \ref{Section:Dai}), i.e.
\begin{align}\label{Eqn:CheegerDai}
\sigma_{(2)}(T_\pi)=\tau. 
\end{align}
From this formula and using Example \ref{Example:ProjBundle}, as in the Section \ref{Sect:PfoofSignatureFormula}, we obtain the following vanishing result. 
\begin{coro}\label{Coro:TauWitt}
For $t$ sufficiently small, we have
\begin{align*}
\ind(Q_{<}({A}(t))(H),Q_{\geq 0}(A_0(t))(H))-\frac{1}{2}\dim\ker(A_0(t))=\tau=0. 
\end{align*}
\end{coro}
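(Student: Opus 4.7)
The plan is to chain together three inputs that are already in hand: Theorem~\ref{Thm:5.4}, the Cheeger--Dai formula \eqref{Eqn:CheegerDai}, and the vanishing of Dai's $\tau$-invariant for projectivization bundles (Example~\ref{Example:ProjBundle}). The first step simply isolates the Kato index on one side of the identity in Theorem~\ref{Thm:5.4} and writes
\begin{align*}
\ind(Q_{<}(A(t))(H), Q_{\geq}(A_0(t))(H)) - \tfrac{1}{2}\dim\ker(A_0(t)) = \sigma_{(2)}(T_\pi),
\end{align*}
valid for $t$ sufficiently small. The second step invokes the Cheeger--Dai identity $\sigma_{(2)}(T_\pi) = \tau$ from \eqref{Eqn:CheegerDai}, which is legitimate because we are in the Witt case (this hypothesis was already used earlier to arrange $|A_V| \geq 1$ and invoke Theorem~\ref{Thm:5.4}); this converts the right-hand side into $\tau$, the invariant of the fibration $\pi_{\mathcal{F}}:\mathcal{F}\longrightarrow F$ from \eqref{Def:TauInv}.

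The third and final step is the identification of $\pi_{\mathcal{F}}$ as a projectivization bundle. Recall from Step~1 of the proof of Theorem~\ref{Thm:S1SignatureThm} that the typical fiber is $Y=\mathbb{C}P^N$, and that $\mathcal{F}=\mathcal{S}/S^1$ is obtained from the sphere bundle of the complex normal bundle $NF\longrightarrow F$ (Proposition~\ref{Prop:S1actionNF}); thus $\mathcal{F} = P(NF)$ is precisely the projectivization of $NF$. Example~\ref{Example:ProjBundle} then applies verbatim: the global cohomology classes $1,c,\dots,c^N\in H^{*}(P(NF))$ restrict fiberwise to generators, so the Leray spectral sequence has $E_2^{p,q}=H^p(F)\otimes H^q(\mathbb{C}P^N)$ consisting entirely of globally defined classes, which forces $d_r=0$ for all $r\geq 2$. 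Consequently every bilinear pairing $\tau_r$ in \eqref{Def:TauInv} is the zero form, so $\sigma(\tau_r)=0$ and $\tau=0$.

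Combining these three steps yields the claimed chain of equalities. There is no real obstacle here: all three ingredients have been set up and stated earlier in the document (Theorem~\ref{Thm:5.4}, \eqref{Eqn:CheegerDai}, Example~\ref{Example:ProjBundle}), and the only thing to verify is the geometric identification $\mathcal{F}=P(NF)$, which was already recorded in the description of the local model near $F$. The deeper content of the corollary---namely Theorem~\ref{Thm:5.4} itself and the Cheeger--Dai theorem---is imported as a black box; if one were to unpack those, the degeneration of the spectral sequence for $P(NF)$ would still be the only way $\tau$ could vanish in a structural (rather than coincidental) way, so the projectivization step is conceptually where the Witt geometry does its work.
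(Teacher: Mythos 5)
Your proposal is correct and follows exactly the route the paper takes: Theorem~\ref{Thm:5.4} converts the Kato index (minus half the kernel dimension) into $\sigma_{(2)}(T_\pi)$, the Cheeger--Dai identity \eqref{Eqn:CheegerDai} turns this into $\tau$, and Example~\ref{Example:ProjBundle} applied to $\mathcal{F}=P(NF)$ (the projectivization of the complex normal bundle, as recorded in the local model near $F$) kills $\tau$ via the degeneration of the Leray spectral sequence. Your additional remark making the identification $\mathcal{F}=P(NF)$ explicit is a welcome clarification but not a departure from the paper's argument.
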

We therefore see from Proposition \ref{Prop:IndexFormulaCasi} and Theorem \ref{Thm:S1SignatureThm} that the index of the signature operator on $M_0/S^1$, in the Witt case, computes Lott's equivariant $S^1$-signature. 
\begin{theorem}\label{Thm:IndD+}
Let $M$ be a closed, oriented Riemannian $4k+1$ dimensional manifold on which $S^1$ acts effectively and semi-freely by orientation preserving isometries. If the codimension of the fixed point set $M^{S^1}$ in $M$ is divisible by four, then $M/S^1$ is a Witt space  and the index of the signature operator is 
\begin{align*}
\ind(D^+)=\sigma_{S^1}(M)=\int_{M_0/S^1}L\left(T(M_0/S^1),g^{T(M_0/S^1)}\right).
\end{align*}
\end{theorem}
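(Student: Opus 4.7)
The plan is to observe that essentially all the substantial work has already been carried out in the preceding results of the chapter, and the final identity of Theorem \ref{Thm:IndD+} will emerge as an assembly. Specifically, I would start from the index formula of Proposition \ref{Prop:IndexFormulaCasi}, which, for $t>0$ sufficiently small, reads
\begin{equation*}
\ind(D^+) = \sigma_{S^1}(M) - \tfrac{1}{2}\dim\ker(A_0(t)) + \ind\bigl(Q_<(A(t))(H),\, Q_{\geq}(A_0(t))(H)\bigr).
\end{equation*}
The correction terms on the right are precisely the combination controlled by Corollary \ref{Coro:TauWitt}, which, invoking the Cheeger--Dai identification $\sigma_{(2)}(T_\pi)=\tau$ in \eqref{Eqn:CheegerDai} together with the vanishing of the $\tau$-invariant for $\mathbb{C}P^N$-fibrations (Example \ref{Example:ProjBundle}), shows that
\begin{equation*}
\ind\bigl(Q_<(A(t))(H),\, Q_{\geq}(A_0(t))(H)\bigr) - \tfrac{1}{2}\dim\ker(A_0(t)) = 0.
\end{equation*}
Substituting this cancellation into the previous display immediately yields $\ind(D^+) = \sigma_{S^1}(M)$.

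For the second equality I would invoke Lott's formula (Theorem \ref{Thm:S1SignatureThm}), which states
\begin{equation*}
\sigma_{S^1}(M) = \int_{M_0/S^1} L\bigl(T(M_0/S^1), g^{T(M_0/S^1)}\bigr) + \eta(M^{S^1}).
\end{equation*}
In the Witt case the codimension of $M^{S^1}$ in $M$ is divisible by four, so by Lemma \ref{Lemma:VanishingEtaWitt} the eta invariant $\eta(M^{S^1})$ vanishes, giving
\begin{equation*}
\sigma_{S^1}(M) = \int_{M_0/S^1} L\bigl(T(M_0/S^1), g^{T(M_0/S^1)}\bigr),
\end{equation*}
and completing the chain of identities claimed in the theorem.

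The real obstacles in this index computation lie entirely in the preparatory results rather than in the final step. The main difficulty was Theorem \ref{Thm:5.2}, the vanishing of the index contribution from $U_t$ for small $t$, which required the delicate spectral estimate of Lemma \ref{Lemma:5.12} and the ability to arrange $\pm\tfrac{1}{2}\notin\spec(A_V)$ by vertical rescaling together with the Witt assumption. The second nontrivial piece was the Agranovi\v{c}--Dynin type correction of Theorem \ref{Thm:5.3}, followed by the identification of the residual Kato index with $\sigma_{(2)}(T_\pi)$ in Theorem \ref{Thm:5.4} and the deep Cheeger--Dai theorem relating $\sigma_{(2)}(T_\pi)$ to the $\tau$-invariant. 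Given all these ingredients, the theorem itself is then essentially a two-line consequence obtained by matching Proposition \ref{Prop:IndexFormulaCasi} with Corollary \ref{Coro:TauWitt} and invoking Lott's formula with the Witt-case vanishing of $\eta(M^{S^1})$.
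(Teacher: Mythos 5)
Your proposal is correct and follows exactly the paper's own route: the theorem is obtained by combining Proposition \ref{Prop:IndexFormulaCasi} with the cancellation from Corollary \ref{Coro:TauWitt} (via Theorem \ref{Thm:5.4} and the Cheeger--Dai identity \eqref{Eqn:CheegerDai}), and then invoking Lott's formula together with the vanishing of $\eta(M^{S^1})$ in the Witt case. Your assessment that the real work lies in the preparatory results (Theorems \ref{Thm:5.2}, \ref{Thm:5.3}, \ref{Thm:5.4}) and that the final step is a short assembly matches the paper precisely.
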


Observe that we have used the fact that the $\eta(M^{S^1})$ vanishes in the Witt case (see Section \ref{Section:VanishEta}).

\subsection{The index formula for the Dirac-Schr\"odinger signature operator}

In this last section we describe how to compute the index of the operator $\mathscr{D}^+$ using the techniques illustrated in Section \ref{Section:IndexSigOpWitt}. We obtain the complete index formula for this operator in the Witt case and for the non-Witt case we point out some difficulties in the calculations using these methods. \\

To begin with, we make no assumption on the parity of $N$, i.e. we do not distinguish between the Witt and the non-Witt case. As above, we use the geometric decomposition of $M_0/S^1$ in order to apply Theorem \ref{BBCThm4.17} to compute the index of $\mathscr{D}^+$ by adding up the index contributions of $Z_t$ and $U_t$. Motivated by the treatment of the signature operator discussed in the last section, consider , for $0<r<t<t_0/2$,  the following non-parabolic compatible super-symmetric Dirac systems  
\begin{align*}
\mathscr{D}_1\coloneqq &\gamma\left(\frac{\partial}{\partial r}+\bigstar\otimes \mathscr{A}(t+r)\right),\\
\mathscr{D}_2\coloneqq &-\gamma\left(\frac{\partial}{\partial r}-\bigstar\otimes \mathscr{A}(t-r)\right),
\end{align*}
where $\gamma$ and $\bigstar$ are defined as in last section and $\mathscr{A}(r)$ is the operator of Theorem \ref{Thm:LocalDesD1}. The corresponding graded operators, with respect to the super-symmetry $\bigstar$, are 
\begin{align*}
\mathscr{D}^+_1&=\frac{\partial}{\partial r}+\mathscr{A}(t+r),\\
\mathscr{D}^+_2&=-\left(\frac{\partial}{\partial r}-\mathscr{A}(t-r)\right).
\end{align*}
\begin{remark}\label{Rmk:EquivSobChainsPot}
As before, on $H\coloneqq L^2(\wedge T^*\mathcal{F}_t)$ and the associated Sobolev chain of $\mathscr{A}(t)$ is denoted by $H^{s}\coloneqq \dom(|\mathscr{A}(t)|^s)$. Since for $t>0$ fixed, $\mathscr{A}(t)-A(t)$ 
is a bounded operator, it follows by Remark \ref{Rmk:SobolevKatoPert} that the Sobolev chains of $\mathscr{A}(t)$ and $A(t)$ are isomorphic.
\end{remark}

As we did for the signature operator, we impose the complementary APS-type boundary conditions
\begin{align*}
\mathscr{B}^+_1\coloneqq & Q_{<}(\mathscr{A}(t))(\dom(|\mathscr{A}(t)|^{1/2})),\\
\mathscr{B}^+_2\coloneqq & Q_{\geq}(\mathscr{A}(t))(\dom(|\mathscr{A}(t)|^{1/2})),
\end{align*}
on $Z_t$ and $U_t$ respectively. If we denote both operators with their compatible elliptic boundary conditions by
\begin{align*}
\mathscr{D}^{+}_{Z_t,Q_{<}(\mathscr{A}(t))(H)}\coloneqq &\mathscr{D}_{1,{\mathscr{B}^+_1}},\\
\mathscr{D}^{+}_{U_t,Q_{\geq}(\mathscr{A}(t))(H)}\coloneqq &\mathscr{D}_{1,{\mathscr{B}^+_2}},
\end{align*}
then, as in last section, we can apply Theorem \ref{BBCThm4.17} to get decomposition formula 
\begin{align}\label{Eqn:Thm5.1OpPot}
\textnormal{ind}(\mathscr{D}^+)=\textnormal{ind}\left(\mathscr{D}^{+}_{Z_t,Q_{<}(\mathscr{A}(t))(H)}\right)+\textnormal{ind}\left(\mathscr{D}^{+}_{U_t,Q_{\geq }(\mathscr{A}(t))(H)}\right).
\end{align}

\subsubsection{The index formula for $\mathscr{D}^+$ in the Witt case}
Now we restrict ourselves to the Witt case, i.e. $N$ is odd. In order to obtain a vanishing result for the  index contribution of $U_t$ we need to modify the boundary conditions at $r=t$. As noted in Remark \ref{Rmk:EquivSobChainsPot}, for fixed $t>0$ the operator $A(t)$ is a Kato perturbation of $\mathscr{A}(t)$. Hence, by Theorem \ref{BThm5.9} and Theorem  \ref{BBCThm4.17} we obtain the decomposition formula of the index 
\begin{align}\label{Eqn:Thm5.1OpPotI}
\textnormal{ind}(\mathscr{D}^+)=\textnormal{ind}\left(\mathscr{D}^{+}_{Z_t,Q_{<}({A}(t))(H)}\right)+\textnormal{ind}\left(\mathscr{D}^{+}_{U_t,Q_{\geq }({A}(t))(H)}\right).
\end{align}

The following lemma relates these two boundary conditions at $r=t$. 
\begin{lemma}\label{Lemma:SplitKatoInd}
The following index identity holds true, 
\begin{align*}
 \ind(Q_{<}  (\mathscr{A}(t))(H) & ,  Q_{\geq }(A_0(t))(H))\\
=&\ind(Q_{<}({A}(t))(H),Q_{\geq }(A_0(t))(H))
+\ind(Q_{<}(\mathscr{A}(t))(H),Q_{\geq }(A(t))(H)).
\end{align*}
\end{lemma}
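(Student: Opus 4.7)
The plan is to derive the identity as a direct application of the Kato-index additivity in Proposition \ref{Prop:A.13}. Set
\[
E := Q_{<}(\mathscr{A}(t))(H), \qquad P := Q_{\geq}(A_0(t)), \qquad Q := Q_{\geq}(A(t)),
\]
so that $\ran P = Q_{\geq}(A_0(t))(H)$, $\ran Q = Q_{\geq}(A(t))(H)$ and $\ker Q = Q_{<}(A(t))(H)$. Proposition \ref{Prop:A.13}, applied to the projections $P,Q$ and the closed subspace $E$, then yields
\[
\ind(E,\ran P) = \ind(E,\ran Q) + \ind(\ker Q,\ran P),
\]
which is precisely the claimed identity once the three Fredholm-pair conditions and the compactness of $P-Q$ are verified.

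The first preliminary step is to check that $P-Q = Q_{\geq}(A_0(t)) - Q_{\geq}(A(t))$ is compact on $H$. By \eqref{Eqn:DifferenceAA0} one has $A_0(t) - A(t) = -\nu/t$, a bounded symmetric operator (since $\nu = \mathrm{vd}-N$ acts as multiplication by an integer with $|\nu|\le N$). Moreover, both $A(t)$ and $A_0(t)$ are elliptic self-adjoint operators on the closed manifold $\mathcal{F}_t$, hence discrete with compact resolvents. Using Remark \ref{Rmk:DiffKatoProj} (after, if necessary, shifting the spectral parameter along the imaginary axis away from a finite set of eigenvalues to ensure invertibility on the contour), the difference $P-Q$ can be written as the contour integral
\[
\frac{1}{2\pi i}\int_{\mathrm{Re}\,z = c}(A_0(t)-z)^{-1}\bigl(-\tfrac{\nu}{t}\bigr)(A(t)-z)^{-1}\,dz,
\]
which is norm-convergent and compact because each resolvent is compact.

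Next I verify that the Fredholm-pair hypotheses hold. For $(E,\ran Q) = (Q_{<}(\mathscr{A}(t))(H),Q_{\geq}(A(t))(H))$, the operator $\mathscr{A}(t) = A(t) - \varepsilon/(2t)$ is a bounded symmetric (in fact a Kato, with $b=0$) perturbation of $A(t)$; by Theorem \ref{BThm5.9} the projection $Q_{>}(\mathscr{A}(t))$ is elliptic with respect to $A(t)$ and the pair $(Q_{\leq}(A(t))(H),Q_{>}(\mathscr{A}(t))(H))$ is Fredholm. Passing from $Q_{\leq}$ to $Q_{<}$ and from $Q_{>}$ to $Q_{\geq}$ only affects the finite-dimensional kernels $\ker A(t)$ and $\ker \mathscr{A}(t)$, so the pair $(E,\ran Q)$ is likewise Fredholm. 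The Fredholm property of $(E,\ran P)$ then follows from the ``if and only if'' clause of Proposition \ref{Prop:A.13} using the compactness of $P-Q$ established above.

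With compactness of $P-Q$ and the Fredholm-pair conditions in place, Proposition \ref{Prop:A.13} applies verbatim and the two resulting summands are exactly $\ind(Q_{<}(\mathscr{A}(t))(H),Q_{\geq}(A(t))(H))$ and $\ind(Q_{<}(A(t))(H),Q_{\geq}(A_0(t))(H))$. I expect the main obstacle to be the rigorous verification of the compactness of $P-Q$: although intuitively clear from the boundedness of the perturbation $-\nu/t$ together with the compactness of the resolvents of $A(t)$ and $A_0(t)$, one has to be careful in handling the possible non-invertibility of these operators on the imaginary axis when invoking Remark \ref{Rmk:DiffKatoProj}; this is a standard but nontrivial point which can be dealt with either by a small spectral shift or by approximating the spectral projections by smooth functional-calculus projections and passing to the limit.
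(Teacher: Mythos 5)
Your proposal is correct and follows essentially the same route as the paper's proof: establish compactness of the relevant differences of spectral projections via the resolvent contour-integral formula of Remark \ref{Rmk:DiffKatoProj} (using that the perturbations $\nu/t$ and $\varepsilon/(2t)$ are bounded and the resolvents compact), verify the Fredholm-pair hypotheses via Theorem \ref{BThm5.9} and the arguments of Theorem \ref{Thm:5.3}, and then apply Proposition \ref{Prop:A.13}. The only cosmetic difference is your choice of which projections play the roles of $P$ and $Q$ in Proposition \ref{Prop:A.13} (the paper takes $P=Q_{<}(\mathscr{A}(t))$, $Q=Q_{<}(A(t))$, $E=Q_{\geq}(A_0(t))(H)$, which by symmetry of the Kato index yields the same identity), so the two arguments are interchangeable.
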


\begin{proof}
The strategy is to use the index formula of  Lemma \ref{Prop:A.13}. Using similar arguments as in the proof of Theorem \ref{Thm:5.3} one can see that 
\begin{align*}
&(Q_{<}({A}(t))(H),Q_{\geq }(A_0(t))(H)),\\
&(Q_{<}(\mathscr{A}(t))(H),Q_{\geq }(A(t))(H)),
\end{align*} 
are both Fredholm pairs. As in the proof of Corollary \ref{Coro:VanishingKatoIndexPert}, we can use  Remark \ref{Rmk:DiffKatoProj} and \cite[Lemma A.1]{BB01} with $A_1=\mathscr{A}(t)$, $A_2=A_0(t)$,  $\alpha_1=\alpha_2=0$ to prove that 
 \begin{align*}
 Q_>(\mathscr{A}(t))-Q_>(A_0(t))=\frac{1}{2\pi i}\int_{\text{Re}\:z=0}(\mathscr{A}(t)-z)^{-1}\left[\frac{1}{t}\left(\nu-\frac{\varepsilon}{2}\right)\right](A_0(t)-z)^{-1}dz,
 \end{align*}
is compact for fixed $t>0$. Indeed, both $\mathscr{A}(t)$ and $A_0(t)$ are discrete, so their resolvent is compact, and the perturbation term $(\nu-\varepsilon/2)$ is a bounded operator. Similarly, we see that all the differences
\begin{align*}
 &Q_<(\mathscr{A}(t))-Q_<(A_0(t)),\\
 &Q_>(\mathscr{A}(t))-Q_>(A(t)),\\
  &Q_>({A}(t))-Q_>(A_0(t)),
\end{align*}
are compact. Finally we can use Lemma \ref{Prop:A.13} with $P=Q_{<}(\mathscr{A}(r))$, $Q=Q_{<}({A}(r))$ and $B=Q_{\geq }(A_0(r))(H)$ to obtain the desired formula. 
\end{proof}

\begin{lemma}
For $t>0$ suficciently small we have
\begin{align*}
\ind(Q_\leq(A(t))(H),Q_>(\mathscr{A}(t)(H))=0
\end{align*}
\end{lemma}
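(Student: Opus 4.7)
The plan is to recognize this as a direct application of Corollary \ref{Coro:VanishingKatoIndexPert} with $A = A(t)$ and bounded perturbation $B = -\varepsilon/(2t)$, so that $A+B = \mathscr{A}(t)$ (see Definition \ref{Def:DVPot}). The corollary's hypothesis is that $B$ is bounded and that $|A| \geq \mu$ for some $\mu > \sqrt{2}\,\|B\|$. Since $\varepsilon$ is an involution, $\|B\| = 1/(2t)$, so the required inequality reads $|A(t)| \geq \mu > \sqrt{2}/(2t)$.

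First I would invoke Lemma \ref{Lemma:VertOp1}. In the Witt case we may assume, after the rescaling of the vertical metric used in Section \ref{Section:IndexSigOpWitt} (see condition \eqref{SpecAvWittCase}), that $|A_V| \geq 1$, so that in particular $A_V$ is invertible and $A_V^2 \geq I$. Lemma \ref{Lemma:VertOp1} then gives, for any fixed $C \in (0,1)$ and for $t$ sufficiently small,
\begin{equation*}
A(t)^2 \;\geq\; C\, t^{-2} A_V^2 \;\geq\; C\, t^{-2}\,I,
\end{equation*}
hence $|A(t)| \geq \sqrt{C}/t$.

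Now I would choose $C \in (1/2,1)$, which is permitted. Then $\sqrt{C} > 1/\sqrt{2}$, and therefore
\begin{equation*}
|A(t)| \;\geq\; \frac{\sqrt{C}}{t} \;>\; \frac{1}{\sqrt{2}\,t} \;=\; \sqrt{2}\,\|B\|,
\end{equation*}
so the hypothesis $|A(t)| \geq \mu > \sqrt{2}\,\|B\|$ is satisfied with, say, $\mu = \sqrt{C}/t$.

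Finally, Corollary \ref{Coro:VanishingKatoIndexPert} applied to $A=A(t)$ and the Kato perturbation $B=-\varepsilon/(2t)$ yields
\begin{equation*}
\ind\bigl(Q_{\leq}(A(t))(H),\, Q_{>}(\mathscr{A}(t))(H)\bigr) \;=\; 0,
\end{equation*}
which is the statement. There is no real obstacle here: the only point requiring care is verifying that the spectral gap of $A(t)$ grows like $1/t$ fast enough to dominate $\sqrt{2}$ times the norm of the potential $\varepsilon/(2t)$, and this is exactly what the Witt-case bound $|A_V|\geq 1$ combined with Lemma \ref{Lemma:VertOp1} delivers with room to spare.
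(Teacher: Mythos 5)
Your proof is correct and follows essentially the same route as the paper: both apply Corollary \ref{Coro:VanishingKatoIndexPert} to $A=A(t)$ with the bounded perturbation $\pm\varepsilon/(2t)$, use Lemma \ref{Lemma:VertOp1} together with the Witt-case bound $|A_V|\geq 1$ to get $|A(t)|\geq \sqrt{C}/t$, and then choose $C>1/2$ to beat $\sqrt{2}\norm{B}=\sqrt{2}/(2t)$. Your sign $B=-\varepsilon/(2t)$ is in fact the correct one given $\mathscr{A}(t)=A(t)-\varepsilon/(2t)$, and the argument is complete.
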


\begin{proof}
The idea is to apply Corollary \ref{Coro:VanishingKatoIndexPert} with $A=A(t)$ and $B=\varepsilon/2t$ so that the sum $A+B=\mathscr{A}(t)$. From \eqref{SpecAvWittCase} and Lemma \ref{Lemma:VertOp1} we see that for $t$ small enough, 
\begin{align*}
|A(t)|\geq\frac{\sqrt{C}}{t}|A_V|\geq \frac{\sqrt{C}}{t}.
\end{align*}
Hence, the required condition of  Corollary \ref{Coro:VanishingKatoIndexPert} is, for $\mu\coloneqq \sqrt{C}/t$,
\begin{align*}
\frac{\sqrt{C}}{t}>\sqrt{2}\bnorm{\frac{\varepsilon}{2t}}=\frac{\sqrt{2}}{2t},
\end{align*}
that is, $C>1/2$, which we can be achieved in view of the proof of Lemma \ref{Lemma:VertOp1}.
\end{proof}

From this lemma we conclude, via Theorem \ref{BBCThm4.14}, that in the Witt case the decompositions \eqref{Eqn:Thm5.1OpPot} and \eqref{Eqn:Thm5.1OpPotI} are the same. \\

Now we describe the vanishing result for the index on $U_t$. Analogously as before, consider the operator  $\mathscr{D}^+_t\coloneqq  \mathscr{D}_{U_t,Q_{\geq}({A}(t))(H)}$ defined on the core
\begin{align*}
\mathcal{C}({D}^+_t)\coloneqq \{\sigma\in C^{1}_c((0,t],H^1))\:|\:Q_<({A}(t))\sigma(t)=0\}.
\end{align*}

\begin{theorem}\label{Thm:5.2Pot}
In the Witt case, for $t\in(0,t_0]$ sufficiently small we have 
\begin{align*}
\textnormal{ind}\left(\mathscr{D}^{+}_{U_t,Q_{\geq}({A}(t))(H)}\right)=0. 
\end{align*}
\end{theorem}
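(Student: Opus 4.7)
The plan is to mirror the proof of Theorem \ref{Thm:5.2}, showing that $\ker(\mathscr{D}^+_t)=\{0\}$ and $\ker((\mathscr{D}^+_t)^*)=\{0\}$ for $t$ small, via $L^2$-lower bounds analogous to Lemma \ref{Lemma:5.12} and Remark \ref{Lemma:5.12R}. First, for $\sigma\in\mathcal{C}(\mathscr{D}^+_t)$, I would expand $\norm{\mathscr{D}^+_t\sigma(r)}_H^2$ exactly as in the proof of Lemma \ref{Lemma:5.12}, with $\mathscr{A}$ replacing $A$ throughout: complete the square in the vertical sector to produce the term $r^{-2}\norm{(\mathscr{A}_V+\tfrac{1}{2})\sigma}_H^2$; use Hardy's inequality to absorb $-\tfrac{1}{4}r^{-2}\norm{\sigma}_H^2$ against $\norm{\sigma'}_H^2$; and use the APS boundary condition $Q_<(A(t))\sigma(t)=0$ together with $\mathscr{A}(t)=A(t)-\varepsilon/(2t)$ being a small bounded perturbation of $A(t)$, so that $\int_0^t \tfrac{d}{dr}\langle\sigma,\mathscr{A}(r)\sigma\rangle dr=\langle\sigma(t),\mathscr{A}(t)\sigma(t)\rangle$ is non-negative (using $\abs{A(t)}\geq Ct^{-1}$ from Lemma \ref{Lemma:VertOp1}).

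The main new difficulty in deriving the estimate, as noted in Remark \ref{Rmk:VertCondOP}, is that $\mathscr{A}_{HV}=A_{HV}-\varepsilon A_H(r)$ is no longer a first-order vertical operator. In the expansion this produces an additional cross term of the form $-r^{-1}\langle \varepsilon A_H(r)\sigma,\sigma\rangle$, which I would bound by Cauchy--Schwarz combined with AM--GM as $\delta\norm{A_H(r)\sigma}_H^2+(4\delta r^2)^{-1}\norm{\sigma}_H^2$. The first summand is absorbed into the positive $\norm{A_H(r)\sigma}_H^2$ already present in the expansion for $\delta<1$, while the excess $r^{-2}\norm{\sigma}_H^2$ is controlled by the boundary contribution $\langle\sigma(t),\mathscr{A}(t)\sigma(t)\rangle\geq C t^{-1}\norm{\sigma(t)}_H^2$ together with a trace estimate relating $\norm{\sigma(t)}_H^2$ to $\norm{\sigma'}_{L^2((0,t],H)}^2$. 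After these absorptions, for $t$ small enough, the estimate
\[
\norm{\mathscr{D}^+_t\sigma}_{L^2((0,t],H)}\geq \frac{C}{t}\bnorm{\bigl(\mathscr{A}_V+\tfrac{1}{2}\bigr)\sigma}_{L^2((0,t],H)}
\]
would hold on the core $\mathcal{C}(\mathscr{D}^+_t)$.

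Unlike the signature case, where rescaling the vertical metric ensures $\pm\tfrac{1}{2}\notin\spec(A_V)$, the spectral decomposition of $\mathscr{A}_V$ in Theorem \ref{Thm:SpecDec} shows that in the Witt case $-\tfrac{1}{2}$ remains in $\spec(\mathscr{A}_V)$ on the harmonic sector, precisely when $2j-N=-1$ (which occurs for $j=(N-1)/2$ when $N$ is odd). Hence the estimate alone does not force $\sigma=0$, and this is the main obstacle. To overcome it, I would implement the deformation argument from \cite[pg.\ 39]{B09} adapted to our setting: consider the one-parameter family $\mathscr{A}_V(s)\coloneqq A_V-\tfrac{s\varepsilon}{2}$ for $s\in[0,1]$ and the associated operators $\mathscr{D}^+_{t,s}$ on $U_t$ with the fixed boundary condition $Q_{<}(A(t))\sigma(t)=0$. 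This is a continuous family in the gap topology of closed Fredholm operators, since the perturbation $-s\varepsilon/(2r)$ is bounded by the cone part $r^{-1}A_V$ of $D^+_t$ with relative bound going to zero as $t\to 0$, and the boundary condition is $s$-independent. At $s=0$, Theorem \ref{Thm:5.2} gives $\ind(\mathscr{D}^+_{t,0})=\ind(D^+_t)=0$; by homotopy invariance of the Fredholm index, $\ind(\mathscr{D}^+_{t,1})=0$. The identical scheme applied to the adjoint $(\mathscr{D}^+_t)^*$ with the complementary boundary condition from Remark \ref{Rmk:AdjointBoundCond} and the companion estimate involving $(\mathscr{A}_V-\tfrac{1}{2})$ handles the cokernel, yielding the vanishing claim.
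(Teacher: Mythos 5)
You correctly identify the real obstruction: in the Witt case $\pm\tfrac12\in\spec(\mathscr{A}_V)$ on the vertical-harmonic sector (from $2j-N=\pm 1$), so no analogue of Lemma \ref{Lemma:5.12} can by itself kill the kernel. But both remedies you propose have gaps. For the direct estimate, the cross term $-r^{-1}\inner{\varepsilon A_H(r)\sigma}{\sigma}$ cannot be absorbed as you describe: after Cauchy--Schwarz you are left with an excess $(4\delta r^2)^{-1}\norm{\sigma(r)}_H^2$, which is of the same order as the term $-\tfrac14 r^{-2}\norm{\sigma(r)}_H^2$ that Hardy's inequality \eqref{Eqn:Hardy} already absorbs with its \emph{sharp} constant, so there is no slack left in $\norm{\sigma'}^2$; and the single boundary value $\inner{\sigma(t)}{\mathscr{A}(t)\sigma(t)}$ cannot control the interior integral $\int_0^t r^{-2}\norm{\sigma(r)}_H^2\,dr$ (take $\sigma$ supported away from $r=t$). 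More seriously, the homotopy argument rests on the claim that $-s\varepsilon/(2r)$ has relative bound tending to zero; this is false on the harmonic sector. The estimate one actually gets from Lemma \ref{Lemma:5.12} is $\norm{D^+_t\sigma}\geq\tfrac12\bnorm{r^{-1}(A_V+\tfrac12)\sigma}$, whence $\bnorm{(2r)^{-1}\varepsilon\sigma}\leq d^{-1}\norm{D^+_t\sigma}$ with $d=\textnormal{dist}(-\tfrac12,\spec(A_V))$. On $\mathcal{H}(Y)$ the eigenvalues of $A_V$ are the odd integers $2j-N$, so $d=\tfrac12$ there \emph{independently of $t$ and of any vertical rescaling}, and the relative bound is not below $1$. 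Hence neither gap-continuity of your family nor stability of its index is established where it matters, and at $s=1$ an eigenvalue of the cone coefficient lands exactly on $-\tfrac12$, precisely the situation in which the index of a cone operator is not automatically stable.

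The paper closes exactly this gap by splitting $L^2(\wedge T^*Y)=\mathcal{H}(Y)\oplus\mathcal{H}(Y)^\perp$ via the projection $P_{\mathcal{H}}$ (Lemma \ref{Lemma:CommPH}) and treating the two blocks by different arguments. On $\mathcal{H}^\perp$ the spectral gap $d^\perp$ can be made large by rescaling the vertical metric, so $-\varepsilon/(2r)$ \emph{is} a Kato perturbation of $D^+_{t,\mathcal{H}^\perp}$ with relative bound $(1+\beta)/(2d^\perp)<1$, and the index is inherited from Theorem \ref{Thm:5.2} (Proposition \ref{Prop:DefIndex2}); this is the rigorous version of your perturbative idea, valid only off the harmonic forms. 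On $\mathcal{H}$ no perturbative or homotopy argument is available; instead the Witt condition supplies the involution $\mathscr{U}$ with $\mathscr{U}A_F\mathscr{U}=-A_F$, which conjugates $\mathscr{D}^+_{t,\mathcal{H}}$ into $-(\mathscr{D}^+_{t,\mathcal{H}})^*$ and forces $\ind(\mathscr{D}^+_{t,\mathcal{H}})=-\ind(\mathscr{D}^+_{t,\mathcal{H}})=0$ (Proposition \ref{Prop:DefIndex3}); the residual Kato index from changing the boundary projection vanishes by Corollary \ref{Coro:VanishingKatoIndexPert}. You need a non-perturbative argument of this second kind on the harmonic sector for your proof to close.
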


The strategy of the proof of this theorem is to argue by a deformation argument, as in the proof of \cite[Theorem 5.2]{B09}, splitting the index into a contribution of the space of vertical harmonic forms and a contribution of the complement. More precisely, for $\delta>0$ sufficiently small, let
\begin{align*}
P_\mathcal{H}(x)\coloneqq\frac{1}{2\pi i}\int_{|z|=\delta}(\Delta_{Y,x}-z)^{-1}dz, \quad\text{for $x\in F$,}
\end{align*}
 be the projection onto the space of vertical harmonic forms $\mathcal{H}(Y)$ and let $P_{\mathcal{H}^\perp}\coloneqq I-P_\mathcal{H}$ be the complementary projection. These two projections induce an orthogonal decomposition $L^2(\wedge T^*Y)=\mathcal{H}(Y)\oplus\mathcal{H}(Y)^\perp$ (see \cite[Equations (5.51), (5.52)]{B09} ). 

\begin{lemma}\label{Lemma:CommPH}
The projection $I\otimes P_\mathcal{H}$ commutes with $\mathscr{A}_V$ and with the principal symbol of $A_H(t)$.
\end{lemma}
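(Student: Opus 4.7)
The plan is to exploit the tensor-product decomposition $\wedge T^*\mathcal{F} = \wedge T^*_H\mathcal{F} \otimes \wedge T^*_V\mathcal{F}$ together with the observation that $P_\mathcal{H}$ is obtained by spectral calculus applied to the vertical Laplacian $\Delta_Y$, so that $I \otimes P_\mathcal{H}$ acts trivially on the horizontal factor and commutes with any vertical operator commuting with $\Delta_Y$.

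For $\mathscr{A}_V = A_{0V} + \nu - \tfrac{1}{2}\varepsilon$, I would treat the three summands separately. First, $\nu = \text{vd} - N$ respects the vertical grading, and since the bundle of vertical harmonic forms decomposes as $\mathcal{H}(Y) = \bigoplus_{j} \mathcal{H}^j(Y)$, the projection $P_\mathcal{H}$ is graded, hence commutes with $\nu$. Second, $\varepsilon = \varepsilon_H \otimes \varepsilon_V$ is a tensor product, and $\varepsilon_V$ commutes with the vertical grading, hence with $P_\mathcal{H}$. Third, $A_{0V} = (d_V + d_V^\dagger)\alpha$: by the explicit formula $d_V = \varepsilon_H \otimes d_Y$ from Section \ref{Sect:RiemFibr} (and the analogous expression for $d_V^\dagger$), the operator $d_V + d_V^\dagger$ acts on the vertical factor as $d_Y + d_Y^\dagger$, whose kernel is $\mathcal{H}(Y)$ and whose range is $\mathcal{H}(Y)^\perp$ by Hodge decomposition; hence $(I \otimes P_\mathcal{H})(d_V + d_V^\dagger) = (d_V + d_V^\dagger)(I \otimes P_\mathcal{H}) = 0$. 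Finally the involution $\alpha = \varepsilon_H^v \bar{\star}_H \otimes \bar{\star}_V$ factors as a tensor product with $\bar{\star}_V$ on the vertical factor, and $\bar{\star}_V$ sends $\mathcal{H}^j(Y)$ isomorphically onto $\mathcal{H}^{v-j}(Y)$, so $\alpha$ preserves $\ker \Delta_Y$ and its orthogonal complement, thus commuting with $I \otimes P_\mathcal{H}$.

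For the principal symbol of $A_H(t)$, I would recall from Proposition \ref{Prop:TransDPsi1} that $A_H(t) = \tilde{A}_H(t)\alpha$ with $\tilde{A}_H(t) = (d_H^{(1)} + r d_H^{(2)}) + (d_H^{(1)} + r d_H^{(2)})^\dagger$. Since $d_H^{(2)}$ is a bounded (curvature) operator, only $d_H^{(1)}$ and its adjoint contribute to the principal symbol; $d_H^{(1)}$ is a first-order horizontal differential operator whose symbol at $(x, \xi) \in T^*F$ is $-i\, \xi \wedge$ acting on the horizontal factor of $\wedge T^*\mathcal{F}$. Therefore $\sigma_P(A_H(t))(x, \xi)$ is of the form $-i\, c_H(\xi)\alpha$, where $c_H(\xi)$ acts trivially on the vertical factor. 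This manifestly commutes with $I \otimes P_\mathcal{H}$, and combined with the commutation of $\alpha$ with $I \otimes P_\mathcal{H}$ established above, the claim follows.

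I do not anticipate a real obstacle: the statement is a bookkeeping verification resting on the tensor-product structure of $\wedge T^*\mathcal{F}$ and on vertical Hodge theory. The only point requiring a modicum of care is confirming that $\alpha$, which mixes the degrees via $\bar{\star}_V$, still commutes with $P_\mathcal{H}$; this is immediate from the fact that $\bar{\star}_V$ preserves vertical harmonicity.
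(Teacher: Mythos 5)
Your proof is correct. The conclusion and the symbol computation match the paper's, but for the commutation of $I\otimes P_\mathcal{H}$ with $\mathscr{A}_V$ you take a slightly different route: you verify commutation summand by summand, handling $\tilde{A}_{0V}=\varepsilon_H\otimes(d_Y+d_Y^\dagger)$ by the Hodge-theoretic fact that both $P_\mathcal{H}(d_Y+d_Y^\dagger)$ and $(d_Y+d_Y^\dagger)P_\mathcal{H}$ vanish (kernel and closure of range of a self-adjoint operator are orthogonal complements), and $\nu$, $\varepsilon_V$, $\bar\star_V$ by degree- and harmonicity-preservation. The paper instead proves the single algebraic identity $\Delta_Y\mathscr{A}_V=\mathscr{A}_V\Delta_Y$ (via $\tilde{A}_{0V}^2\nu=\nu\tilde{A}_{0V}^2$ and $(\tilde{A}_{0V}\alpha)\varepsilon=\varepsilon(\tilde{A}_{0V}\alpha)$) and then lets the contour-integral definition $P_\mathcal{H}=\frac{1}{2\pi i}\int_{|z|=\delta}(\Delta_Y-z)^{-1}dz$ do the work, since anything commuting with $\Delta_Y$ commutes with its resolvent. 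The two arguments buy essentially the same thing; yours is marginally more explicit about where each piece of $\mathscr{A}_V$ lands relative to the Hodge decomposition, while the paper's is a purely algebraic computation that does not need the identification of $\ker(d_Y+d_Y^\dagger)^\perp$ with the closure of the range. Both are complete, and your treatment of the principal symbol of $A_H(t)$ --- reducing it to $-i\,c(\xi)\alpha$ with $c(\xi)$ acting only on the horizontal factor and $\alpha$ commuting with $I\otimes P_\mathcal{H}$ --- is exactly the paper's observation that $(I\otimes P_{\mathcal{H}})(c(f^\alpha)\bar{\star}_H\otimes\bar{\star}_V)(I\otimes P_{\mathcal{H}}^{\perp})=0$.
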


\begin{proof}
From the relations
\begin{align*}
(\nu-1)d_V=&d_V\nu,\\
(\nu+1)d^\dagger_V=&d^\dagger_V\nu,
\end{align*}
it follows $\tilde{A}_{0V}\nu=\nu\tilde{A}_{0V}-(d_V-d^\dagger_V)$, for $\tilde{A}_{0V}=d_V+d^\dagger_V$. Next we compute 
\begin{align*}
\tilde{A}_{0V}^2\nu=&\tilde{A}_{0V}(\nu\tilde{A}_{0V}-(d_V-d^\dagger_V))\\
=&(\nu\tilde{A}_{0V}-(d_V-d^\dagger_V))\tilde{A}_{0V}-\tilde{A}_{0V}(d_V-d^\dagger_V)\\
=&\nu\tilde{A}^2_{0V}-(d_V-d^\dagger_V)\tilde{A}_{0V}-\tilde{A}_{0V}(d_V-d^\dagger_V)\\
=&\nu\tilde{A}^2_{0V}.
\end{align*}
On the other hand, since $\varepsilon\alpha+\alpha\varepsilon=0$, then $(\tilde{A}_{0V}\alpha)\varepsilon=-\tilde{A}_{0V}\varepsilon\alpha=\varepsilon(\tilde{A}_{0V}\alpha)$. Hence, $\Delta_Y\mathscr{A}_V=\mathscr{A}_V\Delta_Y$, as $\Delta_Y=(\tilde{A}_{0V}\alpha)^2$. In particular, this verifies that $I\otimes P_\mathcal{H}$ commutes with $\mathscr{A}_V$. For the second claim note that, as $\bar{\star}_V\Delta_Y=\Delta_Y\bar{\star}_V$, we have 
\begin{align*}
(I\otimes P_{\mathcal{H}})(c(f^\alpha)\bar{\star}_H\otimes \bar{\star}_V)(I\otimes{P_{\mathcal{H}}}^\perp)=0,
\end{align*}
for any horizontal $1$-form $f^\alpha$. 
\end{proof}

Let us define 
\begin{align*}
\mathscr{A}^{\delta}(r)\coloneqq &(I\otimes P_{\mathcal{H}})\mathscr{A}(r)(I\otimes P_{\mathcal{H}})+(I\otimes{P_{\mathcal{H}}}^\perp)\mathscr{A}(r)(I\otimes P_{\mathcal{H}}^\perp)\\
\eqqcolon &\mathscr{A}_{\mathcal{H}}(r)+\mathscr{A}_{{\mathcal{H}^\perp} }.
\end{align*}
By Lemma \ref{Lemma:CommPH} the difference $\mathscr{C}(r)\coloneqq \mathscr{A}^\delta(r)-\mathscr{A}(r)$
has uniformly bounded norm, i.e. there exists $\mathscr{C}>0$ such that $\norm{\mathscr{C}(r)}\leq \mathscr{C}$ for all $r\in(0,t]$. We now consider the ``deformed'' operator
\begin{align*}
\mathscr{D}^+_{t,\delta}\coloneqq \frac{\partial}{\partial r}+\mathscr{A}^{\delta}(r),
\end{align*}
defined on the core $\mathcal{C}(\mathscr{D}^+_{t,\delta})\coloneqq \mathcal{C}(\mathscr{D}^+_{t})=\{\sigma\in C^{1}_c((0,t],H^1))\:|\:Q_<(A^\delta(t))\sigma(t)=0\}$, where 
\begin{align*}
{A}^{\delta}(r)\coloneqq &(I\otimes P_{\mathcal{H}}){A}(r)(I\otimes P_{\mathcal{H}})+(I\otimes P_{\mathcal{H}}^\perp){A}(r)(I\otimes P_{\mathcal{H}}^\perp)\\
\eqqcolon &{A}_{\mathcal{H}}(r)+{A}_{{\mathcal{H}^\perp} }(r).
\end{align*}
Again, from Lemma \ref{Lemma:CommPH} it follows that $C(r)\coloneqq A^\delta(r)-A(r)$ has uniformly bounded operator. Since $\mathscr{D}^+_{t,\delta}-\mathscr{D}^+_{t}=\mathscr{C}(r)$ is uniformly bounded then, by Theorem \ref{BBCThm4.14}, 
\begin{align*}
\ind(\mathscr{D}^+_t )=\ind(\mathscr{D}^+_{t,\delta} )+\ind(Q_{<}(A(t))(H),Q_\geq(A^\delta(t))(H)).
\end{align*}
Now we introduce the operators 
\begin{align*}
\mathscr{D}^+_{t,\mathcal{H} /{\mathcal{H}^\perp} }\coloneqq\frac{\partial}{\partial r}+\mathscr{A}_{\mathcal{H} /{\mathcal{H}^\perp} }(r)
\end{align*}
defined on the core $\mathcal{C}(\mathscr{D}^+_{t,\mathcal{H} /{\mathcal{H}^\perp} })\coloneqq \{\sigma\in C^{1}_c((0,t],H^1))\:|\:Q_<(A_{\mathcal{H}/{\mathcal{H}^\perp} }(t))\sigma(t)=0\}$, which by orthogonality satisfy 
\begin{align*}
\ind(\mathscr{D}^+_{t,\delta})=\ind(\mathscr{D}^+_{t,\mathcal{H}})+\ind(\mathscr{D}^+_{t,{\mathcal{H}^\perp}} ). 
\end{align*}
Hence, we obtain the index relation
\begin{align*}
\ind(\mathscr{D}^+_t )=\ind(\mathscr{D}^+_{t,\mathcal{H} })+\ind(\mathscr{D}^+_{t,{\mathcal{H}^\perp} } )+\ind(Q_<(A(t))(H),Q_\geq(A^\delta(t))(H)). 
\end{align*}

\begin{remark}
In the Witt case, as $A(t)$ is invertible fot $t>0$ small enought, it is easy to see that this is also the case for the operator $A^\delta(t)$. 
\end{remark}

Now we show these three contributions to the index vanish for $t>0$ sufficiently small. 
\begin{proposition}\label{Prop:DefIndex1}
For $t>0$ small enough, $\ind(Q_<(A(t))(H),Q_\geq(A^\delta(t))(H))=0$.
\end{proposition}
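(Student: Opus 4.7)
The plan is to apply Corollary \ref{Coro:VanishingKatoIndexPert} directly, with $A = A(t)$ and $B = C(t) := A^\delta(t) - A(t)$, exploiting the fact that in the Witt case the spectral gap of $A(t)$ grows like $1/t$ while the perturbation $C(t)$ stays uniformly bounded. To set this up I would first record that $C(t)$ is bounded uniformly in $t\in(0,t_0]$: indeed, by the definition of $A^\delta(r)$ we have
\begin{align*}
C(r) = -(I\otimes P_\mathcal{H})A(r)(I\otimes P_\mathcal{H}^\perp) - (I\otimes P_\mathcal{H}^\perp)A(r)(I\otimes P_\mathcal{H}),
\end{align*}
and Lemma \ref{Lemma:CommPH} shows that $I\otimes P_\mathcal{H}$ commutes with $\mathscr{A}_V$ (hence with $A_V$) and with the principal symbol of $A_H(r)$; thus the off-diagonal entries of $A(r) = A_H(r) + r^{-1}A_V$ in the splitting $L^2(\wedge T^*Y) = \mathcal{H}(Y)\oplus \mathcal{H}(Y)^\perp$ only involve the zero-order commutators of $I\otimes P_\mathcal{H}$ with $A_H(r)$, giving a bound $\|C(r)\|\leq \mathscr{C}'$ independent of $r$.

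Next I would use the Witt condition to quantify the spectral gap of $A(t)$. By the rescaling of the vertical metric the inequality $|A_V|\geq 1$ holds (cf.\ \eqref{SpecAvWittCase}), and Lemma \ref{Lemma:VertOp1} then gives $A(t)^2 \geq C t^{-2} A_V^2 \geq C t^{-2}$ for any constant $0<C<1$, provided $t$ is small enough. In particular $A(t)$ is invertible, so $Q_<(A(t)) = Q_\leq(A(t))$, and the same spectral argument applied to $A^\delta(t) = A_\mathcal{H}(t)\oplus A_{\mathcal{H}^\perp}(t)$ (using Lemma \ref{Lemma:CommPH} to see that the block-diagonal operator also satisfies the analogue of Lemma \ref{Lemma:VertOp1} on each summand) shows that $A^\delta(t)$ is invertible for $t$ small, whence $Q_\geq(A^\delta(t)) = Q_>(A^\delta(t))$.

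Having reduced the problem to comparing $Q_\leq(A(t))$ with $Q_>(A(t)+C(t))$, I would now verify the hypothesis of Corollary \ref{Coro:VanishingKatoIndexPert}: we need $|A(t)|\geq \mu > \sqrt{2}\,\|C(t)\|$. Choosing $C$ close enough to $1$ we have $\mu := \sqrt{C}/t$, and since $\|C(t)\|\leq \mathscr{C}'$ is uniformly bounded, the inequality $\sqrt{C}/t > \sqrt{2}\,\mathscr{C}'$ holds for every $t>0$ small enough. Corollary \ref{Coro:VanishingKatoIndexPert} then yields
\begin{align*}
\ind(Q_\leq(A(t))(H), Q_>(A(t)+C(t))(H)) = 0,
\end{align*}
which is exactly the vanishing statement of the proposition under the identifications above.

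The only subtle point, and what I would treat with the most care, is the verification that $A^\delta(t)$ is actually invertible for small $t$ — that is, that the block $A_\mathcal{H}(t)$ acting on the (finite-rank) harmonic bundle $\mathcal{H}(Y)$ also has growing spectral gap. This is not automatic from Lemma \ref{Lemma:VertOp1}, since on harmonic forms the eigenvalues of $A_V$ in the Witt case are $2j-N \neq 0$ with $N$ odd, so $|A_V|\geq 1$ on $\mathcal{H}(Y)$ as well, and the commutation of $I\otimes P_\mathcal{H}$ with $A_V$ (Lemma \ref{Lemma:CommPH}) guarantees the estimate survives when we restrict to the harmonic block. The remaining bookkeeping, namely matching $Q_<$ with $Q_\leq$ and $Q_\geq$ with $Q_>$ via the triviality of the kernels, is then routine.
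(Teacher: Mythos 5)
Your proof is correct and follows essentially the same route as the paper: apply Corollary~\ref{Coro:VanishingKatoIndexPert} with $A=A(t)$ and $B=C(t)=A^{\delta}(t)-A(t)$, using the Witt estimate $|A_V|\geq 1$ and Lemma~\ref{Lemma:VertOp1} to get $|A(t)|\geq \sqrt{C}/t$, together with the uniform boundedness of $C(t)$ coming from Lemma~\ref{Lemma:CommPH}. The only stylistic difference is that you spell out the off-diagonal decomposition of $C(r)$ and the invertibility bookkeeping ($Q_<$ versus $Q_\leq$, $Q_\geq$ versus $Q_>$) inline, whereas the paper delegates the invertibility of $A^\delta(t)$ to a preceding remark and states the bound on $C(t)$ earlier in the section; this makes your version slightly more self-contained but does not change the argument.
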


\begin{proof}
As we are in the Witt case, we can assume \eqref{SpecAvWittCase} and therefore, by Lemma \ref{Lemma:VertOp1},
\begin{align*}
|A(t)|\geq\frac{\sqrt{C}}{t}|A_V|\geq\frac{\sqrt{C}}{t}.
\end{align*}
In order to apply the vanishing statement of Corollary \ref{Coro:VanishingKatoIndexPert} we must have 
\begin{align*}
\frac{\sqrt{C}}{t}>\sqrt{2}\norm{C(t)},
\end{align*}
which can always be achieved by making $t$ small,  since $C(t)$ is uniformly bounded. 
\end{proof}

\begin{proposition}\label{Prop:DefIndex2}
For $t>0$ small enough we have $\ind(\mathscr{D}^+_{t,{\mathcal{H}^\perp} } )=0$.
\end{proposition}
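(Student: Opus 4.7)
The plan is to adapt the argument used for Theorem \ref{Thm:5.2} (via Lemma \ref{Lemma:5.12} and Remark \ref{Lemma:5.12R}) to the restricted operator $\mathscr{D}^+_{t,\mathcal{H}^\perp}$. Since the APS-type boundary condition is regular and the underlying Dirac system is non-parabolic, $\mathscr{D}^+_{t,\mathcal{H}^\perp}$ is Fredholm by Theorem \ref{Thm:BBC2.43}; it therefore suffices to show that both $\ker(\mathscr{D}^+_{t,\mathcal{H}^\perp})$ and $\ker((\mathscr{D}^+_{t,\mathcal{H}^\perp})^*)$ vanish for $t$ small enough.

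First I would derive, for $\sigma \in \mathcal{C}(\mathscr{D}^+_{t,\mathcal{H}^\perp})$, a pointwise identity for $\norm{\mathscr{D}^+_{t,\mathcal{H}^\perp}\sigma(r)}_H^2$ by expanding $\mathscr{A}_{\mathcal{H}^\perp}(r) = A_{H,\mathcal{H}^\perp}(r) + r^{-1}\mathscr{A}_V|_{\mathcal{H}^\perp}$ and using
\[
2\,\mathrm{Re}\inner{\sigma'}{\mathscr{A}_{\mathcal{H}^\perp}(r)\sigma}_H = \frac{d}{dr}\inner{\sigma}{\mathscr{A}_{\mathcal{H}^\perp}(r)\sigma}_H - \inner{\sigma}{\mathscr{A}'_{\mathcal{H}^\perp}(r)\sigma}_H,
\]
together with $\mathscr{A}'(r) = A_H'(0) - r^{-2}\mathscr{A}_V$. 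Rearranging exactly as in \eqref{Eqn:AuxLemma5.12}, integrating over $r\in(0,t]$, absorbing the $-r^{-2}\norm{\sigma(r)}^2_H/4$ contribution via Hardy's inequality \eqref{Eqn:Hardy}, and exploiting the non-negative boundary contribution at $r=t$ produced by the APS-type boundary condition, one arrives at a lower bound whose leading term is $\int_0^t r^{-2}\norm{(\mathscr{A}_V+\tfrac{1}{2})\sigma(r)}_H^2\,dr$.

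The enabling observation is that, in the Witt case, after a further vertical rescaling if necessary, the condition \eqref{SpecAvWittCase} can be strengthened so that $\spec(\Delta_Y|_{\mathcal{H}^\perp})\subset(1,\infty)$, which by the spectral decomposition of Theorem \ref{Thm:SpecDec} forces $\spec(\mathscr{A}_V|_{\mathcal{H}^\perp})$ to be uniformly bounded away from both $\pm 1/2$; consequently $(\mathscr{A}_V\pm\tfrac{1}{2})^{-1}$ is uniformly bounded on $\mathcal{H}^\perp$. The main obstacle is controlling the cross term $r^{-1}\inner{\mathscr{A}_{HV,\mathcal{H}^\perp}(r)\sigma}{\sigma}_H$, since, by Remark \ref{Rmk:VertCondOP}, $\mathscr{A}_{HV}$ is not a purely first-order vertical operator: it differs from $A_{HV}$ by the first-order horizontal term $\varepsilon A_H(0)$. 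To handle this I would split $\mathscr{A}_{HV} = A_{HV} - \varepsilon A_H(0)$, treat the vertical part $A_{HV}(\mathscr{A}_V+\tfrac{1}{2})^{-1}$ as bounded exactly as in Lemma \ref{Lemma:VertOp1}, and estimate the horizontal correction via Cauchy--Schwarz against $\norm{A_{H,\mathcal{H}^\perp}(r)\sigma}_H$, absorbing that quantity into the $\norm{\mathscr{A}_{\mathcal{H}^\perp}(r)\sigma}^2_H$ term on the left through Young's inequality.

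Combining these estimates and choosing $t$ small enough to dominate the remainder by the leading $r^{-2}\norm{(\mathscr{A}_V+\tfrac{1}{2})\sigma}^2_H$ term yields the Lemma \ref{Lemma:5.12}-type inequality
\[
\norm{\mathscr{D}^+_{t,\mathcal{H}^\perp}\sigma}_{L^2((0,t],H)} \geq \frac{C}{t}\bnorm{\left(\mathscr{A}_V+\tfrac{1}{2}\right)\sigma}_{L^2((0,t],H)},
\]
from which, by density of $\mathcal{C}(\mathscr{D}^+_{t,\mathcal{H}^\perp})$ in $\dom(\mathscr{D}^+_{t,\mathcal{H}^\perp})$, we conclude $\ker(\mathscr{D}^+_{t,\mathcal{H}^\perp})=0$. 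The analogous estimate for the adjoint, with $\mathscr{A}_V-\tfrac{1}{2}$ in place of $\mathscr{A}_V+\tfrac{1}{2}$, is obtained as in Remark \ref{Lemma:5.12R} and gives $\ker((\mathscr{D}^+_{t,\mathcal{H}^\perp})^*)=0$, so $\ind(\mathscr{D}^+_{t,\mathcal{H}^\perp})=0$.
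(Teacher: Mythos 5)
Your proposal is correct in spirit but takes a genuinely different route from the paper. The paper does \emph{not} run the energy estimate on the perturbed operator $\mathscr{D}^+_{t,\mathcal{H}^\perp}$ at all. Instead it argues in two steps: first, the \emph{unperturbed} operator $D^+_{t,\mathcal{H}^\perp}=\partial_r+A_{\mathcal{H}^\perp}(r)$ satisfies the Lemma \ref{Lemma:5.12}--type estimate (where $A_{HV}$ really is first-order vertical by Lemma \ref{Lemma:VertOp1}, so no horizontal cross term ever appears) and hence has index zero; second, the potential $\tfrac{\varepsilon}{2r}$ is shown to be a relatively bounded perturbation of $D^+_{t,\mathcal{H}^\perp}$ with relative bound $b=(1+\beta)/(2d^\perp)<1$, using the bound $\norm{r^{-1}\sigma}\le (d^\perp)^{-1}\norm{r^{-1}(A_{V,\mathcal{H}^\perp}+\tfrac12)\sigma}$ together with \eqref{Eqn:Lemma5.12Modif}, where $d^\perp=\mathrm{dist}(-\tfrac12,\spec(A_{V,\mathcal{H}^\perp}))$ is made large by rescaling. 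Kato's stability theorem then gives $\ind(\mathscr{D}^+_{t,\mathcal{H}^\perp})=\ind(D^+_{t,\mathcal{H}^\perp})=0$. The advantage of the paper's route is precisely that it never has to estimate the cross term $\mathscr{A}_{HV}=A_{HV}-\varepsilon A_H(0)$ of Remark \ref{Rmk:VertCondOP}; your route confronts it head-on, at the cost of the extra Young-inequality absorption you describe, which does work but needs $\mathrm{dist}(-\tfrac12,\spec(\mathscr{A}_V|_{\mathcal{H}^\perp}))$ pushed sufficiently far by rescaling.

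Two points in your argument need care. First, the boundary term: the core of $\mathscr{D}^+_{t,\mathcal{H}^\perp}$ is cut out by $Q_<(A_{\mathcal{H}^\perp}(t))\sigma(t)=0$, so the total-derivative contribution you pick up is $\inner{\sigma(t)}{\mathscr{A}_{\mathcal{H}^\perp}(t)\sigma(t)}_H=\inner{\sigma(t)}{A_{\mathcal{H}^\perp}(t)\sigma(t)}_H-\tfrac{1}{2t}\inner{\sigma(t)}{\varepsilon\sigma(t)}_H$, and only the first summand is non-negative by the boundary condition. You must additionally invoke the lower bound $|A_{\mathcal{H}^\perp}(t)|\ge \sqrt{C}/t$ on the range of $Q_\ge$ (Lemma \ref{Lemma:VertOp1}) to dominate the indefinite $-\tfrac{1}{2t}\norm{\sigma(t)}^2_H$ correction. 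Second, when you claim boundedness of $A_{HV}(\mathscr{A}_V+\tfrac12)^{-1}$ you should note that $\mathscr{A}_V+\tfrac12=A_V+\tfrac12-\tfrac{\varepsilon}{2}$ differs from $A_V+\tfrac12$ by a bounded operator, so invertibility and the mapping property into the vertical first-order domain still hold on $\mathcal{H}^\perp$ after rescaling; this is true but not automatic from Lemma \ref{Lemma:VertOp1} as stated. With these repairs your direct estimate closes and yields the same conclusion.
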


\begin{proof}
As we are in the Witt case we can assume \eqref{SpecAvWittCase}. Hence, it is easy to see that for the operator
\begin{align*}
{D}^+_{t,{\mathcal{H}^\perp} } \coloneqq \frac{\partial}{\partial r}+A_{{\mathcal{H}^\perp}}(r),
\end{align*}
defined on the core $\mathcal{C}(\mathscr{D}^+_{t,{\mathcal{H}^\perp} })$,  we can prove an analogue of Lemma \ref{Lemma:5.12}. That is, for $t>0$ small enough and $\sigma\in\mathcal{C}(\mathscr{D}^+_{t,{\mathcal{H}^\perp} })$, we can derive the estimate
\begin{align*}
\bnorm{D^+_{t,{\mathcal{H}^\perp}}\sigma}_{L^2((0,t],H)}\geq\frac{1}{2t}\bnorm{\left(A_{V,{\mathcal{H}^\perp}}+\frac{1}{2}\right)\sigma}_{L^2((0,t],H)},
\end{align*}
where $A_{V,{\mathcal{H}^\perp}}\coloneqq  (I\otimes P_{\mathcal{H}}^\perp)A_V(I\otimes P_{\mathcal{H}}^\perp)$. In particular, we can  show as before that $\ind(D_{t,{\mathcal{H}^\perp}})=0$. Now, the strategy of the proof is to show that $\mathscr{D}^+_{t,{\mathcal{H}^\perp} }$
is a Kato perturbation of ${D}^+_{t,{\mathcal{H}^\perp} }$ on $\mathcal{C}(\mathscr{D}^+_{t,{\mathcal{H}^\perp} })$.  If we are able to proof an estimate of the form 
\begin{align}\label{Eqn:KatoAim}
\bnorm{\frac{\varepsilon}{2r}\sigma}_{L^2((0,t],H)}
\leq b\norm{D^+_{t,{\mathcal{H}^\perp}}\sigma}_{L^2((0,t],H)},
\end{align}
with $b<1$, then by \cite[Theorem IV.5.22, pg. 236]{KATO} it would follow that 
$\textnormal{ind}\left(\mathscr{D}^{+}_{t,\mathcal{H}\perp}\right)=0$. The main idea is to use the computations used in the proof of Lemma \ref{Lemma:5.12}  to obtain the estimate \eqref{Eqn:KatoAim}. Concretely, in this proof the following equation is deduced for an element $\sigma\in\mathcal{C}(\mathscr{D}^+_{t,{\mathcal{H}^\perp}})$ (see \eqref{Eqn:AuxLemma5.12}),
\begin{align*}
\norm{{D}^+_{t,{\mathcal{H}^\perp}}\sigma(r) }_{H}^2=&\left(\norm{\sigma'(r)}^2_H-\frac{1}{4r^2}\norm{\sigma(r)}^2_H\right)+\frac{d}{dr}\inner{\sigma(r)}{{A}_{{\mathcal{H}^\perp}}(r)\sigma(r)}_H\\
&+r^{-2}\bnorm{\left({A}_{V,{\mathcal{H}^\perp}}+\frac{1}{2}\right)\sigma(r)}^2_H+r^{-1}\left\langle\left({A}_{V,{\mathcal{H}^\perp}}+\frac{1}{2}\right)\sigma(r),{L}^*\sigma(r)\right\rangle_H\notag\\
&-\inner{\sigma(r)}{A'_{H,{\mathcal{H}^\perp}}(0)\sigma(r)}_H+\norm{A_{H,{\mathcal{H}^\perp}}(r)\sigma(r)}_H^2.\notag
\end{align*}
Arguing as in the the mentioned proof,  we see that after integration between $0$ and $t$:
\begin{itemize}
\item The first term in brackets is non-negative by Hardy's inequality. 
\item The term containing the total derivative with respect to $r$ is also non-negative because $\sigma$ has compact support and because the boundary condition at $r=t$.
\end{itemize}

Thus, it follows that for $t$ small enough and $0<\beta<1$,
\begin{align}\label{Eqn:Lemma5.12Modif}
\bnorm{\frac{1}{r}\left(A_{V,{\mathcal{H}^\perp}}+\frac{1}{2}\right)\sigma}_{L^2((0,t],H)}\leq (1+\beta)\norm{D^+_{t,{\mathcal{H}^\perp}}\sigma}_{L^2((0,t],H)}.
\end{align}

On the other hand, 
\begin{align*}
\norm{\sigma}_H=&\bnorm{\left({A}_{V,{\mathcal{H}^\perp}}+\frac{1}{2}\right)^{-1}\left({A}_{V,{\mathcal{H}^\perp}}+\frac{1}{2}\right)\sigma(r)}_H\\
\leq &\bnorm{\left({A}_{V,{\mathcal{H}^\perp}}+\frac{1}{2}\right)^{-1}}\bnorm{\left({A}_{V,{\mathcal{H}^\perp}}+\frac{1}{2}\right)\sigma(r)}_H.
\end{align*}
The norm of the resolvent is given by (\cite[SectionV.5]{KATO}), 
\begin{align*}
\bnorm{\left({A}_{V,{\mathcal{H}^\perp}}+\frac{1}{2}\right)^{-1}}=\frac{1}{d^\perp}, 
\end{align*}
where $d^\perp\coloneqq {\text{dist}(-1/2,\spec(A_{V,{\mathcal{H}^\perp}}))}$. From this relation and \eqref{Eqn:Lemma5.12Modif} we conclude that
\begin{align}\label{Eqn:ResultKatoEstimate}
\bnorm{\frac{\varepsilon}{2r}\sigma}_{L^2((0,t],H)}\leq \frac{(1+\beta)}{2d^\perp}\norm{D^+_t\sigma}_{L^2((0,t],H)},
\end{align}
i.e. we have shown the desired estimate with $b=(1+\beta)/2d^\perp$. Observe however that we require the condition $b<1$, which translates to
\begin{align}
\frac{(1+\beta)}{2}<d^\perp.
\end{align}
This can always be achieved, in view of Theorem \ref{Thm:SpectralDecomp}, by rescaling the vertical metric.
\end{proof}

\begin{proposition}\label{Prop:DefIndex3}
For $t$ small enough we have $\ind(\mathscr{D}^+_{t,{\mathcal{H}} } )=0$.
\end{proposition}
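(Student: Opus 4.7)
The plan is to adapt the Kato-perturbation technique used in the proof of Proposition \ref{Prop:DefIndex2}, exploiting the fact that in the Witt case the bundle $\mathcal{H}(Y)$ is finite dimensional over $F$ and the operator $\mathscr{A}_{V,\mathcal{H}}$ admits a very explicit spectral decomposition. First I would repeat the integration-by-parts computation of Lemma \ref{Lemma:5.12} with $\mathscr{D}^+_{t,\mathcal{H}}=\partial_r+A_{H,\mathcal{H}}(r)+r^{-1}\mathscr{A}_{V,\mathcal{H}}$ in place of $D^+_t$. A direct calculation, using $\mathscr{A}'(r)=A_H'(0)-r^{-2}\mathscr{A}_V$, yields
\begin{align*}
\|\mathscr{D}^+_{t,\mathcal{H}}\sigma(r)\|_H^{2}
=&\bigl(\|\sigma'(r)\|_H^{2}-\tfrac{1}{4r^{2}}\|\sigma(r)\|_H^{2}\bigr)+\tfrac{d}{dr}\langle\sigma(r),\mathscr{A}(r)\sigma(r)\rangle_H\\
&+r^{-2}\bigl\|(\mathscr{A}_{V,\mathcal{H}}+\tfrac{1}{2})\sigma(r)\bigr\|_H^{2}
+r^{-1}\langle(\mathscr{A}_{V,\mathcal{H}}+\tfrac{1}{2})\sigma(r),\mathscr{L}^{*}\sigma(r)\rangle_H\\
&+\|A_{H,\mathcal{H}}(r)\sigma(r)\|_H^{2}-\langle\sigma(r),A'_{H,\mathcal{H}}(0)\sigma(r)\rangle_H,
\end{align*}
where $\mathscr{L}=\mathscr{A}_{HV}(r)(\mathscr{A}_{V,\mathcal{H}}+\tfrac{1}{2})^{-1}$, which is bounded provided $-\tfrac{1}{2}\notin\spec(\mathscr{A}_{V,\mathcal{H}})$. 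Integrating, using Hardy's inequality \eqref{Eqn:Hardy} and the APS boundary condition at $r=t$, I would obtain the analogue of \eqref{Eqn:Lemma5.12Modif}, namely
\[
\Bigl\|\tfrac{1}{r}(\mathscr{A}_{V,\mathcal{H}}+\tfrac{1}{2})\sigma\Bigr\|_{L^{2}((0,t],H)}\leq(1+\beta)\|\mathscr{D}^+_{t,\mathcal{H}}\sigma\|_{L^{2}((0,t],H)}
\]
for $t$ small and any $\beta\in(0,1)$.

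The crucial point is then to verify that in the Witt case the spectrum of $\mathscr{A}_{V,\mathcal{H}}$ is bounded away from $-\tfrac{1}{2}$ after a rescaling of the vertical metric. By Theorem \ref{Thm:SpecDec}(1) and Remark \ref{Rmk:ZeroEigenValueAV}, on the $\varepsilon_H$-eigenspaces of $\mathcal{H}^{2k}(Y)$ the eigenvalues of $\mathscr{A}_{V,\mathcal{H}}$ are $(2k-N)\mp\tfrac{1}{2}$ for $k=0,1,\ldots,N$ and, in the Witt case, $2k-N$ is a nonzero odd integer. Consequently the eigenvalue $-\tfrac{1}{2}$ occurs only on the finitely many sectors with $|2k-N|=1$ and a specific sign of $\varepsilon_H$. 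I would therefore split $\mathcal{H}(Y)$ as $\mathcal{H}'(Y)\oplus\mathcal{H}''(Y)$, where $\mathcal{H}'(Y)$ is the spectral subspace on which $\mathscr{A}_{V,\mathcal{H}}$ avoids $\{\pm\tfrac{1}{2}\}$ and $\mathcal{H}''(Y)$ is the complementary finite-dimensional piece. On $\mathcal{H}'(Y)$ the estimate above combined with the resolvent bound $\|(\mathscr{A}_{V,\mathcal{H}'}+\tfrac{1}{2})^{-1}\|=1/d'$ produces a Kato bound $\|(\varepsilon/2r)\sigma\|\leq b'\|\mathscr{D}^+_{t,\mathcal{H}'}\sigma\|$ with $b'<1$, exactly as in \eqref{Eqn:ResultKatoEstimate}. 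Then \cite[Theorem IV.5.22]{KATO} gives $\ind\mathscr{D}^+_{t,\mathcal{H}'}=\ind D^+_{t,\mathcal{H}'}=0$, the last equality following from the proof of Theorem \ref{Thm:5.2} restricted to $\mathcal{H}'(Y)$ (which still satisfies $|A_V|\geq 1$).

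For the remaining piece $\mathcal{H}''(Y)$, which is a flat bundle of uniformly bounded rank over $F$, the plan is to implement a small deformation of the potential, in the spirit of Remark \ref{Rmk:VanishingBaseEven}, replacing $-\varepsilon/2$ by $-(1+s)\varepsilon/2$ for a small $s>0$ so that the shifted eigenvalues $(2k-N)\mp(1+s)/2$ move off $\{\pm\tfrac{1}{2}\}$. The deformation is a bounded, $\bigstar$-compatible perturbation, hence by Theorem \ref{BBCThm4.14} (together with the compactness/Kato-index comparison of Lemma \ref{Prop:A.13}) the index of the deformed operator differs from that of $\mathscr{D}^+_{t,\mathcal{H}''}$ by a Kato index of projections, which vanishes by Corollary \ref{Coro:VanishingKatoIndexPert} once $t$ is small enough. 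After the shift, the estimate above applies on $\mathcal{H}''(Y)$ as well and gives $\ind\mathscr{D}^+_{t,\mathcal{H}''}=0$. Adding the two contributions produces the claim.

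The main obstacle I anticipate is the careful bookkeeping of the spectral splitting together with the deformation: the shifted operator must still commute with $\bigstar$, and the Kato-index discrepancy coming from the boundary condition at $r=t$ must be controlled using Lemma \ref{Lemma:SplitKatoInd} and the smallness of $t$. Once this is organized, the rest is a direct application of the estimate derived from Lemma \ref{Lemma:5.12} together with Theorem \ref{BBCThm4.14}.
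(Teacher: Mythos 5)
Your strategy diverges completely from the paper's, which makes no estimate on the harmonic part at all. The paper's proof is a symmetry argument: in the Witt case the dimension of $F$ has the residue mod $4$ that makes $\eta(M^{S^1})$ vanish (Section \ref{Section:VanishEta}, Lemma \ref{Lemma:VanishingEtaWitt}), so there is a self-adjoint involution $\mathscr{U}$ anti-commuting with the odd signature operator of $F$ with coefficients in $\ker D_Y$, hence with the principal symbol of $\mathscr{A}_{\mathcal{H}}$. This gives $\mathscr{U}\mathscr{A}_{\mathcal{H}}(r)\mathscr{U}=-\mathscr{A}_{\mathcal{H}}(r)+\mathscr{C}_2(r)$ with $\mathscr{C}_2$ uniformly bounded, and for $t$ small the boundary projections match so that $\mathscr{U}\mathscr{D}^+_{t,\mathcal{H}}\mathscr{U}=-(\mathscr{D}^+_{t,\mathcal{H}})^*$; therefore $\ind(\mathscr{D}^+_{t,\mathcal{H}})=-\ind(\mathscr{D}^+_{t,\mathcal{H}})=0$.

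Your route has gaps I do not see how to close. First, $\mathscr{L}=\mathscr{A}_{HV}(r)(\mathscr{A}_{V,\mathcal{H}}+\tfrac12)^{-1}$ is \emph{not} bounded: by Remark \ref{Rmk:VertCondOP}, $\mathscr{A}_{HV}=A_{HV}-\varepsilon A_H(0)$ contains the first-order \emph{horizontal} operator $\varepsilon A_H(0)$, and composing with the inverse of the vertical, fiberwise operator $\mathscr{A}_{V,\mathcal{H}}+\tfrac12$ does nothing to control horizontal derivatives. This is precisely where the Lemma \ref{Lemma:5.12} computation breaks for $\mathscr{D}$, and it is why Proposition \ref{Prop:DefIndex2} only runs the estimate for $D^+_{t,\mathcal{H}^\perp}$ and then treats $\varepsilon/2r$ as a Kato perturbation. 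Second, that Kato step cannot be salvaged on the harmonic part: the constant is $b=(1+\beta)/(2d)$ with $d=\textnormal{dist}(-\tfrac12,\spec(A_{V,\mathcal{H}}))$, and on vertical harmonic forms $\spec(A_{V,\mathcal{H}})=\{2j-N\}$ consists of odd integers, so $d=\tfrac12$ exactly and $b=1+\beta>1$; unlike on $\mathcal{H}^\perp$, this cannot be improved by rescaling the vertical metric, since these are the essential eigenvalues (Remark \ref{Rmk:ZeroEigenValueAV}). Third, the deformation $-\varepsilon/2\mapsto-(1+s)\varepsilon/2$ changes the coefficient of $1/r$ on all of $(0,t)$, not merely the boundary condition at $r=t$, so Theorem \ref{BBCThm4.14} does not compare the two indices; worse, for $2j-N=1$ the shifted cone eigenvalue $\tfrac12-\tfrac{s}{2}$ lands in $(-\tfrac12,\tfrac12)$, destroying essential self-adjointness at $r=0$ and making the deformed index depend on an unspecified choice of extension. (A smaller issue: your total-derivative term integrates to $\langle\sigma(t),\mathscr{A}(t)\sigma(t)\rangle$, whose sign is not controlled by the boundary condition $Q_<(A(t))\sigma(t)=0$, which is spectral for $A(t)$, not for $\mathscr{A}(t)$.)
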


\begin{proof}
We will proceed similarly as in the proof of \cite[Theorem 5.2]{B09}. Since we can identify the first order part of $\mathscr{A}_H$ with the odd signature operator $A_F$ of $F$  with coefficients in $\mathcal{H}(Y)$ we know, by the discussion of Section \ref{Section:VanishEta} and the proof of Lemma \ref{Lemma:VanishingEtaWitt}, that in the Witt case  there exists a self-adjoint involution $\mathscr{U}$ such that $\mathscr{U}A_F\mathscr{U}=-A_F$. In particular, $\mathscr{U}$ anti-commutes with the principal symbol of $\mathscr{A}_H$ and therefore
\begin{align*}
\mathscr{U}\mathscr{A}_{\mathcal{H}}(r)\mathscr{U}=-{\mathscr{A}}_\mathcal{H}(r)+\mathscr{C}_2(r),
\end{align*}
where  $\norm{\mathscr{C}_2(r)}_H\leq \mathscr{C}_2$ for $r\in(0,t]$.  Similarly,  we get an analogous formula for $A_\mathcal{H}$ since it has the same principal symbols as $\mathscr{A}_\mathcal{H}$, i.e 
\begin{align*}
\mathscr{U}{A}_{\mathcal{H}}(r)\mathscr{U}=-{A}_\mathcal{H}(r)+{C}_2(r),\quad\text{where  $\norm{{C}_2(r)}_H\leq C_2$ for $r\in(0,t]$}.
\end{align*}
Observe now that the operator $\mathscr{U}\mathscr{D}^+_t \mathscr{U}$ defined on the core
\begin{align*}
\mathcal{C}(\mathscr{U}\mathscr{D}^+_{t,\mathcal{H}}\mathscr{U})\coloneqq \{\sigma\in C^{1}_c((0,t],H^1))\:|\:Q_<(\mathscr{U} A_{\mathcal{H}} (t)\mathscr{U})\sigma(t)=0\}
\end{align*}
is given by 
\begin{align*}
\left(\frac{\partial }{\partial r}+\mathscr{U} \mathscr{A}_H(r)\mathscr{U} \right)\sigma(r)=-\left(-\frac{\partial }{\partial r}+\mathscr{A}_\mathcal{H}(r)-\mathscr{C}_2(r)\right)\sigma(r). 
\end{align*}
As we are in the Witt case we can use Lemma \ref{Lemma:VertOp1} and \cite[Theorem V.4.10]{KATO} to verify for $t>0$ sufficiently small the relation $Q_<(-A_{\mathcal{H}} (t)+C_2(t))=Q_<(-A_{\mathcal{H}})=Q_>(A_{\mathcal{H}})$ since $C_2(r)$ is uniformly bounded. We now compare $\mathscr{U} \mathscr{D}^+_t \mathscr{U} $ with the adjoint operator
\begin{align*}
(\mathscr{D}^+_{t,\mathcal{H}})^*\sigma(r)=\left(-\frac{\partial }{\partial r}+\mathscr{A}_H(r)\right)\sigma(r),
\end{align*}
defined on the core (Remark \ref{Rmk:AdjointBoundCond})
\begin{align*}
\mathcal{C}((\mathscr{D}^+_{t,\mathcal{H}})^*)\coloneqq \{\sigma\in C^{1}_c((0,t],H^1))\:|\:Q_\geq(A_{\mathcal{H}} (t))\sigma(t)=0\}.
\end{align*}
Since in the Witt case $A_\mathcal{H}$ is invertible then $Q_\geq(A_{\mathcal{H}} (t))=Q_>(A_{\mathcal{H}} (t))$ and therefore 
$\mathscr{U} \mathscr{D}^+_t \mathscr{U} =-(\mathscr{D}^+_{t,\mathcal{H}})^*$. Altogether, 
\begin{align*}
\ind(\mathscr{D}^+_{t,{\mathcal{H}} } )=\ind( \mathscr{U}\mathscr{D}^+_{t,{\mathcal{H}}} \mathscr{U})=(\mathscr{D}^+_{t,\mathcal{H}})^*=-\ind(\mathscr{D}^+_{t,{\mathcal{H}} } ), 
\end{align*}
which shows that $\ind(\mathscr{D}^+_{t,{\mathcal{H}} } )=0$. 
\end{proof}

\begin{proof}[Proof of Theorem \ref{Thm:5.2Pot}]
This follows now from the deformation argument described above, the vanishing results of Proposition \ref{Prop:DefIndex1}, Proposition \ref{Prop:DefIndex2} and Proposition \ref{Prop:DefIndex3}.
\end{proof}

Regarding the index contribution of $Z_t$ it is easy to see that, by deforming the metric close to $r=t$, we can adapt the proof Theorem \ref{Thm:5.3} to get the analogous formula,
\begin{align*}
\textnormal{ind}\left(\mathscr{D}^+_{Z_t,Q_{<}({A}(t))(H)}\right)&=\textnormal{ind}\left({D}^+_{
Z_t,Q_{<}(A_0(t))(H)}\right)+(Q_{<}({A}(r))(H),Q_{\geq }({A}_0(r))(H)), 
\end{align*}
where ${D}^+_{Z_t,Q_{<}(A_0(t))(H)}$ is the signature operator on the manifold with boundary $Z_t$. Using  \eqref{IndS1SignKern} we see for $t>0$ small enough 
\begin{align*}
\textnormal{ind}\left(\mathscr{D}^+_{Z_t,Q_{<}({A}(t))(H)}\right)=\sigma_{S^1}(M)-\frac{1}{2}\dim\ker(A_0(t))+(Q_{<}({A}(r))(H),Q_{\geq }({A}_0(r))(H)).
\end{align*}
Finally, from this formula and from the vanishing Theorem \ref{Thm:5.2Pot} we obtain a partial answer for the index of the operator $\mathscr{D}^+$ (compare with Theorem \ref{Thm:IndD+}). 

\begin{theorem}
Let $M$ be a closed, oriented Riemannian $4k+1$ dimensional manifold on which $S^1$ acts effectively and semi-freely by orientation preserving isometries. If the codimension of the fixed point set $M^{S^1}$ in $M$ is divisible by four, then $M/S^1$ is a Witt space  and the index of the graded Dirac-Schr\"odinger operator is 
\begin{align*}
\ind(\mathscr{D}^+)=\sigma_{S^1}(M)=\int_{M_0/S^1}L\left(T(M_0/S^1),g^{T(M_0/S^1)}\right).
\end{align*}
\end{theorem}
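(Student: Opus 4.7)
The plan is to assemble the ingredients developed in the previous sections into a short proof, since almost all of the real work has already been done. The starting point is the gluing decomposition \eqref{Eqn:Thm5.1OpPotI}, which, by the choice of the compatible super-symmetric Dirac-Schr\"odinger systems $\mathscr{D}_1, \mathscr{D}_2$ on $Z_t$ and $U_t$ with complementary APS-type boundary conditions at $r=t$, expresses $\ind(\mathscr{D}^+)$ as the sum of a compact-manifold-with-boundary contribution and a cylindrical contribution. The first step is to invoke Theorem \ref{Thm:5.2Pot}, which kills the $U_t$-contribution for $t$ sufficiently small in the Witt case. Since the left-hand side of \eqref{Eqn:Thm5.1OpPotI} is independent of $t$, this leaves $\ind(\mathscr{D}^+) = \ind(\mathscr{D}^+_{Z_t, Q_<(A(t))(H)})$ for all small $t$.

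The second step is to identify this $Z_t$-contribution with the equivariant signature. I would first use the deformation of the vertical metric of Corollary \ref{Coro:IndexInvScal}, together with the deformation argument from the proof of Theorem \ref{Thm:5.3}, to pass without changing the index from the boundary projection $Q_<(A(t))$ to $Q_<(A_0(t))$, picking up the Kato-index term $\ind(Q_<(A(t))(H), Q_{\geq}(A_0(t))(H))$. Simultaneously, since $\mathscr{D}$ and $D$ have the same principal symbol and differ by the $1/r$-bounded potential $\tfrac{1}{2}c(\bar\kappa)\varepsilon$, which close to $r=t$ is a uniformly bounded Kato perturbation with the required commutation relations, I would show that $\ind(\mathscr{D}^+_{Z_t, Q_<(A(t))(H)}) = \ind(D^+_{Z_t, Q_<(A(t))(H)})$ using Lemma \ref{Lemma:SplitKatoInd} together with the vanishing of $\ind(Q_\leq(A(t))(H), Q_>(\mathscr{A}(t))(H))$ furnished by Corollary \ref{Coro:VanishingKatoIndexPert} in the Witt case (where $|A(t)| \geq \sqrt{C}/t$ by Lemma \ref{Lemma:VertOp1}).

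With these two steps combined we arrive exactly at the formula of Proposition \ref{Prop:IndexFormulaCasi}, namely
\begin{equation*}
\ind(\mathscr{D}^+) = \sigma_{S^1}(M) - \tfrac{1}{2}\dim\ker(A_0(t)) + \ind(Q_<(A(t))(H), Q_{\geq}(A_0(t))(H)).
\end{equation*}
The Witt hypothesis now enters decisively through Corollary \ref{Coro:TauWitt}, which combines the Cheeger--Dai identity \eqref{Eqn:CheegerDai} with the vanishing of the $\tau$-invariant for $\mathbb{C}P^N$-fibrations (Example \ref{Example:ProjBundle}) to yield
$\ind(Q_<(A(t))(H), Q_{\geq}(A_0(t))(H)) - \tfrac{1}{2}\dim\ker(A_0(t)) = 0$. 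Substituting gives $\ind(\mathscr{D}^+) = \sigma_{S^1}(M)$. The second equality $\sigma_{S^1}(M) = \int_{M_0/S^1} L(T(M_0/S^1), g^{T(M_0/S^1)})$ then follows immediately from Lott's formula (Theorem \ref{Thm:S1SignatureThm}) together with Lemma \ref{Lemma:VanishingEtaWitt}, which provides $\eta(M^{S^1}) = 0$ in the Witt case.

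The main obstacle to be careful about is the bookkeeping of boundary-condition changes: one must verify that replacing $\mathscr{A}(t)$ by $A(t)$ in the boundary projection and then $A(t)$ by $A_0(t)$ produces only the Kato-index contribution that ultimately cancels against $\tfrac{1}{2}\dim\ker(A_0(t))$ via Cheeger--Dai. Once the spectral-gap input $|A_V| \geq 1$ (from rescaling) and the invertibility of $A(t)$ for small $t$ are in hand, the Agranovi\v{c}--Dynin formula of Theorem \ref{BBCThm4.14} performs all these transitions uniformly, so the argument is essentially a routine consequence of the machinery already in place; the non-trivial analytic work has been absorbed into Theorem \ref{Thm:5.2Pot} and Corollary \ref{Coro:TauWitt}.
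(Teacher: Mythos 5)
Your proposal is correct and follows essentially the same route as the paper: the gluing decomposition with complementary APS-type boundary conditions, the vanishing of the $U_t$ contribution via Theorem \ref{Thm:5.2Pot}, the passage from $\mathscr{D}^+$ on $Z_t$ to the signature operator with the $Q_<(A_0(t))$ boundary condition at the cost of the Kato index, and the cancellation of that Kato index against $\tfrac{1}{2}\dim\ker(A_0(t))$ via Cheeger--Dai and the vanishing of $\tau$, followed by Lott's formula with $\eta(M^{S^1})=0$. The only cosmetic difference is that you split the $Z_t$ step into an interior-operator replacement (at fixed boundary condition) followed by Theorem \ref{Thm:5.3}, whereas the paper performs both transitions at once; this is equivalent.
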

In this last formula we have again used that $\eta(M^{S^1})=0$ in the Witt case

\subsection{Perspectives}
This work presents a specific model for a formalism that can be applied to more general stratified spaces. We want to mention some paths where this point of view could lead future research:
\begin{enumerate}
\item Although the operator $\mathscr{D}$ is essentially self-adjoint in both the Witt and the non-Witt case, he have not been able to prove that its index (which is well defined!) computes $\sigma_{S^1}(M)$. Nevertheless, we are optimistic about this conjecture in view of Theorem \ref{THM2}. This index computation has been elusive because of two reasons:
\begin{enumerate}
\item The techniques used to prove the vanishing theorem of the index contribution of $U_t$ use the Witt condition. However, it seems that this is just a feature of the proof and it does not seem to be a fundamental obstruction. Choosing some appropriate gluing elliptic boundary condition might lead to such a vanishing result. 
\item When computing the adiabatic limit for the index contribution of $Z_t$ we needed to modify the boundary condition in order to obtain the right tangential operator, an essential ingredient of the signature Thorem \ref{Thm:SignThmMBound}. As seen before, by Theorem \ref{BBCThm4.14}, this change of boundary condition is compensated by a Kato index between the respective projections. The result of Cheeger and Dai in \cite{CD09}, used by  Br\"uning to compute this Kato index (Theorem \ref{Thm:5.4}), is only valid for the Witt case. For the non-Witt case we hope to proceed in a similar way in view of the generalization result \cite[Theorem 1.7]{MH05}. 
\end{enumerate}
\item Following \cite[Section 4.3]{L00}, it does not  seem hard to generalize the constructions of Chapter \ref{Sec:Induced} in the context of semi-free actions for general compact Lie groups.
\item In  \cite[Section 2.4]{L00} Lott defined, inspired in the index theorem \cite[Theorem 4.2]{APSI} for the spin-Dirac operator, the $\widehat{A}$-genus for $M/S^1$ which he proves it is always an integer. In the work \cite{AG16} of Albin and Gell-Redman they study the index formula of the spin-Dirac operator on a compact stratified space with one singular stratum. In their formalism, they impose a geometric Witt condition which requires the spectrum of the cone coefficient not to intersect the open interval  $(-1/2,1/2)$. In the Witt case their index formula computes Lott's $\widehat{A}$-genus. Nevertheless, as for the signature operator, the $\widehat{A}$-genus still makes sense in the non-Witt case and a natural question is  whether this integer comes again as an index of certain Dirac-type operator. Following the ideas developed in  Chapter \ref{Sec:Induced} one might try to push-down the spin-Dirac operator to $M/S^1$. The main problem might be that the obtained potential might not necessarily commute with the chirality involution of the spinor bundle. If this is the case, then one should explore instead pushing down an appropriate transversally elliptic operator to obtain the desired compatible potential. Of course, after achieving this, one should try to compute the index. 
\item Finally, we hope to be able to implement the addition of  these kind of potentials in order to study further topological invariants in  more general stratified spaces. In particular, it would be interesting to see which type of index formulas can be obtained when adding this concrete potential ``by hand" to the signature operator on more general stratified spaces which do not necessarily satisfy the Witt-condition. Perhaps adding this potential, or possibly variants of it, might bring a more geometric interpretation to the index formulas  obtained from imposing certain ideal boundary conditions.
 \end{enumerate}

\section{Regular singular operators}\label{App:RSO}

In this appendix we want to collect some important results on {\em first order regular singular operators}, introduced in the seminal work of Br\"uning and Seeley \cite{BS88}. \\

Let us consider the following setting:
\begin{enumerate}
\item  $H$ is a fixed Hilbert space. 
\item $\dom(S) \subseteq H $ is the common domain of the family of self-adjoint operators 
\begin{align*}
S(r)\coloneqq S_0+r^{\beta+1}S_1(r), \quad\textnormal{for $r\in(0,\infty)$ and $\beta>-\frac{1}{2}$}. 
\end{align*}
\item $S_0$ is a discrete operator on $\dom(S)$. We denote its associated orthonormal basis of eigenvectors by $\{e_\lambda\}_{\lambda\in\spec(S_0)}$.
\item There exists a constant $C_0>0$ such that 
\begin{align*}
\norm{S_1(r)(|S_0|+1)^{-1}}_H + \norm{(|S_0|+1)^{-1}S_1(r)}_H\leq C_0,
\end{align*}
uniformly in $r>0$. 
\end{enumerate}
In this context we define the operator
\begin{align*}
T\coloneqq \frac{\partial}{\partial r}+\frac{1}{r}S(r). 
\end{align*}
acting on $L^2([0,\infty),H)$ with domain $C^\infty_c((0,\infty), \dom(S))$. This operator has a formal adjoint, defined on the same domain, given by 
\begin{align*}
T^{\dagger}\coloneqq -\frac{\partial}{\partial r}+\frac{1}{r}S(r). 
\end{align*}

We can define two natural closed extensions of $T$:
\begin{itemize}
\item The {\em minimal extension} $T_\textnormal{min}\coloneqq \bar{T}$ is the closure of $T$, i.e. $x\in \dom(T_\textnormal{min})$ if and only if there exists a sequence $(x_n)_n\subset C^\infty_c((0,\infty), \dom(S))$ such that $x_n\longrightarrow x$ and $(Tx_n)_n\longrightarrow y$ for some $y\in L^2([0,\infty),H)$. In this case we define $T_\textnormal{min} x\coloneqq  y$. 
\item The {\em maximal extension} $T_\textnormal{max}\coloneqq (T^\dagger)^*$ is defined to be the adjoint operator of $T^\dagger$. 
\end{itemize}

\begin{theorem}[{\cite[Theorem 3.1]{BS88}}]\label{BS88Thm3.1}
$T_\textnormal{max}$ and $T_\textnormal{min}$ are Fredholm operators. The extensions of $T_\textnormal{min}$ are all Fredholm operators, and correspond to subspaces of the finite-dimensional space $\dom(T_\textnormal{max})/\dom (T_\textnormal{min})$.
\end{theorem}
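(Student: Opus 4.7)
The plan is to build a parametrix for $T$ explicitly by diagonalizing the leading part $S_0$ and then treating $r^{\beta+1}S_1(r)$ as a compact perturbation. Since $S_0$ is discrete and self-adjoint on $\dom(S)$, let $\{e_\lambda\}_{\lambda\in\spec(S_0)}$ be an orthonormal eigenbasis and decompose
\[
L^2([0,\infty),H) \;\cong\; \bigoplus_{\lambda\in\spec(S_0)} L^2([0,\infty)) \otimes e_\lambda .
\]
For the unperturbed operator $T_0 \coloneqq \partial_r + S_0/r$, this decomposition reduces the problem to the scalar model operators $L_\lambda \coloneqq \partial_r + \lambda/r$ on $L^2([0,\infty))$, whose homogeneous solutions are $r^{-\lambda}$. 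First I would study these scalar cases in detail: using variation of parameters against $r^{-\lambda}$, one obtains an explicit integral parametrix $G_\lambda$ satisfying $L_\lambda G_\lambda = I$, where the choice of integration endpoints ($0$ or $\infty$) depends on whether $\lambda$ is small ($|\lambda|<1/2$), large positive, or large negative, to keep the output in $L^2$. This mirrors the small/large eigenvalue split used in Chapter \ref{Sect:Param}.

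The second step is to assemble these scalar parametrices into a global parametrix $G_0$ for $T_0$ by summing over $\lambda\in\spec(S_0)$, using the uniform bound $\norm{S_1(r)(|S_0|+1)^{-1}} + \norm{(|S_0|+1)^{-1}S_1(r)}\leq C_0$ to control the series. The decay of $G_\lambda$ as $|\lambda|\to\infty$ gives a norm bound $\norm{G_0}\lesssim 1$ and, more importantly, shows that $G_0$ maps $L^2$ into a domain where $|S_0|/r$ is controlled, so that $G_0$ is compact from $L^2$ into $L^2$ by Rellich-type arguments combined with the weighted structure. Next I would turn on the perturbation: writing $T = T_0 + r^\beta S_1(r)$, and noting that $\beta > -1/2$, the operator $r^\beta S_1(r)$ acts as a relatively compact perturbation of $T_0$ on the natural maximal domain. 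This yields the identity $TG_0 = I + \mathcal R$ with $\mathcal R$ compact, and symmetrically $G_0 T^{\dagger} = I + \mathcal R'$; hence $T_{\max}$ and $T_{\min}$ have finite-dimensional kernel and cokernel and closed range, proving the Fredholm property.

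The third step handles the quotient $\dom(T_{\max})/\dom(T_{\min})$. From the explicit form of the parametrix, every $\sigma \in \dom(T_{\max})$ admits an expansion as $r\to 0$ of the shape
\[
\sigma(r) \;=\; \sum_{|\lambda|<1/2} c_\lambda(\sigma)\, r^{-\lambda}\, e_\lambda \;+\; O_\sigma\!\bigl(r^{1/2}|\log r|\bigr),
\]
while $\sigma \in \dom(T_{\min})$ is precisely characterized by the vanishing of all the coefficients $c_\lambda(\sigma)$, and by $\norm{\sigma(r)}_H = O_\sigma(r^{1/2}|\log r|)$ as $r\to 0$ (this is exactly the statement of Lemma \ref{Lemma:DVmin} in the special case treated here). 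Because $\spec(S_0)$ is discrete and $(-1/2,1/2)$ contains only finitely many eigenvalues (with finite multiplicity), the space of possible leading coefficients is finite-dimensional, so $\dom(T_{\max})/\dom(T_{\min})$ has finite dimension. Any closed extension of $T_{\min}$ is then a finite-rank extension of $T_{\min}$, hence still Fredholm by the stability of the Fredholm index under finite-rank perturbations, and it is determined by the subspace of $\dom(T_{\max})/\dom(T_{\min})$ on which one imposes the boundary values $c_\lambda$.

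The main obstacle I expect is the parametrix estimates for the scalar models $L_\lambda$ \emph{uniformly} in $\lambda$, especially near the threshold $|\lambda|=1/2$ where the two admissible solutions $r^{-\lambda}$ degenerate in terms of their $L^2$-behavior at $0$; this is where the logarithmic correction $r^{1/2}|\log r|$ in the asymptotic expansion arises, and controlling the growth of $G_\lambda$ as $\lambda$ crosses $\pm 1/2$ requires a careful case analysis. The rest — compactness of $\mathcal R$, finite-dimensionality of the quotient, and classification of closed extensions — follows formally once these scalar estimates are in place.
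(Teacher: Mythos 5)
The paper does not prove this theorem: it is quoted verbatim from Br\"uning--Seeley \cite{BS88}, and your outline reconstructs exactly their strategy, which is also the machinery the thesis adapts in Chapter \ref{Sect:Param} (scalar model operators $\partial_r+\lambda/r$ with homogeneous solutions $r^{-\lambda}$, explicit solution operators split according to small versus large eigenvalues, and the $r^{-\lambda}$ / $O(r^{1/2}|\log r|)$ asymptotics that separate $\dom(T_\textnormal{max})$ from $\dom(T_\textnormal{min})$ and make the quotient finite-dimensional). Two minor imprecisions, neither fatal: you neither need nor in general obtain compactness of $G_0$ itself --- what the argument requires is boundedness of $G_0$ and of $|S_0|G_0$ together with compactness, or small norm after localizing near $r=0$, of the remainders $\mathcal{R}$ and $\mathcal{R}'$, and this is precisely where the hypothesis $\norm{S_1(r)(|S_0|+1)^{-1}}+\norm{(|S_0|+1)^{-1}S_1(r)}\leq C_0$ and the restriction $\beta>-1/2$ enter; and the closed extensions of $T_\textnormal{min}$ are finite-dimensional enlargements of the \emph{domain}, not finite-rank perturbations of the operator, so the Fredholm stability you should invoke is the standard fact that a closed operator sandwiched between a Fredholm $T_\textnormal{min}$ and $T_\textnormal{max}$ with $\dim(\dom(T_\textnormal{max})/\dom(T_\textnormal{min}))<\infty$ is Fredholm, with index $\ind(T_\textnormal{min})+\dim W$ as recorded in Theorem \ref{BS88Thm3.2}.
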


\begin{lemma}[{\cite[Lemma 3.2]{BS88}}]\label{BS88Lemma3.2}
For $\lambda\in\spec(S_0)$ with $|\lambda|<1/2$, there are continuous linear functionals $c_\lambda$ defined on $\dom(T_\textnormal{max})$ such that for $r\in (0,1)$ and $0<\epsilon<1$, 
\begin{align*}
\bnorm{x(r)-\sum_{\substack{\lambda\in\spec(S_0),\\|\lambda|<1/2}}c_\lambda(x)r^{-\lambda} e_\lambda}_H\leq \epsilon r^{1/2}|\log r|^{1/2}+C_{\epsilon,x}r^{1/2}, \:\: \text{for $x\in\dom(T_\textnormal{max})$}.
\end{align*}
\end{lemma}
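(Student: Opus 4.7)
The plan is to reduce the equation $T_{\max} x = y$ to a system of scalar ODEs by expanding along the orthonormal eigenbasis $\{e_\lambda\}$ of $S_0$. Writing $x(r) = \sum_\lambda x_\lambda(r) e_\lambda$ with $x_\lambda(r) \coloneqq \langle x(r), e_\lambda\rangle_H$, and likewise $y_\lambda(r) \coloneqq \langle y(r),e_\lambda\rangle_H$, the equation $T_{\max} x = y$ becomes, component-wise,
\begin{align*}
x_\lambda'(r) + \frac{\lambda}{r}\, x_\lambda(r) = y_\lambda(r) - r^\beta \langle S_1(r) x(r), e_\lambda\rangle_H,
\end{align*}
a family of scalar ODEs coupled through the zero-order perturbation $r^\beta S_1(r)$.

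The homogeneous equation $u' + \lambda u/r = 0$ has solution $r^{-\lambda}$, so variation of parameters gives
\begin{align*}
x_\lambda(r) = r^{-\lambda}\, c_\lambda(x) + r^{-\lambda}\int_0^r s^\lambda\bigl( y_\lambda(s) - s^\beta \langle S_1(s) x(s), e_\lambda\rangle_H\bigr)\, ds,
\end{align*}
with $c_\lambda(x)\in \mathbb{C}$ to be identified. For $|\lambda|<1/2$, hence $2\lambda+1>0$, the Cauchy--Schwarz bound
\begin{align*}
\left|\int_0^r s^\lambda y_\lambda(s)\,ds\right| \leq \frac{r^{\lambda+1/2}}{\sqrt{2\lambda+1}}\,\|y_\lambda\|_{L^2(0,r)}
\end{align*}
shows the integral term is $O(r^{1/2})$ pointwise in $\lambda$, with analogous control for the $S_1$-term by virtue of assumption (4) and the factor $s^\beta$. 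Hence $c_\lambda(x)$ is recovered as $c_\lambda(x)=\lim_{r\to 0^+} r^\lambda x_\lambda(r)$, which defines a continuous linear functional on $\dom(T_{\max})$ equipped with the graph norm. For $\lambda \geq 1/2$, the function $r^{-\lambda}$ is not $L^2$ near zero, forcing the corresponding $c_\lambda$ to vanish; for $\lambda\leq -1/2$, the function $r^{-\lambda}$ already decays at least as fast as $r^{1/2}$, so these contributions are absorbed into the error.

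It remains to carry out the passage from the pointwise-in-$\lambda$ control to a control of $\|x(r)-\sum_{|\lambda|<1/2} c_\lambda(x) r^{-\lambda} e_\lambda\|_H$. The contribution of large eigenvalues is estimated via a Bessel/Parseval-type identity together with the assumption $\|(|S_0|+1)^{-1}S_1(r)\|_H\leq C_0$, which bounds the off-diagonal coupling. To treat the perturbation, I would iterate the variation-of-parameters formula (a contraction/Neumann-series argument on a small interval $[0,r_0]$): the factor $r^\beta$ with $\beta>-1/2$ makes the perturbation integrable in the relevant weighted sense, so the iterated series converges and inherits the $O(r^{1/2})$ bound from the leading term.

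The hard part will be handling the uniformity in $\lambda$ for eigenvalues accumulating at $\pm 1/2$: the constants $1/\sqrt{2\lambda+1}$ blow up as $\lambda\to -1/2^+$, and even though each such term is $O(r^{1/2})$, the sum is not. This is precisely where the factor $|\log r|^{1/2}$ enters. The resolution is to split the small-eigenvalue sum into two parts depending on $\epsilon>0$: a ``dangerous'' tail containing the eigenvalues within distance $\epsilon$ of $\pm 1/2$, whose cumulative contribution is controlled by $\epsilon r^{1/2}|\log r|^{1/2}$ after applying a dyadic decomposition of the interval $(0,r)$ and summing the resulting geometric/logarithmic series, and a complementary finite-dimensional piece whose constant may depend on $\epsilon$ and on $x$ but retains the clean $r^{1/2}$ decay, giving the term $C_{\epsilon,x}\, r^{1/2}$. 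Combining the three pieces yields the stated estimate.
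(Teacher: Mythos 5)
First, a point of reference: the paper does not prove this lemma at all --- it is quoted verbatim from \cite[Lemma 3.2]{BS88} in the appendix, so there is no in-paper proof to compare against. Your overall strategy (expand in the eigenbasis of $S_0$, solve each scalar ODE by variation of parameters, identify $c_\lambda(x)=\lim_{r\to 0^+}r^{\lambda}x_\lambda(r)$, and kill $c_\lambda$ for $\lambda\geq 1/2$ by the $L^2$ condition) is indeed the skeleton of the Br\"uning--Seeley argument. But two of your steps are problematic.

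The first is your diagnosis of where $|\log r|^{1/2}$ comes from. You propose to handle ``eigenvalues accumulating at $\pm 1/2$'' by an $\epsilon$-dependent spectral window. But $S_0$ is assumed discrete, so its spectrum has no finite accumulation points: the sum over $|\lambda|<1/2$ is \emph{finite}, and there is no dangerous tail of eigenvalues near $\pm 1/2$. The logarithm enters instead through the borderline eigenvalues with $|\lambda|=1/2$ exactly (excluded from the sum but still present in $x$): for $\lambda=-1/2$ the particular solution must be written with a base point $r_0>0$, and Cauchy--Schwarz produces the weight $\bigl(\int_r^{r_0}s^{-1}\,ds\bigr)^{1/2}=|\log(r/r_0)|^{1/2}$ multiplying $r^{1/2}\|g_\lambda\|_{L^2(r,r_0)}$. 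The $\epsilon$ in the statement is then obtained by shrinking $r_0=r_0(\epsilon)$ so that the $L^2$-norm of the data over $(0,r_0)$ is below $\epsilon$ (absolute continuity of the integral), with the contribution from $[r_0,1)$ absorbed into $C_{\epsilon,x}r^{1/2}$. Your dyadic splitting of the spectrum near $\pm1/2$ answers a question that does not arise and misses the mechanism that does.

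The second, more serious gap is the treatment of the perturbation. Hypothesis (4) only gives \emph{relative} bounds $\|S_1(r)(|S_0|+1)^{-1}\|\leq C_0$, so to control $\langle S_1(r)x(r),e_\lambda\rangle$ --- and hence to know that the right-hand side $g_\lambda$ of your scalar ODE is even locally $L^2$ --- you need an a priori estimate of the form $\|(|S_0|+1)x(\cdot)\|$ (in a suitable weighted $L^2$ sense) in terms of the graph norm of $T_{\max}$. This is not automatic from $x\in\dom(T_{\max})$; it is the substantive elliptic-type estimate that precedes Lemma 3.2 in \cite{BS88}, and your Neumann-series iteration tacitly presupposes it rather than proves it. Without it, both the existence of the limits defining $c_\lambda$ and their continuity on $\dom(T_{\max})$ are unsupported. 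To complete the argument you should first establish (or explicitly invoke) that $\dom(T_{\max})$ embeds continuously into the weighted domain of $S_0$, and only then run the variation-of-parameters and base-point arguments componentwise.
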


\begin{theorem}[{\cite[Theorem 3.2]{BS88}}]\label{BS88Thm3.2}
The closed extensions of $T_\textnormal{min}$ are classified by the subspaces $W$ of 
\begin{align*}
\bigoplus_{\substack{\lambda\in\spec(S_0),\\|\lambda|<1/2}}\ker(S_0-\lambda).
\end{align*}
For such subspace $W$, the corresponding operator $T_W$ has Fredholm index 
\begin{align*}
\ind(T_W)=\ind(T_\textnormal{min})+\dim W. 
\end{align*}
\end{theorem}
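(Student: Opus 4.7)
The plan is to reduce the theorem to two ingredients: a precise identification of the quotient space $\dom(T_{\textnormal{max}})/\dom(T_{\textnormal{min}})$ coming from Lemma \ref{BS88Lemma3.2}, and the standard algebraic identity relating indices of closed operators that differ by a finite-dimensional extension of their domain.

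First, I would define the coefficient map
\begin{equation*}
\Phi:\dom(T_{\textnormal{max}})\longrightarrow\bigoplus_{\substack{\lambda\in\spec(S_0),\\ |\lambda|<1/2}}\ker(S_0-\lambda),\qquad \Phi(x)\coloneqq \sum_{|\lambda|<1/2}c_\lambda(x)\,e_\lambda,
\end{equation*}
where the $c_\lambda$ are the functionals from Lemma \ref{BS88Lemma3.2}. I would then show that $\Phi$ descends to an \emph{isomorphism} $\bar\Phi:\dom(T_{\textnormal{max}})/\dom(T_{\textnormal{min}})\to\bigoplus_{|\lambda|<1/2}\ker(S_0-\lambda)$. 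For the kernel, if $c_\lambda(x)=0$ for all $|\lambda|<1/2$, then Lemma \ref{BS88Lemma3.2} gives the decay $\norm{x(r)}_H=o(r^{1/2})\cdot|\log r|^{1/2}$, which together with the standard cut-off approximation argument (multiplying by a smooth cut-off $\chi_n(r)$ supported in $[1/n,\infty)$, and controlling $[T,\chi_n]$ via Hardy's inequality) places $x$ in $\dom(T_{\textnormal{min}})$. For surjectivity --- this is the main technical point --- I would, for each eigenpair $(\lambda,e_\lambda)$ with $|\lambda|<1/2$, construct an explicit element $x_\lambda\in\dom(T_{\textnormal{max}})$ whose leading singular term is exactly $r^{-\lambda}e_\lambda$.

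The natural candidate is $x_\lambda(r)\coloneqq \chi(r)r^{-\lambda}e_\lambda$ with $\chi\in C_c^\infty([0,\infty))$ equal to $1$ near $r=0$. The $L^2$-condition near $0$ holds because $\int_0^1 r^{-2\lambda}dr<\infty$ precisely when $\lambda<1/2$, and the symmetric bound $|\lambda|<1/2$ is what allows the construction. Applying $T$ one computes
\begin{equation*}
Tx_\lambda = \chi'(r)r^{-\lambda}e_\lambda+\chi(r)r^{\beta-\lambda}S_1(r)e_\lambda,
\end{equation*}
which lies in $L^2([0,\infty),H)$: the first term has compact support in $(0,\infty)$, and the second is $L^2$ near $0$ because $\beta>-1/2$ and $|\lambda|<1/2$ give $2(\beta-\lambda)>-2$, while the uniform bound on $S_1$ gives the $H$-norm control. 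Thus $x_\lambda\in\dom(T_{\textnormal{max}})$ and clearly $c_\mu(x_\lambda)=\delta_{\lambda\mu}$. Together with injectivity, this proves $\bar\Phi$ is an isomorphism.

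With $\bar\Phi$ in hand, the first claim is immediate: closed extensions $T_{\textnormal{min}}\subset T_W\subset T_{\textnormal{max}}$ correspond via $\bar\Phi$ to (necessarily closed, since finite-dimensional) subspaces $W\subset\bigoplus_{|\lambda|<1/2}\ker(S_0-\lambda)$. For the index formula, I would apply the following general algebraic fact to $A=T_{\textnormal{min}}$, $B=T_W$: if $A\subset B$ are closed Fredholm operators on $L^2([0,\infty),H)$ with $\ran(A)\subset\ran(B)$, then the map $\phi:\dom(B)\to\coker(A)$, $\phi(x)\coloneqq Bx+\ran(A)$, has kernel $\dom(A)+\ker(B)$ and image $\ran(B)/\ran(A)$, giving
\begin{equation*}
\dim\dom(B)/\dom(A)=\dim\ker(B)/\ker(A)+\dim\coker(A)-\dim\coker(B)=\ind(B)-\ind(A).
\end{equation*}
Since $\dom(T_W)/\dom(T_{\textnormal{min}})\cong W$ under $\bar\Phi$, this yields $\ind(T_W)=\ind(T_{\textnormal{min}})+\dim W$. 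The hard part is really the surjectivity of $\bar\Phi$ — one has to verify both the $L^2$-summability of the explicit singular ansatz and that the remainder from the $S_1$-perturbation stays in $L^2$ uniformly; everything else is either a direct consequence of Lemma \ref{BS88Lemma3.2} or a formal algebraic manipulation.
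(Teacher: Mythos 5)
The paper does not supply a proof of this statement --- it is quoted directly from \cite{BS88} --- so your argument must stand alone. Your global plan is the right one: identify $\dom(T_{\max})/\dom(T_{\min})$ with $\bigoplus_{|\lambda|<1/2}\ker(S_0-\lambda)$ via the coefficient functionals of Lemma~\ref{BS88Lemma3.2}, then apply the nested-Fredholm index identity, which you state and verify correctly. But both halves of the isomorphism claim have concrete gaps. On surjectivity, the integrability condition is miscalculated: $\chi(r)r^{\beta-\lambda}S_1(r)e_\lambda$ lies in $L^2$ near $r=0$ only if $\int_0^1 r^{2(\beta-\lambda)}\,dr<\infty$, i.e.\ $2(\beta-\lambda)>-1$, equivalently $\lambda<\beta+\tfrac12$. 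The hypotheses $\beta>-\tfrac12$, $|\lambda|<\tfrac12$ yield only $2(\beta-\lambda)>-2$, which is not sufficient since $r^{-1}\notin L^2(0,1)$. For $\beta\in(-\tfrac12,0)$ and $\lambda\in(\beta+\tfrac12,\tfrac12)$ the bare ansatz $\chi(r)r^{-\lambda}e_\lambda$ is therefore \emph{not} in $\dom(T_{\max})$; you must correct it by a variation-of-constants construction that absorbs the $r^{\beta}S_1$ perturbation, so that $r^{-\lambda}e_\lambda$ appears only as the leading singular term.

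On injectivity, a ``standard cut-off $\chi_n$ supported in $[1/n,\infty)$, controlled by Hardy'' does not close the argument at the borderline. The commutator is $[T,\chi_n]x=\chi_n'x$; with $|\chi_n'|\sim n$ on $[1/n,2/n]$ and the decay $\norm{x(r)}_H\le\epsilon r^{1/2}|\log r|^{1/2}+C_\epsilon r^{1/2}$ of Lemma~\ref{BS88Lemma3.2}, one gets $\int_{1/n}^{2/n}n^2\norm{x(r)}_H^2\,dr\gtrsim C_\epsilon^2$, bounded but not vanishing as $n\to\infty$, while the $\epsilon$-part contributes $\sim\epsilon^2\log n$, which diverges for fixed $\epsilon$; there is no diagonal choice that makes this tend to zero. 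This borderline failure is precisely why Br\"uning--Seeley replace the naive cut-off with the logarithmic family $\psi_n(r)=\chi(r)^{\vartheta_n}(1-\varphi(nr))$, $\vartheta_n=(\log n)^{-1/2}$, of Appendix~\ref{App:Seq}; Proposition~\ref{Prop:AppSeqMain} is exactly the statement that with these cut-offs the commutator term vanishes under $\norm{\sigma(r)}_H=O(r^{1/2})$. Until both points are repaired, the isomorphism $\bar\Phi$ --- and hence the classification of closed extensions and the index formula --- is not actually established.
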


\section{A sequence of cut-off functions}\label{App:Seq}

In this appendix we are going to describe a family of cut-off functions introduced in \cite[Section 6]{BS87}. To begin choose two functions $\phi,\chi\in C^\infty(\mathbb{R})$ such that 
\begin{align*}
\begin{aligned}[c]
&0\leq\varphi \leq 1,\\
&\varphi(r)=1, \:\text{if $|r|\leq 1$},\\
&\varphi(r)=0, \:\text{if $|r|\geq 2$},
\end{aligned}
\qquad\text{and}\qquad
\begin{aligned}[c]
&0<\chi(r)\leq r,\:\text{if $r>0$},\\
& \chi(r)=0,\: r<0,\\
&\chi(r)=r, \:\text{if $0\leq r\leq 1$},\\
&\chi(r)=1, \:\text{if $r\geq 2$}.
\end{aligned}
\end{align*}

For $n\in\mathbb{N}$ and $n\geq 2$, put $\vartheta_n\coloneqq (\log n)^{-1/2}$. It is easy to verify $\vartheta_n\longrightarrow 0$ as $n\longrightarrow\infty$. Define a sequence of cut-off fun functions by
\begin{align*}
\psi_n(r)&\coloneqq \chi(r)^{\vartheta_n}(1-\varphi(nr)),\\
\psi_{nm}(r)&\coloneqq \psi_n(r)-\psi_m(r).
\end{align*} 

\begin{remark}
Observe that $\psi_n(r)=0$ whenever $|r|\leq 1/n$.  
\end{remark}

\begin{lemma}\label{Lemma:PropPsin}
The sequence $(\psi_n)_n$ satisfies,
\begin{enumerate}
\item It is uniformly bounded.
\item For each $r\in\mathbb{R}$,  $\psi_n(r)\longrightarrow 1$ and $\psi_{nm}(r)\longrightarrow 0$ as $n,m\longrightarrow \infty$. 
\item For each $\sigma\in L^2([0,2], H)$, where $H$ is a Hilbert space, $\psi_n \sigma\longrightarrow v$ in $L^2$.
\end{enumerate}
\end{lemma}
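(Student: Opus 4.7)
The statement is elementary; its three parts correspond directly to the three ingredients needed later (uniform bound, pointwise limit, dominated convergence). The plan is to verify each in turn, working from the explicit formula $\psi_n(r)=\chi(r)^{\vartheta_n}(1-\varphi(nr))$ and noting that the limit in (3) is $\sigma$ (the ``$v$'' in the statement is a typo).

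First I would establish a uniform bound on $\chi(r)^{\vartheta_n}$. Split $[0,\infty)$ into the three regions coming from the definition of $\chi$: for $0\le r\le 1$ we have $\chi(r)=r\le 1$, so $\chi(r)^{\vartheta_n}\le 1$; for $1\le r\le 2$ we have $0<\chi(r)\le r\le 2$, so $\chi(r)^{\vartheta_n}\le 2^{\vartheta_n}\le 2^{\vartheta_2}$; for $r\ge 2$ we have $\chi(r)=1$, so $\chi(r)^{\vartheta_n}=1$; and $\psi_n(r)=0$ for $r\le 0$ since $\chi(r)=0$ there. Combined with $0\le 1-\varphi(nr)\le 1$, this gives the uniform bound $\lVert\psi_n\rVert_\infty\le 2^{\vartheta_2}$, proving (1).

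Next I would check pointwise convergence. For fixed $r>0$, as soon as $n\ge 2/r$ we have $nr\ge 2$, hence $\varphi(nr)=0$ and $1-\varphi(nr)=1$. On the other hand $\chi(r)>0$, so $\chi(r)^{\vartheta_n}=\exp(\vartheta_n\log\chi(r))\to 1$ as $\vartheta_n=(\log n)^{-1/2}\to 0$. Therefore $\psi_n(r)\to 1$, which gives both assertions in (2) since $\psi_{nm}=\psi_n-\psi_m$. (At $r=0$ the convergence fails but this point has measure zero, which is all that matters for (3).)

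For (3), given $\sigma\in L^2([0,2],H)$, the pointwise a.e. convergence $\psi_n(r)\sigma(r)\to \sigma(r)$ from (2), combined with the uniform domination $\lVert\psi_n(r)\sigma(r)-\sigma(r)\rVert_H^2\le (1+2^{\vartheta_2})^2\lVert\sigma(r)\rVert_H^2\in L^1([0,2])$ from (1), allows Lebesgue dominated convergence to conclude $\psi_n\sigma\to\sigma$ in $L^2([0,2],H)$. No step presents a real obstacle — the interest of this sequence is not in this lemma itself but in the refined estimates on $d\widetilde{\psi}_n$ used in the essential self-adjointness argument (Corollary~\ref{Coro:DESA}), where the slow decay $\vartheta_n=(\log n)^{-1/2}$ is crucial.
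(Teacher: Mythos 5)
Your proof is correct and follows essentially the same route as the paper's: the uniform bound $\chi(r)^{\vartheta_n}\le 2^{\vartheta_n}$, pointwise convergence by taking $n$ large enough that $\varphi(nr)=0$ and then letting $\vartheta_n\to 0$, and dominated convergence for the $L^2$ statement. Your added remarks — that the limit in (3) should read $\sigma$ and that the pointwise convergence in (2) actually fails at $r\le 0$ but only a.e.\ convergence on $[0,2]$ is needed — are accurate refinements of the paper's slightly looser phrasing.
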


\begin{proof}
\begin{enumerate}
\item By definition of $\chi,\varphi$ and $\vartheta_n$ we estimate
\begin{align*}
|\psi_n(x)|\leq |\chi(r)|^{\vartheta_n}|(1-\varphi(nr))|\leq 2^{\vartheta_n}<2.
\end{align*}
\item Let us fix $r\in\mathbb{R}$ and choose $n\in\mathbb{N}$ such that  $nr>2$, then $(1-\varphi(nr))=1$ and therefore $\psi_n(r)=\chi(r)^{\vartheta_n}$. If $r\geq2$ or $r\leq 0$ then we are done by the definition of $\chi$. Assume now $r\in(0,2)$. In this case $0<\chi(r)<2$, and since $\vartheta_n\longrightarrow 0$ as $n\longrightarrow 0$ then the claim follows.
\item This follows from (1), (2) and from Lebesgue's dominated convergence theorem.
\end{enumerate}
\end{proof}
Now we study some properties of the sequence $(\psi'_n)_n$. First we compute
\begin{align*}
\psi'_n(r)=\vartheta_n\chi(r)^{\vartheta_n-1}\chi'(r)(1-\varphi(nr))-n\chi(r)^{\vartheta_n}\varphi'(nr). 
\end{align*}
Hence,
\begin{align}\label{Eqn:DPsi_n}
\psi'_n(r)^2=&\vartheta_n^2\chi(r)^{2\vartheta_n-2}\chi'(r)^2(1-\varphi(nr))^2+n^2\chi(r)^{2\vartheta_n}\varphi'(nr)^2\\
&-2\vartheta_n n\chi(r)^{2\vartheta_n-1}\chi'(r)(1-\varphi(nr))\varphi'(nr)\notag \\
\leq &C(\vartheta_n^2\chi(r)^{2\vartheta_n-2}\chi'(r)^2(1-\varphi(nr))^2+n^2\chi(r)^{2\vartheta_n}\varphi'(nr)^2),\notag 
\end{align}
by Young's inequality. 

\begin{lemma}\label{LemmaB2}
The sequence $\psi'_n(r)^2\longrightarrow 0$ uniformly in $r\geq 1$. 
\end{lemma}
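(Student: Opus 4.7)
The strategy is to exploit the explicit estimate \eqref{Eqn:DPsi_n} and observe that for $r\geq 1$ and $n\geq 2$ both factors on the right hand side simplify dramatically because we are far from the support of $\varphi(n\cdot)$ and outside the region where $\chi$ is non-trivial.

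First I would note that for $r\geq 1$ and $n\geq 2$ one has $nr\geq 2$, so by the defining properties of $\varphi$ we get $\varphi(nr)=0$ and $\varphi'(nr)=0$. This makes the second summand in \eqref{Eqn:DPsi_n} vanish identically on $\{r\geq 1\}$, and upgrades the factor $(1-\varphi(nr))^2$ in the first summand to $1$. Hence for all such $r$
\begin{align*}
\psi'_n(r)^2\leq C\vartheta_n^2\,\chi(r)^{2\vartheta_n-2}\chi'(r)^2.
\end{align*}

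Next I would split the range $r\geq 1$ into $[1,2]$ and $[2,\infty)$. On $[2,\infty)$ the definition of $\chi$ gives $\chi(r)=1$ constant, so $\chi'(r)=0$ and the bound above is identically zero. On the compact interval $[1,2]$ the function $\chi$ is smooth and strictly positive (since $\chi(r)>0$ for $r>0$, $\chi(1)=1$, $\chi(2)=1$), so there exist constants $0<c_1\leq c_2$ with $c_1\leq \chi(r)\leq c_2$ and $|\chi'(r)|\leq M$ on $[1,2]$. Because $\vartheta_n\to 0$, the exponent $2\vartheta_n-2$ lies in a bounded neighborhood of $-2$ for $n$ large, so $\chi(r)^{2\vartheta_n-2}$ is uniformly bounded on $[1,2]$ by a constant $K$ independent of $n$ and $r$.

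Combining these observations yields the uniform bound
\begin{align*}
\sup_{r\geq 1}\psi'_n(r)^2 \leq CKM^2\,\vartheta_n^2 = \frac{CKM^2}{\log n}\longrightarrow 0
\end{align*}
as $n\to\infty$. The argument involves no subtle analysis — the only point that requires a moment of care is the uniform lower bound for $\chi$ on $[1,2]$, which follows from continuity and positivity of $\chi$ on $(0,\infty)$ together with the explicit boundary values $\chi(1)=1$ and $\chi(2)=1$.
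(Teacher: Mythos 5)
Your proof is correct and follows essentially the same route as the paper: kill the second summand of \eqref{Eqn:DPsi_n} using $\varphi'(nr)=0$ for $nr\geq 2$, and bound the first summand by a constant times $\vartheta_n^2$. In fact you are slightly more careful than the paper, which glosses over the uniform bound on $\chi(r)^{2\vartheta_n-2}$ (a negative exponent, so the positive lower bound for $\chi$ on $[1,2]$ that you supply is genuinely needed) and contains the typo ``$\varphi(nr)=1$'' where $\varphi(nr)=0$ is meant.
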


\begin{proof}
For $r\geq 1$ we study the two terms separately of the estimate \eqref{Eqn:DPsi_n}. On the one hand, since $nr\geq 2$, then $\varphi(nr)=1$, thus 
\begin{align*}
|\vartheta_n^2\chi(r)^{2\vartheta_n-2}\chi'(r)^2(1-\varphi(nr))^2|\leq \vartheta_n^2 |\chi(r)^{2\vartheta_n-2}\chi'(r)^2|. 
\end{align*} 
Note that $|\chi'(r)|\leq C_1$ for some $C_1>0$ by construction, hence 
\begin{align*}
|\vartheta_n^2\chi(r)^{2\vartheta_n-2}\chi'(r)^2(1-\varphi(nr))^2|\leq  2\vartheta_n^2 C_1\longrightarrow 0. 
\end{align*} 
On the other hand, the term $|n^2\chi(r)^{2\vartheta_n}\varphi'(nr)^2)|$ is automatically zero since $\varphi'(nr)=0$ as $nr\geq2 $. 
\end{proof}

\begin{proposition}\label{Prop:AppSeqMain}
If $\sigma\in L^2([0,2],H)$ and $\norm{\sigma(r)}_H=O(r^{1/2})$ as $r\longrightarrow 0$, then 
\begin{align*}
\int_0^2 \psi'_{nm}(r)^2\norm{\sigma(r)}^2_H dr\longrightarrow 0, 
\end{align*}
as $n,m\longrightarrow \infty$. 
\end{proposition}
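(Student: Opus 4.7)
The plan is to reduce the two-index statement to a one-index convergence by the elementary pointwise bound $\psi'_{nm}(r)^2\le 2\psi'_n(r)^2+2\psi'_m(r)^2$, so it is enough to show
\[
I_n\coloneqq\int_0^2\psi'_n(r)^2\norm{\sigma(r)}_H^2\,dr\longrightarrow 0\quad\text{as }n\to\infty.
\]
I would then split the integration range into $[1,2]$, where Lemma \ref{LemmaB2} applies directly, and $[0,1]$, where $\chi(r)=r$ and I can compute explicitly, using estimate \eqref{Eqn:DPsi_n} term by term.

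On $[1,2]$, Lemma \ref{LemmaB2} gives $\sup_{r\in[1,2]}\psi'_n(r)^2\to 0$, so the contribution is bounded by that sup times $\norm{\sigma}_{L^2([1,2],H)}^2$ and tends to $0$. On $[0,1]$, pick $r_0\in(0,1)$ with $\norm{\sigma(r)}_H^2\le C_\sigma r$ for $r\in[0,r_0]$, which exists by hypothesis. For $n$ large enough that $2/n<r_0$, the factor $1-\varphi(nr)$ vanishes on $[0,1/n]$ and $\varphi'(nr)$ is supported in $[1/n,2/n]\subset[0,r_0]$, so only the short piece $[1/n,r_0]$ needs care. For the first term in \eqref{Eqn:DPsi_n}, using $\chi(r)=r$ and $\chi'(r)=1$,
\[
C\vartheta_n^2\int_{1/n}^{r_0}r^{2\vartheta_n-2}\cdot C_\sigma r\,dr=\tfrac{CC_\sigma}{2}\vartheta_n\bigl(r_0^{2\vartheta_n}-n^{-2\vartheta_n}\bigr)\longrightarrow 0,
\]
since $\vartheta_n\to 0$ while the parenthesized factor stays bounded. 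For the second term, substituting $u=nr$,
\[
C\int_{1/n}^{2/n}n^2 r^{2\vartheta_n}\varphi'(nr)^2\cdot C_\sigma r\,dr=CC_\sigma\, n^{-2\vartheta_n}\int_1^2 u^{2\vartheta_n+1}\varphi'(u)^2\,du,
\]
which tends to $0$ because $n^{-2\vartheta_n}=e^{-2(\log n)^{1/2}}\to 0$ while the $u$-integral is uniformly bounded in $n$. Finally the piece $[r_0,1]$ is controlled as in the proof of Lemma \ref{LemmaB2}: on this subinterval $\chi(r)\ge r_0$ is bounded below and $\chi'$ is bounded, so $\psi'_n(r)^2\le C'\vartheta_n^2 r_0^{2\vartheta_n-2}\to 0$ uniformly, and the contribution is dominated by $o(1)\cdot\norm{\sigma}_{L^2([r_0,1],H)}^2$.

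The delicate point, and the main obstacle to watch out for, is the simultaneous behavior of the two competing scales: the derivative $\psi'_n$ has a term of size $n$ localized on an interval of length $1/n$, so a naive estimate gives a contribution of order $n\norm{\sigma(r)}_H^2\sim 1$ there, which would not decay. The saving comes from combining the hypothesis $\norm{\sigma(r)}_H^2=O(r)=O(1/n)$ on $[1/n,2/n]$ with the factor $n^{-2\vartheta_n}=e^{-2(\log n)^{1/2}}$ produced by the $\chi(r)^{2\vartheta_n}$ weight, and analogously for the other term $\vartheta_n^2\cdot r^{2\vartheta_n-1}$ whose logarithmic divergence is killed by the prefactor $\vartheta_n^2$. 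The specific choice $\vartheta_n=(\log n)^{-1/2}$ is calibrated precisely so that both estimates close.
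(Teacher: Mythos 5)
Your proof is correct and follows essentially the same route as the paper: split at $r=1$ (Lemma \ref{LemmaB2} handling the outer piece), then estimate the two terms of \eqref{Eqn:DPsi_n} on $[0,1]$ using $\chi(r)=r$ and the hypothesis $\norm{\sigma(r)}_H^2=O(r)$, obtaining $O(\vartheta_n)$ and $O(n^{-2\vartheta_n})$ respectively. Your introduction of $r_0$ to localize where the $O(r^{1/2})$ bound is quantitative is a minor (and welcome) tightening of the paper's argument, not a different approach.
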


\begin{proof}
We split the integral into two parts. By Lemma \ref{LemmaB2} we see
\begin{align*}
\int_1^2 \psi'_{nm}(r)^2\norm{\sigma(r)}^2_H dr\longrightarrow 0, 
\end{align*}
Now we treat the contributions of the two terms of the estimate \eqref{Eqn:DPsi_n} between $0$ and $1$. The first can be estimated by the integral 
\begin{align*}
C\vartheta^2_n\int_0^1 r^{2\vartheta_n-1}dr=\frac{C}{2}\vartheta_n r^{2\vartheta_n}\bigg{|}^1_0=\frac{C}{2}\vartheta_n\longrightarrow 0, \quad n\longrightarrow \infty. 
\end{align*}
The second term, since $\varphi'(nr)=0$ if $nr\geq 2$,  can be estimated by 
\begin{align*}
Cn^2\int_0^{2/n}r^{2\vartheta_n+1}dr=Cn^2\frac{1}{2\vartheta_n+2} r^{2\vartheta_n+2}\bigg{|}^1_0 \leq  Cn ^{-2\vartheta_n}. 
\end{align*}
Finally note 
\begin{align*}
n ^{-2\vartheta_n} = \left(e^{\log n}\right)^{-2\vartheta_n}=e^{-2\vartheta_n\log n}=e^{-2(\log n)^{1/2}} \longrightarrow 0, \quad n\longrightarrow \infty. 
\end{align*}
\end{proof}

\begin{remark}
Similar results hold also for the sequence
\begin{align*}
\tilde{\psi}_n(r)\coloneqq &\varphi\left(\frac{r}{n}\right)\psi_n(r), \\
\tilde{\psi}_{nm}\coloneqq &\tilde{\psi}_n-\tilde{\psi}_m. 
\end{align*}
Observe that its derivative is given by
\begin{align*}
\tilde{\psi'}_n(r)=&\frac{1}{n}\varphi\left(\frac{r}{n}\right)\psi_n(r)+\varphi\left(\frac{r}{n}\right)\psi'_n(r).
\end{align*}
As before, we can estimate its square as the sum of tho contributions
\begin{align*}
\tilde{\psi'_n}(r)^2=&\frac{1}{n^2}\varphi\left(\frac{r}{n}\right)^2\psi_n(r)^2+\varphi\left(\frac{r}{n}\right)^2\psi'_n(r)^2\\
&+\frac{2}{n}\varphi\left(\frac{r}{n}\right)\varphi\left(\frac{r}{n}\right)'\psi_n(r)\psi'_n(r)\\
\leq & \tilde{C}\left(\frac{1}{n^2}\varphi\left(\frac{r}{n}\right)^2\psi_n(r)^2+\varphi\left(\frac{r}{n}\right)\psi'_n(r)\right).
\end{align*}
\end{remark}

Now we are going to describe an extension of the sequence $(\psi_n)_n$ to higher dimensions described in \cite[Lemma 5.1]{BS91}. For $(s,r)\in \mathbb{R}^h\times\mathbb{R}$ set
\begin{align}
\widetilde{\psi}_n(r,s)\coloneqq &\varphi\left(\frac{\norm{s}}{n}\right)\varphi\left(\frac{r}{n}\right)\psi_n(r), \label{Eqn:TildePsin}\\
\widetilde{\psi}_{nm}\coloneqq &\widetilde{\psi}_n-\widetilde{\psi}_m. \label{Eqn:TildePsinm}
\end{align}
From the properties of $\varphi$ and $\psi_n$ it follows that $(\widetilde{\psi}_n)_n$ is uniformly bounded and converges pointwise to zero. Moreover, 
\begin{align*}
\frac{\partial \widetilde{\psi}_n}{\partial r}(r,s)=&\frac{1}{n}\varphi\left(\frac{\norm{s}}{n}\right)\varphi '\left(\frac{r}{n}\right)\psi_n(r)+\varphi\left(\frac{\norm{s}}{n}\right)\varphi\left(\frac{r}{n}\right)\psi'_n(r),\\
\frac{\partial \widetilde{\psi}_n}{\partial \norm{s}}(r,s)=&\frac{1}{n}\varphi'\left(\frac{\norm{s}}{n}\right)\varphi\left(\frac{r}{n}\right)\psi_n(r),
\end{align*}
so we can estimate as before
\begin{align}\label{Eqn:EstimDTildePsi}
\left(\frac{\partial \widetilde{\psi}_n}{\partial r}(r,s) + \frac{\partial \widetilde{\psi}_n}{\partial \norm{s}}(r,s)\right)^2
&\leq \widetilde{C}\left(\varphi\left(\frac{\norm{s}}{n}\right)^2\varphi\left(\frac{r}{n}\right)^2\psi'_n(r)^2 \right.\\
&\left. +\frac{1}{n^2}\left(\varphi'\left(\frac{\norm{s}}{n}\right)^2\varphi \left(\frac{r}{n}\right)^2+\varphi\left(\frac{\norm{s}}{n}\right)^2\varphi '\left(\frac{r}{n}\right)^2\right)\psi_n(r)^2\right).\notag 
\end{align}

\pagenumbering{gobble}
\bibliographystyle{acm}
\bibliography{references} 
\end{document}